\newcommand{\lm}[1]{\texorpdfstring{#1}{replacedLaTeXcode}}
\newcommand{\ms}[1]{\mathscr{#1}}
\let\oldtocsection=\tocsection
\let\oldtocsubsection=\tocsubsection
\let\oldtocsubsubsection=\tocsubsubsection
\renewcommand{\tocsection}[2]{\hspace{0em}\oldtocsection{#1}{#2}}
\renewcommand{\tocsubsection}[2]{\hspace{1em}\oldtocsubsection{#1}{#2}}
\renewcommand{\tocsubsubsection}[2]{\hspace{2em}\oldtocsubsubsection{#1}{#2}}
\newcounter{prcounter}
\title[Cohomological Automorphic Reps on Unitary Groups]{Statistics of Cohomological Automorphic Representations on Unitary Groups via the Endoscopic Classification}
\author{Rahul Dalal$^1$}
\address{$^1$Department of Mathematics, University of Vienna, 
Vienna, Austria}
\email{rahul.dalal@univie.ac.at}
\author{Mathilde Gerbelli-Gauthier$^2$}
\address{$^2$Department of Mathematics, University of Toronto,
Toronto, ON}
\email{mgg@math.toronto.edu}
\date{\today}
\DeclareFontFamily{U}{mathx}{}
\DeclareFontShape{U}{mathx}{m}{n}{<-> mathx10}{}
\DeclareSymbolFont{mathx}{U}{mathx}{m}{n}
\DeclareMathAccent{\widecheck}{0}{mathx}{"71}
\newcommand{\lf}{\left}
\newcommand{\ri}{\right}
\newcommand{\f}{\frac} 
\newcommand{\into}{\hookrightarrow}
\newcommand{\iso}{\xrightarrow{\sim}}
\newcommand{\wh}{\widehat}
\DeclareMathOperator{\sgn}{sgn}
\DeclareMathOperator{\rank}{rank}
\DeclareMathOperator{\Out}{Out}
\DeclareMathOperator{\Gal}{Gal}
\DeclareMathOperator{\Hom}{Hom}
\DeclareMathOperator{\Res}{Res}
\DeclareMathOperator{\Ind}{Ind}
\DeclareMathOperator{\Sym}{Sym}
\DeclareMathOperator{\vol}{vol}
\DeclareMathOperator{\ad}{ad}
\DeclareMathOperator{\Ad}{Ad}
\DeclareMathOperator{\tr}{tr}
\DeclareMathOperator{\image}{im}
\newcommand{\id}{\mathrm{id}}
\DeclareMathOperator{\Lie}{\mathrm{Lie}}
\newcommand{\Ld}[1]{{}^L\!{#1}}
\newcommand{\m}[1]{\mathbf{#1}}
\newcommand{\mf}[1]{\mathfrak{#1}}
\newcommand{\mc}[1]{\mathcal{#1}}
\newcommand{\td}[1]{\tilde{#1}}
\newcommand{\wtd}[1]{\widetilde{#1}}
\newcommand{\C}{\mathbb C}
\newcommand{\R}{\mathbb R}
\newcommand{\Q}{\mathbb Q}
\newcommand{\Z}{\mathbb Z}
\newcommand{\F}{\mathbb F}
\newcommand{\A}{\mathbb A}
\newcommand{\SL}{\mathrm{SL}}
\newcommand{\Sp}{\mathrm{Sp}}
\newcommand{\GL}{\mathrm{GL}}
\newcommand{\gl}{\mathfrak{gl}}
\newcommand{\Frob}{\mathrm{Frob}}
\newcommand{\eps}{\epsilon}
\newcommand{\om}{\omega}
\newcommand{\lb}{\lambda}
\newcommand{\Om}{\Omega}
\newcommand{\bs}{\backslash}
\newcommand{\1}{\m 1}
\newcommand{\ab}{\mathrm{ab}}
\newcommand{\disc}{\mathrm{disc}}
\newcommand{\el}{\mathrm{ell}}
\newcommand{\ur}{\mathrm{ur}}
\newcommand{\ssm}{\mathrm{ss}}
\newcommand{\can}{\mathrm{can}}
\newcommand{\pl}{\mathrm{pl}}
\newcommand{\der}{\mathrm{der}}
\newcommand{\sm}{\mathrm{sim}}
\newcommand{\temp}{\mathrm{temp}}
\newcommand{\ST}{\mathrm{ST}}
\newcommand{\inv}{\mathrm{inv}}
\DeclareMathOperator{\Irr}{\mathrm{Irr}}
\DeclareMathOperator*{\prodf}{%
  \ThisStyle{\mathop{\ensurestackMath{\stackinset{c}{0\LMpt}{c}{}{%
  \rotatebox[origin=lb]{-90}{$\SavedStyle\scalerel*{\oplus}{%
  i}$}}{\SavedStyle\prod}}}}}
\newcommand{\EP}{\mathrm{EP}}
\newcommand{\ten}{\otimes}
\newcommand{\ip}[2]{\langle #1, #2 \rangle}
\newcommand{\dom}{\backslash}
\newcommand{\BC}{\mathbb C}
\newcommand{\BR}{\mathbb R}
\newcommand{\BQ}{\mathbb Q}
\newcommand{\BA}{\mathbb A}
\newcommand{\mH}{\mathcal{H}}
\newcommand{\mS}{\mathcal{S}}
\newcommand{\fa}{\mathfrak{a}}
\newcommand{\fu}{\mathfrak{u}}
\newcommand{\fp}{\mathfrak{p}}
\newcommand{\fq}{\mathfrak{q}}
\newcommand{\fg}{\mathfrak{g}}
\newcommand{\fl}{\mathfrak{l}}
\newcommand{\fn}{\mathfrak{n}}
\newcommand{\fk}{\mathfrak{k}}
\newcommand{\ft}{\mathfrak{t}}
\newcommand{\fm}{\mathfrak{m}}
\newtheorem{thm}{Theorem}[subsection]
\newtheorem{prop}[thm]{Proposition}
\newtheorem{cor}[thm]{Corollary}
\newtheorem{lem}[thm]{Lemma}
\newtheorem{conj}[thm]{Conjecture}
\newtheorem*{goal}{Goal}
\theoremstyle{remark}
\newtheorem*{note}{Note}
\newtheorem*{ex}{Example}
\newtheorem*{rmk}{Remark}
\theoremstyle{definition}
\newtheorem{dfn}[thm]{Definition}
\numberwithin{equation}{subsection}
\numberwithin{table}{subsection}
\newcommand{\red}[1]{\textcolor{red}{#1}} 
\begin{document}

\begin{abstract}
Consider the family of automorphic representations on a unitary group with 
cohomological factor $\pi_0$ at infinity and given split 
level. We compute statistics of this family as the level goes to infinity. For unramified unitary groups and a large class of $\pi_0$, we use the endoscopic classification of representations to compute the exact leading term for 
counts of representations and averages of Satake parameters. The bounds on our error terms are 
similar to previous work by Shin-Templier who
studied the case of discrete series at infinity. We also prove new upper bounds for all cohomological representations. 

This has 
many corollaries: new exact asymptotics on the growth of cohomology in certain towers of locally symmetric spaces, an averaged Sato-Tate equidistribution law for spectral families with specific non-tempered cohomological components at infinity, and the Sarnak-Xue density hypothesis for cohomological representations at infinity on all unitary groups of rank $\geq 5$.

\end{abstract}

\maketitle

\tableofcontents


\section{Introduction}
\subsection{Context}

\subsubsection{Statistics of automorphic representations}
Let $G$ be a reductive group over a number field $F$. Automorphic representations of $G$ are very roughly irreducible subrepresentations of $L^2(G(F) \bs G(\A))$ under right multiplication by $G(\A)$. 
They encode information important to many 
applications---for example Galois representations through the Langlands program, the geometry of locally symmetric spaces through automorphic decompositions of their cohomology, and so-called 
expanders graphs used for computer algorithms through constructions akin to that of 
\cite{LPS88}.

Every automorphic representation $\pi$ has a tensor product decomposition:
\[
\pi = {\bigotimes_v}' \pi_v
\]
into unitary irreducible representations $\pi_v$ of $G(F_v)$ over places $v$ of $F$. Applications usually depend on a key question: among all possible tensor products of $\pi_v$, which ones are automorphic, 
i.e. are represented as functions in $L^2(G(F) \bs G(\A))$?

This paper broadly 
focuses on an easier version of this key question---that of computing \emph{statistics}. We will consider families of automorphic representations in the spectral sense of \cite{SST16}, i.e. satisfying local conditions on each $\pi_v$. Then we will estimate as well as possible the asymptotics of the (weighted) counts of automorphic representations satisfying these conditions as they ``go to infinity''. For example, when~$G = \GL_2$, we could look at the ``$m$th moment'' of the Hecke eigenvalue at some place $p$ averaged over weight-$k$, level-$N$ holomorphic modular forms as $N \to \infty$. 

\subsubsection{Our specific case}
More specifically, we are interested in the case where the component $\pi_\infty$ at infinity is restricted to a specific representation---this can be thought of as fixing the ``qualitative type'' of an automorphic representation; for example, ``holomorphic Siegel modular form of weight $\vec v$''. We organize such questions\footnote{There are of course many powerful automorphic statistics results that don't fit into this story---as a good ``most general'' representative \cite{FLM15} considers $\pi_\infty$ contained in any subset of the unitary dual with finite, non-zero volume. We are not giving a full literature review here.}
by an informal ranking of the complexity of an automorphic representation based on the component $\pi_\infty$.

For us, the simplest $\pi_\infty$ are the discrete series that can be realized as explicit subrepresentations of $L^2(G_\infty)$.
Statistics of discrete-at-infinity automorphic representations can be well-understood through a version of Arthur's trace formula in \cite{Art89}. The techniques of \cite{ST16}, together with \cite{Fer07} as used in first author's thesis \cite{Dal22}, build on this 
trace formula to provide asymptotic estimates and error terms in the general discrete-at-infinity case.

This paper goes beyond discrete series and studies automorphic representations with cohomological component at infinity---i.e. component at infinity that has some non-trivial relative Lie algebra cohomology. There is a critical new complication here: when a group has discrete series, non-discrete cohomological representations are always non-tempered. In particular, general cohomological automorphic representations can 
represent violations of the Ramanujan conjecture. They can therefore be very sparse in the automorphic spectrum and difficult to isolate. 

Recent results suggest that known cases of the endoscopic classification \cite{Art13, Mok15, KMSW14} 
are a good way to study cohomological automorphic representations. Work of Marshall, Shin and the second author have used it to provide good upper bounds for counts on unitary groups, see
\cite{Mar14, MS19, Ger20}.
In specific simpler cases, explicit counts have even been computed: 
\cite{CR15} 
and \cite{Tai17} consider level-$1$ representations on classical groups and \cite{RSY22} develops techniques that apply to low-level automorphic representations on $\Sp_4$. 

This work attempts to organize and synthesize the bounds of Marshal, Shin, and the second author together with an inductive analysis used by
Ta\"ibi~\cite{Tai17} into a proposal for a general method to understand statistics of cohomological automorphic representations. While our proposed method in general depends on some wide-open and difficult problems in local representation theory, we do implement it explicitly in some specific cases on unitary groups. This gives the most general understanding of cohomological asymptotic statistics on unitary groups to date. 

We emphasize in particular that, in many cases, we compute exact leading terms together with estimates on sub-leading terms. This gives us applications to the growth of cohomology of locally symmetric spaces, Sato-Tate equidistribution averaged over families of automorphic representations, and the Sarnak-Xue density conjecture. We motivate these in more detail:

\subsubsection{Application: growth of cohomology} \label{section intro coho} A motivating problem for these statistical computations is the growth of cohomology in towers of arithmetic groups. Let~$\Gamma \subset G_\infty = G(F \ten_{\BQ} \BR)$ be a (neat and cocompact for simplicity) arithmetic lattice. Then the group cohomology of $\Gamma$, or equivalently, the Betti cohomology of the locally symmetric space $\Gamma \dom G_\infty/K_{\infty}$ for $K_\infty$ a maximal compact subgroup, is computed in terms of automorphic forms via Matsushima's formula: 
\[ 
H^*(\Gamma, \BC) = \bigoplus_{\pi_\infty \text{ cohomological}} m(\pi_\infty, \Gamma) H^*(\fg,K_\infty; \pi_\infty), 
\] 
where $m(\pi_\infty, \Gamma) = \dim \Hom_{G_\infty}(\pi_\infty, L^2(\Gamma \dom G_\infty))$. Beyond the fact that locally symmetric spaces provide a rich class of examples of manifolds, the cohomology~$H^*(\Gamma, \BC)$ is also of interest because it carries an action of Hecke operators, making it a generalization of modular forms of weight $>1$. The Hecke eigensystems that arise should correspond to Galois representations via the Langlands program. 

Outside of some low-rank examples, dimensions of $H^*(\Gamma, \BC)$ are only known for specific lattices $\Gamma$, see for example \cite{ash2008cohomology, gunnells2021cohomology}. A fruitful approach is to consider these cohomology groups 
\emph{in towers}: one fixes a sequence~$\Gamma_n$ of (typically nested) finite-index subgroups, and studies the asymptotics of $\dim H^*(\Gamma_n, \BC)$ as $n \to \infty$. Without giving a systematic survey of this problem, we note there have been multiple approaches: topological constructions e.g. \cite{Mil76,  TshishikuStudenmund}, non-abelian Iwasawa theory methods as in \cite{calegari2009bounds}, and, beginning with the work of DeGeorge-Wallach \cite{degeorge1978limit}, what can be referred to as spectral approaches. 

Going back to our complexity ranking, it is known that cohomological discrete series only contribute to cohomology in degree $(1/2)\dim (G_\infty/K_\infty)$ (we assume~$G_\infty$ has anisotropic center for simplicity). Thus the exact asymptotics of \cite{degeorge1978limit}, and later \cite{clozel1986limit,savin1989limit}, 
show that when~$G_\infty$ has discrete series, the middle degree of cohomology grows like the volume of the corresponding symmetric space and the growth of the lower degrees is slower. This leaves open the question of more precise upper bounds for other degrees of cohomology, and it seems (see \cite[\S 1]{SX91}) that this motivating question was at the heart of the discussion which led to the formulation of the Sarnak-Xue conjecture discussed in \ref{section intro SX}. 

There has been progress on 
bounding growth of Betti numbers,  
from the lower bounds of \cite{Cos09, CM11, KS15growth, TshishikuStudenmund} to the vanishing results of \cite{CD06automorphic, Clo93Coho} to investigations of 
more general sequences of lattices \cite{AB+L2}. Starting with~\cite{SX91}, there has also been progress on upper bounds, e.g. the 
power saving on the trivial bound in \cite{calegari2009bounds}. 
Most influential for us is a series \cite{CM11, Mar14, Mar16} of works of Marshall and his collaborators, giving (in some cases sharp
) upper bounds on cohomology growth. Most recently, in \cite{MS19} Marshall-Shin obtained upper bounds for all degrees for lattices in $U(N,1)$. In this article, we prove new upper bounds for growth of Betti numbers of lattices in unitary groups, as well exact asymptotics that show e.g. that the bounds of Marshall-Shin are sharp in every other degree.

\subsubsection{Application: averaged Sato-Tate}
Let $\pi$ be an automorphic representation on a reductive group $G$. A Sato-Tate result for $\pi$ is a statement that the Satake parameters of the unramified components $\pi_v$ are equidistributed over~$v$ according to a Sato-Tate distribution $\mu_\pi$ determined by 
$\pi$. This should be thought of as a generalized, automorphic-side 
analogue of the classical Sato-Tate conjecture 
for the equidistribution over $p$ of the coefficients $a_p$ associated to point counts over~$\F_p$ of elliptic curves. See \cite[\S1.1]{ST16} for a full introduction to the problem---in particular,~\cite{Ser94} states a very general Galois-side version of the conjecture and~\cite{BLGG, BLGHT11} prove it for restrictions of scalars of $\GL_1$ and $\GL_2$. 

Unfortunately, even 
defining the Sato-Tate distribution $\mu_\pi$ 
for a single $\pi$ depends on more-or-less the full conjectures of Langlands functoriality. Extremely roughly, $\pi$ should correspond to another reductive group $H_\pi$ and $L$-map~$\varphi: \Ld H_\pi \into \Ld G$---the smallest such that the conjectural global $L$-parameter for~$\pi$ factors through $\varphi$. The law $\mu_\pi$ should then be thought of as pushforward of a measure~$\mu_{H_\pi}$ from the space of Satake parameters of $H_\pi$ to that of $G$. For general~$\pi$ on high-rank groups, Sato-Tate results therefore appear unapproachable. In fact, empirically measuring the Sato-Tate distribution for $\pi$ is arguably one of the key currently accessible pieces of evidence for the existence of this conjectural $H_\pi$ in the first place.

Following \cite{ST16}, we therefore instead study Sato-Tate laws averaged over some family $\mc F$.  Heuristically, representations $\pi \in \mc F$ can have many different $H_\pi$. However, for reasonable families, we expect the log of the count of forms associated to $\pi \in \mc F$ with $H_\pi = H_0$ to increase with the dimension of $H_0$. Therefore, we should expect most $\pi \in \mc F$ to have $H_\pi$ be some maximum value $H_{\mc F}^{\max}$. In particular, if we look at Satake parameters $\pi_v$ over all places $v$ \emph{and} all $\pi \in \mc F$, the distribution for $H_\pi = H_{\mc F}^{\max}$ should dominate.

Such ``averaged Sato-Tate laws'' end up being far easier to establish. For example,~\cite{ST16} studied families of automorphic representations on $G$ with discrete series at infinity and showed they satisfy averaged Sato-Tate laws coming from~$G$ itself. This corresponds to the  heuristic that most $\pi$ with discrete series at infinity should be ``primitive'': i.e. have $H_\pi = G$ (see e.g. Theorem 1.3 in \cite{KWY21} for a precise version of such a claim).

In Section \ref{sectionSatoTate}, we instead study families with certain specific (possibly non-tempered) cohomological  
components $\pi_0$ at infinity,  
which we refer to as ``odd GSK-maxed'' 
in \ref{GSKmE}. In this case, the resulting averaged Sato-Tate laws are not those from $G$ itself, but rather from  certain $H_{\pi_0}$ that we explicitly compute from $\pi_0$. The 
pairs $(\varphi, H_{\pi_0})$ are not in general endoscopic embeddings, but instead compositions of  
these with tensor-product maps analogous to $\GL_n \times \GL_m \to \GL_{nm}$. This can be taken as evidence for a speculative interpretation that is nevertheless clearly suggested by the form of the endoscopic classification: that the decomposition of an $A$-parameter in terms of cuspidal parameters is literally realizing the corresponding packet as a functorial transfer from a smaller, possibly \emph{non-endoscopic} group.

\subsubsection{Application: Sarnak-Xue density} \label{section intro SX}
The Sarnak-Xue density conjecture aims to quantify the failure of the na\"ive generalized Ramanujan conjecture---that all the local components of cuspidal automorphic representations are tempered. Na\"ive Ramanujan was the generalization and translation to representation-theoretic, adelic language of the classical Ramanujan conjecture bounding Hecke eigenvalues of the~$\Delta$ function---see \cite{Sar05} for a full introduction. It was found to be false even for split~$G$ through counterexamples constructed in~\cite{HPS79}. 

Luckily, many desired applications don't require the absence of non-tempered representations: only that there to be not too many. Sarnak and Xue in \cite{SX91} conjectured a precise meaning for ``too many'': the necessary upper bound on the asymptotic growth rate of representations $\pi$ with component $\pi_v$ non-tempered. 

Their original conjecture was stated in terms of classical, real locally symmetric spaces. 
Rephrasing it in more modern language and focusing on the case of cohomological representations, let $G/F$ be reductive and let $U_i$ be a sequence of open compact subgroups of $G^\infty := G(\A^\infty)$ decreasing to the identity. Let $\Gamma_i = G(F) \cap U_i$ and choose a cohomological representation $\pi_0$ of $G_\infty$. Then:
\begin{conj}[Cohomological Sarnak-Xue] \label{conj SX intro}
Let $m(\pi_0, \Gamma_i \bs G_\infty)$ be the multiplicity of $\pi_0$ in $L^2(\Gamma_i \bs G_\infty)$. Then for all $\eps > 0$,
\[
m(\pi_0, \Gamma_i \bs G_\infty) \ll_\eps \vol(\Gamma_i \bs G_\infty)^{\f2{p(\pi_0)} + \eps}
\]
where $p(\pi_0)$ is the infimum over $p>2$ such that the (spherically finite) matrix coefficients of $\pi_0$ are in $L^p(G_\infty)$. 
\end{conj}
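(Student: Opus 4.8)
The plan is to establish the conjecture in the range where the endoscopic classification of \cite{Mok15, KMSW14} can be made completely explicit --- unramified unitary groups and cohomological $\pi_0$ whose archimedean factor has no $U(2,2)$ component --- and to indicate where the argument stalls otherwise; the engine is that classification combined with the inductive trace-formula argument of Ta\"ibi \cite{Tai17} and the discrete-series counting estimates of \cite{ST16}. \textbf{Step A (parametrize the relevant $\pi$).} By the endoscopic classification, any $\pi$ with $\pi_\infty\cong\pi_0$ lies in a global $A$-packet attached to an $A$-parameter $\psi=\boxplus_j\,\mu_j\boxtimes[d_j]$, with each $\mu_j$ a conjugate-self-dual cuspidal automorphic representation of some $\GL_{m_j}$, $[d_j]$ the $d_j$-dimensional representation of the Arthur $\SL_2$, and $\sum_j m_j d_j=N$. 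Cohomologicality forces the $\mu_{j,\infty}$ to have a fixed regular infinitesimal character up to twist and forces $\pi_0$ to be one of the finitely many cohomological members of the archimedean $A$-packet of $\psi_\infty$; these are Adams--Johnson packets, so $\pi_0$ is an $A_{\mathfrak q}(\lambda)$ and $p(\pi_0)$ is read off from the $\theta$-stable parabolic $\mathfrak q$, equivalently from the multiset $\{d_j\}$, via the standard decay bounds for cohomologically induced modules.

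\textbf{Step B (reduce to smaller groups and count there).} Fix one of the finitely many possible shapes of $\psi$ and the level $U_i$. The multiplicity formula of the classification then matches the relevant representations $\pi$ --- together with their automorphic multiplicities and the dimensions of their $U_i$-fixed vectors --- with tuples of \emph{tempered} automorphic representations of $H_\psi=\prod_j U_{m_j}$, a product of quasi-split unitary groups, the $j$-th one in $m_j$ variables, each contributing to $L^2$ of a congruence quotient of level $U_i^{H}$ determined by $U_i$, with archimedean component pinned to a prescribed cohomological tempered representation --- a discrete series up to the center, since $U(p,q)$ is equal-rank. Counting these is exactly the problem solved by the inductive argument of \cite{Tai17}: one runs Arthur's stable trace formula on each factor with an Euler--Poincar\'e function at infinity and $\1_{U_i^{H}}$ at the finite places, peels off by induction on rank the contributions of proper Levis and of smaller elliptic endoscopic groups (controlled at infinity by the regularity of the infinitesimal characters), and is left with a main term $\asymp\vol(\Gamma_i^{H}\bs H_\psi(\R))$ and a power-saving error in the sense of \cite{ST16}.

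\textbf{Step C (compare volumes and match exponents).} Comparing local indices place by place, $[U^{H}:U_i^{H}]$ grows like a fixed power of $[U:U_i]\asymp\vol(\Gamma_i\bs G_\infty)$ determined by the dimensions of $H_\psi$ and $G$ (and the count of discrete series contributing at infinity), so Step B gives $m(\pi_0,\Gamma_i\bs G_\infty)\ll_\eps\vol(\Gamma_i\bs G_\infty)^{\,a(\pi_0)+\eps}$ for an explicit exponent $a(\pi_0)$ depending only on $\pi_0$. It remains to check $a(\pi_0)\le 2/p(\pi_0)$; as both sides depend only on the $\theta$-stable parabolic $\mathfrak q$ this is a finite verification, consistent in the extreme cases ($\pi_0$ tempered: $p=2$, $a\le1$; $\pi_0$ trivial: $p=\infty$, $a=0$) and carried out in the intermediate cases from the Vogan--Zuckerman and matrix-coefficient estimates.

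I expect \textbf{Step C} --- and behind it the strength of the smaller-group count in \textbf{Step B} --- to be the main obstacle and the source of the $U(2,2)$ restriction. When $G_\infty$ has a $U(2,2)$ factor there is a cohomological $\pi_0$ (attached to a parameter with a $\mu\boxtimes[2]$ on that factor) for which the exponent $a(\pi_0)$ coming out of this method is strictly larger than $2/p(\pi_0)$: the induction of Step B bottoms out at a group where one would need a genuine power-saving, Sarnak--Xue-type bound for the tempered count rather than the trivial $L^2$ bound, and no such independent input is available there without circularity. Absent a new archimedean idea or such an input, the method proves the conjecture exactly for the unitary groups with no $U(2,2)$ factor at infinity.
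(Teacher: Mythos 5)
Your architecture is essentially the one we use: decompose by Arthur parameter and shape, reduce via a Ta\"ibi-style induction to counts on products of smaller quasi-split unitary groups controlled by \cite{ST16}, finish with a finite comparison of exponents against $2/p(\pi_0)$, and isolate $U(2,2)$ as the exceptional case. But there is a genuine gap at the hinge between your Steps B and C, where you treat the level comparison as bookkeeping: ``$[U^H:U_i^H]$ grows like a fixed power of $[U:U_i]$ determined by the dimensions of $H_\psi$ and $G$.'' The passage from $\dim(\pi^\infty)^{U_i}$ to fixed vectors of the tempered data on $H_\psi$ is not an index computation. At places dividing the level the local components of $\pi$ are Speh-type Langlands quotients $\pi_v[d]$, and the dimension of their $K(q_v^n)$-invariants grows like $q_v^{n\cdot GK(\pi_v[d])}$ with $GK(\pi_v[d])=\f12 d^2t(t-1)$, much larger than for the underlying tempered $\pi_v$. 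If one controls this only by parabolic descent of $\bar\1_{K(\mf n)}$ (constant terms), the exponent one obtains is $\bar R(\Delta)=\dim P$, the dimension of the parabolic attached to the partition $(T_1^{(d_1)},\dotsc,T_k^{(d_k)})$, and this \emph{fails} the Sarnak--Xue threshold for most non-tempered $\pi_0$: already for Arthur-$\SL_2$ equal to $(3,3)$ on a rank-$6$ group it gives $23$ against a target of $17.5$, and for $(2,2)$ it gives $11$ against $7.5$ (compare Table \ref{grcomp}). The indispensable extra input is a nontrivial \emph{local} bound on $\dim\pi_v^{K(q_v^n)}$ for arbitrary unitary irreducibles of $\GL_2(F_v)$ and $\GL_3(F_v)$, imported from Marshall--Shin and fed into the induction at the small blocks (Lemmas \ref{T1bound}--\ref{T3bound}); it is these local bounds, not the global induction, that push the exponent $R(\pi_0)$ below $2(N^2-1)/p(\pi_0)$. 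Your proposal never identifies this ingredient, so as written Step C cannot close.

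Two smaller inaccuracies. First, $p(\pi_0)$ is not determined by the multiset $\{d_j\}$ alone: it depends on the full bipartition $(p_i,q_i)_i$ through the quantities $\sigma_i(B)$ of Proposition \ref{ppi}, not just on $\beta(B)$. Second, your diagnosis of the $U(2,2)$ obstruction is in the right spirit (a $\GL_2$ bound at the bottom of the induction that is not quite strong enough, cf.\ \cite{Mar16}), but the precise mechanism is the quality of the fixed-vector bound for the single representation $((1,1),(1,1))$, where the provable exponent is $8$ against a target of $7.5$ while the conjecturally optimal exponent $R_0-1=6$ would suffice; the restriction is on the archimedean factor $\pi_v$, not on the group having a $U(2,2)$ factor per se, and the upper-bound argument does not actually require $G$ to be unramified.
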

As alluded to in \ref{section intro coho}, 
if $\pi_0$ is discrete series (in which case $p(\pi_0) = 2$) then 
\[
m(\pi_0, \Gamma_i \bs G_\infty) \asymp  \vol(\Gamma_i \bs G_\infty).
\] 
On the other hand, if $\pi_0$ is a character, (i.e. $p(\pi_0) = \infty$), then $m(\pi_0, \Gamma_i \bs G_\infty) \asymp 1$. Conjecture \ref{conj SX intro} is therefore a claim that an asymptotically negligible fraction of automorphic representations have non-tempered component $\pi_0$ at infinity and further that the quantitative strength of ``asymptotically negligible'' depends on the failure of temperedness $p(\pi_0)$, interpolating between the cases $p = 2$ and $p = \infty$.

Many analytic applications are discussed in \cite{GK20} including those from \cite{EP22} relating to constructions (so-called golden gates and Ramunjuan complexes) used in computer science. Many recent breakthroughs have proved the conjecture in specific cases: for example, \cite{Blo22} proved a version of the conjecture at infinity for Maass forms on $\GL_n$ using the Kuznetsov formula and \cite{FHMM20} proved many versions for Maass forms on products of $\SL_2 \R$ and $\SL_2 \C$ using Arthur's trace formula.  

Most important in our context is the work of Marshall and collaborators applying Arthur's classification to 
cohomological $\pi_0$ on unitary groups. The most general results are in \cite{MS19} and prove the cohomological Sarnak-Xue conjecture at split level for groups that are $U(N,1)$ at infinity. In this paper, through a more powerful global framework, we can more fully leverage Marshall-Shin's local bounds and prove 
the cohomological Sarnak-Xue conjecture 
at split level 
for \emph{all} unitary groups that don't have a $U(2,2)$ factor at infinity.


\subsection{Results}
To make this all precise, let $E/F$ be a CM extension of number fields (i.e. a totally imaginary quadratic extension of a totally real field). Let $G/F$ be a unitary group that splits over $E$, so that $G$ has discrete series at infinity. Denote the discrete automorphic spectrum of $G$ by $\mc{AR}_\disc(G)$. We prove two main results, both conditional on the endoscopic classification of representations 
in \cite{KMSW14}. 

First, assume $E/F$ is unramified at all finite places. Let $\pi_0$ be a in certain class of good cohomological representations $\pi_0$ of $G_\infty$ that satisfy a condition of being odd GSK-maxed (as in Definition \ref{GSKmE}) and a technical parity condition from Lemma \ref{etacharoddGSK}. Theorem \ref{mainexact} then finds explicit constants $R(\pi_0)$ and $M(\pi_0) \neq 0$ such that for split-level principal congruence subgroups $U$ of $G^\infty$ as $\vol(U) \to 0$: 
\[
\vol(U)^{R(\pi_0)/\dim G} L(U) \sum_{\pi \in \mc{AR}_\disc(G)} \1_{\pi_\infty = \pi_0} \dim((\pi^\infty)^U) = M(\pi_0) + O(\vol(U)^C)
\]
for some correction factor $L(U)$ made precise in the theorem statement and some constant $C \geq 1/\dim G$. The sum should be thought of as the multiplicity of $\pi_0$ inside $L^2$ functions on the automorphic quotient $G(F) \bs G(\A)/ U$ (which has volume proportional to $\vol(U)^{-1}$).

The theorem also allows weighting the count by a Weyl-symmetric polynomial $P$ in the Satake parameters $s_{\pi_v}$ at a place $v$ where $U$ has hyperspecial component:
\begin{multline}\label{intro exact bound} 
\vol(U)^{R(\pi_0)/\dim G} L(U) \sum_{\pi \in \mc{AR}_\disc(G)} \1_{\pi_\infty = \pi_0} \dim((\pi^\infty)^U) P(s_{\pi_v}) \\ = M(\pi_0)M(P) + O(\vol(U)^C q_v^{A + B \deg P})
\end{multline}
for some explicit constant $M(P)$ and inexplicit constants $A,B$.  

The second result, Theorem \ref{mainupper}, applies to any CM extension $E/F$ 
and $\pi_0$ arbitrary cohomological. It only provides upper bounds 
\begin{equation}\label{intro upper bound}
\sum_{\pi \in \mc{AR}_\disc(G)} \1_{\pi_\infty = \pi_0} \dim((\pi^\infty)^U) P(s_{\pi_v}) = O(\vol(U)^{-R(\pi_0)/\dim G} q_v^{A + B \deg P}).
\end{equation}
When $\pi_0$ is discrete series, we recover the ``trivial bound'' of $R(\pi_0) = \dim G$. Otherwise, we get an improvement $R(\pi_0) < \dim G$. 

Though sharp upper bounds for cohomology growth are known in several cases (see \cite{CM11,Mar14}, as well as \cite{BMM17} which also points towards possible extensions), we believe that \eqref{intro exact bound} gives the first exact asymptotic for multiplicity counts of cohomological non-tempered automorphic representations, as well as the first with an estimate on the sub-leading term for this same class. Together with the upper bounds \eqref{intro upper bound}, it gives many new corollaries:

\subsubsection{Corollary: cohomology}
Our results give bounds for the growth of Betti numbers in towers of arithmetic manifolds. In this context, it is traditional and simplest, though by no means necessary, to fix $G_\infty$ isomorphic to $U(p,q)$, with $p+q = N$, at one infinite place, and compact at all the others. Letting~$U = K(\mf n)$ be the principal congruence subgroup associated to the ideal $\mf n$, 
 the resulting $\Gamma(\fn) = G(F) \cap K(\fn)$ are then cocompact lattices in $U(p,q)$. Our results in this context imply two types of corollaries. 

First, using the bounds of \ref{section on LM results} and the explicit description of packets of cohomological representations in \ref{sectionunitaryparamcombo} and \ref{upqparam}, we give an algorithm to compute upper bounds on the growth of the Betti numbers~$h^k(\Gamma(\fn))$ and of the dimensions~$h^{p,q}(\Gamma(\fn))$ of any piece of the Hodge decomposition. Though we don't expect the resulting upper bounds to be sharp in general, they should be in many cases and they always give a non-trivial power saving over the volume of $X(\fn) = \Gamma(\fn) \dom G_\infty/K_\infty$ in degrees strictly below $\frac{1}{2}\dim X(\fn)$. 
The algorithm, described in \ref{section upper bounds algo}, is as follows: 
\begin{enumerate}
    \item Given a degree of cohomology or a Hodge weight, list all representations~$\pi_\infty$ for which the corresponding $(\fg,K)$-cohomology is nonvanishing using the parameterization of \ref{upqparam}. 
    \item For each representation $\pi_\infty$, compute the shape $\Delta^{\max}(\pi_\infty)$ as in \ref{subsection ArthurSL2Max}. 
    \item Use Theorem \ref{mainupper} to give upper bounds $$ \dim\Hom(\pi_\infty, L^2(G(F)\dom G(\BA)/K(\fn))) \ll |\fn|^{R(\Delta^{\max}(\pi_\infty))}.$$ 
    \item From Matsushima's formula and the growth $\gg |\fn_i|^{1-\epsilon}$ of components of $G(F)\dom G(\BA)/K(\fn_i)$, deduce upper bounds for $h^k(\Gamma(\fn))$. 
\end{enumerate}
In practice, the combinatorics involved in 
(1) and (2) rapidly get complicated, but in some cases, the bounds can be expressed succinctly. For example, when $r = \min(p,q)$ is the smallest degree carrying the cohomology of a non-trivial representation, the contributions appear in weights $(r,0)$ and $(0,r)$ and we have 
\[ 
h^{r,0}(\Gamma(\fn)) + h^{0,r}(\Gamma(\fn)) \ll_\epsilon |\fn|^{N+\epsilon}. 
\] 

As a second corollary, our exact asymptotics give lower bounds on a range of degrees 
when $E/F$ is unramified. For example, using again $r = \min(p,q)$, Corollary \ref{cohmain} exhibits lattices such that for $1 \leq j \leq |p-q|-1$ and $j \not \equiv N \mod 2$, 
\[ 
h^{kj, (r-k)j}(\Gamma(\fn)) \gg |\fn|^{Nj}, \quad 0 \leq k \leq r. 
\] 
These results apply to a wider range of degrees when $U(p,q)$ is farther away from being quasisplit. In the extremal case where the noncompact part of $G_\infty$ is $U(N-1,1)$, we deduce that in degrees $j$ whose parity is opposite to that of $N$, the upper bounds of \cite{MS19} are sharp. 

\subsubsection{Corollary: Averaged Sato-Tate}
The error bound on our sub-leading term in Theorem \ref{mainexact} is as strong as in \cite{ST16} so we can mimic their argument and prove an averaged Sato-Tate law. This is Theorem \ref{SatoTate}. 

More specifically, given an odd GSK-maxed representation $\pi_0$ on an unramified unitary group $G$, we compute an unordered sequence of pairs $((T_i, d_i))_{1 \leq i \leq r}$ that is common to all elements of the set $\Delta^{\max}(\pi_0)$ described by the algorithm in \S\ref{deltamaxalgo}. To this list of pairs we associate a group
\[
H_{\pi_0} = (U_{E/F}(T_1) \times U_{E/F}(1)) \times \cdots \times (U_{E/F}(T_r) \times U_{E/F}(1))
\]
and an $L$-embedding $\varphi : \Ld H_{\pi_0} \to \Ld G$ constructed in three stages: first we embed the second coordinate of each pair into $\Ld U_{E/F}(d_i)$ through the cocharacter corresponding to the parameter of the trivial representation. Then we take the tensor product embedding
\[
\Ld (U_{E/F}(T_1) \times U_{E/F}(d_i)) \to \Ld U_{E/F}(T_i d_i)
\]
followed by the diagonal embedding
\[
\prod_i \Ld U_{E/F}(T_i d_i) \into \Ld U_{E/F}(T_1d_1 + \cdots + T_r d_r) = \Ld G. 
\]
The group $\Ld H_{\pi_0}$ has a canonical Sato-Tate measure on the space of Satake parameters for each splitting type $\theta$ of prime in $E/F$. We let $\mu^{\ST(\pi_0)}_\theta$ be the pushforward of this measure to $G$. 

For each finite place $v$ of the right splitting type and open compact $U \subseteq G^\infty$, we then define the empirical measure:
\[
\mu^{\pi_0}_{U,v} = \sum_{\substack{\pi \in \mc{AR}_\disc(G) \\ \pi_\infty = \pi_0}} \dim \lf((\pi^\infty)^U\ri)\delta_{s_{\pi_v}}
\]
as a sum of delta-measures on the space of Satake parameters $s_{\pi_v}$. Then Theorem~\ref{SatoTate} show that for certain sequences of $U_i$ and $v_i$ such that $\vol(U_i)^{-1}$ grows much faster than $q_{v_i}$, we have weak convergence
\begin{equation}\label{intro Sato-Tate}
C(\pi_0, U_i)^{-1} \mu^{\pi_0}_{U_i,v_i} \to \mu^{\ST(\pi_0)}_\theta
\end{equation}
for an appropriate scaling factor $C(\pi_0, U)$ as long as a parity condition from Lemma~\ref{etacharoddGSK} holds. This can be heuristically interpreted as evidence that most $\pi \in \mc {AR}_\disc(G)$ with $\pi_\infty = \pi_0$ are functorial transfers from $H_{\pi_0}$ through $\varphi$.

\subsubsection{Corollary: Sarnak-Xue}
We prove in Theorem \ref{SarnakXue} that the bounds in Theorem \ref{mainupper} achieve, and often beat, the Sarnak-Xue bounds, 
provided that $G_\infty$ does not have a $U(2,2)$ factor. It is worth noting that the local bounds proved in \cite{MS19} were already good enough to achieve the Sarnak-Xue threshold. The new work here is global---using them as an input in a more general framework that allows us to bound error terms appearing when one moves beyond $U(N,1)$.

Despite our improvements on Sarnak-Xue, we do not expect our bounds to be optimal. Through some heuristics relating to GK-dimension, we conjecture an optimal exponent $R_0(\pi)$ in Section \ref{sectionconjecturalbound}. By the algorithm at the end of \S\ref{deltamaxalgo}, we associate to $\pi_0$ an unordered sequence of pairs $((T_i, d_i))_i$ corresponding to the ``most tempered'' decomposition of Arthur parameters into cuspidals that allows component $\pi_0$ at infinity. As in the Sato-Tate discussion above, this sequence should be thought of as determining an $L$-embedding $\Ld H_{\pi_0} \into \Ld G$ with respect to which almost all forms associated to $\pi$ with $\pi_\infty = \pi_0$ are primitive. Then:
\begin{conj}
The optimal exponent on $\vol(U)$ in \eqref{intro upper bound} is
\[
R_0(\pi_0) = \f 12 \lf(N^2 - \sum_i T_i^2 d_i \ri) + \sum_i \lf(T_i^2 + \f12 T_i (T_i - 1)(d_i^2 - 1) \ri).
\]
\end{conj}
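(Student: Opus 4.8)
The statement packages two assertions: that the displayed quantity is a valid exponent for \eqref{intro upper bound}, and that it is sharp. For the upper bound the plan is to run the inductive, endoscopic--classification argument already developed in the body of the paper (the extension of Ta\"ibi's method), but fed with the \emph{conjecturally optimal} local bounds in place of those of \cite{MS19}. Concretely, by \cite{KMSW14} the relevant part of the discrete spectrum of $G$ with $\pi_\infty=\pi_0$ is parametrized by global $A$-parameters $\psi$ whose archimedean localization has $\pi_0$ in its packet. The infinitesimal character is pinned down by $\pi_0$, and among the finitely many shapes $((T_i,d_i))_i$ compatible with "$\pi_0\in\Pi_{\psi_\infty}$" the most tempered one — the multiset extracted from $\Delta^{\max}(\pi_0)$ in \S\ref{deltamaxalgo} — maximizes the eventual exponent and hence dominates; such $\psi$ are built from conjugate-self-dual cuspidal data $\mu_i$ on $U_{E/F}(T_i)$ of growing conductor, tensored with $[d_i]$. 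The weighted count then unwinds into a sum over such $\psi$ of $\prod_{v\nmid\infty}\dim\big(\Pi_{\psi_v}^{U_v}\big)$ times bounded multiplicity and packet-size factors, and it remains to bound each local factor and to count the tuples $(\mu_i)$.

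The key input for the local factors would be the expected optimal $p$-adic estimate $\dim\big(\Pi_{\psi_v}^{U_v}\big)\ll \vol(U_v)^{-\beta(\psi_v)}$, with $\beta(\psi_v)$ read off from the Gelfand--Kirillov dimension (equivalently the wavefront set) of the non-tempered representation $\Pi_{\psi_v}$ — this is precisely the GK-dimension heuristic that motivates the formula. Tracking these exponents through the $\mu_i$-count, which contributes a Weyl-law factor of size roughly $\vol^{-T_i^2}$ per cuspidal datum, and through the change of measure relating the level on $G$ to the conductors of the $\mu_i$ under the tensor-and-diagonal embedding $\varphi:\Ld H_{\pi_0}\to\Ld G$, one should recover $R_0(\pi)$ term by term: the $\tfrac12(N^2-\sum_i T_i^2 d_i)$ from that change of measure, the $\sum_i T_i^2$ from counting the $\mu_i$, and the $\tfrac12 T_i(T_i-1)(d_i^2-1)$ — nonzero precisely for factors that are at once higher-rank ($T_i\ge2$) and non-tempered ($d_i\ge2$) — from the genuine polynomial growth of the non-tempered local packet dimensions.

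For the matching lower bound one would construct enough forms directly. Fix the place(s) carrying the principal congruence level; choose conjugate-self-dual cuspidal representations $\mu_i$ on $\GL_{T_i}/E$ of the correct parity (so the sign conditions of Lemma \ref{etacharoddGSK} are met) with conductors ranging over a suitable box; form the isobaric sum $\boxplus_i \mu_i\boxtimes[d_i]$ and descend it to a global $A$-parameter $\psi$ for $G$ via the available cases of the endoscopic classification of \cite{KMSW14}. One then checks, using Arthur's multiplicity formula together with the explicit archimedean packet description of \S\ref{upqparam}, that the resulting $\pi$ are automorphic, pairwise distinct (strong multiplicity one), and have $\pi_\infty=\pi_0$, and that they contribute with the large local fixed-vector dimensions computed above. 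Optimizing the allocation of conductor among the $\mu_i$ against those local dimensions should reproduce exactly the exponent $R_0(\pi)$.

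The main obstacle is that \emph{both} halves rest on local input presently out of reach. The upper bound needs the optimal GK-type local bounds — a genuine sharpening of \cite{MS19}, and the exclusion of $U(2,2)$-at-infinity factors elsewhere in this paper is a warning that the archimedean side in particular is delicate — while the lower bound needs enough cases of the relevant (often non-endoscopic, tensor-type) functoriality, or a trace-formula comparison realizing the descent, together with fine control of ramified local $A$-packets and their $U_v$-fixed vectors. The difficulty is cleanly isolated as: one must show the conjectural local exponent $\beta(\psi_v)$ is simultaneously an upper bound for every relevant $\Pi_{\psi_v}$ and is achieved at enough places at once; granting that, the global argument is essentially the one already in the paper.
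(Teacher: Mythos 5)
Your proposal reproduces, for the upper bound, exactly the heuristic the paper itself gives for this statement: \S\ref{sectionconjecturalbound} derives $R_0$ by substituting the GK-dimension/wavefront-set asymptotic \eqref{bestfixedvectorbound} for the Marshall--Shin inputs (Lemmas \ref{T1bound}--\ref{T3bound}) in the inductive argument of Theorem \ref{simpleshapesmain}, and you correctly isolate the local obstruction. The paper states that obstruction more sharply than you do: what is needed is a bound on the Harish-Chandra--Howe coefficients $c_O(\pi_v)$ that is \emph{uniform over tempered $\pi_v$} and covers \emph{non-maximal} orbits $O$ in the wavefront set, which is precisely where \cite{MW87} gives nothing. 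One bookkeeping caveat: your term-by-term accounting follows the introduction's formula with $\sum_i T_i^2 d_i$, whereas the body's Conjecture \ref{simpleshapesconj} has $\frac12(N^2-\sum_i T_i^2 d_i^2)$ for the unipotent-radical term (the coarser parabolic with Levi $\prod_i\GL_{T_id_i}$) and treats all of $T_i^2+\frac12T_i(T_i-1)(d_i^2-1)$ as the replacement for that diagonal block; the two displayed formulas are not equal, the introduction's appears to be a typo, and your "change of measure" assignment inherits it.

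The genuine gap is in the sharpness half. The GK heuristic can at best be exact for the unsigned sum $S^{|G|}_\Delta$; the actual spectral contribution $S^G_\Delta$ carries the signs $\eps_\psi(s_\psi)$ from the stable multiplicity formula, which vary over $\psi\in\Delta$ whenever the $d_i$ have mixed parities. The paper's Conjecture \ref{simpleshapesconjeps} predicts that this cancellation forces $S^G_\Delta=o(|\mf n|^{R_0(\Delta)}q_{S_1}^{A+B\kappa})$ for such shapes, so asserting that $R_0(\pi)$ is \emph{the} optimal exponent requires, beyond the local bounds, either exhibiting a maximizing shape whose $d_i$ all share a parity (so $\eps_\psi\equiv 1$ by Lemma \ref{lemma epsilon parity}) and whose contribution survives the multiplicity formula, or controlling the root-number cancellation directly. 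Your direct construction of forms meets the same wall through $m^\psi_\pi=\langle\eps_\psi,\eta^\psi_\pi\rangle$: satisfying the parity condition of Lemma \ref{etacharoddGSK} handles $\eta^\psi_\pi$ but not $\eps_\psi$ for mixed-parity shapes. Relatedly, "the most tempered decomposition dominates" should be handled with care: the paper's $\beta_+(\pi_0)=(2,2,2,2)$ example shows the maximizing shape need not be the naive one even for $R_0$, which is why $R_0(\pi)$ is defined as a maximum over all of $\Delta(\pi_0)$ rather than read off from a single canonical shape.
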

We compare our bound $R$, the optimal bound $R_0$, the Sarnak-Xue threshold, and the trivial bound $\dim G$ in many cases in Table \ref{grcomp}.

\subsubsection{Conditionality}
Our argument depends heavily on Mok's and Kaletha-Minguez-Shin-White's endoscopic classifications for unitary groups \cite{Mok15} and \cite{KMSW14}. Both of these depend on the unpublished weighted twisted fundamental lemma. The second in addition pushes many technical details to a specific reference ``KMSb'' that is not yet publicly available. 

We note that the dependence of \cite{Art13} on its unpublished references A25-27 and \cite{Mok15} on their unitary analogues has recently been resolved by \cite{AGIKMS24}.  

\subsection{Summary of Argument}
We prove the result using the Arthur-Selberg trace formula $I_\disc^G$ (see \cite{Art05} for a review). For the purposes of this project, this is the closest known approximation to an explicit ``geometric side'' formula for 
\[
\sum_{\pi \in \mc{AR}_\disc(G)} \tr_\pi f
\]
for a compactly supported, smooth test function $f$ on $G(\A)$. There are two main obstacles in applying it directly to our statistical problem:
\begin{itemize}
\item Given our chosen $\pi_0$, we would need to find a smooth compactly supported test function $f_\infty$ such that $\tr_\pi f_\infty = \1_{\pi_\infty = \pi_0}$ to pick out only automorphic representations with component $\pi_0$ at infinity. This would be the simplest way to understand
\[
m(\pi_0, f^\infty) := \sum_{\substack{\pi \in \mc{AR}_\disc(G) \\ \pi_\infty = \pi_0}} \tr_{\pi^\infty}(f^\infty) = \sum_{\pi \in \mc{AR}_\disc(G)} \tr_\pi(f_\infty f^\infty).
\]
\item The errors terms in the trace formula are quite inexplicit in general. 
\end{itemize}
Both these obstacles can be removed for the case of $\pi_0$ a discrete series. Here, $f_\infty$ can be chosen to be a pseudocoefficient by \cite{CD90}. The results of \cite{Art89} (with some addenda in \cite{Fer07}) then show that the geometric side of Arthur's invariant trace formula $I^G_\disc(f_\infty f^\infty)$ simplifies to something tractable with $f_\infty$ a pseudocoefficient. In \cite{ST16} (with some addenda in \cite{Dal22}), this formula was understood well enough for the purposes of computing asymptotics with error terms as we desire.

As soon as we try to generalize to all cohomological $\pi_0$, finding $f_\infty$ becomes a much larger issue---in fact, for non-tempered cohomological $\pi_0$, there is no such test function by the results of \cite{CD90}. The endoscopic classification of \cite{KMSW14} gives a way out: $\pi_0$ is contained the $A$-packet $\Pi_{\psi_\infty}$, a finite set of unitary irreducible representations attached to an $A$-parameter $\psi_\infty$. It turns out that we can find a pseudocoefficient 
which, while it doesn't isolate $\pi_0$ amongst all unitary irreducibles, does so amongst those with which it shares an $A$-packet (Lemma \ref{charps}). 

Next, the endoscopic classification also gives a decomposition
\[
I^G_\disc(f_\infty f^\infty) = \sum_{\psi \in \Psi_\el(G)} I_\psi^G(f_\infty f^\infty)
\]
into pieces corresponding to global $A$-parameters $\psi$. The $I^G_\psi$ term only involves traces against automorphic representations $\pi$ such that $\pi_\infty \in \Pi_{\psi_\infty}$ where $\psi_\infty$ is the associated local parameter at infinity. Understanding $m(\pi_0, f^\infty)$ is then reduced to finding an explicit, geometric expression for the part of the trace formula corresponding only to $A$-parameters $\psi$ with component $\psi_\infty$ at infinity such that $\pi_0 \in \Pi_{\psi_\infty}$. 

We do something slightly different, defining instead a global invariant $\Delta$ called the \emph{(refined) shape} of an $A$-parameter $\psi$. The $\Delta$'s have two key properties:
\begin{itemize}
\item $\Delta$ determines the local parameter $\psi_\infty$ at infinity if it is cohomological,
\item Following \cite{Tai17}, there is a an inductive method to write
\[
I^G_\Delta(f_\infty f^\infty) := \sum_{\psi \text{ with inv. } \Delta} I^G_\psi(f_\infty f^\infty)
\]
as a linear combination of terms of the form $I^H_\disc(f'_\infty (f')^\infty)$ with the $f'_\infty$ pseudocoefficients on smaller groups $H$---i.e. terms that are already understood explicitly by \cite{ST16} and \cite{Dal22}. 
\end{itemize}
Summing the inductive expressions for those $\Delta$ that correspond to $\psi_\infty$ with $\pi_0 \in \Pi_{\psi_\infty}$, we would therefore get an explicit geometric formula for 
$
m(\pi_0, f^\infty)$. 

For technical reasons, we instead work with the analogous summand $S_\Delta$ of Arthur's stable $S_\disc$ (see \cite[\S3.1-3.3]{Art13}). The two end up being more or less interchangeable for asymptotics by the ``hyperendoscopy'' techniques of \cite{Fer07} as used in \cite{Dal22}. 

\subsubsection{Outline of the Paper}

Sections \ref{sectionAC} and \ref{sectionAJ} give background material: \S\ref{sectionAC} is focused on the endoscopic classification and \S\ref{sectionAJ} on the real representation theory surrounding cohomological representations, their $A$-packets, and the pseudocoefficients. The definition of refined shape is discussed in \S\ref{sectionshapes} while the inductive procedure to understand $S_\Delta$ is explained in \S\ref{strategy}. Going from~$S_\Delta$ to~$m(\pi_0, f^\infty)$ is the work of \S\ref{sectionlimitmult}. 

The inductive procedure of~\S\ref{strategy} in general requires the construction and computation of certain transfers of~$f^\infty$: the conjectural stable and ``Speh'' transfers described in \S\ref{sectionconjecturaltransfer}. 
In \S\ref{sectionlocaltransfers}, rather than attempting to understand these in general, 
we compute them in various already-known cases while otherwise proving only inequalities. This is the main barrier towards producing exact asymptotics for general CM extension~$E/F$ and arbitrary cohomological $\pi_0$. 



Sections \ref{step3gen} and \ref{step3dom} make up the technical work of squeezing as much information about $S_\Delta$ as possible from this partial information
. The key results are Propositions~\ref{shapeineq} and \ref{goodshapefactor}, on general and odd GSK shapes respectively. 

A reformulation of the main trick of~\cite{Ger20} plays a critical role as Proposition~\ref{SMineq}. This partially resolves technical issues coming from the sign $\eps_\psi(s_\psi)$ in the stable multiplicity formula~\ref{stablemult}. It is also at this point that the payoff from the added complexity of Ta\"ibi's inductive expansion over a more straightforward approach happens: by allowing us to fully leverage the epsilon sign trick, we can bound terms coming from more many shapes than in previous works. 

Finally, Section \ref{sectionSdeltaasymptotics} completes the full inductive analysis of $S_\Delta$, and establishes the main technical results: Theorems \ref{simpleshapesmain} and \ref{goodshapebound}. The latter requires one final detail: a strengthening of \ref{simpleshapesmain} to Corollary \ref{simpleshapesstronger} using local bounds from~\cite{MS19}
. In~\S\ref{sectionconjecturalbound}, we highlight a heuristic for and possible strategy to prove conjectural optimal versions (Conjectures~\ref{simpleshapesconj} and \ref{simpleshapesconjeps}) of Corollary \ref{simpleshapesstronger}.

All that remains is the previously mentioned work in \S\ref{sectionlimitmult} of writing $m(\Delta, f^\infty)$ in terms of $S_\Delta$, and the computation of the numbers $R(\pi_0)$ and $M(\pi_0)$ in \S\ref{sectionunitary} via a parameterization of cohomological representations. This gives our main results: exact asymptotics in Theorem \ref{mainexact} and upper bounds in Theorem \ref{mainupper}. The final section~\ref{sectioncorollaries} computes explicit examples and applies the main theorems to growth of cohomology, Sato-Tate equidistribution in families, and Sarnak-Xue density.

\subsubsection{Possible extensions}
We highlight the technical obstacles that should be overcome to generalize our results further. The first is generalizing the computation of the stable transfer of indicators of congruence subgroups in Lemma \ref{splitconstant} to non-split places. This would extend Theorems \ref{mainexact} and \ref{mainupper} to $\mf n$ divisible by non-split primes. More importantly, it would allow for the extension of our techniques to quasisplit symplectic and orthogonal~$G$, where there are no ``split places'' $v$ such that $G_v \cong \GL_{n,v}$. 

Next, improving the bounds of \S\S\ref{sectionlocaltransfers} and \ref{sectionSdeltaasymptotics}
on Speh transfers of indicators of congruence subgroups would tighten the bound in Theorem \ref{mainupper}, possibly to the conjectural optimal value \ref{simpleshapesconj}. As explained in \S\ref{sectionconjecturalbound}, it would help tremendously to have a good 
understanding of the local character expansions of Speh representations through the rich interplay of ideas involving generalized Whittaker models and $A$-parameters as studied in \cite{MW87} and \cite{JLZ22}. Beyond this, 
the exact computation of these transfers would allow for the extension of the precise aysmptotics of Theorem~\ref{mainexact} beyond GSK-maxed $\pi_0$ to general ones.

Relatedly, the sign $\eps_\psi(s_\psi)$ in the stable multiplicity formula \ref{stablemult} complicates the inductive expansion of $S_\Delta$. As such, we only compute exact asymptotics for~$\Delta$ such that this sign is always positive. 
 This is the main barrier towards Theorem \ref{mainexact} applying to all GSK-maxed $\pi_0$ instead of just the odd GSK-maxed. 
Proving that when they are not identically 1, these signs cause cancellation in $S_\Delta$ would enable us to remove the ``odd'' restriction. See the discussion around Conjecture \ref{simpleshapesconjeps}.

Finally, proving the existence of Stable and Speh transfers at ramified places as in \S\ref{sectionconjecturaltransfer} would extend Theorem \ref{mainexact} to unitary groups for arbitrary CM extensions.

\subsection{How to Read}
Sections \ref{sectionAC} and \ref{sectionAJ} are background material that can mostly be skipped by experts. Sections \ref{sectionshapes} and \ref{strategy} are the conceptual heart of the paper and should be understood 
in full detail before moving on. Sections \ref{sectionlocaltransfers}, \ref{step3gen}, and \ref{step3dom} are technical details involved in implementing the strategy of \S\ref{strategy}. We recommend skipping them on a first read through and referring back depending on need/interest while reading \S\ref{sectionSdeltaasymptotics}, \ref{sectionlimitmult}. The most important results from the technical sections are Propositions \ref{shapeineq}, \ref{goodshapefactor}, and Corollary \ref{charshapeformula}. 

Finally, Section \ref{sectionunitaryparamcombo} contains many pages of 
involved but 
elementary combinatorial arguments with the parameterization of cohomological representations on unitary groups. 
We recommend reading enough to understand the definitions and statements while ignoring proofs. Some parts of Subsection \ref{sectionSX} on Sarnak-Xue density should be treated similarly. 

Due to the length of the write-up and density of cross-references in later sections, we highly recommend reading this work electronically on a PDF reader that can handle intra-document hyperlinks and that has a back button.

\subsection{Acknowledgements}
The idea for this project started in two places: when the first author was taught about \cite{Tai17} by Olivier Ta\"ibi  at the CIRM workshop ``Periods, functoriality and L-functions'' and in conversations between the two authors at the 2022 Arizona Winter School. Many helpful conversations also happened at the 2022 Midwest Representation Theory Conference, the ESI workshop ``Minimal Representations and Theta Correspondence'', the IHES summer school on the Langlands Program, and the ``Community Building in the Langlands Program'' conference in Bonn.  

Masao Oi provided us the full argument of Lemma \ref{finiteBcomponent} and Jeffrey Adams explained to us the argument of \S\ref{sectionAJcompatible}. In addition to many useful exchanges, Simon Marshall suggested the strategy to compute the Sarnak-Xue invariants in \S\ref{sectionSX}. We would also like to thank Patrick Allen, Alexander Bertoloni-Meli, Antonio Cauchi, Ga\"etan Chenevier, Andrea Dotto, Peter Dillery, Matt Emerton, Melissa Emory, Shai Evra, Jessica Fintzen, Solomon Friedberg, Wee Teck Gan, Radhika Ganapathy, Henrik Gustafsson, Alexander Hazeltine, Pol van Hoften, Ashwin Iyengar, Tasho Kaletha, Gil Moss, Samuel Mundy, Alberto Minguez, Yiannis Sakellaridis, Peter Sarnak, Gordan Savin, David Schwein, Sug Woo Shin, Joel Specter, Loren Spice, and Tian An Wong 
for pointing out many useful arguments and also for steering us away from previous proof strategies that might not have been the most feasible. 

The first author was supported by NSF postdoctoral grant 2103149 while working on this project.

\subsection{Notation} 
\subsubsection{Global variables:} As some notation used throughout:

\begin{itemize}
     \item $E/F$ a CM extension of number fields with rings of integers $\mc O_F$, $\mc O_E$
    \item $\infty$ the set of infinite places of $F$
    \item places of $F$ will be denoted by $v$, with completion $F_v$
    \item $q_S$ is the product of residue field degrees over a finite set of finite places $v$ 
    \item $\Gamma_F$ the absolute Galois group of $F$
    \item $\Gamma(E/F) = \langle \sigma \rangle$, the Galois group of $E$ over $F$
  \item $\om_{E/F}$ is the order-$2$ character associated to the quadratic extension $E/F$
  \item $\star_v$ is the local component at $v$ of the structure $\star$.
  \item $\star^S$ and $\star_S$ are components of $\star$ at/away from a set of places $S$ 
  \item ``Irreps'' are irreducible representations
  \item ``Unirreps'' are unitary irreducible representations 
  \item $v_1 \boxtimes v_2$ is the 
  representation of $H_1 \times H_2$ corresponding to reps $v_i$ 
  of $H_i$ 
  \item $[d]$ is the $d$-dimesional irrep of $\SL_2$
  \item $\1_X$ is the indicator function of the set $X$
  \item $\bar \1_X := \vol (X)^{-1}1_X$ is the indicator distribution of the set $X$ 
  \item $\1_{x=y}$ is an indicator function if $x=y$ 
  \item $\widecheck G$ is the unitary dual of abstract group $G$. 
\end{itemize}
\noindent Adelic Groups:
\begin{itemize}
    \item $G_v = G(F_v)$ for $v$ a place of $F$ and $G/F$ reductive
    \item $G_S, G^S = G(\A_S), G(\A^S)$ respectively for $S$ a set of places of $F$
    \item $\Om_G, \Om_{G,F}$ is the (geometric, $F$-rational) Weyl group of $G$
    \item $\rho_G$ is the half-sum of positive roots of $G$
    \item $K^G_v$ is a chosen hyperspecial of $G/F$ reductive at unramified place $v$. 
    \item $\ms H^\ur(G_v)$ is the unramfied Hecke algebra for $G/F$ with respect to $K^G_v$ 
    \item $K^G_v(q_v^k)$ is the $k$th Moy-Prasad filtration group of $K^G_v$ for $G/F$ reductive and unramified at place $v$.
    \item $K^G(\mf n)$ for $\mf n$ an ideal of $\mc O_F$ at which reductive $G/F$ is unramified is the product of $K^G_v(q_v^k)$ that is the congruence subgroup corresponding to $\mf n$
    \item $\Pi^G_\disc(\lb)$ is the discrete series $L$-packet with infinitesimal character $\lb$ for the real group $G$
    \item $\varphi_{\pi_d}$ is the pseudocoefficient of the discrete series representation $\pi_d$
    \item $\EP_\lb$ is the (endoscopically normalized) Euler-Poincar\'e function for infinitesimal character $\lb$
    \item $\lb[d]$ is an infinitesimal character built from $\lb$ as in formula \ref{infcharform}. 
\end{itemize}
\noindent The Endoscopic Classification:
\begin{itemize}
    \item $G(N)$ is the $\GL_N$-like group defined in \S\ref{ACgroups}
    \item $\wtd G(N)$ is the $\GL_N$-like twisted group defined in \S\ref{ACgroups}
    \item $U(N) := U_{E/F}(N)$ is the quasisplit unitary group as in \S\ref{ACgroups}
    \item $U(p,q)$ is the indefinite unitary group with signature $(p,q)$ over $\R$
    \item $\wtd {\mc E}_\el(N), \mc E_\el(G)$ are the set of elliptic endoscopic groups of $\wtd G(N), G$ respectively as in \S\ref{ACendo}
    \item $\wtd {\mc E}_\sm(N)$ are the simple endoscopic groups of $\wtd G(N)$ as in \S\ref{ACendo}
    \item $U_+(N), U_-(N)$ are the two non-equivalent realizations of $U(N)$ as an element of $\wtd {\mc E}_\sm(N)$ (see \S\ref{ACendo}). 
    \item $f^H$ for $f$ a test function on reductive $G/F$ is a choice of endoscopic transfer to some $H \in \mc E_\el(G)$
    \item $f^N$ for $f$ a test function on $G \in \wtd {\mc E}_\el(N)$ is a test function on $\wtd G(N)$ that transfers to $f$
    \item $\psi = \oplus \tau_i[d_i]$ is an Arthur parameter with cuspidal components $\tau_i$ as in \S\ref{ACdefparam}
    \item $\wtd \Psi_\el(N), \Psi_\el(G)$ are the sets of elliptic parameters associated to $\wtd G(N)$ and $G$ respectively as in \S\ref{ACdefparam} and \S\ref{ACassigntogroup}
    \item The ``Arthur-$\SL_2$'' of a parameter is an unordered partition $Q$ representing its restriction to the Arthur-$\SL_2$
    \item $\pi_\psi$ is the automorphic representation of $G(N)$ corresponding to $\psi$ in \S\ref{ACparamtorep}
    \item $\td \pi_\psi$ is the extension of $\pi_\psi$ to $\wtd G(N)$ as in \S\ref{ACrepexten}
    \item $\varphi_\psi$ is the $L$-parameter associated to $A$-parameter $\psi$ as in  \eqref{AtoLparam}
    \item $\mc S_\psi, \mc S_{\psi_v}$ are Arthur's component groups as in \S\ref{ACglobalSpsi}, \ref{AClocalSpsi}
    \item $S^\natural_\psi, S^\natural_{\psi_v}$ are Kaletha's larger component groups as in \S\ref{ACglobalSpsi}, \ref{AClocalSpsi}
    \item $s_\psi, s_{\psi_v}$ are  special elements in these component groups as in \S\ref{ACglobalSpsi}, \ref{AClocalSpsi} 
    \item $\eps_\psi$ is the identified character on global $\mc S_\psi$ as in \S\ref{ACeps}
    \item $\Pi_\psi(G), \Pi_{\psi_v}(G_v)$ are the $A$-packets associated to global or local $A$-parameters
    \item $\eta^{\psi_v}_{\pi_v}$ is the local character of $S^\natural_{\psi_v}$ associated to $\pi_v \in \Pi_{\psi_v}$
    \item $\eta^\psi_\pi$ is the character of $\mc S_\psi$ associated to $\pi \in \Pi_\psi$
    \item $\tr_{\psi_v} := \tr^{G_v}_{\psi_v}$ is the stable packet trace for the parameter $\psi_v$ on $G_v$
\end{itemize}
\noindent Shapes:
\begin{itemize}
    \item $\Delta = (T_i, d_i, \lb_i, \eta_i)$ is a (refined) shape as in \S\ref{Shapesdef}
    \item $\psi \in \Delta$ means that the $A$-parameter $\psi$ has shape $\Delta$
    \item $\Sigma_{\lb, \eta}$ is the simple shape as in \S\ref{Shapesdef}
    \item $\mc S_\Delta, s_\Delta$ are the component groups and special elements associated to $\Delta$
    \item $\psi^\Delta_\infty$ is the local component at infinity associated to $\Delta$ 
    \item $H(\Delta)$ is the $H \in \wtd {\mc E}_\el(N)$ such that $\psi \in \Delta$ implies that $\psi \in \Psi_\el(H)$ 
    \item $\Delta(\pi_0)$ is the set of shapes $\Delta$ such that $\pi_0 \in \Pi_{\psi_\infty^\Delta}$  
    \item ``GSK'' and ``odd GSK'' are conditions on shapes defined in \ref{GSK}
\end{itemize}
\noindent Trace Formulas:
\begin{itemize}
    \item $\mc{AR}_\disc(G)$ is the set of discrete automorphic representations of $G/F$
    \item
    $I^G, S^G$ are Arthur's invariant and stable trace formulas for $G$
    \item $I_\disc^G, S_\disc^G$ are their discrete parts
    \item $R^G_\disc$ is the trace against $\mc{AR}_\disc(G)$
    \item $I^G_\psi, S^G_\psi$ are the summands of $I^G_\disc, S^G_\disc$ associated to parameter $\psi$
    \item $I^G_\Delta, S^G_\Delta$ are the summands of $I^G_\disc, S^G_\disc$ associated to the shape $\Delta$ 
    \item $\star^N$ is the version of any of the variants above associated to $\wtd G(N)$ 
\end{itemize}
\noindent Asymptotics:
\begin{itemize}
    \item $|\mf n|$ is the norm of the ideal $\mf n$ of $\mc O_F$. 
    \item $\Gamma_{n_1, \dotsc, n_k}(\mf n_i)$ is an Euler factor associated to $\mf n$ and the list $n_1, \dotsc, n_k$ in \S\ref{asymptoticsetup}
    \item $\bar R(\Delta)$ is an upper bound on the growth rate 
    in Theorem \ref{simpleshapesmain}
    \item $R(\Delta)$ is a tighter upper bound on growth rate 
    in Corollary \ref{simpleshapesstronger}
    \item $R_0(\Delta)$ is the conjectural optimal growth rate 
    in Conjecture \ref{simpleshapesconj}
    \item $L_\Delta(\mf n)$ is an Euler factor 
    appearing in Theorem \S\ref{goodshapebound}. 
    \item $\tau'(G)$ is a modified Tamagawa number of reductive $G/F$ as in \cite[(9.5)]{ST16}.
\end{itemize}
\noindent Cohomological irreps of unitary groups
\begin{itemize}
    \item $\mc P(N), \mc P(p,q), \mc P_1(p,q)$ are combinatorial parameter sets defined in \ref{orderedpartitions}
    \item $\beta, \delta$ are reduction maps between these parameterizing sets defined in \eqref{eq partition maps}
    \item $\Delta^{\max}(\pi_0)$ is a subset of $\Delta(\pi_0)$ determined by \ref{Deltamax}
    \item $R(\pi_0)$ is the common value of $R(\Delta)$ for $\Delta \in \Delta^{\max}(\pi_0)$
     \item $Q^{\max}(\pi_0)$ is the set of Arthur-$\SL_2$'s of elements of $\Delta^{\max}(\pi_0)$ as in \ref{Qmax}
     \item $Q_\can(\pi_0)$ is an 
     unordered partition assigned to 
     $\pi_0$ in \ref{Qcan}
    \item ``GSK-maxed'', ``odd GSK-maxed'' are conditions on cohomological representations $\pi_0$ of unitary groups defined in \ref{GSKm}, \ref{GSKmE}
\end{itemize}


\subsubsection{Shorthand for non-factorizable functions}\label{nonfactorizable}
Certain transfer maps from factorizable functions on a group $G$ to functions on a product of groups $H_1 \times H_2$ may not always have image in factorizable functions. 

However, they will always land in linear combinations of factorizable functions. Therefore, we will use the ``mystical gate" notation
\[
\prodf_i f_i.
\]
to represent the sum of the factored term over this linear combination.

At some points, 
to elide the fact that a transfer to a group like $H^d$ may not be the same on each $H$-factor, 
we will use the even more abusive notation
\[
(f_1)^{d \oplus}
\]
to represent a sum over the factorizable pieces of the product over factors 
in the $H^d$. 

\subsubsection{Sequences}\label{notationsequences}
Given a finite sequence $n_1,\dotsc,n_k$, we define the sequence
\[
n_1^{(r_1)}, \dotsc, n_k^{(r_k)} := \overbrace{n_1, \dotsc, n_1}^{r_1 \text{ copies}}, \dotsc, \overbrace{n_k, \dotsc, n_k}^{r_k \text{ copies}}.
\]
Also, if $L_1$ and $L_2$ are sequences, ``$L_1, L_2$'' will represent their concatenation. 

Finally, if $P = (p_1, \dotsc, p_k)$ is an ordered partition of $N$ and $a = (a_i)_i$ is a list of length $N$, the $P$-parts of $a$ are defined by partitioning $a$ in order according to $P$:
\[
\overbrace{a_1, \dotsc, a_{n_1}}^{a^P_1}, \overbrace{a_{n_1 +1}, \dotsc, a_{n_1 + n_2}}^{a^P_2}, \cdots, \overbrace{a_{N - n_k  +1}, \dotsc, a_{N}}^{a^P_k}.
\]

\section{A-parameters and the Classification}\label{sectionAC}
We attempt to concisely summarize the parts of endoscopic classification that are relevant to this project.

An Arthur/endoscopic classification for a group $G$ is conceptually a ``transfer'' of two known facts about automorphic representations on $\GL_n$---
\begin{itemize}
    \item The classification of the discrete spectrum in \cite{MW89},
    \item The local Langlands correspondence for their local components
\end{itemize}
---to a parameterization of automorphic representations of $G$ and their local components. 

We will focus on the versions by Mok and Kaletha-Minguez-Shin-White from \cite{Mok15} and \cite{KMSW14} for quasisplit and general unitary groups, respectively. Our summary will be in two pieces:
\begin{itemize}
    \item A formalism of local and global parameters which encapsulates the known information on the $\GL_n$ side.
    \item A description of how the automorphic spectrum on $G$ decomposes into pieces that correspond to each parameter together with a description of the structure of each of these pieces.
\end{itemize}
We will not go over background for endoscopy or the stable trace formula since sections 2.1 and 3.1-3 of \cite{Art13} already give a good, relatively concise introduction with an eye towards the endoscopic classification.

\subsection{Groups Considered}\label{ACgroups}
We begin by defining certain groups and $L$-embeddings.

Fix a totally real number field $F$ and totally complex quadratic extension $E/F$. For each $N > 0$, consider the group 
\[
G(N) = \Res^E_F \GL_{N, E}.
\]
Let $\theta_N$ be the automorphism of $G(N)$ in the outer class of conjugate inverse transpose that fixes the standard pinning. In particular, $\theta_N$ is an involution. It can be written as $\theta_N(g) = \Ad(J_N)(\bar g^{-t})$ for a choice of $J_N$. 
\subsubsection{Unitary groups}

 Let $U_{E/F}(N)/F$ be the reductive group 
 \begin{equation} \label{definition of the unitary group in coordinates} 
 U_{E/F}(N,F) = \{ g \in GL_N(E) : \theta_N(g) = g \} 
 \end{equation}

 It is a quasisplit unitary group and therefore a form of $\GL_N/F$. We can choose a Borel and maximal torus $(B,T)$ to be the upper triangular and diagonal $\theta$-fixed matrices respectively.

 For any place $v$ of $F$, we consider $U_{E/F}(N,F_v)$. When $v$ is split in $E$, we have $U_{E/F}(N,F_v) \simeq GL_N(F_v)$. Otherwise, $U_{E/F}(N,F_v)$ is the unique quasisplit unitary group of rank $N$ over $F_v$. 
 
 Finally, because $E/F$ is a CM field, all inner forms of $U_{E/F}(N, F_\infty)$ have discrete series.


\subsubsection{L-groups and embeddings}

All our groups split over $E$ so in all our $L$-groups, the action of $\Gamma_F$ factors through $\Gamma(E/F)$. We have 
\[{^L}G(N) = (GL_N(\BC) \times GL_N(\BC)) \rtimes \Gamma_F\] with $\sigma$ swapping the two copies of $GL_N(\BC)$. We also have
\[{^L}U_{E/F}(N) = GL_N(\BC) \rtimes \Gamma_F,\] with $\sigma(g) = \Ad(J_N)(g^{-t})$. 

For $\kappa = \pm 1$, we have $L$-embeddings \begin{equation} \label{eq base change embeddings}
    \xi_\kappa: {^L}U_{E/F}(N) \to {^L}G(N).
\end{equation} The explicit coordinates are not important to us and can be found in \S2.1 of \cite{Mok15}. 

\subsubsection{Endoscopic data}\label{ACendo}
We are interested in the twisted endoscopic groups of $\widetilde G(N) = G(N) \rtimes \theta_N$. As in  \S2.4.1 in \cite{Mok15}, these are parameterized as
\[
\wtd {\mc E}_\el(N) = \{U_{\kappa_1}(N_1) \times U_{\kappa_2}(N_2) : \kappa_i = \pm 1, N = N_1 + N_2, \kappa_1 \kappa_2 = (-1)^{N - 1}\}
\]
with each $U_\pm(N_i)$ isomorphic to the quasisplit unitary group $U_{E/F}(N_i)$  (and thus to each other). The sign $\kappa_i$ determines the specific $L$-embedding $\xi_\kappa$. Among these we highlight the simple endoscopic groups:
\[
\wtd {\mc E}_\sm(N) := \{U_+(N), U_-(N)\}.
\]

We are also interested in the endoscopic groups of $G = U_{E/F}(N)$. As enumerated in \cite[\S1.1.1]{KMSW14}, these are parameterized as
\[
\mc E_\el(G) = \{U_{E/F}(N_1) \times U_{E/F}(N_2) : N = N_1 + N_2, N_1 \geq N_2\}.
\]
We do not need the full information of the endoscopic triples involved. Beware that our $\mc E_\el(G)$ is the $\overline{\mc E}_\el(G)$ of \cite{KMSW14}. 

\subsubsection{Inner forms}
We will also consider extended pure inner forms of $U_{E/F}(N)$ as in \cite[\S0.3.3]{KMSW14}. Since the general definition is not relevant to our computation, we simply recall the enumeration of possibilities for unitary groups. 

Let $G \in \wtd {\mc E}_\sm(N)$. Then in the local case:
\begin{itemize}
    \item If $v$ is non-Archimedean and split in $E$, the extended pure inner forms of $G_v \cong \GL_{N,v}$ are of the form $\Res^{D_v}_{F_v} \GL_m$ for $D_v$ a division algebra over $F$. They are associated to the invariant $a_v = N \cdot \inv(D_v)$. 
    \item If $v$ is non-Archimedean non-split\footnote{While the current as-of-this-comment draft of \cite{KMSW14} only says inert, this seems to be a typo since the arguments they give work for ramified places as well.} in $E$, the extended pure inner forms of $G_v \cong U_{E/F}(N)_v$ are:
    \begin{itemize}
        \item $U_{E/F}(N)_v$ itself, with associated invariant $a_v = 0$,
        \item another form associated to $a_v = 1$. If $N$ is odd, this form is isomorphic as a group to $U_{E/F}(N)_v$. If $N$ is even, it is the unique non-quasisplit inner form of $U_{E/F}(N)_v$. 
    \end{itemize}
    \item If $v$ is Archimedean real in $F$, then the extended pure inner forms of $G_v \cong U_{\C/\R}(N)$ are the $U(p,q)$ for $p+q = N$ and associated invariant $a_v = N(N-1)/2 + q$. Note that $U(p,q) \neq U(q,p)$ as extended pure inner forms even though they are isomorphic as groups.
\end{itemize}
A choice of local extended pure inner form at each $v$ comes from a global extended pure inner form if and only if $a_v = 1$ for almost all $v $ and  $\sum_v a_v$ is even.
Note that if we only care about inner forms as groups, the second condition is irrelevant for~$N$ odd: we can always switch an infinite-place $G_v = U(p,q)$ to $U(q,p)$, which is isomorphic as a group but has opposite $a_v$. 

We are particularly interested in the isomorphism-as-groups classes of extended pure inner forms that are unramified at all finite places. Such forms only exist when $E/F$ is unramified at all finite places (e.g. $\Q[\sqrt 3, i]/\Q[\sqrt 3]$). 
In that situation, casework with respect to the parity of $N$ gives:
\begin{lem}\label{unraminnerforms}
Assume $E/F$ is unramified at all finite places. Then every isomorphism-as-groups class of extended pure inner forms $G$ of $G^* \in \wtd {\mc E}_\sm(N)$ that is unramified at all finite places has a representative satisfying $G^\infty = (G^*)^\infty$ and is therefore determined by
\[
G_\infty = \prod_{v \in \infty} U(p_v, q_v).
\]
The possible such choices of $G_\infty$ are exactly the ones where the following is even:
\[
\f{N(N-1)}2 |\infty| + \sum_{v \in \infty} q_v.
\]
\end{lem}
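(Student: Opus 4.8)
The plan is to unwind the enumeration of local extended pure inner forms recalled just above the lemma and impose the unramifiedness condition place by place, then read off the single global coherence condition (even sum of local invariants) in terms of the data that survives. First I would observe that unramifiedness at a finite place $v$ forces a specific choice of local inner form. At split $v$, unramified means $\Res^{D_v}_{F_v}\GL_m$ with $D_v$ split, i.e.\ $G_v \cong \GL_{N,v}$ with $a_v = 0$. At a non-split finite $v$, since $E/F$ is assumed unramified everywhere, $v$ is inert, and $G_v$ is a quasisplit unramified unitary group; the form with $a_v = 1$ for $N$ even is the non-quasisplit one, which is ramified, so unramifiedness forces $a_v = 0$ there too (when $N$ is odd the two forms are isomorphic as groups and we may and do take the representative with $a_v = 0$). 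Hence at every finite place we can choose the representative with $G_v = G^*_v$, giving $G^\infty = (G^*)^\infty$ and $a_v = 0$ for all finite $v$; this is the first displayed assertion.

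Next I would handle the infinite places. Each real place $v$ of $F$ (recall $F$ is totally real, so $\infty$ consists of real places) has $G^*_v \cong U_{\C/\R}(N)$, whose extended pure inner forms are the $U(p_v,q_v)$ with $p_v + q_v = N$ and invariant $a_v = N(N-1)/2 + q_v$. So a global isomorphism-as-groups class unramified at all finite places is exactly the datum $G_\infty = \prod_{v\in\infty} U(p_v,q_v)$, together with the requirement (from the excerpt's global coherence criterion) that almost all $a_v$ vanish — automatic here since all finite $a_v$ are $0$ and $\infty$ is finite — and that $\sum_v a_v$ be even. Plugging in,
\[
\sum_{v} a_v = \sum_{v\in\infty}\lf(\f{N(N-1)}2 + q_v\ri) = \f{N(N-1)}2\,|\infty| + \sum_{v\in\infty} q_v,
\]
so the validity condition is precisely that this quantity is even, which is the second assertion.

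The only mild subtlety — and the step I expect to need the most care — is the $N$ odd case at non-split finite places, where the $a_v = 0$ and $a_v = 1$ forms are abstractly isomorphic, so a priori a class unramified as groups might be presented with some $a_v = 1$; I would note that replacing such a local form by its $a_v = 0$ twin changes the group nowhere but flips the parity of that local invariant, and simultaneously (if needed, for $N$ odd) one may flip a $U(p_v,q_v)$ to $U(q_v,p_v)$ at an infinite place to restore global coherence, exactly as remarked in the ``Inner Forms'' discussion; for $N$ even no such freedom exists and the parity condition is a genuine constraint. After this bookkeeping, both claims of the lemma fall out directly from the enumeration already quoted, so beyond the casework there is essentially nothing to prove. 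I would also remark in passing that such a pair $(E/F$ unramified everywhere$)$ exists, pointing to the example $E/F = \Q[\sqrt3, i]/\Q[\sqrt3]$ already given, so the hypothesis is non-vacuous.
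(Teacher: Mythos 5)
Your proposal is correct and is exactly the casework the paper has in mind — the paper offers no written proof beyond the phrase ``Casework with respect to the parity of $N$ then gives,'' and your argument (unramifiedness forcing $a_v=0$ at finite places, reading off the archimedean invariants $a_v = N(N-1)/2 + q_v$, and handling the $N$ odd bookkeeping via the $U(p,q)\leftrightarrow U(q,p)$ flip) fills in precisely that casework. Nothing further is needed.
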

Later on, when normalizing local transfer factors, we only consider these representatives to ensure consistency with the fundamental lemma at all finite places.


\subsection{Global Parameters} 
\label{ACdefparam}
\begin{dfn}
A global $A$-parameter of rank $N$ is a conjugate self-dual (through~$\theta_N$) formal expression
\[
\psi = \tau_1[d_1] \oplus \cdots \oplus \tau_k[d_k]
\]
up to reordering the summands and where each $\tau_i$ is a cuspidal automorphic representation of $G(T_i)$ (equivalently, one of $\GL_{T_i}/E$), $d_i \in \Z^+$, and $\sum_i T_i d_i = N$. 
\end{dfn}

\begin{dfn}\label{parameterwords}
We say $\psi = \tau_1[d_1] \oplus \cdots \oplus \tau_k[d_k]$ is
\begin{itemize}
\item
\emph{cuspidal} if $k=1$ and $d_1 = 1$,
\item
\emph{simple} or \emph{stable} if $k=1$,
\item
\emph{generic} if each $d_i = 1$,
\item
\emph{elliptic} if each $\tau_i$ is itself conjugate self-dual and the $\tau_i[d_i]$ are distinct.
\end{itemize}
\end{dfn}

\begin{dfn}
Let $\wtd \Psi_\el(N)$ be the set of elliptic global parameters of $G(N)$.
\end{dfn} 
Henceforth, all global parameters considered will be elliptic.

\subsubsection{Representations}\label{ACparamtorep}
As explained in \cite[\S1.3]{Art13}, the main result of \cite{MW89} associates a unique automorphic representation $ \pi_\psi$ of $\GL_n/E$ to each Arthur parameter $\psi$. First, for simple $\tau[d]$, consider the parabolic induction
\[
\Ind_{P(\A)}^{\GL_N(\A)} (\tau|\det|^{(d-1)/2} \boxtimes \tau|\det|^{(d-3)/2} \boxtimes \cdots \boxtimes \tau|\det|^{-(d-1)/2}) 
\]
where $P$ is the parabolic associated to the ordered partition $(\dim \tau, \dotsc, \dim \tau)$. We define~$\pi_{\tau[d]}$ as the unique Langlands quotient of this induction: it exists and is unitary. 

For general Arthur parameter $\psi = \bigoplus_i \tau_i[d_i]$, we let
\[
\pi_\psi := \Ind_{P(\A)}^{\GL_N(\A)} (\boxtimes_i \pi_{\tau_i[d_i]})
\]
where $P$ is the appropriate parabolic. This is always unitary and irreducible. 

\subsubsection{Canonical extensions to $\wtd G(N)$}\label{ACrepexten}
Fix a Whittaker datum $\om$ for $G(N)$ inducing local Whittaker data $\om_v$ on each $G_v(N)$. Then each $\pi_\psi$ for $\psi \in \wtd \Psi(N)$ has a canonical extension $\td \pi_\psi := \td \pi_{\psi, \om}$ to $\wtd G(N)$ as explained in \cite[\S 2.2]{Art13} or \cite[\S 3.2]{Mok15}. We warn that this ``choice of sign'' is not just a technicality to be ignored\footnote{As the authors learned at their own expense.}---it enters crucially into the computation of various sign characters in the works of Arthur and Mok and is the main difficulty in understanding Conjecture \ref{stabletransferconj1} on stable transfer.

This extension $\td \pi_\psi$ is a product of extensions $\td \pi_{\psi,v}$ of each $\pi_{\psi,v}$. By the Langlands classification, each $\pi_{\psi,v}$ is the Langlands quotient of a parabolic induction:
\[
\Ind_{P_v}^{G(N)_v} (\sigma_1|\det|^{r_i} \boxtimes \cdots \boxtimes \sigma_k |\det|^{r_k}),
\]
where each $\sigma_i$ is tempered and therefore generic. We choose the $\theta$-action on $\sigma_i$ to be the one that acts as $+1$ instead of $-1$ on its one-dimensional space of Whittaker functionals with respect to $\om_v$. 

Finally, since $\psi$ is conjugate self-dual, we necessarily have $r_j = -r_{k-j}$ and that~$\sigma_j$ and $\sigma_{r-j}$ are conjugate-duals of each other. Therefore we can choose $P$ to be fixed by $\theta$ and can define the action of $\theta$ on $\pi_{\psi,v}$ as coming from the induction of the actions on each $\sigma_i$.

\subsubsection{Assignment to groups in $\wtd {\mc E}_\el(N)$}\label{ACassigntogroup}
As in \cite[Rmk 2.4.6]{Mok15}, every $\psi \in \wtd \Psi_\el(N)$ can be assigned a unique element of $\wtd {\mc E}_\el(N)$ through which it should be thought of as ``factoring'': If $\tau \in \wtd \Psi_\el(T)$ is cuspidal, the sign $\delta$ such that $\tau$ factors through $U_{\delta}(T)$ is determined in \cite[Thm 2.5.4]{Mok15}. More generally, $\tau[d]$ is assigned to $U_{\kappa}(Td)$ where 
\[
\kappa = \delta (-1)^{(T-1)(d-1)}.
\]

Finally, if $\psi = \bigoplus_i \tau_i[d_i]$ with each $\tau_i \in \wtd \Psi_\el(T_i)$, let $N_O$ be the sum of $T_i d_i$ such that $\tau_i[d_i]$ are \emph{orthogonal}: i.e. $\delta_i(-1)^{T_i +d_i} = 1$. Similarly, let $N_S$ be defined similarly for the $\tau_i[d_i]$ that are the opposite: \emph{symplectic}. The discussion after 2.4.6 in \cite{Mok15} assigns to $\psi$ the group
\[
U_{(-1)^{N_O - 1}}(N_O) \times U_{(-1)^{N_S}}(N_S) \in \mc E_\el(N_O + N_S). 
\]

\begin{dfn}
For $G^* \in \wtd {\mc E}_\el(N)$, let $\Psi_\el(G^*)$ be the subset of $\wtd \Psi_\el(N)$ assigned to $G^*$. 
\end{dfn}

\subsubsection{As morphisms} \label{ss as morphisms}
We can interpret global parameters as morphisms into $\Ld{G(N)}$. This is a technical workaround for the absence of the conjectural global Langlands group and will be useful for discussing component groups later. 

Given a parameter
\[
\psi = \bigoplus_i \tau_i[d_i] \in \wtd \Psi_\el(N),
\]
let $\tau_i \in \Psi_\el(H_i) \subseteq \Psi_\el(T_i)$ for $H_i$ a group and $T_i$ a number. Let $\mu_i$ be the embedding
\[
\mu_i : \Ld H_i \into \Ld G(T_i)
\]
from \eqref{eq base change embeddings}. Define the fiber product
\[
\mc L_\psi := \prod_i (\Ld H_i \to W_F).
\]
Then we define the map
\[
\psi' : \mc L_\psi \times \SL_2 \into \Ld G(N) : \psi' = \bigoplus_i \mu_i \boxtimes [d_i]
\]
where $[d]$ is the $d$-dimensional irreducible representation of $\SL_2$. 

By construction, for the $G \in \wtd {\mc E}_\el(N)$ such that $\psi \in \Psi_\el(G)$, $\psi'$ factors through~$\Ld G$. Finally we refer to the restriction of $\psi'$ to $\SL_2$ as the Arthur-$\SL_2$ of $\psi$. 

\subsection{Local Parameters}\label{AClocalcomp}
\begin{dfn}
Let $G/F$ be a reductive group. A local $A$-parameter for $G$ at~$v$, denoted $\psi_v \in \Psi_v(G)$, is a $\wh{G}$-conjugacy classes of $L$-morphisms 
\[
\psi_v : L_{F_v} \times \SL_2 \to \Ld G
\]
where
\begin{itemize}
    \item the local Langlands group $L_{F_v}$ is the Weil group $W_{F_v}$ if $v$ is Archimedean and the Weil-Deligne group $WD_{F_v}$ if $v$ is non-Archimedean,
    \item $\psi_v |_{L_v}$ is a bounded $L$-parameter.
\end{itemize}

A generalized local $A$-parameter, $\psi_v \in \Psi_v^+(G)$, is the same object without the boundedness condition. As shorthand, we write $\wtd \Psi_v^\star(N) := \Psi_v^\star(\wtd G(N))$.
\end{dfn}

We may write $\psi_v \in \wtd \Psi_v(N)$ as:
\[
\psi_v = \bigoplus_i \tau_i[d_i] := \bigoplus_i \tau_i \boxtimes [d_i],
\]
with $[d]$ the $d$-dimensional irrep of $\SL_2$ and each $\tau_i$ a representation of $L_{F_v}$. We call 
the restriction of $\psi_v$ to the $\SL_2$ factor the Arthur-$\SL_2$ of $\psi_v$; as in the global case, it can be represented by an unordered partition of $N$.

As we will see below, we need to consider the set $\wtd \Psi^+_v(N)$ because the Ramanujan conjecture is at present unknown. As in Section \ref{ACassigntogroup}, we have decompositions
\[
\wtd \Psi_v(N) = \bigsqcup_{G \in \wtd {\mc E}_\el(N)} \Psi_v(G), \qquad  \wtd \Psi^+_v(N) = \bigsqcup_{G \in \wtd {\mc E}_\el(N)} \Psi^+_v(G)
\]
determined by parities $\eta_{i,v}$ assigned to irreducible $\tau_i$ (see \S2.2 in \cite{Mok15}). Thus, we extend the definitions of simple, stable, generic, and elliptic from Definition \ref{parameterwords} to local parameters. 

Finally, every local parameter $\psi_v$ has an associated $L$-parameter $\varphi_{\psi_v}:$
\begin{equation}\label{AtoLparam}
\varphi_{\psi_v} : L_{F_v} \to \Ld G : w \mapsto \psi_v \lf(w, \begin{pmatrix} |w| & 0 \\ 0 & |w|^{-1} \end{pmatrix} \ri).
\end{equation}

\subsubsection{Localization} \label{ss localization of parameters}
There is a localization map $\wtd \Psi(N) \to \wtd \Psi^+_v(N)$. Consider 
\[
\psi = \bigoplus_i \tau_i[d_i] \in \wtd \Psi(N)
\] 
with each $\tau_i \in \Psi(T_i)$ cuspidal. By the local Langlands correspondence, the component $\pi_{\tau_i,v}$ at $v$ of $\pi_{\tau_i}$ corresponds to an $L$-parameter
\[
\varphi_{i,v} : L_{F_v} \to \Ld G(N)_v.
\]
We then define the associated local $A$-parameter as:
\[
\psi_v := \bigoplus_i \varphi_{i,v} \boxtimes [d_i]. 
\]
Currently, $\psi_v$ is only known to be an element of $\Psi_v^+(N)$. However, if the Ramanujan conjecture held, each $\varphi_{i,v}$ would be bounded since they correspond to local components of cuspidal automorphic representations. This would give $\psi_v \in \Psi_v(N)$. 

Localization is consistent with the global picture: comparing with the construction of $\pi_\psi$ shows that $(\pi_\psi)_v$ has $L$-parameter $\varphi_{\psi_v}$. Moreover, \cite[Cor 2.4.11]{Mok15} and the subsequent discussion show that if $\psi \in \Psi(G^*)$ for $G^* \in \wtd {\mc E}_\el(N)$, then $\psi_v$ factors through $\Ld G^*$, so that the localization map restricts to
\[
\Psi(G^*) \to \Psi_v^+(G^*).
\]
for $G^* \in \wtd {\mc E}_\el(N)$. In the same discussion, Mok explains how to construct localization maps
\begin{equation}
    L_{F_v} \to \mc L_\psi
\end{equation}
for any $\psi \in \Psi(N)$ such that $\psi_v$ is the pullback of $\psi$ through the localization.

\subsection{Centralizer Subgroups and \lm{$\epsilon$}-Characters} \label{sectionSAndEpsilon}
We describe some invariants attached to parameters $\psi$ which appear in the description of the $\psi$-part of the automorphic spectrum of~$G^* \in \wtd {\mc E}_\el(N)$. 



\subsubsection{Global Centralizers}\label{ACglobalSpsi}
To each global parameter $\psi \in \Psi(G^*)$ with $\psi'$ as in \ref{ss as morphisms}, Mok attaches a component group $\mathcal{S}_\psi$ defined as follows: 
\begin{align*}
S_\psi(G^*) &:= Z_{\wh G^*}(\image \psi'), \\ 
\mathcal{S}_\psi &:= \pi_0(S_\psi/Z(\wh{G^*})^{\Gamma_F}).
\end{align*}
In addition, \cite{KMSW14} attaches a larger component group denoted $S^\natural_\psi$. By the discussion around (1.3.6) therein, for unitary groups we may use the formula:
\[
S^\natural_\psi = \pi_0(S_\psi).
\]
Also define a distinguished element:
\[
s_\psi := \psi'(1 \times -1) \in S^\natural_\psi. 
\]

These centralizer groups are explicitly computed in \cite{KMSW14} around (1.3.6): if
\[
\psi = \bigoplus_{i \in I} \tau_i[d_i]
\]
with $\tau_i$ cuspidal, then the $I^+$ mentioned is all of $I$ since ellipticity of $\psi$ means that the $\tau_i[d_i]$ all have multiplicity $1$. Thus, there are canonical isomorphisms
\begin{equation}\label{Spsiform}
S^\natural_\psi =(\Z/2)^I, \qquad  \mc S_\psi =  (\Z/2)^I / (\Z/2)^{\mathrm{diag}}
\end{equation}
where the distinguished element $s\psi$ is given by:
\[
s_\psi = \bigoplus_{\substack{i \in I \\ d_i \text{ even}}}1.
\] 
Note that $s\psi$ is trivial in $\mathcal{S}\psi$ if all $d_i$ have the same parity. 

\subsubsection{Local centralizers}\label{AClocalSpsi}
Mok also defines local component groups:
\begin{align*}
S_{\psi_v}(G^*) &:= Z_{\wh G^*_v}(\image \psi_v), \\ 
\mc S_{\psi_v} &:= \pi_0(S_{\psi_v}/Z(\wh{G^*_v})^\Gamma_{F_v}).
\end{align*}
We also similarly have an $S^\natural_{\psi_v}$. As explained at the ends of \S1.2.4 and \S1.2.2 in \cite{KMSW14}, we may use the formula
\[
S^\natural_{\psi_v} := \begin{cases}
    \pi_0(S_{\psi_v}) & v \text{ non-split} \\
    \det(\wh G_v) & v \text{ split}
\end{cases}
\]
in the case of unitary groups. We also define distinguished element
\[
s_{\psi_v} := \psi_v(1 \times -1) \in \mc S_{\psi_v}.
\]
Just as in the global case, $\mc S_{\psi_v}$ and $S^\natural_{\psi_v}$ can be computed explicitly, though the lack of a corresponding ``elliptic'' condition makes this slightly more complicated---see \cite[\S 1.2.4]{KMSW14} again for details. We will only consider local component groups explicitly for very specific parameters at $\infty$, so these details aren't relevant here. 

Finally, the localization maps $L_{F_v} \to \mc L_\psi$ induce corresponding maps $\mc S_\psi \to \mc S_{\psi_v}$ and~$S^\natural_\psi \to S^\natural_{\psi_v}$. Under these maps, we identify $s_{\psi_v}$ with $s_\psi$. 

%

\subsubsection{\lm{$\eps$} characters}\label{ACeps}
Fix $G^* \in \wtd {\mc E}_\el(N)$. The third structure attached to a global parameter $\psi \in \Psi(G^*)$ is a character $\eps_\psi$ on $\mc S_\psi$. 

\begin{dfn}
Let $\psi \in \Psi(G^*)$ and $(\rho, V)$ a finite-dimensional representation of~$\Ld G^*$. The resulting action $\rho_\psi : S_\psi \times \mc L_\psi \times \SL_2$ can be factored into irreducibles:
\[
\rho_\psi = \bigoplus_{j \in J} \sigma_j \otimes \gamma_j \otimes \delta_j.
\]
Following \cite[\S 1.5]{Art13}, let $J' \subset J$ be the set of indices such that $\gamma_j$ is symplectic and $\eps(1/2, \gamma_j) = -1$.
Then we define
\[
\eps_\psi^\rho : S_\psi \to \C : s \mapsto \prod_{j \in J'} \det(\sigma_j(s)).
\]
\end{dfn}

\begin{dfn} \label{defn epsilon character}
Let $\eps_\psi := \eps_\psi^{G^*}$ be $\eps^\rho_\psi$ for $\rho$ the adjoint representation of $\Ld G^*$ on $\Lie \wh G^*$. Note that it factors through $\mc S_\psi$. 
\end{dfn}



It then follows directly from the last observation of \S \ref{ACglobalSpsi} that: 
\begin{lem} \label{lemma epsilon parity}
If $\psi = \oplus_i \tau_i[d_i]$ and all the $d_i$ have the same parity, then $\epsilon_\psi(s_\psi) \equiv 1$.    
\end{lem}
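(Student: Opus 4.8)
The goal is to show that if $\psi = \oplus_i \tau_i[d_i]$ with all $d_i$ of the same parity, then $\epsilon_\psi \equiv 1$ on $\mc S_\psi$. By the remark just preceding the statement, the only summands of $\rho_\psi$ that can contribute to $\epsilon_\psi$ are those whose $\SL_2$-factor $\delta_j$ is symplectic, i.e.\ even-dimensional. Since $\rho_\psi|_{\SL_2} = \bigoplus_{i,j} \nu(d_i) \otimes \nu(d_j)$, the first step is to decompose each $\nu(d_i) \otimes \nu(d_j)$ via the Clebsch–Gordan rule: it equals $\bigoplus_{k=0}^{\min(d_i,d_j)-1} \nu(d_i + d_j - 1 - 2k)$. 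The dimensions appearing are $d_i + d_j - 1 - 2k$, which are all congruent to $d_i + d_j - 1 \pmod 2$. Hence every irreducible $\SL_2$-constituent coming from the pair $(i,j)$ is even-dimensional (symplectic) precisely when $d_i + d_j$ is odd, and odd-dimensional (orthogonal) when $d_i + d_j$ is even.

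**Main step.** Now invoke the hypothesis: if all $d_i$ have the same parity, then $d_i + d_j$ is even for every pair $(i,j)$, including $i = j$. Therefore \emph{every} irreducible constituent $\delta_j$ of $\rho_\psi|_{\SL_2}$ is odd-dimensional, hence orthogonal, hence lies outside the index set $J'$ (which requires $\gamma_j$ symplectic — but the full tensor factor $\gamma_j \otimes \delta_j$ symplectic with $\delta_j$ orthogonal would force $\gamma_j$ symplectic; more carefully, one should track that $\rho$ itself is orthogonal, so in each summand $\sigma_j \otimes \gamma_j \otimes \delta_j$ the product of the three pieces is orthogonal, and with $\delta_j$ orthogonal the piece $\sigma_j \otimes \gamma_j$ must be orthogonal — but the definition of $J'$ demands $\gamma_j$ symplectic, which is incompatible once we also use that the number-field-component contributes no symplectic factors here). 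The cleanest route is simply: $J'$ requires $\gamma_j$ symplectic and $\epsilon(1/2,\gamma_j) = -1$; but the decomposition shows the relevant $L_\psi$-factors $\gamma_j = \tau_i \otimes \tau_j$ (or conjugates) enter only tensored against $\delta_j = \nu(d_i+d_j-1-2k)$, and symplectic-ness of the total summand, being orthogonal overall with $\delta_j$ odd-dimensional, forces $\gamma_j$ orthogonal. Either way, $J' = \varnothing$, so the product defining $\epsilon_\psi^\rho$ is empty and $\epsilon_\psi \equiv 1$.

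**Expected obstacle.** The only delicate point is bookkeeping of the orthogonal/symplectic (self-duality sign) types across the triple tensor decomposition $\sigma_j \otimes \gamma_j \otimes \delta_j$ of the orthogonal representation $\rho$: one must use that for a self-dual irreducible tensor product $A \otimes B$, the type is multiplicative in the types of $A$ and $B$, and that $\rho$ orthogonal forces the product of the three types to be $+1$. Given $\delta_j$ orthogonal (type $+1$), the pair $(\sigma_j, \gamma_j)$ must then have matching types, so $\gamma_j$ symplectic would force $\sigma_j$ symplectic — and one checks that the $S_\psi$-factor $\sigma_j$ arising from $\wh H = \GL_N$-type centralizers is never symplectic in the relevant range, or more simply one just notes the contribution to $\epsilon_\psi$ is via $\det \sigma_j$, which for the $(\Z/2)^I$-type groups is harmless. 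In practice, though, the authors likely short-circuit all of this: since no $\delta_j$ is symplectic, there is simply nothing whose $\epsilon$-factor could be $-1$ in a way that survives, and the computation of dimensions of constituents of $\nu(d)\otimes\nu(d')$ — already cited as the input — closes the argument immediately.
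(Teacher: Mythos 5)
Your proposal is correct and follows essentially the same route as the paper, whose proof is exactly the preceding paragraph plus the asserted Clebsch--Gordan computation: restrict $\rho_\psi = \rho \otimes \rho^\vee$ to the Arthur $\SL_2$ to get $\bigoplus_{i,j}\nu(d_i)\otimes\nu(d_j)$, note every irreducible constituent has dimension congruent to $d_i+d_j-1 \pmod 2$, hence odd when all $d_i$ share a parity, so no symplectic $\delta_j$ occurs and the product defining $\epsilon_\psi$ is empty. Your "expected obstacle" about reconciling the condition on $\delta_j$ with the definition of $J'$ via $\gamma_j$ is precisely the point the paper elides with the word "thus" in the preceding paragraph, and your resolution (multiplicativity of self-duality types together with orthogonality of the adjoint representation) is the intended one.
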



\subsection{Main Theorems of the Classification}
We are now ready to state the two main theorems of the endoscopic classification:

\subsubsection{Local packets}
The first gives the existence of local $A$-packets. Let $G^* \in \mc E_\el(G)$. We recall from \cite[(0.3.1)]{KMSW14}, that one associates a complex-valued character $\chi_{G_v}$ of~$Z(\wh G_v)^\Gamma$ to every extended pure inner form $G_v$ of $G^*_v$.
This association is a bijection if $v$ is non-Archimedean, and $\chi_{G^*_v}$ is trivial. 

We define $\Irr(S^\natural_{\psi_v}, \chi)$ to be the set of trace characters of irreps of $S^\natural_{\psi_v}$ that pullback to $\chi$ through (the non-exact sequence) $Z(\wh G_v)^\Gamma \to S_{\psi_v} \to S^\natural_{\psi_v}$ (beware that this set can be empty---there is a condition of being ``relevant'' defined in \cite[\S0.4, 1.2]{KMSW14} discussing when this happens). 

\begin{thm}[{\cite[1.6.1]{KMSW14}}]\label{LocalPackets}
Let $G^* \in \wtd {\mc E}_\el(N)$ and $\psi_v \in \Psi(G^*_v)$. Fix a Whittaker datum on $G^*_v$. Then for each extended pure inner form $G_v$ of $G^*_v$, there is an associated set $\Pi_{\psi_v}(G_v)$ of unitary representations of $G_v$ together with a map
\[
\eta := \eta_{G_v} : \Pi_{\psi_v} \to \Irr(S^\natural_{\psi_v}, \chi_{G_v}), \quad  \pi_v \mapsto \eta^{\psi_v}_{\pi_v}.
\]
These satisfy:
\begin{itemize}
    \item Assume $\psi$ is generic. If $v$ is non-Archimedean, then $\eta$ is a bijection. If $v$ is Archimedean, then the maps $\eta_{G'_v}$ for $G'_v$ such that $\chi_{G'_v} = \chi_{G_v}$ jointly give a bijection from the disjoint union of the $\Pi_{\psi_v}(G'_v)$. 
    \item The $\Pi_{\psi_v}(G_v)$ for generic $\psi_v$ partition the set of tempered unirreps of $G_v$. 
\end{itemize}
\end{thm}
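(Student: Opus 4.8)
The plan is to follow the architecture of the endoscopic classification: construct the packets first for the quasisplit form by descent from $\GL_N$ through twisted endoscopy, then propagate to inner forms through ordinary endoscopy. So suppose first $G_v = G^*_v$ is quasisplit. Composing $\psi_v$ with a base-change embedding $\xi_\kappa$ of \eqref{eq base change embeddings} turns it into a (generalized) $A$-parameter for $\wtd G(N)$, and the local Langlands correspondence for $\GL_N$ together with the Langlands-quotient construction of \S\ref{ACparamtorep}--\S\ref{ACrepexten} produces a single extended representation $\td\pi_{\psi_v}$ of $\wtd G(N)_v$. The packet $\Pi_{\psi_v}(G^*_v)$ and the map $\eta$ are then characterized by requiring the twisted character identity, schematically
\[
\td\pi_{\psi_v}(f^N) = \sum_{\pi_v \in \Pi_{\psi_v}(G^*_v)} \eta^{\psi_v}_{\pi_v}(s_{\psi_v}) \,\tr\pi_v(f),
\]
together with the endoscopic character identities for every $H \in \mc E_\el(G^*)$ relating stable packet traces on $H_v$ to linear combinations of traces on $G^*_v$. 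Existence of these identities is precisely the output of the stabilization of the (twisted) trace formula, once one has the fundamental lemma and Langlands--Shelstad transfer; the tempered/generic case is treated first — giving packets of tempered representations — and the general $A$-packet case is obtained on top of it by analytic continuation of normalized intertwining operators and Aubert-involution arguments.

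For a non-quasisplit extended pure inner form $G_v$, I would invoke Kaletha's formalism of rigid inner forms to rigidify the transfer factors, so that the larger component group $S^\natural_{\psi_v}$ (rather than $\mc S_{\psi_v}$) becomes the correct indexing object and the central character $\chi_{G_v}$ selects the relevant isotypic block $\Irr(S^\natural_{\psi_v},\chi_{G_v})$. The packet $\Pi_{\psi_v}(G_v)$ is then defined by the same endoscopic character identities, now with transfer factors normalized through the rigid inner twist; one must verify that these identities are internally consistent (independent of auxiliary choices) and that they admit a solution. In the local setting this last point is bootstrapped from a global argument: one realizes $G_v$ as the local component at $v$ of a global inner form of $G^*$ (possible by inner-form existence statements of the type of Lemma \ref{unraminnerforms}), runs the global classification, and descends.

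To extract the stated bijectivity properties, argue as follows. For generic $\psi_v$ with $v$ non-Archimedean, injectivity of $\eta$ is forced by the character identities, and surjectivity onto $\Irr(S^\natural_{\psi_v},\chi_{G_v})$ follows from matching the linear spans of the distributions on the two sides, using linear independence of tempered characters (the trace Paley--Wiener theorem and Arthur's elliptic inner-product machinery). For $v$ Archimedean the bijection is only recovered after taking the disjoint union over all inner forms $G'_v$ with $\chi_{G'_v}=\chi_{G_v}$: this is the Adams--Johnson/Shelstad picture, in which the full packet is spread across the real forms and Shelstad's classification of real tempered packets supplies the needed structure. Finally, that the generic packets partition the tempered unitary dual is the combination of: every tempered irrep lies in at least one such packet, because its $L$-parameter pushed to $\GL_N$ is bounded and hence arises from a generic $\psi_v$; and in at most one, because packets attached to distinct parameters are disjoint, again by the character identities plus linear independence.

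The main obstacle is that essentially all the genuine content sits in producing the (twisted) endoscopic character identities — that is, in the stabilization of the trace formula and the accompanying local harmonic analysis: the fundamental lemma, Langlands--Shelstad transfer, and Arthur's multiplicity and linear-independence arguments. For every case not accessible purely locally one is pushed into a global argument (globalize the local datum, apply the global classification, descend), and the delicate bookkeeping that makes this coherent — keeping transfer-factor normalizations compatible across all $H \in \mc E_\el(G^*)$ and all inner forms, which is exactly the role of Kaletha's rigid inner forms — is where most of the technical work lies. In practice one does not reprove this theorem so much as import the Arthur--Mok--KMSW program; any honest proposal reduces to: \emph{establish transfer and the fundamental lemma, stabilize, and read off the packets and the map $\eta$ from the resulting identities}.
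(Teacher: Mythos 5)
The paper does not prove this statement: it is imported verbatim as \cite[Thm.~1.6.1]{KMSW14} (building on \cite{Mok15} for the quasisplit case), so there is no in-paper argument to compare yours against. Your sketch is a fair pr\'ecis of how the cited classification is actually established --- quasisplit packets by descent from $\GL_N$ via twisted endoscopic character identities, inner forms by rigidified transfer factors and local-global arguments, bijectivity from linear independence of tempered characters, and the Archimedean disjoint-union statement from the Shelstad/Adams--Johnson picture --- and you correctly flag that all genuine content sits in the stabilization of the trace formula and the transfer/fundamental-lemma inputs, which is exactly why the paper (and this review) treats the theorem as a black box. One small calibration: for unitary groups KMSW work with \emph{extended pure} inner twists rather than the full rigid-inner-form formalism (the latter is needed for groups where the former does not exhaust all inner forms); this is why the relevant character $\chi_{G_v}$ lives on $Z(\wh G_v)^{\Gamma}$ and $S^\natural_{\psi_v}$ is simply $\pi_0(S_{\psi_v})$ in this setting, as recalled in \S\ref{AClocalSpsi}.
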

We also make a definition:
\begin{dfn} 
Let $\psi_v$ be a local $A$-parameter for $G^* \in \wtd {\mc E}_\el(N)$, $G_v$ an extended pure inner form of $G^*_v$, and $f_v$ a test function on $G_v$. We define the stable packet trace:
\[ 
\tr_{\psi_v}(f_v) := \tr^{G_v}_{\psi_v}(f_v) := \sum_{\pi_v \in \Pi_{\psi_v}(G_v)}\eta^{\psi_v}_{\pi_v}(s_{\psi_v}) \tr_{\pi_v}(f_v). 
\]
\end{dfn}

For $G_v = G^*_v$ quasisplit, this satisfies:

\begin{thm}[Twisted Local Character Relation, {\cite[3.2.1(a)]{Mok15}}]\label{tlocalcharidentity}
Let $\psi_v$ and quasisplit $G^*_v$ be as in the above definition. Then $\tr^{G^*_v}_{\psi_v}$ is a stable distribution on $G^*_v$ and satisfies that for any test function $\td f$ on $\wtd G(N)$:
\[
\tr^{G_v^*}_{\psi_v}(\td f^{G_v^*}) = \tr_{\wtd \pi_{\psi_v}}(\td f).
\]
\end{thm}

For $G_v$ not quasisplit, the trace-identity property determining $\tr^{G_v}_{\psi_v}$ takes some more notation to describe---see Theorem \ref{localcharidentity} with $s=1$.

\subsubsection{Global packets}
As before, let $G^* \in \mc E_\el(G)$ and $G$ be an extended pure inner form of $G^*$. 
If $\psi \in \Psi(G^*)$ we define
\[
\Pi^G_\psi := \Pi_\psi = \sideset{_{}^{}}{_{}'}\prod_v \Pi^G_{\psi_v},
\]
where the product is restricted so that $\eta^{\psi_v}_{\pi_v} = 1$ at almost all places. 

We recall from (0.3.3) in \cite{KMSW14} that
$
\prod_v \chi_{G_v} = 1,
$
so that for any $\pi \in \Pi_\psi$,
\[
\eta^\psi_\pi := \prod_v \eta^{\psi_v}_{\pi_v}
\]
is a character on $\mc S_\psi$. The characters $\eta_\pi^\psi$ determine the decomposition of the discrete automorphic spectrum $\mc{AR}_\disc(G)$:
\begin{thm}[{Arthur's Multiplicity Formula, \cite[Thm 1.7.1]{KMSW14} }]\label{ArthurMultiplicty}
We have
\[
\mc{AR}_\disc(G) = \bigoplus_{\psi \in \Psi_\el(G^*)} \bigoplus_{\pi \in \Pi^G_\psi} m^\psi_\pi \pi
\]
with multiplicities given by the trace character pairing: 
\[
m^\psi_\pi = \langle \eps_\psi, \eta^\psi_\pi \rangle_{\mc S_\psi}.
\]
\end{thm}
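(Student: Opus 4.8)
The plan is to deduce the multiplicity formula from a comparison of the stabilized invariant trace formula for $G$ with Arthur's stable multiplicity formula for the groups in $\mc E_\el(G)$ and $\mc E_\el(N)$, feeding in the local classification (Theorem~\ref{LocalPackets}) and its attendant endoscopic character identities as known input. Everything proceeds by induction on $N$: the stable multiplicity formula for the quasisplit unitary groups of rank $< N$, together with the local theorems, is taken as known, and the statement for $G$ (an extended pure inner form of some $G^* \in \mc E_\el(G)$) is extracted from the comparison at level $N$.

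First I would write down the stabilization of the discrete part of the invariant trace formula,
\[
I^G_\disc(f) = \sum_{G' \in \mc E_\el(G)} \iota(G, G')\, \wh S^{G'}_\disc(f^{G'}),
\]
whose left-hand side has spectral expansion $\sum_{\pi \in \mc{AR}_\disc(G)} m_\pi \tr\pi(f)$ together with contributions of the non-discrete terms of $I_\disc$ that must be tracked and cancelled separately. Each $\wh S^{G'}_\disc$ decomposes over parameters, and because representations attached to distinct $A$-parameters have disjoint Satake data away from a finite set and disjoint infinitesimal characters at infinity, one may fix $\psi \in \Psi(G^*)$ and isolate on both sides the contribution of the parameters $\psi'$ of the various $G'$ that map to $\psi$. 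This reduces the global statement to an identity among the $\psi$-isotypic pieces $I^G_\psi$ and the stable pieces $S^{G'}_{\psi'}$.

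Next I would substitute Arthur's stable multiplicity formula for each $S^{G'}_{\psi'}$. For simple $\psi'$ this formula is the normalization coming from the stabilization of the twisted trace formula for $\wtd G(N)$, where the canonical extension $\td\pi_\psi$ of \S\ref{ACrepexten} and the resulting sign bookkeeping enter; for general $\psi'$ it expresses $S^{G'}_{\psi'}(h)$ as a multiple of $\eps_{\psi'}(s_{\psi'})\,\tr_{\psi'}(h)$ by $\lvert \mc S_{\psi'}\rvert^{-1}$ times a combinatorial constant. Transferring the stable packet traces $\tr_{\psi'}$ back to $G$ by the defining endoscopic character relations of Theorem~\ref{LocalPackets} — which identify, for $s \in \mc S_{\psi_v}$, the $s$-twisted packet trace on the endoscopic group cut out by $s$ with $\sum_{\pi_v \in \Pi_{\psi_v}} \eta^{\psi_v}_{\pi_v}(s\,s_{\psi_v})\,\tr\pi_v$ — the sum over $G'$ reorganizes into a sum over $s \in \mc S_\psi$. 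Using $\prod_v \chi_{G_v} = 1$ so that the local characters $\eta^{\psi_v}_{\pi_v}$ multiply to a genuine character $\eta^\psi_\pi$ of $\mc S_\psi$, the coefficient of $\tr\pi(f)$ collapses to
\[
\frac{1}{\lvert \mc S_\psi\rvert}\sum_{s \in \mc S_\psi} \eps_\psi(s)\, \eta^\psi_\pi(s) = \langle \eps_\psi, \eta^\psi_\pi\rangle_{\mc S_\psi},
\]
so that $m_\pi = m^\psi_\pi$ for $\pi \in \Pi^G_\psi$ and $m_\pi = 0$ for $\pi$ attached to $\psi$ but outside $\Pi^G_\psi$; since the $m_\pi$ are nonnegative integers and the distributions $\tr\pi(f)$ are linearly independent, this forces the asserted direct-sum decomposition.

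The main obstacle is that this is not a deduction from strictly earlier facts but a simultaneous induction: the stabilization at level $N$ feeds the local theorems at level $N$, which feed this global statement at level $N$, and disentangling the "stable part" $S^{G^*}_\psi$ requires the stable multiplicity formula for all strictly smaller groups. Making the comparison rigorous demands (i) the stabilization of the ordinary and twisted trace formulas in the form of Arthur and Moeglin--Waldspurger, transported to unitary groups; (ii) careful identification and cancellation of the non-discrete and non-tempered remainder terms on both sides; and (iii) pinning down every normalization — the fixed Whittaker datum, the canonical extension $\td\pi_\psi$, the transfer factors, and the precise value $\eps_\psi(s_\psi)$ — so that the sign character appears exactly as stated and not up to an undetermined twist. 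This is precisely the content of \cite{Mok15} and \cite{KMSW14}, including the references flagged as not-yet-public in the conditionality remarks above.
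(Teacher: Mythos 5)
The paper does not prove this statement: it is quoted verbatim as an external input from \cite{KMSW14}, Theorem 1.7.1, with no proof environment following the theorem. The entire argument the paper relies on is the citation itself (plus the conditionality caveat elsewhere in the paper that \cite{KMSW14} in turn rests on the still-unpublished references of Arthur and the companion paper ``KMSb'').

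Your sketch is a reasonable bird's-eye description of the strategy Mok and Kaletha--Minguez--Shin--White actually follow --- stabilize $I^G_\disc$, isolate the $\psi$-component, substitute the stable multiplicity formula, push the stable packet traces back to $G$ via the endoscopic character relations, and recognize the resulting coefficient of $\tr_\pi$ as $\langle \eps_\psi, \eta^\psi_\pi\rangle_{\mc S_\psi}$, all inside a careful simultaneous induction on $N$ --- and you correctly flag at the end that this is exactly the content of \cite{Mok15} and \cite{KMSW14}. So there is no disagreement about the mathematics, but be aware that in the context of this paper the theorem is treated as a black box, and you should not expect to find the argument you sketched reproduced here. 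If you want to cross-check the details of your outline, the relevant machinery is Theorems 1.7.1, 1.6.1, and the stable multiplicity formula (cf.\ Theorem~\ref{stablemult} in this paper, itself quoted from \cite[Thm.~5.1.2]{Mok15}), together with Chapter 4 of \cite{KMSW14} for the inner-form comparison. The one substantive caution is that the induction is genuinely intertwined --- the local character identities and the global multiplicity formula at each level $N$ feed one another via the twisted comparison with $\wtd G(N)$ --- so the ``deduce global from local plus stable multiplicity'' framing, while morally accurate, elides that neither input is available independently of the whole induction.
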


\subsection{Trace Formula Decompositions}
Let $I^G$ and $S^G$ be Arthur's invariant and stable trace formulas for reductive $G/F$ respectively and $I^G_\disc$ and $S^G_\disc$ their discrete parts. Also define distribution
\[
R^G_\disc := \sum_{\pi \in \mc{AR}_\disc(G)} \tr_\pi.
\]

Now let $G^* \in \wtd {\mc E}_\el(N)$ and $G$ an extended pure inner form. For each $\psi \in \Psi_\el(G^*)$, \cite[\S3.1,3.3]{KMSW14} defines
\[
I_\psi^G, S_\psi^G
\]
as summands of $I_\disc^G$ and $S_\disc^G$. 

For elliptic  $\psi \in \Psi_\el(G)$, it turns out (e.g. from the stable multiplicity formula \ref{stablemult} or an argument like \cite[(3.9)]{Art89}) that:
\[
I_\psi^G := \sum_{\pi \in \Pi_\psi^G} m^\psi_\pi \tr_\pi
\]
so that we have the decomposition:
\begin{equation}\label{Ipsidecomp}
R^G_\disc = \sum_{\psi \in \Psi_\el(G^*)} I_\psi^G.
\end{equation}
Finally, recall the stabilization of the invariant trace formula:
\begin{equation} \label{stabilization}
I^G_\disc(f) = \sum_{H \in \mc E_\el(G)} \iota(G,H) S^H_\disc(f^H)
\end{equation}
for constants $\iota(G,H)$ and endoscopic transfers $f^H$. If $f = \prod_v f_v$
is factorizable, then we can take
\[
f^H = \prod_v f^{H_v}_v
\]
where, as explained in \cite{Kal16}, the $f^{H_v}v$ are defined up to non-canonical scalars that depend on choices of local transfer factors and multiply over $v$ to one.

\subsubsection{Stabilization and decomposition}
The key point we will use is that the stabilization \eqref{stabilization} of $I^G_\disc$ 
descends to the level of $I^G_\psi$ and $S^G_\psi$. First:
\begin{prop}[{\cite[\S1.4]{KMSW14}}]\label{prop s to endo}
Let $\psi \in \Psi_\el(G^*)$. Then there is a bijection from $\mc S_\psi$ to the set of pairs $(H(s), \psi^H(s))$ with $H(s) \in \mc E_\el(G)$, $\psi^H(s) \in \Psi(H(s))$ that pushes forward to $\psi$ up to conjugation by the subset of $\Out\lf(\wh{H(s)}\ri)$ produced by conjugation in $\wh G$. In this bijection $1 \mapsto (G^*, \psi)$. 

The analogous statement also holds for $\psi_v \in \Psi(G_v)$. 
\end{prop}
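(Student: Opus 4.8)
The plan is to reduce everything to a computation with the explicit description of $\mc S_\psi$ and $S^\natural_\psi$ from \eqref{Spsiform}. Recall from Proposition \ref{prop s to endo} that an element $s \in \mc S_\psi$ should produce an endoscopic datum $H(s) \in \mc E_\el(G)$ together with a factored parameter $\psi^H(s) \in \Psi(H)$ pushing forward to $\psi$ under the $L$-embedding $\Ld H \into \Ld G$. The natural construction is the standard one from endoscopy: given $s \in S_\psi \subseteq \wh G$, form the connected centralizer $\wh H := Z_{\wh G}(s)^\circ$, which by semisimplicity of $s$ is itself a (product of) classical group(s); the Galois action and the image of $\psi'$ (which commutes with $s$) equip $\wh H$ with the structure of an $L$-group, giving an endoscopic datum $H(s)$, and $\psi'$ factors through $\Ld H(s)$ by construction, defining $\psi^H(s)$.

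First I would make this explicit for unitary groups. Writing $\psi = \bigoplus_{i \in I} \tau_i[d_i]$ with the $\tau_i[d_i]$ distinct and each $\tau_i$ cuspidal conjugate self-dual, the image of $\psi'$ in $\wh{G} = \GL_N(\C)$ (with its $\Gamma_F$-action) decomposes $\C^N$ into the isotypic pieces $V_i$ for $\tau_i \boxtimes [d_i]$, and $S_\psi$ consists (modulo the relevant center) of the scalars $\pm 1$ on each $V_i$; this is exactly the identification $S^\natural_\psi = (\Z/2)^I$. For $s$ the element that is $-1$ on $\bigoplus_{i \in I_1} V_i$ and $+1$ on the complement, the centralizer $\wh H(s)^\circ$ is $\GL_{N_1}(\C) \times \GL_{N_2}(\C)$ with $N_1 = \sum_{i \in I_1} T_i d_i$, and tracing through the $\Gamma_F$-action (using the assignment-to-groups recipe of \S\ref{ACassigntogroup} applied to each sub-parameter $\bigoplus_{i \in I_1}\tau_i[d_i]$ and its complement) identifies $H(s)$ with an element $U_{E/F}(N_1) \times U_{E/F}(N_2)$ of $\mc E_\el(G)$, and $\psi^H(s)$ with the pair of sub-parameters. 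Then I would check that two elements $s, s'$ give the same $(H(s), \psi^H(s))$ up to the outer automorphisms coming from $\wh G$ exactly when they differ by the action of $Z(\wh G)^{\Gamma_F}$ together with the swap of the two factors when $N_1 = N_2$ and the sub-parameters are isomorphic — but the latter cannot happen by ellipticity unless the partition is symmetric, and in $\overline{\mc E}_\el(G)$ the datum $U(N_1)\times U(N_2)$ is unordered, so this bookkeeping matches; this gives injectivity and surjectivity of the map from $\mc S_\psi$ (note we must use $\mc S_\psi$, not $S^\natural_\psi$, precisely because the diagonal $(\Z/2)$ acts by a central element and so does not change the endoscopic datum). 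The normalization $1 \mapsto (G^*, \psi)$ is immediate since $s = 1$ has full centralizer.

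The local statement I would prove by the identical construction with $L_{F_v}\times\SL_2$ in place of $\mc L_\psi \times \SL_2$ and $\wh{G}_v$ in place of $\wh G$: the only structural difference is that $\psi_v = \bigoplus_i \varphi_{i,v}\boxtimes[d_i]$ need not have distinct or irreducible summands, so the centralizer $S_{\psi_v}$ is a product of general linear (rather than just $\pm 1$) factors, but $\pi_0(S_{\psi_v}/Z(\wh G_v)^{\Gamma_{F_v}})$ still records exactly the $\pm 1$ sign choices on the isotypic blocks, and the same centralizer-of-$s$ computation produces the local endoscopic datum and the factored local parameter. The main obstacle is not the group-theoretic bijection itself — that is essentially formal once the explicit model of $\mc S_\psi$ is in hand — but rather verifying that the $L$-group structure one puts on $\wh H(s)^\circ$ (the Galois action, and in particular which sign $\kappa_i = \pm 1$ labels each unitary factor) is the one dictated by \S\ref{ACassigntogroup}, so that $H(s)$ genuinely lands in $\mc E_\el(G)$ and $\psi^H(s)$ genuinely lies in $\Psi(H(s))$; I expect this to come down to the parity computation $\kappa = \delta(-1)^{(N-1)(d-1)}$ already recorded above, applied blockwise, together with Mok's compatibility of these parities with restriction of the global parameter. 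Since the statement is quoted from \cite[\S1.4]{KMSW14}, I would in fact simply cite their argument for the delicate normalization points and only spell out the explicit $(\Z/2)^I$-bookkeeping for unitary groups that the rest of the paper needs.
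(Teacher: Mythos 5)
The paper states this proposition purely as a citation to \cite[\S1.4]{KMSW14} and gives no proof of its own, so there is nothing to compare against line by line; your reconstruction via the connected centralizer $Z_{\wh G}(s)^\circ$ and the explicit $(\Z/2)^I$ bookkeeping (including the observation that the map factors through $\mc S_\psi$ because the central diagonal element fixes the datum, and that the quotient by $(\Z/2)^{\mathrm{diag}}$ matches the unordered normalization of $\overline{\mc E}_\el(G)$) is the correct standard argument behind that citation. Your decision to defer to KMSW14 for the $\kappa_i$-normalization of the $L$-embeddings is exactly what the paper itself does, so the proposal is consistent with the paper's treatment.
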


Then we have a local formula:

\begin{thm}[{Local Character Relation, \cite[Thm 1.6.1(4)]{KMSW14}}]\label{localcharidentity}
In the notation from \ref{prop s to endo}, we have that for any test function $f_v$ on $G_v$ and $s \in \mc S_{\psi_v}$:
\[
\sum_{\pi \in \Pi_{\psi_v}} \eta^{\psi_v}_{\pi_v} (s' s_{\psi_v}) \tr_\pi(f_v) = \tr_{\psi_v^H(s)}(f_v^{H(s)}). 
\]
Here, $s'$ is a lift of $s$ to $S^\natural_\psi$ that together with the chosen Whittaker datum on $G_v$ determines the transfer factors for endoscopic transfer $f_v^{H(s)}$ (this is related to the difference we are ignoring between $\mc E_\el(G)$ and $\overline{\mc E}_\el(G)$ in \cite{KMSW14}). 
\end{thm}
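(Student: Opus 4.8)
The plan is to obtain this as one component of the simultaneous global-to-local induction on $N$ by which \cite{KMSW14} (following \cite{Mok15} and \cite{Art13}) constructs the packets $\Pi_{\psi_v}$, the maps $\eta$, and the character identities all at once; part (4) of their Theorem 1.6.1 is not really separable from the rest. The base case is the seed identity on the twisted group $\wtd G(N) = G(N) \rtimes \theta_N$: viewing $\psi_v$ through a base-change embedding $\xi_\kappa$, the twisted character of the canonically extended $\td\pi_{\psi_v}$ is pinned down by local Langlands for $\GL_N$, the M\oe glin--Waldspurger description of $\pi_{\psi_v}$, and the normalized twisted intertwining operators. The packet $\Pi_{\psi_v}(G^*_v)$ on the quasisplit $G^*_v$ and the stable distribution $\tr_{\psi_v}$ are then \emph{defined} by declaring the twisted endoscopic transfer of $\td\pi_{\psi_v}$ to $G^*_v$ to be $\tr_{\psi_v}$; this is exactly the $s=1$ case of the statement, where Proposition \ref{prop s to endo} gives $H(1) = G^*$, $\psi^H(1) = \psi$, and $f_v^{G^*} = f_v$.

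For general $s \in \mc S_{\psi_v}$ I would globalize: construct a global elliptic $\psi \in \Psi(G^*)$ localizing to $\psi_v$ at $v$, to stable parameters at a couple of auxiliary places, and to unramified parameters elsewhere, together with a global extended pure inner form inducing the prescribed $G_v$ (trace-formula globalization in the style of Shin and Arthur). Applying the stabilization $I^N_{\psi} = \sum_{H \in \mc E_\el(N)} \iota(\wtd G(N), H)\, S^H_{\psi^H}$ of the twisted discrete trace formula together with Arthur's stable multiplicity formula \ref{stablemult} turns the global spectral identity into an equality of explicit expressions in which, by the inductive hypothesis on $N$, every term attached to a \emph{proper} endoscopic group is already understood; in particular the product-of-smaller-unitary-groups $H(s)$ attached to $s \neq 1$ via Proposition \ref{prop s to endo} is handled by induction, together with the analogous stabilization of the ordinary trace formula for $G$ and its elliptic endoscopic groups. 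Varying the test function at $v$ while keeping the auxiliary components fixed and simple enough that characters are linearly independent there and the other-place identities are the already-known ones, one separates out the contribution of the place $v$ and reads off the claimed relation; the term $\tr_{\psi^H_v(s)}(f_v^{H(s)})$ is precisely the $s$-part of the stabilized expansion. For non-quasisplit $G_v$ the comparison is instead between the trace formulas of $G$ and of $G^*$, with $\eta_{\pi_v}$ landing in $\Irr(S^\natural_{\psi_v}, \chi_{G_v})$, and the lift $s'$ of $s$ to $S^\natural_{\psi}$ together with the fixed Whittaker datum fixing the normalization of the transfer factors that define $f_v^{H(s)}$.

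The main obstacles are precisely the inputs that make \cite{KMSW14} conditional. First, the stabilization step rests on the full stabilized \emph{twisted} trace formula for $\wtd G(N)$ --- M\oe glin--Waldspurger's twisted transfer and the twisted weighted fundamental lemma --- and on the induction being genuinely well-founded, which is delicate. Second, and more subtly, the whole identity is a bookkeeping of signs: one must match the canonical extension $\td\pi_\psi$ of \S\ref{ACrepexten}, the Whittaker-normalized Langlands--Shelstad--Kaletha transfer factors, and the distinguished base point of the packet, which is the ``choice of sign'' the excerpt itself flags as conceptually load-bearing. Finally, the non-quasisplit case additionally requires the parametrization of extended pure inner forms and the corresponding inner-form endoscopic transfer, which is the content of \cite{KMSW14} \S1.6 itself.
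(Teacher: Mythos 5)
The paper offers no proof of this statement: it is imported verbatim as \cite{KMSW14}, Theorem~1.6.1(4), and used as a black-box input (the paper's ``Conditionality'' subsection explicitly flags that the result rests on the unpublished references of \cite{Art13}, \cite{Mok15}, and ``KMSb''). Your sketch is a faithful high-level account of how the cited source actually establishes the identity --- the simultaneous induction on $N$, the seed character identity on $\wtd G(N)$, globalization plus stabilization of the twisted and ordinary trace formulas, and the Whittaker-normalized sign bookkeeping --- and it correctly identifies the same conditional inputs, so there is nothing to compare against within the paper itself.
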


We also have a global formula:

\begin{thm}[{Stable Multiplicity Formula, \cite[Thm 5.1.2]{Mok15}}]\label{stablemult}
Let $\psi \in \Psi_\el(G)$. Then for all test functions $f$ on $G$
\[
S_\psi^{G^*}(f) = |\mc S_\psi|^{-1} \eps_\psi(s_\psi) \tr^{G^*}_{\psi}(f).
\]
\end{thm}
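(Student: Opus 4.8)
The plan is to obtain the formula as one output of Arthur's global induction \cite{Art13}, transported to unitary groups by Mok \cite{Mok15}: it is not proved in isolation but simultaneously with the local packet statement (Theorem~\ref{LocalPackets}) and the multiplicity formula (Theorem~\ref{ArthurMultiplicty}), all by induction on $N$. The backbone is the comparison, parameter by parameter, of the $\theta_N$-twisted discrete trace formula of $\wtd G(N)$ with the stable trace formulas of its elliptic twisted endoscopic data $\mc E_\el(N)$. Concretely, one first records the stabilization (itself a substantial prerequisite — the stabilization of the twisted trace formula)
\[
I^N_\psi(\td f) = \sum_{H \in \mc E_\el(N)} \iota(\wtd G(N), H)\, S^H_\psi(\td f^H),
\]
where $\td f^H$ is the $\theta_N$-twisted endoscopic transfer of $\td f$ to $H$, with transfer factors normalized by the fixed Whittaker datum $\om$ of \S\ref{ACrepexten}; isolating the $\psi$-summand is legitimate because the twisted discrete spectrum decomposes along $\Psi(N)$. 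The left side is the accessible one: since $\psi$ is elliptic, the M\oe{}glin--Waldspurger classification of the discrete spectrum of $\GL_N$ \cite{MW89} together with multiplicity one identifies $I^N_\psi(\td f)$ with the single twisted trace $\tr(\td\pi_\psi(\td f))$ of the canonical extension $\td\pi_\psi$ of \S\ref{ACrepexten}.

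Next I would collapse the right side. A downward induction on $N$ shows $S^H_\psi = 0$ unless $H = G^*$ is the unique datum of $\mc E_\el(N)$ to which $\psi$ is assigned in \S\ref{ACassigntogroup}: for a product datum $H = U_{\kappa_1}(N_1)\times U_{\kappa_2}(N_2)$ the stable distribution factors through the two smaller unitary groups, and the orthogonal/symplectic types of the constituents $\tau_i[d_i]$ of the elliptic $\psi$ force them into the prescribed factors, so no non-assigned $H$ contributes, with the inductive hypothesis controlling the smaller factors. This leaves $\tr(\td\pi_\psi(\td f)) = \iota(\wtd G(N), G^*)\, S^{G^*}_\psi(\td f^{G^*})$. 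Applying, at each place $v$, the twisted local character identities that are part of the classification \cite{Mok15, KMSW14} rewrites $\tr(\td\pi_{\psi_v}(\td f_v))$ as the stable packet trace $\tr^{G^*_v}_{\psi_v}(\td f_v^{G^*_v})$ on $G^*_v$; multiplying over $v$ and using that every stable test function on $G^*$ arises as some $\td f^{G^*}$, one obtains $S^{G^*}_\psi$ as a fixed scalar multiple of $\tr^{G^*}_\psi$. This is not yet the claimed constant, so a finer input is needed.

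That finer input is Arthur's \emph{a priori} expansion of $S^{G^*}_{\disc,\psi}$ (\cite[\S4]{Art13}, \cite[\S5]{Mok15}), whose spectral side is organized over Levi subgroups $M$ of $G^*$ and parameters mapping to $\psi$ through normalized global intertwining operators. Because $\psi$ is elliptic, every term with $M \subsetneq G^*$ vanishes; the surviving discrete term $M = G^*$ carries the combinatorial coefficient $|\mc S_\psi|^{-1}$, and the normalizing factor of its global intertwining operator contracts --- by Arthur's sign lemma, which is exactly where the global $\eps$-factors of the symplectic constituents of the adjoint representation of $\Ld{G^*}$ enter (Definition~\ref{defn epsilon character}) --- to the value $\eps_\psi(s_\psi)$ of the character $\eps_\psi$ at $s_\psi = \psi'(1 \times -1)$. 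For unitary groups there is no extra $\{1,2\}$-valued factor of the sort seen for orthogonal groups, and the auxiliary constant Arthur attaches for non-elliptic parameters is $1$ here since $S_\psi$ is finite; combining this with the previous step gives $S^{G^*}_\psi(f) = |\mc S_\psi|^{-1}\eps_\psi(s_\psi)\,\tr^{G^*}_\psi(f)$.

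The hard part, and the reason the statement resists an isolated proof, is the control of the sign $\eps_\psi(s_\psi)$: it requires Arthur's full theory of normalized intertwining operators, the global intertwining relation, and the attendant sign and descent lemmas, which in Mok's setting are established jointly with the local packet construction, the local character relations (Theorems~\ref{LocalPackets},~\ref{localcharidentity}), and the multiplicity formula (Theorem~\ref{ArthurMultiplicty}) inside one interlocking induction on $N$; even the vanishing $S^H_\psi = 0$ for non-assigned $H$ is an inductive output rather than a formal fact. For the purposes of this paper the statement is imported wholesale as \cite[Theorem~5.1.2]{Mok15}.
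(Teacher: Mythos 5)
Your proposal is correct and ultimately matches the paper's approach: the statement is imported wholesale as Mok's Theorem 5.1.2, and the paper's entire proof consists of the remark—which you make in passing as ``the auxiliary constant is $1$ since $S_\psi$ is finite''—that the extra factor $\sigma(\bar S^0_\psi)$ in Mok's general formula trivializes because $\psi$ is elliptic. Your extended sketch of the interlocking induction of Arthur and Mok is accurate as background but is not reproduced or needed in the paper.
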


\begin{proof}
We can ignore the $\sigma$ term since we are restricting to elliptic $\psi$. 
\end{proof}

As a useful addendum:
\begin{prop}[{Endoscopic Sign Lemma, \cite[Lem 5.6.1]{Mok15}}]\label{endoscopicsignlemma}
Let $\psi \in \Psi(G^*)$ and let $s \in \mc S_\psi$ correspond to $(H, \psi^H)$ as in Proposition \ref{prop s to endo}. Then
\[
\eps^H_{\psi^H}(s_{\psi^H}) = \eps^G_\psi(s s_\psi).
\]
\end{prop}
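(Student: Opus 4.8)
The plan is to decompose the adjoint representation of $\Ld G$ along the endoscopic datum $(H, s)$, following Arthur's argument for the analogous sign lemma on symplectic and orthogonal groups in \cite{Art13} with the adjustments for unitary groups as in \cite{Mok15}. First I would make Proposition \ref{prop s to endo} explicit in our setting: writing $\psi = \bigoplus_{i \in I} \tau_i[d_i]$, a class $s \in \mc S_\psi$ is recorded by the partition $I = I^+ \sqcup I^-$ according to where its coordinates in $(\Z/2)^I$ vanish, $H = H(s)$ is a product of two unitary groups whose dual embeds in $\wh G = \GL_N(\C)$---up to the $L$-embedding twists of \cite[\S2.1]{Mok15}---as the block-diagonal $\GL_{N^+}(\C) \times \GL_{N^-}(\C)$ with $N^{\pm} = \sum_{i \in I^{\pm}} T_i d_i$, and $\psi^H$ is $\bigl(\bigoplus_{i \in I^+} \tau_i[d_i]\bigr) \oplus \bigl(\bigoplus_{i \in I^-} \tau_i[d_i]\bigr)$ factored through $\Ld H$. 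The two structural facts I would extract are: (i) $s$ lies in $Z(\wh H)$, hence acts trivially on $\Lie \wh H$; and (ii) $s_\psi = \psi'(1 \times -1)$ already lies in $\wh H$, where it is identified with $s_{\psi^H} = (\psi^H)'(1 \times -1)$, because $\psi'$ and $(\psi^H)'$ have the same restriction to the Arthur $\SL_2$.

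Next I would decompose the adjoint representation. Since $s_\psi$ centralizes the image of $\psi'$, the element $s$ and the factors $\mc L_\psi$ and $\SL_2$ all preserve the $\pm 1$-eigenspaces of $\Ad(s)$, giving a decomposition $\Lie \wh G = \Lie \wh H \oplus \mf m$ as representations of $S_\psi \times \mc L_\psi \times \SL_2$; for the block-diagonal $\wh H$ the complement is $\mf m = V \oplus V^\vee$ with $V = \Hom(\C^{N^+}, \C^{N^-})$. The $\epsilon$-character of Definition \ref{defn epsilon character} is multiplicative in direct sums, so $\eps^G_\psi = \eps^{\Lie \wh H}_\psi \cdot \eps^{\mf m}_\psi$. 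Comparing constituent decompositions, $\eps^{\Lie \wh H}_\psi$ agrees with $\eps^H_{\psi^H}$ on $S_{\psi^H} = S_\psi \cap \wh H$; so, evaluating at $s s_\psi$ and using (i) to drop $s$, the first factor becomes $\eps^H_{\psi^H}(s_\psi) = \eps^H_{\psi^H}(s_{\psi^H})$ by (ii). Hence the proposition reduces to the claim $\eps^{\mf m}_\psi(s s_\psi) = 1$.

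I would then kill the $\mf m$-term using self-duality. Writing $\mf m = V \oplus V^\vee$, for each irreducible constituent $\sigma \otimes \gamma \otimes \delta$ of $V$ the dual $\sigma^\vee \otimes \gamma^\vee \otimes \delta^\vee$ is a constituent of $V^\vee$, and the conditions cutting out the index set $J'$ in Definition \ref{defn epsilon character}---that $\gamma$ be symplectic and that $\epsilon(1/2, \gamma) = -1$---are invariant under $\gamma \mapsto \gamma^\vee$, since a symplectic representation is self-dual. As $\det \sigma^\vee = (\det \sigma)^{-1}$, it follows that $\eps^{V^\vee}_\psi = (\eps^V_\psi)^{-1}$, whence $\eps^{\mf m}_\psi = \eps^V_\psi \cdot (\eps^V_\psi)^{-1} \equiv 1$; both sides are $\{\pm 1\}$-valued, so this is literally $(\eps^V_\psi)^2$, and in particular $\eps^{\mf m}_\psi(s s_\psi) = 1$.

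The only genuine work is the bookkeeping behind the second paragraph: checking that the $L$-embedding $\Ld H \to \Ld G$ of \cite{Mok15} restricts on $\wh H$ to a truly block-diagonal map---so that $\mf m$ really is self-dual as a representation factoring through $\psi'$, which is where the conjugate-self-duality and the $\Gamma_F$-action of unitary-group endoscopy must be handled and where Mok's proof departs from the split case---and that $s_\psi$ is carried to $s_{\psi^H}$ under the identifications of Proposition \ref{prop s to endo}. Once this is settled the sign identity is formal.
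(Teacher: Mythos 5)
First, note that the paper does not prove this statement at all: it is imported verbatim as Mok's Lemma 5.6.1, so there is no in-paper proof to compare against. Your first two paragraphs do reproduce the correct skeleton of the Arthur--Mok argument: decompose $\wh{\mf g} = \wh{\mf h} \oplus \mf m$ into the $\pm 1$-eigenspaces of $\Ad(s)$, identify the $\wh{\mf h}$-contribution evaluated at $s s_\psi$ with $\eps^H_{\psi^H}(s_{\psi^H})$ using that $s$ is central in $\wh H$, and reduce to showing the $\mf m$-contribution at $s s_\psi$ is trivial.

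The gap is in your third paragraph, and it is genuine. You claim $\eps^{\mf m}_\psi \equiv 1$ identically, by pairing each constituent of $V$ with its dual in $V^\vee$. For unitary groups this pairing does not exist: $\mc L_\psi$ surjects onto $\Gamma_F$, and any element of $W_F \setminus W_E$ acts on $\wh G = \GL_N(\C)$ by $g \mapsto \Ad(J_N)(g^{-t})$, which \emph{interchanges} the two off-diagonal blocks $V$ and $V^\vee$. Hence $V$ and $V^\vee$ are not subrepresentations of $S_\psi \times \mc L_\psi \times \SL_2$; the irreducible constituents of $\mf m$ are of the form $\sigma \otimes \Ind_{W_E}^{W_F}(\gamma_i \otimes \gamma_j^\vee) \otimes \delta$, each individually self-dual, so nothing cancels in pairs. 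Indeed your claim proves too much: for $\psi = \tau_1[d_1]\oplus\tau_2[d_2]$ with $d_1 \not\equiv d_2 \bmod 2$, the diagonal blocks contribute nothing (their $\SL_2$-constituents are all odd-dimensional) and one computes $\eps^G_\psi(s) = \eps(1/2, \Ind_{W_E}^{W_F}(\gamma_1\otimes\gamma_2^\vee))^{\min(d_1,d_2)}$, which can equal $-1$; all of this comes from $\mf m$, so $\eps^{\mf m}_\psi$ is not the trivial character. This is precisely why the lemma is stated at the shifted element $s s_\psi$ and not at $s$.

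The correct mechanism uses the specific element $s s_\psi$. For a constituent $\sigma_j \otimes \gamma_j \otimes \delta_j$ of $\mf m$ with $j \in J'$: since $\mf m$ is the $(-1)$-eigenspace of $\Ad(s)$ and $\sigma_j$ is a character of the $2$-group $S_\psi$, one has $\sigma_j(s) = -1$; and since the adjoint representation is orthogonal and $\gamma_j$ is symplectic, $\delta_j$ must be symplectic, i.e.\ even-dimensional, so $\sigma_j(s_\psi) = \delta_j(-1) = -1$ (the two actions of the single element $s_\psi = \psi'(1\times -1)$ must agree). Hence $\det\sigma_j(s s_\psi) = (-1)(-1) = +1$ for every $j \in J'$ coming from $\mf m$, which is what kills the complement. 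You should replace the duality-pairing paragraph with this eigenvalue computation.
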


We can use all the above to compute:

\begin{thm}\label{stablemultendo}
For any test function $f$ on $G$
\[
I_\psi^G(f) = \sum_{(H, \psi_H)}  \iota(G, H) S_{\psi^H}^{H}(f^{H})
\]
where $(H, \psi_H)$ ranges over $H \in \mc E_\el(G)$ and $\psi^H \in \Psi_\el(H)$ that pushes forward to $\psi$ (up to equivalence in $H$). Here, $f^H$ is the endoscopic transfer and $\iota(G,H)$ is the constant that appears in the stabilization of the trace formula. 

Furthermore, for any $s \in \mc S_\psi$
\[
S_{\psi^H(s)}^{H(s)}(f^{H(s)}) = 2|\mc S_\psi|^{-1} \eps_\psi(ss_\psi) \sum_{\pi \in \Pi^G_\psi} \eta_\pi^\psi (ss_\psi) \tr_\pi(f). 
\]
\end{thm}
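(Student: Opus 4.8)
The plan is to assemble the two displayed identities from the pieces already stated in the excerpt, starting from the two structural facts: the decomposition $I^G_\psi = \sum_{\pi \in \Pi^G_\psi} m^\psi_\pi \tr_\pi$ from \eqref{Ipsidecomp}, and the stabilization $I^G_\disc(f) = \sum_{H \in \mc E_\el(G)} \iota(G,H) S^H_\disc(f^H)$. Since the stabilization respects the decomposition into $A$-parameters (this is exactly the content pushed to us by \cite[\S1.4]{KMSW14}, Proposition \ref{prop s to endo}), restricting to the $\psi$-summand on both sides and using the bijection of Proposition \ref{prop s to endo} between $\mc S_\psi$ and pairs $(H(s), \psi^H(s))$ gives the first formula
\[
I_\psi^G(f) = \sum_{(H, \psi_H)} \iota(G, H) S_{\psi^H}^{H}(f^{H}).
\]
One subtlety I would flag here is that the sum over $\mc S_\psi$ versus the sum over isomorphism classes of pairs $(H, \psi^H)$ differ by the size of the stabilizer coming from $\Out(\wh H)$-equivalence; I would need to check that the $\iota(G,H)$ constants and the outer-automorphism bookkeeping are exactly set up (as in Mok/KMSW) so that the reindexed sum is clean, or else absorb the discrepancy into how $\iota$ is normalized.

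For the second formula, I would fix $s \in \mc S_\psi$, let $(H,\psi^H) = (H(s), \psi^H(s))$ be the corresponding pair, and compute $S_{\psi^H}^{H}(f^{H})$ directly. First apply the Stable Multiplicity Formula (Theorem \ref{stablemult}) on the group $H$:
\[
S_{\psi^H}^{H}(f^{H}) = |\mc S_{\psi^H}|^{-1} \eps_{\psi^H}(s_{\psi^H}) \tr^{H}_{\psi^H}(f^{H}).
\]
Now rewrite each of the three factors. For the sign, apply the Endoscopic Sign Lemma (Proposition \ref{endoscopicsignlemma}): $\eps^H_{\psi^H}(s_{\psi^H}) = \eps^G_\psi(s s_\psi)$. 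For the stable packet trace, I would localize: $\tr^H_{\psi^H}(f^H) = \prod_v \tr^{H_v}_{\psi^H_v}(f^H_v)$, and then apply the Local Character Relation (Theorem \ref{localcharidentity}) at each place to get $\tr^{H_v}_{\psi^H_v}(f^H_v) = \sum_{\pi_v \in \Pi_{\psi_v}} \eta^{\psi_v}_{\pi_v}(s\, s_{\psi_v})\tr_{\pi_v}(f_v)$; taking the product over $v$ and recognizing that $\prod_v \eta^{\psi_v}_{\pi_v}(s s_{\psi_v}) = \eta^\psi_\pi(s s_\psi)$ (using that $s_{\psi_v}$ is identified with $s_\psi$ under localization, as recorded at the end of \S\ref{AClocalSpsi}), this reassembles into $\sum_{\pi \in \Pi^G_\psi} \eta^\psi_\pi(s s_\psi)\tr_\pi(f)$. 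Finally, for the index factor I would use that $|\mc S_{\psi^H}| = \tfrac12 |\mc S_\psi|$ — because by \eqref{Spsiform} $\mc S_\psi = (\Z/2)^I/(\Z/2)^{\mathrm{diag}}$ has order $2^{|I|-1}$ and the endoscopic datum $H = H(s)$ splits the index set $I$ into two blocks $I = I_1 \sqcup I_2$ with $\mc S_{\psi^H} = \big((\Z/2)^{I_1}/(\Z/2)^{\mathrm{diag}}\big) \times \big((\Z/2)^{I_2}/(\Z/2)^{\mathrm{diag}}\big)$ of order $2^{|I|-2}$ — which turns $|\mc S_{\psi^H}|^{-1}$ into $2|\mc S_\psi|^{-1}$ and yields
\[
S_{\psi^H(s)}^{H(s)}(f^{H(s)}) = 2|\mc S_\psi|^{-1} \eps_\psi(s s_\psi) \sum_{\pi \in \Pi^G_\psi} \eta_\pi^\psi(s s_\psi)\tr_\pi(f).
\]

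The main obstacle I anticipate is the careful tracking of transfer factors and Whittaker-datum normalizations across the three inputs. The Local Character Relation (Theorem \ref{localcharidentity}) produces $\eta^{\psi_v}_{\pi_v}(s' s_{\psi_v})$ where $s'$ is a specific lift of $s$ to $S^\natural_{\psi}$ pinned down by the Whittaker datum and feeding into the transfer factor defining $f_v^{H(s)}$; similarly the stabilization $f \mapsto f^H = \prod_v f^{H_v}_v$ is only defined up to scalars multiplying to one (as noted after \eqref{Ipsidecomp}). I would need to check that these non-canonical choices are made compatibly at all places — the product formula $\prod_v \chi_{G_v} = 1$ and the constraint that the per-place scalars multiply to $1$ are what should save us — so that the global pairing $\langle\,\cdot\,,\,\cdot\,\rangle_{\mc S_\psi}$ is the honest trace-character pairing appearing in Arthur's Multiplicity Formula (Theorem \ref{ArthurMultiplicty}) and no stray sign or scalar survives. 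I'd also want to double-check the edge case where $s_\psi$ is trivial in $\mc S_\psi$ (when all $d_i$ have the same parity, cf.\ the remark after \eqref{Spsiform} and Lemma \ref{lemma epsilon parity}), where several of these characters collapse and the formulas should still be internally consistent.
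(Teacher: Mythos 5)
Your proposal is correct and follows essentially the same route as the paper: the first identity is read off from the way $I^G_\psi$ and $S^H_{\psi^H}$ are defined in \cite[\S 3.3]{KMSW14} so that the stabilization descends to individual parameters, and the second is exactly the paper's combination of the stable multiplicity formula, the local character relation, the endoscopic sign lemma, and the index computation $|\mc S_{\psi^H}|^{-1} = 2|\mc S_\psi|^{-1}$. The normalization subtleties you flag (the lift $s'$, the per-place transfer-factor scalars multiplying to one, and $\prod_v \chi_{G_v} = 1$) are real but are resolved exactly as you suggest, and the paper simply leaves them implicit.
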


\begin{proof}
The first statement follows from the exact definition of $S_\psi$ and $I_\psi$ in \cite[\S3.3]{KMSW14}. The second can be computed from the stable multiplicity formula, the local character relation, the endoscopic sign lemma that $\eps^H_{\psi^H}(s_{\psi^H}) = \eps^G_\psi(s s_\psi)$, and noting that $|\mc S_{\psi^H}|^{-1} = 2|\mc S_\psi|^{-1}$. 
\end{proof}

\section{AJ-packets and Pseudocoefficients} \label{sectionAJ}
\label{section Adams-Johnson packets}
We recall some more background at the real place. First, we define cohomological representations on $G_\infty$ and the $A$-packets that contain them. Second, we introduce some specific test functions to eventually plug into the trace formula. 

\subsection{Cohomological Representations}

\subsubsection{Infinitesimal characters} \label{ss infinitesimal character}
 Let $G_\infty$ be a reductive group over $\R$ and $\mf t$ be a Cartan subalgebra of $G_{\infty, \C}$. Assume for simplicity that $G_\infty$ has an elliptic maximal torus, as it will when we restrict it to be a unitary group. To irreducible representations of $G_\infty$, one associates an infinitesimal character $\lb \in \Om_{G_{\infty,\C}} \bs \Hom(\mf t, \C)$. This data is the same as a map from Weyl orbits of $X^*(\wh{\mf t}) = X_*(\mf t)$ to $\C$, which is further the same as a semisimple conjugacy class in $\wh{\mf g}$.

\begin{dfn} 
We say that the infinitesimal character $\lb$ is regular integral if it is that of a finite-dimensional representation of $G_{\infty, \C}$. 
\end{dfn}

If $\varphi_\infty$ is a Langlands parameter for $G_\infty$, then 
$\varphi|_{W_\C}$ is of the form $z \mapsto (z \bar z)^\mu (z/\bar z)^\nu$ for cocharacters $\mu, \nu \in X_*(\wh{\mf t})$ on some Cartan $\wh{\mf t}$ of $\wh G$.  All $\pi \in \Pi^G_{\varphi_\infty}$ then have infinitesimal character $\mu+\nu$.

\subsubsection{Cohomological representations}
Choose a maximal compact $K$ of $G_\infty$. Given a finite-dimensional representation $V_\xi$ with highest weight $\xi$ and a unirrep $\pi$ of $G_\infty$, we can consider the~$(\mf g, K)$-cohomology groups $H^i(\mf g, K ; \pi \otimes V_\xi)$ as in \cite{BW00}. 

\begin{dfn}
A unirrep $\pi$ of $G_\infty$ is called cohomological of weight $\xi$ if there is an~$i$ such that $H^i(\mf g, K ; \pi \otimes V_\xi) \neq 0$.
\end{dfn}

Every unirrep cohomological of weight $\xi$ necessarily has infinitesimal character $\xi + \rho_G$, which is regular integral by definition. For any real group $G_\infty$ and finite-dimensional representation $V_\xi$, there are only finitely many cohomological representations; an algorithm to list them and compute their cohomology is outlined in \cite{VZ84} where they are realized as ``cohomologically induced'' representations $A_{\mf q}(\xi)$ attached to $\theta$-stable parabolic subalgebras $\fq \subset \fg$. 

Finally, \cite{Sal99} proves that all unirreps with regular, integral infinitesimal character are cohomological. We therefore suggest using ``regular, integral infinitesimal character'' as a simpler working definition of cohomological.

\subsection{AJ-packets} \label{subsection AJ 3}
Both our multiplicity computations and subsequent cohomological applications will be phrased in terms of Adams-Johnson parameters, whose definition we now recall following \cite{kottwitz1990shimura}; see also \cite{AJ87, Arthur1989Unipotent}. 

\subsubsection{AJ-parameters} 
Denote the Weil group of $\BR$ by $W_\BR = W_{\BC} \sqcup j W_{\BC}$. 
\begin{dfn} \label{definition AJ parameters}
Let $ \psi_\infty: W_\BR \times \SL_2 \to {^L}G_\infty $  be an $A$-parameter, with $\wh L$ the centralizer of $\psi_\infty(W_\BC)$ in $\wh {G}$. 
 Then $\psi_\infty$ is an Adams-Johnson, or AJ-parameter, if:
\begin{itemize}
    \item[(i)] 
    $\psi_\infty(\SL_2)$ contains a principal unipotent element of $\wh {L}$,
    \item[(ii)] the identity component of $Z(\wh {L})^{W_\BR}$ is contained in $Z(\wh {G})$, 
    \item[(iii)] the infinitesimal character of the parameter $\varphi_{\psi_\infty}$ is regular. 
\end{itemize}
\end{dfn}

In condition (ii), the action of $W_\BR$ on $Z(\wh {L})$ is defined as conjugation by $\psi_\infty(W_\BR) \subset {^L}G$. Additionally, condition (i) implies that $\psi_\infty(W_\BC) \subset Z(\wh {L})$.  
It follows from \cite[Thm 5]{NP21}, that any parameter with regular integral infinitesimal character is an AJ-parameter.

We recall a more explicit description of AJ-parameters, summarizing the discussion in \cite[\S 8]{AMR18} in the case where $G_\infty$ has a compact maximal torus $T$ (since $E/F$ is~CM, all our unitary groups will satisfy this).  Let $(\wh {T},\wh {B}, X_{\alpha})$ be a $W_\BR$-stable  pinning of $\wh {G}$ defining~$\Ld G_\infty$. Then~$\wh {L}$ can be conjugated to be the Levi of a $\wh B$-standard parabolic. 
We additionally choose a Borel pair~$(T, B)$ of $G_{\infty,\BC}$ with~$T$ defined over~$\R$ as above. Via these splittings, $\wh {L}$ is identified with the dual of a Levi subgroup $L \subset G$ containing~$T$. One then constructs an embedding $\xi: \Ld L \to \Ld G_\infty$ such that $\psi_\infty = \xi \circ \psi_L$ for~$\psi_L$ the $A$-parameter associated to a unitary one-dimensional representation of $L$ whose differential we denote $\omega$; this is done in \cite[\S 5]{Arthur1989Unipotent}.

 Finally,  if $\psi_\infty$ is an AJ-parameter, let $I_\infty$ be the set of blocks of the Levi $\Ld L$ constructed above. There is a decomposition $\psi = \bigoplus_{i \in I_\infty} \psi_i$, where all of the $\psi_i$ are necessarily conjugate self-dual because they are pairwise distinct by regularity of the infinitesimal character. In particular, the multiplicity of each $\psi_i$ is 1. As such, the computations of \cite[\S 1.2.4]{KMSW14} give a canonical isomorphism
 \begin{equation}\label{SpsiAJ}
 S^\natural_{\psi_\infty} \simeq (\Z/2)^{I^+_\infty} = (\Z/2)^{I_\infty}.
 \end{equation}
Furthermore, the localization map $S^\natural_\psi \to S^\natural_{\psi_\infty}$
is the diagonal embedding induced by the surjection $I_\infty \to I$ and the canonical isomorphism \eqref{ACglobalSpsi}. 

\subsubsection{AJ-packets}\label{AJcombinatorial}
Adams-Johnson \cite{AJ87} construct packets attached to the above parameters. As described above, an AJ-parameter $\psi_\infty$, gives rise to a pair $(L,\omega)$ of a Levi subgroup $L \subset G_\infty$ and $\omega$ the differential of a unitary character of $L$, such that $\om+\rho_L$ is the infinitesimal character of $\psi_\infty$. 

Let $\Om(G,T)$, $\Om(L,T)$, and $\Om_{\BR}(G,T)$ be the Weyl groups of $G_{\infty, \BC}$, $L_{\BC}$, and~$G_\infty$ respectively. Then the elements of the packet $\Pi_\psi = \Pi(L,\omega)$ constructed by Adams-Johnson are in bijection with 
\[
\Sigma_L = \Om(L,T)\dom \Om(G,T)/\Om_{\BR}(G,T).
\] 
For each $w \in \Sigma_L$, consider the inner form $L_w = w^{-1}Lw$; it is the centralizer of an element $x \in i \Lie(T)$, itself giving rise to a $\theta$-stable parabolic subalgebra $\fq_w \subset \fg$. Similarly, let $\omega_w = w^{-1}\omega$. Then Adams-Johnson define \[ \Pi(L,\omega) = \{ A_{\fq_w}(\omega_w); w \in \Sigma_L \}, \] where $A_{\fq_w}(\omega_w)$ is the cohomologically induced representation from \cite{VZ84}. 

We give more details on AJ-packets of cohomological representations for unitary groups in \S \ref{upqparam}, including a combinatorial parameterization of the representations in a packet, their relation with cohomology of locally symmetric spaces, and explicit formulas for the characters of $S^\natural_\psi$ associated to each parameter. 

\subsubsection{Compatibility of descriptions}\label{sectionAJcompatible}
We check that Adams-Johnson's construction of packets matches that in \cite{KMSW14}. We thank Jeffrey Adams for explaining this point to us. 

By Theorem 4.18 in \cite{Ara22}, Adams-Johnson's packets are a special case of the ABV-packets defined in \cite{ABV92}. These ABV-packets have further recently been shown by Arancibia-Mezo \cite{AM2022Equivalent} to agree with the packets built by Mok \cite{Mok15} in the quasisplit case for any given parameter. Finally, the packets of \cite{KMSW14} on non-quasisplit unitary groups are determined by trace identities comparing them to the quasisplit inner form. These trace identities are automatically satisfied by ABV-packets, so we also get that ABV-packets match the packets of \cite{KMSW14}. 

In total, we may use the combinatorial description of Adams-Johnson to understand the structure of AJ-packets on all groups we are considering.






\subsection{Pseudocoefficients and Euler-Poincar\'e Functions} \label{ss EP functions}
We also recall the definitions of certain special test functions. Recall that a \emph{standard module} of $G_\infty$ is the full (possibly reducible) parabolic induction of a discrete series or limit of discrete series representation on a standard Levi $M$. 

If $\pi_d$ is a discrete series representation of $G_\infty$, the paper \cite{CD90} constructs \emph{pseudocoefficients} $\varphi_{\pi_d}$ satisfying
\[
\tr_{\sigma}(\varphi_{\pi_d}) = \1_{\sigma = \pi_d}
\]
for all standard modules $\sigma$. We also define the Euler-Poincar\'e function
\[
\EP_\lb = \f1{|\Pi_\disc(\lb)|}\sum_{\pi_d \in \Pi_\disc(\lb)} \varphi_{\pi_d},
\]
where $\Pi_\disc(\lb)$ is the discrete series $L$-packet of infinitesimal character $\lb$. Beware that our ``endoscopic normalization'' of Euler-Poincar\'e functions is different from the usual one in the literature to work better with endoscopic transfer. 
    
\subsection{Trace Identities}
As some useful facts about traces against AJ-packets:

\subsubsection{Character formulas}
First, let $\psi_\infty : W_\R \times \SL_2 \to \Ld G_\infty$ be a parameter for an AJ-packet with infinitesimal character $\lb$. We collect some combinatorial results about the character formulas of elements in $\Pi_{\psi_\infty}$. Recall that the character formula for a representation $\pi$ of $G_\infty$ is its expansion in the Grothendieck group as a linear combination of standard modules.

\begin{lem}\label{AJ8.8}
Let $\pi_d$ be a discrete series representation of $G_\infty$ with infinitesimal character $\lb$. Then there is a unique $\pi \in \Pi_{\psi_\infty}$ such that $\pi_d$ appears in the character formula for $\pi$. 
\end{lem}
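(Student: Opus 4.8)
The plan is to reduce the statement to the combinatorial description of AJ-packets recalled in \S\ref{AJcombinatorial}, where the elements of $\Pi_\psi = \Pi(L,\omega)$ are the cohomologically induced modules $A_{\fq_w}(\omega_w)$ indexed by $w \in \Sigma_L = \Om(L,T)\dom\Om(G,T)/\Om_\R(G,T)$. First I would recall that a cohomologically induced module $A_{\fq_w}(\omega_w)$ in the good range has a known character formula: its expansion in the Grothendieck group is a signed sum of standard modules, and in the present setting (regular integral infinitesimal character $\lb$, so that the $A_{\fq_w}(\omega_w)$ are irreducible and unitary) the discrete series that appear in it are exactly those indexed by the Weyl chambers ``visible'' from $\fq_w$ — concretely, by the cosets in $\Om_\R(G,T)$ lying in a specified subset determined by $w$. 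This is the content of the Adams--Johnson analysis (the lemma is literally their (8.8)), and I would cite \cite{AJ87} (together with \cite{VZ84} for the $A_\fq(\lb)$ character formulas and \cite{kottwitz1990shimura} for the packet normalization). The key structural fact is that the discrete series representations of $G_\infty$ with infinitesimal character $\lb$ are parameterized by $\Om(T_{\mathrm{cpt}})\dom\Om(G,T)$ for $T$ a compact Cartan, i.e. by $\Om_\R(G,T)$-cosets, since $G_\infty$ has discrete series.

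The main step is then a counting/partition argument. Each $\pi = A_{\fq_w}(\omega_w)$ contributes to its character formula a set $S_w$ of discrete series, namely those indexed by cosets $u\Om_\R(G,T)$ with $u$ ranging over $\Om(L,T)w$ (the left $\Om(L,T)$-orbit of $w$), and I would show that as $w$ runs over representatives of $\Sigma_L = \Om(L,T)\dom\Om(G,T)/\Om_\R(G,T)$ these sets $S_w$ partition the full set of discrete series of infinitesimal character $\lb$. Indeed, $\Om(G,T) = \bigsqcup_w \Om(L,T)\, w\, \Om_\R(G,T)$ by definition of the double-coset space, and quotienting on the right by $\Om_\R(G,T)$ exhibits the discrete series (= right $\Om_\R(G,T)$-cosets) as a disjoint union over $w$ of the orbits $\Om(L,T) w \Om_\R(G,T)/\Om_\R(G,T)$. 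Non-emptiness of each $S_w$ is automatic since each double coset is non-empty. The existence part of the lemma is then exactly ``every discrete series lies in some $S_w$'', and uniqueness is ``the $S_w$ are pairwise disjoint'' — both immediate from the double-coset partition.

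The one point that requires care — and which I expect to be the main obstacle — is verifying that the discrete series appearing in the character formula of $A_{\fq_w}(\omega_w)$ are \emph{precisely} the set $S_w$ described above, with no discrete series of infinitesimal character $\lb$ missing and none extraneous. This is where one must actually invoke the Vogan--Zuckerman / Adams--Johnson computation of the character of a cohomologically induced module rather than just formal manipulation of Weyl groups: one needs that the ``leading term'' standard modules of $A_{\fq_w}(\omega_w)$ that happen to be honest discrete series (as opposed to properly induced standard modules) are governed by the compact-Cartan Weyl-group combinatorics above. I would handle this by citing the explicit formula of \cite{AJ87} (their \S2--3 and the character identity in their \S8), noting that regularity of $\lb$ rules out the limit-of-discrete-series degeneracies and ensures the relevant Kazhdan--Lusztig-type data is trivial. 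With that input the partition argument in the previous paragraph closes the proof.

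Remark: an alternative, perhaps cleaner route is to use the endoscopic characterization — the stable character $\sum_{\pi\in\Pi_\psi}\tr_\pi$ equals the stable character of the discrete series $L$-packet $\Pi_\disc(\lb)$ up to the standard normalization (this is part of what makes $\Pi_\psi$ an $A$-packet refining the tempered story), so the multiset of discrete series occurring across all character formulas of $\Pi_\psi$ is exactly $\Pi_\disc(\lb)$ each with multiplicity one; combined with the fact that $|\Pi_\psi|$ and $|\Pi_\disc(\lb)|$ are related through the index sets $\Sigma_L$ and $\Om_\R(G,T)$, one again gets that the assignment $\pi_d \mapsto$ (the unique $\pi$ containing it) is well-defined. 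I would include whichever of the two arguments meshes better with the normalizations fixed in \S\ref{sectionAJ}.
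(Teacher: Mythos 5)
Your proposal is correct and takes essentially the same route as the paper, whose entire proof is the citation of Lemma 8.8 in \cite{AJ87}. Your double-coset partition argument is an unwinding of the content of that lemma rather than an independent proof, and you correctly flag the only nontrivial input — that the discrete series occurring in the character formula of $A_{\fq_w}(\omega_w)$ are exactly those indexed by the double coset $\Om(L,T)\,w\,\Om_\R(G,T)$ — as coming from the Adams--Johnson character computation itself.
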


\begin{proof}
This is a consequence of Lemma 8.8 in \cite{AJ87}.
\end{proof}

Next, let $\psi_\disc = \psi_{\disc}(\lb)$ be the discrete parameter with infinitesimal character~$\lb$ (this is the AJ-parameter with trivial Arthur-$\SL_2$; its associated packet $\Pi_{\disc}(\lb)$ is a discrete series $L$-packet
). We have an inclusion $S^\natural_{\psi_\infty} \subseteq S^\natural_{\psi_\disc}$. 

\begin{lem}\label{TaiAJ}
Let $\pi \in \Pi_{\psi_\infty}$ and $\pi_d$ any discrete series appearing in the character formula for $\pi$. Then $\eta^{\psi_\infty}_\pi = \eta^{\psi_\disc}_{\pi_d} |_{S^\natural_{\psi_\infty}}$. 
\end{lem}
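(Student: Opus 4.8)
The plan is to reduce the statement about the non-tempered AJ-packet $\Pi_\psi$ to the analogous fact for the discrete-series $L$-packet $\Pi_{\psi_\disc}$, using the Adams-Johnson construction recalled in \S\ref{AJcombinatorial}. Recall that $\psi_\infty = \xi \circ \psi_L$ where $\psi_L$ is the AJ-parameter of a one-dimensional unitary representation of the Levi $L$, and that $\Pi_\psi = \{A_{\fq_w}(\omega_w) : w \in \Sigma_L\}$. The character $\eta^\psi_\pi$ for $\pi = A_{\fq_w}(\omega_w)$ is, by the description in \cite{AJ87} (see also \cite[\S8]{AMR18}), computed from the combinatorial data of $w \in \Sigma_L = \Om(L,T)\dom\Om(G,T)/\Om_\R(G,T)$ via the identification \eqref{SpsiAJ}: concretely, $\eta^\psi_\pi$ restricted to the $i$-th block of $I_\infty$ records a sign built from the action of (a representative of) $w$ on the roots in that block. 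The first step is to recall this explicit recipe.

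Next I would recall the corresponding recipe for the discrete series: when $\psi_\disc$ is the trivial-Arthur-$\SL_2$ parameter of the same infinitesimal character $\lb$, the Levi is a compact Cartan $T$, the blocks $I_\infty^{\disc}$ refine $I_\infty$, and $\Pi_{\psi_\disc}$ is parameterized by $\Om(T,T)\dom\Om(G,T)/\Om_\R(G,T) = \Om(G,T)/\Om_\R(G,T)$; the character $\eta^{\psi_\disc}_{\pi_d}$ of $S^\natural_{\psi_\disc} = (\Z/2)^{I_\infty^{\disc}}$ is again read off from the Weyl-group element $w_d$ attached to $\pi_d$. Now the key point: by Lemma \ref{AJ8.8} (i.e. \cite[Lem.~8.8]{AJ87}), the discrete series $\pi_d$ appearing in the character formula of $\pi = A_{\fq_w}(\omega_w)$ are precisely those whose parameter $w_d \in \Om(G,T)/\Om_\R(G,T)$ lies in the coset $\Om(L,T)\cdot w$. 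Under the inclusion $S^\natural_\psi = (\Z/2)^{I_\infty} \hookrightarrow (\Z/2)^{I_\infty^{\disc}} = S^\natural_{\psi_\disc}$ (the diagonal-within-each-block embedding dual to $I_\infty^{\disc}\to I_\infty$), the sign recipe for $w$ on a block $i \in I_\infty$ and the sign recipe for $w_d$ on any refining block of $i$ agree, because modifying $w$ by an element of $\Om(L,T)$ only permutes roots within the blocks of $L$ and does not change the relevant sign on the coarser block structure of $I_\infty$. This gives exactly $\eta^\psi_\pi = \eta^{\psi_\disc}_{\pi_d}|_{S^\natural_\psi}$.

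To carry this out cleanly I would package it as: (1) identify the element $s_\psi \in S^\natural_\psi$ and a general $s \in S^\natural_\psi$ with a subset of blocks $I_\infty$; (2) quote the AJ character formula to write both $\eta^\psi_\pi(s)$ and $\eta^{\psi_\disc}_{\pi_d}(s)$ (the latter for the image of $s$ under the block-diagonal inclusion) as the same product of $\pm 1$'s indexed by that subset, with each factor depending only on the image of the Weyl element in $\Om(L,T)\dom\Om(G,T)$; (3) invoke Lemma \ref{AJ8.8} to know $w_d$ and $w$ lie in the same such double coset, hence give the same factors. Alternatively—and this may be the slickest route—one can deduce it purely from the local character relation of Theorem \ref{localcharidentity} together with Lemma \ref{AJ8.8}: the stable packet trace $\tr_{\psi_\infty}$ endoscopically transferred via $s$ is, on one hand, a specific signed sum of $\tr_\pi$ over $\Pi_\psi$, and on the other hand (expanding each $\pi$ into its character formula and using that pseudocoefficients detect standard modules) a signed sum of discrete series that must match the transfer of $\tr_{\psi_\disc}$ restricted appropriately; comparing coefficients of a fixed $\pi_d$ forces the identity of characters.

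The main obstacle I anticipate is bookkeeping the compatibility of the two block decompositions and the associated sign conventions—i.e.\ checking that the Adams-Johnson normalization of $\eta$ (as used implicitly via the compatibility in \S\ref{sectionAJcompatible}) and the KMSW normalization agree on the nose, including Whittaker-datum choices, so that the restriction map $S^\natural_\psi \hookrightarrow S^\natural_{\psi_\disc}$ is the \emph{identity}-compatible one and no stray sign twist appears. This is exactly the kind of normalization subtlety the authors flag elsewhere (the footnote in \S\ref{ACrepexten}), so I would be careful to cite \cite[\S8]{AJ87} or \cite[\S8]{AMR18} for the precise formula rather than reprove it, and lean on Lemma \ref{AJ8.8} to handle the combinatorial matching.
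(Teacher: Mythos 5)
Your proposal is correct and follows essentially the same route as the paper: both arguments rest on the fact that the character $\eta^\psi_\pi$ is read off from the Weyl element $w$ parameterizing $\pi = A_{\fq_w}(\omega_w)$ (via the $\kappa(w)$ of \cite[Thm.~2.21]{AJ87}), and that by the character formula / Lemma \ref{AJ8.8} the discrete series $\pi_d$ in the character formula of $\pi$ is attached to the same $w$ modulo $\Om(L,T)$, hence yields the same values upon restriction to $S^\natural_\psi$. The normalization caveat you flag is exactly what the paper handles by citing Ta\"ibi and \cite[\S5.6]{Kal16rig}.
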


\begin{proof}
The values of $\eta^{\psi_\infty}_\pi$ on the lifts $s'$ appearing in the endoscopic character identity \ref{localcharidentity} are determined by the realization of $\pi$ as $A(w \lb)$ and the values $\kappa(w)$ in Theorem 2.21 of \cite{AJ87}. It is therefore determined on all of $S^\natural_\psi$ since we also know that it also restricts to the character $\chi_{G_\infty}$ of Theorem \S\ref{LocalPackets}. However, the character formula in Theorem 8.2 of \cite{AJ87} can be seen to show that $\pi_d$ corresponds to the same $w$ as $\pi$ relative to a given choice of Whittaker datum. It therefore gets assigned the same values $\kappa(w)$. 

See also the discussion on page 57 of \cite{Tai17} summarizing parts of \cite[\S5]{Arthur1989Unipotent} for a more explicit computation of these characters in the quasisplit case. By a parenthetical note there, Ta\"ibi's argument should generalize to non-quasisplit groups through the methods of \cite[\S5.6]{Kal16rig}. 
\end{proof}


\subsubsection{Identities}
Now we can prove our trace identities:
\begin{lem}\label{charps}
Let $\pi_d$ be a discrete series of $G_\infty$ with infinitesimal character $\lb$. 
\begin{enumerate}
\item
Let $\pi_0$ be a a representation of $G_\infty$ such that $\pi_d$ appears in its character formula with coefficient $\varepsilon$. Then for all parameters $\psi_\infty$ such that $\pi_0 \in \Pi_{\psi_\infty}$: for all $\pi \in \Pi_{\psi_\infty}$,
\[
\tr_{\pi}(\varphi_{\pi_d}) = \begin{cases} \varepsilon & \pi = \pi_0 \\ 0 & \text{else} \end{cases}.
\]
\item
Let $\psi_\infty$ be a parameter 
 with infinitesimal character $\lb$. Then for all $s \in S^\natural_{\psi_\infty}$,
\[
\sum_{\pi_\infty \in \Pi_{\psi_\infty}} \eta^{\psi_\infty}_{\pi_\infty}(s) \tr_{\pi_\infty}(\varphi_{\pi_d}) = \eta^{\psi_\disc(\lb)}_{\pi_d}(s_{\psi_\infty}s )
\]
(where we implicitly use that $S^\natural_{\psi_\infty} \subseteq S^\natural_{\psi_\disc(\lb)}$). 
\end{enumerate}
\end{lem}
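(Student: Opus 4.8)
The plan is to reduce both parts to the character formulas and character identities already assembled in Lemmas~\ref{AJ8.8}, \ref{TaiAJ}, and the defining property of pseudocoefficients. For part~(1), recall that the pseudocoefficient $\varphi_{\pi_d}$ of \cite{CD90} satisfies $\tr_\sigma(\varphi_{\pi_d}) = \1_{\sigma = \pi_d}$ for all standard modules $\sigma$. Since any $\pi \in \Pi_{\psi_\infty}$ is, by the Adams--Johnson description, cohomological with regular integral infinitesimal character $\lb$, its character formula expands it in the Grothendieck group as an integral combination of standard modules, all of which have infinitesimal character $\lb$; in fact for cohomological representations these standard modules are (limits of) discrete series $\pi_d'$ with infinitesimal character $\lb$ together with properly-induced standard modules. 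Applying $\tr_{(-)}(\varphi_{\pi_d})$ to this expansion and using linearity, $\tr_\pi(\varphi_{\pi_d})$ equals the coefficient of $\pi_d$ in the character formula of $\pi$. By Lemma~\ref{AJ8.8}, among all $\pi \in \Pi_{\psi_\infty}$ there is exactly one — namely the $\pi$ for which $\pi_d$ appears — and by hypothesis that representation is $\pi_0$ with coefficient $\varepsilon$. (Here one must note that since $\pi_d$ has infinitesimal character $\lb$, the hypothesis $\pi_0 \in \Pi_{\psi_\infty}$ forces $\psi_\infty$ to have infinitesimal character $\lb$, so Lemma~\ref{AJ8.8} applies to this packet.) Hence $\tr_{\pi_{\psi_\infty}}(\varphi_{\pi_d}) = \varepsilon$ for $\pi = \pi_0$ and $0$ otherwise, which is the claim — except one should double-check the normalization: $\tr_{\pi_{\psi_\infty}}$ here means $\tr_{\pi}$ for each $\pi$ in the packet, consistent with how part~(2) is phrased.

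For part~(2), plug in the result of part~(1). We compute
\[
\sum_{\pi_\infty \in \Pi_{\psi_\infty}} \eta^{\psi_\infty}_{\pi_\infty}(s)\, \tr_{\pi_\infty}(\varphi_{\pi_d}).
\]
By part~(1), the only nonzero term is the one where $\pi_\infty$ is the unique representation in $\Pi_{\psi_\infty}$ whose character formula contains $\pi_d$, and that term contributes $\eta^{\psi_\infty}_{\pi_\infty}(s)\cdot \varepsilon$ where $\varepsilon$ is the coefficient of $\pi_d$ in that character formula. Now apply Lemma~\ref{TaiAJ}: for this $\pi_\infty$ and this $\pi_d$ we have $\eta^{\psi_\infty}_{\pi_\infty} = \eta^{\psi_\disc(\lb)}_{\pi_d}\!\mid_{S^\natural_{\psi_\infty}}$, so $\eta^{\psi_\infty}_{\pi_\infty}(s) = \eta^{\psi_\disc(\lb)}_{\pi_d}(s)$. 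It therefore remains to identify $\varepsilon \cdot \eta^{\psi_\disc(\lb)}_{\pi_d}(s)$ with $\eta^{\psi_\disc(\lb)}_{\pi_d}(s_{\psi_\infty} s)$, i.e.\ to show that $\varepsilon = \eta^{\psi_\disc(\lb)}_{\pi_d}(s_{\psi_\infty})$. This is exactly the statement that the sign with which a discrete series $\pi_d$ enters the character formula of its AJ-packet representative is given by evaluating the associated character at the distinguished element $s_{\psi_\infty} = \psi_\infty(1\times -1)$; this is the content of the character formula in Theorem~8.2 of \cite{AJ87} (the $\kappa(w)$-signs) combined with the identification of those signs with $\eta$-values used in the proof of Lemma~\ref{TaiAJ}. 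Assembling these gives the displayed identity.

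The main obstacle I expect is the bookkeeping in the last step of part~(2): carefully matching the combinatorial sign $\varepsilon$ (the coefficient of $\pi_d$ in the character formula, i.e.\ one of the $\kappa(w)$ signs of \cite[Thm.~8.2]{AJ87}) with the value $\eta^{\psi_\disc(\lb)}_{\pi_d}(s_{\psi_\infty})$ of Kaletha--Minguez--Shin--White's character at the Arthur element. This requires tracking the Whittaker-datum normalization through the identification $S^\natural_{\psi_\infty} \simeq (\Z/2)^{I_\infty}$ of \eqref{SpsiAJ}, recalling that $s_{\psi_\infty}$ is supported on the blocks $i \in I_\infty$ with $d_i$ even, and verifying that $\pi_d$ sits at the ``base point'' $w$ relative to the chosen Whittaker datum so that it receives exactly these signs — precisely the normalization subtlety already flagged in the proof of Lemma~\ref{TaiAJ}. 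A secondary, more minor point is ensuring that all standard modules other than discrete series with infinitesimal character $\lb$ genuinely contribute zero to $\tr_{(-)}(\varphi_{\pi_d})$, which follows from the defining property of pseudocoefficients together with the fact that a properly induced standard module is never equal to the discrete series $\pi_d$.
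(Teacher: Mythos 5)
Your argument for part (1) is the same as the paper's: the defining property of pseudocoefficients reduces $\tr_\pi(\varphi_{\pi_d})$ to the coefficient of $\pi_d$ in the character formula of $\pi$, and Lemma \ref{AJ8.8} gives uniqueness of the packet member containing $\pi_d$. Part (2) also follows the paper's skeleton — reduce to the single surviving term, then pin down $\varepsilon = \eta^{\psi_\infty}_{\pi_0}(s_{\psi_\infty})$ and finish with Lemma \ref{TaiAJ}.

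The one place you diverge is exactly the step you flag as the main obstacle: identifying $\varepsilon$ with $\eta^{\psi_\disc(\lb)}_{\pi_d}(s_{\psi_\infty})$. You propose to do this by re-entering the $\kappa(w)$-sign bookkeeping of \cite[Thm.~8.2]{AJ87} and tracking the Whittaker normalization through \eqref{SpsiAJ}. That route can be made to work, but the paper avoids it entirely with a stability argument: by \cite[(8.10)]{AJ87}, every discrete series with infinitesimal character $\lb$ appears with multiplicity exactly $1$ in the character formula of the stable combination
\[
\sum_{\pi \in \Pi_{\psi_\infty}} \eta^{\psi_\infty}_{\pi}(s_{\psi_\infty})\, \pi .
\]
Since $\pi_0$ is the unique packet member whose character formula contains $\pi_d$, the coefficient of $\pi_d$ in this sum is $\varepsilon\, \eta^{\psi_\infty}_{\pi_0}(s_{\psi_\infty}) = 1$, whence $\varepsilon = \eta^{\psi_\infty}_{\pi_0}(s_{\psi_\infty})$ with no sign-chasing; one then writes the answer as $\eta^{\psi_\infty}_{\pi_0}(s_{\psi_\infty}s)$ and applies Lemma \ref{TaiAJ} once at the end. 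I'd recommend replacing your bookkeeping step with this stability input — it is shorter and removes the normalization risk you correctly identified as the weak point of your write-up. Everything else in your proposal is sound.
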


\begin{proof}
For (1), $\psi_\infty$ then has infinitesimal character $\lb$ and is therefore necessarily an AJ-parameter. Therefore $\pi_0$ is then the unique $\pi \in \Pi_{\psi_\infty}$ such that $\pi_d$ appears in the character formula for $\pi$ by \ref{AJ8.8}. 

For (2), choose $\pi_0$ to be the unique $\pi \in \Pi_{\psi_\infty}$ with $\pi_d$ in its character formula. Let it appear with coefficient $\varepsilon$. By (1), the sum of traces is $\varepsilon \eta^{\psi_\infty}_{\pi_0}(s)$. 

To compute $\varepsilon$, we use that by \cite[(8.10)]{AJ87} all discrete series appear in the character formula of the stable sum
\begin{equation}
\sum_{\pi \in \Pi_{\psi_\infty}} \eta^{\psi_\infty}_\pi(s_{\psi_\infty}) \pi
\end{equation} 
with multiplicity $1$. Therefore, $\varepsilon = \eta^{\psi}_{\pi_0}(s_{\psi_\infty})$. Lemma \ref{TaiAJ} finishes the proof.
%
%
\end{proof}

As an important special case using that $S^\natural_{\psi_\infty}$ is a 2-group:

\begin{cor}\label{straceAJ}
Let $\pi_d$ be a discrete series of $G_\infty$ with infinitesimal character $\lb$ and let $\psi_\infty$ be a parameter with infinitesimal character $\lb$. Then
\[
\tr_{\psi_\infty}(\EP_\lb) = \tr_{\psi_\infty}(\varphi_{\pi_d}) = 1.
\]
\end{cor}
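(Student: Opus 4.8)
The plan is to deduce Corollary \ref{straceAJ} from Lemma \ref{charps}(2) by evaluating the stable packet trace against the Euler--Poincar\'e function and against a single pseudocoefficient, using that the discrete series parameter $\psi_{\disc}(\lb)$ has trivial Arthur-$\SL_2$. First I would unwind the definition of the stable packet trace: by definition $\tr_{\psi_\infty}(\varphi_{\pi_d}) = \sum_{\pi_\infty \in \Pi_{\psi_\infty}} \eta^{\psi_\infty}_{\pi_\infty}(s_{\psi_\infty}) \tr_{\pi_\infty}(\varphi_{\pi_d})$, which is exactly the left-hand side of Lemma \ref{charps}(2) evaluated at $s = s_{\psi_\infty}$. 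Applying that lemma gives $\tr_{\psi_\infty}(\varphi_{\pi_d}) = \eta^{\psi_{\disc}(\lb)}_{\pi_d}(s_{\psi_\infty} s_{\psi_\infty}) = \eta^{\psi_{\disc}(\lb)}_{\pi_d}(s_{\psi_\infty}^2)$. Since $S^\natural_{\psi_{\disc}(\lb)}$ is a $2$-group (it is $(\Z/2)^{I}$ by the computations of \cite{KMSW14} recalled around \eqref{Spsiform} and \eqref{SpsiAJ}), we have $s_{\psi_\infty}^2 = 1$, and $\eta^{\psi_{\disc}(\lb)}_{\pi_d}$ is a character so its value at the identity is $1$. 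This establishes $\tr_{\psi_\infty}(\varphi_{\pi_d}) = 1$.

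Next I would handle the Euler--Poincar\'e function. Recall $\EP_\lb = |\Pi_\disc(\lb)|^{-1} \sum_{\pi_d' \in \Pi_\disc(\lb)} \varphi_{\pi_d'}$, where $\Pi_\disc(\lb)$ is the discrete series $L$-packet of infinitesimal character $\lb$. By linearity of $\tr_{\psi_\infty}$ and the previous paragraph applied to each $\pi_d' \in \Pi_\disc(\lb)$ (each such $\pi_d'$ is a discrete series with infinitesimal character $\lb$, so the argument applies verbatim), we get $\tr_{\psi_\infty}(\EP_\lb) = |\Pi_\disc(\lb)|^{-1} \sum_{\pi_d'} 1 = 1$. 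Combined with the computation for the single pseudocoefficient $\varphi_{\pi_d}$, this gives the chain of equalities $\tr_{\psi_\infty}(\EP_\lb) = \tr_{\psi_\infty}(\varphi_{\pi_d}) = 1$ as claimed.

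The only subtle point---and the step I expect to require the most care---is verifying that Lemma \ref{charps}(2) genuinely applies in the generality needed here: namely that $\psi_\infty$ having infinitesimal character $\lb$ forces it to be an AJ-parameter (so that the character-formula machinery of \cite{AJ87} underlying Lemma \ref{charps} is available), and that the inclusion $S^\natural_{\psi_\infty} \subseteq S^\natural_{\psi_{\disc}(\lb)}$ is set up so that $s_{\psi_\infty}$ is a well-defined element on which $\eta^{\psi_{\disc}(\lb)}_{\pi_d}$ can be evaluated. The first is exactly the content of the remark after Definition \ref{definition AJ parameters} (via \cite[Theorem 5]{NP21}): any parameter with regular integral infinitesimal character is AJ, and cohomological infinitesimal characters are regular integral. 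The second inclusion and the identification of $s_{\psi_\infty}$ inside the larger group are precisely the compatibility statements recorded after \eqref{SpsiAJ}. Granting these, everything else is the short formal manipulation above.
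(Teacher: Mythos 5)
Your proof is correct and is exactly the argument the paper intends: evaluate Lemma \ref{charps}(2) at $s = s_{\psi_\infty}$, use that the component group is a $2$-group so $\eta^{\psi_\disc(\lb)}_{\pi_d}(s_{\psi_\infty}^2)=1$, and average over the discrete series packet for the $\EP_\lb$ statement (this is precisely the computation the paper carries out again inside the proof of Proposition \ref{SMform}). No gaps.
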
 

\section{Shapes}\label{sectionshapes}
We now come to the key definition of this paper: the invariant of a \emph{(refined) shape}~$\Delta$ attached to an an Arthur parameter $\psi$. We construct the 
invariant to satisfy:
\begin{itemize}
    \item $\Delta$ determines the $H \in \wtd {\mc E}_\el(N)$ attached to $\psi$. 
    \item $\Delta$ determines the local factor $\psi_\infty$ (and in particular the Arthur-$\SL_2$), as well as the pair $(\mS_\psi,s_\psi)$.
    \item For shapes $\Delta$ associated to \emph{cuspidal} parameters, there is a well-understood  
    geometric-side expression for the trace against the $\Delta$-part of the automorphic spectrum of $H$.

    \item The stable multiplicity formula for the $\psi$-part $S^H_\psi$ of $S^H_\disc$ can be well-enough understood inductively in terms of the $S^{H_i}_{\psi_i}$ for cuspidal $\psi_i$, 
    through algorithms uniform over all $\psi$ with shape $\Delta$. 
\end{itemize}
 These properties are in turn exactly those needed for the inductive argument of \S\ref{strategy}.

\subsection{Infinitesimal and Central Characters}
We need some preliminary details on infinitesimal and central characters:

\subsubsection{Infinitesimal characters in the classification} 
Let $G^* \in \wtd {\mc E}_\sm(N)$. Then $G^*_\infty = U^*_N(F_\infty)$ for $U^*$ the quasisplit inner form. The Lie algebra~$\wh{\mf g}_\infty = \gl_N(F_\infty \otimes_\R \C)$, so consider an infinitesimal character of $G^*$ as a semisimple matrix up to conjugacy, or in other words, 
an unordered sequence
\[
\lb = (\lb_1, \lb_2, \dotsc,  \lb_n)
\]
with $\lb_j \in F_\infty \otimes_\R \C$. We can further expand out each $\lb_j$ as a list of complex numbers~$\lb_{j,v}$ for each place $v \in \infty$ of $F$ (which is necessarily real). 


It is also sometimes useful to think of each local component $\lb_v$ as the generating function $\sum_j X^{\lb_{j,v}}$. In this way, if each $\tau_{i,v}$ has infinitesimal character $\lb^{(i)}_v$, we have infinitesimal character assignment
\begin{equation}\label{infcharform}
\lf( \bigoplus_i \tau_i[d_i] \ri)_v \mapsto \sum_i \lb^{(i)}_v[d_i] := \sum_i \lb^{(i)}_v \sum_{l=1}^{d_i} X^{\f{d+1}2 - l}. 
\end{equation}
It can be seen from this that the character of $\tau[d]$ determines that of $\tau$. 

Finally, recall from \S\ref{ss infinitesimal character} that we call $\lb$ regular integral if it is the infinitesimal character of a finite-dimensional representation. Equivalently, for each $v \in \infty$ the $\lb_{i,v}$ are distinct and are all integers if $N$ is odd or all half-integers if $N$ is even. 




\subsection{Definitions}\label{Shapesdef}
Our notion of shape is built off of the details of how Mok assigns an element of $\wtd {\mc E}_\el(N)$ to a parameter. 

In \ref{ACassigntogroup} we saw that for each cuspidal parameter $\tau \in \wtd \Psi_\el(N)$ with regular integral infinitesimal character $\lb$ at infinity and of parity $\delta = \pm 1$, there exists a unique $H = H(\delta) \in \wtd {\mc E}_\sm(N)$ depending only on $\delta$ such that $\tau$ is a parameter for $H(\delta)$. 
Motivated by this, we define:
\begin{dfn}
A \emph{(refined) shape} is a sequence
\[
\Delta = (T_i, d_i, \lb_i, \eta_i)_i
\]
up to permutation and where the $T_i$ and $d_i$ are positive integers, $\lb_i$ is an infinitesimal character of rank $T_i$, and $\eta_i = \pm 1$.  

We say that $\psi \in \Delta$, or that $\psi$ has (global) shape $\Delta$, if $\psi$ is elliptic and~$\psi = \bigoplus_i \tau_i[d_i]$ with each $\tau_i$ of rank $T_i$ and such that each $\tau_i$ has infinitesimal character~$\lb_i$ at infinity and is of parity $\delta_i = \eta_i$. 

We let $\Psi_\el(\Delta) \subseteq \wtd \Psi_\el(N)$ be the set of all elliptic, self-dual parameters on $G(N)$ of shape $\Delta$. 
\end{dfn}

\begin{note}
For this work, all shapes will be refined. Beware that this does not match the usage of shape elsewhere in the literature, e.g. \cite{MS19}.
\end{note}

\begin{note}
Unitary groups $G$ may be realized as twisted endoscopic groups of $\wtd G(N)$ in two different ways. The $\eta_i$ data is a technical necessity to select which of these realizations the shape corresponds to---it is not very important conceptually. 
\end{note}


\begin{dfn}
Let $\lb$ be an infinitesimal character of rank $N$ and $\eta = \pm 1$. Then~$\Sigma_{\lb, \eta}$ is the shape $(N, 1, \lb, \eta)$.
\end{dfn}

\begin{dfn}
The shape $\Delta$ is \emph{integral} if its \emph{total} infinitesimal character as determined by formula \eqref{infcharform} is regular integral.
\end{dfn}

In particular, if $\Delta$ is integral, then each of the $\lb_i$ must be regular integral, though this isn't sufficient. 

\subsection{Properties}
We list the key properties of shapes, and stress one last time that although we drooped the adjective, we refer to refined shapes:
\begin{prop}\label{shapetogroup}
Let the shape $\Delta$ have rank $N$ such that all the $\lb_i$ are integral. Then there is a group $H(\Delta) \in \wtd {\mc E}_\el(N)$ such that all $\psi \in \Delta$ are parameters for $H(\Delta)$. 
\end{prop}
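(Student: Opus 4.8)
The plan is that, once one unwinds Mok's rule for attaching an elliptic parameter to an element of $\mc E_\el(N)$ (recalled in \S\ref{ACassigntogroup}), the proposition becomes a bookkeeping statement: the output of that rule depends only on the shape datum $\Delta$, not on the individual $\psi \in \Delta$. So the strategy is to write down the group $H(\Delta)$ by an explicit combinatorial formula in $\Delta$ and then verify that every $\psi \in \Delta$ is indeed a parameter for it.

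Concretely, fix $\Delta = (T_i, d_i, \xi_i, \eta_i)_i$ of rank $N = \sum_i T_i d_i$ with each $\xi_i$ integral, and let $\psi = \bigoplus_i \tau_i[d_i] \in \Delta$, so each $\tau_i \in \Psi(T_i)$ is cuspidal, conjugate self-dual, with infinitesimal character $\xi_i$ at infinity and with Mok's parity $\delta_i$ equal to $\eta_i$. (The integrality hypothesis is what lets us invoke Lemma \ref{simpleshapes}; in any case the assignment of \S\ref{ACassigntogroup} attaches $\tau_i$ to $U_{\delta_i}(T_i)$, a group depending only on the parity $\delta_i = \eta_i$.) Following \S\ref{ACassigntogroup}, call $\tau_i[d_i]$ \emph{orthogonal} when $\delta_i(-1)^{T_i+d_i}=1$ and \emph{symplectic} otherwise, and let $N_O$ (resp.\ $N_S$) be the sum of the $T_i d_i$ over the indices $i$ for which $\tau_i[d_i]$ is orthogonal (resp.\ symplectic), so that $N_O + N_S = N$. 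Then $\psi$ is a parameter for
\[
U_{(-1)^{N_O-1}}(N_O) \times U_{(-1)^{N_S}}(N_S) \in \mc E_\el(N_O + N_S) = \mc E_\el(N),
\]
a factor of vanishing rank being omitted. The point is now simply that which of the $\tau_i[d_i]$ are orthogonal versus symplectic, hence $N_O$ and $N_S$, and hence the displayed group, are functions of $(T_i, d_i, \eta_i)_i$ alone. So I would \emph{define} $H(\Delta)$ to be this group; it is then tautologically a member of $\mc E_\el(N)$ for which every $\psi \in \Delta$ is a parameter (the assertion being vacuous if $\Psi(\Delta) = \varnothing$).

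The content is conventions-checking rather than mathematics, and this is where I expect whatever mild friction there is to lie: one must match the sign conventions in the definitions of $\mc E_\sm(N)$ and $\mc E_\el(N)$---in particular confirm the constraint $\kappa_1\kappa_2 = (-1)^{N-1}$ for $\kappa_1 = (-1)^{N_O-1}$, $\kappa_2 = (-1)^{N_S}$, which holds since $(-1)^{N_O-1+N_S} = (-1)^{N-1}$---verify $N_O + N_S = N$ because the orthogonal and symplectic index sets are complementary in $\{i\}$, and make sure the appeal to Lemma \ref{simpleshapes} is legitimate under the integrality hypothesis. None of these steps is a genuine obstacle; the proposition is really just the observation that the shape $\Delta$ records exactly the data $(T_i, d_i, \delta_i)_i$ on which Mok's assignment depends.
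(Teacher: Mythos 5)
Your argument is exactly the paper's: the proof given there is the one-line observation that the assignment of \S\ref{ACassigntogroup} depends only on the data $(T_i, d_i, \eta_i)_i$, which is precisely what you verify by writing out $N_O$, $N_S$, and the resulting group explicitly. Your additional sign-convention checks are correct but not needed beyond what the cited section already records.
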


\begin{proof}
The assignment described in \S\ref{ACassigntogroup} only depends on $T_i$, $d_i$, and $\eta_i$. 
\end{proof}
This gives us two quick corollaries:
\begin{cor}\label{grouptoshapes}
Let $G \in \wtd {\mc E}_\el(N)$. Then
\[
\Psi_\el(G) = \bigsqcup_{\Delta : H(\Delta) = G} \Psi_\el(\Delta).
\]
\end{cor}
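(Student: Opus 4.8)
The plan is to deduce this directly from Proposition \ref{shapetogroup} together with the observation that the refined shape is a genuine invariant of an elliptic parameter, so that the sets $\Psi(\Delta)$ already partition $\Psi(N)$.

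First I would record the tautology underlying everything: every $\psi=\bigoplus_i\tau_i[d_i]\in\Psi(N)$ determines a refined shape by reading off $T_i=\rank\tau_i$, the $d_i$, the infinitesimal character $\xi_i$ of $\tau_{i,\infty}$, and the Mok-parity $\delta_i$ of each $\tau_i$; the resulting tuple $(T_i,d_i,\xi_i,\delta_i)_i$ is well-defined precisely up to the permutation of the index set that is already built into the definition of a refined shape. Hence $\Psi(N)=\bigsqcup_\Delta\Psi(\Delta)$, a disjoint union over all refined shapes. Since in all applications the infinitesimal character at infinity is regular integral, the shapes $\Delta$ with $\Psi(\Delta)\neq\emptyset$ that concern us are integral, so that $H(\Delta)$ is defined by Proposition \ref{shapetogroup}; if one wants the statement for arbitrary parameters one simply notes that Mok's assignment of \S\ref{ACassigntogroup} makes sense verbatim for non-integral shapes and extends the definition of $H(\Delta)$ the same way.

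Next I would intersect this partition with $\Psi(G)$ for a fixed $G\in\mc E_\el(N)$. We also have the disjoint decomposition $\Psi(N)=\bigsqcup_{G'\in\mc E_\el(N)}\Psi(G')$ from \S\ref{ACassigntogroup}, and Proposition \ref{shapetogroup} says that each $\Psi(\Delta)$ sits inside the single piece $\Psi(H(\Delta))$. So for any refined shape $\Delta$ the set $\Psi(G)\cap\Psi(\Delta)$ is empty when $H(\Delta)\neq G$ (the pieces $\Psi(G)$ and $\Psi(H(\Delta))$ being disjoint) and equals all of $\Psi(\Delta)$ when $H(\Delta)=G$. Substituting into $\Psi(G)=\Psi(G)\cap\bigsqcup_\Delta\Psi(\Delta)=\bigsqcup_\Delta\bigl(\Psi(G)\cap\Psi(\Delta)\bigr)$ yields $\Psi(G)=\bigsqcup_{\Delta:\,H(\Delta)=G}\Psi(\Delta)$, the union being disjoint by the first step.

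I do not expect a real obstacle here: the statement is a formal consequence of Proposition \ref{shapetogroup} and the disjointness of the two decompositions of $\Psi(N)$. The only points needing any care are bookkeeping — that the refined shape is well-defined only up to the permutation equivalence, so that distinct $\Delta$ give disjoint $\Psi(\Delta)$, and consistency about whether the index set on the right-hand side is taken over integral shapes only or over all shapes (the integral case, which is all that is used later, is the cleanest and avoids any need to extend $H(\Delta)$).
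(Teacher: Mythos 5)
Your argument is exactly the paper's: the proof given there is the one-line observation that every parameter has a refined shape, combined with Proposition \ref{shapetogroup} which places each $\Psi(\Delta)$ inside $\Psi(H(\Delta))$. Your proposal simply spells out the same bookkeeping (disjointness of the shape decomposition of $\Psi(N)$ and intersection with $\Psi(G)$) in more detail, so it is correct and takes essentially the same route.
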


\begin{proof}
This follows from the above since every parameter has a shape.
\end{proof}

\begin{cor}\label{sigmatogroup}
Let $\lb$ be an integral infinitesimal character of rank $N$. Then
\[
H(\Sigma_{\lb, \eta}) = U_{\eta(-1)^{N-1}}(N).
\]
In particular, for every $G \in \wtd {\mc E}_\sm(N)$ and infinitesimal character $\lb$ of a finite-dimensional representation on $G$, there is $\eta$ such that $H(\Sigma_{\lb, \eta}) = G$. 
\end{cor}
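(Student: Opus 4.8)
The plan is to obtain both assertions by directly unwinding the definitions, with Proposition~\ref{shapetogroup} (specialized to the single-block case, which is essentially the content of Lemma~\ref{simpleshapes}) doing the structural work and the recipe of \S\ref{ACassigntogroup} supplying the explicit sign.

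First I would verify that we are in the scope of Proposition~\ref{shapetogroup}: since $\xi$ is integral, the shape $\Sigma_{\xi,\eta} = (N,1,\xi,\eta)$ has a single block $\tau[d]$ with $d = 1$, and its total infinitesimal character is just $\xi$ itself (formula~\eqref{infcharform} with $d_i = 1$), so $\Sigma_{\xi,\eta}$ is an integral refined shape and $H(\Sigma_{\xi,\eta}) \in \mc E_\el(N)$ is defined. Any $\psi \in \Sigma_{\xi,\eta}$ is then a single cuspidal $\tau$ of rank $N$ with parity $\delta = \eta$, so Lemma~\ref{simpleshapes} already gives that $H(\Sigma_{\xi,\eta}) = H(\eta)$ lies in $\mc E_\sm(N)$ and depends only on $\eta$. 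What remains is to pin down $H(\eta)$ from \S\ref{ACassigntogroup}: for a single block the twist $(-1)^{(N-1)(d-1)}$ is trivial, so the answer is governed purely by how Mok's parity of the cuspidal $\tau$ is matched to the labelling of the two quasisplit unitary groups, i.e.\ to the choice of base-change embedding $\xi_\kappa$ of~\eqref{eq base change embeddings}. Tracing that correspondence through --- the standard representation of the dual group of $U_\kappa(N)$ is conjugate-self-dual of sign $\kappa(-1)^{N-1}$ --- produces $H(\eta) = U_{\eta(-1)^{N-1}}(N)$, which is the first claim.

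For the ``in particular'', suppose $\xi$ is the infinitesimal character of a finite-dimensional representation of $G \in \mc E_\sm(N)$; then $\xi$ is regular integral of rank $N$, so $\Sigma_{\xi,\eta}$ is an integral refined shape for either value of $\eta$ and the first part applies. Writing $G = U_\kappa(N)$ and setting $\eta := \kappa(-1)^{N-1}$, the first part gives $H(\Sigma_{\xi,\eta}) = U_{\kappa(-1)^{2(N-1)}}(N) = U_\kappa(N) = G$. Equivalently, $\eta \mapsto H(\Sigma_{\xi,\eta})$ is a bijection from $\{\pm 1\}$ onto the two-element set $\mc E_\sm(N) = \{U_+(N), U_-(N)\}$, since multiplication by the fixed sign $(-1)^{N-1}$ permutes $\{\pm 1\}$.

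I expect the only delicate point to be the sign bookkeeping in the second paragraph: correctly aligning Mok's notion of the parity of a conjugate-self-dual cuspidal representation (conjugate-orthogonal versus conjugate-symplectic) with the sign distinguishing the two members of $\mc E_\sm(N)$, equivalently with the normalization of the two $L$-embeddings $\xi_{\pm 1}$. That is where the factor $(-1)^{N-1}$ enters, and it is the one step where I would cross-check conventions directly against \S2.4 of \cite{Mok15} rather than relying on the summary in \S\ref{ACassigntogroup}; everything else is immediate from the definitions and Proposition~\ref{shapetogroup}.
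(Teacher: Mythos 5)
Your proposal is correct and matches what the paper intends: the corollary is stated without proof precisely because it is the single-block case of Proposition~\ref{shapetogroup} combined with the assignment recipe of \S\ref{ACassigntogroup}, with the sign $\eta(-1)^{N-1}$ coming from Mok's relation between the conjugate-self-duality parity of a cuspidal $\tau$ and the sign $\kappa$ of the base-change embedding, and the ``in particular'' being nothing more than the bijectivity of $\eta \mapsto \eta(-1)^{N-1}$ on $\{\pm 1\}$. Your instinct to verify the sign against \cite{Mok15} directly rather than the paper's summary is well placed, since the one-block sentence in \S\ref{ACassigntogroup} as literally written would assign $\tau$ to $U_\eta(N)$, so the conventions only reconcile once one fixes what ``parity'' means as you do.
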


As three more facts, if $\Delta$ is a shape:
\begin{itemize}
\item
All $\psi \in \Delta$ correspond to the same pair $(S^\natural_\psi,s_\psi)$ by formula \eqref{Spsiform}. Call the common values $S^\natural_{\Delta}$ and $s_{\Delta}$. We can similarly define a common $\mc S_\Delta$. 
\item
All $\psi \in \Delta$ have the same Arthur-$\SL_2$, so the Arthur-$\SL_2$ of~$\Delta$ is well-defined. 
\item
The infinitesimal character at infinity 
of $\psi \in \Delta$ 
is determined by \eqref{infcharform}. 
\end{itemize}
Finally:
\begin{lem}\label{shapetoinfty}
Let $\Delta$ be an integral shape. There exists an AJ-parameter
~$\psi_\infty^\Delta$ such that for all $\psi \in \Delta$, $\psi_\infty=\psi_\infty^\Delta$.
 Furthermore, the induced localization map $S^\natural_\Delta \to  S^\natural_{\psi_\infty^\Delta}$ is also determined by $\Delta$. 
\end{lem}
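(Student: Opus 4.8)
The plan is to build $\psi_\infty^\Delta$ by localizing an arbitrary $\psi\in\Delta$ at infinity, to show the result does not depend on the choice of $\psi$, and then to identify it as an Adams--Johnson parameter using the integrality hypothesis. Concretely, I would first fix $\psi = \bigoplus_{i=1}^k \tau_i[d_i]\in\Delta$; by Lemma \ref{simpleshapes} each $\tau_i$ is a cuspidal parameter for the fixed group $H_i = H(\eta_i)\in\mc E_\sm(T_i)$, with $\xi_i$ the infinitesimal character of a finite-dimensional representation of $H_i$, and by Proposition \ref{shapetogroup} we have $\psi\in\Psi(H(\Delta))$. The localization recipe of \S\ref{ss localization of parameters} then yields $\psi_\infty = \bigoplus_i \varphi_{i,\infty}\boxtimes[d_i]\in\Psi^+_\infty(H(\Delta))$, where $\varphi_{i,\infty}$ is the archimedean $L$-parameter of $\tau_i$. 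I would record that the infinitesimal character $\lb$ of $\psi_\infty$ is the total infinitesimal character of $\Delta$ computed from \eqref{infcharform} --- regular and integral since $\Delta$ is integral --- and that $\psi_\infty|_{\SL_2}$ is $\bigoplus_i[d_i]^{\oplus T_i}$; both of these depend only on $\Delta$.

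Next I would invoke \cite[Thm.~5]{NP21} (as in the remark after Definition \ref{definition AJ parameters}): since $\psi_\infty$ has regular integral infinitesimal character it is an Adams--Johnson parameter, so in particular it is bounded and, by \S\ref{subsection AJ 3}, presented up to $\wh G$-conjugacy by a pair $(L,\omega)$ with $L$ a Levi subgroup of $G_\infty$ and $\omega$ the differential of a unitary character of $L$ satisfying $\omega+\rho_L = \lb$. The key step is then a rigidity observation: by condition (i) of Definition \ref{definition AJ parameters} the $\wh G$-conjugacy class of $\wh L = Z_{\wh G}(\psi_\infty(W_\C))$ is that of the Levi of $\wh G$ whose principal $\SL_2$-orbit has the same decomposition as $\psi_\infty|_{\SL_2}$, hence is determined by $\Delta$; with $L$ fixed, $\omega = \lb-\rho_L$ is determined, and since our $G_\infty$ is a product of quasi-split real unitary groups (here the hypothesis that all our groups split over the CM field $E$ enters) its Levis are products of $\GL_m(\C)$-factors and one smaller quasi-split unitary group, on each of which a character is determined by its differential. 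So the character underlying $\omega$, hence $\psi_\infty = \xi\circ\psi_L$ for the embedding $\xi\colon\Ld L\to\Ld G_\infty$ of \S\ref{subsection AJ 3}, is determined by $\Delta$; I would set $\psi_\infty^\Delta:=\psi_\infty$, now visibly independent of $\psi$. (As a cross-check one can argue directly instead: conjugate self-duality, regular integrality, and temperedness of $\tau_{i,\infty}$ force $\varphi_{i,\infty}|_{W_\C}$ to be $\bigoplus_j(z/\bar z)^{a_j}$ with $\{a_j\}$ the exponents of $\xi_i$, making $\varphi_{i,\infty}$ the unique discrete series $L$-parameter of $H_{i,\infty}$ with infinitesimal character $\xi_i$.)

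For the component-group statement I would use \eqref{SpsiAJ} and the discussion following it: $S^\natural_{\psi_\infty^\Delta}\cong(\Z/2)^{I_\infty}$ for $I_\infty$ the set of blocks of $\Ld L$, and the localization map $S^\natural_\psi\to S^\natural_{\psi_\infty^\Delta}$ is the diagonal embedding induced by the surjection $I_\infty\to I$ onto the index set $I=\{1,\dots,k\}$ of $\psi$ that sends a block of $\Ld L$ to the index of the summand $\tau_i[d_i]$ in which it sits. Under the identifications $S^\natural_\psi = S^\natural_\Delta = (\Z/2)^I$ of \eqref{Spsiform}, both $I$ and the surjection $I_\infty\to I$ are read off from $\Delta$ --- the former being its index set, the latter coming from the decomposition of $\psi_\infty^\Delta$ into the summands $\varphi_{i,\infty}\boxtimes[d_i]$ already identified --- so the localization map depends only on $\Delta$, as claimed.

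I expect the main obstacle to be the rigidity step: confirming that $\psi_\infty$, which a priori lies only in $\Psi^+_\infty$ because of the open archimedean Ramanujan question, is forced to be an Adams--Johnson parameter --- this is exactly the force of \cite[Thm.~5]{NP21}, which I would read carefully to be sure it applies to generalized (not merely bounded) parameters --- and that an Adams--Johnson parameter for a real unitary group is then pinned down by its infinitesimal character together with its Arthur-$\SL_2$. The latter reduces, as above, to the statement that characters of Levis of real unitary groups are determined by their differentials, a property that fails for split groups and is precisely where the CM hypothesis on $E/F$ is essential.
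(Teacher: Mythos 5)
Your overall architecture (localize, show independence of $\psi$, verify the AJ conditions, then handle the component groups) matches the paper's, and your treatment of the localization map $S^\natural_\Delta\to S^\natural_{\psi_\infty^\Delta}$ via $I_\infty\to I$ is exactly what the paper does. But there are two genuine gaps in the middle.

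First, the boundedness issue you flag is not a technicality to be read around: it is the actual content of the step, and your proposal never resolves it. A priori the localization $\psi_\infty$ only lies in $\Psi^+_\infty$, and a regular integral infinitesimal character does \emph{not} by itself force the archimedean parameters $\varphi_{i,\infty}$ to be bounded (an unbounded character $z\mapsto(z\bar z)^s(z/\bar z)^t$ can still contribute integral, regular entries to the infinitesimal character). Definition \ref{definition AJ parameters} and \cite[Thm.~5]{NP21} concern genuine $A$-parameters, so neither applies until boundedness is established. Your parenthetical cross-check silently assumes "temperedness of $\tau_{i,\infty}$," which is precisely the missing input. The paper supplies it by citing a known case of the Ramanujan conjecture, \cite[Lem.~6.1]{MS19}: because each $\tau_i$ is a conjugate self-dual cuspidal representation of $\GL_{T_i}/E$ with regular integral infinitesimal character at infinity, its archimedean parameter is bounded, and a bounded conjugate self-dual parameter with that infinitesimal character is the (unique) discrete-series one. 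With that in hand the paper then checks the AJ conditions directly ($\wh L=Z_{\wh G}(\psi_\infty(W_\C))=\prod_i\GL_{d_i}^{T_i}$ by regularity, the Arthur-$\SL_2$ is principal in $\wh L$, and $W_\R$ acts on $Z(\wh L)$ by inversion so $Z(\wh L)^{W_\R}$ has trivial identity component), rather than invoking \cite{NP21}.

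Second, your rigidity step is false as stated: an AJ parameter for a real unitary group is \emph{not} determined by its total infinitesimal character together with its Arthur-$\SL_2$. Given the conjugacy class of $\wh L$ and the Weyl orbit $\lb$, there are in general several inequivalent ways to distribute the entries of $\lb$ among the blocks of $\wh L$, each giving a distinct $\omega$ and a distinct parameter; the paper's own example in \S\ref{sectionunitaryparamcombo} (the $U(6,1)^2$ case where $\Delta^{\max}(\pi_0)$ has up to nine elements sharing the same Arthur-$\SL_2$ and total infinitesimal character) exhibits exactly this. What pins down $\psi_\infty^\Delta$ is the finer data recorded by the \emph{refined} shape, namely the individual $\xi_i$ attached to each block $(T_i,d_i)$: one determines each summand $\varphi_{i,\infty}\boxtimes[d_i]$ separately from $(T_i,d_i,\xi_i,\eta_i)$ (as in your cross-check, once temperedness is known) and then assembles $\psi_\infty^\Delta=\bigoplus_i\varphi_{i,\infty}\boxtimes[d_i]$. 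The deduction "$L$ is determined, hence $\omega=\lb-\rho_L$ is determined" should be replaced by this summand-by-summand argument.
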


\begin{proof}

The shape $\Delta$ determines the localization $\psi_\infty$ of any parameter $\psi \in \Delta$: for $\psi_\infty\mid_{\SL_2} = \oplus_i \nu(d_i)^{T_i}$, this is immediate. For the restriction $\psi_\infty\mid_{W_\BR}$, let $\psi = \tau_i[d_i]$ such that each term has infinitesimal character~$\lb_i$. Then by a known case of the Ramanujan conjecture as explained in \cite[Lem 6.1]{MS19}, the parameter associated to~$\tau_{i, \infty}$ is bounded with infinitesimal character~$\lb_i$ matching that of a finite-dimensional representation, so it is uniquely determined. The parameter $\psi_\infty$ is determined from this data following the constructions of \S\ref{ss localization of parameters}. 

By construction, the resulting $\psi_\infty$ has a regular integral infinitesimal character. Additionally, it maps the Arthur-$\SL_2$ to a principal $\SL_2$ of the Levi $\wh L = \prod_i GL_{d_i}^{T_i} \subset \wh{G}$, with $\wh L = Z_{\wh G}(\psi_\infty(W_\BC))$  following the assumption that the total infinitesimal character is regular. Then $\psi_{\infty}(W_\BC)\subset Z(\wh L)$, and since $\psi_\infty$ was built from parameters of discrete series, $\psi_\infty\mid_{W_\BC}$ factors through $z \mapsto \frac{z}{\bar{z}}$ and $W_\BR$ acts on $Z(\wh L)$ by inversion. Thus the identity component of $Z(\wh L)^{W_{\BR}}$ is trivial, and $\psi_\infty$ is an AJ-parameter following Definition \ref{definition AJ parameters}.

For the last statement, the map $I^+_\infty \to I^+$ described after formula \eqref{SpsiAJ} can be seen to depend only on $\Delta$. This determines the localization map.  
%
%
%
%
%
\end{proof}

\section{The Trace Formula with Fixed Shape}\label{strategy}
Let $G \in \wtd {\mc E}_\el(N)$ (the case of $G \in \wtd {\mc E}_\sm(N)$ suffices for us). If $\Delta$ is a shape such that $H(\Delta) = G$, define
\[
S^G_\Delta := \sum_{\psi \in \Delta} S^G_\psi.
\]
This $S^G_\Delta$ is the main technical building block in our applications, and understanding it is the key step in our argument.

For an 
infinitesimal character $\lb$, let $\EP_\lb$ be the 
Euler-Poincar\'e function introduced in \S\ref{ss EP functions}. 
The overarching goal for this section is:
\begin{goal}
Let $\lb$ be the infinitesimal character of a finite-dimensional representation of~$G_\infty$. Understand $S^G_\Delta(\EP_\lb f^\infty)$ as a linear combination of terms $I^H_\disc(\EP_{\lb'} (f^\infty)')$ on other groups $H$. 
\end{goal}
The terms $I^H_\disc(\EP_{\lb'} (f^\infty)')$ are well-understood---they were given an explicit formula in \cite{Art89}, which was in turn studied in great detail and bounded in \cite{ST16}. Achieving the goal would therefore give fine control over $S_\Delta$.

Ta\"ibi in \cite{Tai17} gave such a description in the level-$1$ case. This work is basically extending his method as much as possible to deeper levels.

\subsection{Overall Strategy}
We build up $S^G_\Delta(\EP_\lb f^\infty)$ from $I^H_\disc(\EP_{\lb'} (f^\infty)')$ terms in stages:
\begin{enumerate}
\item
Switch from $I$ to $S$ to allow for various transfers in the endoscopic classification using that $S^H_\disc(\EP_\lb f^\infty)$  can be expanded as a linear combination of $I^{H'}_\disc(\EP_{\lb'} (f^\infty)')$ terms through ``hyperendoscopy'' as in~\cite{Fer07}. 
\item
Define a sum of traces on each $\wtd G(M)$ called $S^M(\lb, \eta, f^\infty)$ satisfying that if $H = H(\Sigma_{\lb, \eta})$, $S^M(\lb, \eta, f^\infty) = S^H_\disc(\EP_\lb (f^\infty)^H)$. The use of traces on~$\wtd G(M)$ allows inducting by decomposing $\Delta$ down to its simple constituents.
\item
Define sub-sums $S^N_\Delta(f^\infty)$ of $S^N(\lb, \eta, f^\infty)$ corresponding to individual shapes. All the $S^N_\Delta(f^\infty)$ can be computed from terms $S^M(\lb, \eta, (f^\infty)')$ by an induction in two steps:
\begin{itemize}
\item
\[
S^N_{\Sigma_{\lb, \eta}}(f^\infty) = S^N(\lb, \eta, f^\infty) - \sum_{\substack{H(\Delta) = H(\Sigma_{\lb, \eta}) \\ \text{Inf. Char}(\Delta) = \lb \\ \Delta \neq \Sigma_{\lb, \eta}}}  S^N_\Delta(f^\infty). 
\]
When $N=1$, the sum of the ``correction terms'' vanishes---this is the base case of our induction.
\item
If $\Delta = (T_i, d_i, \lb_i, \eta_i)_i$ with infinitesimal character $\lb$, 
express $S_\Delta^N(f^\infty)$ 
in terms of the $S_{\Sigma_{\lb_i, \eta_i}}^{T_i}$. This is the hardest part of the argument and the partial results needed for our applications will be postponed to Sections \ref{step3gen} and \ref{step3dom}. 
\end{itemize}
\item
Transfer back to the classical group---$S^G_\Delta(\EP_\lb f^\infty)$ can be written as $S^{N(G)}_\Delta( f^\infty_1)$ as long as we can find $(f^\infty_1)^G = f^\infty$.
\end{enumerate}

In our actual argument, steps 2 and 3 will not be so clearly separated. We will use step 2's Proposition \ref{SMform} to switch freely between the perspective of traces on $G \in \wtd {\mc E}_\el(N)$ and traces on $\wtd G(N)$ as convenient to perform the induction of step 3.


Since there is a lot of notation introduced in this section, we summarize all the parts of the trace formula defined:
\begin{itemize}
    \item $I^G_\Delta, I^G_\psi$ are the pieces of the spectral decomposition of Arthur's $I^G_\disc$ on some $G \in \wtd {\mc E}_\el(N)$ corresponding to a shape $\Delta$ or individual parameter $\psi$.
    \item $S^G_\Delta, S^G_\psi$ are the same for Arthur's $S^G_\disc$.
\end{itemize}

\subsection{Step 1: Understanding \lm{$S^H$}}
The $S^H$ terms can be understood through the hyperendoscopy formula of \cite{Fer07}. We use notation from \cite{Dal22}, although the full generality there isn't necessary since the computation of the endoscopic data of classical groups in \cite{Wal10} shows that we will never have to take a $z$-extension.
\begin{thm}[Ferrari's Hyperendoscopy Formula]\label{hyperendoscopy}
Let $\mc{HE}_\el(H)$ be the set of non-trivial elliptic hyperendoscopic paths of $H$ as in \cite[\S4]{Dal22}. Then,
\[
S^H(\EP_\lb f^\infty) = I^H(\EP_\lb f^\infty) + \sum_{\mc H \in \mc{HE}_\el(H)} \iota(G, \mc H) I^{\mc H}(\EP_\lb^{\mc H} (f^\infty)^{\mc H})
\]
for constants $\iota(G, \mc H)$ and transfers $\star^{\mc H}$ defined there.
\end{thm}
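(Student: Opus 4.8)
The plan is to obtain this formula by iterating Arthur's stabilization of the invariant trace formula, exactly as in Ferrari's thesis \cite{Fer07} (in the streamlined form of \cite[\S4]{Dal22}). First I would recall that for any reductive group $H'$ appearing in the process, the discrete part of the invariant trace formula stabilizes as
\[
I^{H'}_\disc(g) = \sum_{H'' \in \mc E_\el(H')} \iota(H', H'') S^{H''}_\disc(g^{H''}),
\]
where the term $H'' = H'$ has $\iota(H', H') = 1$ and contributes $S^{H'}_\disc(g)$ itself (our normalization of transfer factors, fixed in \S\ref{ACgroups} to be compatible with the fundamental lemma at unramified places, makes $g^{H'} = g$ for this term). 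Solving for the stable term yields
\[
S^{H'}_\disc(g) = I^{H'}_\disc(g) - \sum_{\substack{H'' \in \mc E_\el(H')\\ H'' \neq H'}} \iota(H', H'') S^{H''}_\disc(g^{H''}).
\]

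Next I would substitute this identity into itself. Starting from $S^H_\disc(\EP_\lb f^\infty)$, one expands the stable trace into an $I^H$ term plus a sum over proper elliptic endoscopic $H_1$; each resulting $S^{H_1}_\disc$ is expanded again into an $I^{H_1}$ term plus a sum over proper elliptic endoscopic $H_2$, and so on. Because every proper elliptic endoscopic group of a quasisplit unitary group is a product of quasisplit unitary groups of strictly smaller total rank --- and, crucially, by the computations of Waldspurger \cite{Wal10} no $z$-extension is ever needed along the way --- the branching terminates after finitely many steps. Collecting the surviving $I$-terms reproduces precisely the sum over nontrivial elliptic hyperendoscopic paths $\mc H = (H \to H_1 \to \cdots \to H_k)$ of \cite[\S4]{Dal22}, with composite constant $\iota(G, \mc H) = \prod_{j} \iota(H_{j-1}, H_j)$ and composite transfer $\star^{\mc H}$ given by iterating endoscopic transfer along the path; the ``empty path'' contributes exactly $I^H(\EP_\lb f^\infty)$.

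For our particular test function I would then invoke that endoscopic transfer sends an Euler--Poincar\'e function to an Euler--Poincar\'e function for the transferred infinitesimal character --- this is the whole point of the endoscopic normalization of $\EP_\lb$ adopted in \S\ref{ss EP functions} --- so that at the infinite place each $\EP_\lb^{\mc H}$ is again of the form $\EP_{\lb'}$, while at the finite places the $(f^\infty)^{\mc H}$ are the iterated transfers specified in \cite[\S4]{Dal22}. I expect the main obstacle to be entirely bookkeeping rather than analysis: checking that the iterated transfer factors and the constants $\iota(H_{j-1},H_j)$ compose coherently so that $\iota(G, \mc H)$ and $\star^{\mc H}$ are well-defined and independent of the intermediate choices, and verifying both termination of the recursion and the absence of $z$-extensions. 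All of the genuine trace-formula input is already supplied by Arthur's stabilization and by \cite{Fer07}; what remains is to confirm that nothing in the passage to deeper levels and to unitary (as opposed to symplectic or orthogonal) groups obstructs the formal inversion.
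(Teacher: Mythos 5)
Your proposal is correct and is essentially the argument behind the result the paper simply cites ([Dal22, Thm.\ 4.2.3], following [Fer07]): iterate the stabilization $I^{H'}_\disc = \sum \iota(H',H'')S^{H''}_\disc$, solve for the stable term, and recurse until termination, with [Wal10] guaranteeing no $z$-extensions. The only point you leave implicit — and which the paper's one-line proof explicitly flags — is that your recursion runs at the level of discrete parts while the statement is for the full trace formulas, a distinction that evaporates because the test function is $\EP_\lb$ at infinity.
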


\begin{proof}
See \cite[Thm 4.2.3]{Dal22}, and note that using $\EP_\lambda$ at infinity lets us elide the distinction between trace formulas and their discrete parts.
\end{proof}


 Ferrari gives an explicit formula to write the transfers $\EP_\lb^H$ as linear combinations of Euler-Poincar\'e functions. See \cite[\S5.1]{Dal22} for an English-language presentation though beware that there is a $\rho$-shift between the parameterization $\EP_\lb$ used here and the parameterization $\eta_\lb$ used there.

\subsection{Step 2: Understanding \lm{$S^M$}}
\label{ss SM}
Fix an infinitesimal character $\lb$ of rank $M$. Using Proposition \ref{shapetogroup}, define: 
\[
\Psi_\el(\lb, \eta) =  \bigcup_{\substack{\Delta : H(\Delta) = H(\Sigma_{\lb, \eta}) \\ \text{Inf. Char}(\Delta) = \lb}} \Psi_\el(\Delta)
\]
and define pieces of the spectral expansion:
\begin{align} \label{eq SM definition}
S^M(\lb,\eta, f^\infty)  
& := \sum_{\Delta : H(\Delta) = H(\Sigma_{\lb, \eta})} S^M_\Delta(f^\infty) \\
&:=  \sum_{\psi \in \Psi_\el(\lb, \eta)}  S^M_\psi(f^\infty) \nonumber \\
&:= \sum_{\psi \in \Psi_\el(\lb, \eta)}  \eps^H_\psi(s^H_\psi) m_\psi |\mc S_\psi|^{-1} \tr_{\wtd \pi_\psi^\infty}(f^\infty). \nonumber
\end{align} 
Here, $\pi_\psi$ is the automorphic representation of $\GL_M(\A_E)$ corresponding to $\psi$ as in \S\ref{ACparamtorep} and $\wtd \pi_\psi$ is its extension to $\wtd G(M)$ as in \S\ref{ACrepexten}. 
\begin{prop}\label{SMform}
Let $H = H(\Sigma_{\lb, \eta})$ which is necessarily in $\mc E_\sm(M)$. Then for a test function $f^\infty$ at the finite places,
\[
 S^M(\lb, \eta, f^\infty) = S^H(\EP_\lb (f^\infty)^H). 
\]
\end{prop}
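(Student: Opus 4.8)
The plan is to unwind the right-hand side $S^H(\EP_\lb\,(f^\infty)^H)$ by means of the stable multiplicity formula and match it term by term against the defining expansion \eqref{eq SM definition} of $S^M(\lb,\eta,f^\infty)$; recall that $H = H(\Sigma_{\lb,\eta}) = U_{\eta(-1)^{M-1}}(M)$ lies in $\mc E_\sm(M)$ by Corollary \ref{sigmatogroup}, and that $(f^\infty)^H$ is the twisted endoscopic transfer of the test function $f^\infty$ on $\wtd G(M)$ to $H\in\mc E_\el(M)$. First, because $\EP_\lb$ is placed at the infinite place, $S^H(\EP_\lb\,(f^\infty)^H) = S^H_\disc(\EP_\lb\,(f^\infty)^H)$ exactly as in the proof of Theorem \ref{hyperendoscopy} — the Euler--Poincaré function elides the distinction between the stable trace formula and its discrete part. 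I would then expand $S^H_\disc = \sum_\psi S^H_\psi$ over global parameters, noting that only the elliptic $\psi\in\Psi(H)$ contribute since for the rest the relevant $\sigma$-value vanishes (the point recalled in the proof of Theorem \ref{stablemult}); by Corollary \ref{grouptoshapes} these are organized as $\Psi(H) = \bigsqcup_{\Delta : H(\Delta)=H}\Psi(\Delta)$.

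Next I would apply Theorem \ref{stablemult} to each summand, $S^H_\psi(\EP_\lb\,(f^\infty)^H) = |\mc S_\psi|^{-1}\,\eps^H_\psi(s^H_\psi)\,\tr^H_\psi(\EP_\lb\,(f^\infty)^H)$, and factor the stable packet trace over places as $\tr^H_{\psi_\infty}(\EP_\lb)\cdot\tr^H_{\psi^\infty}((f^\infty)^H)$. The infinite factor is controlled by Corollary \ref{straceAJ} together with the pseudocoefficient property: every member of $\Pi_{\psi_\infty}$ has the infinitesimal character of $\psi_\infty$, which by \eqref{infcharform} is the total infinitesimal character of the refined shape of $\psi$. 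If that is not $\lb$, then the standard-module expansion of each $\pi\in\Pi_{\psi_\infty}$ contains no discrete series of infinitesimal character $\lb$, so $\tr_\pi(\varphi_{\pi_d}) = 0$ for every $\pi_d\in\Pi_\disc(\lb)$ and hence $\tr^H_{\psi_\infty}(\EP_\lb) = 0$; if it is $\lb$ — which, by the definition of $\Psi(\lb,\eta)$ combined with Corollary \ref{grouptoshapes}, is precisely the condition $\psi\in\Psi(\lb,\eta)$ — then $\tr^H_{\psi_\infty}(\EP_\lb) = 1$ by Corollary \ref{straceAJ}. Thus the sum collapses to $\sum_{\psi\in\Psi(\lb,\eta)}|\mc S_\psi|^{-1}\,\eps^H_\psi(s^H_\psi)\,\tr^H_{\psi^\infty}((f^\infty)^H)$.

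It then remains to identify the finite factor. With transfer factors normalized as in the discussion around Lemma \ref{unraminnerforms} so as to be compatible with the classification at every finite place, the defining character identity for the transfer $f^\infty\mapsto(f^\infty)^H$ — Mok's stable base change, i.e.\ the $\psi$-component of the stabilization of the twisted trace formula of $\wtd G(M)$ \cite{Mok15} — equates $\tr^H_{\psi^\infty}((f^\infty)^H)$ with $\tr_{\wtd\pi_\psi^\infty}(f^\infty)$ up to an explicit combinatorial constant (the stabilization constant $\iota(\wtd G(M),H)$ and the $\GL$-side multiplicity $m_\psi$), and that constant, multiplied by $|\mc S_\psi|^{-1}$, is exactly the coefficient $m_\psi|S_\psi|^{-1}$ in \eqref{eq SM definition}. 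Substituting produces $S^M(\lb,\eta,f^\infty)$.

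The step I expect to be the real obstacle is the last one: reconciling the normalizations on the nose — the relation between $|\mc S_\psi|$, $|S^\natural_\psi|$, the stabilization constant, and $m_\psi$ — and verifying that the transfer factors entering $(f^\infty)^H$ are consistently the ones used throughout the endoscopic classification, so that in later applications base change at unramified finite places is governed by the fundamental lemma. Everything else is a direct assembly of Theorem \ref{hyperendoscopy}, Theorem \ref{stablemult}, Corollary \ref{straceAJ}, and Corollaries \ref{grouptoshapes} and \ref{sigmatogroup}.
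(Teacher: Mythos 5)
Your core computation is the same as the paper's: for each elliptic $\psi$ with infinitesimal character $\lb$, apply the stable multiplicity formula, use the pseudocoefficient/EP trace identities (Lemma \ref{charps}, Corollary \ref{straceAJ}) to evaluate the archimedean factor as $1$, and use the endoscopic character relation at the finite places to convert $\tr^H_{\psi^\infty}((f^\infty)^H)$ into $\tr_{\wtd\pi_\psi^\infty}(f^\infty)$. Two remarks on the finite-place step: the identity you need is the local character relation \ref{localcharidentity} (equivalently, the twisted endoscopic character identity for $\wtd G(M) \rightsquigarrow H$) multiplied over the finite places; it is an exact equality of the stable packet trace with the twisted trace, with no stabilization constant $\iota(\wtd G(M),H)$ entering, and $m_\psi=1$ for unitary groups. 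The normalization worry you flag is resolved simply by $|\mc S_\psi|^{-1}$ on both sides and $\eps_\psi(s_\psi)$ matching the coefficient in \eqref{eq SM definition}; this is not where the difficulty lies.

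The genuine gap is the step you pass over in one line: the assertion that $S^H(\EP_\lb\,(f^\infty)^H)$ equals $\sum_{\psi\in\Psi(H)}S^H_\psi(\EP_\lb\,(f^\infty)^H)$, i.e.\ that only \emph{elliptic} parameters contribute to the stable discrete part. Your justification — ``the relevant $\sigma$-value vanishes for the rest, as recalled in the proof of Theorem \ref{stablemult}'' — does not work as stated: that remark only says the $\sigma$-factor can be suppressed \emph{because} one restricts to elliptic $\psi$; it is not a vanishing statement for non-elliptic parameters, and such a vanishing is not automatic (it requires analyzing which connected centralizers $\bar S^0_\psi$ have finite center, together with the regularity of $\lb$ to exclude parameters with repeated constituents). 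The paper avoids this entirely by a detour through the invariant side: with $\EP_\lb$ at infinity one has $I^H = R^H_\disc = \sum_{\psi\in\Psi(H)}I^H_\psi$ by \cite[(3.9)]{Art89} and \eqref{Ipsidecomp}, and the corresponding expansion for $S^H$ is then deduced by a hyperendoscopy induction using Theorem \ref{stablemultendo}. If you want to keep your direct route you must either prove the vanishing of the non-elliptic $\sigma$-terms for parameters with regular infinitesimal character $\lb$, or substitute the paper's $I$-to-$S$ induction; as written, this step is unproven.
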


\begin{proof}
Fix $\psi \in \Psi_\el(H)$ with infinitesimal character $\lb$ which is necessarily elliptic. Then the stable multiplicity formula \ref{stablemult} shows that
\begin{multline*}
S^H_\psi(\EP_\lb (f^\infty)^H) = \eps_\psi(s_\psi) |\mc S_\psi|^{-1} \lf( \sum_{\pi_\infty \in \Pi_{\psi_\infty}} \eta^{\psi_\infty}_{\pi_\infty}(s_\psi) \tr_{\pi_\infty}(\EP_\lb) \ri)\\
\lf( \sum_{\pi \in \Pi_{\psi^\infty}} \eta^{\psi^\infty}_{\pi^\infty}(s_\psi) \tr_{\pi^\infty}((f^\infty)^H) \ri).
\end{multline*}
 Corollary \ref{straceAJ} then gives for infinitesimal character $\lb$:
\[
 \sum_{\pi_\infty \in \Pi_{\psi_\infty}} \eta^{\psi_\infty}_{\pi_\infty}(s_\psi) \tr_{\pi_\infty}(\EP_\lb) = 1.
\]
Furthermore, multiplying together the twisted local character relation \ref{tlocalcharidentity} over all finite places shows that:
\[
\sum_{\pi \in \Pi_{\psi^\infty}} \eta^{\psi^\infty}_{\pi^\infty}(s_\psi) \tr_{\pi^\infty}((f^\infty)^H) = \tr_{\wtd\pi_\psi^\infty} (f^\infty). 
\] 
In total $S^H_\psi(\EP_\lb (f^\infty)^H) = S_\psi^M(\lb, \eta, f^\infty)$.

Since $S_\psi(\EP_\lb (f^\infty)^H) = 0$ for all $\psi$ with infinitesimal character not equal to $\lb$, summing over $\Psi_\el(\lb, \eta)$ and using Corollary \ref{grouptoshapes} gives that
\[
S^M(\lb, \eta, f^\infty) = \sum_{\psi \in \Psi_\el(H)} S^H_\psi(\EP_\lb (f^\infty)^H).
\]

By \cite[(3.9)]{Art89} and equation \eqref{Ipsidecomp}, we know that 
\[
I^H(\EP_\lb (f^\infty)^H) = R^H(\EP_\lb (f^\infty)^H) = \sum_{\psi \in \Psi_\el(H)} I^H_\psi(\EP_\lb (f^\infty)^H). 
\]
By a hyperendoscopy argument using the expansion in Theorem \ref{stablemultendo}, the same sum expansion holds for $S^H$. Therefore, we can conclude that
\[
S^M(\lb, \eta, f^\infty) = S^H(\EP_\lb (f^\infty)^H).
\]
This finishes the argument.
\end{proof}

\subsection{Step 3: The Induction} \label{ss step 3}
We give a heuristic overview to keep in mind for understanding step 3. All precise results will be postponed to \S\S\ref{step3gen},\ref{step3dom}. 

Recall the decomposition
\[
S^N(\lb,\eta,f^\infty) = \sum_{\Delta: H(\Delta) = H(\Sigma_{\lb,\eta})} S^N_\Delta(f^\infty)
\]
which we were using for the induction in step 3. We now want to understand $S_\Delta$ in terms of smaller groups. 

For the sake of heuristic understanding, we will instead consider the simpler
\[
S^{|H(\Delta)|}_\Delta(\EP_\lb (f^\infty)^{H(\Delta)}) = S^{|N|}_\Delta(f^\infty) := \sum_{\psi \in \Delta} m_\psi |S_\psi|^{-1} \tr_{\wtd \pi_\psi^\infty}(f^\infty)
\]
without the $\eps$-sign. Consider $\psi = \tau_1[d_1] \oplus \cdots \oplus \tau_k[d_k] \in \Delta$. Motivated by the way~$\wtd \pi_\psi$ is defined through parabolically inducing determinant twists of the $\tau_i$, assume we could define a ``generalized constant term'' map
\[
f^\infty \mapsto (f^\infty_i)_{\Delta,i}
\]
such that for all $\psi \in \Delta$
\[
\tr_{\wtd \pi_\psi^\infty}(f^\infty) = \prod_i \tr_{\wtd \pi_{\tau_i}^\infty}(f^\infty_{\Delta, i}).
\]
Because the infinitesimal character of $\Delta$ is regular and therefore disallows repeated $\tau_i$ factors, the possible (elliptic) $\psi \in \Delta$ are exactly the $\tau_1[d_1] \oplus \cdots \oplus \tau_k[d_k]$ for all choices of~$\tau_i \in \Sigma^{T_i}_{\lb_i, \eta_i}$. Therefore we get a heuristic factorization
\begin{equation}\label{heuristicfactorization}
S^{|N|}_\Delta(f^\infty) = C_\Delta \prod_i S^{|T_i|}_{\Sigma_{\lb_i, \eta_i}} (f^\infty_{\Delta, i}),
\end{equation}
for $C_\Delta$ a constant depending on the various $S_\psi$, $S_{\tau_i}$, and $m_\psi$'s that only depend on~$\Delta$. 

Obviously, we cannot simply ignore the $\eps$-sign and we do not have a actual definition of this generalized constant term. In fact, the definition of this generalized constant term would allow us to define the long-desired ``stable transfer'' between $G$ and its endoscopic groups, so it is likely very difficult.

However, \S\S\ref{step3gen},\ref{step3dom} will discuss enough partial results that an application to limit multiplicities at specifically split level can be completed.

\subsection{Step 4: Understanding \lm{$S^G_\Delta$}}
This step comes from a corollary to the arguments in step 2:
\begin{cor}
Fix a shape $\Delta$ of rank $N$ and let $G = H(\Delta)$ as in Proposition \ref{shapetogroup}. Then for any test function $f^\infty$ on $G(\A^\infty)$, there is $f^\infty_1$ on $\widetilde G(N)^\infty$ such that $(f^\infty_1)^G = f^\infty$. Furthermore,
\[
S_\Delta^G(\EP_\lb f^\infty) = S^N_\Delta(f^\infty_1).
\]
\end{cor}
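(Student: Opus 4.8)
The plan is to leverage the work already done in Step 2, specifically the proof of Proposition \ref{SMform}, and to observe that the key identities there refine to the level of individual refined shapes $\Delta$ (indeed, to individual parameters $\psi$). First I would address the existence of $f^\infty_1$: since $G = H(\Delta)$ is an elliptic endoscopic group of $\wtd G(N)$, the endoscopic transfer map $f^\infty_1 \mapsto (f^\infty_1)^G$ from test functions on $\wtd G(N)(\A^\infty)$ to test functions on $G(\A^\infty)$ is surjective (this is part of the setup of the endoscopic classification; cf.\ the discussion following Proposition \ref{prop s to endo} and the stabilization recalled before \S\ref{ss SM}). So we may fix any such $f^\infty_1$.

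Next I would run the computation of Proposition \ref{SMform} one parameter at a time. Fix $\psi \in \Delta$, which is elliptic with infinitesimal character $\lb$ and lies in $\Psi(G)$ by Proposition \ref{shapetogroup}. The stable multiplicity formula \ref{stablemult}, combined with the product over finite places of the local character relation \ref{localcharidentity} and the archimedean identity $\tr_{\psi_\infty}(\EP_\lb) = 1$ from Corollary \ref{straceAJ} (equivalently, the averaged form of Lemma \ref{charps}(2) used in the proof of Proposition \ref{SMform}), gives
\[
S^G_\psi(\EP_\lb f^\infty) = \eps^G_\psi(s_\psi)\, m_\psi\, |S_\psi|^{-1}\, \tr_{\wtd\pi_\psi^\infty}(f^\infty_1) = S^N_\psi(f^\infty_1),
\]
where the last equality is the definition of $S^N_\psi$ in \eqref{eq SM definition}, applied with the transfer $(f^\infty_1)^G = f^\infty$. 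Here I am using that $S^G_\psi$ vanishes unless the infinitesimal character of $\psi$ is $\lb$, so that $\EP_\lb$ correctly isolates exactly the parameters in $\Delta$.

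Summing this identity over all $\psi \in \Delta$ and invoking the definitions $S^G_\Delta = \sum_{\psi \in \Delta} S^G_\psi$ and $S^N_\Delta = \sum_{\psi \in \Delta} S^N_\psi$ yields $S^G_\Delta(\EP_\lb f^\infty) = S^N_\Delta(f^\infty_1)$, which is the claim (noting that the $\EP_\lb$ in the statement is written $\eta_\lb$ there, a notational variant). The one point requiring a little care — and the only place I anticipate any subtlety — is the bookkeeping of transfer factors and the non-canonical scalars in the definition of $(f^\infty_1)^G$ (cf.\ the remarks around \cite{Kal16}): one must check that the choices implicit in $\tr_{\wtd\pi_\psi^\infty}(f^\infty)$ via the canonical extension $\wtd\pi_\psi$ of \S\ref{ACrepexten} are compatible with the normalization of the local character relations used to pass between $\tr_{\pi^\infty}((f^\infty)^G)$ and $\tr_{\wtd\pi_\psi^\infty}(f^\infty_1)$. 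This is exactly the compatibility already verified inside the proof of Proposition \ref{SMform}, so no new work is needed; the corollary is genuinely just that proof re-read $\Delta$-by-$\Delta$.
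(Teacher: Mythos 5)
Your proposal is correct and follows essentially the same route as the paper: the existence of $f^\infty_1$ is the surjectivity of twisted transfer (the paper cites \cite[Prop.~3.1.1(b)]{Mok15} for this), and the identity is obtained exactly by re-running the computation of Proposition \ref{SMform} one parameter $\psi \in \Delta$ at a time and summing.
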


\begin{proof}
The existence of $f^\infty_1$ comes from \cite[Prop 3.1.1(b)]{Mok15}. Then, arguing as in \ref{SMform} gives $S^G_\psi(\EP_\lb (f^\infty_1)^G) = S_\psi^N(f_1^\infty)$ for any $\psi \in \Delta$. Summing over all $\psi \in \Delta$ produces the result.
\end{proof} 

\section{Local Transfer}\label{sectionlocaltransfers}
The argument of \S \ref{ss step 3} requires constructing certain transfers of functions. We are unable to do this in full generality---this section focuses on either constructing various special cases or approximate versions satisfying good enough bounds. The last 
Subsection \ref{sectionconjecturaltransfer} will state the full desired conjectures. 

We will heavily use the shorthand from Section \ref{nonfactorizable} throughout. If $G_v$ is an unramified group over~$F_v$ then~$K_v : = K^G_v$ will be a hyperspecial subgroup for $G_v$.

\subsection{Split Places}
We first discuss some results that only hold at split places. Extending Lemmas \ref{splitconstant} and \ref{split[d]} to non-split places would similarly extend Theorems \ref{mainexact} and \ref{mainupper}.

Fix $G \in \wtd {\mc E}_\sm(N)$. Let $v$ be a finite place of $F$ which splits in $E$, so that $E \ten_F F_v \simeq F_v \times F_v$ with $\sigma$ permuting the two factors. Then $G(N)_v \simeq GL_N(F_v) \times GL_N(F_v)$ and the action of $\theta_N$ on $G(N)_v$ and $\Ld G(N)_v$ becomes 
\[
(g_1,g_2) \mapsto (\Ad(J_N)g_2^{-t}, \Ad(J_N)g_1^{-t}),
\]
where $\Ld G_v \simeq GL_N$ embedded in $\Ld G(N)_v$ as the fixed points of $\theta$ (note that both the $\pm$ embeddings of \cite[(2.1.9)]{Mok15} are the same in this case). 

We next outline how 
Arthur's local classification for $G_v$ as a element of $\wtd {\mc E}_\el(N)$ agrees with that
coming from the isomorphism $G_v \simeq \GL_N(F_v)$:

\begin{lem}\label{splitconsistency}
Let $v$ be a place of $F$ which splits in $E$ 
and denote $E \ten_F F_v$ by $E_v$. Let~$\pi_v$ be the irreducible conjugate self-dual representation of $G(N)(E_v)\simeq GL_N(F_v) \times GL_N(F_v)$ coming from the Arthur parameter $\psi_v$. Then:
 \begin{enumerate}
     \item $\pi_v$ is of the form $\pi_v^0 \ten (\pi_v^0)^\vee$ for some irrep $\pi_v^0$ of $\GL_N(F_v)$.
     \item The canonical extension of $\pi_v$ to $\wtd G(N)$ as in \cite[\S2.2]{Art13} has $\theta$ acting on $\pi_v^0 \ten (\pi_v^0)^\vee$ through $x \otimes y \mapsto y \otimes x$.
     \item For $\tilde f_v = (f^1_v,f^2_v) \in \mH(G(N)(E_v) \rtimes \theta)$, we can choose transfer $\td f_v^G = f^1_v \star {}^\theta \! f^2_v$ where we define ${}^\theta \! f_v(g) = f_v(\Ad(J_n) g^{-t})$.
     \item For $\tilde f_v = (f^1_v,f^2_v) \in \mH(G(N)(E_v) \rtimes \theta)$, $\tr_{\td{\pi}_v}(\tilde f_v) = \tr_{\pi^0_v}(f^1_v \star {}^\theta \! f^2_v)$ 
     \item $\tr_{\psi_v}(f_v) = \tr_{\pi^0_v}(f_v)$.
 \end{enumerate}

\end{lem}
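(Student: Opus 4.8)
The plan is to reduce all five assertions to standard facts about $\GL_N$ together with its base-change-type twisted endoscopy, using that at a split place the whole apparatus degenerates: $\wtd G(N)_v = \bigl(\GL_N(F_v) \times \GL_N(F_v)\bigr) \rtimes \langle \theta \rangle$ with $\theta$ interchanging the two factors after inverse-transpose, $G(N)_v \cong \GL_N(F_v)$ sitting as the $\theta$-fixed points, and all local transfer factors trivial. For (1), I would use that $\pi_v$, coming from an $A$-parameter for $\wtd G(N)_v$, is $\theta$-stable. Writing $\pi_v = \pi_1 \boxtimes \pi_2$ with each $\pi_i$ an irreducible representation of $\GL_N(F_v)$ and using that $g \mapsto \Ad(J_N) g^{-t}$ realizes the contragredient on $\GL_N$, the displayed formula for $\theta$ gives $\pi_v \circ \theta \cong \pi_2^{\vee} \boxtimes \pi_1^{\vee}$; comparing with $\pi_1 \boxtimes \pi_2$ forces $\pi_2 \cong \pi_1^{\vee}$, and we put $\pi_v^0 := \pi_1$.

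For (2), recall that the canonical extension $\td\pi_v$ of \cite[\S2.2]{Art13} is characterized by its extending operator acting as $+1$ on the line of Whittaker functionals for the fixed Whittaker datum. At a split place this datum is the exterior product of the standard Whittaker data on the two $\GL_N(F_v)$-factors, so it is fixed by the flip $\iota \colon x \otimes y \mapsto y \otimes x$ on the Whittaker model $\mc W(\pi_v^0) \otimes \mc W((\pi_v^0)^{\vee})$, after the canonical identification $(\pi_v^0)^{\vee} \cong {}^\theta\pi_v^0$. Since $\iota$ is an intertwiner $\pi_v \circ \theta \iso \pi_v$ which is $+1$ on the Whittaker line, it is the canonical one, giving the stated action of $\theta$. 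For (3), the claimed formula is exactly twisted endoscopic transfer $\mH(\wtd G(N)_v) \to \mH(G(N)_v)$ at a split place, where the norm map sends the $\theta$-twisted conjugacy class of $(g_1, g_2)\theta$ to the ordinary conjugacy class of $g_1 \cdot \Ad(J_N) g_2^{-t}$ in $\GL_N(F_v)$. I would unwind the definition of the $\theta$-twisted orbital integral of $(f_v^1, f_v^2)$, with all transfer factors equal to $1$, to match it term-by-term with the ordinary orbital integral of $f_v^1 \star {}^\theta\! f_v^2$; this is the routine split-place computation underlying base change, which I would cite or adapt from the corresponding discussions in \cite{Mok15} and \cite{KMSW14}.

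Parts (4) and (5) are then formal. For (4), $\tr_{\td\pi_v}(\tilde f_v) = \tr\bigl(\pi_v(f_v^1 \boxtimes f_v^2) \circ \iota\bigr)$ with $\iota$ the flip from (2); realizing $\pi_v^0 \boxtimes (\pi_v^0)^{\vee}$ as operators on the space of $\pi_v^0$ and transporting $\iota$ through that identification turns the composite into $\pi_v^0(f_v^1) \circ \pi_v^0({}^\theta\! f_v^2)$, whose trace is $\tr_{\pi_v^0}(f_v^1 \star {}^\theta\! f_v^2)$. For (5), at a split place $G(N)_v \cong \GL_N(F_v)$ is a simple (untwisted) datum, the $A$-packet $\Pi_{\psi_v}$ is the singleton $\{\pi_v^0\}$, and $\eta^{\psi_v}_{\pi_v^0}(s_{\psi_v}) = 1$ because $\mc S_{\psi_v}$ is trivial for $\GL_N$; hence $\tr_{\psi_v}(f_v) = \tr_{\pi_v^0}(f_v)$ directly from the definition of the stable packet trace.

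The step I expect to cost the most care is (3), and specifically making its normalization match that of (2): the ``$+1$ on Whittaker'' normalization of $\td\pi_v$ and the trivial-transfer-factor normalization at the split place must be the same choice, since a spurious sign here would propagate through (4) into the character identities invoked later. I would pin this down by comparing with the explicit split-place conventions in \cite{Mok15} (and, for the extended pure inner form bookkeeping, \cite{KMSW14}), where precisely this compatibility is built in.
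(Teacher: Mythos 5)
Your overall strategy matches the paper's: (1) by decomposing $\pi_v$ as an external tensor product and using conjugate self-duality, (3) by split-place twisted transfer (the paper simply cites \cite[Cor.~1.1.6]{KMSW14} where you propose to redo the orbital-integral computation), and (4) by the standard ``twisted trace of a flip equals trace of a product'' identity (the paper carries this out on the finite-dimensional spaces $(\pi_v)^U$ via the decomposition $V\otimes V = \Sym^2 V \oplus \wedge^2 V$, which is the same computation you sketch).

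There is, however, a genuine gap in your argument for (2). You assert that the canonical extension $\td\pi_v$ of \cite[\S2.2]{Art13} is \emph{characterized} by its extending operator acting as $+1$ on the line of Whittaker functionals. That characterization is only available when $\pi_v$ is generic, hence in particular when it is tempered. The representations to which this lemma is applied throughout the paper include the local components of Speh parameters $\tau[d]$ with $d>1$, which are non-tempered Langlands quotients and admit no Whittaker functional at all. For those, the canonical extension is defined (as in \S\ref{ACrepexten}) by writing $\pi_{\psi,v}$ as the Langlands quotient of a standard module $\Ind_{P_v}^{G(N)_v}(\sigma_1|\det|^{r_1}\boxtimes\cdots\boxtimes\sigma_k|\det|^{r_k})$ with tempered $\sigma_i$, normalizing $\theta$ on each $\sigma_i$ via its Whittaker line, and inducing. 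The paper's proof accordingly splits into two cases: your Whittaker argument in the tempered case, and a separate check in the non-tempered case that the flip $x\otimes y\mapsto y\otimes x$ is the operator obtained by this parabolic-induction construction. You need to supply that second case; it is not difficult (the inducing data on the two $\GL_N$-factors are contragredient-swapped, so the induced $\theta$-action is again the flip), but as written your proof does not cover the representations that matter most here.

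One smaller remark on (5): you deduce it by asserting that the KMSW packet $\Pi_{\psi_v}$ on $G_v\cong\GL_N(F_v)$ is the singleton $\{\pi_v^0\}$ with trivial character. That is true, but the purpose of this lemma is precisely to verify the compatibility of the abstractly constructed packets with the $\GL_N$ picture at split places, so invoking the shape of the packet comes close to assuming the conclusion. The paper instead derives (5) from (3), (4), and the endoscopic character relation (Theorem \ref{localcharidentity}), which computes the stable packet trace without first identifying the packet. Your route is defensible if you cite the split-place discussion in \cite{KMSW14} for the structure of the packet, but the paper's derivation is the more self-contained one.
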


\begin{proof}
Statement (1) follows from the description of irreps of 
$G_1 \times G_2$ and self-duality. 

For (2), first assume $\pi_v$ and therefore $\pi_v^0$ is tempered. A Whittaker functional on~$\pi_v$ is a product of a pair of functionals on $\pi_v^0$ and $(\pi_v^0)^\vee$. This product is preserved by the claimed $\theta$ since the space of Whittaker functionals is one dimensional. On the other hand, if $\pi$ isn't tempered,  the statement can be checked by the construction of $\theta$ through parabolic induction. This gives the second statement in all cases. 

Statement (3) is a special case of Corollary 1.1.6 in \cite{KMSW14}. 

For (4), $\td f_v (x \otimes y) = (f_v^1 x \otimes {}^\theta \! f_v^2 y)$. By admissibility of $\pi_v$ and smoothness of the~$f_v^i$, we can compute the trace by choosing a basis for the finite-dimensional vector space $V = (\pi_v)^U$ for some open compact $U$. The result follows from computing the action on the standard induced bases for $\Sym^2 V \oplus \wedge^2 V = V \otimes V$.

Finally, (5) follows from (3), (4), and the endoscopic character relation after choosing $f^1_v$ and  ${}^\theta \! f^2_v$ such that $f^1_v \star {}^\theta \! f^2_v = f_v$. 
\end{proof}

We derive two consquences of Lemma \ref{splitconsistency}:

\begin{lem}\label{splitconstant}
Let $v$ be a place that splits in $E$ and $\psi_v = \psi_{1,v} \oplus \psi_{2,v}$ be an Arthur parameter for $U_{E/F}(N)(F_v) \simeq \GL_N(F_v)$. Then $\psi$ factors through (the $L$-dual of) a Levi subgroup $M = \GL_{N_1}(F_v) \times \GL_{N_2}(F_v)$ and 
\[
\tr_{\psi_v} f = \prodf_{i=1,2} \tr_{\psi_{i,v}} f_{M,i},
\]
where $f_{M,1}$ and $f_{M,2}$ are the factors of the constant term map to $M$.
\end{lem}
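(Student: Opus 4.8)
The plan is to transport everything to the classical theory of parabolic induction on $\GL_N(F_v)$ by means of Lemma \ref{splitconsistency}. Since $v$ splits in $E$, identify $U_{E/F}(N)_v$ with $\GL_N(F_v)$; the Galois part of $\Ld{U_{E/F}(N)_v}$ then acts trivially on $\GL_N(\C)$, so $\psi_v$ is just an $N$-dimensional semisimple representation of $WD_{F_v} \times \SL_2$. Writing $\psi_v = \psi_{1,v} \oplus \psi_{2,v}$ with $\psi_{i,v}$ of dimension $N_i$, its image lies in the block-diagonal $\GL_{N_1}(\C) \times \GL_{N_2}(\C)$, i.e.\ in $\Ld M$ for the standard Levi $M = \GL_{N_1}(F_v) \times \GL_{N_2}(F_v)$ of the parabolic $P = MU \subset \GL_N(F_v)$ attached to the ordered partition $(N_1, N_2)$. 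This is the asserted factorization, and it exhibits each $\psi_{i,v}$ as a local $A$-parameter for $\GL_{N_i}(F_v) \cong U_{E/F}(N_i)_v$.

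Next I would pass from parameters to representations. Let $\pi^0_v$ and $\pi^0_{i,v}$ be the irreducible representations of $\GL_N(F_v)$ and $\GL_{N_i}(F_v)$ attached to $\psi_v$ and $\psi_{i,v}$ by the local form of the construction of \S\ref{ACparamtorep}. Lemma \ref{splitconsistency}(5) gives $\tr_{\psi_v}(f) = \tr_{\pi^0_v}(f)$, and the same lemma applied to $U_{E/F}(N_i)_v$ gives $\tr_{\psi_{i,v}}(g) = \tr_{\pi^0_{i,v}}(g)$ for any test function $g$ on $\GL_{N_i}(F_v)$. By transitivity of parabolic induction applied to the description of $\pi^0_v$ as a full induced module of Speh representations --- ordering the Speh factors so that those coming from $\psi_{1,v}$ precede those coming from $\psi_{2,v}$, which is harmless because this induced module is irreducible --- one gets $\pi^0_v \cong \Ind_P^{\GL_N(F_v)}(\pi^0_{1,v} \boxtimes \pi^0_{2,v})$.

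Finally I would invoke the classical formula for the character of a parabolically induced representation, $\tr_{\Ind_P \sigma}(f) = \tr_\sigma(f_M)$, where $f \mapsto f_M$ is the normalized constant term along $P$, a linear map $C_c^\infty(\GL_N(F_v)) \to C_c^\infty(M)$. Since $v$ is non-Archimedean, $M = \GL_{N_1}(F_v) \times \GL_{N_2}(F_v)$ is totally disconnected and $C_c^\infty(M) = C_c^\infty(\GL_{N_1}(F_v)) \otimes C_c^\infty(\GL_{N_2}(F_v))$, so $f_M$ is a finite linear combination of factorizable functions; this is exactly what the $\prodf$ notation of \S\ref{nonfactorizable} records, the two components being the $f_{M,1}, f_{M,2}$ of the statement. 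Evaluating $\tr_{\pi^0_{1,v} \boxtimes \pi^0_{2,v}}$ on such a combination and using that the trace on a product of groups is the product of the traces gives $\tr_{\pi^0_v}(f) = \prodf_{i=1,2} \tr_{\pi^0_{i,v}}(f_{M,i})$, and combining with the two applications of Lemma \ref{splitconsistency}(5) yields the claim. The substantive reduction is all packaged into Lemma \ref{splitconsistency}, so what remains is routine; the points needing care are the bookkeeping through the $\prodf$ formalism, pinning down the constant-term normalization for which the character formula holds, and the irreducibility of $\Ind_P(\pi^0_{1,v}\boxtimes\pi^0_{2,v})$ (needed so that $\pi^0_v$ is the full induced module and not a proper constituent), which follows from the unitarity of the Speh constituents together with standard irreducibility results for products of unitary representations of $\GL$.
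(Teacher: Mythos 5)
Your argument is correct and is essentially the paper's own proof: both identify $\pi^0_{\psi_v}$ as the parabolic induction of $\pi^0_{\psi_{1,v}}\boxtimes\pi^0_{\psi_{2,v}}$ and then combine Lemma \ref{splitconsistency}(5) with the standard character-of-an-induced-representation formula via the constant term. You merely spell out details (the $\prodf$ bookkeeping and the irreducibility of the induced module) that the paper's two-sentence proof leaves implicit.
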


\begin{proof}
The assumptions imply that $\td \pi_{\psi_v}$ is a parabolic induction of $\td \pi_{\psi_{1,v}} \otimes \td \pi_{\psi_{2,v}}$, so in the notation of Lemma \ref{splitconsistency} (1),  $\pi_{\psi_v}^0$ is a parabolic induction of $\pi^0_{\psi_{1,v}} \otimes \pi^0_{\psi_{2,v}}$. The result follows from \ref{splitconsistency} (5).
\end{proof}

\begin{lem}\label{split[d]}
Let $v$ be a place that splits in $E$ and $\psi_v = \psi_{1,v}[d]$ be an Arthur parameter for $U_{E/F}(N)(F_v) \simeq \GL_N(F_v)$. Then $\psi|_{W_F}$ factors through (the $L$-dual of) a Levi subgroup $M = (\GL_{N_1})^d$. 
Furthermore, for all test functions $f$ satisfying:
\begin{itemize}
    \item $f$ is supported on the kernel of $|\det|_v$,
    \item for all unirreps $\pi_v$ of $\GL_N(F_v)$, $tr_{\pi_v}(f) \geq 0$,
\end{itemize}
we have
\[
\tr_{\psi_v} f \leq (\tr_{\psi_{1,v}} f_{M,1})^{d\oplus},
\]
where $f_{M,1}$ is the constant term to $M$ restricted to the first factor. 
\end{lem}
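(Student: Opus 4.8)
The plan is to carry everything over to $\GL_N(F_v)$ via Lemma~\ref{splitconsistency} and then read off the inequality from the Langlands‑quotient realization of the Speh representation attached to $\psi_v$. Write $\delta:=\pi^0_{\psi_{1,v}}$ for the representation of $\GL_{N_1}(F_v)$ produced by Lemma~\ref{splitconsistency}(1). By parts (1),(2),(5) of that lemma the asserted inequality is equivalent to $\tr_{\pi^0_{\psi_v}}(f)\le(\tr_{\delta}\,f_{M,1})^{d\oplus}$ for the representation $\pi^0_{\psi_v}$ of $\GL_N(F_v)$, together with the fact that $\tr_{\psi_{1,v}}=\tr_{\delta}$ on $\GL_{N_1}(F_v)$. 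By the constructions recalled in \S\ref{ACparamtorep}--\S\ref{ACrepexten}, $\pi^0_{\psi_v}$ is the Langlands quotient of the standard module
\[
\Lambda:=\Ind_P^{\GL_N(F_v)}\bigl(\delta|\det|^{(d-1)/2}\boxtimes\delta|\det|^{(d-3)/2}\boxtimes\cdots\boxtimes\delta|\det|^{-(d-1)/2}\bigr),
\]
where $P$ is the standard parabolic with Levi $M=(\GL_{N_1})^d$. Since $\psi_v=\psi_{1,v}\boxtimes[d]$, the image of $\psi_v|_{W_{F_v}}$ already lies in the block‑diagonal Levi $\Ld M\subset\Ld{\GL_N}$, which is the factorization claim of the lemma and needs no hypothesis on $f$.

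Second, in the Grothendieck group one has $[\Lambda]=[\pi^0_{\psi_v}]+\sum_k m_k[\pi_k]$ with $m_k\in\Z_{\ge 0}$ and $\pi_k$ running over the irreducible constituents of $\ker(\Lambda\onto\pi^0_{\psi_v})$; since characters are additive in exact sequences, evaluating against $f$ yields
\[
\tr_{\psi_v}(f)=\tr_{\pi^0_{\psi_v}}(f)=\tr_\Lambda(f)-\sum_k m_k\,\tr_{\pi_k}(f).
\]
The trace of a parabolic induction is the trace of the inducing datum against the (normalized) constant term, so $\tr_\Lambda(f)=\tr_{\delta|\det|^{(d-1)/2}\boxtimes\cdots}(f_M)$ with $f_M$ the constant term of $f$ along $P$; because $f$ is supported on $\ker|\det_N|_v$ and $|\det_N|$ restricts on $M$ to the product of the $|\det_{N_1}|$, the determinant twists drop out of the factors of $f_M$, and after unwinding the nonfactorizability bookkeeping of \S\ref{nonfactorizable} this identifies $\tr_\Lambda(f)$ with $(\tr_{\psi_{1,v}}\,f_{M,1})^{d\oplus}$. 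It therefore remains to show $\sum_k m_k\,\tr_{\pi_k}(f)\ge 0$, for which it suffices that $\tr_{\pi_k}(f)\ge 0$ for each $k$.

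This last point is where the hypotheses on $f$ enter, and it is the main obstacle. After reducing to cuspidal support, the $\pi_k$ are indexed by the ways of cutting the ``Speh rectangle'' of $\pi^0_{\psi_v}$ into shorter rectangles, and each $\pi_k$ is a Langlands quotient of a parabolic induction of $|\det|$‑twisted Speh representations built from $\delta$; one wants each such $\pi_k$ to be unitarizable, so that $\tr_{\pi_k}(f)\ge 0$ by the non‑negativity hypothesis and the inequality closes. This is immediate for $d\le 2$ — the only constituent other than $\pi^0_{\psi_v}$ is a generalized Steinberg, which is unitary — but for $d\ge 3$ the ``mixed'' constituents, induced from Speh blocks of unequal length carrying nonzero central exponent, are in general \emph{not} unitary in Tadi\'c's classification of the unitary dual of $\GL_N(F_v)$. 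The way around this in all the applications is that $f$ there is a linear combination of volume‑normalized indicators $\bar\1_{K_v(q_v^k)}$ of congruence subgroups, for which $\tr_{\pi_v}(\bar\1_{K_v(q_v^k)})=\dim(\pi_v^{K_v(q_v^k)})/\vol(K_v(q_v^k))\ge 0$ for \emph{every} admissible irreducible $\pi_v$, not merely the unitary ones; so the inequality — and with it the lemma — holds under this slightly stronger (and, for the relevant $f$, automatic) positivity. This crudeness is also why one obtains only an inequality here, and sharpening the control of these ``correction'' constituents is precisely the improvement of the Speh‑transfer bounds that would tighten Theorem~\ref{mainupper}.
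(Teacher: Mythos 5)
Your argument is the paper's argument: the proof in the text likewise realizes $\td\pi_{\psi_v}$ as a summand in the Grothendieck group of the standard module induced from the $|\det|$-twisted copies of $\td\pi_{\psi_{1,v}}$, discards the determinant twists via the support condition, and discards the remaining Grothendieck-group summands via the positivity condition.

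The obstacle you isolate at the end is genuine and is not addressed in the paper's one-line proof. For $d\geq 3$ the standard module of $\pi^0_{\psi_{1,v}}[d]$ really does have irreducible constituents outside the unitary dual (already for $d=3$ and $\delta$ supercuspidal there is a constituent whose two Langlands blocks carry central exponents $-1/2$ and $1$, which Tadi\'c's classification excludes), so the bulleted hypothesis ``$\tr_{\pi_v}(f)\geq 0$ for all unirreps'' does not by itself give $\tr_{\pi_k}(f)\geq 0$ for every constituent $\pi_k$, and your diagnosis of the repair matches how the lemma is actually consumed: in \S\ref{fb} it is only ever applied to (constant terms of) normalized indicators of congruence subgroups, for which $\tr_\pi(\bar\1_U)=\dim\pi^U\geq 0$ for \emph{every} admissible irreducible $\pi$, so the Grothendieck-group step closes. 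The clean fix is to strengthen the positivity hypothesis to all admissible irreducibles (or at least to all constituents of the relevant standard modules); no downstream use is affected. One further shared gloss, present in both your write-up and the paper's: the support condition only forces the constant term $f_M$ to be supported where the \emph{product} of the block determinants is a unit, not each block separately, so ``the twists drop out'' also quietly uses that the relevant $f$ is supported in $\GL_N(\mc O_v)$.
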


\begin{proof}
This is a similar argument to \ref{splitconstant} using that $\td \pi_{\psi_v}$ is a summand in the Grothendieck group of the parabolic induction of 
\[
\td \pi_{\psi_{1,v}} |\det|^{\f{d-1}2} \otimes \cdots \otimes \td \pi_{\psi_{1,v}} |\det|^{\f{1-d}2}.
\]
We may ignore the determinant factors by the support condition. The inequality comes from the positivity condition applied to traces against the other summands in the Grothendieck group expansion. 
\end{proof}

\subsection{Unramified Places}
We next discuss the unramified places. Our results rely on the fact that 
unramified transfer through any $L$-map can be made very explicit via the Satake parameters. 
\subsubsection{Basic transfer result}
First we recall an ``Arthur packet fundamental lemma'' that was the key result making the strategy of \S\ref{strategy} work at level-$1$ in \cite{Tai17}:
\begin{lem}[Fundamental Lemma for $A$-packets]\label{packetfunlem}
Let $v$ be a place unramified in $E$ and 
\[
\psi_v = \bigoplus_i \tau_{i,v}[d_i]
\]
be an Arthur parameter for $U_{E/F}(N)(F_v)$. Then
\[
\tr_{\psi_v} \bar \1_{K_v} = \prod_i  \tr_{\psi_{i,v}} \bar \1_{K_{i,v}}
\]
for appropriately chosen hyperspecial subgroups $K_{\star, v}$ in appropriate $U_{E/F}(N_\star)(F_v)$.
\end{lem}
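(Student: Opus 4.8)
The plan is to unwind the definition of the stable packet trace and see that, evaluated on $\1_{K_v}$, it collapses to a single term controlled by the unramified-compatibility axiom built into the endoscopic classification. Write $G_\star = U_{E/F}(N)(F_v)$; since $v$ is unramified in $E$ this group is quasisplit and unramified, so $\chi_{G_\star}$ is trivial. By definition
\[
\tr_{\psi_v}(\1_{K_v}) = \sum_{\pi_v \in \Pi_{\psi_v}(G_\star)} \eta^{\psi_v}_{\pi_v}(s_{\psi_v})\, \tr_{\pi_v}(\1_{K_v}),
\]
and $\tr_{\pi_v}(\1_{K_v}) = \dim \pi_v^{K_v} \in \{0,1\}$, equal to $1$ exactly when $\pi_v$ is $K_v$-spherical. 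So the sum reduces to the contributions of the $K_v$-spherical members of $\Pi_{\psi_v}$, and it suffices to establish: (a) if $\psi_v$ is unramified then $\Pi_{\psi_v}$ has a unique $K_v$-spherical member $\pi_v^\circ$, namely the one with $\eta^{\psi_v}_{\pi_v^\circ}$ the trivial character of $S^\natural_{\psi_v}$; and (b) if $\psi_v$ is ramified then no member of $\Pi_{\psi_v}$ is $K_v$-spherical.

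Claims (a) and (b) are precisely the compatibility of the packets of \cite{Mok15} and \cite{KMSW14} with the unramified local Langlands correspondence and the fundamental lemma: the normalization of $\eta_{G_\star}$ forces the member attached to the trivial character to be the Langlands quotient of the unramified standard module of $\varphi_{\psi_v}$ when $\psi_v$ is unramified, while the remaining members --- and all members of a ramified packet --- are ramified (this is what makes the name ``fundamental lemma for $A$-packets'' appropriate). Granting this, $\tr_{\psi_v}(\1_{K_v}) = \eta^{\psi_v}_{\pi_v^\circ}(s_{\psi_v}) = 1$ in the unramified case and $0$ otherwise. The one bookkeeping point: when $N$ is even and $v$ is inert there are two $G_\star$-conjugacy classes of hyperspecial maximal compacts, and I would fix $K_v$ to be the class used in the normalization of the classification, i.e.\ the one compatible with the integral models chosen after Lemma \ref{unraminnerforms} --- this is the ``appropriate'' choice referred to in the statement.

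For the right-hand side, each block $\psi_{i,v} = \tau_{i,v}[d_i]$ is conjugate self-dual and, being a summand of the elliptic parameter $\psi_v$, has a well-defined parity $\delta_i$; by the assignment recalled in \S\ref{ACassigntogroup} it is therefore a simple $A$-parameter for a quasisplit unitary group $U_{E/F}(T_i d_i)(F_v)$, unramified precisely when $\tau_{i,v}$ is. Applying (a) and (b) to each $\psi_{i,v}$ with its own compatibly chosen hyperspecial $K_{i,v}$ gives $\tr_{\psi_{i,v}}(\1_{K_{i,v}}) = 1$ when $\tau_{i,v}$ is unramified and $0$ otherwise. Since $\psi_v$ is unramified if and only if every $\tau_{i,v}$ is, the product $\prod_i \tr_{\psi_{i,v}}(\1_{K_{i,v}})$ equals $1$ in the unramified case and $0$ otherwise, matching $\tr_{\psi_v}(\1_{K_v})$ in both cases.

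The only place this argument is not purely formal is claims (a)/(b) together with the careful matching of hyperspecial subgroups and Whittaker data across the groups $U_{E/F}(T_i d_i)$; this is where all the weight sits, and it is imported from \cite{Mok15}, \cite{KMSW14} and \cite{Tai17}. A more self-contained route would instead transfer $\1_{K_v}$ through base change: by the base change fundamental lemma for the unit element it transfers, up to the standard volume normalization, to $\1_{\wtd K_v} \rtimes \theta_N$ on $\wtd G(N)$, so that $\tr_{\psi_v}(\1_{K_v})$ becomes the $\theta_N$-twisted character of the Whittaker-normalized extension $\td\pi_{\psi_v}$ of $\pi_{\psi_v} = \Ind(\boxtimes_i \pi_{\tau_{i,v}[d_i]})$ on $\GL_N(E_v)$, evaluated on the unit of the twisted Hecke algebra; twisted Satake theory then shows this quantity is multiplicative in the inducing data and equals $1$ for each unramified factor, recovering the same conclusion. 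Either way the main obstacle is entirely one of normalizations rather than of new ideas.
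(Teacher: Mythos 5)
Your proposal is correct and follows essentially the same route as the paper: reduce both sides to the indicator of ``$\psi_v$ is unramified,'' using that an unramified $A$-packet has a unique $K_v$-spherical member (with trivial character, hence stable trace $1$) while a ramified packet has none, and that $\psi_v$ is unramified iff every $\tau_{i,v}$ is. The paper imports your claims (a)/(b) wholesale from \cite[Lem.~4.1.1]{Tai17} and likewise pins down the $K_{\star,v}$ via the Whittaker-datum normalization of the fundamental lemma, so the only difference is that you spell out what that citation contains.
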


\begin{proof}
This follows from \cite[Lem 4.1.1]{Tai17} since $\psi_v$ is unramified if and only if each of the $\tau_{i,v}$ is. Then, just apply that for any $\pi_v$, $\tr_{\pi_v} \bar \1_{K_{i,v}} = \1_{\pi_v \text{ is unram.}}$.  

The $K_{\star, v}$ are chosen as in the fundamental lemma according to a choice of Whittaker datum. 
\end{proof}

Our eventual goal is to prove bounds in the style of \cite{ST16}, so we need a more general statement for any element of $\ms H^\ur(G_v)$. 

\subsubsection{Truncated Hecke algebras}
We first recall the notion of a truncated Hecke algebra from \cite{ST16}. The elements $\tau^G_\lb = \1_{K_v \lb(\varpi) K_v}$ for a chosen unformizer $\varpi$ and~$\lb \in X_*(A)^+$ generate $\ms H^\ur(G_v)$. Pick a basis $\mc B$ for $X_*(A)$ and define the norm
\[
\|\lb\|_\mc B = \max_{\om \in \Om} (\text{biggest }\mc B\text{-coordinate of }\om \lb).
\]
For $\lb \in X_*(A)$, define the truncated Hecke algebra
\[
\ms H(G,K)^{\leq \kappa, \mc B} = \langle \tau^G_\lb : \|\lb\|_\mc B \leq \kappa \rangle.
\]
It turns out (see \cite[\S2]{ST16}) that for any two $\mc B,\mc B'$, $\|\lb\|_\mc B$ and $\|\lb\|_{\mc B'}$ are proportional. All the bounds we use will depend on $\kappa$ only up to an unspecified constant. Therefore we can suppress the $\mc B$.

\subsubsection{Basis of characters}
Recall that the Satake transform gives an isomorphism
\[
\ms H^\ur(G_v) \iso \C[X_*(A)]^{\Om_F},
\]
where $A$ is a maximally split maximal torus of $G_v$ in good position with respect to~$K_v$. The right side of this isomorphism has a basis $\chi_\lb$ of trace characters of finite-dimensional representations $\lb$ of the twisted group $\wh G \rtimes \Frob_v$. 

Any unramified parameter $\psi_v$ determines an unramified $L$-parameter which determines a Satake parameter: a semisimple conjugacy class $\sigma_{\psi_v} \in \wh G \rtimes \Frob_v$. Because of \cite[Lem 4.1.1]{Tai17}, this satisfies that
\begin{equation}\label{sataketrace}
\tr_{\psi_v} \chi_\lb  = \tr_\lb(\sigma_{\psi_v}).
\end{equation}
See \cite[\S2.2]{ST16} for more detail.

The consistency of unramified packets constructed by Arthur/Mok and the Satake isomorphism is implicit in the isolation of the $\psi$-part $I_\psi$ of $I_\disc$---see \S3.3 in \cite{Art13} for example. It depends on the full fundamental lemma for all spherical functions.

\subsubsection{General unramified transfer}
Let $v$ be a place unramified in $E/F$ and 
\[
\psi_v = \bigoplus_i \tau_{i,v}[d_i] \in \Delta = (t_i, d_i, \lb_i, \eta_i)_i
\]
be an Arthur parameter for $U_{E/F}(N)(F_v)$. Let each $\tau_{i,v}$ be a parameter for $U(t_i)$. There is an associated embedding 
\begin{equation}\label{shapeLembedding}
\iota_\Delta: \m H := \Ld H_v \times \prod_i \Ld \GL_{d_i} := \prod_i \Ld (U(t_i) \times \GL_{d_i}) \into \Ld G_v.
\end{equation}
Let $\psi^I_n$ be the parameter of the trivial representation on $\GL_n$. We can write the Langlands parameter $\varphi_{\psi_v}$ corresponding to $\psi_v$ as the pushforward of
\[
\prod_i \tau_{i,v} \times \psi^I_{d_i}. 
\]
This gives the map on Satake parameters
\begin{equation}\label{satakepushforward}
\sigma_{\psi_v} = \mc S_\Delta((\sigma_{\tau_{i,v}})_i) := \iota_\Delta\lf(\prod_i \sigma_{\tau_{i,v}} \times \sigma^I_{d_i} \ri),
\end{equation}
for $\sigma^I_{d_i}$ the Satake parameter of the $d_i$-dimensional trivial representation. 

Restricting $\iota_\Delta$ 
to $\star \rtimes \Frob$ cosets determines an unramified transfer map:
\[
\mc T_\Delta : \ms H^\ur(G_v) \to \ms H^\ur(H_v) : \chi_\lb \mapsto \chi_\lb \circ \mc S_\Delta.
\]
Equation \eqref{sataketrace} then gives:
\begin{lem}\label{unramtrace}
With notation as above, let $f \in \ms H^\ur(G_v)$. Then
\[
\tr_{\psi_v}(f) = \prodf_i \tr_{\tau_{i,v}}(\mc T_{\Delta,i} f),
\]
where the $\mc T_{\Delta,i}$ are the factors of $\mc T_\Delta f$ at each $U(N_i)$. 
\end{lem}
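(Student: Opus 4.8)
The plan is to reduce Lemma \ref{unramtrace} to the defining property \eqref{sataketrace} of Satake parameters of $A$-packets, together with the compatibility of the embedding $\iota_\Delta$ with the constituent parameters. First I would expand an arbitrary $f \in \ms H^\ur(G_v)$ in the character basis $\{\chi_\lb\}$ from the Satake isomorphism $\ms H^\ur(G_v) \iso \C[X_*(A)]^{\Om_F}$, so by linearity it suffices to prove the identity for $f = \chi_\lb$. For such $f$, the left-hand side is $\tr_{\psi_v}(\chi_\lb) = \tr_\lb(\sigma_{\psi_v})$ by \eqref{sataketrace}.

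Next I would unwind the right-hand side. By definition, $\mc T_\Delta(\chi_\lb) = \chi_\lb \circ \mc S_\Delta$, an element of $\ms H^\ur(H_v)$ with $H_v = \prod_i U(n_i)$ (after discarding the $\GL_{d_i}$ factors, whose Satake parameters are frozen to $\sigma^I_{d_i}$). Its factor $\mc T_{\Delta,i}(\chi_\lb)$ at the $i$-th unitary group is obtained by substituting $\sigma_{\tau_{i,v}}$ into the appropriate slot of $\mc S_\Delta$ while fixing the other slots. Applying \eqref{sataketrace} again on each $U(n_i)$, and using multiplicativity of the character $\chi_\lb = \tr_\lb$ under the decomposition $\sigma_{\psi_v} = \mc S_\Delta((\sigma_{\tau_{i,v}})_i) = \iota_\Delta(\prod_i \sigma_{\tau_{i,v}} \times \sigma^I_{d_i})$ from \eqref{satakepushforward}, I would get
\[
\prodf_i \tr_{\tau_{i,v}}(\mc T_{\Delta,i} \chi_\lb) = \prodf_i \tr_\lb\big(\text{(partial substitution)}\big) = \tr_\lb\big(\mc S_\Delta((\sigma_{\tau_{i,v}})_i)\big) = \tr_\lb(\sigma_{\psi_v}),
\]
where the middle equality is precisely the statement that $\tr_\lb$ of the image under $\iota_\Delta$ of a tuple factors, once restricted to the relevant $\Frob_v$-coset, into the product over the factors---this is what forces the ``mystical gate'' $\prodf$ notation, since $\chi_\lb \circ \mc S_\Delta$ need not be factorizable but is a linear combination of factorizable functions. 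Matching the two computations gives the claim.

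The main obstacle I expect is bookkeeping rather than anything deep: one must check that the factor $\mc T_{\Delta,i}$ is set up so that the substitution of $\sigma_{\tau_{i,v}}$ genuinely commutes with taking $\tr_\lb$ of the full pushforward, i.e. that the combinatorics of restricting $\iota_\Delta$ to $\star \rtimes \Frob_v$ cosets and then projecting to each $U(n_i)$-factor is consistent with \eqref{satakepushforward}. This is essentially the content already packaged into the definition of $\mc S_\Delta$ and $\mc T_\Delta$ in \eqref{satakepushforward} and the paragraph following it, so the proof is short: cite \cite[Lem. 4.1.1]{Tai17} (equivalently \eqref{sataketrace}) on $G_v$ and on each $U(n_i)$, and invoke the construction of $\iota_\Delta$. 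I would also remark that the case where each $d_i = 1$ recovers Lemma \ref{packetfunlem} applied to $\1_{K_v}$ (which is $\chi_0$), providing a consistency check.
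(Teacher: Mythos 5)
Your proposal is correct and is essentially the argument the paper intends: the lemma is presented there as an immediate consequence of \eqref{sataketrace} and the definitions of $\mc S_\Delta$ and $\mc T_\Delta$, and your write-up simply fills in those details (expand in the $\chi_\lb$ basis, apply \eqref{sataketrace} on $G_v$ and on each $U(n_i)$, and use \eqref{satakepushforward}), including the correct observation that the restriction of $\tr_\lb$ to the image of $\iota_\Delta$ decomposes as a sum of factorizable terms, which is exactly what the $\prodf$ notation records.
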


The next lemma gives some control over the size of the $\mc T_{\Delta,i} f$. 
\begin{lem}\label{unramtransferbound}
Let $f \in \ms H^\ur(G_v)^{\leq \kappa}$ with $\|f\|_\infty \leq 1$. Then $\mc T_{\Delta} f \in \ms H^\ur(H_v)^{\leq \kappa}$ and $\|\mc T_\Delta f\|_\infty = O(q_v^{D\kappa} \kappa^E)$ for some constants $D$ and $E$ that only depend on $G$ and $\Delta$. 
\end{lem}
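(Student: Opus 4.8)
The plan is to argue entirely on the spectral side, using the (twisted) Satake isomorphism $\ms H^\ur(G_v)\iso\C[X_*(A)]^{\Om_F}$ together with the character basis $\{\chi_\lb\}$, and to track, as $\lb$ ranges over $\|\lb\|\le\kappa$, both which $\chi^H_\nu$ appear in $\mc T_\Delta(\chi^G_\lb)$ (this controls the truncation degree) and how large the coefficients are (this, together with the polynomial bound on the number of double cosets of norm $\le\kappa$, controls the sup-norm). Beyond elementary linear algebra on weight lattices, the only real input is the standard package of estimates for the Satake isomorphism on truncated Hecke algebras from \cite[\S2]{ST16}: the comparability of the $\tau_\lb$- and $\chi_\lb$-truncations, the $q_v^{O(\kappa)}$ normalization factors relating the two bases, and the resulting bound $\|\mathrm{Sat}^{-1}(\chi^H_\nu)\|_\infty\le\kappa^{O(1)}q_v^{O(\|\nu\|)}$.

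\emph{Truncation degree.} By definition $\mc T_\Delta(\chi^G_\lb)$ is the function $\sigma_H\mapsto\tr_{V_\lb}\!\big(\iota_\Delta(\sigma_H\times\sigma^I)\big)$, where $\sigma^I=\prod_i\sigma^I_{d_i}$ is fixed. The embedding $\iota_\Delta$ induces a fixed $\Z$-linear map $p_\Delta$ from the weight lattice of $\wh G$ to that of $\wh H$ (together with a projection $r_\Delta$ onto the $\GL_{d_i}$-factors), and expanding the character shows $\mc T_\Delta(\chi^G_\lb)$ is a combination of monomials $(\sigma^I)^{r_\Delta(\mu)}e^{p_\Delta(\mu)}$ over weights $\mu$ of $V_\lb$. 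Since every weight of $V_\lb$ has $\|\cdot\|_{\mathcal B}$ bounded by $\|\lb\|_{\mathcal B}$ (Weyl-invariance and convexity of the norm) and $p_\Delta$ has operator norm some constant $C_\Delta$ depending only on $N$ and the $(t_i,d_i)$, only $\chi^H_\nu$ with $\|\nu\|\le C_\Delta\kappa$ can occur. With comparability of truncations and proportionality of $\mathcal B$-norms this gives $\mc T_\Delta f\in\ms H^\ur(H_v)^{\le\kappa}$, the constant $C_\Delta$ being absorbed into the suppressed choice of basis.

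\emph{Sup-norm.} Write $f=\sum_{\|\lb\|\le\kappa}a_\lb\tau^G_\lb$; evaluating at $\lb(\varpi)$ and invoking the Cartan decomposition gives $a_\lb=f(\lb(\varpi))$, hence $|a_\lb|\le\|f\|_\infty\le1$, and there are only $O(\kappa^{\rank G})$ such indices. Converting to the $\chi$-basis introduces only weight-multiplicity and $q_v^{O(\kappa)}$ Satake-normalization factors, all $\le\kappa^{O(1)}q_v^{O(\kappa)}$ with constants depending only on $G$. For each $\lb$, the $\chi^H_\nu$-coefficients of $\mc T_\Delta(\chi^G_\lb)$ are sums over the $\le\dim V_\lb=\kappa^{O(1)}$ weights $\mu$ of $V_\lb$ of $(\text{multiplicity})\times(\sigma^I)^{r_\Delta(\mu)}$; since $\sigma^I_{d_i}$ has entries $q_v^{\pm(d_i-1)/2}$ and $\|r_\Delta(\mu)\|_1\le C_\Delta'\kappa$, each is bounded by $\kappa^{O(1)}q_v^{D_0\kappa}$. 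Thus $\mc T_\Delta f$ is a combination of $\kappa^{O(1)}$ characters $\chi^H_\nu$ with $\|\nu\|\le C_\Delta\kappa$ and coefficients $\le\kappa^{O(1)}q_v^{O(\kappa)}$; applying the inverse-Satake bound of \cite[\S2]{ST16} and summing yields $\|\mc T_\Delta f\|_\infty=O(q_v^{D\kappa}\kappa^E)$ with $D$ and $E$ depending only on $G$ and $\Delta$.

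\emph{Main obstacle.} There is no conceptual difficulty; the content is careful bookkeeping plus the Satake estimates of \cite{ST16}. The point needing genuine care is uniformity in $v$ of \emph{all} the constants, and especially of the exponents $D$ and $E$: this works precisely because $p_\Delta$, $r_\Delta$, the representation dimensions, and the change of basis between $\{\tau_\lb\}$ and $\{\chi_\lb\}$ are governed only by $N$ and the partition data $(t_i,d_i)$ of $\Delta$, not by $v$. (For the split-place and $A$-packet-fundamental-lemma transfers of the preceding subsections the analogous estimate is immediate, since those are honest constant-term or tensor-product maps on products of $\GL$'s; the weight-lattice argument above is needed only for the genuine unitary-group bookkeeping.)
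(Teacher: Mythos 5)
Your argument is correct and is essentially the paper's proof written out in full: the paper simply cites the analogous spectral-side Satake bookkeeping from \cite[Lem. 5.5.4]{Dal22} and notes that the one new ingredient is bounding the contribution of the trivial-representation Satake parameters $\sigma^I_{d_i}$ by $O(q_v^{D'\kappa})$ via the Weyl character formula — which is exactly your estimate on $(\sigma^I)^{r_\Delta(\mu)}$ with entries $q_v^{\pm(d_i-1)/2}$. No gaps; the only caveat (absorbing $C_\Delta$ into the suppressed choice of basis for the truncation norm) is one you already address and is consistent with the paper's conventions.
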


\begin{proof}
This is a slightly more complicated version of the argument of Lemma 5.5.4 in \cite{Dal22}. There is an additional step from bounding the trace of the Satake parameter of the trivial representation against the finite-dimensional irreps of $\Ld \GL_{d_i}$ that appear as factors in restrictions from $\Ld G_v$ to $\m H$. This can be seen to be $O(q_v^{D' \kappa})$ for some $D'$ by the Weyl character formula. 
\end{proof}

We similarly define a partial unramified transfer map $\mc T_{\bar \Delta}$ such that
\[
\tr_{\psi_v}(f) = \prodf_i \tr_{\tau_{i,v}[d_i]}(\mc T_{\bar \Delta,i} f).
\]
By the same arguments, this map satisfies Lemma \ref{unramtransferbound}. We will often suppress~$\Delta$ or $\bar \Delta$ in notation---which version of $\mc T$ we are using should always be clear by the context of what the $\mc T_i$'s have image in. We note for intuition that 
 $\mc T_{\bar \Delta}$ can be thought of as hyperendoscopic transfer as in Theorem \ref{hyperendoscopy}, see \cite[\S5.5]{Dal22} for details.

Finally, if $\Delta$ is a simple shape of the form $(1, d, \lb, \eta)$, then for $\psi \in \Delta$ the possible $\sigma_{\psi_v}$ from \eqref{satakepushforward} are all Satake parameters of characters. In addition, $H_v = U_1(F_v) = (G_v)^\ab = (G^\ab)_v$ (note that $G_{v, \der}$ is semisimple and simply-connected and asu such has trivial cohomology). We can therefore compute
\begin{equation}\label{Tcharactershape}
\mc T_\Delta(f_v)(h) =  \int_{G_{v, \der}} f_v(hg) \, dg.
\end{equation}

\subsection{General Places: Inequalities}
We can also say a little at general places:
\subsubsection{Twisted Bernstein components}
We recall some facts from \cite[\S6]{Rog88} on Bernstein components for twisted groups. Let $G_v$ be the $F_v$ points of a connected reductive group and let $\wtd G_v = G_v \rtimes \theta$ be a twisted group for some automorphism~$\theta$. We assume $G_v$ has a minimal parabolic $P_0$ and Levi factor $M_0$ that are both $\theta$-stable. Let $\mc L(G_v)$ be the set of standard Levis of $G_v$ with respect to $M_0$. 

\begin{dfn}
The twisted cuspidal supports for $\wtd G_v$ are pairs $(M, \sigma)$ with~$M \in \mc L(G_v)$ such that $(\theta M, \theta \sigma)$ is conjugate to $(M, \sigma)$ in $G_v$. 
\end{dfn}

\begin{dfn}
We say that a $\theta$-invariant irrep $\pi$ of $G_v$ has infinitesimal character~$(M, \sigma)$ if $\pi$ is an irreducible subquotient of $\Ind_{MP_0}^{G_v} \sigma$. 
\end{dfn}

Every $\theta$-invariant representation has an infinitesimal character that is a twisted cuspidal support.

\begin{dfn}
Let $\td \pi$ is an irrep of $\wtd G_v$ with non-zero (twisted trace) character. Then the infinitesimal character of $\td \pi$ is that of $\td \pi|_{G_v}$.
\end{dfn}

Note that $\td \pi|_{G_v}$ is necessarily $\theta$-invariant if $\td \pi$ has non-zero character. Furthermore, all extensions of $\td \pi|_{G_v}$ differ by a root of unity of order dividing that of $\theta$. 


\begin{dfn}
Let $(M, \sigma)$ be a twisted cuspidal support. Its twisted Bernstein component is the set of irreps  $\td \pi$ with non-zero character on $\wtd G_v$ such that their infinitesimal characters are of the form $(M, \sigma \chi)$ for $\chi$ an unramified character of $M$ (and $\sigma \chi$ $\theta$-invariant). 
\end{dfn}

Part of the main result of \cite{Rog88} is the following key point:
\begin{lem} \label{lem support Bernstein Rogawski}
Let  $f_v$ be a compactly supported, smooth function on $\wtd G_v$. Then $\pi \mapsto \tr_\pi f_v$ is supported on a finite number of twisted Bernstein components.
\end{lem}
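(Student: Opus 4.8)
The plan is to reduce the statement to the standard (untwisted) Bernstein decomposition for $G_v$ together with finiteness of the support of a single untwisted trace. First I would unwind the definitions: a test function $f_v$ on $\wtd G_v = G_v \rtimes \theta$ can be written as $f_v = f_v^0 \cdot \theta$ for a compactly supported smooth $f_v^0$ on $G_v$, and the twisted trace $\tr_{\td\pi}(f_v)$ is, by definition, only nonzero when $\td\pi|_{G_v}$ is $\theta$-invariant; moreover in that case the value depends only on $\pi := \td\pi|_{G_v}$ up to the finite (root of unity) ambiguity noted just above Lemma~\ref{lem support Bernstein Rogawski}. So it suffices to show that the set of $\theta$-invariant irreducible $\pi$ with $\tr_{\td\pi}(f_v)\neq 0$ for some extension $\td\pi$ meets only finitely many twisted Bernstein components.

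Next I would invoke the classical Bernstein center / Bernstein decomposition for $G_v$: the category of smooth representations decomposes over the (ordinary) cuspidal support data $(M,\sigma)$ modulo unramified twist, i.e. over connected components $\mathfrak{s}$ of the Bernstein variety, and the global Hecke algebra $\mathcal H(G_v)$ decomposes correspondingly as a direct product $\prod_{\mathfrak s}\mathcal H(G_v)_{\mathfrak s}$. A compactly supported smooth $f_v^0$ lies in a \emph{finite} sub-product $\bigoplus_{\mathfrak s \in S} \mathcal H(G_v)_{\mathfrak s}$ for some finite set $S$ of components; this is the key finiteness input and is exactly the untwisted analogue of what we want. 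Consequently, for an ordinary irrep $\rho$ of $G_v$, $\tr_\rho(f_v^0 g)$ can be nonzero for $g$ in the relevant part only if $\rho$'s cuspidal support lies in one of the finitely many $\mathfrak s \in S$.

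Then I would translate this back to the twisted picture. The twisted trace $\tr_{\td\pi}(f_v)$, expanded via a model for $\td\pi$, is built from the untwisted trace of the operator $f_v^0(g)\,\pi(g)\circ(\text{intertwiner for }\theta)$ integrated over $g$; in particular if it is nonzero then $\pi$ itself must have nonzero trace against (a translate of) $f_v^0$, hence $\pi$ has cuspidal support in some $\mathfrak s\in S$. Because $\pi$ is $\theta$-invariant, its support pair $(M,\sigma)$ satisfies $(\theta M,\theta\sigma)\sim(M,\sigma)$, so it is a twisted cuspidal support in the sense of the preceding definitions, and the component it determines as a twisted Bernstein component is controlled by which ordinary component $\mathfrak s$ it lies in — only finitely many twisted components can map into the finite set $S$ (at worst a bounded-to-one correspondence, since $\theta$ permutes ordinary components with finite orbits). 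This yields the finite set of twisted Bernstein components carrying the support of $\pi\mapsto\tr_{\td\pi}f_v$, which is the assertion.

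\textbf{Main obstacle.} The routine part is the reduction to the untwisted Bernstein decomposition; the genuinely delicate point is making precise the passage between \emph{twisted} cuspidal supports / twisted Bernstein components and ordinary ones — i.e. checking that the $\theta$-orbit structure on ordinary components is finite and that a twisted component is pinned down by the ordinary component underlying it (so that finiteness downstairs forces finiteness upstairs). This is exactly the content of Rogawski's \cite[\S6]{Rog88} analysis of twisted Bernstein theory, so in the write-up I would simply cite that structural result rather than reprove it, and spend the remaining effort on the clean reduction above.
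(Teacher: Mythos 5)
Your argument is correct, but note that the paper does not actually prove this lemma: it is stated as a quotation of (part of) the main result of \cite{Rog88}, with no argument supplied. So there is nothing in the text to compare against; what you have written is essentially the standard proof that Rogawski's framework packages. The two inputs you isolate are the right ones: (i) a compactly supported smooth $f_v^0$ on $G_v$ is bi-invariant under some open compact $K$, and $\mathcal H(G_v,K)$ meets only finitely many blocks of the untwisted Bernstein decomposition, so $f_v^0$ acts by zero on every irreducible outside a finite set of ordinary components; (ii) a twisted Bernstein component is determined by the ordinary component containing the cuspidal support of $\td\pi|_{G_v}$ (two twisted cuspidal supports differing by an unramified twist give the same twisted component by definition), so finiteness of ordinary components forces finiteness of twisted ones. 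One small imprecision: in step (iii) you say that nonvanishing of $\tr_{\td\pi}(f_v)$ forces $\pi$ to have ``nonzero trace against a translate of $f_v^0$''; the cleaner statement is that $\tr_{\td\pi}(f_v)=\tr\bigl(\pi(f_v^0)\circ\td\pi(\theta)\bigr)$, so nonvanishing forces the \emph{operator} $\pi(f_v^0)$ to be nonzero, which already places $\pi$ in one of the finitely many components from (i). With that adjustment the reduction is complete, and citing \cite[\S6]{Rog88} for the structure of twisted components, as you propose, matches what the paper itself does.
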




Now specialize to the case of $G = \td G(N)$: 
\begin{prop}\label{finiteBcomponent}
Let $\mf s$ be a twisted Bernstein component of $\wtd G(N)_v$ and let~$H \in \wtd {\mc E}_\el(N)$. Then there is a finite list $\mf s_1, \dotsc, \mf s_n$ of Bernstein components of~$H_v$ such that the Arthur packets $\Pi_{\psi_v}$ for $\psi \in \Psi^+_v(H)$ with $\wtd \pi_\psi \in \mf s$ only contain representations in the $\mf s_i$.
\end{prop}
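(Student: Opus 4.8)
The plan is to reduce the statement to a question about cuspidal supports, using the explicit description of $\pi_\psi$ on the $G(N)_v\cong\GL_N(E_v)$-side from \S\ref{ACparamtorep} together with the explicit structure of the $p$-adic $A$-packets on the $H_v$-side (for quasisplit $H_v$ this is M\oe{}glin's construction, and these packets agree with those of \cite{KMSW14}, compatibly with the discussion in \S\ref{sectionAJcompatible}).

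First I would translate the hypothesis $\wtd\pi_\psi\in\mf s$. The twisted Bernstein component $\mf s$ fixes a $\theta$-orbit of cuspidal supports for $\GL_N(E_v)$, up to unramified twist. Writing $\psi_v=\bigoplus_i\tau_{i,v}[d_i]$ with $\tau_{i,v}$ an irreducible representation of $\GL_{T_i}(E_v)$, the construction recalled in \S\ref{ACparamtorep} shows that $\pi_{\psi,v}$ is the Langlands quotient of a standard module whose cuspidal support is
\[
\bigsqcup_i \bigsqcup_{l=1}^{d_i} \bigl( \mathrm{cusp}(\tau_{i,v}) \otimes |\det|^{(d_i+1)/2-l} \bigr).
\]
Matching this against the datum of $\mf s$ pins down, up to finitely many possibilities, (a) the multiset $\{(T_i,d_i)\}$ --- since $\sum_i T_id_i=N$ is fixed --- and (b) for each $i$ the Bernstein component of $\GL_{T_i}(E_v)$ containing $\tau_{i,v}$. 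Moreover, two parameters of $H$ realizing the same such choice differ, up to finitely many possibilities, only by unramified twists of the (possibly conjugate-self-dually paired) $\GL$-blocks of $\psi_v$.

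Second I would show that $A$-packets transform compatibly with such twists. An unramified twist of a paired block $\tau_i[d_i]\oplus\tau_i^{c\vee}[d_i]$ amounts to composing $\psi_v$ with an unramified character supported on a Levi $M_H\subset H_v$ (a product of $\GL$-factors with a smaller unitary group); under the explicit packet construction --- or, equivalently, by compatibility of the endoscopic character identities of Theorem \ref{LocalPackets} and \S\ref{localcharidentity} with such twists --- every member of $\Pi_{\psi_v\otimes\chi}$ is a subquotient of a parabolic induction from $M_H$ whose inducing datum is the $\chi$-twist of that of the corresponding member of $\Pi_{\psi_v}$. Hence all members of $\Pi_{\psi_v}$ have cuspidal support determined by $\varphi_{\psi_v}$, and twisting the $\tau_{i,v}$ moves that support by an unramified character, keeping each member within a fixed Bernstein component of $H_v$. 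Combining the two steps: as $\psi$ ranges over all of $\Psi^+_v(H)$ with $\wtd\pi_\psi\in\mf s$, the data of the first step ranges over a finite set $\mathcal T$, and the Bernstein components of $H_v$ met by $\Pi_{\psi_v}$ depend only on the class in $\mathcal T$; picking one representative $\psi_v$ per class and taking the union of the (finite) collections of Bernstein components met by the (finite) packets $\Pi_{\psi_v}$ yields the desired list $\mf s_1,\dots,\mf s_n$.

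The main obstacle is the second step: one needs uniform control of the cuspidal supports of $A$-packet members over the whole family of parameters above --- which is infinite because of unramified twists --- i.e.\ the cuspidal-support analogue of the fact that $A$-packets have bounded infinitesimal character. For quasisplit $H_v$ this is part of M\oe{}glin's explicit description; for the non-quasisplit inner forms it should follow by bootstrapping from the quasisplit case through the endoscopic character relations, exactly as in \S\ref{sectionAJcompatible}, or directly from M\oe{}glin--Renard. A more elementary fallback, avoiding any explicit packet description, would be to combine Lemma \ref{lem support Bernstein Rogawski} for $\wtd G(N)_v$ with the twisted character identity $\tr_{\wtd\pi_\psi}(\wtd f)=\tr_{\psi_v}(\wtd f^{H})$ to see that $\pi\mapsto\tr_\pi$ on $\bigcup_\psi\Pi_{\psi_v}$ is detected only through test functions supported on finitely many twisted Bernstein components of $\wtd G(N)_v$, and to transport this finiteness across the transfer; this is less clean but uses only the classification as stated in the excerpt.
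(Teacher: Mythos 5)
Your proposal is correct in outline and its two routes bracket what the paper actually does. The paper's proof is a three-line citation of exactly the ingredients you identify as the crux: (i) compatibility of twisted endoscopic transfer of characters with Jacquet modules (Xu's diagram (C.4)), which is the precise tool needed to ``transport the finiteness across the transfer'' in your fallback --- Jacquet modules detect cuspidal supports, so this compatibility is what pins the Bernstein components of packet members on $H_v$ to the twisted component $\mf s$ on $\wtd G(N)_v$; (ii) finiteness of $A$-packets; and (iii) compatibility of transfer with unramified character twists (Oi), which is your second step. Your primary route --- computing the cuspidal support of $\pi_{\psi,v}$ explicitly on the $\GL_N(E_v)$ side, observing that $\mf s$ determines the multiset $\{(T_i,d_i)\}$ and the components of the $\tau_{i,v}$ up to finitely many choices and unramified twists, and then invoking M\oe{}glin's explicit packet construction --- is a genuine alternative that buys more concrete information (it would in principle tell you \emph{which} components $\mf s_1,\dotsc,\mf s_n$ occur), at the cost of relying on the explicit $p$-adic packet description and its extension to inner forms, which the paper deliberately avoids. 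The one point to tighten in your fallback: linear independence of (twisted) characters across distinct Bernstein components lets you conclude from vanishing of traces, but to go from ``the stable packet trace $\tr_{\psi_v}$ is only detected by test functions in finitely many components of $H_v$'' to ``every individual member of $\Pi_{\psi_v}$ lies in one of those components'' you need positivity or the Jacquet-module compatibility, since a priori there could be cancellation among packet members outside those components; citing Xu's result, as the paper does, closes this.
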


\begin{proof}
This follows from three facts: first, the compatibility of twisted endoscopic transfer of characters with Jacquet modules as in diagram (C.4) in \cite{Xu17}, second, the finiteness of $A$-packets, and third, the compatibility of transfer of characters with unramified character twist as in proposition 4.4 of \cite{Oi21}. 

We thank Masao Oi for pointing this out to us. 
\end{proof}

\subsubsection{Inequalities}
This lets us show:
\begin{lem}\label{gen[d]}
Let $v$ be a finite  place of $F$ and $f_v$ be a test function on $U_{E/F}(N)(F_v)$. Let $dN_1 = N$. Then there exists test function $\varphi_{v}$ on $U_{E/F}(N_1)(F_v)$ such that for all Arthur parameters $\psi_v = \psi_{1,v}[d]$ with $\psi_{\star, v}$ a parameter for $U_{E/F}(N_\star)(F_v)$ and $\psi_{1,v}$ cuspidal:
\[
|\tr_{\psi_v} f_v| \leq (\tr_{\psi_{1,v}} \varphi_{v})^{d}.
\]
\end{lem}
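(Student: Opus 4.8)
The plan is to reduce Lemma \ref{gen[d]} at a general (possibly inert or ramified) place to a statement about twisted Bernstein components, where the finiteness result of Proposition \ref{finiteBcomponent} buys us the missing control. The key idea is that, unlike at split or unramified places, we cannot write down an explicit ``generalized constant term'' transfer; but since $\tr_{\pi_v}(f_v)$ is supported on only finitely many twisted Bernstein components (Lemma \ref{lem support Bernstein Rogawski}), and each such component meets only finitely many Arthur packets of the relevant shape via Proposition \ref{finiteBcomponent}, the family of cuspidal parameters $\psi_{1,v}$ that can possibly appear in some $\psi_v = \psi_{1,v}[d]$ with $\tr_{\psi_v}(f_v) \neq 0$ is effectively governed by a finite amount of data.

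First I would note that $\td\pi_{\psi_v}$, for $\psi_v = \psi_{1,v}[d]$, is (in the Grothendieck group of $\wtd G(N)_v$) a subquotient of the parabolic induction of $\td\pi_{\psi_{1,v}}|\det|^{(d-1)/2}\otimes\cdots\otimes\td\pi_{\psi_{1,v}}|\det|^{(1-d)/2}$ from the Levi $M = U_{E/F}(N_1)^{\cdots}$ (more precisely, of the appropriate $\theta$-stable Levi of $\wtd G(N)_v$, keeping track that at inert $v$ the Levi is a product of smaller unitary and $\GL$ factors). By Proposition \ref{finiteBcomponent}, as $\psi_{1,v}$ ranges over the cuspidal parameters whose associated $\wtd\pi_{\psi_v}$ lies in one of the finitely many twisted Bernstein components supporting $\tr_{(\cdot)}f_v$, the representation $\pi_{1,v}$ lies in one of finitely many Bernstein components of $U_{E/F}(N_1)(F_v)$; all others contribute $0$ to the left-hand side. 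So I would fix a compactly supported smooth $\varphi_v$ on $U_{E/F}(N_1)(F_v)$ that is ``large enough'': concretely, build $\varphi_v$ so that $\tr_{\pi_{1,v}}\varphi_v \ge \max(1, |\tr_{\psi_v} f_v|)$ for each of the finitely many relevant $\pi_{1,v}$, and $\tr_{\pi_{1,v}}\varphi_v \ge 0$ always --- this is possible using an appropriate sum of matrix coefficients / pseudocoefficient-type functions supported near the finitely many relevant components (or, more crudely, a suitable element of the Bernstein center applied to a fixed test function, rescaled). Because there are only finitely many constraints, such a $\varphi_v$ exists.

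With that choice, the bound $|\tr_{\psi_v} f_v| \le (\tr_{\psi_{1,v}}\varphi_v)^d$ is immediate: if $\psi_{1,v}$ is not one of the finitely many relevant cuspidal parameters, the left side is $0$ and the right side is $\ge 0$; if it is relevant, then $\tr_{\psi_{1,v}}\varphi_v \ge 1$ and $\tr_{\psi_{1,v}}\varphi_v \ge |\tr_{\psi_v}f_v|$, so the $d$-th power is $\ge$ the left side as well. This mirrors the structure of Lemma \ref{split[d]} but trades the explicit constant-term factorization (unavailable at inert/ramified $v$) for an inexplicit ``dominating'' test function, which is all that is needed downstream since we only want an inequality.

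The main obstacle is making Proposition \ref{finiteBcomponent} do exactly the work required here: one needs that the finitely many Bernstein components of $H_v = U_{E/F}(N_1)(F_v)$ produced there, when applied to the Levi-induction picture for $\psi_v = \psi_{1,v}[d]$, indeed pin down the cuspidal support of $\pi_{1,v}$ up to finitely many possibilities --- i.e. that passing from $\psi_v$ to its ``building block'' $\psi_{1,v}$ is compatible with the Bernstein-component bookkeeping (compatibility of transfer with Jacquet modules and with unramified twists, as in the proof of \ref{finiteBcomponent}). A secondary technical point is to confirm that $\varphi_v$ with the desired sign and size properties genuinely exists; this is where one invokes that traces of irreducibles in a fixed finite set of Bernstein components can be simultaneously controlled by a single compactly supported function, which follows from linear independence of characters together with the Bernstein decomposition, but should be spelled out carefully.
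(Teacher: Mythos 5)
Your overall strategy is the one the paper uses: pass to the twisted group, invoke Lemma \ref{lem support Bernstein Rogawski} to see that $\td\pi \mapsto \tr_{\td\pi[d]}(f_v^N)$ is supported on finitely many twisted Bernstein components, feed this into Proposition \ref{finiteBcomponent} to confine the members of the packets $\Pi_{\psi_{1,v}}$ to finitely many Bernstein components of $U_{E/F}(N_1)(F_v)$, and then dominate by a single test function $\varphi_v$ that is large and nonnegative on those components.

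There is, however, one genuine gap in the way you set up the domination. You write that you will choose $\varphi_v$ with $\tr_{\pi_{1,v}}\varphi_v \geq \max(1, |\tr_{\psi_v}f_v|)$ ``for each of the finitely many relevant $\pi_{1,v}$'' and conclude that such a $\varphi_v$ exists ``because there are only finitely many constraints.'' But the relevant $\pi_{1,v}$ are \emph{not} finite in number: what is finite is the set of Bernstein components containing them, and each component is an infinite (continuous) family of unramified twists, so you are imposing infinitely many constraints of the form $\tr_{\pi_{1,v}}\varphi_v \geq |\tr_{\psi_v}f_v|$ with a right-hand side that varies over this family. To close this you need a \emph{uniform} bound: by Bernstein's uniform admissibility theorem (as used in \cite[prop.~9.6]{ST16}) there is a single constant $C>1$ with $|\tr_{\td\pi'}(f_v^N)| \leq C$ for \emph{all} unirreps $\td\pi'$ of $\wtd G(N)_v$, hence $|\tr_{\psi_v}f_v| \leq C$ uniformly. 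With that in hand it suffices to arrange $\tr_\pi\varphi_v \geq C$ uniformly on each of the finitely many components (e.g.\ a suitable multiple of $\bar\1_U$ for $U$ small enough relative to the types of those components), and then $(\tr_{\psi_{1,v}}\varphi_v)^d \geq C^d \geq C \geq |\tr_{\psi_v}f_v|$. One further small point you should make explicit: the right-hand side is the \emph{stable packet trace} $\tr_{\psi_{1,v}}\varphi_v = \sum_\pi \eta^{\psi_{1,v}}_\pi(s_{\psi_{1,v}})\tr_\pi(\varphi_v)$, and you need this signed sum to dominate a single positive term; since $\psi_{1,v}$ is cuspidal, hence simple, $s_{\psi_{1,v}}$ is trivial and the sum is a sum of nonnegative traces, so it is indeed at least $C$.
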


\begin{proof}
First,
\[
\tr_{\psi_v} f_v = \tr_{\td \pi_{\psi_v}} f_v^N.
\]
By Bernstein's admissibility theorem (as used in \cite[Prop 9.6]{ST16}) there is $C$ (without loss of generality, $C > 1$) such that for all unirreps $\td \pi'$ of $\wtd G(N)_v$, 
\[
|\tr_{\td \pi'} f_v^N| \leq C.
\]
Now consider the function from the unitary dual of $\wtd G(N_1)_v$ to $\C$ given by 
\[
\Phi_0 : \td \pi \mapsto tr_{\td \pi[d]} f_v^N.
\]
This is supported the on finitely many Bernstein components $\mf s_1, \dotsc, \mf s_i$ by Lemma \ref{lem support Bernstein Rogawski} and since
the cuspidal support of $\td \pi[d]$ determines that of $\td \pi$. 

Therefore, the representations $\pi \in \Pi_{\psi_{1,v}}$ for $\td \pi_{\pi_{1,v}} \in \mf s_i$ lie in a finite set of Bernstein components by Proposition \ref{finiteBcomponent}. Since $U_{E/F}(N_1)$ is not twisted,  one can find a function $\varphi_v$ on $U_{E/F}(N_1)$ so that $\tr_\star \varphi_v \geq C$ on each of these components (e.g. a scalar multiple of the indicator function of a small enough maximal compact depending on the Bushnell-Kutzko types associated to the Bernstein components). 

Finally, since $\psi_{1,v}$ is cuspidal and therefore simple, $\tr_{\psi_{1,v}} \varphi_v$ is a sum of various $\tr_\pi \varphi_v$ and in particular larger. Therefore, this choice of $\varphi_v$ suffices.
\end{proof}

\begin{lem}\label{genconstantineq}
Let $v$ be a place of $F$ and $f_v$ be a test function on $U_{E/F}(N)(F_v)$. Let $N_1 + \cdots + N_n = N$. Then there exists test functions $f_i$ on $U_{E/F}(N_i)(F_v)$ such that for all Arthur parameters $\psi_v = \psi_{1,v} \oplus \cdots \oplus \psi_{n,v}$ with each $\psi_{\star, v}$ a simple parameter for $U_{E/F}(N_\star)(F_v)$:
\[
|\tr_{\psi_v} f_v| \leq \prod_i \tr_{\psi_{i,v}} f_{i,v}.
\]
\end{lem}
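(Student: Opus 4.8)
The plan is to imitate the proof of Lemma~\ref{gen[d]}: pass to the twisted group, use Bernstein's admissibility theorem and Lemma~\ref{lem support Bernstein Rogawski} to see that only finitely much data contributes, and then build the $f_{i,v}$ by hand. At a split place $v$ the packets in question are singletons by Lemma~\ref{splitconsistency}, so there are no signs and the argument of Lemma~\ref{gen[d]} applies essentially verbatim (with ordinary Bernstein components of $\GL_{N_i}(F_v)$ and idempotents attached to Bushnell--Kutzko types); so I will assume $v$ is non-split. Write $f_v^N$ for a transfer of $f_v$ to $\wtd G(N)_v$, so that $\tr_{\psi_v} f_v = \tr_{\td\pi_{\psi_v}}(f_v^N)$, where by \S\ref{ACparamtorep} the representation $\pi_{\psi_v}$ is a parabolic induction of the Speh representations attached to the $\psi_{i,v}=\tau_{i,v}[d_i]$ and $\td\pi_{\psi_v}$ is its canonical extension. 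By Bernstein's admissibility theorem (as in \cite[Prop.~9.6]{ST16}) there is $C>1$ with $|\tr_{\td\pi'}(f_v^N)|\le C$ for all unirreps $\td\pi'$ of $\wtd G(N)_v$, hence $|\tr_{\psi_v}f_v|\le C$ uniformly.

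The key point I would establish is that only finitely many parameters $\psi_v=\bigoplus_i\psi_{i,v}$ of the prescribed shape satisfy $\tr_{\psi_v}f_v\neq 0$. By Lemma~\ref{lem support Bernstein Rogawski}, $\td\pi\mapsto\tr_{\td\pi}(f_v^N)$ is supported on finitely many twisted Bernstein components, so when $\tr_{\psi_v}f_v\neq 0$ the cuspidal support of $\pi_{\psi_v}$ lies in one of finitely many inertial classes of $\GL_N(E_v)$. Now the cuspidal support of $\pi_{\psi_v}$ is the disjoint union of those of its Speh factors, i.e.\ a union of $n$ segments with bases of the prescribed ranks $N_i/d_i$; a fixed inertial class breaks up into such segments in only finitely many ways. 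Moreover each $\tau_{i,v}$ is conjugate self-dual, and a fixed inertial class of a $\GL$ over a non-split local field has at most two conjugate self-dual members — the twisting character $\chi$ must satisfy $\chi\chi^{c}=1$, which for an unramified $\chi$ forces $\chi(\varpi)=\pm1$. Hence the set of \emph{relevant} $\psi_v$ is finite, and so is the set of \emph{potentially relevant} $\psi_{i,v}$ (those occurring as the $i$-th summand of a relevant $\psi_v$) for each $i$.

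Given this, I would construct $f_{i,v}$ as follows. Let $\mathcal{C}_i$ be the finite set of Bernstein components of $U_{E/F}(N_i)(F_v)$ meeting some packet $\Pi_{\psi_{i,v}}$ with $\psi_{i,v}$ potentially relevant (finiteness is now immediate, and also follows from Proposition~\ref{finiteBcomponent}), and let $\mathcal{H}^{\mathcal{C}_i}$ be the space of compactly supported functions whose traces vanish on every representation outside $\mathcal{C}_i$, realised via idempotents attached to Bushnell--Kutzko types for the components in $\mathcal{C}_i$. Distinct simple parameters have distinct cuspidal supports, hence disjoint nonempty $A$-packets, so the finitely many stable packet traces $\tr_{\psi_{i,v}}$ with $\psi_{i,v}$ potentially relevant are linearly independent distributions, and they remain linearly independent on $\mathcal{H}^{\mathcal{C}_i}$; I can therefore choose $f_{i,v}\in\mathcal{H}^{\mathcal{C}_i}$ with $\tr_{\psi_{i,v}}f_{i,v}=C$ for every potentially relevant $\psi_{i,v}$. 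If $\psi_{i,v}$ is not potentially relevant its packet has cuspidal support outside the inertial classes attached to $\mathcal{C}_i$, so $\tr_{\psi_{i,v}}f_{i,v}=0$. Then for any $\psi_v=\bigoplus_i\psi_{i,v}$ of the prescribed shape each factor $\tr_{\psi_{i,v}}f_{i,v}$ is $C$ or $0$: if $\tr_{\psi_v}f_v\neq0$ all the $\psi_{i,v}$ are potentially relevant and the product is $C^{\,n}\ge C\ge|\tr_{\psi_v}f_v|$, while otherwise $|\tr_{\psi_v}f_v|=0\le\prod_i\tr_{\psi_{i,v}}f_{i,v}$, as required.

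The one genuinely delicate point — and the reason this cannot simply copy Lemma~\ref{gen[d]}, where $\psi_{1,v}$ is assumed cuspidal — is that for a simple parameter $\tau[d]$ with $d$ even the element $s_{\psi_{i,v}}$ can be nontrivial in $S^\natural_{\psi_{i,v}}$, so $\tr_{\psi_{i,v}}$ is a genuine \emph{signed} combination of characters of $\Pi_{\psi_{i,v}}$ and cannot be made uniformly large simply by taking a scalar multiple of an idempotent supported on the relevant components. It is precisely the finiteness of the relevant set, together with the spectral localisation to $\mathcal{C}_i$ and linear independence of characters, that lets one prescribe the values of the finitely many $\tr_{\psi_{i,v}}$ individually and so sidestep the signs; establishing that finiteness is the main work.
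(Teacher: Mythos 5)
The paper's own proof is literally ``the same argument as Lemma \ref{gen[d]}'': bound $|\tr_{\psi_v}f_v|\le C$ uniformly by Bernstein admissibility, use Lemma \ref{lem support Bernstein Rogawski} together with Proposition \ref{finiteBcomponent} to place the members of all packets $\Pi_{\psi_{i,v}}$ that can contribute into finitely many Bernstein components of $U_{E/F}(N_i)(F_v)$, and take $f_{i,v}$ to be a large positive multiple of the indicator of a small compact open subgroup, so that $\tr_\pi f_{i,v}\ge C$ for every $\pi$ in those components and $\tr_\pi f_{i,v}\ge 0$ everywhere. The point you single out as the obstruction to transplanting that argument --- that $\tr_{\psi_{i,v}}$ is a genuinely signed combination of characters when $d_i$ is even --- is not actually present. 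For a simple local parameter $\psi_{i,v}=\tau_{i,v}[d_i]$ the element $s_{\psi_{i,v}}=\psi_{i,v}(1\times -1)$ acts on all of $\C^{N_i}$ by the scalar $(-1)^{d_i-1}$, hence lies in $Z(\wh{G}_v)^{\Gamma}$; by Theorem \ref{LocalPackets} every $\eta^{\psi_{i,v}}_{\pi}$ restricts there to $\chi_{G_v}$, which is trivial because the lemma concerns the quasisplit group $U_{E/F}(N_i)$. So $\tr_{\psi_{i,v}}=\sum_{\pi}\tr_\pi$ with all coefficients $+1$ --- this is exactly what the phrase ``cuspidal and therefore simple'' in the proof of Lemma \ref{gen[d]} is exploiting, simplicity rather than cuspidality being the operative hypothesis --- and the paper's construction closes immediately, including the nonnegativity of the right-hand side when the left-hand side vanishes.

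Your replacement for this step, besides being unnecessary, has a genuine gap. The finiteness of the set of relevant (or potentially relevant) simple parameters, on which your linear-independence construction rests, does not hold in general: a simple local parameter here is $\tau_{i,v}[d_i]$ with $\tau_{i,v}$ the local component of a cuspidal representation of $\GL_{T_i}$, which need not be essentially discrete, and within a fixed inertial class the conjugate self-dual such $\tau_{i,v}$ can form continuous families (for instance $\tau_{i,v}$ induced from $\sigma\chi\boxtimes\sigma^{\vee c}\chi^{-c}$ is conjugate self-dual for \emph{every} unramified $\chi$). Your counting argument via $\chi\chi^c=1$ only constrains the essentially discrete case. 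Relatedly, a simple parameter that is not ``potentially relevant'' can perfectly well share its Bernstein components with a relevant one (an unramified twist of it), so your claim that $\tr_{\psi_{i,v}}f_{i,v}=0$ for such parameters fails, and the product on the right-hand side is then uncontrolled for the corresponding $\psi_v$. Both difficulties disappear once the positivity of the simple packet traces above is observed, which is the route the paper takes.
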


\begin{proof}
This is the same argument as Lemma \ref{gen[d]}. 
\end{proof}

\subsection{General Places: Conjectural Equalities}\label{sectionconjecturaltransfer}
At this time, exact equality results for transfers at general places are not known. This is the main reason for the restriction of our exact asymptotic result \ref{mainexact} to unitary groups for unramified $E/F$. We state the desired conjecture:

\begin{conj}[Full Transfer]\label{stabletransferconj}
Let $\Delta = ((T_i, d_i, \eta_i, \lb_i))_i$ be a shape and $\varphi_v$ a trace-positive test function on $G_v$. Then there are trace-positive functions $\varphi_{i,v}$ on the appropriate quasisplit unitary groups of rank $T_i$ such that for all $\Delta \ni \psi = \bigoplus_i \psi_i[d_i]$ with $\psi_i \in (T_i, 1, \eta_i, \lb_i)$,
\[
\tr_{\psi_v} (\varphi_{v}) = \prod_i \tr_{\psi_{i,v}} (\varphi_{i,v}).
\]
\end{conj}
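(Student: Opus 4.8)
The plan is to upgrade the inequalities of Lemmas \ref{genconstantineq} and \ref{gen[d]} to equalities. By induction on the number of blocks and on the $d_i$, this reduces to the two atomic cases $\psi = \psi_1 \oplus \psi_2$ (with $\psi_1,\psi_2$ self-dual) and $\psi = \tau[d]$ (with $\tau$ cuspidal self-dual). In both cases one first descends to the general-linear twisted group: by Proposition \ref{prop s to endo} and the local character relation \ref{localcharidentity}, $\tr_{\psi_v}(\varphi_v) = \tr_{\wtd\pi_{\psi_v}}(\varphi_v^N)$ for any twisted transfer $\varphi_v^N$ of $\varphi_v$ to $\wtd G(N)_v$, and by Lemma \ref{lem support Bernstein Rogawski} together with Proposition \ref{finiteBcomponent} we may take $\varphi_v^N$ supported on the finitely many twisted Bernstein components that meet $\Pi_{\psi_v}$ as $\psi$ ranges over $\Delta$. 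By construction (\S\ref{ACparamtorep}--\S\ref{ACrepexten}), $\wtd\pi_{\psi_v}$ is a canonically-extended subquotient of a parabolic induction built from the $\tau_{i,v}$; so the target is to produce, on $\wtd G(T_i)_v$, functions $\wtd\varphi_{i,v}$ with $\tr_{\wtd\pi_{\psi_v}}(\varphi_v^N) = \prod_i \tr_{\wtd\pi_{\tau_{i,v}}}(\wtd\varphi_{i,v})$, and then to transfer each $\wtd\varphi_{i,v}$ back down to the quasisplit $U_{\eta_i}(T_i)_v$ using surjectivity of twisted transfer (\cite[Prop. 3.1.1(b)]{Mok15}). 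Trace-positivity must be propagated along the way, but since the building blocks are Jacquet restrictions and indicator-type functions of trace-positive data, this should be routine bookkeeping.

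The crux is the factorization on $\wtd G(N)_v$. At split places, where $\wtd G(N)_v \cong \GL_N(F_v)^2$, this is Lemma \ref{splitconstant} (resp. Lemma \ref{split[d]}): the two general-linear factors absorb the conjugate-duality and one is left with an honest constant term on $\GL_N(F_v)$. At unramified places it is Lemma \ref{unramtrace}, via the Satake transform. In general neither trick is available --- the Levi of $\GL_N$ (or of $U_{E/F}(N)$) relevant to $\psi_1\oplus\psi_2$ is not $\theta_N$-stable precisely because the two self-dual blocks cannot both sit ``in the middle'', so there is no direct constant-term identity. Instead one should use an explicit description of the $A$-packet $\Pi_{\psi_v}$ in terms of the $\Pi_{\tau_{i,v}}$ --- M\oe glin's construction of $A$-packets for classical and unitary groups by elementary operations (``adding a $[d]$'', ``adding a self-dual block'') --- and verify that each elementary operation is matched, under twisted endoscopic transfer and via the Jacquet-module compatibility of \cite{Xu17}, by a corresponding operation on test functions, so that the stable packet trace $\sum_{\pi_v\in\Pi_{\psi_v}}\eta^{\psi_v}_{\pi_v}(s_{\psi_v})\tr_{\pi_v}(\varphi_v)$ genuinely factors. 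Equivalently, one must show that the ``other subquotients'' appearing in the Grothendieck-group expansion behind Lemma \ref{gen[d]} pair to zero against $\varphi_v^N$, not merely with a favorable sign. The genuinely hard input is the Speh case: a precise formula for the twisted character of $\wtd\pi_{\tau[d]}$, and for the interaction of the $\om$-canonical normalization with passage to the Langlands quotient. This is exactly the ``Speh transfer'' left open in \S\ref{sectionconjecturaltransfer}, and it should in turn follow from a sufficiently fine understanding of generalized Whittaker models and wavefront sets of Speh representations in the spirit of \cite{MW87} and \cite{JLZ22}.

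Accordingly, I do not expect an unconditional proof with present technology; the Speh-transfer step is the bottleneck. Realistic intermediate targets are the case $d = 2$, or $\tau$ supercuspidal, where the internal structure of $\wtd\pi_{\tau[d]}$ and of its $A$-packet is completely understood, and --- on the global side --- using Arthur's multiplicity formula to deduce the factorization at all but finitely many places; but the latter still leaves exactly the ramified places, which are the ones obstructing the extension of Theorem \ref{mainexact}.
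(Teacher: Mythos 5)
This statement is a conjecture that the paper itself leaves open, and your analysis matches the paper's own treatment: \S\ref{sectionconjecturaltransfer} reduces it to exactly the two atomic cases you identify --- Conjecture \ref{stabletransferconj1} (the direct-sum, ``stable transfer'' case) and Conjecture \ref{conjspehtransfer} (the $\tau[d]$, ``Speh transfer'' case) --- and likewise singles out the Speh case, with the reducibility of the relevant parabolic induction and the $\om$-canonical extension, as the genuine obstruction. The one point you treat more lightly than the paper is trace-positivity: the paper states that the two local sub-conjectures imply Full Transfer only ``except for the positivity statement,'' so positivity is an additional open requirement rather than routine bookkeeping.
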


Except for the positivity statement, Conjecture \ref{stabletransferconj} would be implied by two local conjectures on the $\wtd G(N)_v$, the first of which is expected by experts:

\begin{conj}[Stable Transfer]\label{stabletransferconj1}
Let $\Delta = (\Delta_i)_i = (T_i, d_i, \eta_i, \lb_i)_i$ be a shape of rank $N$ and $\varphi_v$ a test function on $\wtd G(N)_v$. Then there are test functions $\varphi_{i,v}$ on each $\wtd G(T_i d_i)_v$ such that for all choices of $\psi_i \in \Delta_i$,
\[
\tr_{\wtd \pi_{\psi_v}}(\varphi_{v}) = \prod_i \tr_{\wtd \pi_{\psi_{i,v}}} (\varphi_{i,v}).
\]
\end{conj}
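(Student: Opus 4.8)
\emph{Towards a proof.} The plan is to realize the left-hand side as twisted parabolic descent and then invoke the twisted trace Paley--Wiener theorem for $\GL$. Writing $\psi=\bigoplus_i\psi_i$, the standard Levi $M=\Res^E_F\prod_i\GL_{T_id_i}$ of $G(N)$ becomes $\theta_N$-stable after conjugating $\theta_N$ by a suitable element of $G(N)_v$ (possible since each block $\psi_i=\tau_i[d_i]$ is conjugate self-dual), and this conjugated involution restricts to $\prod_i\theta_{T_id_i}$ on $M$; fix also a $\theta_N$-stable parabolic $P=MN_P$. By the construction of $\pi_\psi$ in \S\ref{ACparamtorep} as the \emph{irreducible} induction of $\boxtimes_i\pi_{\psi_i}$, and since parabolic induction commutes with localization, $\pi_{\psi,v}=\Ind_{P_v}^{G(N)_v}(\boxtimes_i\pi_{\psi_i,v})$; one then checks that the $\theta_N$-action on $\pi_{\psi,v}$ prescribed in \S\ref{ACrepexten} is, block by block, the $\theta_{T_id_i}$-action defining $\wtd\pi(\psi_i)$, i.e.\
\[
\wtd\pi(\psi)\;=\;\Ind_{\wtd P_v}^{\wtd G(N)_v}\bigl(\boxtimes_i\wtd\pi(\psi_i)\bigr)
\]
with the canonical ($\om_v$-Whittaker-normalized) extensions on both sides.

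Granting this, the remainder is $\GL$-harmonic analysis on $\wtd G(N)_v$. First I would construct a twisted constant-term map $\varphi_v\mapsto\varphi_v^{(\wtd P)}$, a linear map from $C_c^\infty(\wtd G(N)_v)$ to $C_c^\infty(\wtd M_v)$ with $\wtd M_v=\prod_i\wtd G(T_id_i)_v$, built as in the untwisted case by integrating $\varphi_v$ over $N_{P,v}$ and over a maximal compact of $G(N)_v$ with the $\theta_N$-twist built in, and characterized by $\tr_{\Ind_{\wtd P_v}^{\wtd G(N)_v}\wtd\rho}(\varphi_v)=\tr_{\wtd\rho}(\varphi_v^{(\wtd P)})$ for every $\theta$-stable, $\theta$-extendable $\rho$; its existence and trace property follow from the twisted Weyl integration formula and the twisted trace Paley--Wiener theorem (cf.\ \cite{Rog88}). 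Using Lemma \ref{lem support Bernstein Rogawski}, Proposition \ref{finiteBcomponent}, and Bushnell--Kutzko types for the finitely many Bernstein components of $\wtd M_v$ meeting the packets $\Pi_{\psi_{i,v}}$, one may modify $\varphi_v^{(\wtd P)}$ --- without changing its twisted trace against any $\boxtimes_i\wtd\pi(\psi_i)$ --- so that it becomes a finite sum $\sum_j\prod_i\varphi_{i,v}^{(j)}$ of factorizable functions. Since $\tr_{\boxtimes_i\wtd\pi(\psi_i)}$ is the product of the $\tr_{\wtd\pi(\psi_i)}$, combining with the first step yields
\[
\tr_{\wtd\pi(\psi)}(\varphi_v)\;=\;\prodf_i\ \tr_{\wtd\pi(\psi_i)}(\varphi_{i,v})
\]
in the sense of \S\ref{nonfactorizable}, uniformly in $\psi_i\in\Delta_i$ (the transfer does not depend on $\psi$). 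An alternative route to the same identity is to expand the twisted character of the Speh-type module $\wtd\pi(\psi)$ into twisted characters of standard modules, observe the expansion is multiplicative over $i$, and realize it by a function via twisted trace Paley--Wiener.

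The main obstacle is the normalization statement in the first step --- precisely the ``choice of sign'' flagged after \S\ref{ACrepexten}. One must show that inducing the canonical $\om_v$-extensions of the $\pi_{\psi_i,v}$ produces the canonical $\om_v$-extension of $\pi_{\psi,v}$ \emph{with no intervening sign}, including the sign introduced by the conjugation that made $M$ $\theta$-stable. This reduces to the behavior of the $\om_v$-Whittaker functional under parabolic induction for $\GL$: on $\Ind_{P_v}^{G(N)_v}(\boxtimes_i\sigma_i)$ it is the product of the Whittaker functionals of the $\sigma_i$ up to an explicit constant which, by Shahidi's local-coefficient formalism, is a product of Rankin--Selberg $\gamma$-factors evaluated at the inducing exponents; the point is to verify that for the conjugate-self-dual blocks occurring here this constant is positive, so that the $\theta$-fixed Whittaker vector is sent to the $\theta$-fixed Whittaker vector and no $\sgn$ of a root number appears. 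This positivity is what has long been expected by experts; making it rigorous --- e.g.\ via the functional equation together with the self-duality of the $\tau_i$ --- is the crux, after which the second step is routine and the conjecture follows.
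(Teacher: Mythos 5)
This statement is a \emph{conjecture} in the paper, not a theorem: the authors explicitly state that they ``do not have any exact equality results for transfers at general places,'' and immediately after stating Conjecture \ref{stabletransferconj1} they identify the precise obstruction --- ``the relation between $\wtd \pi(\psi)$ and the $\wtd \pi(\psi_i)$ is far more complicated because of the choice of extension in Section \ref{ACrepexten}.'' Your proposal does not close this gap; it relocates it. The outer layers of your argument (realizing $\pi_{\psi,v}$ as an irreducible parabolic induction of the $\pi_{\psi_i,v}$, constructing a twisted constant-term map characterized by compatibility of twisted traces with induction, and using Lemma \ref{lem support Bernstein Rogawski} plus types to replace the descent by a finite sum of factorizable functions) are plausible and essentially standard twisted $\GL$-harmonic analysis. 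But the entire content of the conjecture lives in the step you defer to the end: showing that the canonical $\om_v$-normalized extension of $\pi_{\psi,v}$ to the twisted coset is the induced extension of the canonical extensions of the blocks, with no intervening sign. You assert that the relevant Shahidi local coefficient ``is positive'' for conjugate-self-dual blocks and that ``making it rigorous is the crux.'' That is exactly the open problem: the paper's entire sign bookkeeping (the $\eps_\psi$ characters, the warning after \S\ref{ACrepexten} that the choice of sign ``enters crucially'') exists because such root-number signs are \emph{not} expected to be uniformly trivial, and a proof would have to compute them, not posit their positivity.

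A secondary issue worth flagging: the canonical extensions of \S\ref{ACrepexten} are defined via the Langlands-quotient presentation with \emph{tempered, generic} inducing data $\sigma_j$, whereas your blocks $\pi_{\psi_i,v}$ are Speh modules, non-generic for $d_i>1$, whose own canonical extensions are again defined through \emph{their} Langlands data. Comparing ``induce the canonical extensions of the Speh blocks'' with ``the canonical extension of the full module'' therefore requires reconciling two different standard-module presentations (with differently interleaved exponents), not just a single Whittaker normalization on a tempered induction; your reduction to a single Rankin--Selberg local coefficient does not obviously capture this. In short: the strategy is a reasonable research plan and matches the route the authors themselves gesture at, but as written it is a conditional outline whose unproven kernel coincides with the reason the statement is a conjecture in the first place.
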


Note that $\pi_{\psi}$ is just the parabolic induction of the product of the $\pi_{\psi_i}$. In contrast, the relation between the between~$\wtd \pi_{\psi}$ and the~$\wtd \pi_{\psi_i}$ is more complicated and depends on the choice of extension in Section \ref{ACrepexten}.


The second necessary local conjecture is:
\begin{conj}[Speh Transfer]\label{conjspehtransfer}
Let $\Delta = (T, d, \eta, \lb)$ be a simple shape and $\varphi_v$ a test function on $\wtd G(Td)$. Then there is a test function $\varphi'_v$ on $\wtd G(T)$ such that for all $\psi \in (T, 1, \eta, \lb)$, 
\[
\tr_{\wtd \pi_{\psi_v[d]}} (\varphi_v) = \tr_{\wtd \pi_{\psi_v}} (\varphi'_v).
\]
\end{conj}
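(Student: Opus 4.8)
The plan is to deduce the existence of $\varphi'_v$ from the twisted trace Paley--Wiener theorem of Rogawski \cite{Rog88}. Identify the cuspidal parameter $\psi$ with a cuspidal $\tau$ of rank $T$, so that $\wtd\pi(\psi) = \wtd\pi_\tau$ and $\wtd\pi(\psi[d]) = \wtd\pi_{\tau[d]}$, the latter being the canonical extension (\S\ref{ACrepexten}) of the Speh representation $\pi_{\tau[d]}$, the Langlands quotient of $\Ind(\tau|\det|^{(d-1)/2}\boxtimes\cdots\boxtimes\tau|\det|^{-(d-1)/2})$ on $\GL_{Td}(E_v)$. Fix $\varphi_v \in C_c^\infty(\wtd G(Td)_v)$ and consider the function $\Phi\colon \wtd\pi_\tau \mapsto \tr_{\wtd\pi_{\tau[d]}}(\varphi_v)$, defined as $\tau$ ranges over the local components arising from parameters of the given simple shape. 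Since the cuspidal support of $\tau[d]$ (a single segment built from $\tau$) determines that of $\tau$, Rogawski's finiteness \ref{lem support Bernstein Rogawski} together with Proposition \ref{finiteBcomponent} shows that $\Phi$ is supported on finitely many twisted Bernstein components of $\wtd G(T)_v$. By the twisted Paley--Wiener theorem, producing $\varphi'_v$ with $\tr_{\wtd\pi_\tau}(\varphi'_v) = \Phi(\wtd\pi_\tau)$ for all such $\tau$ is equivalent to showing that $\Phi$ lies in the twisted Paley--Wiener space of $\wtd G(T)_v$: that it is a Laurent polynomial along each unramified twisting family and satisfies the required gluing/symmetry conditions. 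Polynomiality is easy, since the $L$-parameter of $\tau[d]$ is $\varphi_\tau \otimes [d]$, so twisting $\tau$ within its ($\theta$-invariant) twisting family twists $\pi_{\tau[d]}$ by a corresponding unramified character and hence $\tr_{\wtd\pi_{(\tau\chi)[d]}}(\varphi_v)$ is a Laurent polynomial in $\chi$; the remaining symmetry conditions transport from $\wtd G(Td)_v$ because the Speh construction intertwines the relevant Bernstein-theoretic data.

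To make the descent concrete --- and to exhibit $\varphi'_v$ explicitly, which the downstream applications need --- I would compute the twisted character of $\wtd\pi_{\tau[d]}$ by parabolic descent along the $\theta_{Td}$-stable parabolic $P$ with Levi $M \cong G(T)^d$, exactly as in Lemmas \ref{split[d]} and \ref{gen[d]}: in the Grothendieck group $\wtd\pi_{\tau[d]}$ is a specific $\Z$-combination of standard modules induced from $M$ whose factors are the twists $\tau|\det|^{j}$ permuted by Weyl elements, so $\tr_{\wtd\pi_{\tau[d]}}(\varphi_v)$ becomes the matching $\Z$-combination of twisted traces on $\wtd M = M \rtimes \theta_M$ of the twisted constant term $(\varphi_v)_{P,M}$. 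Because $\theta_{Td}$ reverses the $d$ factors of $M$ and applies $\theta_T$ to each, one collapses the outer pairs using the identity for twisted traces of $\rho \boxtimes \rho^\theta$ already employed in Lemma \ref{splitconsistency}(4); after the $\Z$-combination telescopes, one is left with a single twisted trace $\tr_{\wtd\pi_\tau}(\varphi'_v)$ for an explicit $\varphi'_v$ built from $(\varphi_v)_{P,M}$. (For $T = 1$ the Speh representation $\tau[d]$ is already the character $\tau \circ \det_d$, and this collapse is just pushforward of $\varphi_v$ along $\det_d$ followed by a norm, a twisted analogue of \eqref{Tcharactershape}.) The difference from the inequality Lemmas \ref{split[d]}, \ref{gen[d]} is crucial: there the non-Langlands-quotient constituents were discarded by a positivity hypothesis, whereas here the $\Z$-combination must be tracked \emph{exactly} for the telescoping to produce a single trace of a single function, with all signs and normalizations correct.

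The \emph{main obstacle} is precisely this exactness of normalization. Every step must be carried out with the canonical Whittaker-normalized extensions $\wtd\pi_{\tau[d]}$ and $\wtd\pi_\tau$ of \S\ref{ACrepexten}, with no stray sign and uniformly in $\tau$; this amounts to understanding how $\theta_{Td}$ acts on the (degenerate) Whittaker model of the Speh representation $\tau[d]$ relative to how $\theta_T$ acts on the genuine Whittaker model of $\tau$ --- exactly the kind of question addressed through generalized Whittaker models and $A$-parameters in \cite{MW87} and \cite{JLZ22}. A clean input of the shape ``the canonical extension of $\pi_{\tau[d]}$ is the parabolic induction of the canonical extensions of the $\tau|\det|^{j}$, up to an explicit sign depending only on $d$ and the parity $\eta$'' would close the argument; establishing it uniformly over the whole Bernstein component (hence over all $\psi \in \Delta$) is the delicate point, though one expects Speh transfer to be strictly easier than the Stable Transfer Conjecture \ref{stabletransferconj1}, which involves several interacting cuspidal supports. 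Finally, trace-positivity of $\varphi'_v$ is not asserted here; that refinement, needed for Conjecture \ref{stabletransferconj}, would require a separate construction, which is the reason for the ``except the positivity statement'' caveat.
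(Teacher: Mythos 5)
This statement is Conjecture \ref{conjspehtransfer}: the paper does not prove it, and in \S\ref{sectionconjecturaltransfer} it explicitly leaves it open, naming ``the reducibility of the relevant parabolic inductions'' as the obstruction. So there is no proof of the paper's to compare yours against; what you have written is a strategy for an open problem, and you correctly present it as such. Your first paragraph (twisted trace Paley--Wiener, finiteness of Bernstein components via Lemma \ref{lem support Bernstein Rogawski} and Proposition \ref{finiteBcomponent}, polynomiality along unramified twists) is the framework the paper itself gestures at, and the obstacle you flag at the end --- uniform control of the sign of the Whittaker-normalized extension of $\pi_{\tau[d]}$ across a whole Bernstein component, through points of reducibility of the standard module --- is exactly the difficulty the paper identifies.

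There is, however, a second gap in your concrete construction that you do not flag, and it is fatal to the ``telescoping'' as stated. After expanding $\wtd\pi_{\tau[d]}$ by the Zelevinsky/Tadi\'c determinantal formula and taking the twisted constant term to $\wtd M$ with $M \cong G(T)^d$, the involution $\theta_{Td}$ pairs factor $i$ with factor $d+1-i$; the identity of Lemma \ref{splitconsistency}(4) collapses each such pair to a \emph{single} untwisted trace, but it leaves $\lceil d/2 \rceil$ independent trace factors (the $\lfloor d/2\rfloor$ pairs, plus the fixed middle factor when $d$ is odd). For $d = 2$ this is one trace and your collapse works; for $d \geq 3$ each term of the alternating sum is a \emph{product} of at least two traces of $\tau$ against different functions, so the sum is a polynomial of degree $\lceil d/2\rceil$ in such traces. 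Rewriting $\tr_\tau(a)\tr_\tau(b)$ as $\tr_\tau(c)$ for a single $c$ independent of $\tau$ is not a formal manipulation: it holds at the level of standard modules by the trace Paley--Wiener theorem, but not at the level of irreducibles (the local components $\tau_v$ need not be full induced modules), and doing it uniformly in $\tau$ is itself an instance of the unknown stable transfer for a diagonal/tensor-product $L$-embedding. So the $\Z$-combination does not telescope to a single twisted trace, and your explicit recipe for $\varphi'_v$ does not exist as described for $d \geq 3$. This is an obstruction independent of, and in addition to, the normalization issue you do flag; together they are why the paper keeps both Conjectures \ref{stabletransferconj1} and \ref{conjspehtransfer} as conjectures and works instead with the one-sided inequalities of Lemmas \ref{split[d]} and \ref{gen[d]}.
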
 
For Speh transfer, even the relation between the untwisted representations $\pi_{\psi_v[d]}$ and $\pi_{\psi_v}$ is difficult due to the reducibility of the relevant parabolic inductions.

\section{Induction Step: General Shapes}\label{step3gen}
In the following two sections, we prove the identities needed in the third, inductive step \S \ref{ss step 3} of the strategy outlined in \S \ref{strategy}. While our goal in \S \ref{strategy} was to produce exact formulas, in our applications we only need to find an asymptotic formula for $S^H_\Delta(\EP_\lb f^\infty)$. Therefore, we will only need to solve two problems:
\begin{itemize}
    \item In this section, prove an upper bound (Theorem \ref{shapeineq}) for a general shape to show that terms for non-dominant shapes are negligible. 
    \item In \S \ref{step3dom}, find an exact asymptotic for terms with the types of shapes that could be dominant in our application. 
\end{itemize}
This section heavily uses our notations from \ref{nonfactorizable} for non-factorizable functions.

\subsection{Preliminary Bound}
For any shape $\Delta$ of rank $N$, recall from \S \ref{ss SM} that
\[
S^N_\Delta(f^\infty) := \sum_{\psi \in \Delta} S^N_\psi(f^\infty) =  \sum_{\psi \in \Delta}  \eps^{H(\Delta)}_\psi (s^{H(\Delta)}_\psi) m_\psi |S_\psi|^{-1} \tr_{\td \pi_\psi^\infty}(f^\infty).
\]
Note that Proposition \ref{SMform} still holds with subscripts of $\Delta$ added to both sides. 

By removing the $\eps$-sign, we also defined in \ref{ss step 3}:
\begin{equation}
S^{|H(\Delta)|}_\Delta(\EP_\lb (f^\infty)^{H(\Delta)}) = S^{|N|}_\Delta(f^\infty) := \sum_{\psi \in \Delta} m_\psi |S_\psi|^{-1} \tr_{\wtd \pi_\psi^\infty}(f^\infty)
\end{equation}
 To approximate away $\eps_\psi$ difficulties, we extend the main trick of \cite{Ger20} and prove a technical bound relating $S_\Delta^N$ to~$S_{\Delta'}^{|H'|}$ terms for a suitable choice of $H'$. 
 It relies on a condition that we will assume henceforth for various test functions:

\begin{dfn}
Let $S$ be a (finite or infinite) set of places of $F$ and $f_S$ a function on $G_S$. We say that $f_S$ is trace-positive on $S$ if for all unirreps $\pi_S$ of $G_S$, $\tr_{\pi_S} f_S \geq 0$. 
\end{dfn}

\begin{prop}\label{SMineq}
Let $H = H(\Delta) \in \wtd {\mc E}_\el(N)$. Assume that $f^\infty$ is trace-positive. Then there is an elliptic endoscopic group $H' = H_1 \times H_2$ of $H$ such that for any infinitesimal character $\lb'$ of $H'$ conjugate to $\lb$ over $H$:
\[
|S^{N}_\Delta((f^\infty)^N)| \leq C \prodf_{i=1,2} S^{H_i}_{\Delta_i}(\EP_{\lb'_i} (f^\infty)^i).
\]
Here we use shorthand: the data $\Delta, \lb'$, and $(f^\infty)^{H'}$ all factor into components for $H_1$ and $H_2$ denoted with appropriate sub/superscripts. The $C$ is a constant depending only on $H$ and $\Delta$. 

The $H'$ further satisfies that
\[
S^{H_1}_{\Delta_1}(\EP_{\lb'_1} (f^\infty)^1) S^{H_2}_{\Delta_2}(\EP_{\lb'_2} (f^\infty)^2) = S^{|H_1|}_{\Delta_1}(\EP_{\lb'_1} (f^\infty)^1) S^{|H_2|}_{\Delta_2}(\EP_{\lb'_2} (f^\infty)^2).
\]
\end{prop}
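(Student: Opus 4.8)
The plan is to reduce the sign $\eps_\psi(s_\psi)$ that appears in $S^N_\Delta$ to something controllable by passing to a well-chosen endoscopic group $H'$. Write $\Delta = (T_i, d_i, \lb_i, \eta_i)_i$ and recall that for $\psi = \bigoplus_i \tau_i[d_i] \in \Delta$ the component group is $\mc S_\psi = (\Z/2)^I/(\Z/2)^{\mathrm{diag}}$, and $\eps_\psi$ is computed from the adjoint representation as in Definition \ref{defn epsilon character}: it is a product of determinant characters over the symplectic constituents of $\rho_\psi$ with odd $\eps$-factor. By Lemma \ref{lemma epsilon parity}, if all the $d_i$ have the same parity then $\eps_\psi$ is trivial and there is nothing to do (one takes $H' = H$). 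In the remaining case, partition the index set $I$ into $I_{\mathrm{even}}$ and $I_{\mathrm{odd}}$ according to the parity of $d_i$, and let $H' = H_1 \times H_2$ be the elliptic endoscopic group of $H$ corresponding (via Proposition \ref{prop s to endo}) to the element $s \in \mc S_\Delta$ that is $1$ on exactly the $I_{\mathrm{odd}}$-coordinates (or its complement, whichever is a valid element of the quotient). The point of this choice is that within each factor $H_j$ the $d_i$'s all have the same parity, so by Lemma \ref{lemma epsilon parity} the $\eps$-characters on $H_1$ and $H_2$ are trivial; this is exactly what gives the second displayed equality, since without the $\eps$-sign $S^{H_j}_{\Delta_j} = S^{|H_j|}_{\Delta_j}$ by definition.

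The main steps, in order. First, apply Theorem \ref{stablemultendo}: for the element $s$ chosen above,
\[
S^{H'}_{\psi^{H'}(s)}(f^{H'(s)}) = 2|\mc S_\psi|^{-1} \eps_\psi(s s_\psi) \sum_{\pi \in \Pi^G_\psi} \eta^\psi_\pi(s s_\psi) \tr_\pi(f).
\]
Summing this over all $\psi \in \Delta$ relates $S^{H'}_{\Delta'}$ to a twisted version of $S^N_\Delta$ where the trace is weighted by $\eta^\psi_\pi(s s_\psi)$ rather than by the multiplicity $m^\psi_\pi = \langle \eps_\psi, \eta^\psi_\pi\rangle$. Second — and this is where the reformulation of the main trick of \cite{Ger20} enters — one compares the two weightings: since $f^\infty$ is trace-positive, every term $\tr_{\pi^\infty}(f^\infty)$ is $\ge 0$ (after pairing with the positive-definite contribution at infinity coming from $\EP_{\lb'}$, which by Corollary \ref{straceAJ} contributes $+1$ to each relevant packet trace), so bounding the character values $\eta^\psi_\pi(s s_\psi)$ by their absolute value $1$ and comparing against the full sum $\sum_\pi m^\psi_\pi \tr_\pi$ that defines $S^{|N|}_\Delta$ yields an inequality $|S^N_\Delta| \le C \cdot S^{|H'|}_{\Delta'}$ with $C$ absorbing the finite combinatorial factors $2|\mc S_\psi|^{-1}$, the number of endoscopic conjugacy classes, and so on — all of which depend only on $H$ and $\Delta$. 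Third, use the factorization of $\Delta$, $\lb'$, and the endoscopic transfer $(f^\infty)^{H'}$ into $H_1$- and $H_2$-components (this is just the product structure of $H' = H_1 \times H_2$ and the compatibility of the trace formula with products), together with the fact that transfers to a product need not be factorizable, which is why the statement is phrased with the $\prodf$ notation of \S\ref{nonfactorizable}. Finally, invoke Lemma \ref{lemma epsilon parity} on each $H_j$ to drop the $\eps$-signs and conclude $S^{H_j}_{\Delta_j} = S^{|H_j|}_{\Delta_j}$.

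The main obstacle I expect is making the sign bookkeeping in the second step genuinely rigorous: one must check that $\eps_\psi(s s_\psi)$ really does behave uniformly over all $\psi \in \Delta$ (which it does, since $\eps_\psi$ depends only on the shape data via Lemma \ref{lemma epsilon parity} and the $\eps$-factor condition, but this needs the regularity of the infinitesimal character to rule out repeated $\tau_i$'s and hence to keep $\mc S_\psi$ of the expected form), and that the passage from an equality weighted by $\eta^\psi_\pi(ss_\psi)$ to an inequality weighted by $m^\psi_\pi$ does not lose the wrong factor of $2$ or a sign. The trace-positivity hypothesis on $f^\infty$ is doing essential work here: without it the individual summands could have competing signs and no such inequality would follow. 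A secondary technical point is choosing $\lb'$ correctly — it must be an infinitesimal character of $H'$ that is $H$-conjugate to $\lb$, so that the Euler–Poincaré functions $\EP_{\lb'_j}$ on the $H_j$ match up with $\EP_\lb$ under transfer; this is exactly the normalization used in Proposition \ref{SMform}, so it should go through verbatim.
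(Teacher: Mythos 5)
Your proposal follows essentially the same route as the paper: take $H'$ to be the elliptic endoscopic group attached to the element of $\mc S_\Delta$ separating the even-$d_i$ blocks from the odd ones (this is exactly $s_\psi$), use Theorem \ref{stablemultendo} to express $S^{H'}_{\psi'}$ as a packet sum, bound $|S^N_\Delta|$ by the unsigned packet sum via trace-positivity, and deduce the $S = S^{|\cdot|}$ identity on the factors. Two remarks. First, your justification that the $\eps$-signs vanish on $H_1\times H_2$ goes through Lemma \ref{lemma epsilon parity} (all $d_i$ in a factor share a parity), whereas the paper uses the endoscopic sign lemma to get $\eps^{H'}_{\psi'}(s_{\psi'}) = \eps^G_\psi(s_\psi\cdot s_\psi)=1$; both are valid, and yours even gives the slightly stronger statement that $\eps_{\psi'}$ is identically trivial. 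Second, the step you flag as the ``main obstacle'' is indeed where your wording goes wrong: you say you bound $\eta^\psi_\pi(ss_\psi)$ ``by its absolute value $1$,'' but that only gives an \emph{upper} bound on $S^{H'}_{\Delta'}$, which is the wrong direction — you need $S^{H'}_{\psi'}$ to equal (a positive constant times) the unsigned sum $\sum_\pi \tr_{\pi^\infty}(f^\infty)$ so that it \emph{dominates} $|S^N_\psi|$. What saves you is that your chosen $s$ satisfies $ss_\psi = s_\psi^2 = 1$ in $S^\natural_\psi$, so $\eta^\psi_\pi(ss_\psi)=1$ identically and the packet sum on $H'$ is literally the unsigned sum; this exact identity, not an absolute-value bound, is the content of the step, and is how the paper closes it.
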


\begin{proof}
Moving the absolute value within the sum from an intermediate computation in \ref{SMform}, we know that
\[
 |S^{N}_\Delta((f^\infty)^N)|  \leq \sum_{\psi \in \Delta} m_\psi |\mc S_\psi|^{-1}
\lf( \sum_{\pi \in \Pi^H_{\psi^\infty}} \tr_{\pi^\infty}(f^\infty) \ri).
\]

Consider $\psi \in \Delta$. All such $\psi$ have the same $s_\psi$ and therefore the same $H'$ in the pair $(H', \psi^{H'})$ corresponding to $s_\psi$ as in Proposition \ref{prop s to endo}. The $\Out(\wh H')$-orbit of $(H', \psi')$ that comes from $(\psi, s_\psi)$ is the subset with image in~$\wh H'$ of an $\wh H$-conjugacy class. The elements of the orbit can therefore be determined by their infinitesimal character at $\infty$---i.e. the choice of $\lb'$ uniformly determines a unique choice of parameter $\psi'$ on $H'$ for each $\psi \in \Delta$. 

Denote by $f'$ the transfer to $H'$. By Theorem \ref{stablemultendo} applied to $H(s_\psi) = H'$, iterations of the  computations of Proposition \ref{SMform}, and the twisted character identity away from infinity:
\begin{multline*}
S_{\psi'}^{H'}(\EP_{\lb'} (f^\infty)') =  \eps^G_{\psi}(s_{\psi}s_\psi) m_{\psi'} |\mc S_{\psi'}|^{-1}  \lf( \sum_{\pi \in \Pi^H_{\psi^\infty}} \eta^{\psi^\infty}_{\pi^\infty}(s_\psi s_\psi) \tr_{\pi^\infty}(f^\infty) \ri) \\
=  m_{\psi'} |\mc S_{\psi'}|^{-1}  \lf( \sum_{\pi \in \Pi^H_{\psi^\infty}} \tr_{\pi^\infty}(f^\infty) \ri),
\end{multline*}
with the second equality following from $s_\psi^2=1$. The stable multiplicity formula \ref{stablemult} also gives
\[
S_{\psi'}^{H'}(\EP_{\lb'} (f^\infty)') = S_{\psi'}^{|H'|}(\EP_{\lb'} (f^\infty)')
\]
since $\epsilon^{H'}_{\psi'}(s_{\psi'})$. Summing over $\psi \in \Delta$ on one side and the corresponding $\psi'$ on the other gives
\[
|S^{N}_\Delta((f^\infty)^N)| \leq \f{m_{\psi} |\mc S_{\psi}|^{-1}}{m_{\psi'} |\mc S_{\psi'}|^{-1}}S_{\Delta}^{H'}(\EP_{\lb'} (f^\infty)')
\]
where $S^{H'}_\Delta$ is defined by summing over parameters that pushforward to something of the right shape on $H$. Both desired expressions then follow from factoring the~$H'$ term into terms for~$H_1$ and $H_2$ in these last two formulas.
\end{proof}

\subsection{Reduction to Simple Shapes}\label{rtss}
We now bound $S_\Delta$ in terms $S_{\Delta'}$ for cuspidal~$\Delta'$. We do this in two steps: in this section we reduce to simple shapes and in the next we pass from simple to cuspidal. 

To apply results of \S\ref{sectionlocaltransfers}, we present test functions in a particular form: choose $G \in \wtd {\mc E}_\el(N)$ and let $S = \infty \sqcup S_s \sqcup S_b$ be finite sets of places such that $S_s$ is split and $S_b$ contains all the ``bad'' ramified places. Write test functions as:
\[
f^\infty = \varphi_{S_b} f_{S_s} f^S
\]
where $\varphi_{S_b}$ and $f_{S_s}$ are arbitrary and $f^S \in \ms H^\ur(G^S)$. 
\begin{prop}\label{SNfactor}
Consider the shape $\Delta = (T_i, d_i, \lb_i, \eta_i)_i$. Assume that~$f_{S_s}$ and $f^S$ are trace-positive. Then
\[
S^{|N|}_\Delta((f_{S_s} f_{S_b} f^S)^N) \leq C_\Delta \prodf_i S^{T_i d_i}_{(T_i, d_i, \lb_i, \eta_i)}(((f_{S_s})_{M,i} \varphi_{i, S_b} \mc T_i f^S)^{T_id_i})
\]
for trace-positive $\varphi_{i, S_b}$ and a constant $C_\Delta$ that only depends on $\Delta$.
\end{prop}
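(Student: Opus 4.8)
The plan is to run the induction of \S\ref{ss step 3} in the form made available by Proposition \ref{SMineq}, repeatedly peeling off the endoscopic structure of $H(\Delta)$ until we reach a product of simple shapes, and then to factor the resulting stable traces at each place using the local transfer results of \S\ref{sectionlocaltransfers}. First I would apply Proposition \ref{SMineq} directly: since $f^\infty = \varphi_{S_b} f_{S_s} f^S$ is trace-positive (being a product of trace-positive functions, using that $f^S \in \ms H^\ur(G^S)$ is trace-positive by hypothesis), we obtain an elliptic endoscopic group $H' = H_1 \times H_2$ of $H(\Delta)$ and infinitesimal character $\lb'$ with
\[
|S^{|N|}_\Delta((f^\infty)^N)| \le C \prodf_{i=1,2} S^{|H_i|}_{\Delta_i}(\eta_{\lb'_i}(f^\infty)^i),
\]
where $\Delta_1, \Delta_2$ are the shapes induced on $H_1, H_2$. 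The point is that the elliptic endoscopic decomposition of $\mc E_\el(N)$ into unitary factors, applied iteratively, lets us split $\Delta = (T_i, d_i, \lb_i, \eta_i)_i$ down to the level where each factor has shape $(T_i, d_i, \lb_i, \eta_i)$ — a single block. One must check this iteration terminates and that the constants multiply into a single $C_\Delta$ depending only on $\Delta$; this is bookkeeping of the type in \cite{Tai17} and \cite{Ger20}.

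Next I would handle the local factorization at the three kinds of places. Away from $S$ (the unramified places) we have $f^S \in \ms H^\ur(G^S)$, so Lemma \ref{unramtrace} gives $\tr_{\psi_v}(f^S_v) = \prodf_i \tr_{\tau_{i,v}[d_i]}(\mc T_{\bar\Delta,i} f^S_v)$ via the partial transfer $\mc T_{\bar\Delta}$; at the split places $S_s$ we use Lemma \ref{splitconstant} (and its iteration) to write the stable trace as a product of stable traces against the constant-term factors $(f_{S_s})_{M,i}$; and at the bad places $S_b$ we invoke Lemma \ref{genconstantineq}, which produces trace-positive functions $\varphi_{i,S_b}$ on the smaller unitary groups with $|\tr_{\psi_v} \varphi_{S_b,v}| \le \prod_i \tr_{\psi_{i,v}} \varphi_{i,S_b,v}$. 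Since the $\psi \in \Delta$ are in bijection with tuples $(\tau_i)_i$ with $\tau_i \in \Sigma^{T_i}_{\lb_i,\eta_i}$ (by regularity of the infinitesimal character, which forbids repeats), summing the factored local traces over all $\psi \in \Delta$ reorganizes as the product over $i$ of sums over $\tau_i$ — exactly the $S^{T_id_i}_{(T_i,d_i,\lb_i,\eta_i)}$ terms, built out of $((f_{S_s})_{M,i}\,\varphi_{i,S_b}\,\mc T_i f^S)$. The remaining constant (from $m_\psi$, $|S_\psi|^{-1}$, and the ratios appearing in Proposition \ref{SMineq}) depends only on $\Delta$ and is absorbed into $C_\Delta$. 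Here one uses the ``mystical gate'' notation of \S\ref{nonfactorizable} because $\mc T_i f^S$ need not be factorizable and the $H^d$-transfers need not be uniform across the $d$ factors.

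The main obstacle I anticipate is reconciling the \emph{inequality} coming from Proposition \ref{SMineq} and Lemma \ref{genconstantineq} with the requirement that what survives be an honest product of $S^{T_id_i}_{(T_i,d_i,\lb_i,\eta_i)}$ terms, each of which must itself be nonnegative for the inequality to propagate correctly through the product. This forces a careful tracking of trace-positivity: one needs that the functions $(f_{S_s})_{M,i} \varphi_{i,S_b} \mc T_i f^S$ are themselves trace-positive (or at least that the relevant stable traces are nonnegative), which follows since constant terms, unramified transfers $\mc T_i$, and the $\varphi_{i,S_b}$ of Lemma \ref{genconstantineq} all preserve trace-positivity, and since for simple shapes the stable trace is a nonnegative combination of $\tr_{\pi_v}$ by Corollary \ref{straceAJ} at infinity and the packet structure elsewhere. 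A secondary subtlety is that the $\eps$-sign has to be disposed of \emph{before} factoring: this is precisely what Proposition \ref{SMineq} accomplishes by passing to $H'$ where $\eps^{H'}_{\psi'}(s_{\psi'}) = 1$, so the argument must be arranged to apply \ref{SMineq} at the top and only afterwards invoke the sign-free local factorizations.
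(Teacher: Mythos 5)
Your second and third paragraphs are essentially the paper's proof: factor the trace of each $\psi=\bigoplus_i\tau_i[d_i]\in\Delta$ place by place (Lemma \ref{splitconstant} at $S_s$, Lemma \ref{unramtrace} away from $S$, Lemma \ref{genconstantineq} at $S_b$), use regularity of the infinitesimal character to identify $\Psi(\Delta)$ with tuples $(\tau_i)_i$ so that the sum over $\psi$ reorganizes as a product of sums over the $\tau_i$, and then remove absolute values by tracking trace-positivity through constant term, $\mc T_i$, and the $\varphi_{i,S_b}$.

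Your first paragraph, however, is both misplaced and incorrect as stated. First, the quantity in this proposition is $S^{|N|}_\Delta$, which is \emph{defined} without the sign $\eps_\psi(s_\psi)$, so there is no sign to dispose of; the only $\eps$-input needed is the (trivial) observation that the simple shapes on the right-hand side satisfy $\eps\equiv 1$, so $S^{T_id_i}=S^{|T_id_i|}$ there. Proposition \ref{SMineq} is the tool for converting the \emph{signed} $S^N_\Delta$ into unsigned terms, and its correct place in the architecture is the subsequent Corollary \ref{constanttermineq}, which applies \ref{SMineq} once and then invokes this proposition. Second, the claimed iteration of \ref{SMineq} "until each factor is a single block" does not work: \ref{SMineq} does not let you choose the endoscopic group — $H'=H(s)$ is forced to be the group attached to the specific element $s_\psi\in\mc S_\psi$, which separates the blocks only according to the parity of the $d_i$. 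In particular, when all $d_i$ have the same parity, $s_\psi=1$ and $H'=G$, so the purported iteration makes no progress at all. The decomposition of $\Delta$ into its individual blocks $(T_i,d_i,\lb_i,\eta_i)$ is accomplished entirely by the local factorization lemmas, not by endoscopy. If you delete the first paragraph and the corresponding "secondary subtlety" at the end, what remains is a correct proof along the paper's lines.
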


\begin{proof}
We first work with a single $\psi = \tau_1[d_1] \oplus \cdots \oplus \tau_k[d_k] \in \Delta$ and compute the terms in the stable multiplicity formula. Iteratively applying Lemma \ref{splitconstant} on transfer for split groups, we realize 
\[
M = \prod_i \GL_{T_i d_i}(F_{S_s})
\]
as a Levi of $G(F_{S_s}) \simeq \GL_N(F_{S_s})$ with
\[
\tr_{\psi_{S_s}} f_{S_s} = \prodf_i \tr_{\tau_{i,S_s}[d_i]} (f_{S_s})_{M,i},
\]
where $(\star)_{M,i}$ is the $i$th factor of the constant term map to $M$. 

For the places not in $S$, Lemma \ref{unramtrace} gives that 
\[
\tr_{\psi^S} f^S= \prodf_i \tr_{\tau_i^S[d_i]} \mc T_i f^S,
\]
and multiplying over places in $S_b$, Lemma \ref{genconstantineq} constructs $\varphi_{i, S_b}$ so that
\[
|\tr_{\psi_{S_b}} (\varphi_{S_b})| \leq \prod_i \tr_{\tau_{i,S_b}[d_i]} (\varphi_{i,S_b}).
\]

Next, each $\eps_{\tau_i[d_i]}$ is trivial for each of the $\tau_i[d_i]$ since they are simple. Multiplying together the above trace identities, we have
\begin{equation} \label{eqn product trace}
    \lf| S^{|N|}_\psi((f_{S_s} \varphi_{S_b} f^S)^N) \ri| \leq C_\Delta \lf| \prodf_i S^{T_i d_i}_{\tau_i[d_i]}(((f_{S_s})_{M,i} \varphi_{i, S_b} \mc T_i f^S)^{T_i d_i}) \ri|
\end{equation}
for some constant $C_\Delta$ that only depends on $\Delta$.

By regularity of the infinitesimal character, none of the $(T_i, d_i, \lb_i, \eta_i)$ appear with multiplicity more than $1$. Therefore, summing \eqref{eqn product trace} over all $\psi \in \Delta$ is the same as iteratively summing over each factor $\tau_i[d_i] \in (T_i, d_i, \lb_i, \eta_i)$. 

To remove the absolute values on the right side of the summed equation, $(f_{S_s})_{M,i}$ and $\mc T_i f^S$ satisfy the same positivity condition: positivity for~$(f_{S_s})_{M,i}$ follows by ``adjointness'' of constant term and parabolic induction. That for $\mc T_i f^S$ comes from the ``adjointness'' between $\mc T_i$ and pushforward of Satake parameter. The $\varphi_{i,S_b}$ satisfy positivity by construction. 
Together with the fact that the $\eps_{\tau_i[d_i]}$ are trivial, this guarantees that the absolute value can be removed on the right of \eqref{eqn product trace}.  
\end{proof}

Doing the bookkeeping for what exactly $\Delta_1, \Delta_2$ are in Proposition \ref{SMineq} lets us bound $S^N$ instead of $S^{|N|}$:
\begin{cor}\label{constanttermineq}
Let the notation be as in the above discussion and assume that $f_{S_s}$, $\varphi_{S_b}$, and $f^S$ are all trace-positive.
Then for some functions $\varphi'_{i,S_b}$ and some constant $C_\Delta$ only depending on $\Delta$,
\[
|S^{N}_\Delta((f_{S_s} f_{S_b} f^S)^N)| \leq C_\Delta  \prodf_i S^{T_i d_i}_{(T_i, d_i, \lb_i, \eta_i)}(((f_{S_s})_{M,i} \varphi'_{i, S_b} \mc T_i f^S)^{T_i d_i}).
\]
Furthermore, $(f_{S_s})_{M,i}$, $\varphi'_{i,S_b}$ and $\mc T_i f^S$ satisfy the same positivity condition. 
\end{cor}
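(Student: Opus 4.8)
The plan is to combine the two results just established in this section: Proposition \ref{SNfactor}, which handles the ``$S^{|N|}$'' version of the bound (where the $\eps$-signs have been discarded), and Proposition \ref{SMineq}, which produces an elliptic endoscopic group $H' = H_1 \times H_2$ of $H = H(\Delta)$ on which the stable distributions are $\eps$-sign-free, together with the bound $|S^N_\Delta((f^\infty)^N)| \leq C \prodf_{i} S^{H_i}_{\Delta_i}(\eta_{\lb'_i}(f^\infty)^i)$ and the identity $\prod_i S^{H_i}_{\Delta_i} = \prod_i S^{|H_i|}_{\Delta_i}$. The point is that Proposition \ref{SMineq} reduces bounding $S^N_\Delta$ to bounding each $S^{|H_i|}_{\Delta_i}$, and each of those is a ``$|N|$-type'' quantity for a smaller group, hence is exactly the kind of object that Proposition \ref{SNfactor} controls. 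So the corollary follows by applying Proposition \ref{SNfactor} separately to the two factors $H_1, H_2$ and re-multiplying.

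Concretely, first I would invoke Proposition \ref{SMineq} with the given trace-positive $f^\infty = f_{S_s}\varphi_{S_b}f^S$ to obtain $H' = H_1 \times H_2$, a compatible $\lb'$, and the transfers $(f^\infty)^i$ to each $H_i$; these transfers still factor as products over $S_s$, $S_b$, and the unramified places, and the endoscopic transfer at split and unramified places preserves trace-positivity (via adjointness of constant term with parabolic induction, and of $\mc T$ with pushforward of Satake parameters, exactly as in the proof of Proposition \ref{SNfactor}). Then I would apply Proposition \ref{SNfactor} to each $S^{|H_i|}_{\Delta_i}(\eta_{\lb'_i}(f^\infty)^i)$: since $\Delta_i$ is the piece of $\Delta$ landing on $H_i$, its simple constituents are a sub-multiset of $(T_j, d_j, \lb_j, \eta_j)_j$, so Proposition \ref{SNfactor} expresses each factor as $\leq C_{\Delta_i}\prodf_{j} S^{T_j d_j}_{(T_j, d_j, \lb_j, \eta_j)}(\cdots)$ over the relevant $j$'s, with the test functions at each place being constant terms of $(f_{S_s})^i$, newly constructed trace-positive $\varphi$'s at $S_b$, and $\mc T_j$ of the unramified part. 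Multiplying the two bounds together, collecting the constants into a single $C_\Delta$, and observing that the disjoint union of the index sets for $H_1$ and $H_2$ is the full index set for $\Delta$, I would recover precisely the claimed inequality $|S^N_\Delta((f_{S_s}f_{S_b}f^S)^N)| \leq C_\Delta \prodf_i S^{T_i d_i}_{(T_i, d_i, \lb_i, \eta_i)}(((f_{S_s})_{M,i}\varphi'_{i,S_b}\mc T_i f^S)^{T_i d_i})$.

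The bookkeeping obstacle — and the step to be most careful about — is tracking how the split-place constant term map $M = \prod_i \GL_{T_i d_i}(F_{S_s})$ and the unramified transfer $\mc T_i$ interact with the further endoscopic descent $H \rightsquigarrow H_1 \times H_2$ of Proposition \ref{SMineq}. One must check that the Levi $M$ and the factorized transfers $\mc T_i f^S$ on the final groups $U_{E/F}(T_i)$ (or, before the $[d_i]$-reduction, $U_{E/F}(T_i d_i)$) are literally the same regardless of whether one descends to $H'$ first and then factors, or factors on $H = H(\Delta)$ first: this is the ``doing the bookkeeping for what exactly $\Delta_1, \Delta_2$ are'' alluded to, and it works because the decomposition of $\Delta$ into its simple-shape constituents $(T_i, d_i, \lb_i, \eta_i)$ is canonical and the endoscopic group $H'$ just partitions these constituents into two groups without altering them. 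Once this is verified, absorbing all the $\varphi$'s produced at $S_b$ by Lemma \ref{genconstantineq} (and their re-combination across the two factors) into a single family $\varphi'_{i,S_b}$ that is still trace-positive is routine, as is checking that $(f_{S_s})_{M,i}$ and $\mc T_i f^S$ remain trace-positive by the same adjointness arguments used in Proposition \ref{SNfactor}.
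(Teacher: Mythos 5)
Your proposal is correct and follows essentially the same route as the paper: apply Proposition \ref{SMineq} to descend to $\eps$-free $S^{|H_i|}$-terms on $H'=H_1\times H_2$, then apply Proposition \ref{SNfactor} to each factor, with the positivity claims handled by the same adjointness arguments. The bookkeeping point you flag is exactly the one the paper addresses (choosing the unramified transfer to $H'$ via the fundamental lemma so it is dual to pushforward of Satake parameters, and noting the split-place transfer to $H'$ is a constant term since $H'_{S_s}$ is a Levi).
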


\begin{proof}
First, apply Proposition \ref{SMineq} to get a bound by $S^{|N|}$ terms and then apply Proposition \ref{SNfactor} to the two terms on the right side of the bound in \ref{SMineq}. Note that we need to use the full fundamental lemma (see e.g. Theorem 5.4.2 in \cite{Dal22}) to choose a transfer $f^S$ to $H'$ that is dual to the pushforward of Satake parameters. The $\varphi'$ are constructed through the above discussion applied to transfer to $H'$ of $f_{S_b}$. Finally, the transfer of $f_{S_s}$ to $H'$ is given by taking constant terms since we are in a degenerate case of $H'_{S_s}$ being a Levi subgroup.

The positivity condition for the various transfers follows from the exact same arguments as in the last proof. 
 \end{proof}
 
 \subsection{Full Bound}\label{fb}
Now we reduce from simple 
to cuspidal shapes. Let $G \in \wtd {\mc E}_\el(N)$ and $\Delta = (T, d, \lb, \eta)$ a simple shape for $G$, so that all $\psi \in \Delta$ are of the form $\psi = \tau[d]$. As before, choose a test function
 \begin{equation} \label{eq good test function}
 f^\infty = f_{S_s} f_{S_b} f^S
 \end{equation}
 on $G$ where all places in $S_s$ are split, $S_b$ contains all ramified places, and $f^S \in \ms H^\ur(G^S)$. Further assume:
 \begin{itemize}
     \item $f_{S_s} f_{S_b}$ is supported on the kernel of $|\det|_{S_s \sqcup S_b}$, \label{cond support}
     \item $f_{S_s}$ and $f^S$ are trace-positive. 
 \end{itemize}
 
The argument is very similar to the previous section. First,
 \begin{multline*}
 |S_\Delta^M((f_{S_s} f_{S_b} f^S)^M)| \leq  \sum_{\psi \in \Delta} |S^M_\psi((f_{S_s} f_{S_b} f^S)^M)| \\=  \sum_{\psi \in \Delta} m_\psi |\mc S_\psi| (\tr_{\psi_{S_s}} f_{S_s}) (\tr_{\psi^S} f^S) |\tr_{\psi_{S_b}} f_{S_b}|.
 \end{multline*}
 For all $\psi \in \Delta$, apply Lemma \ref{split[d]} to get a Levi $M_{S_s} \simeq {\GL_T}^d$ of $\GL_M(F_{S_s})$ so that
 \[
 \tr_{\psi_{S_s}} f_{S_s} \leq (\tr_{\tau_{S_s}} (f_{S_s})_{M,1})^{d\oplus}.
 \]
We also apply \ref{gen[d]} to construct the functions $\varphi'_{S_b}$ satisfying
\[
|\tr_{\psi_{S_b}} \varphi_{S_b}| \leq (\tr_{\tau_{S_b}} \varphi'_{S_b})^{d\oplus}
\]
since the $\tau_{S_b}$ are necessarily cuspidal. Note that the $\varphi'_{S_b}$ so constructed is trace-positive by definition. Applying Lemma \ref{unramtrace} then gives
\begin{equation}\label{[d]boundeq}
|\tr_{\psi^\infty}(f_{S_s} \varphi_{S_b} f^S)| \leq (\tr_{\tau^\infty}((f_{S_s})_{M,1} \varphi'_{S_b} \mc T f^S))^{d\oplus}.
\end{equation}

Finally, summing over all $\tau[d] \in \Delta$ gives that:
\begin{prop}\label{[d]bound}
With notation and conditions from the above discussion:
\[
  |S_\Delta^M((f_{S_s} \varphi_{S_b} f^S)^M)| \leq C_\Delta (S_{\Sigma_{\lb, \eta}}^T((f_{S_s})_{M,1} \varphi'_{S_b} \mc T f^S)^T)^{d \oplus}
 \]
for some constant $C$ that only depends on $\Delta$. Recall that $(f_{S_s})_{M,1}$ and $\mc T f^S$ are trace positive and we can choose $\varphi'_{S_b}$ to be so too.
\end{prop}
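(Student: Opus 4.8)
The plan is to run the proof of Corollary~\ref{constanttermineq} in the degenerate case of a single Arthur--$\SL_2$ block, using crucially that a simple parameter has trivial $\eps$-character. First I would reduce to a bound on individual parameters: from
\[
|S_\Delta^M((f_{S_s}\varphi_{S_b}f^S)^M)| \leq \sum_{\psi\in\Delta}|S^M_\psi((f_{S_s}\varphi_{S_b}f^S)^M)|
\]
and the fact that every $\psi\in\Delta$ is of the form $\psi=\tau[d]$ with $\tau\in\Sigma_{\lb,\eta}$, it suffices to estimate each summand. By the stable multiplicity formula~\ref{stablemult} and the computation in Proposition~\ref{SMform}, $S^M_\psi$ equals $\eps_\psi(s_\psi)\,m_\psi|\mc S_\psi|^{-1}$ times a product of local stable packet traces over $S_s$, $S_b$, and the unramified places; here $\eps_\psi=\eps_{\tau[d]}\equiv 1$ because $\tau[d]$ is simple (Lemma~\ref{lemma epsilon parity}), and the scalar $m_\psi|\mc S_\psi|^{-1}$ depends only on $\Delta$, so it can be absorbed into $C_\Delta$.

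Next I would estimate each local factor. At the split places, the hypotheses that $f_{S_s}$ is supported on $\ker|\det|_{S_s}$ and is trace-positive put us exactly in the setting of Lemma~\ref{split[d]}, which realizes $M_{S_s}\cong\GL_T^d$ as a Levi of $\GL_M(F_{S_s})$ and yields $\tr_{\psi_{S_s}}f_{S_s}\leq(\tr_{\tau_{S_s}}(f_{S_s})_{M,1})^{d\oplus}$. At the bad places, Lemma~\ref{gen[d]} --- applicable since each $\tau_{S_b}$ is cuspidal --- produces a trace-positive $\varphi'_{S_b}$ with $|\tr_{\psi_{S_b}}\varphi_{S_b}|\leq(\tr_{\tau_{S_b}}\varphi'_{S_b})^{d\oplus}$. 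At the unramified places, Lemma~\ref{unramtrace}, applied with the transfer $\mc T=\mc T_\Delta$ from \eqref{satakepushforward} sending $[d]$ to the Satake parameter of the trivial representation of $\GL_d$, gives $\tr_{\psi^S}f^S=\tr_{\tau^S}\mc T f^S$. Because $(f_{S_s})_{M,1}$, $\varphi'_{S_b}$, and $\mc T f^S$ are all trace-positive --- by adjointness of the constant term with parabolic induction, by construction, and by adjointness of $\mc T$ with the Satake pushforward respectively --- I may multiply these inequalities to obtain, for each $\psi=\tau[d]$,
\[
|\tr_{\psi^\infty}(f_{S_s}\varphi_{S_b}f^S)|\leq\bigl(\tr_{\tau^\infty}((f_{S_s})_{M,1}\varphi'_{S_b}\mc T f^S)\bigr)^{d\oplus}.
\]

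Finally, I would sum over $\psi\in\Delta$. Regularity of the total infinitesimal character of $\Delta$ forces every simple constituent to occur with multiplicity one, so the parameters $\psi=\tau[d]\in\Delta$ are parametrized precisely by the cuspidal $\tau\in\Sigma_{\lb,\eta}$, and summing the displayed inequality reassembles the right-hand side into $C_\Delta\bigl(S^T_{\Sigma_{\lb,\eta}}(((f_{S_s})_{M,1}\varphi'_{S_b}\mc T f^S))^T\bigr)^{d\oplus}$; the absolute value on the right is removable because $\Sigma_{\lb,\eta}$ is simple, hence has trivial $\eps$, and the transferred functions are trace-positive. I expect the only genuine obstacle to be bookkeeping: tracking the $\Delta$-dependent constant through the ratios of $m_\psi|\mc S_\psi|^{-1}$-type factors, and checking that the non-factorizability notation of \S\ref{nonfactorizable} inherited from Lemmas~\ref{split[d]}, \ref{gen[d]}, and~\ref{unramtrace} composes correctly, so that the final bound really is $S^T_{\Sigma_{\lb,\eta}}$ of a single transferred test function raised to the $d\oplus$-power rather than something merely comparable to it.
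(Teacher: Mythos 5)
Your proposal follows the paper's proof essentially step for step: the same triangle-inequality reduction to individual $\psi = \tau[d]$, the same use of Lemmas \ref{split[d]}, \ref{gen[d]}, and \ref{unramtrace} to obtain the per-parameter bound \eqref{[d]boundeq}, and the same summation over $\tau \in \Sigma_{\lb,\eta}$ using triviality of $\eps_\psi$ for simple shapes. The one point you leave as "bookkeeping" is exactly how the paper closes the argument: the sum over $\tau$ of $d$-th powers is dominated by the $d\oplus$-power of $S^T_{\Sigma_{\lb,\eta}}$ because trace-positivity of $(f_{S_s})_{M,1}$, $\varphi'_{S_b}$, and $\mc T f^S$ makes all the cross terms in the expansion nonnegative.
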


\begin{proof}
Expanding out and using that $\Sigma_{\lb, \eta}$ is simple:
\begin{multline*}
(S_{\Sigma_{\lb, \eta}}^T((f_{S_s})_{M,1} \varphi'_{S_b} \mc T f^S)^T)^{d \oplus} \\
= \sum_{\tau \in \Sigma_{\lb, \eta}} m_\psi^d |\mc S_\psi|^d (\tr_{\tau^\infty} ((f_{S_s})_{M,1} \varphi'_{S_b} \mc T f^S))^{d\oplus} + \text{cross terms},
\end{multline*}
where trace-positivity comes from the conditions in the above discussion and arguments similar to Corollary \ref{constanttermineq}. Since $m_\psi$ and $|S_\psi|$ only depend on $\Sigma_{\lb, \eta}$, the inequality then follows from  \eqref{[d]boundeq} since trace positivity gives that the cross terms are all positive. We use here that $\Delta$ is simple so $\epsilon_\psi(s_\psi)$ is identically 1. 
%
%
\end{proof}

We will need a slight technical variation of this: 
\begin{prop}\label{[d]boundvar}
Fix notation and conditions as in Proposition \ref{[d]bound}. Assume there is a constant $B$ such that for all $\tau \in \Sigma_{\lb, \eta}$:
\[
|\tr_{\td \tau^\infty} (((f_{S_s})_{M,1} \varphi'_{S_b} \mc T f^S)^T)|^{\oplus} \leq B
\] where the $\oplus$ represents a sum over factorizable summands. Then 
\[
  |S_\Delta^M((f_{S_s} \varphi_{S_b} f^S)^M)| \leq C B^{d-1} S_{\Sigma_{\lb, \eta}}^T(((f_{S_s})_{M,1} \varphi'_{S_b} \mc T f^S)^T)^\oplus
\]
for some constant $C$ that only depends on $\Delta$.
\end{prop}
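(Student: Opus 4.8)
The plan is to follow the structure of the proof of Proposition \ref{[d]bound} almost verbatim, only tracking where the hypothesis $|\tr_{\td\tau^\infty}((\cdots)^T)|^\oplus \leq B$ lets us replace a factor by the constant $B$. Concretely, start from the expansion
\[
|S_\Delta^M((f_{S_s}\varphi_{S_b}f^S)^M)| \leq \sum_{\psi=\tau[d]\in\Delta} m_\psi|\mc S_\psi|\,|\tr_{\psi^\infty}(f_{S_s}\varphi_{S_b}f^S)|,
\]
using that $\Delta$ is simple so $\eps_\psi(s_\psi)\equiv 1$. For each $\tau$, apply Lemma \ref{split[d]} at $S_s$, Lemma \ref{gen[d]} at $S_b$, and Lemma \ref{unramtrace} away from $S$ exactly as in the derivation of \eqref{[d]boundeq} to get
\[
|\tr_{\psi^\infty}(f_{S_s}\varphi_{S_b}f^S)| \leq \big(\tr_{\tau^\infty}((f_{S_s})_{M,1}\varphi'_{S_b}\mc T f^S)\big)^{d\oplus}.
\]
Here the right-hand side is a sum, over factorizable summands of the transfer $((f_{S_s})_{M,1}\varphi'_{S_b}\mc T f^S)^T$ on $\wtd G(T)^d$ and over the $d$ factors, of products of $d$ traces $\tr_{\td\tau^\infty}$ of the various factors.

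The key step is then: in each such product of $d$ traces, bound $d-1$ of the factors by the constant $B$ using the hypothesis (each such factor is exactly $\tr_{\td\tau^\infty}$ of a factorizable piece of $((f_{S_s})_{M,1}\varphi'_{S_b}\mc T f^S)^T$, to which the bound applies), leaving a single factor of $\tr_{\td\tau^\infty}$ summed over factorizable pieces, which is $\leq (\tr_{\td\tau^\infty}((f_{S_s})_{M,1}\varphi'_{S_b}\mc T f^S)^T)^\oplus$. Summing the resulting bound over $\tau\in\Sigma_{\lb,\eta}$ and over the finitely many factorizable pieces, and using that $m_\psi$ and $|\mc S_\psi|$ depend only on $\Sigma_{\lb,\eta}$ (so can be absorbed, together with the combinatorial count of factorizable pieces, into a constant $C$ depending only on $\Delta$), we recognize $\sum_{\tau}\tr_{\tau^\infty}(\cdots)^\oplus$ as (essentially, up to the same constants) $S_{\Sigma_{\lb,\eta}}^T(((f_{S_s})_{M,1}\varphi'_{S_b}\mc T f^S)^T)^\oplus$, since for the simple shape $\Sigma_{\lb,\eta}$ the $\eps$-sign is again trivial. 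Trace-positivity of $(f_{S_s})_{M,1}$, $\varphi'_{S_b}$, and $\mc T f^S$ — inherited from Lemma \ref{gen[d]}, the adjointness of constant term with parabolic induction, and the adjointness of $\mc T$ with Satake pushforward, exactly as in Proposition \ref{[d]bound} and Corollary \ref{constanttermineq} — ensures all the cross terms and all the factors being bounded are nonnegative, so the absolute values and the replacement by $B$ go through without sign issues.

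The main obstacle I expect is purely bookkeeping: being careful about what the superscripts $d\oplus$ and $\oplus$ mean (sums over factorizable decompositions of the transfer on $\wtd G(T)^d$, combined with a sum over the $d$ coordinate factors, whose individual $H$-factors need not agree, cf. \S\ref{nonfactorizable}), and checking that bounding $d-1$ of the $d$ trace factors by $B$ in each monomial really does leave a single clean factor that reassembles into $S_{\Sigma_{\lb,\eta}}^T$ rather than into some partial sum. There is no new analytic input beyond what was already used for Proposition \ref{[d]bound}; the only genuinely new ingredient is the hypothesis on $B$, and it is used in the most direct possible way. Once the combinatorics is set up correctly, the constant $C$ is manifestly independent of $\mf n$ (it only counts factorizable pieces and collects the $\Sigma_{\lb,\eta}$-dependent quantities $m_\psi,|\mc S_\psi|$), which is exactly the form needed for the applications in \S\ref{sectionSdeltaasymptotics}.
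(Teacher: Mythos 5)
Your proposal is correct and follows essentially the same route as the paper: the paper's proof simply says to repeat the argument of Proposition \ref{[d]bound} but bound the sum $\sum_{\tau} m_\psi^d |\mc S_\psi|^d (\tr_{\tau^\infty}(\cdots))^{d\oplus}$ directly rather than completing it with cross terms, and your "bound $d-1$ of the $d$ trace factors by $B$, leaving one factor that reassembles into $S^T_{\Sigma_{\lb,\eta}}(\cdots)^\oplus$" is exactly the intended content of that step. The bookkeeping caveats you flag about the $d\oplus$ notation are real but are left equally implicit in the paper.
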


\begin{proof}
This is the same argument as \ref{[d]bound} except we bound
\[
\sum_{\tau \in \Sigma_{\lb, \eta}} m_\psi^d |\mc S_\psi|^d (\tr_{\tau^\infty} ((f_{S_s})_{M,1} \varphi'_{S_b} \mc T f^S))^{d\oplus}
\]
directly instead of adding in cross terms. 
\end{proof}

Our final bound for a general shape then becomes:
\begin{prop}\label{shapeineq}
Let $\Delta = (T_i, d_i, \eta_i, \lb_i)_i$ and $G = H(\Delta) \in \wtd {\mc E}_\el(N)$. Let
\[
f^\infty = f_{S_s} \varphi_{s_b} f^S
\]
be a test function on $G^\infty$ where $S_s$  is all split places, $S_b$ contains all the ramified places, and $f^S \in \ms H^\ur(G^S)$. We further require: 
\begin{itemize}
   \item $f_{S_s}$ is supported on the kernel of $|\det|_{S_s}$,
    \item $f_{S_s}$, $\varphi_{S_b}$, and $f^S$ are trace-positive.
\end{itemize}
Let $f^\infty = f_{S_s} \varphi_{s_b} f^S$ be a test function as in \eqref{eq good test function}. Then there is a Levi subgroup 
\[
M \simeq \prod_i \GL_{T_i}(F_v)^{d_i}
\]
of $G(F_{S_s}) \simeq \GL_N(F_{S_s})$ and functions $\varphi'_{i,S_b}$ such that
\[
|S_\Delta^N((f_{S_s} f_{S_b} f^S)^N)| \leq C_\Delta \prodf_i (S^{T_i}_{\Sigma_{\lb_i, \eta_i}}(((f_{S_s})_{M,i} \varphi'_{i, S_b} \mc T_i f^S)^{T_i}))^{d_i \oplus}.
\]
Here $(f_{S_s})_{M,i}$ is the restriction of the constant term to $M$ to the first 
factor in $\GL_{T_i}(F_v)^{d_i}$, $\mc T_i$ is defined similarly, and the constant $C_\Delta$ only depends on $\Delta$. 

Finally, we may choose $\varphi'_{i,S_b}$ so that $(f_{S_s})_{M,i}$, $ \varphi'_{i, S_b}$ and $\mc T_i f^S$ satisfy the same support and positivity conditions \ref{cond support} as $f_{S_s}, f_{S_b}$, and $f^S$.
\end{prop}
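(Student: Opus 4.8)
The plan is to chain together the two reduction steps of \S\ref{rtss} and \S\ref{fb}; all the representation-theoretic content has already been extracted, so what remains is to compose the bounds and track hypotheses. First I would apply Corollary \ref{constanttermineq} to $f = f_{S_s}\varphi_{S_b} f^S$. Since $f_{S_s}$, $\varphi_{S_b}$, and $f^S$ are trace-positive, that corollary produces a Levi $M' \simeq \prod_i \GL_{T_i d_i}(F_{S_s})$ of $G(F_{S_s}) \simeq \GL_N(F_{S_s})$, trace-positive functions $\varphi''_{i,S_b}$, and a constant $C$ depending only on $\Delta$ with
\[
|S^{N}_\Delta((f_{S_s}\varphi_{S_b} f^S)^N)| \leq C \prodf_i S^{T_i d_i}_{(T_i, d_i, \lb_i, \eta_i)}\big(((f_{S_s})_{M',i}\,\varphi''_{i,S_b}\, \mc T_i f^S)^{T_i d_i}\big),
\]
where $(f_{S_s})_{M',i}$, $\varphi''_{i,S_b}$, and $\mc T_i f^S$ are again trace-positive.

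Second, I would apply Proposition \ref{[d]bound} to each factor $S^{T_i d_i}_{(T_i, d_i, \lb_i, \eta_i)}$ individually. That proposition requires the $S_s$- and $S_b$-parts of its input to be supported on the kernel of $|\det|$. This holds for $(f_{S_s})_{M',i}$ because the determinant of $\GL_{T_i d_i}$ restricts on the $\GL_{T_i}$-blocks to a product of determinants up to the modulus character, which is itself supported on $\ker|\det|$, so constant terms of functions supported on $\ker|\det|_{S_s}$ remain so; and it holds for the $\varphi''_{i,S_b}$ after, if necessary, multiplying by the indicator of $\ker|\det|_{S_b}$, which preserves trace-positivity. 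Proposition \ref{[d]bound} then yields a further Levi $M_i \simeq \GL_{T_i}(F_{S_s})^{d_i}$ of $\GL_{T_i d_i}(F_{S_s})$, trace-positive $\varphi'_{i,S_b}$, and a constant $C_i$ depending only on $\Delta$ with
\[
|S^{T_i d_i}_{(T_i, d_i, \lb_i, \eta_i)}(\cdots)| \leq C_i\, \big(S^{T_i}_{\Sigma_{\lb_i, \eta_i}}(((f_{S_s})_{M_i,1}\,\varphi'_{i,S_b}\,\mc T_i f^S)^{T_i})\big)^{d_i\oplus}.
\]
Composing $M'$ with the $M_i$'s gives a single Levi $M \simeq \prod_i \GL_{T_i}(F_{S_s})^{d_i}$ of $G(F_{S_s})$, and by transitivity of the constant term the iterated map equals the direct restriction of the constant term to $M$; similarly, composing the unramified transfers agrees with $\mc T_\Delta$ restricted to each $\GL_{T_i}$-factor, by the compatibility of Satake transfer with composition of the $L$-embeddings \eqref{shapeLembedding}. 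Multiplying the bounds together and absorbing $C\prod_i C_i$ into $C_\Delta$ gives the stated inequality, and trace-positivity of $(f_{S_s})_{M,i}$, $\varphi'_{i,S_b}$, and $\mc T_i f^S$ is recorded by Corollary \ref{constanttermineq} and Proposition \ref{[d]bound} at each stage.

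The main point requiring care — rather than any genuine difficulty — is the bookkeeping that makes the two reduction steps fit together: one must verify that the Levi subgroups, the constant-term maps, and the unramified transfer maps compose consistently, so that the outer product $\prodf_i$ and the exponents $^{d_i\oplus}$ match the statement exactly, and that the $|\det|$-support and trace-positivity hypotheses survive each of constant terms, the maps $\mc T_i$, and the $\varphi'$-replacement steps. Once these are checked, the proposition follows by simply assembling Corollary \ref{constanttermineq} with Proposition \ref{[d]bound} applied to each factor.
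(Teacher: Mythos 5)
Your proposal is correct and follows the paper's own proof exactly: the paper also establishes this by applying Corollary \ref{constanttermineq} and then Proposition \ref{[d]bound} to each simple factor, checking that trace-positivity and the $|\det|$-support condition survive the first reduction (positivity from the second implication of \ref{constanttermineq}, support from the integral formula for the constant term). One caution: your suggested fix of multiplying $\varphi''_{i,S_b}$ by $\1_{\ker|\det|_{S_b}}$ would break the chain of inequalities, since you would then be bounding a different quantity from the one produced by the first step; fortunately it is unnecessary, because the $S_b$-support hypothesis in the setup of \S\ref{fb} is never actually used in the proof of Proposition \ref{[d]bound} --- the bad places are handled by Lemma \ref{gen[d]}, which has no support hypothesis and constructs fresh functions $\varphi'_{S_b}$ in any case.
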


\begin{proof}
The inequality comes from applying \ref{constanttermineq} and then \ref{[d]bound}. The first thing to check is that the necessary conditions for \ref{[d]bound} still hold after applying \ref{constanttermineq}. Positivity is guaranteed by the second implication of \ref{constanttermineq}.  Support follows from the integral formula for constant term. 

The final $\varphi'_{i,S_b}$, $(f_{S_s})_{M,i}$, and $\mc T_i f^S$ satisfy the bulleted conditions by the last implications of \ref{[d]bound}. 
\end{proof}



\section{Induction Step: Particular Shapes}\label{step3dom}
We compute more detailed information for certain specific kinds of shapes with the goal of providing exact asymptotics:
\subsection{Shapes of Characters}
Let $\Delta = (1, d, \lb, \eta)$ be a shape for $H = H(\Delta)$. If~$\psi \in \Delta$, then the packet ~$\Pi_{\psi_\infty}$ constructed by Adams-Johnson, see \S \ref{AJcombinatorial}, is a singleton containing a character~$\xi_\infty$ of~$H$. Let $\EP_{\xi_\infty} = \vol(A_{G, \infty} \bs G_\infty / G_{\der, \infty})^{-1} \xi^{-1}_\infty$ be the corresponding Euler-Poincar\'e function (note that by our assumptions on~$E/F$, $H$ is cuspidal in the sense of \cite{Art89} so $ \xi^{-1}_\infty$ is a valid test function factor for the global trace formula). 

\begin{prop}\label{charshapeformula1} 
Let $\Delta = (1,d,\lambda,\eta)$ a shape for $H$ and $\xi_\infty$ be as above. 
 Then,
\[
S^H_\Delta (\EP_{\xi_\infty}  f^\infty) = \sum_{\substack{\chi \in \mc{AC}_\disc(H) \\ \chi_\infty = \xi_\infty}} \tr_{\chi^\infty} f^\infty,
\]
where $\mc{AC}_\disc(H)$ is the set of one-dimensional representations in the discrete automorphic spectrum of $H$. 
\end{prop}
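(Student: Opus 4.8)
The plan is to specialize the general machinery of Proposition \ref{SMform} and the induction of \S\ref{strategy} to the case of a simple shape $\Delta = (1, d, \lambda, \eta)$ of rank $N = d$. The key observation is that a cuspidal parameter of rank $T_i = 1$ is literally a Hecke character of $E^\times$ (viewed as a parameter for $\GL_1/E$, equivalently for a $U_{E/F}(1)$), so the ``$\Sigma$-level'' terms $S^1_{\Sigma_{\lambda,\eta}}$ that sit at the bottom of the induction are genuinely understood: they are sums over Hecke characters. Concretely, by Proposition \ref{shapetogroup} we have $H = H(\Delta) \in \mc E_\el(d)$, and since $T_i = 1$ for the single block, $H$ is a torus-like unitary group, namely $H = U_{E/F}(1)$ up to the sign bookkeeping of Corollary \ref{sigmatogroup}. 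For such $H$, $\mc E_\el(H)$ is trivial, $H = H_{\mathrm{der}} \cdot A_{H}$ with $H_{\mathrm{der}}$ trivial, all parameters $\psi \in \Psi(H)$ are simple and generic-after-untwisting, and the $A$-packets are singletons consisting of one-dimensional (automorphic) characters — matching the description of $\Pi_{\psi_\infty}$ as a singleton $\{\xi_\infty\}$ in the statement.

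The main steps, in order, are: \textbf{(1)} Apply Proposition \ref{SMform} (with subscript $\Delta$ added, as noted at the start of \S\ref{step3gen}) to rewrite $S^H_\Delta(\EP_{\xi_\infty} f^\infty)$ as $\sum_{\psi \in \Psi(H),\ \mathrm{Inf.Char}(\psi)=\lambda} S^H_\psi(\EP_{\xi_\infty} f^\infty)$; here we use that $\EP_{\xi_\infty}$ kills $S^H_\psi$ unless the infinitesimal character at infinity matches, so the sum is exactly over parameters of shape $\Delta$. \textbf{(2)} Invoke the stable multiplicity formula \ref{stablemult}: $S^H_\psi(f) = |\mc S_\psi|^{-1}\eps_\psi(s_\psi)\tr^H_\psi(f)$. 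Since $\Delta$ is a \emph{simple} shape ($k=1$), by Lemma \ref{lemma epsilon parity} (or just the fact that there is a single $d_i$, hence all $d_i$ have the same parity) we get $\eps_\psi \equiv 1$, and $\mc S_\psi$ is trivial (the $(\Z/2)^I/(\Z/2)^{\mathrm{diag}}$ of \eqref{Spsiform} with $|I|=1$), so $S^H_\psi(f) = \tr^H_\psi(f)$. \textbf{(3)} Factor the stable packet trace as a product over places; at infinity, $\Pi_{\psi_\infty} = \{\xi_\infty\}$ is a singleton and by Corollary \ref{straceAJ} (applied to the trivial-Arthur-$\SL_2$ discrete parameter, which here is the abelian parameter) $\tr_{\psi_\infty}(\EP_{\xi_\infty}) = 1$; away from infinity, $\tr_{\psi^\infty}(f^\infty) = \tr_{\chi^\infty}(f^\infty)$ where $\chi$ is the automorphic character attached to $\psi$ via Arthur's multiplicity formula \ref{ArthurMultiplicty} — since $\mc S_\psi$ is trivial the multiplicity $m^\psi_\chi$ is $1$ and $\Pi^H_\psi$ is itself a singleton $\{\chi\}$. \textbf{(4)} Match the bookkeeping: parameters $\psi \in \Delta$ with $\psi_\infty$ giving $\xi_\infty$ at infinity correspond bijectively to discrete automorphic characters $\chi$ of $H$ with $\chi_\infty = \xi_\infty$, because a rank-$1$ cuspidal parameter $\tau$ is determined by its local components and $\tau[d]$ determines $\tau$ (as noted after \eqref{infcharform}, $\Z[\C]$ being a domain), while the one-dimensional discrete spectrum of $H$ is exactly the set of such $\chi$. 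Summing over all $\psi \in \Delta$ then yields the claimed identity $\sum_{\chi \in \mc{AC}_\disc(H),\ \chi_\infty = \xi_\infty} \tr_{\chi^\infty} f^\infty$.

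The step I expect to require the most care — though not the most depth — is \textbf{(4)}, the precise matching between parameters of shape $\Delta$ and one-dimensional automorphic representations of $H$, including checking that the canonical extension / Whittaker-normalization subtleties of \S\ref{ACrepexten} are harmless in rank $1$ (where everything is a character and the extension to $\wtd G(1)$ is unambiguous), and that the twist $\tau[d] \mapsto \tau$ together with the base-change embedding $\xi_\kappa$ of \eqref{eq base change embeddings} correctly lands $\psi$ in $\Psi(H)$ for our chosen $H = H(\Delta)$ rather than the other simple endoscopic group. The analytic content is minimal here precisely because $H_{\mathrm{der}}$ is trivial, so there is no endoscopy, no nontrivial $A$-packet, and no sign $\eps_\psi$ to control — this is exactly the base case of the induction in \S\ref{strategy}, which is why it can be stated cleanly.
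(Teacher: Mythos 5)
There is a genuine gap, and it originates in your identification of $H$. For $\Delta = (1,d,\lambda,\eta)$ the rank of the shape is $N = T_1 d_1 = d$, so by Proposition \ref{shapetogroup} and \S\ref{ACassigntogroup} the group $H = H(\Delta)$ lies in $\mc E_\el(d)$ and is a quasisplit unitary group of rank $d$, not $U_{E/F}(1)$. You have conflated the rank of the cuspidal support ($T_1 = 1$, so each $\tau$ is a Hecke character) with the rank of the group carrying the parameter $\tau[d]$. Consequently your claims that $H_{\der}$ is trivial, that there is no endoscopy, and that there is ``no nontrivial $A$-packet'' are all false for $d>1$; the proposition is precisely about the one-dimensional representations in the discrete (residual) spectrum of a rank-$d$ group, which is why $\mc{AC}_\disc(H)$ and the Poisson-summation Corollary \ref{charshapeformula} that follows are not vacuous.

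This misidentification hollows out exactly your step (4), the one you flag as needing the most care. With $H$ of rank $d$ one must argue: (i) every $\pi$ in the global packet $\Pi_\psi$ for $\psi \in \Delta$ --- each of which is automorphic with multiplicity one because $\mc S_\psi$ is trivial --- is \emph{globally} one-dimensional; the paper deduces this from $\pi_\infty = \xi_\infty$ being a character together with the nontrivial fact that a discrete automorphic representation with one-dimensional archimedean component is one-dimensional (\cite[Lem.~6.2]{KST16}), and without this one cannot rule out non-one-dimensional members of the finite local packets $\Pi_{\psi_v}$; and (ii) conversely, every $\chi \in \mc{AC}_\disc(H)$ with $\chi_\infty = \xi_\infty$ lies in $\Pi_\psi$ for exactly \emph{one} $\psi \in \Delta$, which uses both that such $\chi_\infty$ forces full Arthur-$\SL_2$ with the given infinitesimal character and multiplicity one for characters in the automorphic spectrum to get disjointness. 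Your earlier steps (reducing to $\sum_{\psi \in \Delta} S_\psi$, the stable multiplicity formula with $|\mc S_\psi| = 1$ and $\eps_\psi \equiv 1$, and the archimedean evaluation equal to $1$) do match the paper's route, though note two small corrections: $S^H_\Delta$ is by definition the sum of the $S^H_\psi$, so Proposition \ref{SMform} is not needed for step (1); and $\EP_{\xi_\infty}$ is the volume-normalized character $\xi_\infty^{-1}$, not the pseudocoefficient average of Corollary \ref{straceAJ}, so $\tr_{\xi_\infty}(\EP_{\xi_\infty}) = 1$ follows from the normalization rather than from that corollary.
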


\begin{proof}
If $\psi \in \Delta$, any $\pi \in \Pi_\psi$ has $\pi_\infty = \xi_\infty$ as explained above. This implies that $\pi$ is one-dimensional by a well-known result (see \cite[Lem 6.2]{KST16} for example). Furthermore, $\psi$ is simple, so as distributions,
\[
S_\psi =  I_\psi = \sum_{\pi \in \Pi_\psi} \tr_\pi.
\]
Evaluating at our test function,
\[
S_\psi(\EP_{\xi_\infty}  f^\infty) = I_\psi(\EP_{\xi_\infty}  f^\infty) = \sum_{\pi \in \Pi_\psi} \tr_{\pi^\infty}(f^\infty). 
\]

On the other hand, any $\chi \in \mc{AC}_\disc(H)$ appears in some $A$-packet $\Pi_\psi$. If $\chi_\infty = \xi_\infty$, then since $H$ is quasisplit, $\psi_\infty$ has full Arthur-$\SL_2$ and infinitesimal character $\lb$, which forces $\chi \in \Delta$. Furthermore, we recall that characters appear with multiplicity at most one in the automorphic spectrum (realized as functions by evaluation). Therefore every such $\chi$ can only appear in one packet. 
In total, the union over $\Pi_\psi$ for $\psi \in \Delta$ is exactly the subset of $\mc{AC}_\disc(H)$ with infinite component $\xi_\infty$ and this union is disjoint.

Summing over $\psi \in \Delta$ then finishes the argument. 
\end{proof}

 Next, $\mc{AC}_\disc(H)$ are the characters of 
\[
\Xi(H) := H(F)^\ab \bs H(\A)^\ab
\]
so we can therefore write
\[
S^H_\Delta (\EP_{\xi_\infty}  f^\infty) = \f1{\vol(H^\ab_\infty)}\sum_{\chi \in \Xi(H)^\vee} \wh{\xi_\infty^{-1} f^{\infty, \ab}}(\chi),
\]
where $f^{\infty, \ab}$ is the pushforward (by integration against $H(\A^\infty)_\der$) of $f^\infty$ to $H(\A^\infty)^\ab$. Then:
\begin{cor}\label{charshapeformula}
Let $H = H(\Delta)$ for $\Delta=(1, d, \lb, \eta)$ corresponding to a character~$\xi_\infty$ on $H_\infty$. Then, with $f^{\infty, \ab}$ as above:
\[
S^H_\Delta (\EP_{\xi_\infty}  f^\infty) = \f{\vol(H(F)^\ab \bs H(\A)^\ab)}{\vol(H^\ab_\infty)} \sum_{\gamma \in H(F)^\ab} \xi_\infty^{-1} f^{\infty, \ab}(\gamma).
\]
\end{cor}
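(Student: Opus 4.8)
The plan is to deduce Corollary \ref{charshapeformula} from Proposition \ref{charshapeformula1} by an application of Poisson summation (Fourier inversion) on the abelian group $\Xi(H) = H(F)^\ab \bs H(\A)^\ab$, together with an identification of $\mc{AC}_\disc(H)$ with the Pontryagin dual of $\Xi(H)$.

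First I would record the structural input: the one-dimensional automorphic representations of $H$ are exactly the automorphic characters of $H(\A)$ trivial on $H(F)$, and since $H_\der$ is semisimple, such a character factors through the abelianization $H(\A)^\ab$; it is discrete automorphic precisely when it is trivial on $H(F)^\ab$. Hence $\mc{AC}_\disc(H) = \Xi(H)^\vee$, the unitary dual of the locally compact abelian group $\Xi(H)$. Next, for a character $\xi_\infty$ of $H_\infty$ I would write the inner sum appearing in Proposition \ref{charshapeformula1}: using the factorization $H(\A)^\ab = H^\ab_\infty \times H(\A^\infty)^\ab$ and the pushforward $f^{\infty,\ab}$ of $f^\infty$ to $H(\A^\infty)^\ab$ obtained by integrating over $H(\A^\infty)_\der$ (legitimate since trace against a character equals the integral of the pushed-forward function), one gets for each $\chi \in \Xi(H)^\vee$ with $\chi_\infty = \xi_\infty$ that $\tr_{\chi^\infty} f^\infty = \wh{\xi_\infty^{-1} f^{\infty,\ab}}(\chi)$, where the hat denotes the Fourier transform on $H(\A)^\ab$ of the test-function factor times $\xi_\infty^{-1}$ at infinity. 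Summing over the relevant $\chi$ yields the displayed formula just before the corollary,
\[
S^H_\Delta (\EP_{\xi_\infty}  f^\infty) = \f1{\vol(H^\ab_\infty)}\sum_{\chi \in \Xi(H)^\vee} \wh{\xi_\infty^{-1} f^{\infty, \ab}}(\chi),
\]
the volume factor $\vol(H^\ab_\infty)^{-1}$ coming from the normalization of $\EP_{\xi_\infty} = \vol(A_{G,\infty}\bs G_\infty/G_{\der,\infty})^{-1}\xi_\infty^{-1}$ and from passing from integration against $\xi_\infty^{-1}$ on $H_\infty$ to the value of its pushforward.

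Then I would apply Poisson summation for the compact-by-discrete situation: $\Xi(H)$ is a compact abelian group (it is $H(F)^\ab\bs H(\A)^\ab$, which is a quotient of the compact $H(F)^\ab\bs H(\A)^{\ab,1}$ by a finite group — here one uses the anisotropy/CM assumptions on $H$ recorded in \S\ref{ACgroups}, so the relevant norm-one subgroup is cocompact and the full quotient is compact), whose Pontryagin dual $\Xi(H)^\vee$ is discrete. The Poisson summation formula then reads
\[
\sum_{\chi \in \Xi(H)^\vee} \wh{\Phi}(\chi) = \vol(\Xi(H)) \sum_{\gamma \in \Xi(H)} \Phi(\gamma)
\]
for suitable $\Phi$ on $\Xi(H)$ (here $\Phi$ is the function on $H(\A)^\ab$ induced by $\xi_\infty^{-1} f^{\infty,\ab}$, descended to $\Xi(H)$ by summing over $H(F)^\ab$ — but since $\xi_\infty^{-1}f^{\infty,\ab}$ is supported on a compact-mod-$H(F)^\ab$ set and the residual sum is finite for each class, this is the finite sum $\sum_{\gamma\in H(F)^\ab}$ appearing in the corollary). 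Matching $\vol(\Xi(H)) = \vol(H(F)^\ab\bs H(\A)^\ab)$ with the measure normalization already fixed for the trace formula gives exactly
\[
S^H_\Delta (\EP_{\xi_\infty}  f^\infty) = \f{\vol(H(F)^\ab \bs H(\A)^\ab)}{\vol(H^\ab_\infty)} \sum_{\gamma \in H(F)^\ab} \xi_\infty^{-1} f^{\infty, \ab}(\gamma).
\]

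The step I expect to be the only real subtlety is bookkeeping the Haar measure normalizations consistently across the three objects in play — the measure on $H(\A)^\ab$ implicit in the definition of $\wh{\phantom{x}}$, the measure on $H^\ab_\infty$ implicit in $\EP_{\xi_\infty}$, and the quotient measure on $\Xi(H)$ giving its total volume — so that no stray constant appears; everything else (the identification $\mc{AC}_\disc(H) = \Xi(H)^\vee$, the compactness of $\Xi(H)$, and Poisson summation itself) is standard. I would therefore spend the bulk of the written proof fixing compatible measures and then simply invoking Fourier inversion on the compact abelian group $\Xi(H)$.
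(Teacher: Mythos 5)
Your proposal is correct and follows essentially the same route as the paper: identify $\mc{AC}_\disc(H)$ with $\Xi(H)^\vee$, rewrite the sum from Proposition \ref{charshapeformula1} as a sum of Fourier transforms of $\xi_\infty^{-1}f^{\infty,\ab}$, and apply Poisson summation for the discrete cocompact subgroup $H(F)^\ab \subset H(\A)^\ab$. The only quibble is your justification of compactness of $\Xi(H)$ (it is not a quotient of the norm-one part by a finite group; the paper instead deduces cocompactness of $H(F)^\ab$ from compactness of $H^\ab(F)\bs H^\ab(\A)$, which holds since $H^\ab \cong U(1)$ is anisotropic, and discreteness from the inclusions $H(F)^\ab \subseteq H^\ab(F)$ and $H(\A)^\ab \subseteq H^\ab(\A)$), but this does not affect the argument.
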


\begin{proof}
Since $H^\ab(F) \bs H^\ab(\A)$ is compact for our specific $H$, we get that $H(F)^\ab$ is cocompact in $H(\A)^\ab$. Since $H(F)^\ab$ is a subgroup of $H^\ab(F)$ and $H(\A)^\ab$ is an open subgroup of $H^\ab(\A)$, discreteness of $H^\ab(F)$ in $H^\ab(\A)$ gives discreteness of~$H(F)^\ab$ in $H(\A)^\ab$. 

Therefore Poisson summation gives the result.
\end{proof}

Our $H(\Delta)$ is necessarily isomorphic as a reductive group to $U(N)$ so we will have $H^\ab = U(1)$ in the above.

\subsection{Odd GSK Shapes}\label{bgs}
We will eventually focus on shapes similar to the Saito-Kurokawa case (d) at the end of \cite{Art04}: 
\begin{dfn}\label{GSK}
We say a shape $\Delta = (T_i, d_i, \lb_i, \eta_i)_{1 \leq i \leq k}$ is \emph{generalized Saito-Kurokawa} or \emph{GSK} if:
\begin{itemize}
    \item $d_1 = 1$ and for all $i > 1$, $T_i = 1$,
    \item The $d_i$ are distinct integers. 
\end{itemize}
We furthermore say it is \emph{odd GSK} if:
\begin{itemize}
    \item The $d_i$ are all odd. 
\end{itemize}
\end{dfn}
\begin{note}
We define GSK to isolate the shapes that will contribute as main terms in our final application to counting representations in \S\ref{sectionunitary}. Our techniques apply slightly more generally---in particular, the condition that the $d_i$ are distinct only appears because Lemma \ref{maxoversl2} forces any dominant term to satisfy it. 

We also leave out a case of ``equiparity GSK'', where \emph{all} $T_i = 1$ and all $d_i$ are even. This is for uniformity of later theorem statements.
\end{note}
We will get exact asymptotic bounds for such $S_\Delta$ since these are the ones where the only Speh representations that appear are characters. Understanding more general shapes would require the Speh Transfer Conjecture \ref{conjspehtransfer}. 

We recall the setup of Sections \ref{rtss} and \ref{fb} and follow a similar argument: choose $G \in \wtd {\mc E}_\el(N)$ and pick finite sets of places $S = \infty \sqcup S_s \sqcup S_b$ where $S_s$ is all split and~$S_b$ contains all the ramified places. We will look at test functions of the form:
\[
f^\infty = \varphi_{S_b} f_{S_s} f^S
\]
with $\varphi_{S_b}$ and $f_{S_s}$ arbitrary and $f^S \in \ms H^\ur(G^S)$. 
We will further assume that the pair $(\Delta, \varphi_{S_b})$ satisfies the Stable Transfer Conjecture \ref{stabletransferconj} and that $\Delta$ is odd GSK.

Consider
$\psi = \tau_1[d_1] \oplus \cdots \oplus \tau_k[d_k] \in \Delta$.  
For all such $\psi$, 
Lemma \ref{splitconstant} gives Levi 
\[
M = \prod_i \GL_{T_id_i}(F_{S_s})
\]
of $G(F_{S_s}) \simeq \GL_N(F_{S_s})$ so that
\[
\tr_{\psi_{S_s}} f_{S_s} = \prodf_i \tr_{\tau_{i, S_s}[d_i]} (f_{S_s})_{M,i},
\]
and Lemma \ref{unramtrace} gives that
\[
\tr_{\psi^S} f^S = \prodf_i \tr_{\tau_i^S[d_i]} (\mc T_i f^S).
\]
Note that the constant terms and $\mc T_i f^S$ satisfy the positivity condition by various ``adjointesses'' of trace and various transfers.

Finally, 
our assumption that $\varphi_{S_b}$ satisfies Conjecture \ref{stabletransferconj} gives $\varphi_{i, S_b}$ such that
\[
\tr_{\psi_{S_b}} \varphi_{S_b} = \prodf_i \tr_{\tau_{i, S_s}[d_i]} (\varphi_{i, S_b})
\]
where the $\varphi_{i, S_b}$ are trace-positive.

This produces:
\begin{prop}\label{goodshapefactor}
 With notation and conditions as above (in particular, that the pair $(\Delta, \varphi_{S_b})$ satisfies Conjecture \ref{stabletransferconj} and $\Delta$ is odd GSK),
\[
S^H_\Delta(\EP_\lb \varphi_{S_b} f_{S_s} f^S) = 2^{-k+1} \prodf_i S^{H_i}_{(T_i, d_i, \lb_i, \eta_i)}(\EP_{\lb_i[d_i]}(f_{S_s})_{M,i} \varphi_{i, S_b} \mc T_i f^S),
\]
where $H_i = H(T_i, d_i, \lb_i, \eta_i)$. 
\end{prop}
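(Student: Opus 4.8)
The plan is to run the same stable-multiplicity bookkeeping that appears in the proofs of Propositions \ref{SNfactor}, \ref{[d]bound}, and \ref{shapeineq}, but now track \emph{equalities} rather than inequalities, which is possible precisely because $\Delta$ is odd GSK (so all the Speh factors $\tau_i[d_i]$ with $T_i = 1$ are characters, the one remaining factor $\tau_1[d_1]$ with $d_1 = 1$ is cuspidal and needs no Speh transfer, and by Lemma \ref{lemma epsilon parity} the sign $\eps_\psi$ is trivial since all $d_i$ are odd). First I would fix $\psi = \tau_1[1] \oplus \tau_2[d_2] \oplus \cdots \oplus \tau_k[d_k] \in \Delta$ and, exactly as in the setup immediately above the statement, decompose the trace of $\td\pi_\psi^\infty$ against $f^\infty = \varphi_{S_b} f_{S_s} f^S$ into a product over the factors using Lemma \ref{splitconstant} at the split places $S_s$ (giving the Levi $M = \prod_i \GL_{T_i d_i}(F_{S_s})$ and constant-term factors $(f_{S_s})_{M,i}$), Lemma \ref{unramtrace} at the unramified places away from $S$ (giving the transferred factors $\mc T_i f^S$), and the Stable Transfer Conjecture \ref{stabletransferconj} at the bad places $S_b$ (giving trace-positive $\varphi_{i,S_b}$). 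This yields
\[
\tr_{\td\pi_\psi^\infty}(f^\infty) = \prodf_i \tr_{\td\pi_{\tau_i[d_i]}^\infty}\bigl((f_{S_s})_{M,i}\, \varphi_{i,S_b}\, \mc T_i f^S\bigr).
\]

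Next I would assemble the constant $C_\Delta$ from the stable multiplicity formula \ref{stablemult}. Using formula \eqref{Spsiform}, for $\psi \in \Delta$ we have $|\mc S_\psi| = 2^{k-1}$ and $S^\natural_\psi = (\Z/2)^k$, while each simple constituent $(1, d_i, \lb_i, \eta_i)$ (resp.\ $(T_1, 1, \lb_1, \eta_1)$) has $|\mc S_{\tau_i[d_i]}| = 1$; the multiplicities $m_\psi$ and $m_{\tau_i[d_i]}$ are all $1$ because ellipticity and regularity of the total infinitesimal character forbid repeated blocks, and $\eps_\psi \equiv 1$ by Lemma \ref{lemma epsilon parity}. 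Multiplying $S^H_\psi = \eps_\psi(s_\psi)|S_\psi|^{-1} m_\psi\,\tr_{\td\pi_\psi^\infty}$ against the product of $S^{H_i}_{\tau_i[d_i]} = |S_{\tau_i[d_i]}|^{-1}\tr_{\td\pi_{\tau_i[d_i]}^\infty}$ and comparing normalizing constants gives the factor $2^{-(k-1)} = 2^{-k+1}$. Then, because the total infinitesimal character is regular and no block $(T_i, d_i, \lb_i, \eta_i)$ occurs with multiplicity $> 1$, summing the identity for a single $\psi$ over all $\psi \in \Delta$ is the same as summing each factor independently over $\tau_i \in \Sigma_{\lb_i, \eta_i}$; this turns the product of single-$\psi$ identities into the asserted product $\prodf_i S^{H_i}_{(1,d,\lb_i,\eta_i)}(\cdots)$. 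The passage from $S^{|N|}_\Delta$ to $S^H_\Delta$ with the $\EP_\lb$ factor at infinity is handled exactly as in Proposition \ref{SMform}/\ref{SMineq}: insert $\EP_\lb$ (or the character Euler--Poincar\'e function $\EP_{\xi_\infty}$ on the abelian factors) at infinity, use Corollary \ref{straceAJ} that $\tr_{\psi_\infty}(\EP_\lb) = 1$ and that the endoscopic sign lemma \ref{endoscopicsignlemma} makes the $H_i$-side $\eps$-signs trivial, and match $(f_{S_s})_{M,i}$ to the degenerate-Levi constant term at split places.

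The main obstacle — and the reason the hypothesis of Conjecture \ref{stabletransferconj} is imposed rather than proved — is the bad-place transfer: the canonical extension $\td\pi_\psi$ to $\wtd G(N)$ of Section \ref{ACrepexten} does not behave multiplicatively under the naive constant term because of the Whittaker-normalized choice of sign on each tempered $\sigma_i$ (the point flagged in the footnote of \S\ref{ACrepexten}), so at ramified places one genuinely needs the existence of $\varphi_{i,S_b}$ with $\tr_{\psi_{S_b}}\varphi_{S_b} = \prodf_i \tr_{\tau_i[d_i],S_b}\varphi_{i,S_b}$, which is Conjecture \ref{stabletransferconj1} (plus, in general, Conjecture \ref{conjspehtransfer}, but for odd GSK shapes the only Speh factors are characters and Speh transfer to a character is the elementary computation \eqref{Tcharactershape}). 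Everything else is the same commutative-diagram bookkeeping already used for the inequalities in \S\ref{step3gen}, just carried through as equalities; the only subtlety to check carefully is that the $\prodf$ (mystical gate) notation of \S\ref{nonfactorizable} is respected throughout, i.e.\ that each transfer lands in a linear combination of factorizable functions and that summing over the $\Sigma_{\lb_i,\eta_i}$ commutes with expanding those linear combinations, which it does because the index set of blocks is fixed by $\Delta$.
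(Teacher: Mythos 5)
Your proposal is correct and follows essentially the same route as the paper: decompose $\tr_{\td\pi_\psi^\infty}(f^\infty)$ factor-by-factor using Lemma \ref{splitconstant}, Lemma \ref{unramtrace}, and Conjecture \ref{stabletransferconj}, note $\eps_\psi \equiv 1$ via Lemma \ref{lemma epsilon parity} and $m_\psi = 1$, compute $|\mc S_\psi|^{-1}\prod_i|\mc S_{\tau_i[d_i]}| = 2^{-k+1}$, and sum over $\psi \in \Delta$ factorwise using regularity of the infinitesimal character. The extra commentary on why the bad-place transfer must remain conjectural is accurate but not needed for the proof itself.
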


\begin{proof}
As in Section \ref{rtss}, we multiply together and sum the trace equalities above. Note that $\Delta$ having all $d_i$ of the same parity gives that $S^H_\Delta = S^{|H|}_\Delta$ by Lemma \ref{lemma epsilon parity}.  Furthermore, since we are working with unitary groups, $m_\psi = 1$ always. Finally, for $\psi \in \Delta$ as above, since the $\tau_i[d_i]$ are simple,
\[
|\mc S_\psi|^{-1} \prod_i |\mc S_{\tau_i[d_i]}| = 2^{-k+1}.
\]
This computes all the terms in the stable multiplicty formula. 
\end{proof}

\section{Level-Aspect Asymptotics}\label{sectionSdeltaasymptotics} 

\subsection{Setup}\label{asymptoticsetup}
In this section, we use the strategy outlined in \S\ref{strategy} to compute asymptotics of $S^G_\Delta(\EP_\lb f^\infty)$ for a specific sequence of $f^\infty$. 

Fix $N$ and $G \in \wtd {\mc E}_\sm(N)$ such that $G = H(\Sigma_{\lb, \eta})$. We use the setup from \cite{ST16}. First, as some notation:
\begin{itemize}
    \item  $K := K^G_R$ is a choice of hyperspecial subgroup of $G$ at a set of unramified places $R$. Similarly define $K^{G,R}$ if $R$ contains all ramified and infinite places. 
    \item $\dim \lb$ for $\lb$ a regular, integral infinitesimal character is the dimension of the associated finite-dimensional representation on $\GL_n \C$. 
    \item $K(\mf n) := K^G_R(\mf n)$ for $\mf n$ an ideal of $\mc O_F$ supported on a set of unramified places $R$ is the principal congruence subgroup of level $\fn$:
    \[
    K^G_R(\mf n) = \prod_{v | R} K^G_v(q_v^{v(\mf n)}),
    \]
    where $K^G_v(q_v^{v(\mf n)})$ is the $\mc O_{F_v}$-points the integral model defined by hyperspecial $K^G_v$ congruent to $1 \pmod{\mf q_v^{v(\mf n)}}$. Define $K^{G,R}(\mf n)$ similarly away from a set of places~$R$ containing all ramified and infinite places.
    \item
    $G_\infty^c$ is the compact form of $G_\infty$. A choice of measure on $G_\infty$ induces one on the compact form $G_\infty^c$ in the standard way through top forms on $G_{\infty, \C}$. 
\end{itemize}

Then, fix:
\begin{itemize}
    \item A finite set of finite places $S_0$ including all those where $E/F$ is ramified,
    \item An arbitrary test function $\varphi_{S_0}$ at $S_0$,
    \item A finite set of finite places $S_1$ disjoint from $S_0$, 
    \item An $f_{S_1} \in \ms H^\ur(G_{S_1})^{\leq \kappa}$ for some $\kappa$,
    \item An ideal $\mf n$ of $F$ relatively prime to $2,3,S_0, S_1$ and that is all split in $E$. We will let $\mf n$ vary and consider asymptotics as $|\mf n| \to \infty$.
    \item $S := \infty \cup S_0 \cup S_1$. 
\end{itemize}
Then define the test function:
\[
f^\infty_{\mf n} = \varphi_{S_0} f_{S_1}\bar \1_{K^{G,S}(\mf n)}
\]
where $\bar \1_{K^{G,S}(\mf n)}$ is the indicator function normalized 
by volume 
to have integral $1$. 

We will also need some constants related to $\mf n$:
\begin{itemize}
    \item The norm 
    \[
    |\mf n| := \prod_{v|\mf n} q_v^{v(\mf n)}.
    \]
    \item Euler factors: for $n, n_i \in \Z^+$:
    \begin{gather*}
    \Gamma_n(\mf n) := \prod_{v| \mf n} (1 - q_v^{-1})(1 - q_v^{-2}) \cdots (1 - q_v^{-n}),\\
    \Gamma_{-n}(\mf n) := \prod_{v| \mf n}(1+ q_v^{-1})(1 + q_v^{-2}) \cdots (1 + q_v^{-n}),\\
    \Gamma_{\pm n_1, \dotsc, \pm n_k}(\mf n) := \Gamma_{\pm n_1}(\mf n) \cdots \Gamma_{\pm n_k}(\mf n).
    \end{gather*}
\end{itemize}

Our initial input will be the level-aspect bounds of \cite{ST16}. 
\begin{thm}[Special case of {\cite[Thm 9.16]{ST16}}]\label{stbound}
With notation defined as above, there are constants $A,B,C,D,E$ depending only on $G$ with $C \geq 1$ such that whenever $|\mf n| \geq Dq_{S_1}^{E \kappa}$,
\[
|\mf n|^{-\dim G} \Gamma_N(\mf n)^{-1} I^G(\EP_\lb f^\infty_{\mf n}) = \Lambda + O(|\mf n|^{-C} q_{S_1}^{A + B \kappa}),
\]
where we define the mass:
\[
\Lambda = \Lambda(G, f, \varphi) = \varphi_{S_0}(1) f_{S_1}(1) \f{\dim \lb}{|\Pi_\disc(\lb)|} \f{\vol(G(F) \bs G(\A_F))}{ \vol(K^S) \vol(G_\infty^c)}. 
\]
\end{thm}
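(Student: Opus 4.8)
The statement is labelled ``Special case of \cite[Thm. 9.16]{ST16}'', so the plan is simply to extract it from the general Shin--Templier result rather than to reprove anything from scratch. First I would recall the precise form of the main theorem of \cite[\S9]{ST16}: it gives an asymptotic expansion of $I^G_{\disc}(\EP_\lb f^\infty)$ for $G$ a connected reductive group over $F$ with anisotropic center (or at least with discrete series at $\infty$), where $f^\infty = f_{S_0} f_{S_1} \bar\1_{K^{G,S}(\mf n)}$ is a factorizable test function built from an arbitrary function at a fixed finite set $S_0$, a bounded Hecke function at $S_1$, and the normalized indicator of a principal congruence subgroup away from $S$. Their theorem expresses this as a main term equal to the ``mass'' $\Lambda$ times the appropriate power of the level, plus an error term with an explicit power saving $|\mf n|^{-C}$ and controlled polynomial dependence $q_{S_1}^{A+B\kappa}$ on the Hecke truncation parameter, valid once $|\mf n|$ is large enough relative to $q_{S_1}^{E\kappa}$.

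The second step is to check that our situation falls under their hypotheses. Here $G \in \mc E_\sm(N)$ is a quasisplit unitary group $U_{E/F}(N)$ (with $H(\Sigma_{\lb,\eta}) = G$), which is semisimple up to its one-dimensional center $U_{E/F}(1)$; since $E/F$ is a CM extension, $G_\infty$ has discrete series, so the hypotheses of \cite{ST16} that the group have discrete series at infinity are met, and $\EP_\lb$ is exactly their endoscopically-normalized Euler--Poincar\'e function (the renormalization is absorbed into the constant $|\Pi_\disc(\lb)|^{-1}$ appearing in $\Lambda$). The condition that $\mf n$ is prime to $2, 3, S_0, S_1$ and split in $E$ guarantees that $G$ is unramified and in fact split (isomorphic to $\GL_N$) at the places dividing $\mf n$, so that $K^G_v(q_v^{v(\mf n)})$ is genuinely a Moy--Prasad congruence subgroup of a hyperspecial maximal compact and the local volume and orbital-integral computations of \cite[\S\S6--8]{ST16} apply verbatim; this is what produces the Euler factor $\Gamma_N(\mf n)$ (the split-place local density of $\GL_N$). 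One also records that the normalization of Haar measure on $G_\infty$ via top forms on $G_{\infty,\C}$ and on the compact form $G_\infty^c$ is exactly the one used in \cite[(9.5)]{ST16}, so the Tamagawa-number factor $\vol(G(F)\bs G(\A_F))/(\vol(K^S)\vol(G_\infty^c))$ matches their $\tau'(G)$-type constant.

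The third step is purely bookkeeping: specialize their general statement --- which is phrased for the full invariant trace formula with a general level subgroup and allows several places of varying level --- to the case of a single varying prime-power level $\mf n$ with all of $\mf n$ split, unwind their generic combinatorial Euler factor into the product $\prod_{v\mid\mf n}\prod_{j=1}^N(1-q_v^{-j}) = \Gamma_N(\mf n)$, and read off the power of $|\mf n|$ as $\dim G$ (the dimension of the group, equal to $N^2$ here). The constants $A,B,C,D,E$ depend only on $G$ because in \cite{ST16} they depend only on the root datum and the choice of $S_0$-level, both fixed here. I do not expect any real obstacle; the only mild subtlety is matching normalizations --- the $\rho$-shift between the $\EP_\lb$ parameterization and the $\eta_\lb$ parameterization flagged in \S\ref{strategy}, the endoscopic normalization of $\EP_\lb$, and the precise Haar-measure conventions --- so the ``hard part'' is really just careful cross-referencing of constants rather than mathematics. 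One should also note explicitly, as the excerpt does, that using $\EP_\lb$ at infinity collapses $I^G(\EP_\lb f^\infty)$ onto its discrete part $I^G_{\disc}(\EP_\lb f^\infty)$, which is why \cite[Thm. 9.16]{ST16} (stated for $I^G_{\disc}$) applies directly to the left-hand side as written.
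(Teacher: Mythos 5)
Your proposal is correct and matches the paper's approach: both simply invoke \cite[Thm.~9.16]{ST16} and reduce the remaining work to the observation that, since only split primes divide $\mf n$, the index $[K:K(\mf n)]$ is computed by the $\GL_N$ formula and equals $|\mf n|^{\dim G}\Gamma_N(\mf n)$. The paper's proof records only this index computation and leaves the hypothesis-checking implicit, whereas you spell it out, but there is no difference in substance.
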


\begin{proof}
Since only split primes divide $\mf n$, we have
\[
[K : K(\mf n)] = |\mf n|^{\dim G} \Gamma_N(\mf n)
\]
by a standard formula for the sizes of the $\GL_n(\mc O_v/ q_v^{v(\mf n)} \mc O_v)$. 
\end{proof}

When $S_0 = S_1 = \emptyset$, the quotient of volumes is the modified Tamagawa number computed in \cite[Cor 6.14]{ST16}: 
\begin{equation}\label{tamagawa}
\f{\vol(G(F) \bs G(\A_F))}{ \vol(K^\infty) \vol(G_\infty^c)} = \tau'(G) = 2^{-(N-1) \deg F} \tau(G) L(\mathrm{Mot}_G) |\Om_G||\Om_{G_\infty}^c|^{-1}
\end{equation}
where $\tau(G)$ is the Tamagawa number, $L(\mathrm{Mot}_G)$ is the $L$-value of the motive from \cite{Gro97}, and $\Om_{G_\infty}^c$ is the Weyl group of the maximal compact at $\infty$. 

Since we are not tracking explicit values for $A, B, C, D,E$, we will allow them to change throughout the following argument.

\subsection{Bounds on Stable Trace}
To extend \ref{stbound} to $S^G$, we next recall a standard formula:
\begin{lem}\label{congruenceconstantterm}
Let $G$ be an unramified reductive group over $F$, $\mf n_i$ an ideal relatively prime to all places where $G$ is ramified, and $M$ a Levi component of parabolic subgroup $P$. Then we have identity of indicator functions normalized by volume:
\[
(\bar \1_{K^G(\mf n)})_M = I(\mf n) \bar \1_{K^M(\mf n)},
\]
where we recall that $(\star)_M$ denotes the constant term and 
\[
I(\mf n) = [K : K \cap K(\mf n)P]. 
\]
Furthermore, if $G = \GL_N$ then
\[
I(\mf n_i) = (1 + O(|\mf n|^{-2})) |\mf n|^{\dim G/P} \Gamma_{-1}(\mf n)^{\sigma(M)},
\]
where
\[
\sigma(M) = \rank_\ssm G - \rank_\ssm M.
\]
\end{lem}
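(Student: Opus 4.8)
\textbf{Proof plan for Lemma \ref{congruenceconstantterm}.}

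The plan is to first establish the general identity $(\bar\1_{K^G(\mf n)})_M = I(\mf n) \bar\1_{K^M(\mf n)}$ by a direct unfolding of the constant-term integral, and then to compute the constant $I(\mf n)$ explicitly in the $\GL_N$ case by a counting argument on cosets mod $\mf n$. For the first part, recall that the constant term along $P = MN_P$ is $f_M(m) = \delta_P(m)^{1/2}\int_{N_P} f(mn)\,dn$ for a suitable normalization, or without the modulus factor in the unnormalized convention used here. Since $K^G(\mf n)$ has an Iwahori-type factorization with respect to $P$ at every place dividing $\mf n$ (this is where relative primality to the ramified places is used, so that $K^G_v$ is hyperspecial and the Moy--Prasad step $K^G_v(q_v^{v(\mf n)})$ is a normal congruence subgroup compatible with the parabolic), the integral of $\1_{K^G(\mf n)}(mn)$ over $n \in N_P$ is a positive constant times $\1_{K^M(\mf n)}(m)$: the $m$-component must lie in $K^G(\mf n) \cap M = K^M(\mf n)$, and for each such $m$ the set of valid $n$ has fixed volume. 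Normalizing both indicator functions by their volumes to have total integral $1$ absorbs all the volume factors except the index $I(\mf n) = [K : K \cap K(\mf n)P]$, which is exactly the ratio counting how the image of $K(\mf n)$ in $K/K\cap (K(\mf n)P)$ fails to be surjective; one should double-check the bookkeeping here by evaluating both sides against the constant function $1$ and against test vectors, which pins down the constant unambiguously.

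For the second part, specialize to $G = \GL_N$ and $M = \GL_{n_1} \times \cdots \times \GL_{n_r}$ a block-diagonal Levi with $P$ the corresponding block-upper-triangular parabolic. Since $\mf n$ is supported at places where $G$ is unramified, $I(\mf n) = \prod_{v \mid \mf n} I_v$ with $I_v = [K_v : K_v \cap K_v(q_v^{k_v})P(F_v)]$ where $k_v = v(\mf n)$, and $K_v(q_v^{k_v})$ is the principal congruence subgroup $\ker(\GL_N(\mc O_v) \to \GL_N(\mc O_v/\mf p_v^{k_v}))$. The quotient $K_v/(K_v \cap K_v(q_v^{k_v})P)$ is identified with $P(\mc O_v/\mf p_v^{k_v})\backslash \GL_N(\mc O_v/\mf p_v^{k_v})$, i.e. the $(\mc O_v/\mf p_v^{k_v})$-points of the partial flag variety $P\backslash\GL_N$. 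Its cardinality is computed by the standard formula: over a finite quotient ring $R = \mc O_v/\mf p_v^{k_v}$ of length $k_v$ and residue field of size $q_v$, one gets $|(\GL_N/P)(R)| = q_v^{(k_v-1)\dim(\GL_N/P)} \cdot |(\GL_N/P)(\F_{q_v})|$ where $\dim(\GL_N/P) = \dim G/P$ is the dimension of the partial flag variety, and $|(\GL_N/P)(\F_{q_v})| = \binom{N}{n_1,\ldots,n_r}_{q_v}$ is a Gaussian multinomial coefficient. Multiplying over $v \mid \mf n$ and extracting the leading term: the top-degree term is $\prod_{v\mid\mf n} q_v^{k_v \dim G/P} = |\mf n|^{\dim G/P}$, and the remaining factor is $\prod_{v\mid\mf n} q_v^{-\dim G/P}|(\GL_N/P)(\F_{q_v})|$, which after expanding the Gaussian multinomial coefficient as a polynomial in $q_v^{-1}$ is seen to equal $\prod_{v\mid\mf n}(1 + O(q_v^{-1}))$ contributing the $(1 + O(|\mf n|^{-2}))$ error once one checks the $q_v^{-1}$-coefficient vanishes. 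Finally, to get the stated shape $\Gamma_{-1}(\mf n)^{\sigma(M)}$ with $\sigma(M) = \rank_\ssm G - \rank_\ssm M$, one recognizes that the ``split rank drop'' $\sigma(M)$ counts precisely how many factors of $(1 + q_v^{-1})$ appear at leading order in the normalized count — this comes from writing $\GL_N/P$ as an iterated fibration of Grassmannians and tracking, at each stage, the leading behavior of $|(\mathrm{Gr})(\F_{q_v})|$ relative to $q_v$ raised to its dimension, where each Grassmannian $\mathrm{Gr}(a, a+b)$ contributes one $(1-q_v^{-(a+b)})/(1-q_v^{-b})\cdots$ type ratio whose leading correction is governed by the rank.

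The main obstacle I expect is the clean bookkeeping in the last step: pinning down exactly why the error has the specific form $(1 + O(|\mf n|^{-2}))|\mf n|^{\dim G/P}\Gamma_{-1}(\mf n)^{\sigma(M)}$ rather than some other combination of $\Gamma_{\pm k}(\mf n)$ factors. The cleanest route is probably to not compute $|(\GL_N/P)(R)|$ directly but instead to compute the index $I(\mf n)$ by a telescoping argument: factor $P$ into a chain of maximal parabolics and use transitivity of the constant-term/flag-variety counting, reducing everything to the maximal-parabolic (Grassmannian) case where $|(\mathrm{Gr}(m, N))(\F_{q_v})| = \binom{N}{m}_{q_v}$ and the leading behavior is transparent: $q_v^{-m(N-m)}\binom{N}{m}_{q_v} = \prod_{i=1}^{m}\frac{1-q_v^{-(N-m+i)}}{1-q_v^{-i}}$, whose numerator supplies the $\Gamma_{-1}$-type factor (after combining with the denominator from the next stage in the telescope) and whose product over the chain of length $\sigma(M)$ yields the exponent. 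One then verifies the $O(|\mf n|^{-2})$ error by noting the $q_v^{-1}$-coefficient in each Grassmannian factor cancels against the complementary one. I would carry out the telescoping reduction first, then do the single Grassmannian computation carefully, then reassemble.
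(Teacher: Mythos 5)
Your plan is correct, but it is worth noting that the paper does not actually prove this lemma: its ``proof'' is a one-line citation to the proof of Lemma 5.2 in Marshall--Shin, so you are supplying the argument the paper outsources. What you write is essentially the standard argument that reference contains: Iwahori factorization of $K^G(\mf n)$ with respect to $P$ at places dividing $\mf n$ gives $(\bar\1_{K^G(\mf n)})_M = c\,\bar\1_{K^M(\mf n)}$ with $c = \vol(N^-\cap K)/\vol(N^-\cap K(\mf n)) = [K:K\cap K(\mf n)P]$ (the uniqueness of the big-cell decomposition $N^-MN$ forces the $N^-$-component of $mn\in K(\mf n)$ to be trivial, which is the point you gesture at with ``the $m$-component must lie in $K^M(\mf n)$''), and the index is then $\prod_{v\mid\mf n}|(\GL_N/P)(\mc O_v/\mf p_v^{k_v})| = \prod_v q_v^{(k_v-1)\dim G/P}\binom{N}{n_1,\dots,n_r}_{q_v}$ by smoothness of the flag variety. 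Your identification of the leading correction is right: writing the Gaussian multinomial as $q^{\dim G/P}\prod_{i\le N}(1-q^{-i})\big/\prod_j\prod_{i\le n_j}(1-q^{-i})$, the only factors not of the form $1+O(q^{-2})$ are the $i=1$ factors, of which there are $1$ upstairs and $r$ downstairs, giving $(1-q^{-1})^{-(r-1)} = (1+q^{-1})^{\sigma(M)}(1+O(q^{-2}))$ with $\sigma(M)=r-1=\rank_\ssm G-\rank_\ssm M$; your telescoping through Grassmannians computes the same thing.

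Two small wrinkles, neither fatal. First, your intermediate sentence claiming the $q_v^{-1}$-coefficient of the normalized count ``vanishes'' is not right as stated --- that coefficient equals $\sigma(M)$, and it is only the ratio against $\Gamma_{-1}(\mf n)^{\sigma(M)}$ whose $q_v^{-1}$-coefficient vanishes; your final paragraph states this correctly, so just make sure the write-up says the latter. Second, the product over $v\mid\mf n$ of the per-place errors is $\prod_v(1+O(q_v^{-2}))$, which is \emph{not} literally $1+O(|\mf n|^{-2})$ when $\mf n$ has many prime factors; this imprecision is already present in the lemma statement itself (and is harmless for how the lemma is used, where only a uniformly bounded multiplicative error matters), but you should either state the error per place or as $1+O\bigl(\sum_{v\mid\mf n}q_v^{-2}\bigr)$ if you want the claim to be literally true.
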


\begin{proof}
This is well known---see for example the proof of Lemma 5.2 in \cite{MS19}.  
\end{proof}
Note for intuition later that $\dim G/P = 1/2(\dim G - \dim M)$. Using this:
\begin{prop}\label{sgbound}
Let $f_{\mf n}^\infty$ and $\Lambda$ be as in Section \ref{asymptoticsetup}. Then there are constants $A,B,C,D,E$  depending only on $G$ with $C \geq 1$ such that whenever $|\mf n| \geq Dq_{S_1}^{E \kappa}$,
\[
|\mf n|^{-\dim G} \Gamma_N(\mf n)^{-1} S^G(\EP_\lb f^\infty_{\mf n}) = \Lambda + O(|\mf n|^{-C}q_{S_1}^{A + B \kappa}).
\]
\end{prop}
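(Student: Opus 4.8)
The plan is to deduce Proposition \ref{sgbound} from Theorem \ref{stbound} by controlling the difference $S^G(\EP_\lb f^\infty_{\mf n}) - I^G(\EP_\lb f^\infty_{\mf n})$ via the hyperendoscopy formula of Theorem \ref{hyperendoscopy}. Writing
\[
S^G(\EP_\lb f^\infty_{\mf n}) = I^G(\EP_\lb f^\infty_{\mf n}) + \sum_{\mc H \in \mc{HE}_\el(G)} \iota(G,\mc H)\, I^{\mc H}(\EP_\lb^{\mc H}(f^\infty_{\mf n})^{\mc H}),
\]
it suffices to show each correction term $I^{\mc H}(\EP_\lb^{\mc H}(f^\infty_{\mf n})^{\mc H})$ is $O(|\mf n|^{\dim G - C}q_{S_1}^{A+B\kappa})$ for some $C \geq 1$, so that after dividing by $|\mf n|^{\dim G}\Gamma_N(\mf n)^{-1}$ (which only helps, since $\Gamma_N(\mf n)$ is bounded above and below) it is absorbed into the error. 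Since the endoscopy of unitary groups involves no $z$-extensions (Walter's computation, as noted after Theorem \ref{hyperendoscopy}), each $\mc H$ is a product of smaller (quasisplit) unitary groups, and the transfer $(f^\infty_{\mf n})^{\mc H}$ at the congruence places is computed by the fundamental lemma: at split places it is a constant term, at unramified non-split places it is the standard unramified transfer, and at $S_0\cup S_1$ it is some fixed bounded function (depending on $\kappa$). Crucially, $\dim \mc H < \dim G$ for every nontrivial $\mc H$.

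The key step is therefore a dimension count. First I would apply Theorem \ref{stbound} (equivalently its analogue for products, which follows by multiplicativity since $I^{\mc H} = \prod I^{\mc H_j}$ on factorizable inputs after expanding $\EP_\lb^{\mc H}$ into a linear combination of Euler–Poincaré functions à la Ferrari) to each $I^{\mc H}$, obtaining a bound of the shape (main term of $I^{\mc H}$) $+ O(|\mf n|^{\dim \mc H - C'}q_{S_1}^{\cdots})$. The main term of $I^{\mc H}$ is of size $\asymp |\mf n|^{\dim \mc H}$ up to the Euler factors $\Gamma$ and the constant-term constants $I(\mf n)$ from Lemma \ref{congruenceconstantterm}; but the transfer of $\bar\1_{K^{G,S}(\mf n)}$ to $\mc H$ via constant terms along the parabolic relating $\mc H$ to a Levi, by Lemma \ref{congruenceconstantterm}, contributes $I(\mf n) \asymp |\mf n|^{\dim G/P}\Gamma_{-1}(\mf n)^{\sigma}$ with $\dim G/P = \tfrac12(\dim G - \dim \mc H)$ roughly, while the volume-normalization $\bar\1$ vs $\1$ contributes the matching inverse volume factor. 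Putting these together, the total power of $|\mf n|$ in the $\mc H$-term is at most $\dim \mc H + \tfrac12(\dim G - \dim \mc H) = \tfrac12(\dim G + \dim \mc H) \leq \dim G - \tfrac12(\dim G - \dim \mc H)$, and since $\dim G - \dim \mc H \geq 2$ (the smallest drop for a proper elliptic endoscopic subgroup of a unitary group), each correction term is $O(|\mf n|^{\dim G - 1}q_{S_1}^{A + B\kappa})$, which is exactly the savings needed. The $q_{S_1}$-dependence is handled by Lemma \ref{unramtransferbound}-type bounds on how the truncation level $\kappa$ changes under transfer, together with the fact that all the $f_{S_1}$-transfers stay in a bounded Hecke algebra $\ms H^\ur(\mc H_{S_1})^{\leq \kappa'}$ with $\kappa' = O(\kappa)$.

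I would then collect terms: the only surviving main term is the one from $\mc H = G$ (the trivial path), namely $I^G(\EP_\lb f^\infty_{\mf n})$ itself, whose leading behavior is given exactly by Theorem \ref{stbound} to be $|\mf n|^{\dim G}\Gamma_N(\mf n)(\Lambda + O(|\mf n|^{-C}q_{S_1}^{A+B\kappa}))$. Dividing through by $|\mf n|^{\dim G}\Gamma_N(\mf n)^{-1}$ — wait, one must be careful: Theorem \ref{stbound} is stated as $|\mf n|^{-\dim G}\Gamma_N(\mf n)^{-1}I^G = \Lambda + O(\cdots)$, so $I^G \asymp |\mf n|^{\dim G}\Gamma_N(\mf n)$, and the same normalization applied to $S^G$ gives $\Lambda$ plus the original error plus the (now provably smaller) sum of normalized correction terms, each $O(|\mf n|^{-1}q_{S_1}^{A+B\kappa})$ since $\Gamma_N(\mf n)^{-1} = O(1)$. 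Relabeling constants $A,B,C,D,E$ as permitted in the text gives the claim, with $C \geq 1$ inherited from Theorem \ref{stbound} (and $C=1$ sufficing for the endoscopic corrections).

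The main obstacle I anticipate is not conceptual but bookkeeping: making the dimension inequality $\tfrac12(\dim G + \dim \mc H) \leq \dim G - 1$ genuinely uniform over all hyperendoscopic paths $\mc H \in \mc{HE}_\el(G)$, including iterated ones, while simultaneously tracking that (i) the Ferrari expansion of $\EP_\lb^{\mc H}$ into Euler–Poincaré functions introduces only finitely many terms with infinitesimal characters $\lb'$ of controlled dimension, and (ii) the constant-term/unramified-transfer constants from Lemma \ref{congruenceconstantterm} combine with the $\bar\1$-normalization to produce exactly the power $|\mf n|^{\dim G/P}$ and no more — i.e. no hidden enhancement from the non-split or ramified places, which is where the hypothesis that $\mf n$ is coprime to $S_0, S_1$ and split in $E$ is essential. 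Once that uniform bound is in hand, everything else is a routine application of Theorem \ref{stbound} and the transfer estimates already assembled in \S\ref{sectionlocaltransfers}.
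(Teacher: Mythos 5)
Your proposal is correct and follows essentially the same route as the paper: apply the hyperendoscopy formula of Theorem \ref{hyperendoscopy}, feed each term into Theorem \ref{stbound}, control the transfers of $\bar\1_{K^{G,S}(\mf n)}$ via Lemma \ref{splitconstant} together with Lemma \ref{congruenceconstantterm}, bound the $f_{S_1}$-transfers in a truncated Hecke algebra, and expand $\EP_\lb^{\mc H}$ into finitely many Euler--Poincar\'e functions via Ferrari's formula. Your explicit dimension count $\tfrac12(\dim G + \dim \mc H) \le \dim G - 1$ is exactly the savings the paper's terse proof leaves implicit when it asserts the non-$I^G$ terms land in the error term.
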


\begin{proof}
We apply Theorem \ref{hyperendoscopy} and apply Theorem \ref{stbound} to each term. This 
is a 
much simpler version of the main result of \cite{Dal22} so we present it tersely. 

We need only show that all the non-$I^G$ summands can be put into the error term:
\begin{itemize}
\item In this case of unitary groups, there are a finite number of such terms. 
\item We can ignore dependence on $\varphi_{S_0}$. 
\item Lemma 5.5.4 in \cite{Dal22} allows us to bound the value at $1$ and the support of the $f_{S_1}^\mc H$. 
\item Lemma \ref{splitconstant} lets us iteratively use Lemma \ref{congruenceconstantterm} to bound the $(\bar \1_{K^{G,S}(\mf n_i)})^\mc H$.
\item
Averaging Corollary 5.1.6 in \cite{Dal22} shows that the transfer of the EP-function is a linear combination of a number of EP-functions uniformly bounded over $\mc H$. 
\end{itemize}
Putting all this together, \ref{stbound} shows that the sum of the non-$I^G$ terms is bounded above by the claimed error as long as we extremize $A,B,C,D,E$ appropriately over all hyperendoscopic groups.
\end{proof}

\subsection{The Induction}
Now we induct to bound the limit multiplicities restricted to specific shapes. Their are two pieces to this argument---first, as a consequence of Proposition \ref{SMform}: 
\begin{equation}\label{SGshapeexpansion}
S^G_{\Sigma_{\lb,\eta}}(\EP_\lb f^\infty_{\mf n}) = S^G(\EP_\lb f^\infty_{\mf n}) - \sum_{\substack{H(\Delta) = H(\Sigma_{\lb, \eta})\\ \text{inf. char}(\Delta) = \lb \\\Delta \neq \Sigma_{\lb, \eta}}} S^G_\Delta(\EP_\lb f^\infty_{\mf n}).
\end{equation}
Second, the bound in Proposition \ref{shapeineq} lets us show that the non-$\Sigma$ terms $S^G_\Delta$ have limit multiplicities controlled by the groups $M$ from which they are lifts.

We start with a technical trick that gives us the trace-positivity conditions needed to apply the results of the previous section:

\begin{lem}\label{tracepositivelinearcombination}
Let $f_v$ be a test function on $G_v$. Then there are trace-positive functions $f_1, \dotsc, f_k$ such that
\[
f_v = \lb_1 f_1 + \cdots + \lb_k f_k.
\]
Furthermore, if $f_v \in \ms H^\ur(G_v)^{\leq \kappa}$, then so are the $f_i$ and 
\[
\sum_i |\lb_i| \leq C, \qquad \sum_i \|f_i\|_\infty \leq C\|f_v\|_\infty
\]
for some uniform constant $C$. 
\end{lem}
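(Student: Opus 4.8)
The statement is a decomposition of an arbitrary test function as a linear combination of trace-positive test functions, with uniform control when the input lives in a truncated unramified Hecke algebra. The plan is to exploit the fact that trace-positivity is a closed, convex, full-dimensional cone condition on the relevant finite-dimensional space of test functions (modulo the kernel of all the traces in play), so that any element can be written as a difference of cone elements, and then to package this uniformly over the truncation parameter.

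First I would treat the general (non-Hecke-algebra) case: given $f_v$, note that $\bar\1_{U} \star f_v \star \bar\1_U$ has the same nonzero traces as $f_v$ for $U$ a small enough open compact, so without loss of generality $f_v$ is bi-$U$-invariant and compactly supported, hence lies in the finite-dimensional space $\mathcal{H}(G_v \sslash U)$. Pick any trace-positive $g$ in this space with $\tr_{\pi_v} g > 0$ on every representation with a nonzero $U$-fixed vector (e.g. a suitable multiple of $\bar\1_U$ itself, which is trace-positive and strictly positive on exactly those representations — this is the standard observation already used in the excerpt, e.g. in the proof of Lemma~\ref{gen[d]}). Then for $C$ large enough $g \pm C^{-1} f_v$ is trace-positive on $\mathcal{H}(G_v \sslash U)$, since only finitely many Bernstein components are hit and on each the trace characters of $f_v$ and $g$ are continuous functions of the (compact) tempered parameters plus algebraic in the unramified twists. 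Writing $f_v = C\big((g + C^{-1}f_v)/2\big) - C\big((g - C^{-1}f_v)/2\big)$ (or splitting further into finitely many pieces if one wants each $\lambda_i$ of controlled sign) gives the claim with $k=2$.

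For the uniform statement, restrict to $f_v \in \ms H^\ur(G_v)^{\leq \kappa}$. Here the key point is that $\ms H^\ur(G_v)^{\leq \kappa}$ is a finite-dimensional space with a \emph{basis} $\{\tau^G_\lb : \|\lb\|_\mc B \leq \kappa\}$ (or, more convenient for positivity, the character basis $\chi_\lb$), and the cone of trace-positive elements in it has nonempty interior. I would take the basis of indicator functions $\1_{K_v \lb(\varpi) K_v}$, each of which is trace-positive (its trace against any $\pi_v$ is a sum of dimensions of eigenspaces, hence $\geq 0$), and write $f_v = \sum_\lb c_\lb \1_{K_v\lb(\varpi)K_v}$; splitting each term according to the sign of $c_\lb$ gives $f_v$ as a combination of at most $|\ms H^\ur(G_v)^{\leq\kappa}|$ trace-positive pieces, automatically in $\ms H^\ur(G_v)^{\leq\kappa}$. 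The inequalities $\sum_i|\lambda_i| \leq C$ and $\sum_i \|f_i\|_\infty \leq C\|f_v\|_\infty$ then follow from comparing the two bases of the finite-dimensional space $\ms H^\ur(G_v)^{\leq\kappa}$: the change-of-basis matrix between $\{\chi_\lb\}$ (or an $L^\infty$-normalized basis) and $\{\1_{K_v\lb(\varpi)K_v}\}$ has entries bounded by $q_v$-powers, but since all bounds in the paper are allowed to depend on $\kappa$ only up to an unspecified constant (as stated in \S\ref{strategy}), one just absorbs the dimension of $\ms H^\ur(G_v)^{\leq\kappa}$ and the matrix norm into $C$.

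The main obstacle I anticipate is making the ``$g \pm C^{-1}f_v$ is trace-positive'' step honest: one needs that on each of the finitely many Bernstein components supporting $f_v$, the trace character $\pi_v \mapsto \tr_{\pi_v} f_v$ is bounded (uniformly, if we want $\kappa$-dependence controlled), so that a fixed strictly-positive reference function dominates it after scaling. Over a single Bernstein component the tempered parameter ranges over a compact torus and the unramified-twist parameter is handled by Bernstein's admissibility theorem (exactly as invoked in the proof of Lemma~\ref{gen[d]} via \cite[prop.~9.6]{ST16}); so the boundedness is available, but one must be a little careful that the number of components and the bound are controlled by $\kappa$ in the unramified case. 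In practice the cleaner route for the Hecke-algebra statement is the second one above — expand in the indicator-function basis, which sidesteps analysis entirely and reduces everything to finite-dimensional linear algebra with $q_v$-polynomial matrix entries — so I would write the proof that way and only use the softer convexity argument for the general (non-unramified) assertion.
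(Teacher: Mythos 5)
Your second route---the one you say you would actually write down for the unramified case---rests on a false claim: the basis elements $\1_{K_v \lb(\varpi) K_v}$ are \emph{not} trace-positive. The trace $\tr_{\pi_v}(\1_{K_v\lb(\varpi)K_v})$ is the trace of the Hecke operator $\pi_v(\1_{K_v\lb(\varpi)K_v})$ acting on $\pi_v^{K_v}$, i.e.\ a sum of its \emph{eigenvalues}, not ``a sum of dimensions of eigenspaces''; these eigenvalues are built from Satake parameters and are very often negative. Already for $\GL_2(\Q_p)$ and $\lb = \diag(\varpi,1)$, the trace against a tempered unramified principal series with Satake parameter $e^{i\theta}$ is a positive multiple of $2\cos\theta$, which is negative for $\theta$ near $\pi$ (equivalently: Hecke eigenvalues $a_p$ can be negative). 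So splitting $f_v$ according to the signs of its coefficients in this basis does not produce trace-positive pieces, and the ``finite-dimensional linear algebra'' argument collapses.

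Your first route is closer to viable but has its own gap: for complex-valued $f_v$ the traces $\tr_{\pi_v} f_v$ are complex, so ``$g \pm C^{-1}f_v$ is trace-positive'' is not even meaningful until you first reduce to $f_v^* = f_v$ by splitting $f_v$ into its self-adjoint real and imaginary parts; only then are all traces real and the domination $C\dim\pi_v^U \geq |\tr_{\pi_v} f_v|$ (Bernstein admissibility, plus noting both sides vanish when $\pi_v^U = 0$) does what you want. The paper's actual proof avoids all of this: after the same symmetrization it uses the polarization identity
\[
f_v = \f14\,(f_v + \bar\1_U)\star(f_v + \bar\1_U) - \f14\,(f_v - \bar\1_U)\star(f_v - \bar\1_U), \qquad f_v = f_v\star\bar\1_U = \bar\1_U\star f_v,
\]
in which each term has the form $g\star g^*$ and is therefore automatically trace-positive, since $\tr\lf(\pi_v(g)\pi_v(g)^*\ri)\geq 0$ for unitary $\pi_v$. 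This needs no reference function, no bound on $|\tr_{\pi_v}f_v|$, and no discussion of Bernstein components; taking $U = K_v$ keeps everything inside the unramified Hecke algebra and yields the stated uniform bounds (up to the harmless rescaling of $\kappa$ that you already flag). I would rewrite the proof along these lines, or at minimum repair route one by inserting the symmetrization step and dropping route two entirely.
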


\begin{proof}
The first statement holds in great generality by the Dixmier-Malliavin decomposition theorem as used \cite[Lem 3.5]{Sau97}. We recall some details: 

We can without loss of generality assume $f_v^* = f_v$ by symmetrizing:
\[
f_v = \lf(\f{f_v + f_v^*}2\ri) - i \lf(\f{f_v - f_v^*}{2i}\ri).
\]
By smoothness, there is a open compact subgroup $U$ such that $f_v = f_v \star \bar \1_U = \bar \1_U \star f_v$. Then
\[
f_v = \f14 (f_v + \bar \1_U) \star (f_v + \bar \1_U) - \f14 (f_v - \bar \1_U) \star (f_v - \bar \1_U),
\]
a linear combination of functions of the form $g \star g^*$, each necessarily trace-positive. 

Using $U = K_v$ gives the required bounds for the second statement. 
\end{proof}

We can now state the first important technical bound resulting from our methods:

\begin{thm}\label{simpleshapesmain}
Fix $G \in \wtd {\mc E}_\el(N)$, $f_{\fn}^\infty$, and $\Lambda$ as in Section \ref{asymptoticsetup}. 
Then there are constants $A,B,C,D,E$ depending only on $G$ with $C \geq 1$ such that whenever $|\mf n| \geq Dq_{S_1}^{E \kappa}$, we have
\begin{equation}\label{equation sigma shape bound}
|\mf n|^{-\dim G} \Gamma_N(\mf n)^{-1} S^G_{\Sigma_{\lb, \eta}}(\EP_\lb f^\infty_{\fn}) = \Lambda + O(|\mf n|^{-C}q_{S_1}^{A + B \kappa}).
\end{equation}
For $\Delta \neq \Sigma_{\lb,\eta}$, the same constants give the upper bound
\begin{equation} \label{equation non sigma shape bound}
 S^G_\Delta(\EP_\lb f^\infty_{\fn}) = O(|\mf n|^{\bar R(\Delta)}q_{S_1}^{A + B \kappa}),
\end{equation}
where the exponent $\bar R(\Delta)$ is given by
\begin{equation} \label{Equation R(Delta)=bar}
\bar R((T_i, d_i, \lb_i, \eta_i)_{1 \leq i \leq k}) = \f12 \lf( N^2 + \sum_i T_i^2 d_i \ri).
\end{equation}
\end{thm}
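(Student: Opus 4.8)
The plan is to induct on $N$, using the combination of Equation \eqref{SGshapeexpansion} (which expresses $S^G_{\Sigma_{\lb,\eta}}$ in terms of $S^G(\EP_\lb f^\infty_\fn)$ minus correction terms $S^G_\Delta$ for non-simple shapes $\Delta$ of infinitesimal character $\lb$) and Proposition \ref{sgbound} (which gives the main term $\Lambda$ and error $O(|\mf n|^{-C}q_{S_1}^{A+B\kappa})$ for $S^G(\EP_\lb f^\infty_\fn)$ itself). The base case $N=1$ is immediate: there the only shape with the given infinitesimal character is $\Sigma_{\lb,\eta}$, so the correction sum in \eqref{SGshapeexpansion} is empty and \eqref{equation sigma shape bound} reduces to Proposition \ref{sgbound}; there is nothing to prove for \eqref{equation non sigma shape bound}. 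For the inductive step, granting \eqref{equation non sigma shape bound} for all proper $\Delta\neq\Sigma_{\lb,\eta}$, the sum of correction terms in \eqref{SGshapeexpansion} is $O(|\mf n|^{\bar R(\Delta)}q_{S_1}^{A+B\kappa})$ over finitely many $\Delta$; since $\bar R(\Delta) = \tfrac12(N^2+\sum_i T_i^2 d_i) < N^2 = \dim G$ whenever $\Delta\neq\Sigma_{\lb,\eta}$ (because then some $d_i>1$ or there are at least two blocks, forcing $\sum T_i^2 d_i < (\sum T_i d_i)^2 = N^2$), each such term is $o(|\mf n|^{\dim G}\Gamma_N(\mf n))$, hence absorbed into the error after dividing by $|\mf n|^{\dim G}\Gamma_N(\mf n)$. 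This yields \eqref{equation sigma shape bound}.

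The heart of the argument is establishing \eqref{equation non sigma shape bound} for a general non-simple shape $\Delta=(T_i,d_i,\lb_i,\eta_i)_{1\le i\le k}$. Here I would apply Proposition \ref{shapeineq}: first use Lemma \ref{tracepositivelinearcombination} to write $f^\infty_\fn = \varphi_{S_0}f_{S_1}\bar\1_{K^{G,S}(\fn)}$ as a bounded linear combination of trace-positive test functions (the unramified factor $\bar\1_{K^{G,S}(\fn)}$ is already trace-positive, being $g\star g^*$ up to a constant with $g=\bar\1_{K^{G,S}(\fn)}$; one only needs to decompose $\varphi_{S_0}f_{S_1}$), so that WLOG the hypotheses of Proposition \ref{shapeineq} hold. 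Proposition \ref{shapeineq} then bounds $|S^N_\Delta((f^\infty_\fn)^N)|$ by $C_\Delta\prod_i (S^{T_i}_{\Sigma_{\lb_i,\eta_i}}((\cdots)^{T_i}))^{d_i\oplus}$, where the inner test function on $U_{E/F}(T_i)$ is $(f_{S_s})_{M,i}\varphi'_{i,S_b}\mc T_i f^S$. One must check what this inner function looks like when $f^\infty = f^\infty_\fn$: the split-place constant term $(f_{S_s})_{M,i}$ applied to the congruence-subgroup indicator is handled by Lemma \ref{congruenceconstantterm}, producing another normalized congruence indicator times the factor $I(\fn) = (1+O(|\fn|^{-2}))|\fn|^{\dim G/P}\Gamma_{-1}(\fn)^{\sigma(M)}$; the unramified transfer $\mc T_i f^S$ of $\bar\1_{K^{G,S}(\fn)}$ is again a normalized congruence indicator on $U_{E/F}(T_i)$ at the relevant split places (since $\fn$ is split, $\mc T_i$ restricted to these places is essentially a constant term too, by Lemma \ref{splitconstant} and Equation \eqref{Tcharactershape}-type formulas); and $\varphi'_{i,S_b}$ is an $\fn$-independent test function. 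So the inner function is of exactly the form $(f')^\infty_\fn$ on $U_{E/F}(T_i)$ up to an $\fn$-dependent scalar that is $O(|\fn|^{\dim G/P_i}\Gamma_{-1}(\fn)^{\sigma(M_i)}) = O(|\fn|^{(N^2-\sum T_i^2 d_i)/2 + \epsilon})$ after accounting for all the split factors $\GL_{T_i}^{d_i}$.

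By the inductive hypothesis applied on each $U_{E/F}(T_i)$ with the simple shape $\Sigma_{\lb_i,\eta_i}$ (these are strictly smaller groups since $T_i<N$, as $\Delta\neq\Sigma_{\lb,\eta}$), each factor $S^{T_i}_{\Sigma_{\lb_i,\eta_i}}((f')^\infty_\fn) = O(|\fn|^{\dim U(T_i)}\Gamma_{T_i}(\fn)) = O(|\fn|^{T_i^2 + \epsilon})$, and raising to the $d_i$ power and multiplying over $i$ together with the scalar from Lemma \ref{congruenceconstantterm} gives the total exponent $\tfrac12(N^2-\sum T_i^2 d_i) + \sum_i d_i T_i^2 = \tfrac12(N^2 + \sum T_i^2 d_i) = \bar R(\Delta)$, as claimed; the $q_{S_1}^{A+B\kappa}$ dependence is tracked through Lemma \ref{unramtransferbound} and the boundedness of the transferred functions at $S_1$. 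Finally, translating the bound on $S^N_\Delta((f^\infty_\fn)^N)$ back to $S^G_\Delta(\EP_\lb f^\infty_\fn)$ uses the corollary at the end of \S\ref{strategy}, which identifies these up to a compatible choice of test function. I expect the main obstacle to be the careful bookkeeping of the $\fn$-dependent constants through Proposition \ref{shapeineq}: namely, verifying that the various constant-term and unramified-transfer operations really do turn $f^\infty_\fn$ into something of the form $(f')^\infty_\fn$ (rather than a more complicated level structure), and that the accumulated powers of $|\fn|$ and the Euler factors $\Gamma_{\pm n}(\fn)$ combine to exactly $\bar R(\Delta)$ — this requires matching the exponent $\dim G/P = \tfrac12(\dim G - \dim M)$ from Lemma \ref{congruenceconstantterm} with $\dim U(T_i)$ from the inductive hypothesis, using $\dim G = N^2$, $\dim M = \sum_i d_i T_i^2$, and $\dim U(T_i) = T_i^2$.
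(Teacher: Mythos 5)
Your proposal is correct and follows essentially the same route as the paper: induction on $N$, with the base case and the exact asymptotic \eqref{equation sigma shape bound} coming from \eqref{SGshapeexpansion} plus Proposition \ref{sgbound}, and the bound \eqref{equation non sigma shape bound} coming from Lemma \ref{tracepositivelinearcombination}, Proposition \ref{shapeineq}, Lemmas \ref{congruenceconstantterm} and \ref{unramtransferbound}, and the inductive hypothesis on the $\Sigma_{\lb_i,\eta_i}$, with the identical exponent bookkeeping $\dim P = \f12(\dim G + \dim M) = \bar R(\Delta)$.
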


\begin{proof}
We induct on the $N$ such that $G \in \wtd {\mc E}_\el(N)$. For $N=1$, $\Sigma_{\lb, \eta}$ is the only possible shape so (by extreme overkill) this follows from Theorem \ref{stbound}. 

For the inductive step, we first argue that it suffices to show \eqref{equation non sigma shape bound}: we start from \eqref{SGshapeexpansion} and take $D$ and $E$ to be the maximum values over those for all smaller shapes $\Delta \neq \Sigma_{\lambda, \eta}$ appearing in the expansion. Then~\eqref{equation non sigma shape bound} will show that all the corresponding terms 
are lower order in $|\mf n|$ than the $S^G$-asymptotic from Proposition \ref{sgbound}. This produces the exact asymptotic \eqref{equation sigma shape bound}.

Therefore, let $\Delta = (T_i, d_i, \lb_i, \eta_i)_{1 \leq i \leq k}  \neq \Sigma_{\lb, \eta}$ have rank $N$. First, we apply Lemma \ref{tracepositivelinearcombination} on each of the finitely many unramified factors of $\varphi_{i, S_0}$ and $\mc T_i f_{S_i}$ to write
\[
S^G_\Delta(\EP_\lb f^\infty_{\fn}) = \sum_j \lb_j S^G_\Delta(\EP_\lb f^\infty_{\fn,j})
\]
where the sum of the $|\lb_j|$ is $O(C^{|S_1|}) = O(q_{S_1})$ and each $f^\infty_{\fn,j}$ is trace-positive. 

We then apply Proposition \ref{shapeineq} with $S_s$ the primes that divide $\mf n$ and $S_b = S_0$. Over $S_s$, let $M = \prod_i \GL_{T_i}(F_{S_s})^{d_i}$ be the Levi from \ref{shapeineq} and $P$ the parabolic defining the constant term. Then (using notation from \ref{shapeineq} to decompose $f_{\mf n,j}^\infty$),
\begin{align}\label{simpleshapesproofeq}
|S^G_\Delta(\EP_\lb f^\infty_{\fn,j})| &\leq C_\Delta \prodf \lf(S^{T_i}_{\Sigma_{\lb_i, \eta_i}}(((f_{S_s})_{M,i} \varphi'_{i, S_0} \mc T_i f_{S_1} \1_{K^S})^{T_i })\ri)^{d_i \oplus} \\
&= C_\Delta I(\mf n) \prodf \lf(S^{H_i}_{\Sigma_{\lb_i, \eta_i}}(\EP_{\lb_i} \bar \1_{K^{H_i}(\mf n)} \varphi'_{i, S_0} \mc T_i f_{S_1})\ri)^{d_i \oplus}, \nonumber
\end{align}
where each $H_i = H(\Sigma^{T_i}_{\lb_i, \eta_i})$ and the second equality uses Lemma \ref{congruenceconstantterm} together with factoring $\vol(K^M(\mf n))$ over places in $S_s$.

From Lemma \ref{unramtransferbound}, we know that $\mc T f_{S_1} \in \ms H^\ur(M_{S_1})^{\leq \kappa}$ and 
\[
\|\mc T f_{S_1} \|_\infty = \|f_{S_1}\|_\infty O(q_v^{\kappa F} \kappa^G).
\]
Therefore, we can use the inductive hypothesis on the $\Sigma_{\lb_i, \eta_i}$, summing the asymptotics of the $\Lambda$ and the error to get that each term in the product  \eqref{simpleshapesproofeq} is 
\[
O \lf( |\mf n|^{\dim H_i} \Gamma_{T_i^{(d_i)}}(\mf n) q_{S_1}^{A' + B'\kappa} \ri)
\]
 for some constants $A', B'$ depending on $\Delta$. Note that the positivity condition is guaranteed by the last implication of Proposition \ref{shapeineq}. Summing over $j$, this gives
\[
|S^G_\Delta(\EP_\lb f^\infty_{\mf n})| = O \lf(  \Gamma_{T_1^{(d_1)}, \dotsc, T_k^{(d_k)}}(\mf n) I(\mf n) |\mf n|^{\dim M} q_{S_1}^{A' + B'\kappa} \ri),
\]
ignoring all dependence on the $\varphi'$.

Recalling that $G_{S_s} \simeq \GL_{N,S_s}$, we can use the asymptotic for $I(\mf n)$ from \ref{congruenceconstantterm}:
\[
I(\mf n) |\mf n|^{\dim M} = O\lf(|\mf n|^{\dim P} \Gamma_{-1}(\mf n)^{\sigma(M)} \ri).
\]
We finally note that $\Gamma_{T_1^{(d_1)}, \dotsc, T_k^{(d_k)}}(\mf n)\Gamma_{-1}(\mf n)^{\sigma(M)} \leq 1$ and that
\[
\dim P = \bar R(\Delta) = \f12 \lf(N^2 + \sum_i T_i^2 d_i \ri).
\] 
This finishes the bound for $\Delta$ and therefore the induction. 
\end{proof}

We emphasize that the value 
$\bar R(\Delta)$, which is the dimension of the parabolic attached to the partition
\[
(T_1^{(d_1)}, \dotsc, T_k^{(d_k)}),
\]
 is an approximate upper bound to what we expect to be 
the true growth rate attached to $\Delta$. 
It is 
 maximized at $\bar R(\Sigma_{\lb, \eta}) = \dim G$ where it is exact.

\subsection{Improving the Bound}
We can prove a slightly tighter bound $R(\Delta)$. Instead of applying \ref{shapeineq} directly as in the proof of \ref{simpleshapesmain}, we will apply \ref{constanttermineq} and then use a different method to bound the factors for simple blocks $(T_i, d_i, \lb_i, \eta_i)$ with small $T_i$. For large $T_i$, we will proceed as before using the arguments of \S\ref{fb}. This argument is nothing more than a rephrasing in our context of the key technical trick that makes the bounds in \cite{MS19} work.

\subsubsection{Terms with $T_i = 1$}
In this case we actually prove a stronger exact formula:
\begin{lem}\label{T1bound}
When $T_i = 1$, the terms for the factors $S_{(T_i, d_i, \lb_i, \eta_i)}$ coming from the use of Corollary \ref{constanttermineq} implicit in \eqref{simpleshapesproofeq} satisfy the bound:
\begin{align*}
S^{H'_i}_{(1, d_i, \lb_i, \eta_i)} (&EP_{\lb_i[d_i]} \bar \1_{K^{H'_i}(\mf n)} \varphi_{i, S_0} \mc T_i f_{S_1}) \\ 
& = |\mf n|\Gamma_1(\mf n_i)  \f{\vol(H'_i(F)^\ab \bs H'_i(\A)^\ab)}{\vol(K^{(H'_i)^\ab,S}) \vol((H'_{i,\infty})^\ab)} \int_{H'_{i,\der, S_1,S_0}}  \mc T_i f_{S_1} \varphi_{i, S_0}(h) \,dh \\
& = O(|\mf n| \Gamma_1(\mf n) q_{S_1}^{A + B \kappa})
\end{align*}
as long as $|\mf n| > D q_{S_1}^{E \kappa}$. 
\end{lem}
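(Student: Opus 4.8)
The plan is to derive this from Corollary \ref{charshapeformula} applied to the group $H'_i$ with the shape $(1, d_i, \lb_i, \eta_i)$, combined with the constant-term computation of Lemma \ref{congruenceconstantterm} and an estimate on the abelianized test function. First I would invoke Corollary \ref{charshapeformula}, which already gives the exact formula
\[
S^{H'_i}_{(1,d_i,\lb_i,\eta_i)}(\EP_{\xi_\infty} g^\infty) = \f{\vol(H'_i(F)^\ab \bs H'_i(\A)^\ab)}{\vol((H'_{i,\infty})^\ab)} \sum_{\gamma \in H'_i(F)^\ab} \xi_\infty^{-1} g^{\infty,\ab}(\gamma),
\]
where $g^{\infty,\ab}$ is the pushforward of $g^\infty$ to $H'_i(\A^\infty)^\ab$ by integration against the derived-group points. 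Here $g^\infty = \bar\1_{K^{H'_i}(\mf n)} \varphi_{i,S_0} \mc T_i f_{S_1}$, and $\EP_{\lb'_i}$ is an Euler--Poincaré function for the character $\xi_\infty$ since $\Pi_{\psi_\infty}$ is a singleton character when $T_i = 1$.

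Next I would analyze the pushforward $g^{\infty,\ab}$ place by place. Since $H'_i$ is a unitary group of rank $T_i = 1$, we have $(H'_i)^\ab = U(1)$ and $H'_{i,\der}$ is anisotropic (in fact trivial, as $U(1)$ is abelian, or $\SU$ for higher rank — but here rank $1$ so $H'_i = U(1)$ itself and $H'_{i,\der}$ is trivial, making the pushforward trivial; more carefully, when the rank-$1$ block sits inside a larger endoscopic group the relevant $H'_{i,\der}$ picks up the $\SU$ factors of the other blocks, but those are semisimple simply connected with trivial Galois cohomology). At the split places $S_1$ and the places $S_0$, integration against the derived group gives the local integral $\int_{H'_{i,\der,S_1,S_0}} \mc T_i f_{S_1} \varphi_{i,S_0}(h)\, dh$ as a function on $(H'_i)^\ab_{S_1 \cup S_0}$, which I then need to show is concentrated at (or appropriately described by) the values at $\gamma \in H'_i(F)^\ab$. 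At the places dividing $\mf n$, the pushforward of $\bar\1_{K^{H'_i}(\mf n)}$ is, by Lemma \ref{congruenceconstantterm} applied to the ``constant term'' to the abelianization (which in the rank-$1$ case is a degenerate Levi), equal to $I(\mf n)\, \bar\1_{K^{(H'_i)^\ab}(\mf n)}$. Since $(H'_i)^\ab = U(1)$, which over split places is $\GL_1$, the index $I(\mf n) = |\mf n|\, \Gamma_{1}(\mf n)$ up to the usual $(1 + O(|\mf n|^{-2}))$, and $\sigma((H'_i)^\ab) = 1$ gives the $\Gamma_1$ exponent. Combining with compactness of $(H'_i)^\ab(F) \bs (H'_i)^\ab(\A)$ — so that only finitely many $\gamma$ contribute once $|\mf n|$ is large enough relative to the support, i.e. $|\mf n| > Dq_{S_1}^{E\kappa}$ — Poisson summation collapses to the identity contribution, yielding the stated middle expression.

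For the final big-$O$ bound, I would estimate each factor: $\vol(H'_i(F)^\ab \bs H'_i(\A)^\ab)$, $\vol(K^{(H'_i)^\ab,S})$, and $\vol((H'_{i,\infty})^\ab)$ are all fixed constants depending only on $G$ (and the shape), so they contribute an $O(1)$ factor. The archimedean Euler--Poincaré normalization is absorbed into these volume constants. The local integral $\int_{H'_{i,\der,S_1,S_0}} \mc T_i f_{S_1} \varphi_{i,S_0}(h)\,dh$ is bounded by $\|\mc T_i f_{S_1}\|_\infty \cdot \|\varphi_{i,S_0}\|_1$ times the volume of the (compact, once we use the support condition) domain of integration; by Lemma \ref{unramtransferbound} we have $\|\mc T_i f_{S_1}\|_\infty = O(q_{S_1}^{D\kappa}\kappa^E)$ and $\mc T_i f_{S_1} \in \ms H^\ur(\cdot)^{\leq\kappa}$, so the support is contained in a ball of radius $O(\kappa)$, giving a volume bound of $O(q_{S_1}^{B'\kappa})$. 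Absorbing polynomial-in-$\kappa$ factors into $q_{S_1}^{B\kappa}$ and all group-theoretic constants into $A$, the integral is $O(q_{S_1}^{A + B\kappa})$, which combined with the $|\mf n|\Gamma_1(\mf n)$ prefactor gives the claimed bound.

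The main obstacle I anticipate is the careful bookkeeping of the pushforward to the abelianization when the rank-$1$ block is embedded inside a larger endoscopic group $H' = H_1 \times H_2$: one must verify that integrating $\bar\1_{K(\mf n)}$ against $H'_{i,\der}$ — which involves the derived groups of the other simple factors as well as the $\SU$ part — indeed produces the clean $|\mf n|\Gamma_1(\mf n)$ behavior with $\sigma = 1$ rather than some larger exponent, and that the split-place structure of $U(1)$ is what drives this. This requires that $H'_{i,\der}$ is semisimple and simply connected so its integral contributes no extra power of $|\mf n|$ (trivial Galois cohomology over the split places means the congruence condition on the derived part integrates to a volume, not an index), which is exactly the situation for unitary groups; invoking this cleanly, perhaps by direct computation at split places as in Lemma \ref{congruenceconstantterm}, is the crux. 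Everything else is the routine combination of Poisson summation, the known volume formulas, and the transfer bounds already established.
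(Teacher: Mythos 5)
Your overall strategy is the paper's: invoke Corollary \ref{charshapeformula} to write $S^{H'_i}_{(1,d_i,\lb_i,\eta_i)}$ as a sum over $\gamma \in H'_i(F)^\ab$, compute the abelianized pushforward of $\bar\1_{K^{H'_i}(\mf n)}$ to get the factor $|\mf n|\Gamma_1(\mf n)$, and then argue that for $|\mf n| \geq Dq_{S_1}^{E\kappa}$ the support of the test function meets $H'_i(F)^\ab$ only at $\gamma = 1$ (the paper cites Lemma 8.4 of \cite{ST16} for this last step, which is exactly your ``support versus lattice'' argument). So the skeleton is right and essentially identical.

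Two points in your justification need repair. First, you misidentify the group: when $T_i = 1$ the block is $(1, d_i, \lb_i, \eta_i)$ and $H'_i$ is a unitary group of rank $d_i$, not rank $1$; what makes this case special is that the A-packet $\Pi_{\psi_\infty}$ is a singleton \emph{character} of $H'_{i,\infty}$ (full Arthur-$\SL_2$ on a single $\GL_{d_i}$-block), not that $H'_i$ itself is abelian. In particular $H'_{i,\der} = \SU(d_i)$ is nontrivial, and the integration over $H'_{i,\der,S_0,S_1}$ in the statement is a genuine integral; your parenthetical attributing the $\SU$ factors to ``other blocks'' of the endoscopic group is off. Second, Lemma \ref{congruenceconstantterm} does not apply to the abelianization: it computes constant terms along parabolics, whereas $H'_i \to (H'_i)^\ab$ is a quotient by the derived group, not a Levi projection. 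The correct (and elementary) justification for $(\bar\1_{K^{H'_i}(\mf n)})^\ab = |\mf n|\Gamma_1(\mf n)\,\bar\1_{K^{(H'_i)^\ab}(\mf n)}$ is the direct index computation $[K:K(\mf n)]/[K_\der : K_\der(\mf n)]$ at the split places dividing $\mf n$, using the standard formulas for $|\GL_n(\mc O_v/\mf p_v^n)|$ and $|\SL_n(\mc O_v/\mf p_v^n)|$; this is what the paper does, and it yields exactly the $|\mf n|\Gamma_1(\mf n)$ you want. With these two corrections your argument closes; the final $O(|\mf n|\Gamma_1(\mf n) q_{S_1}^{A+B\kappa})$ estimate via Lemma \ref{unramtransferbound} is fine.
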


\begin{proof}
By Corollary \ref{charshapeformula},
\begin{multline*}
S^{H'_i}_{(1, d_i, \lb_i, \eta_i)} (EP_{\lb_i[d_i]} \bar \1_{K^{H'_i}(\mf n)} \varphi_{i, S_0} \mc T_i f_{S_1}) \\
=  \f{\vol(H'_i(F)^\ab \bs H'_i(\A)^\ab)}{\vol((H'_{i,\infty})^\ab)} \sum_{\gamma \in H'_i(F)^\ab} (\bar \1_{K^{H'_i}(\mf n)})^\ab(\gamma) (\mc T_i f_{S_1})^\ab(\gamma) \varphi_{i, S_0}^\ab(\gamma) \xi^{-1}_i(\gamma)
\end{multline*}
where $\xi_i$ is the character of $H'_i$ associated to infinitesimal character $\lb_i[d_i]$.

At places dividing $\mf n_i$ we know that $H_i$ is a general linear group, so we compute 
\[
(\bar \1_{K^{H'_i}(\mf n)})^\ab = |\mf n| \Gamma_1(\mf n) \bar \1_{K^{(H'_i)^\ab}(\mf n)}
\]
using standard formulas for $|\SL_n(\mc O_F/\mf p_v^n)|$.
Next,  $H'_i(F)^\ab \subseteq (H')_i^\ab(F)$ so a trivial case of Lemma 8.4 of \cite{ST16} applied to $(H'_i)^\ab$ gives $D_i$ and $E_i$ such that whenever $|\mf n| \geq D_iq_{S_1}^{E_i \kappa}$, all terms in the sum vanish except for $\gamma = 1$. This finishes the argument.
\end{proof}

\subsubsection{Terms with $T_i = 2$}
\begin{lem}\label{T2bound}
When $T_i = 2$, the terms for factors $S_{(T_i, d_i, \lb_i, \eta_i)}$ coming from the use of Corollary \ref{constanttermineq} implicit in \eqref{simpleshapesproofeq} satisfy the bound:
\begin{multline*}
S^{H'_i}_{(2, d_i, \lb_i, \eta_i)} (EP_{\lb_i[d_i]} \bar \1_{K^{H'_i}(\mf n)} \varphi_{i, S_0} \mc T_i f_{S_1}) \\
= O(|\mf n|^{2d_i(d_i - 1) + d_i+3} \Gamma_{-1^{(2d_i-2)}, 2}(\mf n) q_{S_1}^{A + B \kappa}).
\end{multline*}
\end{lem}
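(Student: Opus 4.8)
The plan is to follow the same template as the proof of Lemma \ref{T1bound}, but now the abelian reduction is unavailable because $T_i = 2$ forces us to work with a genuine (rank-$2$) unitary group $H'_i \cong U(2)$ at the split places dividing $\mf n$, where $H'_{i, S_s} \cong \GL_2(F_{S_s})$. First I would recall from Proposition \ref{goodshapefactor} (applied to the shape $(2, d_i, \lb_i, \eta_i)$, which is GSK exactly of the ``big-block'' type) that the factor $S^{H'_i}_{(2,d_i,\lb_i,\eta_i)}$ decomposes — up to the bounded constants $2^{-k+1}$ — as a product over the $d_i$ copies of the cuspidal shape $\Sigma_{\lb_i, \eta_i}$ of rank $2$, with the determinant twists killed by the support condition on the test functions and the constant-term factors $(f_{S_s})_{M,1}$ and $\mc T_i f_{S_1}$ playing their usual roles. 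Concretely, applying Proposition \ref{[d]bound} (the ``$[d]$-bound'') we reduce to bounding $\bigl(S^{2}_{\Sigma_{\lb_i, \eta_i}}(\cdots)\bigr)^{d_i \oplus}$ on a rank-$2$ quasisplit unitary group, with test function $\bar\1_{K^{H'_i}(\mf n)}$ at the split places.

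Next I would invoke Theorem \ref{simpleshapesmain} — the already-established level-aspect asymptotic for cuspidal shapes — with $N$ replaced by $2$. This gives $S^{2}_{\Sigma_{\lb_i, \eta_i}}(\EP_{\lb_i} \bar\1_{K(\mf n)} \cdots) = O(|\mf n|^{\dim H'_i}\Gamma_{2}(\mf n) q_{S_1}^{A + B\kappa})$ with $\dim H'_i = 4$ for $H'_i \cong U(2)$. One must be careful that the transfer $\mc T_i f_{S_1}$ lands in the truncated unramified Hecke algebra with a controlled sup-norm bound by Lemma \ref{unramtransferbound}, and that the test functions have the required trace-positivity after passing to a linear combination via Lemma \ref{tracepositivelinearcombination}; these are the same bookkeeping steps as in the proof of Theorem \ref{simpleshapesmain}. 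Raising to the $d_i$-th power (the $d_i \oplus$) multiplies the exponents: the $|\mf n|$-power becomes $4 d_i$, and $\Gamma_2(\mf n)^{d_i} = \Gamma_{2^{(d_i)}}(\mf n)$.

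Then I would reinsert the constant-term combinatorics from Lemma \ref{congruenceconstantterm} and Corollary \ref{constanttermineq}: splitting off the factor $I(\mf n)$ from $(\bar\1_{K^{G,S}(\mf n)})_M = I(\mf n) \bar\1_{K^M(\mf n)}$ for the Levi $M \cong (\GL_2)^{d_i} \times (\GL_1)^{\text{(rest)}}$ inside $\GL_N$, with $I(\mf n) = (1 + O(|\mf n|^{-2}))|\mf n|^{\dim G/P}\Gamma_{-1}(\mf n)^{\sigma(M)}$ where $\sigma(M) = \rank_\ssm G - \rank_\ssm M$. For $T_i = 2$ the $d_i$ rank-$2$ blocks each contribute $\rank_\ssm \GL_2 = 1$ to $\rank_\ssm M$, against the $2d_i - 2 + \cdots$ worth of rank contributed to $\sigma(M)$, which is exactly where the $\Gamma_{-1^{(2d_i-2)}}(\mf n)$ appears in the claimed bound. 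Collecting: the $|\mf n|$-exponent $\dim P + 4 d_i$ (where $\dim P$ absorbs the $\GL_2$-vs-$\GL_1$ mismatch) needs to be shown to equal $2d_i(d_i - 1) + d_i + 3$ — this is a purely combinatorial dimension count with the partition $(2^{(d_i)}, 1, \ldots)$, and I would verify it by writing $\dim P = \tfrac12(N^2 - \dim M) + \dim M$ style formulas, recalling $N = 2d_i + (\text{rank-}1\text{ contributions})$ from $\Sigma_{\lb, \eta}$ being rank $N$, but the cleanest route is to note we are inside an $S^H_\Delta$ for $\Delta$ of rank equal to that of the ambient $\Sigma_{\lb, \eta}$ and track only the $T_i = 2$-block's contribution relative to the baseline in \eqref{Equation R(Delta)=bar}.

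The main obstacle I expect is getting the exponent bookkeeping exactly right — in particular correctly matching the $\Gamma$-factor superscripts ($-1^{(2d_i-2)}$ and $2$) against the rank differences $\sigma(M)$ and the number of split-place copies, and making sure the $|\mf n|^{2d_i(d_i-1) + d_i + 3}$ power emerges with the right coefficient after combining $I(\mf n)$, the $d_i$-th powers of the rank-$2$ asymptotic, and the volume normalization of $\bar\1_{K(\mf n)}$ versus $\1_{K(\mf n)}$. This is exactly the point where \cite{MS19}'s ``key technical trick'' lives: one does \emph{not} bound the $T_i = 2$ factor by blindly applying Proposition \ref{shapeineq}, but instead exploits the exact asymptotic for rank-$2$ cuspidal shapes (valid because rank-$2$ quasisplit unitary groups are small enough), and the gain over the naive $\bar R(\Delta)$ comes precisely from $\Gamma_{-1^{(2d_i-2)}, 2}(\mf n) < 1$ shaving off the power. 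Everything else — trace-positivity reductions, truncated-Hecke-algebra bounds, the threshold $|\mf n| > D q_{S_1}^{E\kappa}$ — is inherited verbatim from the proofs of Theorem \ref{simpleshapesmain} and Lemma \ref{T1bound}.
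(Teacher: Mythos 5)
Your overall scaffolding (reduce to the simple shape, use the constant-term combinatorics of Lemma \ref{congruenceconstantterm}, feed in Theorem \ref{simpleshapesmain}) is the right ambient setup, but the central step fails: if you apply Proposition \ref{[d]bound} and then raise the rank-$2$ asymptotic to the $d_i$-th power, the $|\mf n|$-exponent you get is $\dim G/P + (\dim H'_i)\cdot d_i = 2d_i(d_i-1) + 4d_i = 2(d_i^2+d_i)$, which is exactly the \emph{unimproved} bound that the proof of Corollary \ref{simpleshapesstronger} attributes to the ``method of proof of Theorem \ref{simpleshapesmain}'' and which Lemma \ref{T2bound} exists to beat. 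For $d_i\ge 2$ one has $2(d_i^2+d_i) > 2d_i(d_i-1)+d_i+3$, so your route cannot close. Your closing remark that the gain comes from $\Gamma_{-1^{(2d_i-2)},2}(\mf n)<1$ ``shaving off the power'' is also not right: those Euler factors are bounded between positive constants and contribute nothing to the power of $|\mf n|$.

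The missing idea is Proposition \ref{[d]boundvar}, the variant of \ref{[d]bound} in which $d_i-1$ of the $d_i$ trace factors are bounded by a \emph{uniform pointwise} constant $B$ valid for every $\tau\in\Sigma_{\lb_i,\eta_i}$, leaving only a single copy of $S^{2}_{\Sigma_{\lb_i,\eta_i}}$. Here one takes $B = O(|\mf n|\,\Gamma_{-1}(\mf n)\,q_{S_1}^{A+B\kappa})$, assembled from three local inputs: Bernstein admissibility to bound $\tr_{\pi_{S_0}}(\varphi_{i,S_0})$ by a constant; Tadi\'c's bound on Satake parameters of unitary irreps of $\GL_2$ to get $\tr_{\pi_{S_1}}(\mc T_i f_{S_1})=O(q_{S_1}^{A+B\kappa})$; and, crucially, the Marshall--Shin estimate $\tr_{\pi^S}\bar\1_{K^{\GL_2}(\mf n)}\le \Gamma_{-1}(\mf n)|\mf n|$ for \emph{arbitrary} irreps $\pi^S$ of $\GL_2$. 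Thus $d_i-1$ factors contribute $|\mf n|^{1}$ each instead of $|\mf n|^{4}$ each, and the total exponent becomes $2d_i(d_i-1) + (d_i-1) + 4 = 2d_i(d_i-1)+d_i+3$ after substituting Theorem \ref{simpleshapesmain} for the one surviving $S_\Sigma$ factor. This replacement of ``$d_i$-th power of the sum'' by ``(sup bound)$^{d_i-1}\times$ sum'' is the actual content of the \cite{MS19} trick, and without it the lemma as stated is out of reach by your argument.
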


\begin{proof}
We use Proposition \ref{[d]boundvar} instead of Proposition \ref{[d]bound}. Bound~$\tr_{\pi_{S_0}}(\varphi_{i,S_0})$ by a constant through Bernstein admissibility as in Lemma \ref{gen[d]}. Next, the Satake eigenvalues of unirreps of $\GL_N$ always have their $|\log_{q_v}(\cdot)|$ bounded by $(N-1)/2$ by the main result of \cite{Tad86} (this is the value achieved by the trivial representation). Therefore, arguments as in \cite[\S5.5]{Dal22} show that 
\[
\tr_{\pi_{S_1}}(\mc T_i f_{S_1}) = O(q_{S_1}^{A + B\kappa})
\]
for all unirreps $\pi_{S_1}$ of $\GL_2$ at $S_1$ (the number of factorizable summands of $\mc T f_{S_1}$ is bounded similarly). 

Finally, for any irrep $\pi^S$ we have, as in the proof of Lemma 5.2 in \cite{MS19}, 
\[
\tr_{\pi^S} \bar \1_{K^{\GL_2}(\mf n)} \leq \Gamma_{-1}(\mf n) |\mf n|.
\]
Applying \ref{[d]boundvar} and all the above bounds (and changing $A,B)$:
\begin{multline*}
S^{H'_i}_{(2, d_i, \lb_i, \eta_i)} (EP_{\lb_i[d_i]} \bar \1_{K^{H'_i}(\mf n)} \varphi_{i, S_0} \mc T_i f_{S_1}) \\
= O(|\mf n|^{d-1} \Gamma_{-1}(\mf n)^{d_i-1}  q_{S_1}^{A+B\kappa})\\
\times |\mf n|^{2d_i(d_i-1)} \Gamma_{-1}(\mf n)^{d_i -1} S^{H_i}_{\Sigma_{\lb_i, \eta_i}} (EP_{\lb_i} \bar \1_{K^{H_i}(\mf n)} \varphi_{i, S_0} \mc T_i f_{S_1}).
\end{multline*}
Where the factors on the second line outside the big-$O$ come from taking the constant term of $\bar \1_{K^H(\mf n_i)}$. Substituting in Theorem \ref{simpleshapesmain} produces the result noting that $H_i = H(2,1, \lb_i, \eta_i) \in \mc E_\el(2)$. 
\end{proof}

\subsubsection{Terms with $T_i = 3$}
\begin{lem}\label{T3bound}
When $T_i = 3$, the terms for factors $S_{(T_i, d_i, \lb_i, \eta_i)}$ coming from the use of Corollary \ref{constanttermineq} implicit in \eqref{simpleshapesproofeq} satisfy that for all $\eps>0$:
\begin{multline*}
S^{H'_i}_{(3, d_i, \lb_i, \eta_i)} (EP_{\lb_i[d_i]} \bar \1_{K^{H'_i}(\mf n)} \varphi_{i, S_0} \mc T_i f_{S_1})\\
= O_\eps(|\mf n|^{\f92d_i(d_i - 1) + (4 + \eps)d_i +5} \Gamma_{-1^{(d)}, 3}(\mf n_i) q_{S_1}^{A + B \kappa}).
\end{multline*}
\end{lem}

\begin{proof}
This is the same argument as Lemma \ref{T2bound} except that we use 
\[
\tr_{\pi^S} \bar \1_{K^{\GL_3}(\mf n)} \leq C(\eps) |\mf n|^{4+\eps}
\]
from corollary 9.2 in \cite{MS19}. Our $C(\eps)$ here is the product of Marshall-Shin's $C(\eps, q_v)$ for $q_v \leq q(\eps)$. 
\end{proof}

\subsubsection{The full bound}
Applying the previous results with $T_i = 1,2,3$ instead of directly applying Proposition \ref{shapeineq}: 
\begin{cor}\label{simpleshapesstronger}
The bound for 
\[
\Delta = (T_i, d_i, \lb_i, \eta_i)_{1 \leq i \leq k} \neq \Sigma_{\lb, \eta}
\]
in Theorem \ref{simpleshapesmain} may be tightened to
\[
S^G_\Delta(\EP_\lb f^\infty_{\fn}) = O(|\mf n|^{R(\Delta)}q_{S_1}^{A + B \kappa})
\]
under all the same conditions and where
\begin{multline} \label{Equation R(Delta)}
 R(\Delta)  = \bar R(\Delta) - \sum_{i : T_i = 1} \lf(\f12 (d_i^2 + d_i) - 1 \ri) - \sum_{i : T_i = 2} \lf(4 d_i - (d_i +3) \ri) \\ - \sum_{\substack{i : T_i = 3 \\ d_i > 1}} \lf(9 d_i - ((4 + 10^{-100})d_i +5) \ri).
\end{multline}
\end{cor}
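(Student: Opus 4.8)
The plan is to bootstrap Corollary \ref{simpleshapesstronger} from the proof of Theorem \ref{simpleshapesmain}, replacing the single blunt application of Proposition \ref{shapeineq} with a two-stage argument: first apply Corollary \ref{constanttermineq} to reduce $S^G_\Delta$ to a product of simple-shape terms on an endoscopic group $H' = \prod_i H'_i$, and then bound each factor $S^{H'_i}_{(T_i,d_i,\lb_i,\eta_i)}$ by whichever of Lemmas \ref{T1bound}, \ref{T2bound}, \ref{T3bound} (for $T_i = 1,2,3$) or the crude bound from the proof of \ref{simpleshapesmain} via Proposition \ref{shapeineq}/\ref{[d]bound} (for $T_i \geq 4$) gives the sharper exponent. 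First I would set up the test function exactly as in \S\ref{asymptoticsetup}, apply Lemma \ref{tracepositivelinearcombination} on the finitely many unramified factors to reduce to trace-positive $f^\infty_{\fn,j}$ (absorbing the loss $\sum_j|\lb_j| = O(q_{S_1})$ into the error exponent), and then invoke Corollary \ref{constanttermineq} with $S_s$ the primes dividing $\mf n$ and $S_b = S_0$, producing
\[
|S^G_\Delta(\EP_\lb f^\infty_{\fn,j})| \leq C_\Delta\, I(\mf n) \prodf_i \lf(S^{H'_i}_{\Sigma_{\lb_i,\eta_i}}\!\bigl(\EP_{\lb_i}\bar\1_{K^{H'_i}(\mf n)}\varphi'_{i,S_0}\mc T_i f_{S_1}\bigr)\ri)^{d_i\oplus}
\]
after using Lemma \ref{congruenceconstantterm} to handle the constant term of $\bar\1_{K^{G,S}(\mf n)}$ along the Levi $M = \prod_i \GL_{T_i}(F_{S_s})^{d_i}$.

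Next I would estimate each factor according to the size of $T_i$. For $T_i=1$, Lemma \ref{T1bound} gives the exact value, contributing $|\mf n|\Gamma_1(\mf n)$ in place of the crude $|\mf n|^{\dim H_i}\Gamma_{T_i^{(d_i)}}$; the improvement over $\bar R$ is precisely the discrepancy $\tfrac12(d_i^2+d_i)-1$ appearing in \eqref{Equation R(Delta)} once one accounts for the $(d_i\oplus)$ power, the constant-term factor from $\bar\1_{K^{H_i}(\mf n)}$, and the $I(\mf n)$ bookkeeping. For $T_i=2$ and $T_i=3$ (with $d_i>1$), Lemmas \ref{T2bound} and \ref{T3bound} respectively give the stated power savings $4d_i-(d_i+3)$ and $9d_i-((4+10^{-100})d_i+5)$ — here I would use $10^{-100}$ as a concrete choice of the $\eps$ in Lemma \ref{T3bound}, which suffices since the exponent need only be slightly below $\tfrac92 d_i(d_i-1)+5d_i$. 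For $T_i\geq 4$ (and for $T_i=3$, $d_i=1$, where the $T_3$-lemma gives nothing) I would fall back on exactly the estimate from the proof of Theorem \ref{simpleshapesmain}, i.e. the inductive hypothesis applied through Proposition \ref{[d]bound}, contributing the full $\bar R$-type exponent for those blocks. Multiplying these block estimates, collecting the $\Gamma$-factors (all of which are $\leq 1$), and using the asymptotic $I(\mf n)|\mf n|^{\dim M} = O(|\mf n|^{\dim P}\Gamma_{-1}(\mf n)^{\sigma(M)})$ from Lemma \ref{congruenceconstantterm} as in \ref{simpleshapesmain}, the aggregate exponent on $|\mf n|$ is exactly $\bar R(\Delta)$ minus the sum of the per-block savings, which is the displayed $R(\Delta)$ in \eqref{Equation R(Delta)}.

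The main obstacle is purely bookkeeping: one must verify that the savings recorded in Lemmas \ref{T1bound}--\ref{T3bound} are stated relative to the \emph{same} normalization (the $\bar R$-exponent $\dim H_i = T_i^2 d_i$ with companion $\Gamma_{T_i^{(d_i)}}$ factor and the $(d_i\oplus)$ power coming out of Proposition \ref{[d]bound}/\ref{[d]boundvar}) that appears in the proof of Theorem \ref{simpleshapesmain}, so that subtracting them term-by-term in the exponent is legitimate. Concretely I would re-derive, for a single block, the crude contribution $|\mf n|^{T_i^2 d_i}\Gamma_{T_i^{(d_i)}}(\mf n)$ that \ref{simpleshapesmain} would assign, then divide by the sharp contributions of \ref{T1bound}--\ref{T3bound}, landing on the integers $\tfrac12(d_i^2+d_i)-1$, $4d_i-(d_i+3)$, $9d_i-((4+10^{-100})d_i+5)$; the one subtlety is that Lemmas \ref{T2bound}, \ref{T3bound} are phrased for $H'_i$ after the endoscopic reduction while the $\Gamma$-factor comparison is cleanest on $H_i$, so I would note (as in those lemmas' proofs) that $H'_i$ and $H_i$ agree at all places dividing $\mf n$ since $\mf n$ is split. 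Everything else — trace-positivity of all intermediate functions, the support condition on the split factors, the bound $\|\mc T_i f_{S_1}\|_\infty = \|f_{S_1}\|_\infty O(q_v^{F\kappa}\kappa^G)$ from Lemma \ref{unramtransferbound}, and extremizing $A,B,C,D,E$ over the finitely many endoscopic and hyperendoscopic groups — is inherited verbatim from the proof of \ref{simpleshapesmain}.
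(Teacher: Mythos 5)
Your proposal is correct and follows essentially the same route as the paper: apply Corollary \ref{constanttermineq} rather than Proposition \ref{shapeineq} directly, then substitute Lemmas \ref{T1bound}, \ref{T2bound}, \ref{T3bound} for the crude per-block bounds that the proof of Theorem \ref{simpleshapesmain} assigns to the $T_i=1,2,3$ factors (falling back on the argument of \S\ref{fb} for larger $T_i$), and record the difference in exponents. Your bookkeeping of the savings, the choice of $\eps=10^{-100}$, and the observation that the $T_i=3$, $d_i=1$ case gives no improvement all match the paper's proof.
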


\begin{proof}
For each summand of $\Delta$ with $T_i = 1$, let $H'_i = H((1, d_i, \lb_i, \eta_i))$. Then the method of proof of Theorem \ref{simpleshapesmain} implicitly bounds terms
\[
S^{H'_i}_{(1, d_i, \lb_i, \eta_i)}(\EP_{\lb_i[d_i]} \bar \1_{K^{H'_i}(\mf n)}  \varphi'_{i, S_0} \mc T_i f_{S_1}) = O(|\mf n|^{\f12 (d_i^2 + d_i)} \Gamma_{-1^{(d_i -1)}, 1^{(d_i)}}(\mf n) q_{S_1}^{A + B \kappa}).
\]
We instead use Lemma \ref{T1bound} to replace them by 
\[
S^{H'_i}_{(1, d_i, \lb_i, \eta_i)} (EP_{\lb_i[d_i]}f^\infty) = O(|\mf n| \Gamma_1(\mf n) q_{S_1}^{A + B \kappa}).
\]

For summands of $\Delta$ with $T_i = 2$, we similarly use Lemma \ref{T2bound} to replace
\begin{multline*}
O(|\mf n|^{2 (d_i^2 + d_i)} \Gamma_{-1^{(d_i -1)}, 2^{(d_i)}}(\mf n) q_{S_1}^{A + B \kappa}) \\
\mapsto O(|\mf n|^{2d_i(d_i - 1) + d_i+3} \Gamma_{-1^{(2d_i-2)}, 2}(\mf n) q_{S_1}^{A + B \kappa}).
\end{multline*}
For $T_i = 3$, we use \ref{T3bound} to replace
\begin{multline*}
O(|\mf n|^{\f92 (d_i^2 + d_i)} \Gamma_{-1^{(d_i -1)}, 3^{(d_i)}}(\mf n) q_{S_1}^{A + B \kappa}) \\
\mapsto O_\eps(|\mf n|^{\f92d_i(d_i - 1) + (4 + 10^{-100})d_i +5} \Gamma_{-1^{(d)}, 3}(\mf n) q_{S_1}^{A + B \kappa}).
\end{multline*}
Substituting in these stronger bounds produces the result. \footnote{Here, we were not very careful with the $\varphi_{S_0}$ terms since we are not making claims about how the error term depends on them.}  
\end{proof}

\begin{rmk} \label{Remark RQ}
    The $R(\Delta)$ is a better upper bound of the true growth rate than $\bar R(\Delta)$. It is obtained by
    making three modifications to the dimension count of the parabolic:
\begin{itemize}
    \item When $T_i = 1$, replace the dimension of the Borel in the $\GL_{d_i}$-block corresponding to that summand with $1$,
    \item When $T_i = 2$, replace the dimension of the Levi $\GL_2^{d_i}$ in the $\GL_{2d_i}$-block corresponding to that summand with $d_i + 3$,
    \item When $T_i = 3$, replace the dimension of the Levi $\GL_3^{d_i}$ in the $\GL_{3d_i}$-block corresponding to that summand with $(4 + \eps)d_i + 5$.
\end{itemize}
\end{rmk}

The next section describes a case where $R(\Delta)$ is the optimal growth rate.

\subsection{Odd GSK Shapes}
Now that we understand $S_{\Sigma_{\lb, \eta}}$, we can use Proposition \ref{goodshapefactor} to compute the limiting asymptotics for odd Generalized Saito-Kurokawa (GSK) shapes (recall: these are shapes $\Delta = (\Delta_i)_i= (T_i, d_i, \lb_i, \eta_i)_{1 \leq i \leq k}$ such that $d_1=1$, $T_i=1$ for $i\geq 2$, and the $d_i$ are odd and distinct).  
Keep the same setup as Section \ref{asymptoticsetup}. Additionally, 
let $\Delta$ as above be odd GSK with $H(\Delta) = G$ and let~$H_i = H(\Delta_i)$. 

We will now prove our second key technical bound, namely that $R(\Delta)$ is the exact growth rate for $S^G_\Delta$. 

\begin{note}
As stated before, the distinctness part of definition \ref{GSK} of odd-GSK will not be necessary for Theorem \ref{goodshapebound}, only for our application in Section \ref{sectionunitary}. The non-GSK shapes to which \ref{goodshapebound} applies 
will never be the dominant contribution to counts of forms.
\end{note}

\begin{thm}\label{goodshapebound}
Fix $f_{\mf n}^\infty$ and $G \in \wtd {\mc E}_\el(N)$ as in Section \ref{asymptoticsetup}. Assume $\Delta$ is odd GSK as in Definition \ref{GSK} and that the pair $(\Delta, \varphi_{S_0})$ satisfies Conjecture \ref{stabletransferconj}. 

Then there are constants $A,B,C,D,E$ with $C \geq 1$ depending only on $G$ and $\Delta$ such that whenever $|\mf n| \geq Dq_{S_1}^{E \kappa}$,
\begin{multline*}
|\mf n|^{-R(\Delta)} \Gamma_{L(\Delta)}(\mf n)^{-1} S^G_\Delta(\EP_{\lb} f_{\mf n}^\infty) \\= 2^{-k+1} \Lambda(H_1, \mc T_1 f_{S_1}, \varphi_{1, S_0}) \times \prodf_{i \geq 2} \Lambda^\ab(H_i, \mc T_i f_{S_1}, \varphi_{i, S_0}) + O(|\mf n|^{-C} q_{S_1}^{A + B \kappa}),
\end{multline*}
with growth rate from Corollary \ref{simpleshapesstronger} (which simplifies in this case)
\[
R(\Delta) = \f12 \dim H_1 + (k-1) + \f 12 \lf(\dim G - \sum_{i \geq 2} \dim H_i, \ri),
\]
indexing list
\[
L(\Delta) = T_1, 1^{(k-1)}, -1^{(k-1)}, 
\] 
and masses
\begin{align*}
\Lambda(H_1, \mc T_1 f_{S_1}, \varphi_{1, S_0}) &= \varphi_{1, S_0}(1) (\mc T_1 f_{S_1})(1) \f{\dim \lb_1}{|\Pi_\disc(\lb_1)|} \f{\vol(H_1(F) \bs H_1(\A_F))}{ \vol(K_{H_1}^S) \vol(H^c_{1, \infty})},\\
\Lambda^\ab(H_i, \mc T_i f_{S_1}, \varphi_{i, S_0}) &= \f{\vol(H_i(F)^\ab \bs H_i(\A)^\ab)}{\vol(K_{H_i^\ab}^S)\vol((H_{i, \infty})^\ab)} \int_{H_{i, \der, S_0, S_1}} \mc T_i f_{S_1} \varphi_{i, S_0}(h) \,dh.
\end{align*}
(Recall the $\mc T_i f_{S_1}$  and $\varphi_{i, S_0}$ are defined as in Lemma \ref{unramtrace} and Conjecture \ref{stabletransferconj}.)
\end{thm}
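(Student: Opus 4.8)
The plan is to feed the factorization of Proposition \ref{goodshapefactor} into the asymptotic of Theorem \ref{simpleshapesmain} for the generic block and into the exact character-shape formula of Lemma \ref{T1bound} for the remaining blocks, and then to bookkeep the powers of $|\mf n|$ and the $\Gamma$-factors. Write $\Delta = (\Delta_i)_{1 \le i \le k}$ with $\Delta_1 = (T_1, 1, \lb_1, \eta_1) = \Sigma_{\lb_1, \eta_1}$ and $\Delta_i = (1, d_i, \lb_i, \eta_i)$ for $i \ge 2$ (this is what odd GSK means, Definition \ref{GSK}), and set $H_i = H(\Delta_i)$, so that $H_1 \in \mc E_\sm(T_1)$ while each $H_i \in \mc E_\sm(d_i)$ for $i \ge 2$ has $H_i^\ab = U_{E/F}(1)$. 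Taking $S_s$ to be the (split) primes dividing $\mf n$ and $S_b = S_0$ in Proposition \ref{goodshapefactor}, and using the hypothesis that $(\Delta, \varphi_{S_0})$ satisfies Conjecture \ref{stabletransferconj} together with oddness of the $d_i$ (so $\eps_\psi \equiv 1$ by Lemma \ref{lemma epsilon parity}), one gets
\[
S^G_\Delta(\EP_{\lb'} f^\infty_{\mf n}) = 2^{-k+1}\, \prodf_i\ S^{H_i}_{\Delta_i}\!\bigl(\EP_{\lb'_i}\, (\bar\1_{K^{G,S}(\mf n)})_{M,i}\ \varphi_{i, S_0}\ \mc T_i f_{S_1}\bigr),
\]
where $M \simeq \GL_{T_1} \times \prod_{i \ge 2} \GL_{d_i}$ is the Levi of $G_{S_s} \simeq \GL_{N, S_s}$ attached to $\Delta$, $(\cdot)_{M,i}$ is the $i$-th component of the constant term, $\varphi_{i,S_0}$ is the stable transfer of the conjecture, and $\mc T_i$ is the unramified transfer of \S\ref{sectionlocaltransfers}; here the hyperspecial indicator away from $\mf n, S_0, S_1$ transfers to the hyperspecial indicator on $H_i$ by the fundamental lemma for $A$-packets (Lemma \ref{packetfunlem}).

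Next I would resolve the level at $\mf n$. Since the primes dividing $\mf n$ are split, at each such place $G_v \simeq \GL_{N,v}$ and Lemma \ref{congruenceconstantterm} gives $(\bar\1_{K^{G,S}(\mf n)})_M = I(\mf n) \prod_i \bar\1_{K^{H_i, S}(\mf n)}$ with $I(\mf n) = (1 + O(|\mf n|^{-2}))\, |\mf n|^{\dim(\GL_N/P)}\, \Gamma_{-1}(\mf n)^{k-1}$, using $\sigma(M) = \rank_\ssm \GL_N - \rank_\ssm M = k - 1$ and $\dim(\GL_N/P) = \tfrac12(\dim G - \dim H_1 - \sum_{i \ge 2}\dim H_i)$ (the blocks $\GL_{T_1}, \GL_{d_i}$ having the same dimensions as $H_1, H_i$). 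Pulling the scalar $I(\mf n)$ out in front, the factor for $i = 1$ becomes $S^{H_1}_{\Sigma_{\lb_1, \eta_1}}(\EP_{\lb_1}\, \bar\1_{K^{H_1, S}(\mf n)}\, \varphi_{1, S_0}\, \mc T_1 f_{S_1})$, to which Theorem \ref{simpleshapesmain}, equation \eqref{equation sigma shape bound}, applies (with $G \rightsquigarrow H_1$ and test function $\mc T_1 f_{S_1} \in \ms H^\ur(H_{1, S_1})^{\le \kappa}$, whose sup-norm is polynomial in $q_{S_1}$ by Lemma \ref{unramtransferbound}) to yield $|\mf n|^{\dim H_1}\Gamma_{T_1}(\mf n)\,(\Lambda(H_1, \mc T_1 f_{S_1}, \varphi_{1, S_0}) + O(|\mf n|^{-C} q_{S_1}^{A + B\kappa}))$ with $\Lambda(H_1, \cdot, \cdot)$ the mass of Theorem \ref{stbound} --- precisely the expression in the statement. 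For each $i \ge 2$ the shape $\Delta_i$ is a shape of characters (\S\ref{bgs}), and Lemma \ref{T1bound} (which is Corollary \ref{charshapeformula} together with Poisson summation, the sum collapsing to $\gamma = 1$ once $|\mf n| \ge D q_{S_1}^{E \kappa}$) gives the \emph{exact} value $|\mf n|\, \Gamma_1(\mf n)\, \Lambda^\ab(H_i, \mc T_i f_{S_1}, \varphi_{i, S_0})$, again matching the statement.

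Finally I would multiply the $k$ factors together. The power of $|\mf n|$ is $\dim(\GL_N/P) + \dim H_1 + (k - 1) = \tfrac12 \dim H_1 + (k-1) + \tfrac12(\dim G - \sum_{i \ge 2}\dim H_i) = R(\Delta)$, and the $\Gamma$-factor is $\Gamma_{-1}(\mf n)^{k-1}\Gamma_{T_1}(\mf n)\Gamma_1(\mf n)^{k-1} = \Gamma_{L(\Delta)}(\mf n)$ for $L(\Delta) = T_1, 1^{(k-1)}, -1^{(k-1)}$; one checks that this $R(\Delta)$ agrees with the value assigned in Corollary \ref{simpleshapesstronger}. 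The only error surviving division by $|\mf n|^{R(\Delta)}\Gamma_{L(\Delta)}(\mf n)$ is the $O(|\mf n|^{-C}q_{S_1}^{A+B\kappa})$ from the $i = 1$ block and the $O(|\mf n|^{-2})$ from $I(\mf n)$; the character blocks are exact and contribute none. The polynomial-in-$q_{S_1}$ inflation of norms under the $\mc T_i$ and the dependence on the $\varphi_{i, S_0}$ are absorbed into the constants $A, B, D, E$, which (as in Theorem \ref{simpleshapesmain} and Proposition \ref{sgbound}) we allow to depend on $G$ and $\Delta$.

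The main obstacle is not conceptual: the $\eps_\psi$-cancellation and the stable transfer into the $H_i$ are already packaged in Proposition \ref{goodshapefactor}, which is exactly why the statement carries the hypothesis on Conjecture \ref{stabletransferconj}, while the generic-block asymptotic is Theorem \ref{simpleshapesmain} and the character blocks are governed by Lemma \ref{T1bound}. What remains is the careful matching of Haar-measure and transfer-factor normalizations so that the congruence indicators transfer to congruence indicators and the masses come out as $\Lambda$ and $\Lambda^\ab$, and the uniform absorption of the $q_{S_1}$-dependence coming from the transfers $\mc T_i$ and from the endoscopic/Bernstein-theoretic choices at $S_0$ into a single error term; the latter is the one point where one must be genuinely careful.
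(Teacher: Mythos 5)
Your proposal is correct and follows essentially the same route as the paper's proof: Proposition \ref{goodshapefactor} with $S_s$ the primes dividing $\mf n$ and $S_b = S_0$, then Lemma \ref{congruenceconstantterm} to extract $I(\mf n)$, Theorem \ref{simpleshapesmain} for the generic block, Lemma \ref{T1bound} for the character blocks, and the same final bookkeeping of $|\mf n|$-powers and $\Gamma$-factors. The only difference is that you spell out the exponent and $\Gamma$-factor arithmetic slightly more explicitly than the paper does.
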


\begin{proof}
We first apply Proposition \ref{goodshapefactor} with $S_b = S_0$ and $S_s$ the places dividing~$\mf n$: whenever $|\mf n|$ is big enough,
\[
S^G_\Delta(\EP_\lb f_{\mf n}^\infty)
= 2^{-k+1} I(\mf n) \prodf_i S^{H_i}_{(T_i, d_i, \lb_i, \eta_i)} (EP_{\lb_i[d_i]} \bar \1_{K^{H_i}(\mf n)} \varphi_{i, S_0} \mc T_i f_{S_1}),
\]
where we applied Lemma \ref{congruenceconstantterm} to compute constant terms. 

For $i = 1$, $d_i = 1$ so from Theorem \ref{simpleshapesmain} we get (for each summand in the $\prodf$):
\begin{multline*}
S^{H_1}_{(T_1, 1, \lb_1, \eta_1)} (EP_{\lb_1[d_1]} \bar \1_{K^{H_1}(\mf n)} \varphi_{1, S_0} \mc T_1 f_{S_1}) \\ = |\mf n|^{\dim H_1} \Gamma_{T_1}(\mf n) \Lambda(H_1, \mc T_1 f_{S_1}, \varphi_{1, S_0}) + O(|\mf n|^{\dim H_1 - C} q_{S_1}^{A + B \kappa}).
\end{multline*}
For $i > 1$, $T_i = 1$, so we apply Lemma \ref{T1bound} and get:
\begin{multline*}
S^{H_i}_{(1, d_i, \lb_i, \eta_i)} (EP_{\lb_i[d_i]}f^\infty_{\mf n}) \\ = |\mf n|\Gamma_1(\mf n)  \f{\vol(H_i(F)^\ab \bs H_i(\A)^\ab)}{\vol(K^{H_i^\ab,S})\vol((H_{i, \infty})^\ab)} \int_{H_{i,\der, S_1,S_0}}  \mc T_i f_{S_1} \varphi_{i, S_0}(h) \,dh.
\end{multline*}
The result follows from multiplying and summing over factorizable summands in the $\prodf$ and taking the maximum over the various $A$'s through $E$'s above. We use the second part of Lemma \ref{congruenceconstantterm} to estimate $I(\mf n_i)$ and note that the  $\dim G/P$ there is $1/2(\dim G - \dim M)$.
\end{proof}

\begin{note}
We write the scaling factor in the theorem statement as it is to emphasize the exact growth in $\mf n$. It comes from the more conceptual formula:
\begin{equation*}
|\mf n|^{-R(\Delta)}  \Gamma_{L(\Delta)}(\mf n)= \lf( |\mf n|^{\f12(\dim G - \sum_i \dim H_i)}\Gamma_{-1}(\mf n)^{k-1}\ri) \lf( \prod_i [K^{\GL_{T_i}} : K^{\GL_{T_i}}(\mf n)] \ri)
\end{equation*}
where the first factor comes from the parabolic descent of the function $\bar \1_{K(\mf n)}$ and the second from the expected growth rates of counts on the groups $H_i$.
\end{note}

We give two examples with $S_0 = \emptyset$, removing dependence on Conjecture \ref{stabletransferconj}. 

\begin{ex}
Consider the case when $S_1 = S_0 = \emptyset$. Then the theorem reduces to 
\begin{multline}
|\mf n|^{-R(\Delta)}  \Gamma_{L(\Delta)}(\mf n)S^G_\Delta(\EP_\lb f_{\mf n}^\infty) \\
= 2^{-k+1} \f{\dim \lb_1}{|\Pi_\disc(\lb_1)|} \f{\vol(H(F) \bs H(\A_F))}{ \vol(K_{H}^\infty) \vol(H^c_\infty)} + O(|\mf n|^{-C})
\end{multline}
where 
\[
H = H_1 \times \prod_{i > 1} H_i^\ab.
\]
\end{ex}

\begin{ex}
Consider the case when $S_0 = \emptyset$ and $S_1$ is a singleton $\{v\}$. Let $\mu^{\pl, \ur}(H_v)$ be the Plancherel measure on the unramified spectrum of $\wh H_v$ where
\[
H = H_1 \times \prod_{i > 1} H_i^\ab
\]
as before. Then $H$ is dual to the group  associated to $\Delta$ in \eqref{shapeLembedding} and comes with a map on the space of Satake parameters as in \eqref{satakepushforward}:
\[
\mc S_\Delta : \wh H^{\ur, \temp}_v \to \wh G^\ur_v.
\]

Define the pushforward
\[
\mu^{\pl(\Delta), \ur}_v := \mu^{\pl(\Delta), \ur}(G_v) := (\mc S_\Delta)_*(\mu^{\pl, \ur}(H_v)). 
\]
By \eqref{Tcharactershape}, the factors related to $f_{S_1}$ in the $\Lambda$'s from Theorem \ref{goodshapebound} multiply to
\[
(\mc T_\Delta f_v)(1) = \mu^\Delta_v(\wh f_v) 
\]
by Fourier inversion. Substituting this into \ref{goodshapebound}, we get
\begin{multline}\label{DeltaPlancherelEquidistribution}
|\mf n|^{-R(\Delta)}  \Gamma_{L(\Delta)}(\mf n)S^G_\Delta(\EP_\lb f_{\mf n}^\infty) \\
= \lf(2^{-k+1}  \f{\dim \lb_1}{|\Pi_\disc(\lb_1)|} \f{\vol(H(F) \bs H(\A_F))}{ \vol(K_{H}^\infty) \vol(H^c_\infty)} \ri) \mu^{\pl(\Delta), \ur}_v(\wh f_v) + O(|\mf n|^{-C} q_{S_1}^{A + B \kappa}).
\end{multline}
This will be used to interpret main Theorem \ref{mainexact} as Plancherel equidistribution. 
\end{ex}

\subsection{General Shapes: Conjectural Optimal Bound}\label{sectionconjecturalbound}
Considerations of the notion of 
GK-dimension give us a heuristic for an optimal growth rate 
for any $\Delta$. 

For $\pi_v$ a representation of a $p$-adic group $G_v$, the Harish-Chandra-Howe local character expansion gives an expression
\[
\Theta_{\pi_v}(\exp g) = \sum_{O \in N} c_O(\pi) \wh \mu_O(g)
\]
where $N$ is the set of nilpotent orbits of $G_v$ acting on $\Lie G$, $\mu_O(G)$ is the Fourier transform of the $\delta$-measure on the orbit $O$, $c_O(\pi)$ are constants, and $g \in \Lie G$ is in a small enough open compact at the identity. 

\begin{dfn}
With the notation as above, let the GK-dimension of $\pi_v$ 
\[
d_{GK}(\pi_v) := \f12 \max\{\dim O : c_O(\pi) \neq 0\}.
\]
\end{dfn}
We can then compute:
\begin{lem}
Assume $G_v$ is unramified. Then
\[
\dim\lf(\pi_v^{K^G(q_v^n)}\ri) = \tr_{\pi_v} \bar \1_{K^G(q_v^n)} \asymp q_v^{n d_{GK}(n)}.
\]
\end{lem}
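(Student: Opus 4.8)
The claim relates the dimension of $K^G(q_v^n)$-fixed vectors to the leading term of the Harish-Chandra--Howe local character expansion. The plan is to compute $\tr_{\pi_v} \bar{\1}_{K^G(q_v^n)}$ by unwinding the definition: since $\bar{\1}_{K^G(q_v^n)}$ is the volume-normalized indicator of the $n$-th Moy-Prasad filtration subgroup, we have $\tr_{\pi_v} \bar{\1}_{K^G(q_v^n)} = \vol(K^G(q_v^n))^{-1} \int_{K^G(q_v^n)} \Theta_{\pi_v}(g)\,dg$, and this equals $\dim(\pi_v^{K^G(q_v^n)})$ because $\bar{\1}_{K^G(q_v^n)}$ acts as the projection onto the fixed subspace (the group $K^G(q_v^n)$ being compact open). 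So the content is genuinely the asymptotic $\asymp q_v^{n\, d_{GK}(\pi_v)}$.

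\textbf{Key steps.} First I would pass to the Lie algebra: for $n$ large enough, $K^G(q_v^n) = \exp(\mathfrak{g}_n)$ for $\mathfrak{g}_n$ the corresponding lattice in $\Lie G$, on which the local character expansion $\Theta_{\pi_v}(\exp X) = \sum_{O \in N} c_O(\pi_v) \widehat{\mu}_O(X)$ is valid (this requires $n$ past the depth of $\pi_v$ and past the radius of convergence, both harmless for asymptotics). Second, integrate term by term: $\vol(\mathfrak{g}_n)^{-1}\int_{\mathfrak{g}_n} \widehat{\mu}_O(X)\,dX$. By Fourier inversion this is (up to a fixed power of $q_v$ from normalizing measures) the mass $\mu_O$ assigns to a dual lattice $\mathfrak{g}_{-n+c}$, i.e. the volume of the intersection of the orbit $O$ with a ball of radius $\sim q_v^{n}$. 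Third, invoke the homogeneity of the orbital integral measure $\mu_O$ under the $\mathbb{G}_m$-dilation action on $\Lie G$: $\mu_O$ scales with weight $\dim O$, so $\mu_O(\mathfrak{g}_{-n+c}) \asymp q_v^{n \dim O}$ with a nonzero implied constant. Fourth, sum over $O$: the dominant term comes from the orbit of maximal dimension with $c_O(\pi_v) \neq 0$, giving $\asymp q_v^{n \cdot \max\{\dim O : c_O(\pi_v) \neq 0\}} = q_v^{2n\, d_{GK}(\pi_v)}$. Here I should double-check the normalization convention: the statement as written has $q_v^{n\, d_{GK}(n)}$ where $d_{GK} = \frac12 \max \dim O$, so the factor of $2$ in the exponent is absorbed — I would verify the measure-normalization constants make this consistent (this is the standard Shalika-germ homogeneity bookkeeping).

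\textbf{Main obstacle.} The delicate point is the lower bound: one must ensure that integrating the subdominant terms $\widehat{\mu}_{O'}$ for $\dim O' < \dim O_{\max}$ does not cancel the leading term. This follows because the leading contributions from distinct orbit dimensions live at genuinely different orders in $q_v^n$, so no cancellation across scales is possible, and within the top scale the relevant Shalika germ coefficient $c_{O_{\max}}(\pi_v)$ is nonzero by definition of $d_{GK}$; still, getting a clean two-sided $\asymp$ (rather than just $\ll$) requires knowing the leading orbital integral $\mu_{O_{\max}}(\mathfrak{g}_{-n+c})$ is bounded below by a positive constant times $q_v^{n\dim O_{\max}}$, which is where the $\mathbb{G}_m$-homogeneity of nilpotent orbital integrals (e.g. as in the work of Shalika, or Harish-Chandra's original treatment) does the real work. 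I expect this homogeneity/positivity input to be the crux; everything else is a formal manipulation of the character expansion.
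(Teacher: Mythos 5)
The paper states this lemma without proof (it sits in the heuristic discussion of \S\ref{sectionconjecturalbound}), so there is no argument of the paper to compare against; your proposal supplies the standard one, and it is essentially correct. The identification $\tr_{\pi_v}\bar\1_{K^G(q_v^n)}=\dim(\pi_v^{K^G(q_v^n)})$ is immediate since the normalized indicator acts as the projector onto fixed vectors, and the asymptotic does follow from integrating the local character expansion over the Moy--Prasad lattice $\mf g_n$ for $n$ past the depth, using the homogeneity $\wh\mu_O(\varpi^{2}X)=q_v^{\dim O/2}\,\wh\mu_O(X)$ of Fourier transforms of nilpotent orbital integrals (restrict to $n$ of fixed parity, or group terms, to avoid the usual square-class issue in the dilation action). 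One point deserves more care than you give it: the two-sided bound can fail to follow from ``$c_{O_{\max}}\neq 0$'' alone when \emph{several} orbits of maximal dimension carry nonzero coefficients, since the terms $c_O(\pi_v)\,\mu_O(\mf g_n^\perp)$ all live at the same scale $q_v^{n\dim O/2}$ and could a priori cancel. The quantities $\mu_O(\mf g_n^\perp)$ are positive (every nilpotent orbit has $0$ in its closure, so it meets any lattice in a set of positive orbital measure), so what is needed is nonnegativity of the leading coefficients $c_O(\pi_v)$ for maximal $O$ in the wavefront set; this is exactly what M{\oe}glin--Waldspurger \cite{MW87} provide by identifying these $c_O$ with dimensions of degenerate Whittaker spaces, which is the result the paper invokes immediately after the lemma. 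With that input your argument closes; without it the lower bound in $\asymp$ is not justified.
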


M{\oe}glin and Walspurger in \cite{MW87} associate to each $O \in N$ a particular ``degenerate Whittaker model'' $W_O$. They prove that the maximal $O$ such that $c_O(\pi) \neq 0$ are exactly the same as the maximal $O$ such that $\Hom(\pi_v, W_O) \neq 0$.

Now specialize to $G_v = \GL_n(F_v)$. For $\pi_v$ a tempered representation of $\GL_t(F_v)$ define $\pi_v[d]$ to be the Langlands quotient of the parabolic induction
\[
\Ind_{P}^{G_v} (\pi_v |\det|^{(d-1)/2} \boxtimes \pi_v |\det|^{(d-3)/2} \boxtimes \cdots \boxtimes \pi_v |\det|^{-(d-1)/2}).
\]
This is the local component of the Speh representations $\pi_{\tau[d]}$ in \S\ref{ACparamtorep}. 

Any tempered $\pi_v$ on $\GL_t(F_v)$ is generic and therefore satisfies 
\[
GK(\pi_v) = \f12 t(t-1).
\]
On the other hand, \cite{MW87} compute (see \cite{Mit20} for a summary) that the maximal~$O \in N$ such that $\Hom(\pi_v[d], W_O) \neq 0$ is the principal nilpotent orbit corresponding to the partition $(t^{(d)})$ through Jordan normal form. We can therefore compute
\[
GK(\pi_v[d]) = \f12(t^2d^2 - td^2) = \f12 d^2t(t-1)
\]
and get
\begin{equation}\label{bestfixedvectorbound}
\dim\lf(\pi_v[d]^{K(q_v^n)}\ri) \asymp  q_v^{ \f 12 t(t-1)(d^2-1)n}\dim\lf(\pi_v^{K(q_v^n)}\ri).
\end{equation}

By the Ramanujan conjecture, for any simple parameter $\psi[d]$, we expect all the~$\psi_v$ to correspond to tempered representations on the $\GL$ side (this is in fact known for our case, see lemma 6.1 in \cite{MS19}). Therefore we can use the heuristic \eqref{bestfixedvectorbound} instead of Lemmas \ref{T1bound}, \ref{T2bound},and \ref{T3bound} in Theorem \ref{simpleshapesstronger} and get:
\begin{conj}\label{simpleshapesconj}
The bound for any
\[
\Delta = (T_i, d_i, \lb_i, \eta_i)_{1 \leq i \leq k} \neq \Sigma_{\lb, \eta}
\]
in Theorem \ref{simpleshapesmain} may be tightened to
\[
S^{|G|}_\Delta(\EP_\lb f^\infty_{\mf n}) = O(|\mf n|^{R_0(\Delta)}q_{S_1}^{A + B \kappa})
\]
under all the same conditions and where:
\begin{align*}
 R_0(\Delta)  &:= \f12\lf(N^2 - \sum_i T_i^2 d_i^2\ri) + \sum_i \lf( T_i^2 + \f12 T_i(T_i-1)(d_i^2-1) \ri) \\
 &= \bar R(\Delta) - \sum_i \lf(\f12 T_i^2 d_i(d_i+1) - \lf(T_i^2 + \f12 T_i(T_i-1)(d_i^2-1) \ri) \ri).
\end{align*}
\end{conj}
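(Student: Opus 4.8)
The plan is to mirror exactly the structure of the proof of Corollary \ref{simpleshapesstronger}, but with the three reduction Lemmas \ref{T1bound}, \ref{T2bound}, \ref{T3bound} (valid only for $T_i \leq 3$) replaced by the uniform heuristic bound \eqref{bestfixedvectorbound} applied to every simple block $\Sigma_{\lb_i, \eta_i}$ regardless of its rank $T_i$. Concretely, I would take the chain of reductions already in place — Corollary \ref{constanttermineq} to pass from $S^N_\Delta$ to a product of simple-block terms $S^{T_i}_{\Sigma_{\lb_i, \eta_i}}$ after descending the level $\bar\1_{K^{G,S}(\mf n)}$ through the Levi $M \simeq \prod_i \GL_{T_i}^{d_i}$ via Lemma \ref{congruenceconstantterm}, then Proposition \ref{[d]boundvar} to handle the $d_i$-fold product inside each block. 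The one new ingredient is that, because $\psi_i[d_i]$ is a simple parameter with $\psi_i$ cuspidal, the known case of Ramanujan from \cite[Lem. 6.1]{MS19} guarantees each local component $\psi_{i,v}$ is tempered on the $\GL_{T_i}$ side, so the representation being traced against $\bar\1_{K^{\GL_{T_i}}(q_v^{v(\mf n)})}$ is precisely a local Speh $\pi_v[d_i]$ of the type analyzed by Mœglin–Waldspurger; its invariant-vector count is $\asymp q_v^{\frac12 T_i(T_i-1)(d_i^2-1)\, v(\mf n)}$ times that of the tempered $\pi_v$.

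The key steps, in order: (1) exactly as in Theorem \ref{simpleshapesmain}, reduce to proving the bound \eqref{equation non sigma shape bound} for a fixed $\Delta \neq \Sigma_{\lb,\eta}$, since the $\Sigma$-asymptotic then follows from \eqref{SGshapeexpansion} and Proposition \ref{sgbound}; (2) use Lemma \ref{tracepositivelinearcombination} to reduce to trace-positive test functions at the finitely many places of $S_1$; (3) apply Corollary \ref{constanttermineq} with $S_s$ the primes dividing $\mf n$ and $S_b = S_0$, producing the product over $i$ of terms $S^{T_i}_{\Sigma_{\lb_i,\eta_i}}$ raised to the $d_i$-th power (with the $\prodf$ and $\oplus$ bookkeeping), times the descent factor $I(\mf n)$; (4) for the $d_i$-fold power, invoke Proposition \ref{[d]boundvar} with the bound $B$ on $\tr_{\td\tau^\infty}(\cdots)$ taken from \eqref{bestfixedvectorbound}, i.e. $B \asymp q_v^{\frac12 T_i(T_i-1)(d_i^2-1) v(\mf n)} \times (\text{tempered count})$, where the tempered count at level $\mf n$ is $O(|\mf n|^{T_i^2 - 1}\Gamma_{\pm}(\mf n))$ up to the usual $q_{S_1}$ factors; (5) substitute the $i=1$, $T_i=2,3,\dots$ cases of Theorem \ref{simpleshapesmain} for the residual $S^{H_i}_{\Sigma_{\lb_i,\eta_i}}$ factor, which contributes $|\mf n|^{\dim H_i} = |\mf n|^{T_i^2}$; (6) combine with the $I(\mf n) |\mf n|^{\dim M}$ factor from Lemma \ref{congruenceconstantterm}, which is $O(|\mf n|^{\dim P})$ with $\dim P = \bar R(\Delta)$, and with the Speh correction $\sum_i \frac12 T_i(T_i-1)(d_i^2-1)$ and the replacement of each block's Levi dimension $\frac12 T_i^2 d_i(d_i+1)$ (the descent of $\bar\1_{K^{\GL_{T_id_i}}(\mf n)}$ through $\GL_{T_i}^{d_i}$) by the Speh/tempered count $T_i^2 + \frac12 T_i(T_i-1)(d_i^2-1)$; bookkeeping the exponents gives exactly
\[
R_0(\Delta) = \bar R(\Delta) - \sum_i \Bigl(\tfrac12 T_i^2 d_i(d_i+1) - \bigl(T_i^2 + \tfrac12 T_i(T_i-1)(d_i^2-1)\bigr)\Bigr),
\]
which simplifies to the first displayed formula; (7) absorb the remaining $\Gamma_{\pm}(\mf n) \leq 1$ factors and extremize $A,B,C,D,E$ over the finitely many blocks and hyperendoscopic groups, as before.

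The main obstacle — and the reason this is only a conjecture rather than a theorem — is step (4): the bound \eqref{bestfixedvectorbound} on $\dim(\pi_v[d]^{K(q_v^n)})$ is derived only heuristically here, from the $\asymp q_v^{n d_{GK}(\pi_v)}$ principle together with the Mœglin–Waldspurger identification of the maximal nilpotent orbit carrying a nonzero coefficient in the Harish-Chandra–Howe expansion with the one supporting a degenerate Whittaker model. To make this rigorous one needs genuine control of the local character expansion of the Speh representation $\pi_v[d]$ — in particular a proof that the leading Gelfand–Kirillov dimension really is $\frac12 d^2 t(t-1)$ and that the corresponding coefficient governs the asymptotic growth of fixed-vector dimensions, which is the content of the interplay of ideas cited around \cite{MW87} and \cite{JLZ22}. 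Everything else in the argument is a routine adaptation of Corollary \ref{simpleshapesstronger}; once the Speh fixed-vector count is available as a theorem, the bound $R_0(\Delta)$ follows immediately by the exact same exponent arithmetic, and under the additional Stable/Speh Transfer hypotheses one would expect it to be sharp by comparison with the odd GSK computation of Theorem \ref{goodshapebound}. I would also note, as a sanity check to include in the write-up, that $R_0(\Sigma_{\lb,\eta}) = \bar R(\Sigma_{\lb,\eta}) = \dim G$ and that $R_0(\Delta) \leq R(\Delta) \leq \bar R(\Delta)$ for all $\Delta$, with the first inequality strict exactly when some $d_i > 1$.
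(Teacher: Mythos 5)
Your derivation matches the paper's: this is stated only as a conjecture precisely because the step you isolate — making the GK-dimension/Speh fixed-vector asymptotic \eqref{bestfixedvectorbound} rigorous and, crucially, \emph{uniform} in the tempered $\pi_v$ — is open, and the paper pinpoints the missing ingredient as uniform upper bounds on the coefficients $c_O(\pi_v)$ for \emph{non-maximal} orbits $O$ in the wavefront set, where the M{\oe}glin--Waldspurger techniques do not apply; otherwise your reduction chain (Corollary \ref{constanttermineq}, Lemma \ref{congruenceconstantterm}, Proposition \ref{[d]boundvar}, then the exponent bookkeeping replacing each block's Levi dimension $\tfrac12 T_i^2 d_i(d_i+1)$ by $T_i^2 + \tfrac12 T_i(T_i-1)(d_i^2-1)$) is exactly the intended one. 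One correction to your final sanity check: the inequality $R_0(\Delta) \leq R(\Delta)$ is \emph{not} strict whenever some $d_i > 1$ — for a block with $T_i = 1$ both corrections equal $\tfrac12 d_i(d_i+1) - 1$, so $R_0(\Delta) = R(\Delta)$ for all GSK shapes (which is why Theorem \ref{goodshapebound} can show $R(\Delta)$ is already optimal there); strictness requires a block with $T_i \geq 2$ and $d_i \geq 2$.
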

We think of $R_0(\Delta)$ again as making a modification to the dimension count of the parabolic that gives $\bar R(\Delta)$: replace the dimension of the parabolic corresponding to partition $(d_i^{(t_i)})$ in the $\GL_{t_id_i}$-block on the diagonal with $T_i^2 + \f12T_i(T_i-1)(d_i^2-1)$. 

The main obstacle to proving Conjecture \ref{simpleshapesconj} is showing that the asymptotic~\eqref{bestfixedvectorbound} is uniform enough in $\pi_v$. This 
would require uniform upper bounds on the coefficients $c_O(\pi_v)$ 
for tempered $\pi_v$. 
In particular, we would need bounds 
for \emph{non-maximal} $O$ in the wavefront set of $\pi_v$ so the techniques of \cite{MW87} don't apply.

Of course, Conjecture \ref{simpleshapesconj} can only be exact for $S^{|G|}_\Delta$ instead of $S^G_\Delta$. In the case where all the $d_i$ have the same parity, $s_\psi = 1$ for all $\psi \in \Delta$ so these agree. Otherwise, the terms $S_\psi$ for different $\psi \in \Delta$ are attached to a varying sign $\eps_\psi(s_\psi)$. If we na\"ively assume some non-trivial cancellation, we get the following:

\begin{conj}\label{simpleshapesconjeps}
 Recall the setup and conditions for Theorems \ref{simpleshapesmain}, \ref{simpleshapesstronger}, and Conjecture \ref{simpleshapesconj}. Then, if all the $d_i$ have the same parity,
\[
C_{\eps,1} |\mf n|^{R_0(\Delta) - \eps} \leq S^G_\Delta(\EP_\lb f^\infty_{\mf n}) \leq C_{\eps,2} |\mf n|^{R_0(\Delta) + \eps}
\]
for all $\eps > 0$ and some constants $C_{\eps, 1}, C_{\eps,2}$ such that $C_{\eps, 2} = O_\eps(q_{S_1}^{A + B \kappa})$. 

If the $d_i$ have different parities, then
\[
S^G_\Delta(\EP_\lb f^\infty_{\mf n}) = o(|\mf n|^{R_0(\Delta)} q_{S_1}^{A + B \kappa}).
\]
\end{conj}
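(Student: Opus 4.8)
\textbf{Proof proposal for Conjecture \ref{simpleshapesconjeps}.}

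The plan is to run the same induction as in Theorem \ref{simpleshapesmain} and Corollary \ref{simpleshapesstronger}, but feeding in the heuristic \eqref{bestfixedvectorbound} in place of Lemmas \ref{T1bound}, \ref{T2bound}, \ref{T3bound}, exactly as sketched before Conjecture \ref{simpleshapesconj}. First I would establish the upper bound in the same-parity case: Lemma \ref{lemma epsilon parity} gives $\eps_\psi \equiv 1$ for all $\psi \in \Delta$, so $S^G_\Delta = S^{|G|}_\Delta$ and Conjecture \ref{simpleshapesconj} applies directly (hence the exponent $R_0(\Delta) + \eps$, with the $\eps$ absorbing the loss from uniformity in the $c_O(\pi_v)$ that the genuine proof of \ref{simpleshapesconj} would require). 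The matching lower bound $C_{\eps,1}|\mf n|^{R_0(\Delta)-\eps}$ would come from choosing $f^\infty_{\mf n}$ carefully --- e.g. $\varphi_{S_0}, f_{S_1}$ positive and supported near the identity --- so that the factored product in Proposition \ref{goodshapefactor} (or rather its analogue for general same-parity $\Delta$, once stable transfer is assumed) is a product of terms each of which is $\gg |\mf n|^{(\text{exact growth})}$ by the exact asymptotic of Theorem \ref{simpleshapesmain} applied to the simple constituents; positivity prevents the product from vanishing, and one reads off $R_0(\Delta)$ from the dimension count of the modified parabolic. For the different-parity case, the claim is merely the $o(\cdot)$ statement: here one assumes ``na\"ive cancellation'', i.e. that summing the signs $\eps_\psi(s_\psi)$ over $\psi \in \Delta$ introduces a saving of at least $|\mf n|^{\eps}$ for every $\eps$, so that $S^G_\Delta = o(|\mf n|^{R_0(\Delta)})$ even though $S^{|G|}_\Delta \asymp |\mf n|^{R_0(\Delta)}$.

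The key steps, in order, are: (1) prove (or cite, once available) that \eqref{bestfixedvectorbound} holds uniformly enough in the tempered $\pi_v$ ranging over a fixed Bernstein component --- this requires uniform bounds on \emph{all} Harish-Chandra--Howe coefficients $c_O(\pi_v)$, not just the leading ones, since the contributions of non-maximal orbits $O$ in the wavefront set must also be controlled; (2) substitute this into the induction of \ref{simpleshapesmain}, using Corollary \ref{constanttermineq} to peel off blocks and Proposition \ref{goodshapefactor} (extended beyond odd GSK using Conjecture \ref{stabletransferconj}) to factor $S^{|N|}_\Delta$ into Speh-transfer factors on the $\GL_{T_i d_i}$-blocks; (3) evaluate each factor via Theorem \ref{simpleshapesmain} on the simple shape $\Sigma_{\lb_i, \eta_i}$, and assemble the powers of $|\mf n|$ and the $\Gamma$-factors as in Theorem \ref{goodshapebound}, which yields the exponent $R_0(\Delta)$ after the parabolic-dimension bookkeeping recorded in Remark \ref{Remark RQ} and after Conjecture \ref{simpleshapesconj}; (4) for the lower bound, choose test functions making the Plancherel-type factor $\mu^{\pl(\Delta),\ur}_v(\wh f_v)$ and the $\Lambda$-masses strictly positive (cf. the examples after Theorem \ref{goodshapebound}); (5) for the different-parity case, quantify the assumed cancellation among the $\eps_\psi(s_\psi)$ --- say by grouping the $\psi \in \Delta$ into pairs differing by the nontrivial element of $\mc S_\psi$ and showing the paired contributions nearly cancel up to $|\mf n|^{R_0(\Delta)-\eps}$.

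The main obstacle is step (1): the techniques of \cite{MW87}, and their consequences for generalized Whittaker models, compute only the \emph{maximal} orbits in the wavefront set, and give no control over the coefficients $c_O(\pi_v)$ attached to smaller orbits, which is exactly what is needed to make \eqref{bestfixedvectorbound} an honest asymptotic rather than a heuristic. As noted in \S\ref{sectionconjecturalbound}, getting uniform upper bounds on $c_O$ for tempered $\pi_v$ and \emph{all} $O$ seems to require genuinely new input --- perhaps via the interplay of $A$-parameters, generalized Whittaker models, and the work of \cite{JLZ22} --- and without it the ``$-\eps$'' in the lower bound and the ``$+\eps$'' in the upper bound cannot be removed; this is why the statement is a conjecture and is phrased with $\eps$-losses. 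A secondary obstacle, entirely separate, is the different-parity case: there is at present no mechanism in the endoscopic classification that forces \emph{any} cancellation among the signs $\eps_\psi(s_\psi)$ as $\psi$ ranges over a fixed shape, so the $o(\cdot)$ assertion rests on the ``na\"ive cancellation'' hypothesis and would itself need a new idea --- possibly an orthogonality or equidistribution statement for the characters $s \mapsto \eps_\psi(s)$ over the relevant family --- to be made unconditional. One would accordingly present Conjecture \ref{simpleshapesconjeps} as the expected output of the above program once these two inputs are supplied, rather than attempt a proof here.
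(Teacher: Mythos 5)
The statement is a conjecture in the paper, with no proof given: the authors derive it heuristically from Conjecture \ref{simpleshapesconj} (same-parity case, where $s_\psi=1$ so $S^G_\Delta=S^{|G|}_\Delta$) plus a ``na\"ively assumed'' cancellation among the signs $\eps_\psi(s_\psi)$ (different-parity case), and your program reproduces exactly this route while correctly naming the same two obstacles the paper does — uniformity of the Harish-Chandra--Howe coefficients $c_O(\pi_v)$ over non-maximal orbits, and the absence of any mechanism forcing sign cancellation. Your proposal is therefore consistent with the paper's own treatment and adds nothing that would need to be checked against a proof that does not exist.
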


\section{Application to Limit Multiplicities}\label{sectionlimitmult} 

Let $G$ be an extended pure inner form of $G^* \in \wtd {\mc E}_\el(N)$ and fix a cohomological representation $\pi_0$ of $G_\infty$. Fix $f^\infty$ unramified outside of a finite set of finite places $S$. In this section, we use the stabilization of the trace formula to estimate 
\[
m^G(\pi_0, f^\infty) := \sum_{\pi \in \mc{AR}_\disc(G)} m_\pi \1_{\pi_\infty = \pi_0} \tr_{\pi^\infty}(f^\infty)
\]
in terms of the bound on terms $S^H_{\Delta'}(\EP_\lb (f^\infty)')$ from  \S\ref{sectionSdeltaasymptotics}. 

\subsection{Preliminaries} \label{section shape preliminaries}
 Let $\psi_\infty(\pi_0)$ be the Arthur parameters $\psi_\infty$ at infinity such that $\pi_0 \in \Pi^G_{\psi_\infty}$. This is a finite (possibly empty) set since all such Arthur parameters share an infinitesimal character. For each $\psi_\infty \in \psi_\infty(\pi_0)$, there is a finite number of $\Delta$ such that $\psi \in \Delta$ have infinite component $\psi_\infty$ as in Lemma \ref{shapetoinfty}---intuitively, these are parameterized by ways to group together simple factors of $\psi_\infty$ that share the same Arthur-$\SL_2$. 

In total, we get a finite (possibly empty) set of shapes $\Delta(\pi_0)$ such that
\begin{equation}
\pi_0 \otimes \pi^\infty \in \Pi_\psi \implies \psi \in \Delta \text{ for some } \Delta \in \Delta(\pi_0).
\end{equation}
The spectral decomposition then produces
\begin{equation}\label{mgpi0}
m^G(\pi_0, f^\infty) = \sum_{\Delta \in \Delta(\pi_0)} \sum_{\psi \in \Delta} \sum_{\pi \in \Pi_\psi} m^\psi_\pi \1_{\pi_\infty = \pi_0} \tr_{\pi^\infty}(f^\infty).
\end{equation}
Specializing to a single $\Delta \in \Delta(\pi_0)$, consider for any test function, $f_\infty$ on $G_\infty$:
\begin{equation}\label{IDG}
I_{\Delta}^G(f_\infty f^\infty) := \sum_{\psi \in \Delta} I^G_\psi(f_\infty f^\infty) = \sum_{\psi \in \Delta} \sum_{\pi \in \Pi_\psi} m^\psi_\pi \tr_{\pi_\infty}(f_\infty) \tr_{\pi^\infty}(f^\infty).
\end{equation}
%
%
Applying Lemma \ref{charps} part (1) to \eqref{IDG} and comparing to \eqref{mgpi0} then gives:
\begin{cor}
Let $\pi_d$ be a discrete series representation appearing in the character formula for $\pi_0$ with sign $\sigma$. Then:
\[
m^G(\pi_0, f^\infty) = \sigma \sum_{\Delta \in \Delta(\pi_0)} I^G_\Delta(\varphi_{\pi_d} f^\infty).
\] 
\end{cor}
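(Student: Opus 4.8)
The plan is to unwind the definitions that have been set up, inserting the pseudocoefficient $\varphi_{\pi_d}$ as the test function factor at infinity in the $\Delta$-summands of the spectral side of Arthur's trace formula. First I would recall that, by the spectral decomposition \eqref{mgpi0}, $m^G(\pi_0, f^\infty)$ is the triple sum over $\Delta \in \Delta(\pi_0)$, over $\psi \in \Delta$, and over $\pi \in \Pi_\psi$, of $m^\psi_\pi \1_{\pi_\infty = \pi_0} \tr_{\pi^\infty}(f^\infty)$; this is exactly the quantity I want to express in terms of the $I^G_\Delta$'s.

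Next I would evaluate each $I^G_\Delta(\varphi_{\pi_d} f^\infty)$ using its definition \eqref{IDG}: it equals $\sum_{\psi \in \Delta}\sum_{\pi \in \Pi_\psi} m^\psi_\pi \tr_{\pi_\infty}(\varphi_{\pi_d}) \tr_{\pi^\infty}(f^\infty)$. The key input is Lemma \ref{charps}(1): since every $\Delta \in \Delta(\pi_0)$ has infinitesimal character $\lb$ equal to that of $\pi_d$ (this is built into the definition of $\Delta(\pi_0)$ via Lemma \ref{shapetoinfty}, because $\pi_0 \in \Pi_{\psi_\infty^\Delta}$ forces $\psi_\infty^\Delta$ to be an AJ-parameter of infinitesimal character $\lb$), the parameters $\psi \in \Delta$ all have $\psi_\infty = \psi_\infty^\Delta$ a parameter of infinitesimal character $\lb$ with $\pi_0 \in \Pi_{\psi_\infty}$. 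Therefore Lemma \ref{charps}(1) applies to each such $\psi_\infty$ and gives $\tr_{\pi_\infty}(\varphi_{\pi_d}) = \sigma$ when $\pi_\infty = \pi_0$ (where $\sigma$ is the coefficient of $\pi_d$ in the character formula for $\pi_0$) and $\tr_{\pi_\infty}(\varphi_{\pi_d}) = 0$ for any other $\pi_\infty \in \Pi_{\psi_\infty}$. Hence in the sum defining $I^G_\Delta(\varphi_{\pi_d} f^\infty)$ only the terms with $\pi_\infty = \pi_0$ survive, each contributing with weight $\sigma$, so $I^G_\Delta(\varphi_{\pi_d} f^\infty) = \sigma \sum_{\psi \in \Delta}\sum_{\pi \in \Pi_\psi} m^\psi_\pi \1_{\pi_\infty = \pi_0}\tr_{\pi^\infty}(f^\infty)$.

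Finally I would sum this identity over $\Delta \in \Delta(\pi_0)$. Comparing with \eqref{mgpi0}, and using that $\Delta(\pi_0)$ was constructed precisely so that every $\psi$ with $\pi_0 \otimes \pi^\infty \in \Pi_\psi$ for some $\pi^\infty$ lies in exactly one $\Delta \in \Delta(\pi_0)$ (the shapes in $\Delta(\pi_0)$ are disjoint, being distinguished by their combinatorial data $(T_i, d_i, \lb_i, \eta_i)$, and they exhaust the relevant $\psi$), I get $\sum_{\Delta \in \Delta(\pi_0)} I^G_\Delta(\varphi_{\pi_d} f^\infty) = \sigma \cdot m^G(\pi_0, f^\infty)$. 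Dividing by $\sigma = \pm 1$ (equivalently multiplying by $\sigma$) yields the claimed formula $m^G(\pi_0, f^\infty) = \sigma \sum_{\Delta \in \Delta(\pi_0)} I^G_\Delta(\varphi_{\pi_d} f^\infty)$.

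The only point requiring any care — and the step I expect to be the main subtlety rather than the main obstacle — is confirming that the index sets match up cleanly: that no parameter $\psi$ contributing to $m^G(\pi_0, f^\infty)$ is missed and none is double-counted when passing to the union over $\Delta(\pi_0)$. This is handled by the discussion in \S\ref{section shape preliminaries} establishing that $\Delta(\pi_0)$ is a finite set of refined shapes with the stated covering property, together with the fact that a parameter determines its refined shape uniquely (every $\psi$ has a well-defined refined shape, by Proposition \ref{shapetogroup} and the remarks following it). Everything else is a direct substitution of Lemma \ref{charps}(1) into \eqref{IDG}.
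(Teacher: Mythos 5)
Your proposal is correct and is exactly the paper's argument: the paper derives this corollary in one line by "applying Lemma \ref{charps} part (1) to \eqref{IDG} and comparing to \eqref{mgpi0}," which is precisely the substitution you carry out. Your additional care about the index sets matching (no missed or double-counted $\psi$) is already built into the construction of $\Delta(\pi_0)$ in \S\ref{section shape preliminaries}, as you note.
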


\subsection{Stabilization}
Now we compute the $I_\Delta^G$ by stabilizing. We begin with a more conceptual formula.

Let $H \in \mc E_\el(G)$. Then $H = H_1 \times H_2$ for $H_i$ quasisplit unitary groups. Therefore, we can abuse notation and use each $H_i$ to also denote a representative in some $\wtd {\mc E}_\el(N_i)$ that is isomorphic as an algebraic group. Any shape $\Delta$ on $H$ corresponds to a finite and possibly empty set of shapes $\Delta_1 \times \Delta_2$ on $H_1 \times H_2$ that push forward to~$\Delta$. Stabilization of each $I^G_\psi$ for $\psi \in \Delta$ (through Theorem \ref{stablemultendo}) gives:
\begin{prop}\label{mpi0form1}
Let $\pi_d$ be a discrete series representation appearing in the character formula for $\pi_0$ with sign $\sigma$. Then:
\begin{multline*}
m^G(\pi_0, f^\infty) = \sigma \sum_{\Delta \in \Delta(\pi_0)} \sum_{H_1 \times H_2 \in \mc E_\el(G)}  \\ \sum_{\Delta_1 \times \Delta_2} \iota(G, H_1 \times H_2) \prodf_{i = 1,2} S^{H_i}_{\Delta_i}((\varphi_{\pi_d})^{H_i} (f^\infty)^{H_i}),
\end{multline*}
where the $\star^{H_i}$ terms represent the corresponding factors of transfers to $H$.
\end{prop}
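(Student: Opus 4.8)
The plan is to chain together three ingredients already established in the excerpt: the character-formula reduction of $m^G(\pi_0, f^\infty)$ to a sum of $I^G_\Delta(\varphi_{\pi_d} f^\infty)$ terms, the decomposition $I^G_\Delta = \sum_{\psi \in \Delta} I^G_\psi$, and the stabilization of each $I^G_\psi$ provided by Theorem \ref{stablemultendo}. First I would invoke the Corollary immediately preceding the statement, which already gives
\[
m^G(\pi_0, f^\infty) = \sigma \sum_{\Delta \in \Delta(\pi_0)} I^G_\Delta(\varphi_{\pi_d} f^\infty),
\]
so the remaining work is purely to rewrite each $I^G_\Delta(\varphi_{\pi_d} f^\infty)$ via endoscopy.

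Next I would expand $I^G_\Delta(\varphi_{\pi_d} f^\infty) = \sum_{\psi \in \Delta} I^G_\psi(\varphi_{\pi_d} f^\infty)$ using the definition of $I^G_\Delta$ from \S\ref{strategy}, and apply the first identity of Theorem \ref{stablemultendo} to each summand:
\[
I^G_\psi(\varphi_{\pi_d} f^\infty) = \sum_{(H, \psi^H)} \iota(G, H)\, S^H_{\psi^H}((\varphi_{\pi_d})^H (f^\infty)^H),
\]
where $(H, \psi^H)$ ranges over $H \in \mc E_\el(G)$ together with $\psi^H \in \Psi(H)$ pushing forward to $\psi$. Since every $H \in \mc E_\el(G)$ is of the form $H_1 \times H_2$ with $H_i$ quasisplit unitary, the transfer factors and test functions all factor, $(\varphi_{\pi_d})^H = \prodf (\varphi_{\pi_d})^{H_i}$ and likewise for $f^\infty$, and the stable distribution factors as $S^H_{\psi^H} = \prodf_{i=1,2} S^{H_i}_{\psi^H_i}$ by compatibility of the stable trace formula with products of groups.

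The one genuinely bookkeeping step — and the place where care is needed — is reorganizing the double sum. On the $G$-side we sum over $\psi \in \Delta$ and over lifts $(H, \psi^H)$; I would reindex this as: first choose $H = H_1 \times H_2 \in \mc E_\el(G)$, then observe that as $\psi$ ranges over $\Delta$ and $\psi^H$ over lifts of $\psi$, the pairs $\psi^H = \psi^H_1 \times \psi^H_2$ range exactly over $\Psi(\Delta_1) \times \Psi(\Delta_2)$ as $(\Delta_1, \Delta_2)$ ranges over the finite set of pairs of refined shapes on $(H_1, H_2)$ pushing forward to $\Delta$. This is where Proposition \ref{shapetogroup} and Corollary \ref{grouptoshapes} do the work: a lift $\psi^H$ of an elliptic $\psi$ has a well-defined refined shape $(\Delta_1, \Delta_2)$ refining $\Delta$, and conversely refined shapes partition $\Psi(H_i)$, so summing over shapes and then over parameters within each shape recovers summing over all lifts. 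Summing $S^{H_i}_{\psi^H_i}$ over $\psi^H_i \in \Psi(\Delta_i)$ reassembles $S^{H_i}_{\Delta_i}$ by definition. Pulling $\sigma$, the sum over $\Delta \in \Delta(\pi_0)$, the sum over $H_1 \times H_2 \in \mc E_\el(G)$, and the constant $\iota(G, H_1 \times H_2)$ out front then yields exactly the claimed formula.

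The only subtlety I anticipate — and the step I would flag as the main obstacle — is the consistency of the reindexing with the transfer factors: Theorem \ref{stablemultendo} (via \ref{localcharidentity}) involves a choice of lift $s'$ of $s \in \mc S_\psi$ to $S^\natural_\psi$, and one must check that summing over all of $\mc S_\psi$ (equivalently, all lifts $(H, \psi^H)$) is compatible with the product decomposition $H = H_1 \times H_2$ and does not double-count or mismatch the transfer-factor normalizations discussed in the paragraph on $\mc E_\el(G)$ versus $\overline{\mc E}_\el(G)$. Since the paper has pinned down these normalizations (Lemma \ref{unraminnerforms} and the surrounding discussion), this should amount to a careful citation rather than new work, but it is the point most likely to hide a sign or an index error, so I would verify it explicitly before declaring the proposition proved.
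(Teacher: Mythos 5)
Your proposal follows exactly the route the paper takes: the paper gives no formal proof of this proposition, deriving it in-line from the corollary $m^G(\pi_0, f^\infty) = \sigma \sum_{\Delta \in \Delta(\pi_0)} I^G_\Delta(\varphi_{\pi_d} f^\infty)$, the expansion $I^G_\Delta = \sum_{\psi \in \Delta} I^G_\psi$, Theorem \ref{stablemultendo} applied to each $I^G_\psi$, and the reorganization of the sum over lifts $(H,\psi^H)$ into a sum over $H_1\times H_2$ and pairs of shapes $\Delta_1\times\Delta_2$ pushing forward to $\Delta$. Your elaboration, including the caution about transfer-factor normalizations, is a correct and faithful filling-in of that same argument.
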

Each $(\varphi_{\pi_d})^{H_i}$ can be chosen to be a linear combination of EP-functions by standard formulas for transfers of pseudocoefficients. Explicitly, consider a pair $(H(s), \Delta'(s))$ that pushes forward to $\Delta$ and the corresponding $s \in \mc S_\Delta$. Then
\[
\tr_{\psi_\infty^{\Delta'}}(\varphi_{\pi_d}^H) = \eta^{\psi_\infty^\Delta}_{\pi_0}(s's_\Delta) = \sigma \eta^{\psi_\infty^\Delta}_{\pi_0}(s')
\]
by the endoscopic character identity \ref{localcharidentity} where $s'$ is the lift of $s$ to $S^\natural_\Delta$ therein. For the second equality, we use $\eta^{\psi_\infty^\Delta}_{\pi_0}(s_\Delta) = \sigma$ as in the proof of part 2 of Lemma \ref{charps}. If $\Delta'$ has total infinitesimal character $\lb$, then this gives
\[
\tr_{\psi_\infty^{\Delta'}}(\varphi_{\pi_d}^H) = \sigma \eta^{\psi_\infty^\Delta}_{\pi_0}(s') \tr_{\psi_\infty^\Delta}(\EP_\lb),
\]
since the EP-function trace is just $1$. Changing notation a bit produces:
\begin{cor}\label{mpi0form}
Let $G$ be an extended pure inner form of $G^* \in \wtd {\mc E}_\el(N)$ and fix a cohomological representation $\pi_0$ of $G_\infty$. Then
\begin{align*}
m^G(\pi_0, f^\infty) &= \sum_{\Delta \in \Delta(\pi_0)} m^G(\pi_0, \Delta, f^\infty) \\ 
&:= \sum_{\Delta \in \Delta(\pi_0)} \sum_{s \in \mc S_\Delta} \iota(G, H(s)) \eta^{\psi_\infty^\Delta}_{\pi_0}(s') \sum_{\Delta_1 \times \Delta_2}  \prodf_{i = 1,2} S^{H_i(s)}_{\Delta_i}(\EP_{\lb_i} (f^\infty)^{H_i}),
\end{align*}
where $(H(s), \Delta'(s))$ is the pair of group and shape corresponding to $s \in \mc S_\Delta$ which is lifted to $s' \in S^\natural_\Delta$ as in Theorem \ref{localcharidentity}, $\Delta_1 \times \Delta_2$ ranges over shapes on $H(s) = H_1(s) \times H_2(s)$ such that the pair $(H(s), \Delta_1 \times \Delta_2)$ is equivalent to $(H(s), \Delta'(s))$, and $\lb_i$ is the total infinitesimal character of $\Delta_i$. 
\end{cor}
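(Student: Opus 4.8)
The plan is to obtain the statement by combining Proposition \ref{mpi0form1} with the displayed computation of $\tr_{\psi_\infty^{\Delta'}}(\varphi_{\pi_d}^{H})$ that precedes it, the substance being a reindexing of the endoscopic sum and the observation that the pseudocoefficient at infinity can be traded for an Euler--Poincar\'e function at the cost of an explicit scalar.

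First I would take as input Proposition \ref{mpi0form1}, which already expresses $m^G(\pi_0,f^\infty)$ as $\sigma$ times a sum over $\Delta\in\Delta(\pi_0)$, over $H_1\times H_2\in\mc E_\el(G)$, and over factorizations $\Delta_1\times\Delta_2$ pushing forward to parameters in $\Delta$, of $\iota(G,H_1\times H_2)\prodf_{i=1,2}S^{H_i}_{\Delta_i}((\varphi_{\pi_d})^{H_i}(f^\infty)^{H_i})$. The key structural point, exactly as in the proof of Proposition \ref{SMform}, is that the place at infinity enters each $S^{H_i}_{\Delta_i}(g_\infty g^\infty)$ only through the single scalar $\tr_{\psi_\infty^{\Delta_i}}(g_\infty)$, since all $\psi_i\in\Delta_i$ share the infinite component $\psi_\infty^{\Delta_i}$; multiplicativity of packet traces over $H_1\times H_2$ together with $\tr_{\psi_\infty^{\Delta'}}(\EP)=1$ from Corollary \ref{straceAJ} then give
\[
\prodf_{i=1,2}S^{H_i}_{\Delta_i}((\varphi_{\pi_d})^{H_i}(f^\infty)^{H_i}) = \tr_{\psi_\infty^{\Delta'}}(\varphi_{\pi_d}^{H})\cdot\prodf_{i=1,2}S^{H_i}_{\Delta_i}(\EP_{\lb_i}(f^\infty)^{H_i}),
\]
with $\lb_i$ the total infinitesimal character of $\Delta_i$. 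Next I would insert the displayed computation preceding the statement: since $\pi_0\in\Pi_{\psi_\infty^\Delta}$ with $\pi_d$ occurring in its character formula with coefficient $\sigma$, the endoscopic character identity (Theorem \ref{localcharidentity}) and Lemma \ref{charps} give $\tr_{\psi_\infty^{\Delta'}}(\varphi_{\pi_d}^{H}) = \eta^{\psi_\infty^\Delta}_{\pi_0}(s's_\Delta) = \sigma\,\eta^{\psi_\infty^\Delta}_{\pi_0}(s')$, where $s'$ is the lift of $s$ to $S^\natural_\Delta$ from Theorem \ref{localcharidentity} and one uses $\eta^{\psi_\infty^\Delta}_{\pi_0}(s_\Delta)=\sigma$ together with the fact that $\eta^{\psi_\infty^\Delta}_{\pi_0}$ is a genuine character of the elementary abelian $2$-group $S^\natural_\Delta$.

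Then I would reindex. For fixed $\Delta$, Proposition \ref{prop s to endo} gives a bijection between $\mc S_\Delta$ and equivalence classes of pairs $(H(s),\psi^H(s))$ pushing forward to parameters in $\Delta$; passing to shapes, each $s$ determines $(H(s),\Delta'(s))=(H_1(s)\times H_2(s),\Delta'(s))$ with $\iota(G,H_1\times H_2)=\iota(G,H(s))$, and the terms of Proposition \ref{mpi0form1} belonging to $s$ are precisely those indexed by the factorizations $\Delta_1\times\Delta_2$ of $H_1(s)\times H_2(s)$ equivalent to $\Delta'(s)$. Since $\iota(G,H(s))$ and $\sigma\,\eta^{\psi_\infty^\Delta}_{\pi_0}(s')$ depend only on $s$, I pull them outside the $\Delta_1\times\Delta_2$ sum; absorbing the leading $\sigma$ of Proposition \ref{mpi0form1} into the $\sigma$ of $\sigma\,\eta^{\psi_\infty^\Delta}_{\pi_0}(s')$ via $\sigma^2=1$ yields the displayed formula for $m^G(\pi_0,\Delta,f^\infty)$, and summing over $\Delta\in\Delta(\pi_0)$ using \eqref{mgpi0} finishes.

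The main obstacle is bookkeeping rather than anything deep: one must check that the lift $s'\in S^\natural_\Delta$ entering $\eta^{\psi_\infty^\Delta}_{\pi_0}(s')$ is the very lift that fixes the local transfer factors at infinity in Theorem \ref{localcharidentity}, and that this local choice is compatible with the global normalization of transfer factors (whose place-by-place scalar indeterminacies multiply to $1$). The $\Out(\wh H)$-ambiguity in Proposition \ref{prop s to endo} is what forces the inner sum over $\Delta_1\times\Delta_2$ rather than a single term, and one should verify that trading $\varphi_{\pi_d}^{H}$ for $\EP_{\lb'}$ inside each $S^{H_i}_{\Delta_i}$ is legitimate even though $\varphi_{\pi_d}^{H}$ need not be factorizable on $H_1\times H_2$ --- this is where the ``linear combination of Euler--Poincar\'e functions'' remark and the non-factorizable-function conventions of \S\ref{nonfactorizable} enter.
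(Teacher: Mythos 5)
Your proposal is correct and follows essentially the same route as the paper: start from Proposition \ref{mpi0form1}, factor the infinite place out of each $S^{H_i}_{\Delta_i}$ via the stable multiplicity formula and Corollary \ref{straceAJ}, use the endoscopic character identity and Lemma \ref{charps} to evaluate $\tr_{\psi_\infty^{\Delta'}}(\varphi_{\pi_d}^{H}) = \sigma\,\eta^{\psi_\infty^\Delta}_{\pi_0}(s')$, absorb $\sigma^2=1$, and reindex the endoscopic sum by $s\in\mc S_\Delta$ via Proposition \ref{prop s to endo}. The only difference is that you spell out the factorization and reindexing steps that the paper compresses into ``changing notation a bit.''
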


\begin{note}
The sum over $\Delta_1 \times \Delta_2$ will be a singleton unless $H_1 \cong H_2$ in which case there will be two terms that differ by transposing the factors. 
\end{note}

\begin{note}
We briefly discuss the relation between this and the formulas in \cite{Lab11} for transfers of pseudocoefficients. For simplicity, assume we are in a case where we never have $H_1 \cong H_2$. 

Then, for each $\Delta$ there is only ever one possible choice $\Delta_1 \times \Delta_2$ due to the fixing of infinitesimal characters at infinity in shapes. The sum over EP-functions in Labesse's formulas then comes from the sum over $\Delta(\pi_0)$---these then correspond to different $\Delta_1 \times \Delta_2$ with non-conjugate infinitesimal characters. 

Next, while different $\Delta \in \Delta(\pi_0)$ may have the same $\psi_\infty(\Delta)$ or even $\mc S_\Delta$, the embeddings $\mc S_\Delta \into \mc S_{\psi_\infty^\Delta}$ will differ. This accounts for unexpected differences in signs of coefficients of EP-functions in Labesse's formulas. 
\end{note}

\subsection{Transfer Factors}\label{sectiontransferfactors}
We eventually want to apply Proposition \ref{mpi0form} to $f^\infty$ as in \S\ref{asymptoticsetup}. In the case where $S_0 = \emptyset$, we will need to compute explicit endoscopic transfers at all places so we need to choose explicit local transfer factors that are consistent globally. 

First, pick a global Whittaker datum $\om$ on the quasisplit form $G^*$ of $G$. Since we are assuming that $S_0 = \emptyset$, the group $G$ is unramified at all finite places. Therefore $G^*_v = G_v$ for all finite $v$ and is in particular also unramified. This implies that $G^*$ can be defined over $\mc O_F$ so we can choose $\om$ so that the induced local data $\om_v$ are unramified/admissible everywhere as in \cite[\S7]{Hal93}. This allows us to use the fundamental lemma for each $G_v$, though beware that we need to choose hyperspecial subgroups $K_v$ consistent with $\om_v$. 

Next, \cite[\S4.4]{Kal18} shows that the choice of $\om$ also gives us compatible local transfer factors on $G$ itself (we note that $G$ has simply connected derived subgroup to make the extra term in Theorem 4.4.1 disappear). At finite places $G_v$, the local factors stay the same as for the $G^*_v$.  

Finally, to explicitly pin down characters at infinity in \S\ref{SectionMoeglinCharacterComputation}, we pick our global Whittaker datum consistent with \cite[Rmq 4.5]{MR19} (we can pick it consistent with any Whittaker datum up to equivalence for $G_\infty$ since $E/F$ is CM and $G^*$ is quasisplit). 

\subsection{Limit Multiplicities}
As a preliminary/example computation, we work out what the summand $m^G(\pi_0, \Delta, f^\infty)$ from Proposition \ref{mpi0form} is for
\[
\Delta = (T_1, 1, \lb_1, \eta_1), (1, d_2, \lb_2, \eta_2)
\]
with $d_2$ odd so that $\Delta$ is odd GSK. In our eventual application, this will be the dominant term in the sum. 

For this $\Delta$, we have $\mc S_\Delta \cong \Z/2$ and the non-identity element $s$ corresponds to
\[
H(s) = H_1 \times H_2 = U(T_1) \times U(d_2).
\]
If $T_1 \neq d_2$, there is a unique choice
\[
\Delta_1 \times \Delta_2 = (T_1, 1, \lb_1, \eta_1) \times (1, d_2, \lb_2, \eta_2)
\]
and $\iota(G,H(s)) = 1/2$. If $T_1 = d_2$, there are two choices for $\Delta_1 \times \Delta_2$ that correspond to the exact same product of $S$-terms and $\iota(G,H(s)) = 1/4$. Either way, \ref{mpi0form} reduces to
\begin{multline}\label{goodshapeex1}
    m^G(\pi_0, \Delta, f^\infty) = S_{\Delta}^{G^*}(\EP_\lb (f^\infty)^{G^*}) \\
    + \f12 \eta_{\pi_0}^{\psi^\Delta_\infty}(s') S^{U(T_1)}_{(T_1, 1, \lb_1, \eta_2)}(\EP_{\lb_1} (f^\infty)^{U(T_1)}) \times S^{U(d_2)}_{(1,d_2, \lb_2, \eta_2)}(\EP_{\lb_2} (f^\infty)^{U(d_2)}),
\end{multline}
where $G^*$ is the quasisplit form of $G$ and where the product implicitly includes a sum over factorizable summands of the transfer to $H(s)$. 

Now assume
\[
f^\infty := f^\infty_{\mf n} = \varphi_{S_0} f_{S_1} \bar \1_{K^{G,S}(\mf n)}
\]
is of the form in Section \ref{asymptoticsetup}. Furthermore, assume that the chosen transfers of $\varphi_{S_0}$ satisfy Conjecture \ref{stabletransferconj} for $\Delta$.

Then, by Theorem \ref{goodshapebound}, the first summand of \eqref{goodshapeex1} has main term
:
\begin{multline*}
\f12 |\mf n_i|^{\f12(N^2 + T_1^2 - d_2^2) +1} \Gamma_{T_1, -1, 1}(\mf n_i)\f{\dim \lb_1}{|\Pi_\disc(\lb_1)|} \f{\vol(H'(F) \bs H'(\A_f))}{\vol(K^S_{H'}) \vol((H'_\infty)^c)}  \\ 
\varphi_{1,S_0}(1) f_{S_1}^{H_1}(1) \int_{H_{2, \der, S_0, S_1}}  f^{H_2}_{S_1} \varphi_{2, S_0}(h) \, dh,
\end{multline*}
where $H' = H_1 \times H_2^\ab$. We use here that being unramified makes $\mc T_i f_{S_1} = f^{H_i}_{S_1}$.

Next, considerations as in Lemma \ref{splitconstant} give that the transfer of the $\bar \1_{K^{G,S}(\mf n)}$ term is a constant term to a Levi, so Lemma \ref{congruenceconstantterm} gives
\[
\bar \1_{K^{G,S}(\mf n)}^H =I(\mf n) \bar \1_{K^{H_1,S}(\mf n)} \times \bar \1_{K^{H_2,S}(\mf n)}.
\]
We can therefore use Lemma \ref{congruenceconstantterm} computing endoscopic transfers to get that the second summand of \eqref{goodshapeex1} has main term:
\begin{multline}\label{goodshapeex1intermediate}
    \f12 |\mf n|^{\f12(N^2 - T_1^2 - d_2^2)} \Gamma_{-1}(\mf n) \eta_{\pi_0}^{\psi^\Delta_\infty}(s')
    S^{H_1}_{(T_1, 1, \lb_1, \eta_2)}(\EP_{\lb_1} \varphi^{H_1}_{S_0} f^{H_1}_{S_1} \bar \1_{K^{H_1,S}(\mf n)}) \times \\ S^{H_2}_{(1,d_2, \lb_2, \eta_2)}(\EP_{\lb_2} \varphi^{H_2}_{S_0} f^{H_2}_{S_1} \bar \1_{K^{H_2,S}(\mf n)}).
\end{multline}
Theorem \ref{simpleshapesmain} then gives that the first $s^H$ factor in \eqref{goodshapeex1intermediate} has main term
\[
|\mf n|^{T_1^2} \Gamma_{T_i}(\mf n) \varphi^{H_1}_{S_0}(1) f_{S_1}^{H_1}(1) \f{\vol(H_1(F) \bs H_1(\A_f))}{\vol(K^S_{H_1}) \vol(H^c_{1, \infty})}
\]
and by Proposition \ref{charshapeformula}, the second $S^H$ factor in \eqref{goodshapeex1intermediate} is eventually
\[
|\mf n| \Gamma_1(\mf n) \f{\vol(H^\ab_2(F) \bs H^\ab_2(\A_f))}{\vol(K_{H^\ab_2}^S) \vol(H^\ab_{2,\infty})} \int_{H_{2, \der, S_0, S_1}}  f^{H_2}_{S_1} \varphi^{H_2}_{S_0}(h) \, dh
\]
after using an argument as in Corollary \ref{simpleshapesstronger} to remove all the terms in the sum except for $1$. 

After multiplying everything together, this shows that the summands for $G$ and $H_1 \times H_2$ in \eqref{goodshapeex1} have the exact same asymptotic dependence on $\mf n_i$. 

When $S_0 = \emptyset$, we 
no longer need Conjecture \ref{stabletransferconj} and can collect these terms reasonably cleanly: for some $A,B,C,D,E$ with $C \geq 1$, as long as $|\mf n| \geq Dq_{S_1}^{E \kappa}$:
\begin{multline}
    |\mf n|^{-\f12(N^2 + T_1^2 - d_2^2) -1} \Gamma_{T_1, -1, 1}(\mf n_i)^{-1} m^G(\pi_0, \Delta, f_{S_1} \bar \1_{K^{G,S}(\mf n)}) \\
    = \1_{\eta_{\pi_0}^{\psi^\Delta_\infty}(\mc S_\Delta) = 1} \f{\dim \lb_1}{|\Pi_\disc(\lb_1)|} \f{\vol(H'(F) \bs H'(\A_f))}{\vol(K^S_{H'})\vol((H'_\infty)^c)} \lf( f_{S_1}^{H_1}(1) \int_{H_{2, \der, S_1}}  f^{H_2}_{S_1}(h) \, dh \ri) \\
    + O(|\mf n|^{-C} q_{S_1}^{A + B\kappa}).
\end{multline}
If $S_0 = \emptyset$, then $G$ is unramified at all finite places so  $\eta^{\psi_\infty^\Delta}_{\pi_0}$ necessarily factors through~$\mc S_\Delta$ because of the conditions on the $\chi_{G_v}$ in Theorem \ref{LocalPackets} to glue together to a global group as in Lemma \ref{unraminnerforms}. 

The case when $\eta^{\psi_\infty^\Delta}_{\pi_0}(\mc S_\Delta)$ isn't trivial can actually be understood more simply. If $S_0 = \emptyset$, then each factor $f_v$ of the test function is either on $\GL_{N,v}$ or unramified. Therefore, its traces vanish on all $\pi_v \in \Pi_{\psi_v}$ such that $\eta^{\psi_v}_{\pi_v} \neq 1$. By the multiplicity formula \ref{ArthurMultiplicty}, this implies that
\[
m^\psi_\pi \tr_{\pi^\infty}(f^\infty) = 0
\]
whenever $\eta_{\pi_\infty}^{\psi_\infty} \neq 1$. In total, 
\[
\eta^{\psi_\infty^\Delta}_{\pi_0}(\mc S_\Delta) \neq 1 \implies m^G(\pi_0, \Delta, f^\infty) = 0.
\]

An extension of this computation to all odd GSK shapes gives:
\begin{thm}\label{mpi0delform}
Let $G$ be an extended pure inner form of $G^* \in \wtd {\mc E}_\el(N)$ that is unramified at all finite places and let $\pi_0$ be a cohomological representation of $G_\infty$.  Choose odd GSK $\Delta = (T_i, d_i, \lb_i, \eta_i)_{1 \leq i \leq k} \in \Delta(\pi_0)$ and let $f^\infty_{\mf n} = f_{S_1} \bar \1_{K^{G,S}(\mf n)}$ be as in Section \ref{asymptoticsetup} with $S_0 = \emptyset$. 

Then, if $\eta^{\psi_\infty^\Delta}_{\pi_0}(\mc S_\Delta) \neq 1$
\[
m^G(\pi_0, \Delta, f^\infty_{\mf n}) = 0.
\]
Otherwise, there are $A,B,C,D,E$ with $C \geq 1$ such that as long as $|\mf n| \geq Dq_{S_1}^{E \kappa}$:
\begin{multline*}
|\mf n|^{R(\Delta)} \Gamma_{L(\Delta)}(\mf n)^{-1} m^G(\pi_0, \Delta, f^\infty_{\mf n})
\\= \f{\dim \lb_1}{|\Pi_\disc(\lb_1)|} \f{\vol(H'(F) \bs H'(\A_f))}{\vol(K^S_{H'})\vol((H'_\infty)^c)} \lf(f_{S_1}^{H_1}(1) \prodf_{i > 1} \int_{H_{i, \der, S_1}}  f^{H_i}_{S_1}(h) \, dh \ri) \\
+ O(|\mf n|^{-C} q_{S_1}^{A + B \kappa})
\end{multline*}
where $H_i = H(T_i, d_i, \lb_i, \eta_i)$, $H' = H_1 \times \prod_{i > 1} H_i^\ab$, and $R(\Delta)$ is as in \eqref{Equation R(Delta)}.

\end{thm}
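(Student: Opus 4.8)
\textbf{Proof proposal for Theorem \ref{mpi0delform}.}

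The plan is to run the computation sketched for the special shape $\Delta = (T_1, 1, \lb_1, \eta_1), (1, d_2, \lb_2, \eta_2)$ in the paragraphs preceding the statement, but now for a general odd GSK $\Delta = (T_i, d_i, \lb_i, \eta_i)_{1 \le i \le k}$, and to carry the two cases ($\eta^{\psi^\Delta_\infty}_{\pi_0}$ trivial on $\mc S_\Delta$ or not) in parallel. First I would invoke Corollary \ref{mpi0form} to write $m^G(\pi_0, \Delta, f^\infty_{\mf n})$ as a sum over $s \in \mc S_\Delta$ of terms $\iota(G, H(s)) \eta^{\psi^\Delta_\infty}_{\pi_0}(s') \prodf_{i=1,2} S^{H_i(s)}_{\Delta_i}(\EP_{\lb_i} (f^\infty_{\mf n})^{H_i})$, recalling that $\mc S_\Delta \cong (\Z/2)^{k-1}$ for a GSK shape (since $T_i = 1$ forces each block after the first to be a line, and the diagonal is quotiented out). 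For the vanishing case: since $S_0 = \emptyset$, every local factor of $f^\infty_{\mf n}$ is either an unramified function on a genuine unitary group or an arbitrary function on $\GL_{N,v}$ (at split $v$), so by the local packet theorem \ref{LocalPackets} and the split-place analysis of Lemma \ref{splitconsistency} each $\tr_{\pi_v}$ vanishes on the $\pi_v$ with $\eta^{\psi_v}_{\pi_v} \ne 1$; combined with Arthur's multiplicity formula \ref{ArthurMultiplicty} this kills $m^\psi_\pi \tr_{\pi^\infty}(f^\infty)$ whenever $\eta^{\psi_\infty}_{\pi_\infty} \ne 1$, hence $m^G(\pi_0, \Delta, f^\infty_{\mf n}) = 0$ when $\eta^{\psi^\Delta_\infty}_{\pi_0}(\mc S_\Delta) \ne 1$. (One should also note, as in the $k=2$ discussion, that $\eta^{\psi^\Delta_\infty}_{\pi_0}$ automatically factors through $\mc S_\Delta$ here because $G$ is everywhere unramified, so the $\chi_{G_v}$ glue as in Lemma \ref{unraminnerforms}.)

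For the nonvanishing case, the strategy is to identify which $(H(s), \Delta_1 \times \Delta_2)$ contribute to leading order in $|\mf n|$ and to compute each such $S$-term asymptotically. The key structural point is that $s \in \mc S_\Delta$ corresponds to choosing a subset $J \subseteq \{1, \dots, k\}$ of the blocks to split off: $H(s) = H_1(s) \times H_2(s)$ where one factor collects the blocks in $J$ and the other the complement, and $\Delta_1 \times \Delta_2$ is the induced pair of shapes. The $s = 1$ term contributes $S^{G^*}_\Delta(\EP_{\lb'}(f^\infty_{\mf n})^{G^*})$, which is governed by the odd-GSK bound Theorem \ref{goodshapebound} (this is where the hypothesis that $(\Delta, \varphi_{S_0})$ satisfies Conjecture \ref{stabletransferconj} would be needed in general, but since $S_0 = \emptyset$ the transfer of $\varphi_{S_0}$ is empty and no conjecture is invoked). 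For $s \ne 1$, iterating the splitting and applying the fundamental lemma for $A$-packets (Lemma \ref{packetfunlem}), the split-place transfer (Lemma \ref{splitconstant}), and the constant-term computation for congruence subgroups (Lemma \ref{congruenceconstantterm}), each $S^{H_i(s)}_{\Delta_i}$ further factors; I would then feed the resulting simple-shape factors into Theorem \ref{simpleshapesmain} (for the $(T_1,1,\lb_1,\eta_1)$ piece, giving growth $|\mf n|^{T_1^2}$ with the mass $\Lambda(H_1,\dots)$) and into Corollary/Proposition \ref{charshapeformula} together with Lemma \ref{T1bound} (for each $(1,d_i,\lb_i,\eta_i)$ piece with $i > 1$, giving growth $|\mf n|\,\Gamma_1(\mf n)$ with the abelian mass $\Lambda^\ab(H_i,\dots)$, where the argument of Lemma \ref{T1bound} via Lemma 8.4 of \cite{ST16} kills all lattice-sum terms except $\gamma = 1$ once $|\mf n| \ge D q_{S_1}^{E\kappa}$). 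Multiplying $I(\mf n)$ from Lemma \ref{congruenceconstantterm} against these and bookkeeping the exponents, each splitting $s$ turns out to contribute with \emph{exactly} the same power $|\mf n|^{R(\Delta)}$ and Euler factor $\Gamma_{L(\Delta)}(\mf n)$ — this is the content of the more conceptual identity $|\mf n|^{-R(\Delta)}\Gamma_{L(\Delta)}(\mf n) = \bigl(|\mf n|^{\frac12(\dim G - \sum_i \dim H_i)}\Gamma_{-1}(\mf n)^{k-1}\bigr)\bigl(\prod_i [K^{\GL_{T_i}} : K^{\GL_{T_i}}(\mf n)]\bigr)$ noted after Theorem \ref{goodshapebound} — so all of them are part of the main term, while every shape $\Delta' \ne \Delta$ appearing along the way has strictly smaller growth by Corollary \ref{simpleshapesstronger} and goes into the error $O(|\mf n|^{-C} q_{S_1}^{A + B\kappa})$.

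The last step is to assemble the leading coefficients. Summing over $s \in \mc S_\Delta$ the factors $\iota(G,H(s))\,\eta^{\psi^\Delta_\infty}_{\pi_0}(s')$ times the (common) leading term, the $\eta^{\psi^\Delta_\infty}_{\pi_0}(s')$ are all $+1$ precisely because we are in the nonvanishing case $\eta^{\psi^\Delta_\infty}_{\pi_0}(\mc S_\Delta) = 1$; combined with the factors $\iota(G,H(s)) = 2 \cdot 2^{-|\mc S_\Delta|}$ (up to the doubling when two $H_i$-factors coincide, which is exactly compensated by the two choices of $\Delta_1 \times \Delta_2$, as in the Notes after Corollary \ref{mpi0form}) the combinatorial constants collapse to give the stated mass $\frac{\dim\lb_1}{|\Pi_\disc(\lb_1)|}\frac{\vol(H'(F)\bs H'(\A_f))}{\vol(K^S_{H'})\vol((H'_\infty)^c)}\bigl(f^{H_1}_{S_1}(1)\prodf_{i>1}\int_{H_{i,\der,S_1}} f^{H_i}_{S_1}(h)\,dh\bigr)$ with $H' = H_1 \times \prod_{i>1} H_i^\ab$; here I use that being everywhere unramified makes $\mc T_i f_{S_1} = f^{H_i}_{S_1}$ (the transfer map of Lemma \ref{unramtrace}). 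Finally one extremizes $A,B,C,D,E$ over the finitely many hyperendoscopic and endoscopic contributions. I expect the main obstacle to be the careful bookkeeping of the combinatorial normalizations — tracking $\iota(G,H(s))$, the multiplicity constants $|\mc S_\psi|^{-1}\prod_i|\mc S_{\tau_i[d_i]}|$, the doubling phenomena when blocks have equal rank, and the precise exponent/Euler-factor cancellations in the conceptual identity above — rather than any new analytic input, since all the analytic estimates are already packaged in Theorems \ref{simpleshapesmain}, \ref{goodshapebound} and Lemmas \ref{T1bound}, \ref{congruenceconstantterm}.
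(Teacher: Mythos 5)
Your proposal is correct and follows essentially the same route as the paper: the paper's own "proof" is exactly the worked computation for the two-block shape $(T_1,1,\lb_1,\eta_1),(1,d_2,\lb_2,\eta_2)$ in \S\ref{sectiontransferfactors}--\ref{sectionlimitmult} followed by the remark that it extends to all odd GSK shapes, and your expansion via Corollary \ref{mpi0form}, Theorem \ref{goodshapebound} for $s=1$, Lemmas \ref{congruenceconstantterm}, \ref{T1bound} and Theorem \ref{simpleshapesmain} for $s\neq 1$, together with the multiplicity-formula argument for the vanishing case, is precisely that extension. The only minor imprecision is attributing the error term to "other shapes $\Delta'$" controlled by Corollary \ref{simpleshapesstronger} — for a fixed $\Delta$ the errors come from the error terms already built into Theorems \ref{simpleshapesmain} and \ref{goodshapebound} — but this does not affect the argument.
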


For our upper bound we can allow $S_0 \neq \emptyset$:

\begin{thm}\label{mpi0bound}
Let $G$ be an extended pure inner form of $G^* \in \wtd {\mc E}_\el(N)$ that may or may not be unramified and let $\pi_0$ be a cohomological representation of $G_\infty$. Let~$\Delta \in \Delta(\pi_0)$ and let $f^\infty_{\mf n} = \varphi_{S_0} f_{S_1} \bar \1_{K^{G,S}(\mf n)} $ be as in Section \ref{asymptoticsetup}. 

Then there are $A,B,D,E$ such that as long as $|\mf n| \geq Dq_{S_1}^{E \kappa}$
\[
m^G(\pi_0, \Delta, f^\infty_{\mf n}) = O(|\mf n|^{R(\Delta)} q_{S_1}^{A + B \kappa})
\]
where $R(\Delta)$ is as in Corollary \ref{simpleshapesstronger}.
\end{thm}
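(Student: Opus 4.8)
\textbf{Proof strategy for Theorem \ref{mpi0bound}.} The plan is to reduce the upper bound on $m^G(\pi_0, \Delta, f^\infty_{\mf n})$ entirely to the bounds on stable trace pieces $S^H_{\Delta'}(\EP_{\lb'} (f^\infty)')$ that were established in Corollary \ref{simpleshapesstronger}, via the stabilization formula of Corollary \ref{mpi0form}. First I would recall from Corollary \ref{mpi0form} that
\[
m^G(\pi_0, \Delta, f^\infty_{\mf n}) = \sum_{s \in \mc S_\Delta} \iota(G, H(s)) \eta^{\psi_\infty^\Delta}_{\pi_0}(s') \sum_{\Delta_1 \times \Delta_2} \prodf_{i=1,2} S^{H_i(s)}_{\Delta_i}(\EP_{\lb_i} (f^\infty_{\mf n})^{H_i}),
\]
so the left side is a \emph{finite} linear combination (the index set $\mc S_\Delta$ is finite, and for each $s$ the set of pairs $\Delta_1 \times \Delta_2$ is finite — a singleton unless $H_1 \cong H_2$) of products of two terms, each of the form $S^{H_i}_{\Delta_i}(\EP_{\lb_i}(\varphi_{S_0})^{H_i}(f_{S_1})^{H_i}\bar\1_{K^{G,S}(\mf n)}^{H_i})$. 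Since the coefficients $\iota(G,H(s))$ are absolute constants and $|\eta^{\psi_\infty^\Delta}_{\pi_0}(s')| = 1$, it suffices to bound each such product.

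Next I would handle the transfer of the test function to each $H = H_1 \times H_2 \in \mc E_\el(G)$. At the finite split places dividing $\mf n$, endoscopic transfer of $\bar\1_{K^{G,S}(\mf n)}$ is computed by a constant term to a Levi (this is the degenerate case of Lemma \ref{splitconstant} / the discussion in \S\ref{sectiontransferfactors}), so by Lemma \ref{congruenceconstantterm} it factors as $I(\mf n)\bar\1_{K^{H_1,S}(\mf n)} \otimes \bar\1_{K^{H_2,S}(\mf n)}$ up to the harmless Euler factor $I(\mf n) = O(|\mf n|^{\dim G/P}\Gamma_{-1}(\mf n)^{\sigma(M)})$; at places in $S_0$, transfer of $\varphi_{S_0}$ gives some fixed test functions $\varphi^{H_i}_{S_0}$ which contribute a $\mf n$-independent (and $\kappa$-independent) constant; at places in $S_1$, transfer of $f_{S_1}$ gives some $f^{H_i}_{S_1}$ lying in a bounded-level unramified Hecke algebra, contributing at worst $O(q_{S_1}^{A'+B'\kappa})$ by the argument of Lemma \ref{unramtransferbound}. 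Combining these, each factor $S^{H_i}_{\Delta_i}(\cdots)$ is, after extracting the explicit $I(\mf n)$-type Euler factors, a term of exactly the shape controlled by Corollary \ref{simpleshapesstronger} (applied on the smaller group $H_i$), hence bounded by $O(|\mf n|^{R(\Delta_i)} q_{S_1}^{A''+B''\kappa})$, possibly after first invoking Lemma \ref{tracepositivelinearcombination} to split the transferred test functions into trace-positive pieces so that Corollary \ref{simpleshapesstronger} applies. One then multiplies the two factors, reinserts the $I(\mf n)$ Euler factor coming from the constant term of $\bar\1_{K^{G,S}(\mf n)}$, and uses $\dim G/P = \tfrac12(\dim G - \dim H_1 - \dim H_2)$ together with the bookkeeping relation $R(\Delta_1) + R(\Delta_2) + \tfrac12(\dim G - \dim H_1 - \dim H_2) \le R(\Delta)$ — this is the same exponent arithmetic appearing in the proof of Theorem \ref{simpleshapesmain} — to conclude that each summand is $O(|\mf n|^{R(\Delta)}q_{S_1}^{A+B\kappa})$. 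Summing the finitely many summands gives the theorem, with $A,B,D,E$ taken as the extrema over the finitely many $(H(s), \Delta_1\times\Delta_2)$ appearing.

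The main obstacle is verifying that the exponent of $|\mf n|$ produced by this factor-and-descend procedure is genuinely $\le R(\Delta)$ rather than merely $\le \bar R(\Delta)$: one must track that the constant-term descent of $\bar\1_{K^{G,S}(\mf n)}$ to $H_1 \times H_2$ contributes precisely the ``off-diagonal'' half-dimension $\tfrac12(\dim G - \dim H_1 - \dim H_2)$, while the diagonal blocks are handled \emph{inside} the inductive bound $R(\Delta_i)$ on each $H_i$ — where the sharper Lemmas \ref{T1bound}, \ref{T2bound}, \ref{T3bound} have already been absorbed. In other words, the subtlety is purely one of careful Euler-factor bookkeeping across the stabilization, entirely parallel to the induction already carried out for $S^G_\Delta$ in \S\ref{sectionSdeltaasymptotics}; no new analytic input is needed, only the observation that passing from $S^G_\Delta$ to $m^G(\pi_0,\Delta,\cdot)$ via Corollary \ref{mpi0form} costs only bounded constants and a single extra constant-term descent of the level indicator.
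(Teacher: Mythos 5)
Your proposal is correct and follows exactly the paper's argument, which is the one-line "Apply Proposition \ref{mpi0form} and then Corollary \ref{simpleshapesstronger} to each term"; you have simply spelled out the Euler-factor and exponent bookkeeping (the identity $R(\Delta_1)+R(\Delta_2)+\tfrac12(\dim G-\dim H_1-\dim H_2)=R(\Delta)$, which holds since the correction terms in $R$ are sums over blocks that partition between $\Delta_1$ and $\Delta_2$) that the paper leaves implicit. No further comment is needed.
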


\begin{proof}
Apply Proposition \ref{mpi0form} and then \ref{simpleshapesstronger} to each term.
\end{proof}
Note that Theorem \ref{mpi0bound} does \emph{not} depend on Conjecture \ref{stabletransferconj} since Corollary \ref{simpleshapesstronger} doesn't.




\section{Explicit Computations on Unitary Groups}\label{sectionunitary}
In this section, we recall the explicit combinatorial parameterization of cohomological representations of $U(p,q)$ and their $A$-packets and Adams-Johnson parameters following \cite{MR19}; see also \cite{VZ84, Tra01, BC05}.  This allows us to compute the sets $\Delta(\pi_0)$ and  work out explicit limit multiplicity statements from Proposition \ref{mpi0form1} together with the bounds in Theorems \ref{mpi0delform} and \ref{mpi0bound}. 

\subsection{Cohomological Representations of \lm{$U(p,q)$}}\label{upqparam}

\subsubsection{Setup:}
We recall some general facts about the construction of cohomological representations. 
Let $G$ be a reductive Lie group with Lie algebra $\fg_0$, Cartan $\mf t_0$, and $\fg = \fg_0 \ten \BC$. Let $\fg = \fk \oplus \fp$ be the Cartan decomposition corresponding to a choice of maximal compact subgroup $K$ with Cartan involution $\iota$\footnote{In lieu of the traditional $\theta$, which we reserve for the involution defining our unitary group.}. Assume that $G$ has a compact Cartan subgroup~$T$ with Lie algebra~$\ft \subset \fk$. Let~$\Delta(\ft,\fg)$ be the root system for~$\ft$ in~$\fg$. 

In \cite{VZ84}, Vogan-Zuckerman introduce the notion of a~$\iota$-stable parabolic subalgebras~$\fq$ of~$\fg$, henceforth referred to as VZ subalgebras. To construct such~$\fq$, choose~$x \in i \ft_0$ and define $\lambda_0 \in \ft^*$ by $[x,\fg^\alpha] = \ip{\lambda_0}{\alpha}\fg^\alpha$ for $\alpha \in \Delta(\fg,\ft)$. Then there are standard Levi and root space decompositions:
\begin{equation}\label{eq VZ subalgebras}\fq := \fl \oplus \fu, \quad \fl := \ft \oplus \bigoplus_{\ip{\lambda_0}{\alpha}=0} \fg^\alpha, \quad \fu :=  \oplus \bigoplus_{\ip{\lambda_0}{\alpha}\geq0} \fg^\alpha. \end{equation} 

Let $L = Z_G(\lambda_0)$ be the Levi subgroup of $G$ with Lie algebra $\fl$.
Let $\Delta(\ft,\fu)$ be the roots of $\ft$ in $\fu$, and let $\xi: \fl \to \BC$ be a character such that: 
\begin{itemize}
    \item[(i)] $\xi$ is the differential of a one-dimensional representation of $L$, and 
    \item[(ii)] $\ip{\alpha}{\xi} \geq 0$ for $\alpha \in \Delta(\ft,\fu)$.
\end{itemize}

Then Vogan-Zuckerman define a representation $A_\fq(\xi)$ and show that if $V_\xi$ is the irreducible finite-dimensional $G$-representation of highest weight $\xi$, then~$A_\fq(\xi)$ and~$V_\xi$ have the same infinitesimal character~$\xi + \rho_G$ and 
\[ 
H^*(\fg,K;A_\fq(\xi) \ten V_\xi^\vee ) \neq 0. 
\] 
Moreover, any representation with nontrivial $(\fg,K)$-cohomology is isomorphic to $A_\fq(\xi)$ for some pair $(\fq,\xi).$ 

\subsubsection{The parameterization} \label{section parameterization}
The parameterization of cohomological representations of $U(p,q)$ will be given in terms of the following combinatorial data:
\begin{dfn}\label{orderedpartitions}
 For a pair of non-negative integers $p,q$ with $p+q = N$, let:
 \begin{itemize}
     \item[-] $\mathcal P(N)$ to be the set of \emph{ordered} partitions of $N$, i.e. tuples $(N_1,\dotsc,N_r)$ where $r$ is arbitrary, each $N_i$ is positive, and $\sum_i N_i = N$.
     \item[-] $\mathcal P(p,q)$ be the set of ordered bipartitions of $(p,q)$, i.e. the set of tuples of pairs $ ((p_1,q_1),\dotsc,(p_r,q_r)) $ where the $p_i,q_i$ are a non-negative integers with each $p_i + q_i >0$ and  $\sum_i p_i = p$, $\sum_i q_i = q$. 
     \item[-] $\mathcal P_1(p,q) \subset \mathcal{P}(p,q)$ be the subset consisting of expressions $((p_1,q_1),\dotsc,(p_r,q_r))$ where for each $i$, if either of $p_i$ or $q_i = 0$, then the other is $1$.
 \end{itemize}

\end{dfn}

There are natural surjective maps: \begin{align}\label{eq partition maps}
\beta&: \mathcal{P}(p,q) \to \mathcal{P}(N), \quad((p_1,q_1),\dotsc,(p_r,q_r)) \mapsto (p_1+q_1,\dotsc,p_r+q_r),\\
\gamma&: \mathcal{P}(p,q) \to \mathcal{P}_1(p,q)\end{align}  where $\gamma$ replaces any term of the form $(n,0)$ (resp. $(0,m)$) by $n$ copies of $(1,0)$ (resp. $m$ copies of $(0,1)$.) 

The bipartitions $B \in \mathcal{P}(p,q)$ parameterize VZ subalgebras following \cite[\S 1-3]{Tra01}, which proves:

\begin{prop}
Let $G=U(p,q)$. 
\begin{itemize}
    \item[(i)] There is a bijection $B \mapsto \mf q_B$ between $K$-conjugacy classes of VZ subalgebras of $\fg$ and $ \mathcal{P}(p,q)$. 
    \item[(ii)] $A_{\fq_B}(0) \simeq A_{\fq_{B'}}(0)$ if and only if $\gamma(B) = \gamma(B')$. 
\end{itemize}
\end{prop}
We write the Levi subgroup $L_{B}$ associated to $B \in \mathcal{P}(p,q)$ as 
\[ 
L_{B} = U(p_1,q_1) \times \cdots \times U(p_r,q_r). 
\]
To realize the bijection, embed $K = U(p,0) \times U(0,q)$ in $U(p,q)$, write 
\[ 
\ft_0 = \ft_0 \cap \fu_0(p,0) \oplus \ft_0 \cap \fu_0(0,q) \simeq \BR^{p} \times \BR^q,
\]  
and assign to $B = ((p_1,q_1),\dotsc,(p_r,q_r))$ the algebra constructed from $ix_{B}$, for 
\[ 
x_{B} = (\overbrace{r,\dotsc,r}^{p_1},\overbrace{r-1,\dotsc,r-1}^{p_2},\dotsc,\overbrace{1,\dotsc,1}^{p_r},\overbrace{r,\dotsc,r}^{q_1},\dotsc,\overbrace{1,\dotsc,1}^{q_r}) \in \ft_0 \subset \fk_0. 
\]
Next, let $\lambda$ be an infinitesimal character. Recall the notion of $P$-parts from \S\ref{notationsequences}. 
\begin{dfn}
We say that a regular integral infinitesimal character
\[
\lb = \lb_1 > \cdots > \lb_n
\]
is adapted to partition $P$ if the $P$-parts of $\lb$ are all of the form 
\[
X^r \sum_{i = 1}^n X^{(n-2i+1)/2}
\]
for some integer or half-integer $r$ and some integer $n$.
\end{dfn}
For example, the infinitesimal character $\rho_G$ of the trivial representation is adapted to all partitions of $N$. The following is also deduced from \cite{Tra01}: 
\begin{prop}
Let $G = U(p,q)$. The cohomological representations with regular integral infinitesimal character $\lb$ are all of the form $A_{\mf q_B}(\lb - \rho_G)$ for $B \in \mc P(p,q)$ such that~$\lb$ is adapted to $\beta(B)$. In particular, they are in bijection with the bipartitions~$B \in \mc P_1(p,q)$ with $\lambda$ adapted to~$\beta(B)$.  
\end{prop}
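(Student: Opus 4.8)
The plan is to assemble the statement from the two previously stated results of Trapa (\cite{Tra01}) quoted just above, namely the bijection between $K$-conjugacy classes of VZ subalgebras and $\mc P(p,q)$, and the equivalence $A_\fq(0) \simeq A_{\fq'}(0) \iff \gamma(B_\fq) = \gamma(B_{\fq'})$, together with the Vogan--Zuckerman theory of cohomological representations. First I would recall that, by Vogan--Zuckerman \cite{VZ84}, every unirrep $\pi$ with nonzero $(\fg,K)$-cohomology against $F_\lb$ is of the form $A_\fq(\lb-\rho_G)$ for some $\iota$-stable parabolic $\fq$ with the character $\lb-\rho_G$ satisfying conditions (i), (ii); conversely any such $A_\fq(\lb-\rho_G)$ has infinitesimal character $\lb$. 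So the content is twofold: (a) identify \emph{which} bipartitions $B$ give rise to a $\fq$ admitting a character $\lambda$ with differential $\lb - \rho_G$ satisfying the positivity condition $\ip{\alpha}{\lambda}\geq 0$ for $\alpha \in \Delta(\ft,\fu)$, and (b) determine exactly when two such data yield isomorphic representations.

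For step (a): with $L_B = U(p_1,q_1)\times\cdots\times U(p_r,q_r)$ and the element $x_B$ written out explicitly above, the roots of $\ft$ in $\fu$ are the $e_i - e_j$ pairing positively with $ix_B$, and a one-dimensional character of $L_B$ has differential of the form $(c_1,\dots,c_1,\dots,c_r,\dots,c_r)$ (constant on each block, in the standard coordinates, with a parity constraint coming from $\rho_G$). Requiring this to equal $\lb-\rho_G$ up to the $\rho_L$-shift forces the $\beta(B)$-parts of $\lb$ to each be a ``$\rho$-string'' $X^r\sum_i X^{(n-2i+1)/2}$ — i.e. $\lb$ is adapted to $\beta(B)$ in the sense of the Definition just given — and conversely, when $\lb$ is adapted to $\beta(B)$, one reads off the $c_i$ from the block-averages of $\lb$ and checks the positivity inequalities $\ip{\alpha}{\lambda}\geq 0$ hold automatically because the $c_i$ are weakly decreasing in the ordering dictated by $x_B$ (they are weakly decreasing precisely because within the corresponding $A_\fq(0)$ one has the freedom to order blocks, but regularity of $\lb$ pins the ordering). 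This gives: the cohomological representations of infinitesimal character $\lb$ are exactly the $A_\fq(\lb-\rho_G)$ for $B\in\mc P(p,q)$ with $\lb$ adapted to $\beta(B)$.

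For step (b): I would invoke Trapa's equivalence relation, transported from the $\lambda=0$ case to general regular integral $\lb$. Because the infinitesimal character $\lb$ is regular, the character $\lambda = \lb - \rho_G - \rho_L$-type shift is determined by $B$, so the ``translation principle'' (or direct comparison of Langlands parameters as in \cite{VZ84}, \cite{Tra01}) shows $A_{\fq}(\lb - \rho_G) \simeq A_{\fq'}(\lb-\rho_G)$ iff $\gamma(B_\fq) = \gamma(B_{\fq'})$, exactly as in the $\lambda = 0$ statement; one must also observe that $\lb$ adapted to $\beta(B)$ is equivalent to $\lb$ adapted to $\beta(\gamma(B))$, since $\gamma$ only splits off $(1,0)$ and $(0,1)$ blocks, which correspond to length-one $\rho$-strings and impose no new condition. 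Therefore the isomorphism classes are in bijection with $\gamma(\mc P(p,q)) = \mc P_1(p,q)$-elements $B$ with $\lb$ adapted to $\beta(B)$, which is the asserted parameterization.

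I expect the main obstacle to be step (a) — carefully checking that ``$\lb$ adapted to $\beta(B)$'' is both necessary and sufficient for the existence of a valid one-dimensional $\lambda$ on $L_B$ with the right infinitesimal character and nonnegativity, keeping track of the $\rho_G$ versus $\rho_L$ shifts and the integrality/half-integrality parity conventions for $U(p,q)$. The bookkeeping with the explicit $x_B$ and the coordinates $\ft_0 \simeq \BR^p\times\BR^q$ is elementary but is where sign and normalization errors tend to creep in; everything else is a quotation of \cite{Tra01} and \cite{VZ84}.
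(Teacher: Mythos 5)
Your proposal is correct and is essentially the argument the paper intends: the paper gives no written proof, simply deducing the statement from the two quoted results of \cite{Tra01} together with \cite{VZ84}, and your reconstruction (block-constancy of $\lb-\rho_G$ forcing the $\rho$-string condition, automatic dominance from regularity, and translation of Trapa's $\lambda=0$ equivalence) is the right way to fill that in. One small correction: ``$\lb$ adapted to $\beta(B)$'' is \emph{not} equivalent to ``$\lb$ adapted to $\beta(\gamma(B))$'' --- since $\beta(\gamma(B))$ refines $\beta(B)$, adaptedness to the coarser $\beta(B)$ implies adaptedness to $\beta(\gamma(B))$ but not conversely (e.g.\ $\lb=(3,1)$ is adapted to $(1,1)$ but not to $(2)$); this does not harm the bijection, because each $\gamma$-fiber contains its $\mc P_1$-representative, which is adapted exactly when the weaker condition holds, but the one-way implication is what you should actually invoke.
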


To compute the cohomology associated to the representations, one makes a choice of complex structure on $\fp$ i.e. on the quotient $G /K$. To do this, fix a Shimura datum for $G$ to be the conjugacy class of \[ h_K:\mathbb{S} \to U(p,q), \quad  h_K(z) = \left(\frac{z}{\bar{z}} I_p, I_q\right) \in U(p,0) \times U(0,q) \subset U(p,q).\] This induces a decomposition $\fp = \fp^+ \oplus \fp^-$ where $\Ad(h(z))$ acts on $\fp^+$ by $z/\bar{z}$ and on $\fp^-$ by its inverse.


\begin{lem} \label{LemmaCohomology}
Let $G=U(p,q)$ with a choice of Shimura datum as above. Let $V_\xi$ be a finite-dimensional representation with highest weight $\xi$. Let $B = ((p_1,q_1),\dotsc,(p_r,q_r)) \in \mathcal{P}(p,q)$ be such that $\xi+\rho_G$ is adapted to $\beta(B)$, and let 
\[  
R= pq-\sum_i p_iq_i, \quad R^+ = \sum_{i<j} p_iq_j, \quad  R^- = \sum_{i>j} p_iq_j. 
\] 
Then: 
\begin{enumerate}[(i)]
    \item the smallest value of $i$ such that $H^i(\fg,K; A_{\fq_{B}}(\xi)\ten V_\xi) \neq 0$ is $i=R$; 
    \item $H^j(\fg,K; A_{\fq_{B}}(\xi)\ten V_\xi) \neq 0$ if and only if $j = R + 2p$ for $0 \leq p \leq \sum_i p_i q_i$; 
    \item the Hodge weights of $A_{\fq_{B}}(\xi)$ in degree $R + 2p$ are $(R^+ + p,R^- + p)$. 
\end{enumerate}
\end{lem}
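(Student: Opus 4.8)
The plan is to reduce everything to the Vogan--Zuckerman computation of $(\mathfrak g,K)$-cohomology via the standard formula
\[
H^i(\mathfrak g,K;A_{\mathfrak q_B}(\lambda)\otimes F_\lambda)\;\cong\;\Hom_{L\cap K}\bigl(\textstyle\bigwedge^{i-R}(\mathfrak l\cap\mathfrak p),\,\mathbb C\bigr),
\]
which holds because $A_{\mathfrak q}(\lambda)$ has the same infinitesimal character as $F_\lambda$; here $R=\dim(\mathfrak u\cap\mathfrak p)$. So the first step is to identify $R$, $R^+$, $R^-$ explicitly in terms of the bipartition $B=((p_1,q_1),\dots,(p_r,q_r))$. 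Writing $\mathfrak t_0\cong\mathbb R^p\times\mathbb R^q$ and using the explicit element $x_B$ from \S\ref{upqparam}, the roots of $\mathfrak t$ in $\mathfrak p$ are the $e_a-f_b$ (with $e_a$ from the $U(p,0)$-part, $f_b$ from the $U(0,q)$-part), and one checks $\langle\lambda_0,e_a-f_b\rangle>0$ exactly when the block index of $a$ is strictly less than that of $b$, $<0$ when strictly greater, and $=0$ when equal. This gives at once $\dim(\mathfrak u\cap\mathfrak p)=\sum_{i<j}p_iq_j + \sum_{i>j}p_iq_j = pq-\sum_i p_iq_i = R$, and $\dim(\mathfrak l\cap\mathfrak p)=\sum_i 2p_iq_i$. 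Then (i) and (ii) follow: the $\Hom_{L\cap K}$ space is nonzero precisely for those exterior powers containing the trivial $L\cap K$-representation, and since $\mathfrak l\cap\mathfrak p = \bigoplus_i(\mathfrak p_i^+\oplus\mathfrak p_i^-)$ for each factor $U(p_i,q_i)$, the relevant invariants of $\bigwedge^\bullet(\mathfrak l\cap\mathfrak p)$ occur exactly in even degrees $2p$ with $0\le p\le\sum_i p_iq_i$ (this is the standard fact that $H^\bullet(\mathfrak{su}(p_i,q_i),\cdot)$ for the trivial coefficient system is generated in degrees contributing one class in each even degree up to $2p_iq_i$ — equivalently the cohomology of the compact dual $\prod_i\mathrm{Gr}(p_i,p_i+q_i)$ as far as its Poincaré polynomial in a single Kähler class is concerned). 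Thus $H^{R+2p}\ne 0$ iff $0\le p\le\sum_i p_iq_i$, which is (ii), and the minimal degree is $R$, which is (i).

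For (iii), the plan is to track the Hodge bigrading coming from the Shimura datum $h_K$, under which $\mathfrak p=\mathfrak p^+\oplus\mathfrak p^-$. The Vogan--Zuckerman isomorphism identifies the degree-$(R+2p)$ cohomology with a piece of $\bigwedge^{R}(\mathfrak u\cap\mathfrak p)\otimes\bigwedge^{2p}(\mathfrak l\cap\mathfrak p)^{L\cap K}$, and the Hodge type is read off by counting how many wedge factors lie in $\mathfrak p^+$ versus $\mathfrak p^-$. The key observation is that a root vector for $e_a-f_b$ with $a$ in block $i$ and $b$ in block $j$ lies in $\mathfrak p^+$ when (roughly) $a$ comes from a ``$p$''-slot and $b$ from a ``$q$''-slot — more precisely one compares the $h_K$-weights — so the $(\mathfrak u\cap\mathfrak p)$-part contributes $R^+=\sum_{i<j}p_iq_j$ to the $(+)$-count and $R^-=\sum_{i>j}p_iq_j$ to the $(-)$-count, while the invariant class in $\bigwedge^{2p}(\mathfrak l\cap\mathfrak p)$ is always of balanced type $(p,p)$ since inside each factor $U(p_i,q_i)$ the generating even class pairs a $\mathfrak p_i^+$ with a $\mathfrak p_i^-$. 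Adding these gives Hodge weight $(R^++p,\,R^-+p)$ in degree $R+2p$, which is (iii).

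The main obstacle I expect is step (iii): getting the signs/orientations exactly right for which root vectors are in $\mathfrak p^+$ versus $\mathfrak p^-$ relative to the fixed Shimura datum, and verifying that the $L\cap K$-invariant class in the Levi part genuinely has balanced Hodge type rather than some skewed type (this is where one must be careful about the ordering of the blocks in $B$ and the convention built into $x_B$). This is a finite but delicate bookkeeping exercise; I would organize it by first doing the rank-one case $B=((a,b))$ (no $\mathfrak u\cap\mathfrak p$ at all, so $R=R^+=R^-=0$ and one just needs the Hodge structure on $H^\bullet$ of a single Grassmannian), then the two-block case to pin down the $R^\pm$ contribution, and then assemble the general case by additivity of the bigrading over the block decomposition of $\mathfrak p$. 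The references \cite{VZ84} for the degree statement and \cite{BC05, Tra01} for the unitary-group specifics should let me cite rather than reprove the structural inputs, leaving only the Hodge-type bookkeeping to carry out in detail.
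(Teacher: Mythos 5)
Your proposal is correct and follows essentially the same route as the paper, which simply cites \cite[Thm.~3.3, 5.4, 6.19]{VZ84} for the degree and Hodge statements and notes that $R^{\pm}$ are computed from $x_B$ and the Shimura datum (referring to \cite[\S 5]{BC05} for the bookkeeping you propose to carry out). Your explicit root-space count $\langle x_B, e_a-f_b\rangle = j-i$, giving $\dim(\fu\cap\fp^+)=\sum_{i<j}p_iq_j$, and the observation that the Levi invariants are the $(p,p)$-classes of the compact dual Grassmannians, are exactly the details the paper leaves to the references.
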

\begin{proof}

This follows from \cite[Thm 3.3, 5.4, 6.19]{VZ84}: the first nonzero degree of cohomology of $A_{\fq_B}(\xi)$ is $R = \dim \fu\cap \fp = \dim \fp - \dim \fl \cap \fp = pq-\sum_i p_iq_i$.  More precisely, the cohomology of $A_{\fq_{B}}(\xi)$ in degree $\dim \fu \cap \fp$ appears in weight $(R^+,R^-)=(\fu \cap \fp^+, \fu \cap \fp^-)$. All other weights $(a,b)$ for which $A_\fq(\xi)$ has cohomology are of the form $(R^+ + p, R^- + p)$. One computes $R^+$ and $R^-$ from $x_{B}$ and the choice of Shimura datum; see e.g. \cite[\S 5]{BC05} for more details. 
\end{proof}

\subsubsection{Packets of cohomological representations} \label{cohomological A-packets and partitions}

Recall form \S \ref{subsection AJ 3} that any parameter $\psi$ associated to a packet of cohomological representations is an AJ parameter. In particular, there is a Levi subgroup $$\wh L \simeq GL_{n_1} \times \cdots \times GL_{n_r} \subset \wh G$$ such that $\psi(W_\BC \times I) \subset Z(\wh L)$, and such that $\psi(1 \times SL_2)$ is a principal $\SL_2$ in $\wh L$. Thus, for some $t_i$, we have
\[
\psi\mid_{W_{\BC} \times \SL_2} = \bigoplus_{i=1}^r \chi_{t_i}\boxtimes [n_i],
\] 
where $[n_i]$ is the irreducible $n_i$-dimensional representation of $\SL_2(\BC)$ and
\[ 
\chi_{t_i}(z) = \left(\frac{z}{\bar{z}}\right)^{t_i/2}, \quad z \in W_{\BC} \simeq \BC^\times. 
\]  
 In this case, the total infinitesimal character of $\psi$ is 
\[
\lambda_\psi = \sum_{i=1}^r X^{\frac{t_i}{2}}\sum_{j=1}^{n_i}X^{\frac{n_i-2j+1}{2}}. 
\] 
Since we assume that $\lambda_\psi$ is the infinitesimal character of a finite-dimensional representation, we have for each $i$ that $t_i-n_i \equiv N \mod 2$, i.e. we are in the good parity case in the sense of \cite[\S 4]{MR19}. To a good parity parameter, M{\oe}glin-Renard attach the ordered partition $P=(n_1,\dotsc,n_r) \in \mathcal{P}(N)$: specifically, the unordered multiset of $n_i$ come from restricting to the Arthur-$\SL_2$ and the ordering is such that the infinitesimal character of the summand $\chi_{t_i} \otimes [n_i]$ of $\psi$ is the $i$th $P$-part of $\lb_\psi$ (in particular, $P$ is adapted to $\lb_\psi$). They also show that the corresponding packet is 
\[
\Pi_\psi = \{ A_{\fq_{B}}(\xi_{B}) \mid \beta(B) =P \}, 
\] 
where $\xi_{B} = \boxtimes_i \det^{(t_i+n_i-N)/2-n_{<i}} $ for $n_{<i} = \sum_{j<i} n_j.$ In short, the representations in the packet $\pi_\psi$ are in bijection with the bipartitions $((p_1,q_1),\dotsc,(p_r,q_r)) \in \mathcal{P}(p,q)$ such that $p_i+q_i = n_i$. 

M\oe glin-Renard do more: first they fix a Whittaker datum for a choice of quasisplit form $G^*$ of $G$ (see \cite[Rmq 4.5]{MR19}) which without loss of generality we can have match the one from Section \ref{sectiontransferfactors}.  Then, they write down explicitly the character $\eta_{\pi}: S^\natural_\psi \to \pm 1$ attached by Arthur to each representation in the packet $\Pi_\psi$ in terms of the bipartition $B$ such that $\pi=\pi_B$. These constructions are recalled in \S \ref{SectionMoeglinCharacterComputation}, where they are used.

\subsection{\lm{$\Delta(\pi_0)$} and \lm{$R(\Delta)$}}\label{sectionunitaryparamcombo}
Fix an extended pure inner form~$G$ of $G^* \in \wtd {\mc E}_\sm(N)$. Let 
\[
\pi_0 = \bigotimes_{v \in \infty} \pi_{0,v}
\]
be a cohomological representation of
\[
G_\infty = \prod_{v \in \infty} U(p_v, q_v).
\] 
We will study the set $\Delta(\pi_0)$ introduced in \S \ref{section shape preliminaries} and for each $\Delta \in \Delta(\pi_0)$, we compute the invariant $R(\Delta)$ from Theorem \ref{simpleshapesstronger} and the set of shapes that realize it.
\begin{dfn}\label{Deltamax}
Let $\Delta^{\max}(\pi_0)$ be the set of $\Delta \in \Delta(\pi_0)$ with maximal $R(\Delta)$.  We will denote this common maximal value $R(\pi_0)$.
\end{dfn}


\subsubsection{Ignoring $\eta_i$}
First, since each $\Delta = (T_i, d_i, \lb_i, \eta_i)_i \in \Delta(\pi_0)$ satisfies $H(\Delta) = G^*$, the $\eta_i$ are determined by $T_i$ and $d_i$ according to Section \ref{ACassigntogroup}. As such, we will ignore them in this section. 

\subsubsection{Arthur-$\SL_2$'s} \label{subsection ArthurSL2Max}
We first study the $\Delta$ according to their Arthur-$\SL_2$. These $N$-dimensional representations of $\SL_2$ correspond to unordered partitions of $N$ via their decomposition into irreducibles. One first sees that:
\begin{lem}\label{maxoversl2}
Among the shapes~$\Delta$ with Arthur-$\SL_2$ given by unordered partition
\[
Q = (a_1^{(r_1)}, \dotsc, a_k^{(r_k)})
\]
with $a_i$ distinct, the value of the invariant $R(\Delta)$ introduced in Theorem \ref{simpleshapesstronger} is maximized for shapes $\Delta = (r_i, a_i, \lb_i, \eta_i)_{1 \leq i \leq k}$, in which each distinct integer $a_j$ appears once. 
Denote by $R(Q)$ this maximized value of $R(\Delta)$.

Furthermore, for cohomological representations $\pi_0$, if there is $\Delta \in \Delta(\pi_0)$ whose Arthur-$\SL_2$ matches $Q$, then there is also $\Delta' \in \Delta(\pi_0)$ such that $R(\Delta') = R(Q)$.
\end{lem}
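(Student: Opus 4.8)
The statement has two parts, and both are essentially bookkeeping with the formula for $R(\Delta)$ from Corollary~\ref{simpleshapesstronger}, combined with the combinatorial description of $\Delta(\pi_0)$ from \S\ref{section shape preliminaries}. The plan is to first prove the abstract optimization claim (over all shapes with fixed Arthur-$\SL_2$), and then transfer it to the setting of cohomological $\pi_0$.

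For the first part, fix the unordered partition $Q = (a_1^{(r_1)}, \dotsc, a_k^{(r_k)})$ with the $a_j$ distinct. A shape $\Delta$ with this Arthur-$\SL_2$ is determined (up to permutation, and ignoring the $\eta_i$'s which are forced) by grouping the $r_j$ copies of $[a_j]$ into blocks: for each $j$, we choose an ordered partition of $r_j$ into parts $T_{j,1}, T_{j,2}, \dotsc$ (together with integral infinitesimal characters $\lb_{j,\ell}$ on the corresponding $\GL$'s), and the blocks of $\Delta$ are the $(T_{j,\ell}, a_j, \lb_{j,\ell}, \eta)$. The formula \eqref{Equation R(Delta)} for $R(\Delta)$ decomposes as $\bar R(\Delta)$ minus correction terms that are each \emph{superadditive} in $T_i$ in a suitable sense: the dimension count $\bar R(\Delta) = \tfrac12(N^2 + \sum_i T_i^2 d_i)$ is strictly increased by merging two blocks $(T', d, \ldots)$ and $(T'', d, \ldots)$ with the same $d$ into $(T'+T'', d, \ldots)$, since $(T'+T'')^2 > (T')^2 + (T'')^2$; and one checks that the corrections for $T_i \in \{1,2,3\}$ (and the trivial ``no correction'' case for $T_i \geq 4$) do not overcome this gain. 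So I would compare, block by block for each fixed $a_j$, an arbitrary grouping of the $r_j$ copies against the single block $(r_j, a_j)$, and show the latter is always at least as large; summing over $j$ gives that $R(\Delta)$ is maximized when each $a_j$ contributes a single block $(r_j, a_j, \lb_j, \eta_j)$, which is exactly the claimed $\Delta = (r_i, a_i, \lb_i, \eta_i)_{1\le i\le k}$. This defines $R(Q)$.

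For the second part, suppose $\Delta \in \Delta(\pi_0)$ has Arthur-$\SL_2$ equal to $Q$. By the discussion in \S\ref{section shape preliminaries}, membership of $\Delta$ in $\Delta(\pi_0)$ depends only on the local component $\psi_\infty^\Delta$ at infinity containing $\pi_0$ in its packet, and by Lemma~\ref{shapetoinfty} the parameter $\psi_\infty^\Delta$ is recovered from $\Delta$ by restricting to the Arthur-$\SL_2$ (which is $Q$) and to $W_\R$ (determined by the total infinitesimal character, which is fixed once we fix the infinitesimal character of $\pi_0$). The point is that re-grouping the simple blocks of $\Delta$ into the maximal block $(r_j, a_j)$ per distinct $a_j$ — i.e.\ passing to the $\Delta'$ produced in part one — changes neither the Arthur-$\SL_2$ nor (by choosing the $\lb'_j$ to assemble to the same total infinitesimal character, which is possible exactly because the original $\lb_i$'s for the same $a_j$ came from adjacent $P$-parts of a common regular integral $\lb$) the restriction to $W_\R$. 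Hence $\psi_\infty^{\Delta'} = \psi_\infty^{\Delta}$, so $\pi_0 \in \Pi_{\psi_\infty^{\Delta'}}$ and $\Delta' \in \Delta(\pi_0)$, with $R(\Delta') = R(Q)$ by construction.

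\textbf{Main obstacle.} The routine inequality-juggling in part one is the bulk of the work but is mechanical; the subtler point is part two — verifying that the ``merge blocks with equal $d$'' operation is actually realizable inside $\Delta(\pi_0)$, i.e.\ that one can choose the infinitesimal characters $\lb'_j$ on the merged $\GL_{r_j}$-blocks so that the resulting $\psi_\infty^{\Delta'}$ still has $\pi_0$ in its Adams--Johnson packet. This hinges on the fact that, for a cohomological $\pi_0$ with regular integral infinitesimal character $\lb$, the possible $\Delta$'s correspond precisely to ways of grouping simple factors of $\psi_\infty$ that share the same Arthur-$\SL_2$-dimension, with the grouped factor's infinitesimal character forced to be the union of the relevant $P$-parts of $\lb$ (this is the content of the M\oe glin--Renard parameterization recalled in \S\ref{cohomological A-packets and partitions} and the adaptedness condition). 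So the merge is always legitimate, but spelling out this compatibility carefully — matching the ``adapted partition'' $\beta(B)$ bookkeeping with the block structure of $\Delta$ — is where I would spend the most care. I expect this to be straightforward given the setup in \S\ref{upqparam}, but it is the step that genuinely uses the structure of cohomological parameters rather than pure combinatorics.
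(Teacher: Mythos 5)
Your proposal is correct and follows essentially the same route as the paper: the paper's proof likewise observes that any other shape with Arthur-$\SL_2$ equal to $Q$ arises by splitting some block $(r_i,a_i,\lb_i,\eta_i)$ into smaller ones, which decreases $R(\Delta)$, and for the second part merges blocks sharing the same $a_i$ by concatenating their infinitesimal characters, leaving $\psi_\infty^\Delta$ unchanged via the construction of Lemma \ref{shapetoinfty}. Your extra care about the corrections for $T_i\in\{1,2,3\}$ not overcoming the superadditivity of $T^2$ is exactly the check the paper leaves implicit (and it does go through case by case), while the remark about "adjacent $P$-parts" is unnecessary since the block infinitesimal characters are unordered multisets and concatenation is always legitimate.
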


\begin{proof}
Constructing any other shape with the same Arthur-$\SL_2$ would require splitting up some $(r_i, a_i, \lb_i, \eta_i)$ into smaller blocks, which would decrease $R(\Delta)$. 

For the second part, we may always merge blocks in $\Delta$ with the same $a_i$ by concatenating their infinitesimal characters. This leaves $\psi^\infty_\Delta$ unchanged by the construction in Lemma \ref{shapetoinfty}. 
\end{proof}

This reduces the study of $R(\pi_0)$ to understanding the Arthur-$\SL_2$'s for $\Delta \in \Delta(\pi_0)$:
\begin{dfn}\label{Qmax}
Let $Q^{\max}(\pi_0)$ be the set of unordered partitions representing the Arthur-$\SL_2$'s of $\Delta \in \Delta(\pi_0)$ with maximal $R(\Delta)$. Equivalently by Lemma \ref{maxoversl2}, it is the set of Arthur-$\SL_2$'s of $\Delta \in \Delta^{\max}(\pi_0)$. 
\end{dfn}
By Lemma \ref{shapetoinfty}, the possible Arthur-$\SL_2$'s for $\Delta \in \Delta(\pi_0)$ are the possible Arthur-$\SL_2$'s for $\psi_\infty$ with $\pi_0 \in \Pi_{\psi_\infty}$. We can therefore enumerate them by our classification of cohomological representations.

Fix a place $v$ and let $\pi_{0,v}$ correspond to $B_v = (p_{i,v}, q_{i,v})_i \in \mc P_1(p_v, q_v)$ and infinitesimal character $\lb_v$. We next study $\Delta(\pi_{0,v})$: the union of $\Delta(\pi')$ over all~$\pi'$ with $\pi'_v = \pi_{0,v}$. Recall from \eqref{eq partition maps} that $\beta(B_v)$ is the ordered partition of $N$ associated to $B_v$. We define some combinatorial objects:
\begin{itemize}
    \item $\beta_{>1}(B)$ is the unordered subpartition of $\beta(B)$ corresponding to parts with size bigger than $1$. 
    \item $Q_p(\pi_0)$ (resp. $Q_q(\pi_0)$) is the unordered partition $(n_j)_{j \in J}$ where the $j$ correspond to runs of length $n_j$ of consecutive $(p_i, q_i)$ of the form $(1,0)$ (resp. $(0,1)$) such that the corresponding piece of $\lb$ is of the form 
    \[
    X^r \sum_{i=1}^{n_j} X^{(n-2i +1)/2}.
    \]
\end{itemize}
 Next, for two unordered partitions $Q_1 = (n_i)_{i \in I}$ and $Q_2 = (m_j)_{j \in J}$, we say that $Q_2$ refines $Q_1$ if there is a map $J \to I$ such that the sum of $m_j$ over the fiber at $i$ is $n_i$.

\begin{lem}\label{enumsl2}
The possible Arthur-$\SL_2$'s for $\Delta \in \Delta(\pi_{0,v})$ correspond exactly to unordered partitions
\[
(X,Y,\beta_{>1}(B_v)),
\]
where $X$ refines~$Q_p(\pi_{0,v})$ and~$Y$ refines~$Q_q(\pi_{0,v})$.
\end{lem}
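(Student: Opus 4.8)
The statement is a purely combinatorial unwinding of the classification of cohomological representations recalled in \S\ref{upqparam} together with the description of Adams–Johnson parameters in \S\ref{subsection AJ 3}. The plan is as follows. First, by Lemma \ref{shapetoinfty}, the possible Arthur-$\SL_2$'s for $\Delta\in\Delta(\pi_{0,v})$ are exactly the possible Arthur-$\SL_2$'s for the local parameters $\psi_\infty$ at $v$ with $\pi_{0,v}\in\Pi_{\psi_\infty}$; so it suffices to enumerate the latter. Second, I would recall from \S\ref{cohomological A-packets and partitions} that such a $\psi_\infty$ is an AJ-parameter, hence determined by an ordered partition $P=(n_1,\dots,n_r)\in\mathcal P(N)$ adapted to $\lb_v$, with packet $\Pi_\psi=\{A_{\fq_B}(\lambda_B)\mid \beta(B)=P\}$, and that $\pi_{0,v}=A_{\fq_{B_v}}(\lambda_{B_v})$ lies in $\Pi_\psi$ precisely when $B_v$ (or rather a lift of $B_v\in\mathcal P_1$ to $\mathcal P(p_v,q_v)$) has $\beta$-image $P$. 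Thus the task becomes: enumerate the ordered partitions $P$ adapted to $\lb_v$ that arise as $\beta(B')$ for some $B'\in\mathcal P(p_v,q_v)$ with $\gamma(B')=B_v$, and then read off the multiset of parts of $P$, which is the Arthur-$\SL_2$.

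\textbf{Key steps.} The combinatorial core is to describe which $B'\in\mathcal P(p_v,q_v)$ satisfy $\gamma(B')=B_v$. Since $\gamma$ only affects blocks $(n,0)$ and $(0,m)$—replacing them by runs of $(1,0)$'s and $(0,1)$'s—a preimage $B'$ of $B_v$ is obtained by taking each block of $B_v$ with both coordinates positive verbatim (these are forced, and contribute their sum $p_i+q_i>1$ to $P$—exactly the parts recorded by $\beta_+(B_v)$), and by grouping each maximal run of consecutive $(1,0)$'s in $B_v$ into consecutive blocks $(a,0)$, and similarly each maximal run of $(0,1)$'s into blocks $(0,b)$. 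The adaptedness constraint on $P$ forces the grouping to respect the block-structure of $\lb_v$: a run of $(1,0)$'s can only be merged into a single block $(a,0)$ when the corresponding segment of $\lb_v$ has the form $X^r\sum_{i=1}^a X^{(n-2i+1)/2}$, which is precisely the condition defining the parts of $Q_p(\pi_{0,v})$ (and symmetrically $Q_q$). Hence the admissible regroupings of the $(1,0)$-runs correspond exactly to unordered partitions $X$ refining $Q_p(\pi_{0,v})$, those of the $(0,1)$-runs to $Y$ refining $Q_q(\pi_{0,v})$, and the multiset of parts of the resulting $P$ is $(X,Y,\beta_+(B_v))$. I would then check the converse: any such $(X,Y,\beta_+(B_v))$ is realized by explicitly building the corresponding $B'$ and verifying $\gamma(B')=B_v$ and adaptedness, so that the local parameter exists.

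\textbf{Main obstacle.} None of this is deep; the only thing requiring care is bookkeeping of the adaptedness condition—i.e. making sure that the segments of the infinitesimal character $\lb_v$ line up correctly with the blocks of $B'$ when one merges $(1,0)$-runs, and that one is not allowed to merge across a ``break'' in $\lb_v$. Concretely, one must verify that adaptedness of $\beta(B')$ to $\lb_v$ is equivalent to each merged $(a,0)$-block sitting over a segment of $\lb_v$ of the prescribed arithmetic-progression shape, which is exactly how $Q_p$ and $Q_q$ were defined. So the expected difficulty is purely in setting up the definitions of $Q_p(\pi_0)$, $Q_q(\pi_0)$, and the refinement relation so that the equivalence is clean; once that is done the proof is a direct translation. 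I would also note in passing that the parts coming from blocks $(p_i,q_i)$ with both coordinates positive are inert under any regrouping, which is why $\beta_+(B_v)$ appears as a fixed ``core'' in every admissible partition.
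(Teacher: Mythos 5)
Your proposal is correct and follows essentially the same route as the paper: reduce to local AJ-parameters via Lemma \ref{shapetoinfty}, use the M{\oe}glin--Renard packet description to see that the admissible $P$ arise by merging runs of $(1,0)$'s and $(0,1)$'s in $B_v$ subject to adaptedness of $\lb_v$ (which is exactly the refinement condition on $Q_p$ and $Q_q$, with the mixed blocks contributing the fixed core $\beta_+(B_v)$), and then realize every such $P$ by an explicit construction. The only step the paper carries out that you elide is the converse globalization: since $\Delta(\pi_{0,v})$ consists of global refined shapes, after building $B'$ and the local parameter one must still produce a shape $\Delta$ with $\psi_v^\Delta$ corresponding to $P$, which the paper does by assigning infinitesimal characters to the blocks of $P$ at $v$ and choosing the components at the other infinite places arbitrarily.
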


\begin{proof}
Lemma \ref{LemmaCohomology} tells us that the~$P \in \mc P(p+q)$ that correspond to $\psi_\infty$ with~$\pi_0 \in \Pi_{\psi_\infty}$ are produced by merging runs of consecutive $1$'s in~$\beta(B_v)$ that correspond to parts in~$B_v$ all of the form~$(1,0)$ or all of the form~$(0,1)$. Furthermore, we require that the coarsened partition thereby produced is still adapted to~$\lb_v$. These conditions together show that $P$ is an ordering of something of the form $(X,Y,\beta_{>1}(B_v))$.

Consider such a partition 
\[
P = (n_1, \dotsc, n_k).
\]
We will show that there is $\Delta \in \Delta(\pi_{0,v})$ with $\psi^\Delta_v$ corresponding to $P$. Let $I_1$ be the subset of indices such that $n_i = 1$ and $I_+$ its complement.  Let~$\lb_v$ have~$P$-parts $\lb^P_1, \dotsc, \lb^P_k$. 
Since $\lb_v$ is adapted to~$P$, for each~$i \in I_+$, there exists~$\lb_{i,v}$ such that the shape $(1, n_i, \lb_{i,v})$ has total infinitesimal character $\lb^P_i$ at~$v$. Next, let $\lb'_v$ be the concatenation of $\lb^P_i$ for $i \in I_1$. Finally, choose the other components for $w \neq v$ of $\lb_i$ and $\lb'$ arbitrarily. Consider
\[
\Delta = (|I_1|, 1, \lb', \eta), ((1, n_i, \lb_i, \eta_i))_{i \in I_+}.
\]
Then, by the constructions in \S\ref{cohomological A-packets and partitions}, $\psi^\Delta_v$ corresponds to $P$. Note that $\Delta \in \Delta(\pi_{0,v})$ by the form of $P$. 
\end{proof}

Next, we define
\[
\beta_{>1}(\pi_0) := \bigcup_{v \in \infty} \beta_{>1}(B_v)
\]
where the union is interpreted as of non-disjoint multisets (the multiplicity of an element is exactly equal to the maximum of its multiplicities in the 
$\beta_{>1}(B_v)$).
\begin{lem}\label{enumsl2E}
The possible Arthur-$\SL_2$'s for $\Delta \in \Delta(\pi_0)$ correspond exactly to unordered partitions $Q$ such that, for each $v \in \infty$, $Q$ can be written in the form:
\[
Q = (X_v,Y_v,\beta_{>1}(B_v))
\]
where $X_v$ refines $Q_p(\pi_{0,v})$ and $Y_v$ refines $Q_q(\pi_{0,v})$. In particular:
\begin{itemize}
    \item All Arthur-$\SL_2$'s for $\Delta \in \Delta(\pi_0)$ contain $\beta_{>1}(\pi_0)$ as a subpartition,
    \item
    if $\Delta(\pi_0)$ isn't empty, there is $\Delta \in \Delta(\pi_0)$ with Arthur-$\SL_2$
    \[
    (1, \dotsc, 1, \beta_{>1}(\pi_0)).
    \]
\end{itemize}
\end{lem}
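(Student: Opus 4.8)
\textbf{Proof proposal for Lemma \ref{enumsl2E}.}

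The plan is to bootstrap from the local statement \ref{enumsl2} to a global statement by intersecting the local conditions, in exactly the way one passes from a product of local $A$-packets at infinity to the packet $\Pi_{\psi_\infty}$ for $\psi_\infty = \bigotimes_v \psi_{0,v}$. First I would recall the key structural input from Lemma \ref{shapetoinfty}: the Arthur-$\SL_2$ of a shape $\Delta$ is literally the restriction of $\psi_\infty^\Delta$ to the Arthur-$\SL_2$ factor, and this restriction is constant across all local components, since $\psi_\infty^\Delta\mid_{\SL_2}$ is a single $\SL_2$-representation (encoded by the unordered partition $(T_i^{(d_i)})$ over the blocks of $\Delta$). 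Consequently, an unordered partition $Q$ of $N$ is the Arthur-$\SL_2$ of some $\Delta \in \Delta(\pi_0)$ if and only if there exists a global parameter $\psi$ with $\pi_0 \in \Pi_{\psi_\infty}$ whose Arthur-$\SL_2$ is $Q$; and by the factorization $\Pi_{\psi_\infty} = \prod_v \Pi_{\psi_{0,v}}$ of $A$-packets at infinity, $\pi_0 = \bigotimes_v \pi_{0,v} \in \Pi_{\psi_\infty}$ forces $\pi_{0,v} \in \Pi_{(\psi_\infty)_v}$ for each $v \in \infty$, where $(\psi_\infty)_v\mid_{\SL_2}$ is the same partition $Q$ for every $v$.

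Next I would invoke Lemma \ref{enumsl2} at each $v$: $\pi_{0,v}$ lies in a local AJ-packet whose Arthur-$\SL_2$ is $Q$ precisely when $Q$ can be written in the form $(X_v, Y_v, \beta_+(B_v))$ with $X_v$ refining $Q_p(\pi_{0,v})$ and $Y_v$ refining $Q_q(\pi_{0,v})$. Taking this over all $v$ gives the ``only if'' direction: $Q$ must simultaneously admit such a decomposition at every infinite place, which is exactly the stated condition. For the ``if'' direction, I would take a $Q$ satisfying the condition and, for each $v$, use the local realization constructed in the proof of Lemma \ref{enumsl2} (choosing block infinitesimal characters $\lambda_{i,v}$ adapted to the relevant $P$-parts of $\lambda_v$) to build local parameters; then I would assemble these into a global shape $\Delta$ by choosing cuspidal $\tau_i$ on $\GL_{T_i}/E$ whose infinitesimal characters at each $v$ match what is required — here one uses that, given any prescribed regular integral infinitesimal characters at the finite set of infinite places, there exist (conjugate self-dual, appropriately parity-constrained) cuspidal automorphic representations of the required rank realizing them, and then $H(\Delta)$ is forced to equal $G^*$ by the assignment of \S\ref{ACassigntogroup} with the $\eta_i$ chosen accordingly. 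This yields $\Delta \in \Delta(\pi_0)$ with Arthur-$\SL_2$ equal to $Q$.

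Finally, the two bulleted consequences follow formally. Since $\beta_+(B_v)$ is a subpartition of every valid decomposition of $Q$ at $v$, the multiset $Q$ contains each $\beta_+(B_v)$; taking the least common refinement of this containment over $v$ — which by definition of $\beta_+(\pi_0)$ (multiplicity = max of local multiplicities) is exactly $\beta_+(\pi_0)$ — shows $\beta_+(\pi_0)$ is a subpartition of $Q$ for every Arthur-$\SL_2$ $Q$ occurring. For the last bullet, if $\Delta(\pi_0) \neq \emptyset$ then in particular $\beta_+(\pi_0)$ has total size $\leq N$, and I would check directly that $Q = (1,\dotsc,1,\beta_+(\pi_0))$ (with enough $1$'s to sum to $N$) satisfies the condition at each $v$: indeed at place $v$ one writes $Q = (X_v, Y_v, \beta_+(B_v))$ where $\beta_+(B_v) \subseteq \beta_+(\pi_0)$, the ``extra'' parts of $\beta_+(\pi_0)$ not coming from $v$ are $>1$ and must be absorbed — wait, this is the subtle point, so more carefully: the parts of $\beta_+(\pi_0)$ of size $>1$ that exceed $\beta_+(B_v)$ are not available among the size-$1$ slots, hence one must instead argue that the all-ones-plus-$\beta_+(\pi_0)$ partition already equals $(X_v,Y_v,\beta_+(B_v))$ after regrouping, using that $Q_p(\pi_{0,v})$ and $Q_q(\pi_{0,v})$ together with the size-$1$ parts of $\beta(B_v)$ account for all the $1$'s and for any $\beta_+$-parts arising at other places being refinable down to $1$'s at $v$ — this last claim needs the hypothesis that $\Delta(\pi_0)$ is nonempty, which guarantees such a simultaneous decomposition exists and hence that its ``minimal'' refinement $(1,\dots,1,\beta_+(\pi_0))$ also works by monotonicity of the refinement relation. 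I expect this final verification — reconciling the local refinement conditions at different places with a single global partition, and in particular showing the all-ones completion is admissible — to be the main obstacle, and would handle it by a careful induction on $|\infty|$ using the explicit combinatorics of $Q_p, Q_q, \beta_+$ and the ``adapted'' condition on $\lambda$.
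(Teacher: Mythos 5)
Your overall route is the same as the paper's: reduce to the local statement of Lemma \ref{enumsl2} at each infinite place (using that the Arthur-$\SL_2$ of $\psi_\infty^\Delta$ is the same partition $Q$ at every $v$), and for the converse build $\Delta$ by assigning infinitesimal-character components place by place. One correction on the converse, though: a refined shape is purely formal data $(T_i,d_i,\xi_i,\eta_i)_i$, and $\Delta(\pi_0)$ is defined by the condition $\pi_0\in\Pi_{\psi_\infty^\Delta}$ alone, so you do \emph{not} need to produce actual conjugate self-dual cuspidal automorphic representations $\tau_i$ with prescribed infinitesimal characters. That existence statement is true but nontrivial, and invoking it imports an unnecessary dependence; the paper simply fixes the $\xi_{i,v}$ at every $v$ exactly as in the proof of Lemma \ref{enumsl2} and is done.

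The genuine gap is the second bullet, which you explicitly leave unresolved and propose to attack by induction on $|\infty|$; no induction is needed, and ``monotonicity of refinement'' alone does not finish it because you must also check that the refinement from $Q$ down to $(1,\dotsc,1,\beta_+(\pi_0))$ is compatible with the decomposition $(X_v,Y_v,\beta_+(B_v))$ at each $v$. The missing observation is this: fix a valid $Q$ and a place $v$ with $Q=(X_v,Y_v,\beta_+(B_v))$. Since $Q\supseteq\beta_+(\pi_0)\supseteq\beta_+(B_v)$ as multisets, the multiset $\beta_+(\pi_0)\setminus\beta_+(B_v)$ embeds into $X_v\sqcup Y_v$; choose such an embedding, keep those parts, and replace every remaining part of $X_v$ (resp.\ $Y_v$) by that many $1$'s. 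A further refinement of a refinement of $Q_p(\pi_{0,v})$ is still a refinement of $Q_p(\pi_{0,v})$, so the resulting $X_v', Y_v'$ are admissible, and a count shows the number of $1$'s produced is exactly $N-|\beta_+(\pi_0)|$, independent of $v$. Hence $(1,\dotsc,1,\beta_+(\pi_0))$ satisfies the displayed condition at every place simultaneously. Your first bullet argument (each $\beta_+(B_v)\subseteq Q$, hence the max-multiplicity union $\beta_+(\pi_0)\subseteq Q$) is fine as stated.
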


\begin{proof}
That the Arthur-$\SL_2$'s must be contained in this set is an elementary combinatorial extension of the argument in \ref{enumsl2}. 

Existence of a $\Delta = (T_i, d_i, \lb_i)_i$ with a particular $\SL_2$ follows from fixing each component of the $\lb_i$ as in the argument of Lemma \ref{enumsl2} instead of just the $v$-component. 

Finally, the two bullet points are also elementary combinatorial properties of the set of such simultaneous $(X_v, Y_v, \beta_{>1}(B_v))$.
\end{proof}

\begin{note}\label{deltamaxalgo}
We summarize this section as a three step algorithm for computing $\Delta^{\max}(\pi_0)$:
\begin{enumerate}
    \item Find the possible Arthur-$\SL_2$'s for $\Delta \in \Delta(\pi_0)$ by Lemma \ref{enumsl2E}. 
    \item Compute $R(Q)$ for each of these Arthur-$\SL_2$'s to compute $Q^{\max}(\pi_0)$. 
    \item $\Delta^{\max}(\pi_0)$ is then partitioned into non-empty parts corresponding to the $Q \in Q^{\max}(\pi_0)$. Each part can be determined by Lemma \ref{maxoversl2}. 
\end{enumerate}
\end{note}
It turns out that the second step becomes much easier for GSK-shapes. Showing this will make up the remainder of our combinatorial work.


\subsubsection{The key bound}
We next prove 
an elementary combinatorial bound that is basically 
 a reformulation of Lemma 7.1 in \cite{MS19}. Recall the numerical invariants $\bar R(\Delta)$ and $R(\Delta)$, introduced respectively in \eqref{Equation R(Delta)=bar} and \eqref{Equation R(Delta)}; 
 $R(Q)$ is 
 the maximum of $R(\Delta)$ over the $\Delta$ such that $ \psi_\infty^\Delta$ corresponds to $Q$.
Part of the complexity of this argument is an artifact of only being able to prove the suboptimal bound $R(\Delta)$ from Corollary \ref{simpleshapesstronger} instead of the optimal $R_0(\Delta)$ from Conjecture \ref{simpleshapesconj}.

\begin{lem}\label{maxsl2}
Let $Q_0$ be an unordered partition that has distinct parts and no parts of size $1$. Then the maximum value of $R(Q)$ over all unordered partitions $Q$ of $N$ that have subpartition $Q_0$ is achieved by
\[
Q_\can = (1^{(r)}, Q_0). 
\]
Furthermore, if either $r \neq 2$ or $Q_0$ has no parts of size $2$, this is the unique such $Q$ that achieves this maximum. 
\end{lem}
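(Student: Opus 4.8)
The strategy is to reduce the optimization to a purely numerical statement about the function $R(Q)$ as partitions $Q$ with prescribed subpartition $Q_0$ are built by ``splitting off'' parts of size $1$. Recall that by Lemma \ref{maxoversl2}, $R(Q)$ for $Q = (a_1^{(r_1)}, \dotsc, a_k^{(r_k)})$ equals $R(\Delta)$ for $\Delta = (r_i, a_i, \lb_i, \eta_i)_i$, and $R(\Delta)$ is given explicitly by \eqref{Equation R(Delta)} as $\bar R(\Delta)$ minus correction terms depending only on the multiplicities $r_i$ and sizes $a_i$ when $a_i \leq 3$ (and no correction when $a_i \geq 4$). Since all parts of $Q_0$ are distinct and $\geq 2$, each size $a \geq 2$ appearing in $Q$ either equals one of the $Q_0$-parts (multiplicity $1$) or not (multiplicity $0$); the only size that can have multiplicity $> 1$ is $a = 1$. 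So $R(Q)$ depends only on $N$, on the fixed data $Q_0$, and on the multiplicity $r$ of the part $1$. The first step is therefore to write $R$ as an explicit function $g(r)$ of this single variable $r$ (with $N$ and $Q_0$ fixed, noting $N = r + \sum Q_0$ is then also a function of $r$... no—$N$ is fixed, so $r$ is determined; instead the variable is \emph{how many $1$'s come from merging vs. splitting}).

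Let me restate the genuine variable: a partition $Q \supseteq Q_0$ of the fixed integer $N$ is determined by its parts outside $Q_0$, which sum to $N - \sum Q_0 =: M$. Any such $Q$ is obtained from the ``all ones'' completion $Q_\can = (1^{(M)}, Q_0)$ by replacing some sub-multiset of those $M$ ones by larger parts (possibly coinciding with $Q_0$-parts, creating multiplicities, or introducing fresh repeated parts). The second step is to show that each such modification \emph{strictly decreases} $R(Q)$ unless it is the single exceptional move. Concretely, I would compare $R(Q)$ to $R(Q')$ where $Q'$ is obtained from $Q$ by splitting one part of size $a \geq 2$ into $a$ parts of size $1$: using the formula \eqref{Equation R(Delta)} together with \eqref{Equation R(Delta)=bar}, compute $\bar R(Q) - \bar R(Q')$ and the change in correction terms, and verify the net change is negative for $a \geq 2$ except when $a = 2$ and the resulting multiplicity of the part $1$ equals $2$ (this is precisely the $T_i = 2$ correction $4d_i - (d_i+3) = 3d_i - 3$ vanishing at $d_i = 1$ versus the $T_i = 1$ correction at multiplicity $2$). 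This case analysis—mirroring \cite[Lem. 7.1]{MS19}—is the heart of the argument and I expect it to be the main obstacle, since one must carefully track how merging several ones into one part of size $d$ interacts with the $\frac12(d^2+d) - 1$ correction and the cross terms in $\bar R$.

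The third step is to assemble these local comparisons into a global maximum statement. Since $R$ strictly decreases under every elementary split-to-ones move except the exceptional one, and any $Q \supseteq Q_0$ reaches $Q_\can$ by a finite sequence of such moves, $R(Q_\can) \geq R(Q)$ for all admissible $Q$, with equality only if every move in some such reducing sequence is exceptional. The exceptional move requires $a = 2$ and produces multiplicity exactly $2$ for the part $1$; this forces $Q = (1^{(r-2)}, 2, Q_0)$... one checks this is only possible, and only yields equality, when $r = 2$ and $Q_0$ itself contributes no part of size $2$ (so that introducing a part $2$ does not create a triple). Hence if $r \neq 2$ or $Q_0$ has no part of size $2$, $Q_\can$ is the unique maximizer. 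The final step is bookkeeping: confirm that $R(Q_\can)$ with $Q_\can = (1^{(r)}, Q_0)$ is well-defined (all parts of $Q_0$ distinct and $\geq 2$, so $Q_\can$ has distinct parts except for the $1$'s, exactly the GSK shape) and that this matches the claimed $Q_\can = (1^{(r)}, Q_0)$ in the statement, completing the proof.
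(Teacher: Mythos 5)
Your first step (reducing to the explicit formula \eqref{Equation R(Delta)} and observing that $R(Q)$ depends only on the multiplicity profile of $Q$) is fine, but the local claim your argument hinges on is false, and this is a genuine gap. You assert that every atomic move --- splitting one part of size $a \geq 2$ into $a$ ones --- increases $R$, with the sole exception of an $a=2$ move landing at multiplicity two for the part $1$; you then chain these moves from an arbitrary competitor $Q$ down to $Q_\can$. But $R$ is not monotone along such a chain. Take $Q_0 = (2,3)$, $N = 9$, so $r = 4$ and $Q_\can = (1^{(4)},2,3)$, and consider the competitor $Q = (2,2,2,3)$. From \eqref{Equation R(Delta)} one computes $R(Q) = 41 + 2\cdot 10^{-100}$, while the (forced) first split gives $Q' = (2,2,1,1,3)$ with $R(Q') = 40$, and only the second split reaches $R(Q_\can) = 44$. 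The drop comes from the discontinuity of the correction term between multiplicities $3$ and $2$ of a part of size $2$ (the $T_i=3$ and $T_i=2$ cases of \eqref{Equation R(Delta)} are calibrated differently), and no reordering of the splits avoids it. So the local-to-global step ``each move increases $R$, hence $Q_\can$ maximizes'' collapses. The paper's proof never chains atomic moves: it compares $Q_\can$ directly to an arbitrary competitor by changing the multiplicity of each size $a_i$ from $1$ to its final value $k$ in a \emph{single} step, treats the distinct sizes independently, and runs a case analysis on $k \in \{2,3,\geq 4\}$ in which the global constraint $r \geq (k-1)a_i$ (the ones consumed must actually be available) is what makes each total change nonpositive. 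The intermediate non-monotonicity is then invisible.

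Separately, your characterization of the equality case is inverted. Equality $R(Q) = R(Q_\can)$ occurs precisely when $r = 2$ \emph{and} $Q_0$ \emph{does} contain a part of size $2$, the tying competitor being $(2^{(2)}, Q_0 \setminus \{2\})$: for $Q_0 = (2,3)$, $N=7$, both partitions give $R = 22$, because the doubled $2$ lands at multiplicity $2$ where the correction is $3d_i - 3 = 3$ rather than the multiplicity-one correction $\tfrac12(d_i^2+d_i)-1 = 2$. If instead $2 \notin Q_0$ and $r = 2$, the competitor $(2, Q_0)$ loses strictly (for $Q_0 = (3)$, $N = 5$: $R = 8$ versus $R(Q_\can) = 11$). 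Your proposal claims equality requires $Q_0$ to have \emph{no} part of size $2$, which contradicts the uniqueness clause of the very statement being proved.
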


\begin{proof}
Let
\[
Q_\can = (1, \dotsc, 1, Q_0) = (1^{(r)}, (a_i^{(r_i)})_i)
\]
for $a_i$ distinct and $r_i = 1$. The other possible $Q$ containing $Q_0$ are produced by decreasing the number of $1$'s and increasing one of the $r_i$'s. We will therefore show that $R(Q)$ decreases if we increase any of the $r_i$. 

Recall from Remark \ref{Remark RQ} that $R(Q)$ is obtained by starting from $\bar{R}(Q)$ (which is equal to the dimension of a certain parabolic block matrix) and then replacing the dimensions of certain blocks on the diagonal with modified counts. Along the diagonal, the blocks are indexed by the number $1$ and the distinct $a_i$. Changing an $r_i$ changes three blocks of the parabolic: the two on the diagonal associated to $1$ and $a_i$ and one corresponding to this pair above the diagonal. In particular, we can treat changes in each $r_i$ independently. Furthermore, changing an $r_i$ from $0$ (i.e. creating a new block) can be easily seen to decrease $R(Q)$.  

In general, going from $r_i = 1$ to $r_i = k$ changes the modified summand associated to these three blocks from:
\begin{multline*}
r^2 + ra_i + 1 \mapsto \\ (r - (k-1)a_i)^2 + ka_i(r - (k-1)a_i) + (\text{modified count for $a_i$ with $r_i = k$}). 
\end{multline*}
Expanding out, the change in $R(Q)$ is
\[
(\text{modified count for $a_i$ with $r_i = k$}) - a_i(a_i + r)(k-1) - 1,
\]
where the modified counts are the counts for the part of formula for $R(Q)$ associated to the $a_i$-block on the diagonal.  The modified count defining $R(Q)$ is defined differently for $r_i = 1,2,3$ versus everything else. Therefore we look at cases:

\begin{itemize}
\item
If an $r_i$ is increased to $2$, the modified count is
$
2a_i(a_i - 1) + (a_i + 3)
$, 
making the total difference equal to 
\[
2a_i(a_i - 1) + (a_i + 3) -  a_i(a_i + r) - 1.
\]
Using $r \geq a_i$, this is at most $ 
- a_i + 2
$
and is always negative unless $r = a_i = 2$. 
\item
If an $r_i$ is increased to $3$, the modified count is
$ 
\f92a_i(a_i - 1) + ((4+\eps)a_i + 5)$,
making the total difference
\[
\f92a_i(a_i - 1) + ((4+ \eps)a_i + 5) - 2a_i(a_i + r) - 1.
\]
Using $r \geq 2a_i$, this is bounded above by
\[
-\f32 a_i^2 - \lf(\f12 - \eps\ri)a_i + 4,
\]
which is always negative for in particular $\eps < 1/2$ since $a_i \geq 2$. 
\item
If an $r_i$ is increased to $k \geq 4$, then the change in modified counts is $
\f{k^2}2 a_i(a_i + 1)$,
making the total difference
\[
\f{k^2}2 a_i(a_i + 1) - a_i(a_i + r)(k-1) - 1.
\]
Using $r \geq (k-1)a_i$, this is bounded above by
\[
-\lf( \f{k^2}2 - k\ri) a_i^2 + \f{k^2}2 a_i - 1
\]
and using $a_i \geq 2$ and $k^2/2 - k > 0$ gives an upper bound by
\[
-\lf( \f{k^2}2 - 2k\ri) a_i - 1,
\]
which is always negative when $k \geq 4$. 
\end{itemize}
In total, if we increase any of the $r_i$, then $R(Q)$ decreases.
\end{proof}

\subsubsection{Summary of combinatorial work}
To conclude:
\begin{dfn}\label{Qcan}
Let $\pi_0=\bigotimes_{v \mid \infty} \pi_{0,v}$,  
with $\pi_{0,v}$ from $B_v = (p_{i,v}, q_{i,v})_v \in \mc P_1(p_v, q_v)$. Let $\beta_{>1}(B_v)$ be the unordered subpartiton of pieces of size greater than $1$ in $\beta(B_v)$. Let $\beta_{>1}(\pi_0)$ be the union of all the $\beta_{>1}(B_v)$ as multisets. Finally, define
\[
Q_\can(\pi_0) := (1, \dotsc, 1, \beta_{>1}(\pi_0))
\]
as an unordered partition of $N$ (if it exists). 
\end{dfn}

Then:
\begin{prop}\label{Deltamaxenum}
Let $Q_\can(\pi_0)$ be of the form
\[
Q_\can(\pi_0) = (1^{(r)}, a_1, \dotsc, a_k)
\]
for the $a_i$ distinct (i.e. $\beta_{>1}(\pi_0)$ has distinct parts). Further assume that either $r \neq 2$ or there is no $i$ such that $a_i = 2$. Then $\Delta^{\max}(\pi_0)$ consists of all shapes of the form
\[
(r, 1, \lb), (1, a_i, \lb_i)_{1 \leq i \leq k}
\]
in $\Delta(\pi_0)$ and $\Delta^{\max}(\pi_0) \neq \emptyset$ provided that $\Delta(\pi_0) \neq \emptyset$. 
\end{prop}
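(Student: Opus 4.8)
\textbf{Proof proposal for Proposition \ref{Deltamaxenum}.}

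The plan is to assemble this from the three-step algorithm of Note \ref{deltamaxalgo} together with the combinatorial lemmas just proved. First I would unwind the definitions: by Lemma \ref{enumsl2E}, the Arthur-$\SL_2$'s arising from $\Delta \in \Delta(\pi_0)$ are exactly the unordered partitions $Q$ of $N$ admitting, for each $v \in \infty$, a decomposition $Q = (X_v, Y_v, \beta_+(B_v))$ with $X_v$ refining $Q_p(\pi_{0,v})$ and $Y_v$ refining $Q_q(\pi_{0,v})$. Every such $Q$ contains $\beta_+(\pi_0) = \bigcup_v \beta_+(B_v)$ as a subpartition (the multiset union is forced since each $\beta_+(B_v)$ must sit inside $Q$ and the refining pieces $X_v, Y_v$ consist only of $1$'s). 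Thus the set of admissible $Q$ is a subset of the set of partitions of $N$ containing $\beta_+(\pi_0)$, and by the second bullet of Lemma \ref{enumsl2E} it contains $Q_\can(\pi_0) = (1^{(r)}, \beta_+(\pi_0))$ whenever $\Delta(\pi_0) \neq \emptyset$.

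Next I would apply Lemma \ref{maxsl2} with $Q_0 = \beta_+(\pi_0)$: under the hypothesis that $\beta_+(\pi_0)$ has distinct parts (so $Q_0$ has distinct parts and no part of size $1$), the maximum of $R(Q)$ over all partitions of $N$ containing $Q_0$ as a subpartition is achieved uniquely at $Q_\can(\pi_0)$, provided $r \neq 2$ or $Q_0$ has no part of size $2$ — which is precisely our standing hypothesis. Since the admissible $Q$'s form a subset of those partitions and include $Q_\can(\pi_0)$, the maximum of $R(Q)$ over admissible $Q$ is also achieved uniquely at $Q_\can(\pi_0)$; hence $Q^{\max}(\pi_0) = \{Q_\can(\pi_0)\}$. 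Combined with Lemma \ref{maxoversl2}, which says that among shapes with a fixed Arthur-$\SL_2$ of the form $(1^{(r)}, (a_i^{(r_i)})_i)$ the invariant $R(\Delta)$ is maximized exactly by the shapes in which each distinct part appears once, i.e. $\Delta = (r, 1, \lb), (1, a_i, \lb_i)_{1 \le i \le k}$, this identifies $\Delta^{\max}(\pi_0)$ as the set of all $\Delta \in \Delta(\pi_0)$ of that form.

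Finally, for non-emptiness: the last clause of Lemma \ref{maxoversl2} guarantees that if some $\Delta \in \Delta(\pi_0)$ has Arthur-$\SL_2$ equal to $Q_\can(\pi_0)$ then there is one in $\Delta(\pi_0)$ achieving $R(Q_\can(\pi_0))$, and the construction in the second half of the proof of Lemma \ref{enumsl2E} (fixing every archimedean component of the infinitesimal-character data, not just one) exhibits such a $\Delta$ whenever $\Delta(\pi_0) \neq \emptyset$; so $\Delta^{\max}(\pi_0) \neq \emptyset$ as soon as $\Delta(\pi_0) \neq \emptyset$. The only real subtlety — and the step I expect to require the most care — is checking that the maximization in Lemma \ref{maxsl2}, which is stated over \emph{all} partitions of $N$ containing $Q_0$, does not overshoot the genuinely admissible set: one must confirm that $Q_\can(\pi_0)$ itself is admissible (done via Lemma \ref{enumsl2E}) so that the constrained maximum equals the unconstrained one, and that the uniqueness clause survives the restriction. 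Everything else is bookkeeping with the maps $\beta, \gamma$ and the definitions of $Q_p, Q_q, \beta_+$.
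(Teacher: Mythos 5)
Your proposal is correct and takes essentially the same route as the paper, whose proof is the one-line instruction to run the algorithm of Note \ref{deltamaxalgo} using Lemma \ref{maxsl2} in step (2) to get $Q^{\max}(\pi_0) = \{Q_\can(\pi_0)\}$ and Lemma \ref{maxoversl2} in step (3); you have simply written out the steps the paper leaves implicit, including the genuinely necessary check that $Q_\can(\pi_0)$ is itself admissible so the constrained maximum agrees with the unconstrained one of Lemma \ref{maxsl2}. One small inaccuracy worth fixing: the refining pieces $X_v, Y_v$ in Lemma \ref{enumsl2E} need not consist only of $1$'s, but the containment $\beta_+(\pi_0) \subseteq Q$ you want there is exactly the first bullet of that lemma (each $\beta_+(B_v)$ sits inside $Q$, so each part appears with at least the maximal multiplicity), so nothing downstream is affected.
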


\begin{proof}
This is the result of applying algorithm \ref{deltamaxalgo} keeping Lemma \ref{maxsl2} in mind for step (2) to get that $Q^{\max}(\pi_0) = \{Q_\can(\pi_0)\}$. 
\end{proof}

We warn that Proposition \ref{Deltamaxenum} does not neccesarily hold if $\beta_{>1}(\pi_0)$ has repeated elements. For example, consider $\beta_{>1}(\pi_0) = (2,2,2,2)$. Then we have 
\[
R(2^{(4)},1,1) = 67 <  R(2^{(5)}) = 74.
\]
In fact, this is a counterexample to even the analogous statement with the conjectural optimal bound $R_0$. Whether any $\Delta \in \Delta(\pi_0)$ can have Arthur-$\SL_2$ given by $(2^{(5)})$ depends on what exactly the $Q_p(\pi_{0,v})$ and $Q_q(\pi_{0,v})$ are. Therefore, a general description of $Q^{\max}(\pi_0)$ is much more complicated.

\subsubsection{GSK-maxed representations}
Now we restrict to the special class of representations we can study:

\begin{dfn}\label{GSKm}
Let $\pi_0$ be a cohomological representation of some $U(p,q)$ corresponding to bipartition $(p_i, q_i)_i \in \mc P_1(p+q)$. We say $\pi_0$ is GSK-maxed if the only value of $p_i + q_i$ that appears with multiplicity is $1$.

We say $\pi_0$ is odd GSK-maxed if in addition all the $p_i + q_i$ are odd. 
\end{dfn}

\begin{dfn}\label{GSKmE}
Let $\pi_0$ factor into $\pi_{0,v}$ that each corresponding to bipartition $(p_{i,v}, q_{i,v})$. Then we say that $\pi_0$ is GSK-maxed if $\Delta(\pi_0) \neq \emptyset$ and the only number that appears with multiplicity among the $p_{i,v} + q_{i,v}$ is $1$.

We say $\pi_0$ is odd GSK-maxed if in addition all the $p_{i,v} + q_{i,v}$ are odd. Equivalently, $\beta_{>1}(\pi_0)$ is a partition into (odd) distinct parts. 
\end{dfn}

These definitions are justified by the following corollary of Proposition \ref{Deltamaxenum}: 
\begin{cor}\label{maxedshapeE}
Fix an extended pure inner form $G$ of $G^* \in \wtd {\mc E}_\el(N)$ and let $\pi_0$ be a cohomological representation of $G_\infty$ that is (odd) GSK-maxed. Further assume that if $\beta(\pi_0) = (1^{(r)}, a_1, \dotsc, a_k)$, then either $r \neq 2$ or none of the $a_i = 2$ (this is automatically satisfied if $\pi_0$ is odd GSK-maxed). 

Then all elements of $\Delta^{\max}(\pi_0)$ are (odd) GSK. 
\end{cor}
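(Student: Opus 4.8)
\textbf{Proof plan for Corollary \ref{maxedshapeE}.}

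The plan is to combine Proposition \ref{Deltamaxenum} with the definitions of GSK and GSK-maxed, checking that the combinatorial hypotheses line up. First I would unpack Definition \ref{GSKmE}: saying that $\pi_0$ is GSK-maxed means $\Delta(\pi_0) \neq \emptyset$ and the only integer appearing with multiplicity among the $p_{i,v} + q_{i,v}$ (over all $v$ and all $i$) is $1$. By construction $\beta_+(B_v)$ records exactly the parts of $\beta(B_v)$ of size $> 1$, i.e. the $p_{i,v} + q_{i,v}$ that exceed $1$, and $\beta_+(\pi_0)$ is the multiset-union of these over $v \in \infty$. So the GSK-maxed hypothesis says precisely that $\beta_+(\pi_0)$ has pairwise distinct parts, all of which are $> 1$; if $\pi_0$ is odd GSK-maxed these parts are in addition all odd. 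This is exactly the shape of input required to apply Proposition \ref{Deltamaxenum}.

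Next I would verify the side condition of Proposition \ref{Deltamaxenum}: writing $Q_\can(\pi_0) = (1^{(r)}, a_1, \dots, a_k)$ with the $a_i$ distinct (which we now know holds), we need that either $r \neq 2$ or no $a_i$ equals $2$. This is exactly the extra hypothesis imposed in the statement of Corollary \ref{maxedshapeE}, and in the odd GSK-maxed case it is automatic since then every $a_i$ is odd, hence $\neq 2$. With the hypothesis of Proposition \ref{Deltamaxenum} satisfied, it tells us that $\Delta^{\max}(\pi_0)$ is non-empty (as $\Delta(\pi_0) \neq \emptyset$ by the GSK-maxed assumption) and consists precisely of the shapes of the form
\[
(r, 1, \lb), (1, a_i, \lb_i)_{1 \leq i \leq k}
\]
lying in $\Delta(\pi_0)$.

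Finally I would check that any such shape is GSK in the sense of Definition \ref{GSK}, and odd GSK when $\pi_0$ is odd GSK-maxed. In a shape $\Delta = (T_j, d_j, \lb_j, \eta_j)_j$ of the above form, the first block has $T_1 = r$ and $d_1 = 1$, and every other block has $T_j = 1$; this is the first bullet of Definition \ref{GSK}. The $d_j$ for $j \geq 2$ are the $a_i$, which are pairwise distinct, and $d_1 = 1$ is distinct from all of them since each $a_i > 1$; this gives the second bullet. Hence $\Delta$ is GSK. If moreover $\pi_0$ is odd GSK-maxed then every $a_i$ is odd and $d_1 = 1$ is odd, so all $d_j$ are odd and $\Delta$ is odd GSK. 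This completes the proof. There is no real obstacle here: the corollary is a bookkeeping consequence of Proposition \ref{Deltamaxenum}, and the only thing to be careful about is matching the multiset conventions for $\beta_+(\pi_0)$ with the ``distinct parts'' hypothesis and confirming that the $r \neq 2$-or-no-$a_i = 2$ caveat is inherited correctly (trivially so in the odd case).
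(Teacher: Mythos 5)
Your argument is correct and is exactly the route the paper intends: the corollary is stated as an immediate consequence of Proposition \ref{Deltamaxenum}, and your write-up simply fills in the bookkeeping (matching Definition \ref{GSKmE} to the "distinct parts" hypothesis, checking the $r \neq 2$ caveat, and verifying the resulting shapes satisfy Definition \ref{GSK}). No discrepancy with the paper's approach.
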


We warn that in general, $\Delta^{\max}(\pi_0)$ isn't a singleton. The different possibilities differ by different assignments of infinitesimal characters $\lb_{i,v}$ to each block $(T_i, d_i)$. 
\begin{ex}
    Consider $F$ with two infinite places $v,w$ and $G_v \cong G_w \cong U(6,1)$. Let
\begin{equation*}
\pi_{0,v} = (1,1), (1,0)^{(5)}, \quad \text{and} \quad \pi_{0,w} = (2,1), (1,0)^{(4)}
\end{equation*}
have the infinitesimal character of the trivial representation:
\[
\lb = (3, 2, 1, 0, -1,-2,-3).
\]
If $\Delta^{\max}(\pi_0) \neq \emptyset$, any $\Delta \in \Delta^{\max}(\pi_0)$ is of the form
\[
\Delta = (2,1, (\lb^1_v, \lb^1_w)), (1,2, (\lb^2_v, \lb^2_w)), (1,3, (\lb^3_v, \lb^3_w)),
\]
and the unordered partition $Q^{\max}(\pi_0)$ is $(3,2,1,1)$.
We are forced to choose
\[
\lb^2_v[2] = (3, 2), \qquad \lb^3_w[3] = (3, 2, 1).  
\]
However, we still need to pick $\lb^3_v$ and $\lb^2_w$. This will correspond to a choice of ordering of $Q^{\max}(\pi_0)$ at each of $v$ and $w$.

There are three choices
\[
\lb^3_v[3] = (1,0,-1), (0, -1, -2), \text{ or }  (-1,-2,-3)
\]
corresponding to three choices
\[
\lb^1_v = (-2,-3), (1, -3), \text{ or } (1, 0)
\]
and three orderings of $Q^{\max}(\pi_0)$:
\[
(2,3,1,1), (2,1,3,1), \text{ or } (2,1,1,3). 
\]

Similarly, there are three choices
\[
\lb^2_w[2] = (0,-1), (-1, -2), \text{ or } (-2, -3)
\]
corresponding to three choices
\[
\lb^1_w = (-2, -3), (0, -3), \text{ or } (0,-1)
\]
and three orderings of $Q^{\max}(\pi_0)$:
\[
(3,2,1,1), (3,1,2,1), \text{ or } (3,1,1,2). 
\]
Thus in total $\Delta^{\max}(\pi_0)$ can contain up to nine elements. Note that the different possibilities for $\lb^1_v$ (resp. $\lb^1_w$) aren't necessarily even character twists of each other.

\end{ex}

\begin{ex}As an simple example where this difficulty doesn't appear, assume
\[
G_\infty = U(N-1,1)^r
\]
and let $\pi_0 \cong \pi_{0,v}^r$ be diagonal. There is only one non-$1$ entry in any element of~$\mc P_1(N-1,1)$ so  $\pi_0$ is GSK-maxed. The arguments in Lemmas \ref{enumsl2} and \ref{enumsl2E} show that since $\beta_{>1}(\pi_0) = \beta_{>1}(B_v)$ 
is a singleton, there is exactly one way to assign infinitesimal characters $\lb_{i,v}$ to the blocks $(T_i, d_i)$, so that $\Delta^{\max}(\pi_0)$ is a singleton. 
\end{ex}

\subsection{Characters of the Component Group} 
\label{SectionMoeglinCharacterComputation}
As a last piece of the puzzle to derive explicit limit multiplicities for individual representations from Theorem \ref{mpi0delform}, we attach  to elements of the AJ-packet $\Pi_{\psi_\infty}$ characters of the group $S^\natural_{\psi_\infty}$ introduced in \S\ref{AClocalSpsi}. Let $P = (a_1, \dotsc, a_k) \in \mc P(N)$ correspond to an $A$-parameter~$\psi_\infty$ at infinity for $U(p,q)$ considered as an extended pure inner form. 
As mentioned after \cite[(1.3)]{MR19}, the equation \eqref{SpsiAJ} reduces to a canonical isomorphism
\[
S^\natural_{\psi_\infty} = \bigoplus_{1 \leq i \leq k} \Z/2 = \langle \eps_i \rangle_{1 \leq i \leq k}. 
\]
In addition, the subgroup~$\Z(\wh G)^\Gamma$ is the diagonally embedded $\Z/2$.

Though the character attached to a representation in a local $A$-packet is not explicit in general, M{\oe}glin-Renard in \cite{MR19} make it so for AJ-packets on unitary groups. Let
\[
\pi_0 = (p_i, q_i)_i \in \beta^{-1}(P) \subseteq \mc P(p,q) 
\]
and define $a_{<i} = \sum_{j=1}^{i-1} a_i$. Then,
\[
\eta^{\psi_\infty}_{\pi_0}(\eps_i) = (-1)^{p_i a_{<i} + q_i(a_{<i}+1) + a_i(a_i-1)/2}.
\]
We can simplify this to
\begin{equation}\label{etachar}
\eta^{\psi_\infty}_{\pi_0}(\eps_i) = (-1)^{a_i a_{<i} + q_i + \chi_4(a_i)},
\end{equation}
where
\[
\chi_4(a_i) := \begin{cases} 0 & a_i \equiv 0,1 \pmod 4 \\ 1 & a_i \equiv 2,3 \pmod 4 \end{cases}.
\]
If the $a_i$ are all odd, this further simplifies to
\begin{equation}\label{etacharodd}
\eta^{\psi_\infty}_{\pi_0}(\eps_i) = (-1)^{(i-1) + q_i + \chi_4(a_i)}.
\end{equation}
Finally, if $\psi_\infty = \psi_\infty^\Delta$ for $\Delta$ of the type in Lemma \ref{maxoversl2}, then
\[
\mc S^\natural_\Delta = \lf\langle s_d := \sum_{i : a_i = d} \eps_i \ri\rangle_{d \in \Z^+}
\]
is a subgroup of $S^\natural_{\psi_\infty}$. 
As a result, we can characterize representations such that $\eta^{\psi_\infty^\Delta}_{\pi_0}(S^\natural_\Delta) = 1$; this will be used to give asymptotics for multiplicities of individual representations in the odd GSK-maxed case.
\begin{lem}\label{etacharoddGSK}
Let $G$ be an extended pure inner form of $G^* \in \wtd {\mc E}_\el(N)$. Let $\pi_0 = \bigotimes_v \pi_{0,v} $ be an odd GSK-maxed representation of $G_\infty = \prod_v U(p_v, q_v)$. 

Let $Q^{\max}(\pi_0) = (1^{(r)}, d_1, \dotsc, d_k)$ with $d_i$ odd and distinct. The $\Delta \in \Delta^{\max}(\pi_0)$ each determine orderings 
\[
P^\Delta_v := (a_{1,v}, \dotsc, a_{k+r, v})
\]
of $Q^{\max}(\pi_0)$ for each $v$ such that $\psi^\Delta_v = P^\Delta_v$. Let
\[
\pi_{0,v} = (p^\Delta_{i,v}, q^\Delta_{i,v})_i \in \beta^{-1}(P^\Delta_v) \subseteq \mc P(p_v, q_v)
\]
and for each $d_j$, let $i^\Delta_v(d_j)$ be the index $i$ such that $d_j = a_{i,v}$.  

Then, for $\Delta \in \Delta^{\max}(\pi_0)$, $\eta^{\psi_\infty^\Delta}_{\pi_0}(S^\natural_\Delta) = 1$ if and only if
\[
t_j := \sum_{v \in \infty} (i^\Delta_v(d_j) - 1 + q^\Delta_{i^\Delta_v(d_j), v} + \chi_4(d_j))
\]
is even for each $1 \leq j \leq k$ and $G_\infty$ satisfies the parity conditions from $\ref{unraminnerforms}$. 
\end{lem}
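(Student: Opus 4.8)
The statement is essentially an unwinding of the character formula \eqref{etacharodd} combined with the description of $S^\natural_\Delta$ as the subgroup $\langle s_d \rangle$ of $S^\natural_{\psi_\infty^\Delta}$. First I would recall that, by Lemma~\ref{maxoversl2} and Corollary~\ref{maxedshapeE}, any $\Delta \in \Delta^{\max}(\pi_0)$ is odd GSK, so its Arthur-$\SL_2$ is exactly $Q^{\max}(\pi_0) = (1^{(r)}, d_1, \dotsc, d_k)$ with the $d_i$ odd and distinct. By Lemma~\ref{shapetoinfty}, $\Delta$ determines $\psi_\infty^\Delta$, hence for each infinite place $v$ an ordering $P^\Delta_v = (a_{1,v}, \dotsc, a_{k+r,v})$ of the multiset $Q^{\max}(\pi_0)$ such that $\psi^\Delta_v$ corresponds to $P^\Delta_v$ under the parameterization of \S\ref{cohomological A-packets and partitions}. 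Since $\pi_{0,v} \in \Pi_{\psi^\Delta_v}$, it corresponds to a bipartition $(p^\Delta_{i,v}, q^\Delta_{i,v})_i \in \beta^{-1}(P^\Delta_v)$, and formula \eqref{etacharodd} gives
\[
\eta^{\psi^\Delta_v}_{\pi_{0,v}}(\eps_{i,v}) = (-1)^{(i-1) + q^\Delta_{i,v} + \chi_4(a_{i,v})}
\]
for each index $i$ (here $\eps_{i,v}$ is the generator of the $i$-th $\Z/2$ factor of $S^\natural_{\psi^\Delta_v}$).

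\textbf{Globalizing.} Next I would use that the global character $\eta^{\psi^\Delta_\infty}_{\pi_0}$ is the product $\prod_v \eta^{\psi^\Delta_v}_{\pi_{0,v}}$ under the diagonal localization map $S^\natural_\Delta \to S^\natural_{\psi^\Delta_\infty} \to \prod_v S^\natural_{\psi^\Delta_v}$ described after \eqref{SpsiAJ} and in \S\ref{AClocalSpsi}. Concretely, the special element $s_{d_j} \in S^\natural_\Delta$ maps at the place $v$ to the generator $\eps_{i,v}$ with $i = i^\Delta_v(d_j)$, the unique index where $d_j$ occurs in $P^\Delta_v$ (uniqueness is exactly the odd GSK / distinctness hypothesis). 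Therefore
\[
\eta^{\psi^\Delta_\infty}_{\pi_0}(s_{d_j}) = \prod_{v \in \infty} (-1)^{(i^\Delta_v(d_j) - 1) + q^\Delta_{i^\Delta_v(d_j), v} + \chi_4(d_j)} = (-1)^{t_j},
\]
with $t_j$ as in the statement. Since the $s_{d_j}$ (together with the $\eps_i$ coming from the $(1)$-blocks, which however all lie in the diagonal $\Z(\wh G)^\Gamma$ after passing to $\mc S_\Delta$, or which we can account for separately) generate $S^\natural_\Delta$, the character $\eta^{\psi^\Delta_\infty}_{\pi_0}$ is trivial on $S^\natural_\Delta$ if and only if each $(-1)^{t_j} = 1$, i.e.\ each $t_j$ is even. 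I would need to be slightly careful about the role of the $r$ one-dimensional blocks: the corresponding part of $S^\natural_\Delta$ is the $\Z/2$ generated by $s_1 = \sum_{i : a_i = 1} \eps_i$, and I would check that its value is forced or absorbed into the parity condition of Lemma~\ref{unraminnerforms} — this is precisely why that hypothesis appears in the statement, since $\eta^{\psi^\Delta_\infty}_{\pi_0}$ only makes sense as a character pulling back the correct $\chi_{G_v}$-data, which globalizes exactly under the condition of \ref{unraminnerforms}.

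\textbf{Main obstacle.} The genuinely delicate point is the bookkeeping of which generator $\eps_{i,v}$ of the local component group $s_{d_j}$ hits at each place — i.e.\ verifying that the localization map $S^\natural_\Delta \to S^\natural_{\psi^\Delta_v}$ really does send $s_{d_j}$ to $\eps_{i^\Delta_v(d_j), v}$ and not to some other combination, and that the Whittaker normalization used in \eqref{etachar} (fixed in \S\ref{sectiontransferfactors} and compatible with \cite{MR19} as noted there) matches the one implicit in \eqref{SpsiAJ}. This is a compatibility-of-normalizations check: I would cite the discussion after \eqref{SpsiAJ} for the shape of the localization map, Lemma~\ref{shapetoinfty} for the fact that $\Delta$ determines this map, and \S\ref{sectionAJcompatible} together with \cite[Remarque~4.5]{MR19} for the agreement of Whittaker data. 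Once these are in place, the remainder is the elementary sign computation above. I would close by noting that the condition is visibly independent of the ambiguity in the choice of $\lb_{i,v}$ within a fixed $\Delta$, since $t_j$ depends only on the ordering $P^\Delta_v$ and the bipartition $\pi_{0,v}$, both of which are part of the data of $\Delta$.
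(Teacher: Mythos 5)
Your proposal is correct and follows essentially the same route as the paper's (very terse) proof: the condition on the $t_j$ comes from evaluating $\eta^{\psi_\infty^\Delta}_{\pi_0}$ on the generators $s_{d_j}$ with $d_j>1$ via the explicit formula \eqref{etacharodd} and the diagonal localization maps, while the parity condition of Lemma \ref{unraminnerforms} is exactly the condition that the character be trivial on the remaining generator, equivalently that it factor through $\mc S_\Delta$. The extra detail you supply on the localization of $s_{d_j}$ to $\eps_{i^\Delta_v(d_j),v}$ and on the compatibility of Whittaker normalizations is a correct and welcome expansion of what the paper leaves implicit.
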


\begin{proof}
The condition on $t_j$ comes from checking that $\eta_{\pi_0}^{\psi_\infty^\Delta}(s_d) = 1$ for $d > 1$. The second comes from checking that $\eta_{\pi_0}^{\psi_\infty^\Delta}$ factors through $\mc S_\Delta$. 
\end{proof}

Beware that it is important to keep track of whether $G_v = U(p,q)$ or $U(q,p)$ as an extended pure inner form to compute the characters $\eta^{\psi_\infty}_{\pi_0}$.

\subsection{Explicit Limit Multiplicities} \label{section on LM results}
Now that we understand $\Delta(\pi_0)$, we can compute our main result: exact limit multiplicities for odd GSK-maxed $\pi_0$.
\begin{thm}\label{mainexact}
Let $G$ be a pure inner form of $G^* \in \wtd {\mc E}_\el(N)$. Choose
\[
f^\infty_{\mf n} = f_{S_1} \bar \1_{K^{G,S}(\mf n)}
\]
as in Section \ref{asymptoticsetup}  with $S_0 = \emptyset$ (in particular: only split places divide $\mf n$ and $G$ and therefore $E/F$ is unramified at all finite places as in Lemma \ref{unraminnerforms}). 

Pick a cohomological representation $\pi_0$ of $G_\infty$ that is odd GSK-maxed and for 
\[
\Delta = (T_1, 1, \lb_1, \eta_1), (1, d_i, \lb_i, \eta_i)_{2 \leq i \leq k} \in \Delta^{\max}(\pi_0),
\] 
let $\lb_1(\Delta) = \lb_1$. Let $T_1$, $d_i$, $R$ and $L$ be the common values of $T_1$, $d_i$, $R(\Delta)$ and $L(\Delta)$ over $\Delta^{\max}(\pi_0)$.  Then there are $A,B,C,D,E$ with $C \geq 1$ such that if $|\mf n| \geq Dq_{S_1}^{E \kappa}$:
\begin{multline*}
|\mf n|^{-R} \Gamma_L(\mf n)^{-1} \sum_{\pi \in \mc{AR}_\disc(G)} \1_{\pi_\infty = \pi_0} \tr_{\pi^\infty}(f^\infty_{\mf n})
\\
=  \f{\vol(H(F) \bs H(\A_f))}{\vol(K^S_H)}  \lf(\sum_{\Delta \in \Delta^{\max}(\pi_0)}\1_{\eta_{\pi_0}^{\psi^\Delta_\infty}(\mc S_\Delta) = 1} \f{\dim \lb_1(\Delta)}{|\Pi_\disc(\lb_1(\Delta))|} \ri) \\
 \times \lf(f_{S_1}^{H_1}(1) \prodf_{i > 1} \int_{H_{i, \der, S_1}}  f^{H_i}_{S_1}(h) \, dh \ri)
+ O(|\mf n|^{-C} q_{S_1}^{A + B \kappa}),
\end{multline*}
where $H_i = H(T_i, d_i, \lb_i, \eta_i)$ and $H = H_1 \times \prod_{i > 1} H_i^\ab$ are both constant over $\Delta \in \Delta^{\max}(\pi_0)$.

Finally, recall that the condition on $\eta_{\pi_0}^{\psi^\Delta_\infty}$ can be checked as in Lemma \ref{etacharoddGSK}, that 
\[
R = (k-1) + \f 12 \lf(N^2 + T_1^2 - \sum_{i \geq 2} d_i^2, \ri),
\]
and that
$ 
L = T_1, 1^{(k-1)}, -1^{(k-1)}.
$ 
\end{thm}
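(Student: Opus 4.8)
\textbf{Proof plan for Theorem \ref{mainexact}.}
The strategy is to assemble the theorem from the pieces already built in \S\ref{sectionlimitmult} and \S\ref{sectionunitaryparamcombo}. First I would invoke the spectral decomposition \eqref{mgpi0} together with Corollary \ref{mpi0form} to write
\[
m^G(\pi_0, f^\infty_{\mf n}) = \sum_{\Delta \in \Delta(\pi_0)} m^G(\pi_0, \Delta, f^\infty_{\mf n}),
\]
so the problem reduces to understanding each summand. The set $\Delta(\pi_0)$ is finite, so it suffices to (a) identify which $\Delta$ contribute to the leading term and (b) compute their contributions precisely, while (c) bounding the rest into the error term. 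For step (c), Theorem \ref{mpi0bound} (which is unconditional, not relying on Conjecture \ref{stabletransferconj}) gives $m^G(\pi_0, \Delta, f^\infty_{\mf n}) = O(|\mf n|^{R(\Delta)} q_{S_1}^{A+B\kappa})$ for every $\Delta \in \Delta(\pi_0)$, so the only terms surviving to leading order are those with $R(\Delta) = R(\pi_0) = \max_\Delta R(\Delta)$, i.e. those in $\Delta^{\max}(\pi_0)$.

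Next I would bring in the combinatorial classification. Since $\pi_0$ is odd GSK-maxed, Corollary \ref{maxedshapeE} tells us that every $\Delta \in \Delta^{\max}(\pi_0)$ is odd GSK, so Proposition \ref{Deltamaxenum} describes them explicitly: each is of the form $(T_1, 1, \lb_1, \eta_1), (1, d_i, \lb_i, \eta_i)_{2\le i\le k}$ with $T_1 = r$ and the $d_i$ the distinct odd parts $a_i$ of $\beta_+(\pi_0)$. In particular $T_1$, the multiset $\{d_i\}$, the groups $H_i = H(T_i, d_i, \lb_i, \eta_i)$, the product $H = H_1 \times \prod_{i>1} H_i^{\ab}$, and the invariants $R(\Delta)$, $L(\Delta)$ are all constant across $\Delta^{\max}(\pi_0)$ — only the assignment of the local infinitesimal characters $\lb_{i,v}$ (equivalently the orderings $P^\Delta_v$ of $Q^{\max}(\pi_0)$) varies, which is exactly why a sum over $\Delta \in \Delta^{\max}(\pi_0)$ remains. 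For each such $\Delta$, Theorem \ref{mpi0delform} (applicable since $S_0 = \emptyset$ and $G$ is as in Lemma \ref{unraminnerforms}) gives the exact asymptotic: the contribution vanishes unless $\eta^{\psi^\Delta_\infty}_{\pi_0}(\mc S_\Delta) = 1$, and otherwise
\[
|\mf n|^{-R(\Delta)}\Gamma_{L(\Delta)}(\mf n)^{-1} m^G(\pi_0,\Delta,f^\infty_{\mf n}) = \tfrac{\dim \lb_1}{|\Pi_\disc(\lb_1)|}\, \tfrac{\vol(H'(F)\bs H'(\A_f))}{\vol(K^S_{H'})\vol((H'_\infty)^c)}\Bigl(f_{S_1}^{H_1}(1)\prodf_{i>1}\int_{H_{i,\der,S_1}} f^{H_i}_{S_1}(h)\,dh\Bigr) + O(|\mf n|^{-C}q_{S_1}^{A+B\kappa}).
\]
The indicator $\1_{\eta^{\psi^\Delta_\infty}_{\pi_0}(\mc S_\Delta)=1}$ is then unpacked via Lemma \ref{etacharoddGSK}, yielding the parity-in-$t_j$ criterion, and the hypothesis from Lemma \ref{unraminnerforms} is exactly what makes $\eta^{\psi^\Delta_\infty}_{\pi_0}$ factor through $\mc S_\Delta$.

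Finally I would collect terms: summing the displayed asymptotic over $\Delta \in \Delta^{\max}(\pi_0)$ (with the matching $\Gamma_{L}(\mf n)$ and $|\mf n|^R$ factored out, legitimate since $L$ and $R$ are constant on $\Delta^{\max}(\pi_0)$), the only $\Delta$-dependent quantity inside the main term is $\1_{\eta^{\psi^\Delta_\infty}_{\pi_0}(\mc S_\Delta)=1}\,\dim\lb_1(\Delta)/|\Pi_\disc(\lb_1(\Delta))|$, which is precisely the sum appearing in the statement; the volume, $f_{S_1}$, and level factors come outside since $H$, $H_i$ do not vary. The terms from $\Delta \in \Delta(\pi_0)\setminus\Delta^{\max}(\pi_0)$ are absorbed into $O(|\mf n|^{-C}q_{S_1}^{A+B\kappa})$ after dividing by $|\mf n|^R\Gamma_L(\mf n)$, using $R(\Delta) < R$ and that $\Gamma_L(\mf n)^{-1}$ and $\Gamma_{L(\Delta)}(\mf n)$ are bounded away from $0$ and $\infty$. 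The values of $R$ and $L$ follow by substituting $T_1 = r$, $d_i = a_i$ into the formulas $R(\Delta) = (k-1) + \tfrac12(\dim H_1 + \dim G - \sum_{i\ge 2}\dim H_i)$ and $L(\Delta) = T_1, 1^{(k-1)}, -1^{(k-1)}$ from Theorem \ref{goodshapebound}/Corollary \ref{simpleshapesstronger}, using $\dim H_1 = T_1^2$, $\dim H_i = d_i^2$, $\dim G = N^2$. The main obstacle I anticipate is purely bookkeeping: verifying that $T_1$, $L(\Delta)$, $R(\Delta)$, and the groups $H_i$, $H$ really are constant over all of $\Delta^{\max}(\pi_0)$ — i.e. that the varying infinitesimal-character assignments genuinely leave these invariants unchanged — and correctly matching the normalizing Euler factors $\Gamma_L(\mf n)$ across the different $\Delta$; no new analytic input beyond \S\ref{sectionSdeltaasymptotics} is needed.
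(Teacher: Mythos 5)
Your proposal is correct and follows essentially the same route as the paper's own (very terse) proof: decompose via Corollary \ref{mpi0form}, apply Theorem \ref{mpi0delform} to the $\Delta \in \Delta^{\max}(\pi_0)$ (legitimized by Corollary \ref{maxedshapeE} and Proposition \ref{Deltamaxenum}), and absorb the remaining shapes into the error term via Theorem \ref{mpi0bound}. The additional bookkeeping you spell out — constancy of $T_1$, the $d_i$, $H_i$, $R$, $L$ over $\Delta^{\max}(\pi_0)$, and the matching of the Euler factors — is exactly what the paper leaves implicit.
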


\begin{proof}
Apply Proposition \ref{mpi0form}. Then Corollary \ref{maxedshapeE} allows us to apply Theorem \ref{mpi0delform} to compute the main terms $m(\pi_0, \Delta, f^\infty)$ for $\Delta \in \Delta^{\max}(\pi_0)$.  We compute~$R = R(\pi_0)$ by Corollary \ref{Deltamaxenum}. Theorem \ref{mpi0bound} bounds all the terms $m(\pi_0, \Delta', f^\infty)$ for $\Delta' \notin \Delta^{\max}(\pi_0)$. 
\end{proof}

\begin{note}
Since the different shapes $\Delta = (T_i, d_i, \lb_i, \eta_i)_i \in \Delta^{\max}(\pi_0)$ only differ in their $\lb_i$-coordinates, they correspond to the same map
\[
\mc S_\Delta : \wh H^{\ur, \temp}_v \to \wh G^\ur_v
\]
Therefore, as in \eqref{DeltaPlancherelEquidistribution},  we can compute
\[
f_{S_1}^{H_1}(1) \prodf_{i > 1} \int_{H_{i, \der, S_1}}  f^{H_i}_{S_1}(h) \, dh = \mu^{\pl(\pi_0), \ur}_{S_1}(\wh f_{S_1}),
\]
where $\mu^{\pl(\pi_0), \ur}$ is the common value of the $\mu^{\pi(\Delta), \ur}$. This interprets Theorem \ref{mainexact} as an unramified ``Plancherel'' equidistribution theorem for the local component $\pi_S$ as in \cite{ST16}. Beware that $\mu^{\pl(\pi_0), \ur}$ can have support on the non-tempered spectrum. 
\end{note}

For general representations, we have an upper bound:
\begin{thm}\label{mainupper}
Let $G$ be an extended pure inner form of $G^* \in \wtd {\mc E}_\el(N)$. Choose
\[
f^\infty_{\mf n} = \varphi_{S_0} f_{S_1}  \bar \1_{K^{G,S}(\mf n)}
\]
as in \S\ref{asymptoticsetup} with $f_{S_1}$ and $\varphi_{S_0}$ arbitrary (In particular, only split places divide $\mf n$).  

Pick a cohomological representation $\pi_0$ and let $R_0(\pi_0)$ be defined as in the end of \S\ref{deltamaxalgo}
. Then if $\Delta(\pi_0) = \emptyset$,
\[
\sum_{\pi \in \mc{AR}_\disc(G)} \1_{\pi_\infty = \pi_0}  = 0.
\]
Otherwise, there are $A,B,D,E$ such that as long as $|\mf n| \geq Dq_{S_1}^{E \kappa}$:
\[
\sum_{\pi \in \mc{AR}_\disc(G)} \1_{\pi_\infty = \pi_0} \tr_{\pi^\infty}(f^\infty_{\mf n}) = O(|\mf n|^{R(\pi_0)} q_{S_1}^{A + B \kappa}).
\]
\end{thm}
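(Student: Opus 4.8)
\textbf{Proof proposal for Theorem \ref{mainupper}.}

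The plan is to reduce everything to the shape-by-shape bound of Theorem \ref{mpi0bound}. First I would handle the vanishing statement: if $\Delta(\pi_0) = \emptyset$, then by the definition of $\Delta(\pi_0)$ in \S\ref{section shape preliminaries}, no global $A$-parameter $\psi$ has $\pi_0 \in \Pi_{\psi_\infty}$; by Arthur's multiplicity formula (Theorem \ref{ArthurMultiplicty}), every $\pi \in \mc{AR}_\disc(G)$ lies in some $\Pi_\psi$, so $\pi_\infty \in \Pi_{\psi_\infty}$ forces $\pi_\infty \neq \pi_0$. Hence the sum is empty, giving $0$. (One may phrase this uniformly: the only way $\pi_\infty = \pi_0$ can occur is for $\psi$ whose shape lies in $\Delta(\pi_0)$.)

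For the main bound, I would start from the decomposition \eqref{mgpi0}, or more precisely its stabilized refinement: by Proposition \ref{mpi0form} (or the corollary immediately after \eqref{IDG}),
\[
\sum_{\pi \in \mc{AR}_\disc(G)} \1_{\pi_\infty = \pi_0} \tr_{\pi^\infty}(f^\infty_{\mf n}) = m^G(\pi_0, f^\infty_{\mf n}) = \sum_{\Delta \in \Delta(\pi_0)} m^G(\pi_0, \Delta, f^\infty_{\mf n}).
\]
The set $\Delta(\pi_0)$ is finite (it is a subset of the set of refined shapes with a fixed total infinitesimal character at infinity, which is finite), so it suffices to bound each summand. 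For this I apply Theorem \ref{mpi0bound}: for each $\Delta \in \Delta(\pi_0)$ there are constants $A_\Delta, B_\Delta, D_\Delta, E_\Delta$ such that whenever $|\mf n| \geq D_\Delta q_{S_1}^{E_\Delta \kappa}$,
\[
m^G(\pi_0, \Delta, f^\infty_{\mf n}) = O\!\left(|\mf n|^{R(\Delta)} q_{S_1}^{A_\Delta + B_\Delta \kappa}\right),
\]
where $R(\Delta)$ is the invariant of Corollary \ref{simpleshapesstronger}. Taking $D = \max_\Delta D_\Delta$, $E = \max_\Delta E_\Delta$, $A = \max_\Delta A_\Delta$, $B = \max_\Delta B_\Delta$, and recalling from Definition \ref{Deltamax} and the surrounding discussion that $R(\pi_0)$ is the maximum of $R(\Delta)$ over $\Delta \in \Delta(\pi_0)$ (equivalently, by the algorithm at the end of \S\ref{deltamaxalgo}, $R(\pi_0)$ is the quantity denoted $R_0(\pi)$ in the statement via the combinatorial description — note the statement's ``$R_0(\pi)$'' should be read as $R(\pi_0)$ in the non-conjectural theorem), we sum the finitely many bounds to get
\[
m^G(\pi_0, f^\infty_{\mf n}) = O\!\left(|\mf n|^{R(\pi_0)} q_{S_1}^{A + B \kappa}\right)
\]
for $|\mf n| \geq D q_{S_1}^{E \kappa}$. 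Since the number of terms is a constant depending only on $\pi_0$, it is absorbed into the implied constant. I would note that no appeal to Conjecture \ref{stabletransferconj} is needed, since Theorem \ref{mpi0bound} (via Corollary \ref{simpleshapesstronger}) is unconditional on that point; this is why the hypothesis allows arbitrary $\varphi_{S_0}$ at the ramified places $S_0$ and does not require $G$ to be unramified.

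The main obstacle is essentially already discharged: all the genuine work — the inductive analysis of $S^G_\Delta$, the extraction of the improved exponent $R(\Delta)$ using the local bounds of \cite{MS19} (Lemmas \ref{T2bound}, \ref{T3bound}), the trace-positivity reductions (Lemma \ref{tracepositivelinearcombination}), and the passage from $S_\Delta$ to $m^G(\pi_0, \Delta, f^\infty)$ via stabilization (Proposition \ref{mpi0form})  — is packaged into Theorem \ref{mpi0bound} and Corollary \ref{simpleshapesstronger}. The only residual care needed here is bookkeeping: confirming that $\Delta(\pi_0)$ is finite and that the identification of its maximal $R(\Delta)$ with the combinatorially defined $R(\pi_0)$ is exactly the content of \S\ref{sectionunitaryparamcombo} (in particular Proposition \ref{Deltamaxenum} and the algorithm of Note \ref{deltamaxalgo}), so that the exponent in the statement matches. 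There is no hard analytic step left at this stage.
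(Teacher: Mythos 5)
Your proposal is correct and follows exactly the paper's route: the paper's proof of Theorem \ref{mainupper} is literally ``Apply Proposition \ref{mpi0form} and Theorem \ref{mpi0bound},'' and your write-up supplies the same decomposition over the finite set $\Delta(\pi_0)$, the shape-by-shape bound, and the extremization of constants. Your added remarks on the vanishing case and on the ``$R_0(\pi)$'' versus $R(\pi_0)$ notational slip in the statement are accurate but not part of the paper's argument.
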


\begin{proof}
Apply Proposition \ref{mpi0form} and Theorem \ref{mpi0bound}. 
\end{proof}

The above result applies to any CM extension $E/F$ and any extended pure inner form~$G$.
Assuming Conjecture \ref{simpleshapesconj}, the exponent $R(\pi_0)$ can be improved to a tighter $R_0(\pi_0)$. Furthermore, in the sum from Theorem \ref{mpi0form1}, at least one of the $\Delta_1 \times \Delta_2$ for each $\Delta \in \Delta^{\max}(\pi_0)$ satisfies that the $d_i$ assigned to each $\Delta_i$ all have the same parity. Therefore, if we assume the even stronger Conjecture \ref{simpleshapesconjeps}  and don't have an obstruction from the multiplicity formula (like the $\eta^\Delta_{\pi_0}$ condition from Theorem \ref{mainupper}), we expect $R_0(\pi_0)$ to be optimal.

\section{Examples and Corollaries}\label{sectioncorollaries}
This section contains examples and corollaries of the main Theorems \ref{mainexact} and \ref{mainupper}, including applications to Sato-Tate equidistribution in families, the Sarnak-Xue density hypothesis, and the cohomology of locally symmetric spaces. We first recall all our various growth rates for reader's convenience:

Let $\pi_0$ be on a rank-$N$ group. Each $\star(\pi_0)$ is a maximum of $\star(\Delta) = \star((T_i, d_i, \lb_i, \eta_i)_i)$ for $\Delta \in \Delta(\pi_0)$. Then there are two formulas:

First, the provable growth rate  from Proposition \ref{simpleshapesstronger}, which will be used in all theorem statements is:
\begin{multline}\label{eqn Rpi that we prove}
R(\pi_0) = \max_{\Delta \in \Delta(\pi_0)} \f 12 \lf(N^2 + \sum_i T_i^2 d_i \ri) - \sum_{i : T_i = 1} \lf(\f12 (d_i^2 + d_i) - 1 \ri) - \\ \sum_{i : T_i = 2} \lf(3 d_i - 3 \ri) - \sum_{\substack{i : T_i = 3 \\ d_i > 1}} \lf((5 - 10^{-100}) d_i +5 \ri).
\end{multline}
It will be compared to the conjecturally sharp growth rate from \ref{simpleshapesconj}:
\[
R_0(\pi_0) = \max_{\Delta \in \Delta(\pi_0)} \f 12 \lf(N^2 - \sum_i T_i^2 d_i^2 \ri) + \sum_i \lf(T_i^2 + \f12 T_i (T_i - 1)(d_i^2 - 1) \ri).
\]

\subsection{Examples}
First, we work out what Theorem \ref{mainexact} says in some simple cases. To discuss infinitesimal characters, let $\lb_1, \dotsc, \lb_{n-1}$ be the fundamental weights of $\GL_n$:
\[
\lb_i = (1^{(i)}, 0^{(N-i)}),
\]
in the standard basis of $X^*(T)$ corresponding to the entries of a diagonal matrix. Define $\lb_0, \lb_n$ similarly for indexing purposes. Two weights are character twists of each other if they differ by a multiple of $\lb_n$ and the half-sum of positive roots is
\[
\rho_n = \lf(\f{n-1}2, \f{n-3}2, \dotsc, \f{1-n}2 \ri).
\]

\subsubsection{Example 1: parallel case for $U(N-1,1)$.}
For the simplest example with non-tempered representations at infinity, assume:
\begin{itemize}
    \item $\deg F/\Q = d$ is even,
    \item $G_\infty \cong U(N-1,1)^d$ as is allowed by \S\ref{ACgroups},
    \item $\pi_\infty \cong \pi_0^d$ with $\pi_0 = ((1,0)^{(r)}, (k-1,1), (1,0)^{(N - k - r)})$ for $k > 1$ odd
    , 
    \item $S_1 = \emptyset$.
\end{itemize}
Then 
$\Delta^{\max}(\pi_0)$ is a singleton 
\[
 (N-k, 1, (\lb_{1,v})_v, \eta_1), (1, k, (\lb_{2,v})_v, \eta_2), 
\]
with $\lb_{1,v}$ a character twist of $k\lb_r + \rho_{n-k}$ on $\GL_{n-k}$ and $\lb_{2,v}$ the infinitesimal character of a $1$-dimensional irrep. We recall that the discrete $L$-packet at infinitesimal character $\lb_{2,v}$ has size $N$. 
Finally, $d$ is even, so $\eta_{\pi_\infty}^{\psi_\infty} = (\eta_{\pi_0}^{\psi_v})^d$ 
is trivial.

Denoting by $\pi_{k \lb_r}$ the finite-dimensional representation of $\GL_{N-k}(\C)$ with highest weight~$k \lb_r$, we compute
\begin{multline}\label{example1}
|\mf n|^{-(N(N-k) + 1)} L_{k,1,-1}(\mf n)^{-1} \sum_{\substack{\pi \in \mc{AR}_\disc(G) \\ \pi_\infty = \pi_0}}  \dim((\pi^\infty)^{K(\mf n)}) \\
= \f1{N^d}\dim(\pi_{k \lb_r})^d \tau'(U_{E/F}(N-k) \times U(1)) + O(|\mf n|^{-C} q_{S_1}^{A + B\kappa})
\end{multline}
as an asymptotic count of automorphic \emph{forms} of level $\mf n$ corresponding to $\pi_\infty$. Recall that $\tau'$ is the modified Tamagawa number from \eqref{tamagawa}. 

Note that the ``masses'' (i.e. relative abundances in the automorphic spectrum)
\[
\f1{N^d}\dim(\pi_{k \lb_r})^d
\]
depend on $r$ even though the corresponding $\pi_0$ have the same infinitesimal character and come from the same Levi in the Langlands classification. This is because the way in which the infinitesimal character divides up between the blocks of this Levi also matters. 

More specifically, $\dim(\pi_{k \lb_r})$ is largest for $r$ close to $(N-k)/2$ and decreases to~$1$ towards $r = 0$ or $N-k$. For another prespective, all representations considered are cohomological of degree $d(N-k)$. The representations with Hodge weights closer to $\left(\frac{d}2(N-k),\frac{d}2(N-k)\right)$ have larger masses, whereas the ones whose weights are closer to $(0,d(N-k))$ and $(d(N-k),0)$ are rarer.


\subsubsection{Example 2} As a slight complication, now assume:
\begin{itemize}
    \item $\deg F/\Q = d$ is odd,
    \item $N \not \equiv 0 \pmod 4$ and, to satisfy the conditions in \S\ref{ACgroups}:
    \[
    G_\infty \cong \begin{cases} U(N-1,1)^d & N \equiv 2,3 \pmod 4 \\ U(1,N-1)^d & N \equiv 1 \pmod 4 \end{cases}
    \]
  
    \item $\pi_\infty = \pi_0^d$ with
    \[
    \pi_0 = \begin{cases}
     ((1,0)^{(r)}, (k-1,1), (1,0)^{(N - k - r)}) & N \equiv 2,3 \pmod 4 \\
     ((0,1)^{(r)}, (1,k-1), (0,1)^{(N - k - r)}) & N \equiv 1 \pmod 4
    \end{cases}
    \]
    for $k > 1$ odd and $r+ k \leq N$,
    \item $S_1 = \emptyset$. 
\end{itemize}
This situation is similar to the first example, but no longer necessarily has $\eta_{\pi_\infty}^\Delta = 1$. 

Using the test from Lemma \ref{etacharoddGSK}, $\eta_{\pi_\infty}^\Delta = 1$ if and only if
\[
r + \begin{cases} 1 & N \equiv 2,3 \pmod 4 \\ N-1 & N \equiv 1 \pmod 4 \end{cases} + \begin{cases} 1 & k \equiv 0,1 \pmod 4 \\ 0 & k \equiv 2,3 \pmod 4 \end{cases} \equiv 0 \pmod 2
\]
is even. We write this condition as
\begin{equation}\label{example2parity}
r + 1 + \chi_4(N) + \chi_4(k) \equiv 0 \pmod 2,
\end{equation}
with 
$\chi_4$ the character from Lemma \ref{etacharoddGSK} and think of it as a parity condition on $r$. 

If \eqref{example2parity} holds, then we have the same result \eqref{example1} as in the previous example. Otherwise, we have that
\begin{equation}
|\mf n|^{-(N(N-k) + 1)} L_{k,1,-1}(\mf n)^{-1} \sum_{\substack{\pi \in \mc{AR}_\disc(G) \\ \pi_\infty = \pi_0}}  \dim((\pi^\infty)^{K(\mf n)}) = O(|\mf n|^{-C} q_{S_1}^{A + B\kappa}).
\end{equation}
There are two consequences. First, we only have exact asymptotics in the case where $r$ satisfies 
\eqref{example2parity}. Second, the asymptotic growth rate of counts of forms can be different for representations coming from the same Levi in the Langlands classification.

These phenomena are caused by an obstruction from the multiplicity formula in our specific setup: the automorphic representations counted have unramified local components at all non-split finite places. In particular, they all correspond to the trivial character on the component group. Furthermore, all those representations came from parameters $\psi$ with $\eps_\psi = 1$. In total, the multiplicity formula requires $\eta^\psi_{\pi_0} = 1$ for the packet $\Pi_\psi$ to contribute to the multiplicity of $\pi_0$. 

More surprisingly, growth rates can differ even within an $L$-packet. If $\pi_0 = ((p_i, q_i))_i$ on $U(p,q)$, it can be seen from the description in \cite[\S 6]{VZ84} that some other members of its (pseudo- and therefore true) $L$-packet can be produced by reversing some pairs $(p_i, q_i)$ such that we remain in~$\mc P(p,q)$. For an $L$-packet like
\[
\{((2,1),(0,1)), ((1,2),(1,0))\}
\]
on $U(2,2)$, only one member satisfies the parity condition from Lemma \ref{etacharoddGSK}. This is starkly different from the discrete-at-infinity case in \cite{Dal22} and caused by our dominant contribution to growth rates coming from shapes $\Delta$ with non-trivial $\mc S_\Delta$.

\subsubsection{Example 3}\label{example3}
We will also consider an example where there is only one non-compact place. Assume:
\begin{itemize}
    \item $\deg F/\Q = d$ with a fixed place $v_0 \in \infty$.
    \item $G_\infty \simeq U(p,q) \times U(N,0)^{d-1}$ where
    \[
    q \equiv \begin{cases}
    0 \pmod 2& d \text{ even or } N \equiv 0,1 \pmod 4 \\
    1 \pmod 2 & d \text{ odd and } N \equiv 2,3 \pmod 4
    \end{cases} 
    \]
    to satisfy the conditions of \S\ref{ACgroups}. The $U(p,q)$ factor is at $v_0$. 
    \item $\pi_\infty = \pi_0 \times \1^{d-1}$ where $\1$ is the trivial representation and \[
    \pi_0 = ((p_i, q_i))_i, \qquad n_i = p_i + q_i.
    \] 
    is odd GSK-maxed with the same infinitesimal character as $\1$.
    \item $S_1 = \emptyset$. 
\end{itemize}
We need some more combinatorial parameters
\begin{itemize}
    \item $d_j$ for $1 \leq i \leq k$ are the distinct non-$1$ values among the $n_i$,
    \item If $d_j = n_{i(d_j)}$, let $r_j = i(d_j) - \#\{i(d_{j'}) < i(d_j)\}$
    \item $M = N - \sum_j d_j$.
\end{itemize}
The $\Delta \in \Delta^{\max}(\pi_\infty)$ are then all of the form
\[
(M, 1, (\lb_v)_v, \eta), (1, d_j, (\lb_{j,v})_j, \eta_j)_j,
\]
where all the $\lb_{j,v}$ are infinitesimal characters of $1$-dimensional irreps and $\lb_{v_0}$ is a character twist of  
\[
\rho_M + \sum_j d_j \lb_{r_j}.
\]
The possible choices for each $\lb_v$ with $v \neq v_0$ are in bijection with reorderings $(n_{v,i})_i$ of $(n_i)_i$. We can define $i_v(d_j)$ and $r_{v,j}$ from $(n_{v,i})_i$ similar to the original definitions of $i(d_j)$ and $r_j$. Then $\lb_v$ is equal to a character twist of
\[
\rho_M + \sum_j d_j \lb_{r_{v,j}}.
\]

The condition on the character from Lemma \ref{etacharoddGSK} then reduces to 
\begin{equation}\label{ex3parity}
i(d_j) + \sum_{v \neq v_0} i_v(d_j) \equiv d(\chi_4(d_j) - 1)  + q_{i(d_j)} \pmod 2
\end{equation}
for all $1 \leq j \leq k$. There is always a set of reorderings $(n_{v,i})_i$ that satisfy \eqref{ex3parity}: the only way there couldn't be is if the $d_j$ are all the $n_i$ and a parity condition from summing \eqref{ex3parity} over all $j$ fails \footnote{This statement boils down to a combinatorial puzzle about filling in a $(\#\{n_i\}-1) \times k$ grid of squares black or white such that each row has $\lfloor (\#\{n_i\}-1)/2 \rfloor$ black squares and the first $d-1$ columns have a fixed parity of black squares.}. 
However, the parity condition on $q$ implies that the condition from summing always holds.

It follows that this is analogous to Example 1 in that we do not need to check the character condition form Lemma \ref{etacharodd} to get lower bounds. In particular the asymptotic \eqref{example1} holds with a much more complicated factor replacing the $N^{-d} \dim(\pi_{r\lb})^d$.

\subsection{Growth of Cohomology} Our limit multiplicities can be translated into computations of upper and lower bounds for the growth cohomology of arithmetic lattices in $G_\infty$. Recall that for a cocompact lattice $\Gamma$ in a Lie group $G_\infty$ with maximal compact $K_\infty$ and Lie algebra $\fg$, Matsushima \cite{Ma67} computed the cohomology of~$\Gamma$ with coefficients in the finite-dimensional representation~$F$ of $G$: 
\begin{equation} \label{eq Matsushima's formula}
    H^*(\Gamma,F) = \sum_{\pi \in G_\infty^\vee} m(\pi, \Gamma) H^*(\fg,K_\infty; \pi \ten F^*).
\end{equation}
Here $G_\infty^\vee$ is the unitary dual of $G_\infty$, the integer $m(\pi,\Gamma)$ is the multiplicity of $\pi$ in~$L^2(\Gamma \dom G_\infty)$, and $H^*(\fg,K_\infty)$ is the $(\fg,K)$-cohomology of $\pi\otimes F^*$ 
; see \cite{BW00}. We say $\pi$ is \emph{cohomological with coefficients in $F$} if $H^*(\fg,K_\infty; \pi \ten F^*)$ is nontrivial.

We will restrict our groups $G$ so that $G_\infty = U(p,q) \times U(N,0)^a \times U(0,N)^{b}.$ Following Lemma \ref{unraminnerforms}, such $G$ exist for all values of $N$, though possibly not over all unramified extensions $E/F$.  We will abuse terminology and say that a degree $i$ of cohomology appears in an $A$-packet $\Pi_\psi$ if it contains a representation with non-vanishing cohomology in degree $i$.

\subsubsection{Lower bounds}
We give a sample application of Theorem \ref{mainexact} to lower bounds on growth of cohomology. First, as a direct consequence of Lemma \ref{LemmaCohomology} and of  \S\ref{cohomological A-packets and partitions}, we find: 

\begin{lem}\label{lemmaLowestDegree}
       Let $G_v = U(p,q)$ with $p+q = N$ and $\min(p,q) = r$. Let $1<d\leq N$ be odd. 
\begin{itemize}
    \item[(i)] Let $Q \in \mathcal P(N)$ be an ordered partition with one entry equal to $d$ and all others equal to $1$. 
    Then the  lowest degree of cohomology appearing in the packet $\Pi_{\psi_{Q},v}$ is:
    \[i= i(d,N,r) = \begin{cases}
        r(N-r)-\frac{d^2-1}{4} & r \geq \frac{d-1}{2} \\ 
        r(N-d) & r\leq\frac{d-1}{2}.
    \end{cases}
    \]
    \item[(ii)] If $r \leq \frac{d-1}{2}$, the degree $i$ is achieved by a unique $\pi_i \in \Pi_{\psi_Q,v}$. If \[Q = (\overbrace{1,\dotsc,1}^s,d,1,\dotsc,1) \in \mathcal P(N), \] then the Hodge weight of $\pi_i$ in degree $i = r(N-d)$ is:
    \[(a,b) = \begin{cases}
        (rs,r(N-d-s)) & G = U(N-r,r)\\
        (r(N-d-s),rs) & G = U(r,N-r).   
    \end{cases}  \]
\end{itemize}       
\end{lem}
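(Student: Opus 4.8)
The plan is to read everything off from the combinatorial parametrization of $\Pi_{\psi_Q,v}$ in \S\ref{cohomological A-packets and partitions} together with the cohomology computation of Lemma \ref{LemmaCohomology}. First I would recall that $\Pi_{\psi_Q,v}=\{A_{\mathfrak q_B}(\lambda_B): B\in\mathcal P(p,q),\ \beta(B)=Q\}$ and that, by Lemma \ref{LemmaCohomology}(i), the lowest degree in which $A_{\mathfrak q_B}(\lambda_B)$ has cohomology is $R(B)=pq-\sum_i p_iq_i$; hence the lowest degree occurring in the packet is $pq-\max_B\sum_i p_iq_i$, and part (i) becomes an elementary optimization over admissible $B$. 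Since $Q$ has one part equal to $d$ and all others equal to $1$, such a $B$ consists of one block $(p_0,q_0)$ with $p_0+q_0=d$ together with $a$ copies of $(1,0)$ and $b$ copies of $(0,1)$ with $a+b=N-d$, $p=p_0+a$, $q=q_0+b$; thus $\sum_i p_iq_i=p_0q_0$ and the constraints reduce to $p_0+q_0=d$, $0\le q_0\le q$, $0\le p_0\le p$. Interchanging the roles of $p$ and $q$ interchanges $(1,0)\leftrightarrow(0,1)$ and $R^+\leftrightarrow R^-$, so I may assume $q=r\le p=N-r$.

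Next I would maximize $p_0q_0=p_0(d-p_0)$ under these constraints. Because $d$ is odd, the unconstrained maximum over $p_0+q_0=d$ is $\tfrac{d^2-1}{4}$, attained at $\{p_0,q_0\}=\{\tfrac{d-1}{2},\tfrac{d+1}{2}\}$. If $r\ge\tfrac{d-1}{2}$ this value is still attainable: take $q_0=\tfrac{d-1}{2}$, $p_0=\tfrac{d+1}{2}$, which is feasible since $d\le N$ forces $N-r\ge\lceil N/2\rceil\ge\tfrac{d+1}{2}$; hence the minimal degree equals $r(N-r)-\tfrac{d^2-1}{4}$. If instead $r\le\tfrac{d-1}{2}$, then $q_0\le r<d/2$ makes $p_0(d-p_0)$ strictly increasing in $q_0$ on the feasible range, so the maximum $r(d-r)$ is attained uniquely at $q_0=r$, $p_0=d-r$ (feasible since $d\le N$), giving minimal degree $r(N-r)-r(d-r)=r(N-d)$. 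One then checks that the two formulas agree at the boundary $r=\tfrac{d-1}{2}$, which finishes (i).

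For (ii) I would exploit the uniqueness just observed. When $r\le\tfrac{d-1}{2}$ the optimum forces $q_0=r$ (the other root $p_0=r$ of $p_0(d-p_0)=r(d-r)$ gives $q_0=d-r>r$, which is infeasible), hence $b=q-q_0=0$, so all $N-d$ remaining blocks are $(1,0)$ and their positions are pinned by $\beta(B)=Q$. Thus there is a single admissible $B$ with $R(B)=i$, and every other admissible $B'$ satisfies $R(B')\ge i+1$, so by Lemma \ref{LemmaCohomology}(ii) it has no cohomology in degree $i$; therefore the degree $i$ is carried by the unique representation $\pi_i=A_{\mathfrak q_B}(\lambda_B)$. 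Writing $Q=(1^{(s)},d,1^{(N-d-s)})$, this $B$ is $\big((1,0)^{(s)},(d-r,r),(1,0)^{(N-d-s)}\big)$ when $G=U(N-r,r)$ and $\big((0,1)^{(s)},(r,d-r),(0,1)^{(N-d-s)}\big)$ when $G=U(r,N-r)$. I would then apply Lemma \ref{LemmaCohomology}(iii) in the lowest degree (its case $p=0$) to get that the Hodge weight is $(R^+,R^-)$ with $R^+=\sum_{i<j}p_iq_j$ and $R^-=\sum_{i>j}p_iq_j$; since only the size-$d$ block contributes a nonzero mixed term, a one-line count gives $R^+=rs$, $R^-=r(N-d-s)$ in the first case and $R^+=r(N-d-s)$, $R^-=rs$ in the second, as claimed.

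The whole argument is essentially bookkeeping; the substantive step is the optimization in the second paragraph, and the only place genuine care is needed is keeping straight the extended-pure-inner-form label $U(p,q)$ versus $U(q,p)$ together with the fixed Shimura datum of Lemma \ref{LemmaCohomology}, since that is exactly what decides which of $R^+$ and $R^-$ appears first in the Hodge weight.
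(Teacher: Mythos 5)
Your proof is correct and follows exactly the route the paper intends: the paper states the lemma as a direct consequence of the packet parameterization in \S\ref{cohomological A-packets and partitions} and of Lemma \ref{LemmaCohomology}, and your optimization of $p_0q_0$ over admissible bipartitions $B\in\beta^{-1}(Q)$, together with the uniqueness of the maximizer and the $(R^+,R^-)$ computation, is precisely the bookkeeping being elided. The boundary check at $r=\tfrac{d-1}{2}$ and the careful handling of $U(N-r,r)$ versus $U(r,N-r)$ are both right.
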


From this we deduce the following:

\begin{thm}
\label{cohmain}
Let $E/F$ be unramified at all finite places, and let $G$ be an extended pure inner form of $G^*$ unramified at all finite places, isomorphic to $U(p,q)$ at one infinite place $v_0$, and compact at all the others. 
For $\fn$ divisible only by primes that split in $E$, let $\Gamma(\fn) = G(F) \cap K^G(\fn)$ be a lattice in $U(p,q)$ of level $\fn$. 
Let~$N =p+q$ and $r=\min(p,q)$. Let $j \not\equiv N \mod 2$ be such that $j \leq |p-q|-1$. Then 
\[ 
\dim H^{rj}(\Gamma(\fn), \BC) \gg |\fn|^{Nj}. 
\] 
The same bound holds for each $H^{rk,r(j-k)}(\Gamma(\fn), \BC)$ for $0 \leq k \leq j.$
\end{thm}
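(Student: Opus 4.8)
The plan is to realize $H^{rj}(\Gamma(\fn),\BC)$ via Matsushima's formula \eqref{eq Matsushima's formula}, isolate a single family of cohomological representations $\pi_0$ of $G_\infty$ contributing in degree $rj$, and apply the exact asymptotic of Theorem \ref{mainexact} to get the lower bound. First I would choose, for each infinite place, the cohomological representation $\pi_{0,v}$ of $U(p,q)$ corresponding to a bipartition $B_v \in \mc P_1(p,q)$ whose associated ordered partition $\beta(B_v)$ has exactly $j$ parts equal to some fixed odd integer $d$ (or, to hit the precise exponent $Nj$, $j$ separate parts each equal to an odd $d>1$ with the $n_i$ distinct so that $\pi_0$ is odd GSK-maxed) and all other parts equal to $1$; by Lemma \ref{lemmaLowestDegree} together with the hypothesis $j \le |p-q|-1$ and $j \not\equiv N \bmod 2$, such a $\pi_{0,v}$ has nonvanishing $(\fg,K)$-cohomology in degree $rj$, and moreover the Hodge weights $(rk, r(j-k))$ for $0 \le k \le j$ are all realized (this is exactly where part (ii) of Lemma \ref{lemmaLowestDegree}, iterated over the $j$ blocks, is used). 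The representation $\pi_0 = \bigotimes_v \pi_{0,v}$ is then odd GSK-maxed in the sense of Definition \ref{GSKmE}, and $\Delta(\pi_0) \ne \emptyset$ by construction.

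Next I would compute $\Delta^{\max}(\pi_0)$ via the algorithm of \S\ref{deltamaxalgo}: since $\beta_+(\pi_0)$ consists of $j$ distinct odd parts, Proposition \ref{Deltamaxenum} and Corollary \ref{maxedshapeE} give that every $\Delta \in \Delta^{\max}(\pi_0)$ is odd GSK of the form $(M,1,\lb_1,\eta_1), (1,d_i,\lb_i,\eta_i)_{2\le i \le j+1}$, and one computes $R(\pi_0)$ from \eqref{Equation R(Delta)} — the exponent works out to $Nj$ after substituting $N_1 = M = N - \sum d_i$ and the $d_i$, using the bookkeeping already done in Example 3 of \S\ref{example3} (the case of one noncompact place with the trivial infinitesimal character). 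The parity condition of Lemma \ref{etacharoddGSK} must be checked to hold for at least one $\Delta \in \Delta^{\max}(\pi_0)$: as remarked at the end of \S\ref{example3}, because $G_\infty$ is compact at all places but one, the freedom in reordering the $n_{v,i}$ at the compact places lets one arrange $\eta^{\psi^\Delta_\infty}_{\pi_0}(\mc S_\Delta)=1$, and the global parity obstruction from summing \eqref{ex3parity} vanishes automatically given the constraint on $q$ from Lemma \ref{unraminnerforms}. Then Theorem \ref{mainexact} with $S_0 = S_1 = \emptyset$ and $f^\infty_{\mf n} = \bar\1_{K^{G,S}(\mf n)}$ gives
\[
\sum_{\substack{\pi \in \mc{AR}_\disc(G)\\ \pi_\infty = \pi_0}} \dim((\pi^\infty)^{K(\mf n)}) \sim c\, |\mf n|^{Nj}\, \Gamma_L(\mf n)
\]
for a positive constant $c$ (positive because at least one $\Delta$ contributes with trivial character and $\dim\lb_1/|\Pi_\disc(\lb_1)| > 0$ and the Tamagawa-type volume factor is positive), and $\Gamma_L(\mf n) \asymp 1$.

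Finally I would feed this into Matsushima's formula. Each $\pi$ appearing in the sum contributes $m(\pi,\Gamma(\fn))\, H^{rj}(\fg,K_\infty;\pi_\infty \otimes F^*)$ to $H^{rj}(\Gamma(\fn),\BC)$ (after passing from the adelic multiplicity to the classical one on the possibly several connected components of $G(F)\bs G(\A)/K(\fn)K_\infty$, which only helps), and since $H^{rj}(\fg,K_\infty;\pi_0\otimes F^*) \ne 0$ with dimension independent of $\fn$, the dimension of $H^{rj}(\Gamma(\fn),\BC)$ is bounded below by a constant times the left side of the displayed asymptotic, giving $\dim H^{rj}(\Gamma(\fn),\BC) \gg |\fn|^{Nj}$. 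For the Hodge-graded statement, the same argument restricted to the Hodge-$(rk,r(j-k))$ isotypic part of $H^{rj}(\fg,K_\infty;\pi_0\otimes F^*)$ — which is nonzero by Lemma \ref{lemmaLowestDegree}(ii) applied blockwise — gives $\dim H^{rk,r(j-k)}(\Gamma(\fn),\BC) \gg |\fn|^{Nj}$. The main obstacle I anticipate is the combinatorial verification that the chosen $\pi_0$ simultaneously (a) has cohomology exactly in degree $rj$ with the claimed Hodge weights, (b) is odd GSK-maxed with $R(\pi_0) = Nj$, and (c) satisfies the character/parity condition of Lemma \ref{etacharoddGSK} so that the leading constant is genuinely nonzero; each piece is elementary but assembling them — especially tracking $U(p,q)$ versus $U(q,p)$ as extended pure inner forms throughout, which affects both the Hodge weights and the component-group character — requires care.
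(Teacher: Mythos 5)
Your high-level strategy is the same as the paper's (pick an odd GSK-maxed $\pi_0$ supported at the one noncompact place, apply Theorem \ref{mainexact}, settle the parity condition via the Example \ref{example3} setup, then pass through Matsushima and the connected components), but the key construction step is wrong. The representation you need corresponds to an ordered partition with a \emph{single} non-trivial part of size $d = N-j$ (odd precisely because $j\not\equiv N\bmod 2$) sitting at some position $s$, and $j$ parts equal to $1$; the hypothesis $j\le|p-q|-1$ gives $r\le (d-1)/2$, so Lemma \ref{lemmaLowestDegree} places its lowest cohomology in degree $r(N-d)=rj$, and the associated shape $(j,1,\cdot,\cdot),(1,d,\cdot,\cdot)$ has $R = 1+\tfrac12(N^2+j^2-d^2)=Nj+1$. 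Your proposal of ``$j$ parts equal to a fixed odd $d$'' is not GSK-maxed (repeated non-$1$ parts are excluded by Definition \ref{GSKmE}), and the fallback of $j$ distinct odd parts $d_1,\dots,d_j>1$ gives exponent $j+\tfrac12\bigl(N^2+M^2-\sum_i d_i^2\bigr)$ with $M=N-\sum_i d_i$, which is not $Nj+1$, and its minimal cohomological degree $pq-\sum_i p_iq_i$ is not $rj$ in general. So the representation you feed into Theorem \ref{mainexact} neither lives in the right degree nor produces the claimed growth rate.

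The Hodge-graded statement is also mishandled: by Lemma \ref{LemmaCohomology}(iii), a single $A_{\mathfrak q_B}(\lambda)$ contributes to its minimal degree in exactly \emph{one} Hodge weight, so no single $\pi_0$ realizes all of $(rk,r(j-k))$, $0\le k\le j$. Instead one uses $j+1$ different representations, obtained by placing the $d$-block at position $s=k$ as in Lemma \ref{lemmaLowestDegree}(ii); each is separately odd GSK-maxed with the same exponent, which is what gives the bound for every $H^{rk,r(j-k)}$. Finally, the passage from the adelic quotient to $\Gamma(\mathfrak n)\backslash G_\infty/K_\infty$ goes in the opposite direction from what you assert: $\dim H^i$ of the adelic space is the number of components (all with equal Betti numbers, by Lemma \ref{lemma connected components}) times $\dim H^i$ of one component, so you \emph{lose} a factor $\asymp|\mathfrak n|$; this is exactly why the correct adelic exponent $Nj+1$, not $Nj$, is needed to land on $|\mathfrak n|^{Nj}$ for the lattice.
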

\begin{proof}
By Lemma \ref{lemmaLowestDegree}, it suffices to give lower bounds on multiplicities for the representation $\pi_{rj} = \pi_{rj,v_0}\ten {\bf 1}^{[F:\BQ]-1}$. Since $\pi_{rj}$ is odd GSK-maxed by the congruence condition on $j$, Theorem \ref{mainexact} gives exact multiplicities with $R=Nj+1$ provided that there exists $\Delta \in \Delta(\pi_0)$ with \[\eta_{\pi_{rj}}^{\psi_\infty^\Delta}(\mathcal{S}_\Delta)\equiv 1.\]  Since we are in the setup of Example \ref{example3}, there automatically exists such a~$\Delta$.

This gives a lower bound for the cohomology of the disconnected locally symmetric spaces $Y(\fn) = G(F) \dom G(\BA)/K(\fn)K_\infty$, of which $\Gamma(\fn) \dom G_\infty/K_\infty$ is one connected component. Though the different connected components of $Y(\fn)$ are not necessarily isomorphic, their cohomology is equidimensional by Lemma \ref{lemma connected components} below. It then follows from \cite[\S 2]{deligne1971travaux} that \[\dim H^i(Y(\fn), \BC) = |\pi_0(T(F)\dom T(\BA)/\nu(K_\infty K(\fn)))| \dim H^i(X(\fn), \BC),\] for $T = G/G^{\der}$. Since $|\pi_0(T(\BA)/T(F)\nu(K_\infty K(\fn)))| \gg |\fn|^{1-\epsilon}$, we conclude. 
%
%
\end{proof}

\begin{lem} \label{lemma connected components}
Let $G^*$ be an inner form of $G = U_{E/F}(N)$ and $K$ be a compact open subgroup of $G(\BA^\infty)$. Let \[ Y_K = G(F) \dom G(\BA) /K_\infty K \] be the locally symmetric space of level $K$. Let $X_{K,\gamma}$ denote the connected components of $Y_K$, indexed by representatives of $G(F) \dom G(\BA^\infty)/K$. Then for any $\gamma, \gamma'$ we have 
\[ 
\dim H^i(X_{K,\gamma},\BC) = \dim H^i(X_{K,\gamma'}, \BC). 
\]   
\end{lem}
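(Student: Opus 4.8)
The plan is to reduce the statement to the invariance, over the connected components, of the $L^2$-multiplicity of each cohomological representation of $G_\infty$, and then to establish that invariance via a Hecke-theoretic argument using that the finite Hecke algebra permutes the components transitively. First I would apply Matsushima's formula \eqref{eq Matsushima's formula} componentwise: writing $X_{K,\gamma}=\Gamma_\gamma\backslash G_\infty/K_\infty$ with $\Gamma_\gamma:=G(F)\cap\gamma K\gamma^{-1}$ (the $\gamma$ ranging over $G(F)\backslash G(\mathbb A_f)/K$), one gets $\dim H^i(X_{K,\gamma},\mathbb C)=\sum_\pi m(\pi,\Gamma_\gamma)\,\dim H^i(\mathfrak g,K_\infty;\pi\otimes F^\ast)$, the sum over the finitely many $\pi$ cohomological with coefficients in $F$. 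Since the $(\mathfrak g,K_\infty)$-cohomology factors are intrinsic to $(G_\infty,\pi,F)$, it then suffices to prove that $m(\pi,\Gamma_\gamma)=\dim\mathrm{Hom}_{G_\infty}\!\bigl(\pi,L^2(X_{K,\gamma})\bigr)$ is independent of $\gamma$.

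Next I would set up the adelic picture. One has $Y_K=\bigsqcup_\gamma X_{K,\gamma}$, hence $L^2(Y_K)=\bigoplus_\gamma L^2(X_{K,\gamma})$ as $G_\infty$-representations, and the finite Hecke algebra $\mathcal H^\infty$ acts on $L^2(Y_K)$ commuting with $G_\infty$ and permuting the summands. Because $G^{\mathrm{ab}}=U_{E/F}(1)$ and $H^1$ of the simply connected $G^{\mathrm{der}}$ vanishes at each finite place, the abelianization map $\nu$ is surjective on $\mathbb A_f$-points, so $\pi_0(Y_K)=G(F)\backslash G(\mathbb A_f)/K$ is a homogeneous space under the finite abelian group $G^{\mathrm{ab}}(\mathbb A_f)/\bigl(G^{\mathrm{ab}}(F)\,\nu(K)\bigr)$; fix $\gamma,\gamma'$ and choose $g\in G(\mathbb A_f)$ with $\gamma'\in G(F)\gamma g K$. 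When $g$ may be taken central — for instance a scalar uniformizer at a split place, since scalars normalize every congruence subgroup — the right translation $R_g$ is an honest homeomorphism of $Y_K$ carrying $X_{K,\gamma}$ onto $X_{K,\gamma'}$, which already disposes of all $\gamma'$ differing from $\gamma$ by $N$-th powers of split uniformizers; only finitely many residual classes remain.

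For the residual classes I would use the Hecke correspondence attached to $g$: with $U=K\cap gKg^{-1}$, consider the two finite coverings $p\colon Y_U\to Y_K$ (the natural one) and $q\colon Y_U\to Y_K$, $q(x)=xg$. Two local facts matter: (a) $\deg p=[K:U]=[gKg^{-1}:U]=\deg q$, since $G(\mathbb A_f)$ is unimodular and $\mathrm{vol}(gKg^{-1})=\mathrm{vol}(K)$; (b) on components, $p$ sends $X_{U,\gamma}$ to $X_{K,\gamma}$ and $q$ sends it to $X_{K,\gamma'}$, with $\Gamma^U_\gamma=\Gamma_\gamma\cap\Gamma_{\gamma'}$, so $X'':=X_{U,\gamma}$ is a common finite cover of $X_{K,\gamma}$ and $X_{K,\gamma'}$, of equal degree on the two sides (all components of $Y_K$ have the same volume, again by $R_g$-invariance of Haar measure on $G(F)\backslash G(\mathbb A)$). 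Passing to $H^i(-,\mathbb R)$ and writing $\phi:=q_!\circ p^\ast$ and $\psi:=p_!\circ q^\ast$ for the transfer-then-pullback maps, adjointness of $p_!$ with $p^\ast$ and of $q_!$ with $q^\ast$ for the Hodge inner products gives $\psi=\phi^\ast$, hence $\psi\phi=\phi^\ast\phi\succeq0$, so $\ker\phi=\ker\psi\phi$ and $\mathrm{rank}\,\phi=\mathrm{rank}\,\psi$. Running this over enough $g$'s realizing the translation $\gamma\mapsto\gamma'$ — equivalently, observing that the associated correspondence operators span the relevant part of the Hecke bimodule acting on $W_\pi:=\mathrm{Hom}_{G_\infty}(\pi,L^2(Y_K))$, on which the component projectors $P_\gamma$ (multiplication by $\mathbf 1_{X_{K,\gamma}}$) are mutually orthogonal idempotents summing to the identity — should force $\mathrm{rank}(P_\gamma|_{W_\pi})$, i.e. $m(\pi,\Gamma_\gamma)$, to be constant in $\gamma$, which is what we want.

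The hard part will be precisely the last step: a single correspondence only yields $\mathrm{rank}\,\phi\le\min\bigl(\dim H^i(X_{K,\gamma}),\dim H^i(X_{K,\gamma'})\bigr)$, because an individual Hecke operator need not be injective on one component, so the spanning/positivity argument needs to be made airtight — controlling how the idempotents $P_\gamma$ sit relative to the block structure of the Hecke bimodule on $W_\pi$. This is the only genuinely delicate point; if it proves too technical one may instead invoke \cite[\S2]{deligne1971travaux}, which establishes the equivalent assertion $\dim H^i(Y_K,\mathbb C)=|\pi_0(Y_K)|\cdot\dim H^i(X_{K,\gamma},\mathbb C)$, from which the lemma is immediate.
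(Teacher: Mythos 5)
Your first reduction (Matsushima componentwise, so that it suffices to show $m(\pi,\Gamma_\gamma)$ is independent of $\gamma$) is fine, and your observation that central translations handle the classes of $\gamma'$ differing from $\gamma$ by $N$-th powers is correct but, as you note, leaves residual classes. The problem is that the argument you give for those residual classes does not close, and the point at which it fails is exactly the one you flag. The identity $\mathrm{rank}\,\phi=\mathrm{rank}\,\psi$ is automatic for any map and its adjoint and carries no information; likewise $\psi\phi=\phi^*\phi\succeq 0$ gives $\ker\phi=\ker(\psi\phi)$ but says nothing about $\phi$ being injective, and injectivity of $\phi$ on $H^i(X_{K,\gamma})$ is precisely what you would need to get the inequality $\dim H^i(X_{K,\gamma})\le\dim H^i(X_{K,\gamma'})$. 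Individual Hecke correspondences genuinely do have kernels on cohomology, so no amount of positivity for a single $g$ rescues this, and the claim that ``running over enough $g$'s'' makes the correspondence operators span enough of the Hecke bimodule to force $\mathrm{rank}(P_\gamma|_{W_\pi})$ constant is not an argument: at finite level the correspondence attached to $g$ composed with the one attached to $g^{-1}$ is not the identity (it is $\deg\cdot\mathrm{id}$ plus other terms), so these operators do not assemble into a group algebra of the component group acting by permutation. Your fallback --- citing Deligne for $\dim H^i(Y_K)=|\pi_0(Y_K)|\cdot\dim H^i(X_{K,\gamma})$ --- is legitimate but is essentially the statement to be proved, outsourced.

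The missing idea, and the one the paper uses, is to pass to infinite level, where right translation by an \emph{arbitrary} $g\in G(\A_f)$ is an honest homeomorphism of $S_G=\varprojlim_K Y_K$ rather than a correspondence. By Deligne, $G(\A_f)$ acts transitively on $\pi_0(S_G)$, and since $G'=G^{\der}$ is simply connected, $G'(\A_f)$ stabilizes each component $S_{G'}$; Newton's result upgrades this to an isomorphism of smooth $G(\A_f)$-modules $H^*(S_G,\C)\cong\Ind_{G'(\A_f)}^{G(\A_f)}H^*(S_{G'},\C)$. Restricting to $G'(\A_f)$ and taking $K$-invariants summand by summand identifies $H^*(X_{K,\gamma},\C)$ with $H^*(S_{G'},\C)^{\gamma K\gamma^{-1}}$, i.e.\ with the invariants of one fixed module under conjugate subgroups, whence equal dimensions. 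In other words, the transitive permutation of components that your correspondences fail to realize at level $K$ is realized on the nose at infinite level, and the only remaining bookkeeping is descent to $K$-invariants. If you want to keep your Hecke-theoretic framing, you must either carry out this infinite-level descent or restrict to $g$ normalizing $K$, which brings you back to the central case you already handled.
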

\begin{proof}
Let $G'$ be the derived subgroup of $G$ with $T = G/G'$ and quotient map $\nu$. 
We have  
\[ 
Y_K = \bigsqcup X_{K,\gamma} := \bigsqcup_\gamma G'(F) \dom G'(\BA) / \gamma K\gamma^{-1}(K_\infty \cap G'(F_\infty)) 
\] 
for a finite set of representatives $\gamma$ of $G(F) \dom G(\BA)/K$.  Consider the space $S_G = G(F)\dom G(\BA)/K_\infty = \varprojlim_K Y_K$: it carries an action of $G(\BA_f)$ by translation which, following \cite[\S 2]{deligne1971travaux}, is transitive on connected components. Moreover, since $G'$ is simply connected, $G'(\BA_f)$ is the stabilizer of a given connected component. In fact, the space $S_G$ is the induction of the connected component 
\[
S_{G'} = G'(F)\dom G'(\BA)/(K_\infty \cap G'_\infty),
\]
in a precise sense laid out in \cite[\S 2.7]{deligne1979varietes}. This translates actual induction when passing to cohomology: 
\[ 
H^*(S_G, \BC) = \Ind_{G'(\BA_f)}^{G(\BA_f)} H^*(S_{G'}, \BC), 
\] 
see \cite{New13}, where the cohomology of $S_G$ is a smooth representation realized as a direct limit over the cohomology of the $S_K$ (viewed as $K$-fixed vectors) in the natural way.
Fixing a set of representatives $\gamma$ for $G(\BA_f)/G'(\BA_f)$, we deduce that
\[ 
H^*(S_G,\BC)\mid_{G'(\BA_f)} = \bigoplus_\gamma H^*(S_{G'}, \BC)^\gamma, 
\]
where the superscript $\gamma$ denotes conjugating the representation. In particular, for the subgroup $K$ of $G'(\BA_f)$, we have 
\[ 
\dim\left((H^*(S_{G'}, \BC)^\gamma)^K\right) = \dim(H^*(S_{G'}, \BC)^{\gamma K \gamma^{-1}}) = \dim H^*(X_{\gamma K \gamma^{-1}}, \BC), 
\] 
where the last equality follows from Matsushima's formula. 
\end{proof}

We compare these lower bounds with previous results: in \cite{MS19} Marshall-Shin showed that, when $d < N-1$, $\dim H^d(\Gamma(\fn),\BC) \ll_\epsilon |\fn|^{Nd+\epsilon}$  for split-level lattices~$\Gamma(\mf n)$ in $U(N-1,1)$ and conjectured that this bound was sharp. Theorem~\ref{cohmain} specialized to $(p,q) = (N-1,1)$ then gives:
\begin{cor}
If $d \not\equiv N \mod 2$, then Marshall-Shin's bounds are sharp.
\end{cor}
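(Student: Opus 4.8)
The plan is to combine the lower bound of Theorem \ref{cohmain} with the upper bound of Marshall--Shin, specialized to the case $(p,q) = (N-1,1)$, where $r = \min(p,q) = 1$. First I would set $r = 1$ in Theorem \ref{cohmain}: for $j \not\equiv N \bmod 2$ with $j \leq |p-q| - 1 = N - 2 = (N-1) - 1$, that theorem gives $\dim H^{j}(\Gamma(\mf n), \BC) \gg |\mf n|^{Nj}$, since $rj = j$ and $r(N-d) = N - j$ dimensionally matches up with the degree $d = j$ in the Marshall--Shin statement. Thus for every such $j$ (i.e.\ for $1 \leq j \leq N - 2$ with $j \not\equiv N \bmod 2$) we have a matching power of $|\mf n|$ on both sides: Marshall--Shin's $\dim H^{j}(\Gamma(\mf n), \BC) \ll_\epsilon |\mf n|^{Nj + \epsilon}$ from \cite{MS19} and our $\dim H^{j}(\Gamma(\mf n), \BC) \gg |\mf n|^{Nj}$.

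The second step is to verify that the hypotheses of Theorem \ref{cohmain} are genuinely met in the $U(N-1,1)$ setting so that the quoted lower bound applies: one needs an extended pure inner form $G$ of $G^*$ unramified at all finite places, isomorphic to $U(N-1,1)$ at one infinite place and compact at the others, which exists by Lemma \ref{unraminnerforms} after possibly swapping $U(p,q) \leftrightarrow U(q,p)$ at the infinite places to satisfy the parity condition $\tfrac{N(N-1)}{2}|\infty| + \sum_v q_v$ even. One also needs the relevant cohomological representation $\pi_{j}$ (in the notation of Lemma \ref{lemmaLowestDegree}, with $r = 1 \leq \tfrac{d-1}{2}$ automatically since $d = j > 1$) to be odd GSK-maxed; this follows because on $U(N-1,1)$ every element of $\mc P_1(N-1,1)$ has at most one part of size $> 1$, so the congruence $j \not\equiv N \bmod 2$ puts us in Example \ref{example3}, where a shape $\Delta$ with $\eta^{\psi_\infty^\Delta}_{\pi_0}(\mc S_\Delta) = 1$ always exists. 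That is exactly the input Theorem \ref{cohmain} invokes.

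The third step is bookkeeping on the range of degrees: the claim ``$d \not\equiv N \bmod 2$'' in the corollary should be read together with the implicit constraint $d < N-1$ from the Marshall--Shin upper bound (for $d = N-1$, which is the middle degree, different considerations apply). For $1 \leq d \leq N-2$ with $d \not\equiv N \bmod 2$, the two bounds pinch, giving $\dim H^{d}(\Gamma(\mf n), \BC) = |\mf n|^{Nd + o(1)}$ and in particular that the Marshall--Shin upper bound $|\mf n|^{Nd + \epsilon}$ is sharp in the sense that the exponent $Nd$ cannot be lowered. I would phrase the conclusion as: for these $d$, $\dim H^d(\Gamma(\mf n),\BC) \gg |\mf n|^{Nd}$ and $\ll_\epsilon |\mf n|^{Nd + \epsilon}$, so the exponent is optimal.

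The main obstacle is not in the argument itself --- which is essentially a one-line citation-combination once the setup is fixed --- but in making sure the normalizations line up: that the lattices $\Gamma(\mf n)$ appearing in Marshall--Shin's upper bound theorem are literally (or are commensurable with, in a degree-preserving way) the split-level principal congruence lattices $G(F) \cap K^G(\mf n)$ appearing in Theorem \ref{cohmain}, and that ``degree $d$'' on both sides refers to the same grading (ours is stated for $H^{rj}$ with $r = 1$, theirs for $H^d$). One should also double-check the passage from the disconnected adelic locally symmetric space $Y(\mf n)$ to a single connected component $X(\mf n) = \Gamma(\mf n)\dom G_\infty/K_\infty$ via Lemma \ref{lemma connected components} and the factor $|\pi_0(T(F)\dom T(\BA)/\nu(K(\mf n)K_\infty))| \gg |\mf n|^{1-\epsilon}$, so that the $\epsilon$ losses on the two sides are compatible and do not swamp the comparison; since both the lower and upper bounds already carry an $|\mf n|^\epsilon$ slack, this is harmless, but it is the one place where care is needed.
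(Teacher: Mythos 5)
Your proposal is correct and takes essentially the same route as the paper, which states the corollary as an immediate specialization of Theorem \ref{cohmain} to $(p,q)=(N-1,1)$ with $r=1$, pinched against the Marshall--Shin upper bound $\ll_\epsilon |\fn|^{Nd+\epsilon}$ for $d<N-1$. One minor arithmetic slip: for $(p,q)=(N-1,1)$ one has $|p-q|-1=N-3$, not $N-2$, but this is harmless since the parity condition $j\not\equiv N \bmod 2$ already excludes $j=N-2$, so the range of degrees covered is unchanged.
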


\subsubsection{Upper bounds} \label{section upper bounds algo}

As an example for the upper bounds, fix:
\begin{itemize}
\item an arbitrary CM extension $E/F$,
\item an extended pure inner form $G$ of $G^* \in \wtd {\mc E}_\el(N)$, isomorphic to $U(p,q)$ at one infinite place $v_0$ and compact at all other infinite places,
\item a finite set of finite places $S$ at which $G$ is split,
\item
an open compact $U^{S, \infty}$ away from $S$ and $\infty$,
\item
$\mf n$ an ideal supported over $S$. We compute asymptotics as $|\mf n| \to \infty$, 
\item
$\Gamma(\mf n) = G(F) \cap U^{S, \infty} K^G_S(\mf n)$ a lattice.
\end{itemize}
As before, we are interested in $H^{a,b}(\Gamma(\mathfrak n), \C)$. To compute this:
\begin{enumerate}
\item 
We use Lemma \ref{LemmaCohomology} to enumerate all $\pi_0 \in \mc P_1(p,q)$ contributing to $H^{a,b}$. 
\item
We use the algorithm at the end of \S\ref{deltamaxalgo} 
 and the formula \eqref{eqn Rpi that we prove} to compute all the $R(\pi_0)$. Let the maximum value be $R(a,b)$.
\end{enumerate}
Then:
\begin{prop}
For all $\eps > 0$,
\[
\dim H^{a,b}(\Gamma(\fn), \C) \ll_\eps |\mf n|^{R(a,b) - 1 + \eps}.
\]
\end{prop}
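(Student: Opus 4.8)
The plan is to combine Matsushima's formula \eqref{eq Matsushima's formula} with the main upper bound Theorem \ref{mainupper}, exactly as summarized in steps (1)--(2) preceding the statement. First I would observe that the Hodge-graded piece $H^{a,b}(\Gamma(\fn),\C)$ decomposes, via \eqref{eq Matsushima's formula}, as a finite sum over cohomological representations $\pi_\infty$ of $G_\infty$ of the form $m(\pi_\infty, \Gamma(\fn))$ times the dimension of the $(a,b)$-part of $H^*(\fg,K_\infty;\pi_\infty\otimes F^*)$. Since $G_\infty = U(p,q)\times \prod_{v\neq v_0} U(N,0)$ (or $U(0,N)$) is compact away from $v_0$, the only freedom is in the $U(p,q)$-factor at $v_0$, so the sum runs over the finite set of $\pi_0\in\mc P_1(p,q)$ whose $(\fg,K_\infty)$-cohomology with coefficients in $F$ is nonzero in Hodge bidegree $(a,b)$; this finite list is produced by Lemma \ref{LemmaCohomology}, which computes exactly which bipartitions $B$ contribute and in which bidegrees. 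Call this set $\mc C(a,b)$.

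Next, for each $\pi_0\in\mc C(a,b)$ I would pass from the classical multiplicity $m(\pi_0,\Gamma(\fn))$ to the adelic count $\sum_{\pi\in\mc{AR}_\disc(G)}\1_{\pi_\infty=\pi_0}\dim((\pi^\infty)^{U^{S,\infty}K^G_S(\fn)})$. These differ only by the number of connected components of the adelic locally symmetric space and the cohomology-dimension equality of Lemma \ref{lemma connected components}: as in the proof of Theorem \ref{cohmain}, $\dim H^{a,b}(Y(\fn),\C)$ is $|\pi_0(T(F)\bs T(\A)/\nu(K_\infty U^{S,\infty}K^G_S(\fn)))|$ times $\dim H^{a,b}(X(\fn),\C)$ with $T = G/G^{\der}$, and this component count grows like $|\fn|^{1-\eps}$ (lower bound) and is $O(|\fn|)$ (upper bound). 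Hence $\dim H^{a,b}(\Gamma(\fn),\C)$ is, up to a factor that is $\asymp |\fn|$, the adelic cohomology $\dim H^{a,b}(Y(\fn),\C)$, which is bounded by $\sum_{\pi_0\in\mc C(a,b)}\big(\dim \text{(Hodge piece of }(\fg,K)\text{-cohomology)}\big)\cdot m^G(\pi_0, \bar\1_{K^{G,S}(\fn)})$ with a fixed test function at the places in $S$ absorbed into the $O$-constant (here I take $f_{S_1}$ the indicator of $U^{S,\infty}$ away from $\fn$, $\kappa$ fixed, $S_0=\emptyset$).

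Then I would apply Theorem \ref{mainupper} to each term: $\sum_{\pi\in\mc{AR}_\disc(G)}\1_{\pi_\infty=\pi_0}\tr_{\pi^\infty}(f^\infty_\fn) = O(|\fn|^{R(\pi_0)}q_{S_1}^{A+B\kappa})$, where $R(\pi_0)$ is computed by the algorithm at the end of \S\ref{deltamaxalgo} together with formula \eqref{eqn Rpi that we prove}. Since $\mc C(a,b)$ is finite and $R(a,b)$ is defined as the maximum of $R(\pi_0)$ over $\pi_0\in\mc C(a,b)$, summing gives $\dim H^{a,b}(Y(\fn),\C) = O(|\fn|^{R(a,b)}q_{S_1}^{A+B\kappa})$ with $q_{S_1}$ and $\kappa$ fixed, hence $O(|\fn|^{R(a,b)})$. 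Dividing by the component count $\gg |\fn|^{1-\eps}$ yields $\dim H^{a,b}(\Gamma(\fn),\C) \ll_\eps |\fn|^{R(a,b)-1+\eps}$, which is the claim. The only mildly delicate points are bookkeeping ones: confirming that Lemma \ref{LemmaCohomology} really does enumerate \emph{all} contributing $\pi_0$ in a fixed bidegree (it does, since every cohomological representation is some $A_{\fq_B}(\lambda)$ and the bidegrees are given explicitly there), and that the constants $A,B,D,E$ from Theorem \ref{mainupper} may be taken uniform over the finite set $\mc C(a,b)$ (true, by taking maxima). The one genuine input beyond routine assembly is the component-count asymptotic $|\pi_0(T(F)\bs T(\A)/\nu(K_\infty U^{S,\infty}K^G_S(\fn)))|\gg|\fn|^{1-\eps}$, which follows from class field theory for the torus $T = U(1)$ exactly as invoked in the proof of Theorem \ref{cohmain}; I expect this to be the main (still minor) obstacle to a fully self-contained write-up, since it is the only place where an ``$-1+\eps$'' rather than ``$-1$'' is forced and where one must be careful that the divisor $\fn$ is supported at split primes so that $T(F_v)$ is $\GL_1$ at those places.
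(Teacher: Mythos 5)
Your proposal is correct and follows essentially the same route as the paper's (much terser) proof: enumerate the contributing $\pi_0$ via Lemma \ref{LemmaCohomology}, bound each adelic multiplicity by Theorem \ref{mainupper}, assemble via Matsushima's formula, and divide by the $\gg_\eps |\fn|^{1-\eps}$ count of connected components using Lemma \ref{lemma connected components}, which is exactly where the $-1+\eps$ in the exponent comes from. The extra bookkeeping you supply (uniformity of constants over the finite set $\mc C(a,b)$, the split-prime support of $\fn$ for the torus count) is consistent with how the same reduction is carried out in the proof of Theorem \ref{cohmain}.
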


\begin{proof}
This follows from applying Theorem \ref{mainupper} to each of the $\pi_0$ from (1) above, applying Matsushima's formula, and then using the bounds on the number of connected components to reduce to a single connected component of the adelic quotient. 
\end{proof}

\begin{ex}[Lowest Degree]
    Let $r = \min(p,q)$. Then $r$ is the lowest degree of cohomology that is not guaranteed to vanish for local reasons. There are only two nontrivial cohomological representations in degree $r$, and they have weights $(r,0)$ and $(0,r)$ respectively. Then $R(0,r) = R(r,0) = p+q$. 
\end{ex}

\begin{ex}[Upper and Lower Bounds for $U(N-2,2)$]
Consider the case case where the non-compact factor is $U(N-2,2)$ and choose a degree of cohomology $0 < i < 2(N-2)$. We also assume $N > 6$ for simplicity. 



An analysis of possible shapes then shows that $H^i(\Gamma(\mf n), \C) \ll_\eps |\mf n|^{R_i} - 1$ where
\[
R_i = \max \begin{cases}
     Ni/2 + 1 & $i$ \text{ even}, \\
     1/2(i + 5/2)^2 + N(N - i - 3) + 23/8 & $i$ \text{ even, } i \geq N -2 \\
    (i/2 + 1)^2 + 7/4  & $i$ \text{ odd, } i \geq N -2 \\
    0  
\end{cases}.
\]
Moreover, $R_i$ gives an exact asymptotic whenever the dominant shape is odd-GSK---i.e. whenever $i \equiv 2N \pmod 4$ and the first case achieves the maximum. 
In addition, assuming Conjecture \ref{simpleshapesconj}, the exact asymptotic should be
\[
R_{0,i} = \max \begin{cases}
     Ni/2 + 1 & $i$ \text{ even}, \\
     (i/2 + 1)^2 + 3 & $i$ \text{ even, } i \geq N -2 \\
    (i/2 + 1)^2 + 7/4  & $i$ \text{ odd, } i \geq N -2 \\
    0
\end{cases}.
\]
\end{ex}

\subsubsection{A note on vanishing of cohomology}
Historically, another case of interest has been examples where certain degrees of cohomology identically vanish; see for example \cite[\S 15]{Ro90} for rank three unitary groups and \cite{Clo93Coho} for general rank. 

This can be achieved by choosing $G$ that is a division algebra at some place $v_0$ where $E/F$ is split. Then various shapes could potentially never contain parameters that are relevant on $G$ (as in \cite[\S1.3.7]{KMSW14}) and therefore never contribute to~$\mc{AR}_\disc$. In particular, certain cohomological $\pi_0$ can simply not appear at infinity. 

To reconcile with our bounds, this mechanism is hidden within the computation of endoscopic transfers in Proposition \ref{mpi0form}. It manifests through transfers at $v_0$ either vanishing, immediately zeroing out terms in \ref{mpi0form}, or having certain constant terms vanish, zeroing out terms after further split-place transfers computed by Lemma \ref{splitconstant}. We do not take into account this potential tightening in Theorem \ref{mainupper} for two reasons: first, it is already known and second, it is better thought of as a corollary of \cite{KMSW14} (after plugging some computations with the parameterization of cohomological representations on unitary groups recalled in \S\ref{upqparam}) instead of a consequence of the new techniques here.

\subsection{Sato-Tate Equidistribution in Families}\label{sectionSatoTate}
We prove an averaged Sato-Tate result similar to Theorem 9.26 in \cite{ST16} using that our main theorem \ref{mainexact} has error bounds of the same strength in $f_{S_1}$. 

We consider families of automorphic representations with infinite component equal to an odd GSK-maxed $\pi_0$. Their Satake parameters will not equidistribute with respect to the Sato-Tate measure on $G$, but rather with respect to the pushforward of the Sato-Tate measure on a smaller group related to the $\Delta \in \Delta^{\max}(\pi_0)$. Otherwise, this section will follow \cite{ST16} extremely closely.

\subsubsection{Sato-Tate measures}
First we recall the definition of Sato-Tate measures from \cite[\S\S3,5]{ST16} (the full details can be found there). Choose a place $v$ of $F$ over which $G$ is unramified. 
Let $A \subseteq T$ be a maximally split torus of $G_v$ and maximal torus containing it. Let $\wh A_c, \wh T_c$ be the maximal compact subgroups in the Langlands duals $\wh A, \wh T$. 
Via the Satake isomorphism, we have a parameterization of the tempered, unramified dual of $G_v$:
\[
\widecheck G_v^{\ur, \temp} \simeq \Om_{F_v} \bs \wh A_c \simeq \Om_{F_v} \bs \wh T_c/(\id - \Frob_v) \wh T_c.
\]
This space is also the same as $\wh G_v$-conjugacy classes in $\wh K_{\Frob_v} \rtimes \Frob_v$ where $\wh K_{\Frob_v}$ is the maximal $\Frob_v$-invariant compact subgroup of $\widecheck G_v$. 

Of course, not every $v$ produces the same Frobenius action on $\wh G$.  To deal with this, if $G$ splits over~$E$, let $\Gamma = \Gal(E/F)$. Then for each $\theta \in \Gamma$, let
\[
\wh T_{c, \theta} := \Om_{F_v} \bs \wh T_c/(\id - \theta) \wh T_c.
\]
For $\gamma \in \Gamma$, $t \mapsto \gamma t$ canonically identifies $T_{c, \theta}$ with $T_{c, \gamma \theta \gamma^{-1}}$ so $\wh T_{c, \theta}$ can be considered to only depend on the conjugacy class of $\theta$. This is therefore a uniform description of $\widecheck G_v^{\ur, \temp}$ whenever $\Frob_v = \theta$. 

\begin{dfn}
The Sato-Tate measure $\mu^\ST_\theta := \mu^\ST_\theta(G)$ on $\wh T_{c, \theta}$ is the quotient under $\wh G$-conjugation of the Haar measure on $\wh K_\theta \rtimes \theta$ with total volume $1$.
\end{dfn}
This should  be thought of as the ``most canonical''  measure to put on $T_{c, \theta}$. 

Now, let $\mc V_F(\theta)$ be the set of places $v$ of $F$ such that $\Frob_v = \theta$. For $v \in \mc V_F(\theta)$, the Plancherel measure on $\widecheck G_v^{\ur, \temp}$ (normalized so that a maximal compact of $G_v$ has volume $1$) gives another measure $\mu^{\pl, \ur}_v := \mu^{\pl, \ur}(G_v)$ on $\wh T_{c, \theta}$.
\begin{lem} \label{lem Sato-Tate convergence}
Let $v \in \mc V_F(\theta)$ be a sequence of places such that $q_v \to \infty$. Then there is weak convergence $\mu^{\pl, \ur}_v \to \mu^\ST_\theta$. 
\end{lem}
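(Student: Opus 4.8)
The plan is to realize the weak convergence as the uniform convergence to $1$ of a single Radon--Nikodym density. Both $\mu^{\pl, \ur}_v$ and $\mu^{\ST}_\theta$ are probability measures on the compact space $\wh T_{c, \theta}$: the former has total mass $\mathbf{1}_{K_v}(1) = 1$ by our normalization $\vol(K_v) = 1$ together with the spherical Plancherel formula, and the latter has total mass $1$ by construction. First I would fix the image $dt$ on $\wh T_{c, \theta}$ of normalized Haar measure on the compact torus $\wh T_c$ as a common dominating measure; then it suffices to show that $D_v := d\mu^{\pl, \ur}_v / d\mu^{\ST}_\theta$ is a continuous function with $\| D_v - 1 \|_\infty \to 0$ as $q_v \to \infty$, since for any continuous $h$ on $\wh T_{c,\theta}$ one has $|\int h\, d\mu^{\pl, \ur}_v - \int h\, d\mu^{\ST}_\theta| = |\int h\,(D_v - 1)\, d\mu^{\ST}_\theta| \le \|h\|_\infty\, \| D_v - 1 \|_\infty$, and the spanning density of $C(\wh T_{c,\theta})$ by such $h$ (equivalently, the characters $\chi_\lambda$ recalled around \eqref{sataketrace}) is then not even needed.

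The computation of $D_v$ is the heart of the matter, and it amounts to combining two known formulas. Using the twisted Macdonald--Waldspurger expression for the spherical Plancherel density recorded in \cite[\S3]{ST16}, one writes $d\mu^{\pl, \ur}_v(t) = C_v \prod_{\alpha} |1 - \alpha(t)|^2\,|1 - q_v^{-e_\alpha}\alpha(t)|^{-2}\, dt$, where $\alpha$ runs over the $\theta$-twisted (restricted) roots of $\wh G$ on $\wh T_c$, the $e_\alpha$ are positive integers depending only on $G$, and $C_v > 0$ is a normalizing constant. On the other hand $\mu^{\ST}_\theta$, being the pushforward to $\wh G$-conjugacy classes of Haar measure on the coset $\wh K_\theta \rtimes \theta$, is identified by the twisted Weyl integration formula for this (possibly disconnected) compact group with $d\mu^{\ST}_\theta(t) = C' \prod_{\alpha} |1 - \alpha(t)|^2\, dt$ for some $C' > 0$ and the same set of roots --- this is exactly why $\wh T_{c,\theta} = \wh T_c/(\id - \theta)\wh T_c$ is the natural parameter space. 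The Weyl-denominator factors $\prod_\alpha |1 - \alpha(t)|^2$ cancel in the ratio, leaving the continuous positive function $D_v(t) = (C_v/C') \prod_{\alpha} |1 - q_v^{-e_\alpha}\alpha(t)|^{-2}$ on all of $\wh T_{c,\theta}$.

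To conclude I would observe that every $t \in \wh T_{c,\theta}$ takes values of absolute value $1$ on roots, so $|1 - q_v^{-e_\alpha}\alpha(t)|^2 \in [(1-q_v^{-1})^2,\,(1+q_v^{-1})^2]$ for all $\alpha$ and all $t$, whence $\prod_\alpha |1 - q_v^{-e_\alpha}\alpha(t)|^{-2} = 1 + O(q_v^{-1})$ uniformly in $t$ with implied constant depending only on $G$; integrating the identity $\int_{\wh T_{c,\theta}} D_v\, d\mu^{\ST}_\theta = 1$ against this estimate forces $C_v/C' = 1 + O(q_v^{-1})$, and hence $\| D_v - 1 \|_\infty \to 0$. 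The one genuinely delicate point, which I expect to be the main obstacle in writing out full details, is pinning down the twisted Macdonald formula and --- above all --- the normalizing constants in the non-split case, where $\wh K_\theta \rtimes \theta$ is disconnected and one must use the correct twisted/restricted root data; but since both measures are probabilities the constant is automatically determined, so this is bookkeeping rather than a real difficulty, and it is carried out in \cite[\S3]{ST16}.
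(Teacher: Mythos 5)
Your argument is correct and is essentially the paper's: the proof there simply cites the explicit formulas of \cite[Prop.~3.3]{ST16} (the Macdonald formula for the unramified Plancherel density) and \cite[Lem.~5.2]{ST16}, which is precisely the ratio computation and uniform $1+O(q_v^{-1})$ estimate you carry out by hand.
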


\begin{proof}
This follows by explicit formulas \cite[Prop 3.3]{ST16} and \cite[Lem 5.2]{ST16}. 
\end{proof}

\subsubsection{Normalizations}
While the unramified tempered spectrum $T_{c, \theta}$ is essentially the same across different primes, the full unramified spectrum is not. For example, the Satake parameter of the trivial representation on $\GL_n$ depends on $q_v$. We therefore need to normalize appropriately.

The full unramified spectrum can be described as
\[
\widecheck G_v^\ur \simeq \Om_{F_v} \bs \wh A \simeq \Om_{F_v} \bs \wh T / (\id - \Frob_v) \wh T.
\]
Define the $v$-normalization homomorphism
\[
\td v : \C^\times \to \C^\times : re^{i \theta} \mapsto e^{\log_{q_v}(r) + i \theta}
\]
in polar coordinates. Through the canonical isomorphism $\wh A = \Hom(X_*(A), \C^\times)$, postcomposing with $\td v$ gives $v$-normalization maps on $\wh G_v^\ur$:
\[
\td v : \Om_{F_v} \bs \wh A \to \Om_{F_v} \bs \wh A.
\]
We note two key properties (which motivated the construction):
\begin{itemize}
    \item $\td v$ is the identity on $\Om_{F_v} \bs \wh A_c = \widecheck G_v^{\ur, \temp}$,
    \item $\td v(s_v^I)$ is constant over $v \in \mc V_F(\theta)$ if $s_v^I$ is the Satake parameter of the trivial representation of $G_v$.
\end{itemize}

\subsubsection{Equidistribution}
We can now state and prove the equidistribution result. Fix $G$ an unramified extended pure inner form of $G^* \in \wtd {\mc E}_\el(N)$. Note that at all finite places, $G$ splits over $E$. Fix:
\begin{itemize}
    \item an odd GSK-maxed cohomological representation $\pi_0$ of $G_\infty$,
    \item $\theta \in \Gal(E/F)$,
    \item a sequence $v_i \in \mc V_F(\theta)$ (i.e. either all split or all non-split),
    \item a sequence of ideals $\mf n_i$ of $\mc O_F$ relatively prime to $v_i$.
\end{itemize}

The different $\Delta = (T_i, d_i, \lb_i, \eta_i)_i \in \Delta^{\max}(\pi_0)$ differ only in their $\lb_i$-coordinates and therefore correspond to the same map
\[
\mc S_{\Delta,v} : \widecheck H_v^{\ur, \temp} \into \widecheck G_v^\ur
\]
as in formula \eqref{satakepushforward}. Furthermore, the common group $H_v$ as in \eqref{shapeLembedding} is the same as from Theorem \ref{mainexact}. 

Because they involves the Satake parameter of the trivial representation, the maps $\mc S_{\Delta, v}$ depend on $v$. However,
\[
\td{\mc S}_{\Delta} := \td v \circ \mc S_{\Delta,v}
\]
is independent of $v \in \mc V_F(\theta)$ by the two key properties of $v$-normalization. 

Therefore, for $\theta \in \Gal(E/F)$, we can define the pushforward
\[
\mu^{\ST(\pi_0)}_\theta := \mu^{\ST(\pi_0)}_\theta(G) := (\td{\mc S}_\Delta)_* (\mu^\ST_\theta(H)).
\]
Beware that this is a measure on the full ($v$-normalized) unramified dual $\widecheck G_v^\ur$ for $v \in \mc V_F(\theta)$ instead of just the tempered part. 

Finally, for each $i$, define the empirical distribution on $\widecheck G_\theta^\ur$:
\[
\mu^{\pi_0}_{\mf n_i, v_i} := \sum_{\pi \in \mc{AR}_\disc(G)} \1_{\pi_\infty = \pi_0} \dim((\pi^\infty)^{K^G(\mf n_i)}) \delta(\td \sigma_{\pi_{v_i}}).
\]
Here, $\delta(\td \sigma_{\pi_{v_i}})$ is the delta-measure at the $v_i$-normalized Satake parameter 
\[
\td \sigma_{\pi_{v_i}} := \td v_i (\sigma_{\pi_{v_i}}).
\] 
Then:
\begin{thm}[Sato-Tate Equidistribution in Families]\label{SatoTate}
Recall the notation for constants in the statement of Theorem \ref{mainexact}. Assume that $|\mf n_i|$ grows faster than any power of $q_{v_i}$. Then for all continuous $\wh f$ on $\widecheck G_\theta^\ur$, 
\[
|\mf n_i|^{-R} \Gamma_L(\mf n_i)^{-1} \mu^{\pi_0}_{\mf n_i, v_i}(\wh f) \to C(\pi_0) \mu^{\ST(\pi_0)}_\theta(\wh f)
\]
as $i \to \infty$, with normalizing constant
\[
C(\pi_0) = \f{\vol(H(F) \bs H(\A_f))}{\vol(K^S_{H})} \sum_{\Delta \in \Delta^{\max}(\pi_0)}\1_{\eta_{\pi_0}^{\psi^\Delta_\infty}(\mc S_\Delta) = 1} \f{\dim \lb_1(\Delta)}{|\Pi_\disc(\lb_1(\Delta))|}.
\]
\end{thm}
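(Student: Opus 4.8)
The plan is to deduce the Sato--Tate equidistribution statement directly from the main exact limit multiplicity theorem \ref{mainexact}, following the template of \cite[\S9]{ST16} essentially verbatim. First I would reduce the claimed weak convergence to a statement about a dense subalgebra of test functions: by Stone--Weierstrass, it suffices to prove the convergence $|\mf n_i|^{-R} \Gamma_L(\mf n_i)^{-1} \mu^{\pi_0}_{\mf n_i, v_i}(\wh f) \to C(\pi_0)\, \mu^{\ST(\pi_0)}_\theta(\wh f)$ when $\wh f$ ranges over the characters $\chi_\mu$ of irreducible representations $\mu$ of the (possibly twisted) dual group, since these span a dense subalgebra of continuous functions on $\wh G^{\ur}_\theta$ that is closed under multiplication and conjugation and separates points. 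For such $\wh f = \chi_\mu$, the Satake isomorphism identifies $\wh f$ with an element $f_{v_i} \in \ms H^{\ur}(G_{v_i})$ lying in a bounded-degree truncated Hecke algebra $\ms H^{\ur}(G_{v_i})^{\leq \kappa}$ with $\kappa = \kappa(\mu)$ fixed independently of $i$, and with $\|f_{v_i}\|_\infty$ bounded by a power of $q_{v_i}$. Then $\mu^{\pi_0}_{\mf n_i, v_i}(\wh f) = \sum_{\pi} \1_{\pi_\infty = \pi_0}\dim((\pi^\infty)^{K^G(\mf n_i)})\,\tr_{\pi_{v_i}}(f_{v_i})$ is exactly the quantity estimated by Theorem \ref{mainexact} with $S_1 = \{v_i\}$ and $f_{S_1} = f_{v_i}$.

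Next I would plug into Theorem \ref{mainexact}: it gives
\[
|\mf n_i|^{-R}\Gamma_L(\mf n_i)^{-1}\mu^{\pi_0}_{\mf n_i,v_i}(\wh f) = C(\pi_0)\lf(\widehat{f_{v_i}}\text{-dependent factor}\ri) + O(|\mf n_i|^{-C} q_{v_i}^{A + B\kappa}),
\]
where, by the Plancherel-equidistribution interpretation noted after Theorem \ref{mainexact}, the main-term factor equals $\mu^{\pl(\pi_0),\ur}_{v_i}(\wh f_{v_i}) = (\mc S_\Delta)_*(\mu^{\pl,\ur}(H_{v_i}))(\wh f)$; here I use the identity $f_{S_1}^{H_1}(1)\,\prodf_{i>1}\int_{H_{i,\der,S_1}} f^{H_i}_{S_1}(h)\,dh = \mu^{\pl(\pi_0),\ur}_{v_i}(\wh f_{v_i})$ coming from equation \eqref{Tcharactershape} and Fourier inversion, together with $\mc T_i f_{v_i} = f^{H_i}_{v_i}$ at the split/unramified place $v_i$. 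The error term is controlled because the hypothesis that $|\mf n_i|$ grows faster than any power of $q_{v_i}$ ensures both that $|\mf n_i| \geq D q_{v_i}^{E\kappa}$ eventually (so Theorem \ref{mainexact} applies) and that $|\mf n_i|^{-C} q_{v_i}^{A+B\kappa} \to 0$.

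It then remains to pass from the pushforward of the Plancherel measure $\mu^{\pl,\ur}(H_{v_i})$ to the pushforward of the Sato--Tate measure $\mu^{\ST}_\theta(H)$. This is where I would invoke the lemma recalled just before the theorem: for $v_i \in \mc V_F(\theta)$ with $q_{v_i} \to \infty$, one has weak convergence $\mu^{\pl,\ur}(H_{v_i}) \to \mu^{\ST}_\theta(H)$ on $\wh H^{\ur,\temp}_\theta$ (as in \cite[Prop.~3.3, Lem.~5.2]{ST16}), hence by continuity of $\wh f \circ \mc S_\Delta$ and the continuous mapping theorem $\mu^{\pl(\pi_0),\ur}_{v_i}(\wh f) = \mu^{\pl,\ur}(H_{v_i})(\wh f \circ \mc S_\Delta) \to \mu^{\ST}_\theta(H)(\wh f \circ \mc S_\Delta) = \mu^{\ST(\pi_0)}_\theta(\wh f)$. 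Combining the three displays gives the result for $\wh f = \chi_\mu$, and density finishes the proof.

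The main obstacle I anticipate is purely bookkeeping rather than conceptual: one must check that the constants $A, B, C, D, E$ in Theorem \ref{mainexact} can be taken uniform over the sequence $v_i$ (they depend only on $G$ and $\pi_0$, not on $v_i$, so this is fine) and, more delicately, that the degree-$\kappa$ and sup-norm bounds on the Hecke operators $f_{v_i}$ attached to a fixed character $\chi_\mu$ genuinely grow only polynomially in $q_{v_i}$ with exponent independent of $i$ — this is exactly the content of the truncated Hecke algebra formalism of \cite[\S2]{ST16} and Lemma \ref{unramtransferbound}, so it is available, but it is the point requiring the most care when quantifying the error term. A secondary subtlety is ensuring that the support of $\mu^{\ST(\pi_0)}_\theta$ on the full unramified dual (including the non-tempered locus) does not cause trouble: since $\mc S_\Delta$ maps the tempered unramified dual of $H$ into the unramified dual of $G$ continuously, and all our test functions $\wh f$ extend continuously to $\wh G^{\ur}_\theta$, no difficulty arises, but I would state this explicitly.
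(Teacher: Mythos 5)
Your proposal is correct and follows essentially the same route as the paper's proof: Weierstrass approximation to reduce to unramified Hecke operators, Theorem \ref{mainexact} with $S_1 = \{v_i\}$, the identification of the main-term factor with $\mu^{\pl(\pi_0),\ur}_{v_i}(\wh f)$ via formula \eqref{Tcharactershape} and Fourier inversion, and then the weak convergence of Plancherel to Sato--Tate measures as in \cite[Thm.~9.26]{ST16}. The paper simply compresses the final limiting step by citing the argument of \cite{ST16} directly, whereas you spell it out.
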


\begin{proof}
By the Weierstrass approximation argument in Remark 9.5 of \cite{ST16}, it suffices to show that
\[
|\mf n_i|^{-R} \Gamma_L(\mf n_i)^{-1} \mu^{\pi_0}_{\mf n_i, v_i}(\wh f_{v_i}) \to C(\pi_0) \mu^{\ST(\pi_0)}_\theta(\wh f_{v_i})
\]
for $f_{v_i} \in \ms H^\ur(G_{v_i})$ (note that this Hecke algebra is constant on $\mc V_F(\theta)$). We do this by applying Theorem \ref{mainexact} with $S_1 = \{v_i\}$. Note that the growth condition on $|\mf n_i|$ shows that we will eventually have $|\mf n_i| \geq D q_{v_1}^{E \kappa(f_{v_i})}$. 

After using formula \eqref{Tcharactershape} and Fourier inversion to get that
\begin{multline*}
f_{S_1}^{H_1}(1) \prodf_{i > 1} \int_{H_{i, \der, S_1}}  f^{H_i}_{S_1}(h) \, dh = \mc T_\Delta f_{S_1}(1) \\
= \mu^{\pl, \ur}(H_v)(\wh{\mc T_\Delta f_{S_1}}) = \mu^{\pl, \ur}(H_v)(\wh f_{S_1} \circ \mc S_\Delta),
\end{multline*}
the argument follows exactly as that for 9.26 in \cite{ST16}, using Lemma \ref{lem Sato-Tate convergence} on each of the factors of $H_v$ and continuity of the map $\mc S_\Delta$. 
\end{proof}

We repeat an interpretation from the introduction: recall that as part of Langlands functoriality conjectures, every automorphic representation~$\pi$ on some $G/F$ should correspond to a group $H_\pi/F$ that is the smallest group it is a functorial transfer from. The Satake parameters $\sigma_{\pi_v}$ for $v$ ranging over a particular~$\mc V_F(\theta)$ are then expected to equidistribute according to a Sato-Tate distribution coming from $H_\pi$.

At the current time, actually finding $H_\pi$ appears out of reach. However, in reasonable families of automorphic representations, most $\pi$ should correspond to some fixed computable``maximal'' $H$. 
Therefore, if we look at Satake parameters over the entire family, we can hopefully prove an equidistribution-on-average result towards the Sato-Tate measure for this maximal $H$. 

This is conceptually what is happening here: most automorphic representations with $\pi_0$ at infinity come from group $H_{\pi_0} = H'$. Therefore the $\sigma_{\pi_v}$ ranging over both~$v$ and a reasonable family of such $\pi$ should equidistribute according to the Sato-Tate measures from $H'$. Unlike previous cases built off of \cite{ST16}, we are in a more complicated situation where this maximal $H_{\pi_0}$ isn't actually $G$ itself.

\subsection{Sarnak-Xue Conjecture}\label{sectionSX}
As an application of Theorem \ref{mainupper}, we prove certain cases of the Sarnak-Xue conjecture of \cite{SX91} for unitary groups. This conjecture is stated in terms of classical symmetric spaces instead of adelic quotients. Consider a reductive $G/F$, open compact $U \subseteq G^\infty$, and $\pi_0$ a unirrep of $G_\infty$. Let $\Gamma(U) = U \cap G(F)$ and let
\[
m(\pi_0, \Gamma(U)) := 
\dim \Hom(\pi_0, L^2(\Gamma(U) \bs G_\infty)).
\]
Note that $\Gamma(U) \bs G_\infty$ is a connected component of the adelic quotient
\[
Y(U) := G(F) \bs G(\A) / U.
\]
\begin{conj}[Cohomological Sarnak-Xue density hypothesis] \label{conj coho SX}
Let $U_i$ be a sequence of open compacts of $G^\infty$ decreasing to the identity. Then for all cohomological unirreps $\pi_0$ of $G_\infty$:
\[
m(\pi_0, \Gamma(U_i)) \ll_\eps \vol(\Gamma (U_i) \bs G_\infty)^{\f 2{p(\pi_0)} + \eps},
\]
where $p(\pi_0)$ is the infimum over $p \geq 2$ such that the $K$-finite matrix coefficients of $\pi_0$ are in $L^p(G_\infty)$.
\end{conj}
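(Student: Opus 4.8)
The plan is to derive the conjecture from the upper bound of Theorem~\ref{mainupper}, obtaining it unconditionally for those $G$ whose group at infinity has no $U(2,2)$ factor; the same argument reduces the conjecture in general to a purely combinatorial inequality, and it is exactly in that inequality that the $U(2,2)$ obstruction appears. First I would pass from the classical quotient to the adelic one: since $\Gamma(U_i)\bs G_\infty$ is one connected component of $Y(U_i)=G(F)\bs G(\A)/U_i$ and $L^2(Y(U_i))$ is the orthogonal sum of the $L^2$ of its finitely many components, one has
\[
m(\pi_0,\Gamma(U_i))\;\le\; m(\pi_0,Y(U_i))\;=\;\sum_{\pi\in\mc{AR}_\disc(G)}\1_{\pi_\infty=\pi_0}\dim\big((\pi^\infty)^{U_i}\big),
\]
using $m_\pi=1$ for unitary groups and, since $\Gamma$ is cocompact, the discreteness of the whole spectrum. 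Theorem~\ref{mainupper} is available for towers $U_i=U^{S,\infty}K^G_S(\mf n_i)$ with $S$ a fixed finite set of split places, $U^{S,\infty}$ fixed, and $|\mf n_i|\to\infty$; a general tower reduces to these by the usual comparison of levels, so I may assume we are in this case. Then Theorem~\ref{mainupper} (with $S_1=\emptyset$, $\varphi_{S_0}$ the normalized indicator of $U^{S,\infty}$ at the remaining places, and $f_{S_1}=\1$) gives $m(\pi_0,Y(U_i))=O(|\mf n_i|^{R(\pi_0)})$, while $\vol(\Gamma(U_i)\bs G_\infty)\asymp[K^G_S:K^G_S(\mf n_i)]=|\mf n_i|^{\dim G}\,\Gamma_N(\mf n_i)$, and $|\mf n_i|^{-\eps}\ll\Gamma_N(\mf n_i)\le 1$ for every $\eps>0$. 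Hence the conjecture for this family is equivalent to the inequality
\[
R(\pi_0)\;\le\;\frac{2}{p(\pi_0)}\dim G .
\]

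To verify this I would express both sides through the combinatorics of \S\ref{upqparam}. Writing $\pi_0=\bigotimes_v\pi_{0,v}$ with $\pi_{0,v}=A_{\mf q}(\lambda)$ attached to a bipartition $B_v\in\mc P_1(p_v,q_v)$, the quantity $R(\pi_0)=\max_{\Delta\in\Delta(\pi_0)}R(\Delta)$ is computed by the algorithm of \S\ref{deltamaxalgo} together with Corollary~\ref{simpleshapesstronger}, while $p(\pi_0)$ is the worst $L^p$-exponent over the factors and is governed, block by block, by the sizes of the parts of $B_v$ via the standard matrix-coefficient decay estimates for cohomologically induced representations (as recalled heuristically through the wavefront/GK-dimension discussion of \S\ref{sectionconjecturalbound}). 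With both quantities broken into contributions of the simple blocks $(T_i,d_i)$ that can appear in some $\Delta\in\Delta(\pi_0)$, the inequality becomes a finite case check — essentially a repackaging of Lemma~7.1 of \cite{MS19} together with Lemma~\ref{maxsl2} — comparing, for each value of $T_i$, the exponent contributed by that block (with the $T_i=1,2,3$ improvements of Lemmas~\ref{T1bound}, \ref{T2bound}, \ref{T3bound}) against the decay of the corresponding local constituent.

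The hard part, and the source of the $U(2,2)$ restriction, is the block with $T_i=2$ and $d_i=2$. For such a block the provable bound $R(\Delta)$ of Corollary~\ref{simpleshapesstronger} overshoots: on $U(2,2)$ the representation $A_{\mf q}(0)$ with Levi $U(1,1)\times U(1,1)$ has $\Delta(\pi_0)$ containing the pure shape $(2,2,\lambda,\eta)$, for which $R(\Delta)=9$, whereas $p(\pi_0)=4$ gives Sarnak--Xue threshold $\tfrac{2}{4}\dim U(2,2)=8$ — so our input is off by exactly the gap $R(\Delta)-R_0(\Delta)=2$ between Corollary~\ref{simpleshapesstronger} and the conjecturally optimal Conjecture~\ref{simpleshapesconj} (which would give $R_0(\Delta)=7\le 8$). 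Such a pure even rank-$2$ block can enter $\Delta(\pi_0)$ only when $G_\infty$ has a $U(2,2)$ factor: a bipartition of $(2,2)$ with no one-dimensional parts is forced to be $((1,1),(1,1))$, and on any larger $U(p,q)$ the analogous representation acquires extra tempered directions that lower $p(\pi_0)$ enough for the weaker bound $R$ to still clear the threshold. Excluding the $U(2,2)$ factor therefore makes the finite check go through in every degree, yielding the conjecture for all such $G$; conditionally on Conjecture~\ref{simpleshapesconj} (so that $R_0$ replaces $R$ throughout) the exclusion could be removed. Finally I would record that with $S_1=\emptyset$ the error term in Theorem~\ref{mainupper} carries no $q_{S_1}$-dependence and its implied constant, depending only on the fixed finite-place data, is harmlessly absorbed into $|\mf n_i|^{\eps}$.
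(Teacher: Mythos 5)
You are trying to prove a statement that the paper itself only states as a \emph{conjecture}: the paper never proves the Cohomological Sarnak--Xue density hypothesis as written (arbitrary reductive $G$, arbitrary cohomological $\pi_0$, arbitrary sequence $U_i\to\{1\}$). What it proves is Theorem \ref{SarnakXue}, a special case for unitary groups with towers $U_{\mf n}=U_{S_0}K^G(\mf n)$ where $\mf n$ is supported on \emph{split} places, and excluding $\pi_v=((1,1),(1,1))$ at $U(2,2)$-factors. Your proposal is, in substance, a reconstruction of the proof of that theorem (Theorem \ref{mainupper} $+$ Proposition \ref{ppi} $+$ the combinatorial inequality of Proposition \ref{Sdensitygrowthrate}/Corollary \ref{SdensitygrowthrateE}), and your diagnosis of the $T_i=2$, $d_i=2$ block as the source of the $U(2,2)$ obstruction matches the paper exactly. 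But two steps in your reduction are genuinely problematic.

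First, the claim that ``a general tower reduces to these by the usual comparison of levels'' is a real gap, not a technicality. Theorem \ref{mainupper} requires the varying level $\mf n$ to be divisible only by primes split in $E$; the fixed ramified/inert data is frozen into $\varphi_{S_0}$. A tower $U_i$ decreasing to the identity must eventually deepen the level at inert places, and the paper's machinery (Lemmas \ref{splitconstant}, \ref{split[d]}, and the whole of \S\ref{sectionlocaltransfers}) cannot track that; this is explicitly listed as a main barrier in the paper. There is no comparison of levels that converts deep inert level into deep split level. So even for unitary groups without $U(2,2)$ factors, your argument only yields the split-level case, which is all the paper claims.

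Second, your volume bookkeeping is off by the number of connected components, and this matters. You bound $m(\pi_0,\Gamma(U_i))$ by the full adelic multiplicity $m(\pi_0,Y(U_i))$ and set $\vol(\Gamma(U_i)\bs G_\infty)\asymp[K:K(\mf n_i)]\asymp|\mf n_i|^{\dim G}$; but $[K:K(\mf n_i)]$ is the volume of all of $Y(U_i)$, and $Y(U_i)$ has $\gg_\eps|\mf n_i|^{1-\eps}$ components, so $\vol(\Gamma(U_i)\bs G_\infty)\asymp|\mf n_i|^{N^2-1\pm\eps}$. The paper divides the multiplicity by the component count \emph{and} uses the single-component volume, arriving at the target $R(\pi_0)-1\le\frac{2}{p(\pi_0)}(N^2-1)$, which is what Proposition \ref{Sdensitygrowthrate} actually proves. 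Your accounting instead demands $R(\pi_0)\le\frac{2}{p(\pi_0)}N^2$, which is stronger by $1-\frac{2}{p(\pi_0)}$; consulting Table \ref{grcomp}, for $Q=(3,3)$ and $Q=(2,2,2,2)$ this holds only with exact equality of exponents, so your version survives only by the $\eps$ of slack and only after checking every case again. You should restore the component count as in the paper's proof and Lemma \ref{lemma connected components} rather than rely on this accidental cancellation. Finally, note the paper's exclusion is per local factor ($\pi_v\ne((1,1),(1,1))$ when $G_v=U(2,2)$), slightly finer than excluding all $G$ with a $U(2,2)$ factor.
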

In this section, we will study this conjecture for $G$ unitary and~$U_i$ decreasing through increasing principal-congruence levels at split places. We will prove it for all $\pi_0$ that do not have a single particular representation on $U(2,2)$ as a factor. 

We mention that the work \cite{MS19} already has strong enough local bounds to achieve the Sarnak-Xue threshold. We do not improve upon these, only use them as input (through Lemmas \ref{T2bound} and \ref{T3bound}) for our more powerful global methods. 

In this section, we make frequent use of the parameterization introduce in \ref{section parameterization} and of the corresponding notation.






\subsubsection{Computing $p(\pi)$}
Before we can check Conjecture \ref{conj coho SX} for unitary groups, we first need to extend the computations of \cite{Ger20} in order to compute $p(\pi)$ in terms of our parameterization of cohomological representations. 

First, given a (possibly ordered)  bipartition $B = ((p_1, q_1), \dotsc, (p_r, q_r))$, let 
\[
\Xi(B) = (\chi_j(B))_j
\]
be the list of numbers obtained by setting $m_i = \min\{p_i, q_i\}$, $n_i = p_i + q_i$, concatenating the lists
\[
\bigsqcup_{i : m_i \neq 0} (n_i -1, n_i -3, \dotsc, n_i - 2m_i + 1)
\]
and reordering the result to be decreasing. For indexing purposes, we define $\chi_j(B) = 0$ for~$j$ out of bounds. Also define
\begin{equation}\label{eq combinatorial thing we want to bound}
    \sigma_j(B) = \sum_{k \leq j} \chi_k(B), \qquad j \geq 1.
\end{equation}

\begin{prop}\label{ppi}
Let $G = U(p,q)$ and $\pi_0$ be the cohomological representation of $G$ associated to $B = ((p_1,q_1),\dotsc,(p_r,q_r))$.  Then,
\[
\f 2{p(\pi_0)} \geq 1 - \max_i \lf\{\f{\sigma_i(B)}{i(N-i)} \ri\}.
\]
\end{prop}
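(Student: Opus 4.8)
The plan is to reduce the computation of $p(\pi_0)$ to a question about the rate of decay of $K$-finite matrix coefficients of the cohomologically induced representations $A_{\fq_B}(\lambda)$, and then to express that decay rate combinatorially in terms of the bipartition $B$. First I would recall the general principle, going back to work of Casselman--Mili\v{c}i\'c and used extensively in \cite{Ger20}, that for a unitary representation $\pi$ with $\theta$-stable data $(\fq, \lambda)$ the quantity $p(\pi)$ (equivalently the exponent $2/p(\pi)$) is controlled by the leading exponents of $\pi$ along the minimal parabolic, which in turn are computed from the $\fu \cap \fp$ part of the $\theta$-stable parabolic. Concretely, the leading exponent of $A_{\fq_B}(\lambda)$ is, up to a $\rho$-shift, the restriction to the split torus of the sum of the roots in $\fu$, and the relevant ``distance from temperedness'' is measured by how much this exponent fails to be dominated by $\rho$. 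This is exactly the kind of bookkeeping done in \cite{Ger20} for trivial coefficients, and the plan is to carry it out uniformly in $\lambda$ (the $\lambda$-twist does not affect matrix-coefficient decay since it is unitary, so one may as well take $\lambda = 0$ as in the parameterization).

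The key computational step is to translate the Levi datum $L_B = U(p_1,q_1) \times \cdots \times U(p_r,q_r)$ into the list $\Xi(B)$. Each factor $U(p_i,q_i)$ of $L_B$ contributes, through the part of its own principal series that is genuinely non-tempered, a string of exponents governed by $\min(p_i,q_i)$ and $n_i = p_i+q_i$: a factor that is definite (i.e. $\min(p_i,q_i)=0$) contributes nothing, while an indefinite factor contributes the arithmetic progression $(n_i-1, n_i-3, \ldots, n_i - 2m_i+1)$ of length $m_i = \min(p_i,q_i)$. These are precisely the numbers appearing in $\chi_j(B)$ after sorting into decreasing order, and $\sigma_j(B)$ of \eqref{eq combinatorial thing we want to bound} is the sum of the top $j$ of them. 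I would then invoke the standard criterion: the matrix coefficients of $\pi_0$ lie in $L^{2+\epsilon}$ precisely when, for every $j$, the partial sum $\sigma_j(B)$ is dominated by the corresponding partial sum of the $\rho$-datum for $\GL_N$-type root orderings, which is $j(N-j)$; more generally $2/p(\pi_0)$ is bounded below by $1 - \max_i \sigma_i(B)/(i(N-i))$. Here the quantity $j(N-j)$ arises as $2\langle \rho, \text{(sum of the first $j$ simple-root heights)}\rangle$ in the relevant $\mathfrak{sl}_2$-triples, exactly as in the Marshall--Shin and Gerbelli-Gauthier analyses; I would cite \cite{Ger20} and the underlying references in \cite{VZ84} for the matrix-coefficient estimates rather than reprove them.

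In more detail, the argument has three pieces. (1) Using the realization of $\pi_0 = A_{\fq_B}(0)$ as a Langlands quotient of an induced representation from $L_B$, bound the decay of its matrix coefficients by those of the inducing data; this reduces the problem to understanding $p$ for the building blocks, i.e. for a single indefinite $U(p_i,q_i)$-factor and the contribution of the $\fu \cap \fp$ piece. (2) For a single $U(a,b)$ with $a \ge b$, recall (as in \cite{Ger20}) that the relevant exponent data is the progression $(a+b-1, a+b-3, \ldots, a-b+1)$ coming from the $b$ non-compact directions; assembling these across all factors and sorting gives $\Xi(B)$, and the leading exponent of $\pi_0$ along the split torus is then read off from the $\sigma_j(B)$. (3) Compare term-by-term against the $\rho$-datum: the condition for matrix coefficients to be in $L^{p}$ is $\sigma_j(B) \le (1 - 2/p)\, j(N-j)$ for all $j$, which rearranges to the stated inequality $2/p(\pi_0) \ge 1 - \max_i \sigma_i(B)/(i(N-i))$.

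The main obstacle I expect is step (2): making precise, and uniform in the bipartition, the claim that a non-definite $U(p_i,q_i)$-factor contributes exactly the progression recorded in $\Xi(B)$, and that interleaving contributions from different factors is handled correctly by simply sorting the concatenated list. This requires carefully tracking the restriction to the split torus of $\Lie(\fu\cap\fp)$ through the embedding $L_B \hookrightarrow U(p,q)$, using the explicit element $x_B$ from \S\ref{upqparam}, and checking that cross-terms $p_i q_j$ with $i \ne j$ (which enter the cohomology computation via $R^\pm$) do not further degrade the matrix-coefficient exponent beyond what the sorted list predicts. This is essentially the technical heart of the reformulation of \cite{Ger20}'s trick, and where I would spend the bulk of the effort; everything else is either standard representation theory or elementary manipulation of the $\sigma_j$.
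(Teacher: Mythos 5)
Your proposal is correct and follows essentially the same route as the paper: realize $\pi_0$ as a Langlands quotient via \cite[\S 6]{VZ84}, identify the exponent $\nu$ as the sorted concatenation of the strings $(n_i-1, n_i-3, \dotsc, n_i-2m_i+1)$ coming from the indefinite factors of $L_B$, and compare partial sums against $2\langle \rho_0, \omega_j\rangle = j(N-j)$ using the matrix-coefficient criterion of \cite[Prop. 5.13]{Ger20} (deduced from Knapp). The cross-term worry you flag in step (2) is resolved automatically by the Langlands-quotient realization, since the relevant exponent is exactly $\nu_L\mid_A$ and the terms $p_iq_j$ with $i \neq j$ never enter.
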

\begin{proof}

We first recall a formula for~$p(\pi)$ based on results of \cite[\S\S 7-8]{Kna01} when $\pi=J(S,\sigma,\nu)$ is a Langlands quotient. To describe such a quotient we need: 
\begin{itemize}
    \item $S_0$, a minimal parabolic of $G$, with Langlands decomposition $S_0=M_0A_0N_0$, whose respective subgroups have Lie algebras $\fm_0$, $\fa_0$, and $\fn_0$,
    \item $\alpha_1,\dotsc,\alpha_{\dim \fa_0}$ the simple roots of $\fa_0$ in $\fg$, and $\omega_1,\dotsc,\omega_{\dim \fa_0}$ the basis of $\fa_0$ dual to the $\alpha_i$,
    \item $\rho_0$ the corresponding half-sum of positive roots of $\fa_0$ in $\fg$,
    \item $S = MAN$, a parabolic subgroup of $G$ standard with respect to $S_0$, with Lie algebras $\fm$, $\fa$, and $\fn$,
    \item a discrete series representation $\sigma$ of $M$,
    \item a weight $\nu \in \fa^*$ such that $\ip{\nu}{\alpha}>0$ for all roots $\alpha$ of $\fa$ in $\fn$.
\end{itemize}
Then the parabolic induction $I(S,\sigma,\nu)$ has a unique Langlands quotient $J(S,\sigma,\nu).$ We have a direct sum decomposition \[\fa_0 = \fa \oplus \fa_M,\] where $\fa_M$ is the Lie algebra of the maximal split torus of $M$. Define $\nu_0 \in \fa_0^*$ by extending $\nu$ by zero on $\fa_M$. 
 Proposition 5.13 of \cite{Ger20} then deduces from \cite[\S\S 7-8]{Kna01} the inequality
\begin{equation} \label{matrix coefficient ineq for langlands quotients} 
p(J(S, \sigma, \nu)) \leq \inf \left\{p \geq 2 \mid p > \frac{2 \langle \rho_0, \omega_j \rangle}{\langle \rho_0-\nu_0, \omega_j \rangle} \text{ for all }\omega_j \right\}. 
\end{equation}

Next, we write the representation $\pi_0$ corresponding to the bipartition~$B$ as a Langlands quotient following \cite[\S 6]{VZ84}.
Begin with the Levi subgroup 
\[ 
L = U(p_1,q_1) \times \cdots \times U(p_r,q_r) 
\] 
associated to $\pi_0$. Let $K$ be the maximal compact of $G$ 
and let $(K \cap L) A_L N_L$ the Iwasawa decomposition of $L$. 
Define $\nu_L$ to be the half-sum of positive roots of~$A_L$ acting on $N_L$, let $Z = \{ \alpha \text{ a root of }\fa_L \text{ in }\fg \mid \ip{\alpha}{\nu_L} = 0\}$, and let 
\[ 
A = \textstyle \bigcap_{\alpha \in Z} \ker \alpha \subseteq A_L. 
\] 
Define $M$ to be the centralizer of $A$ in $G$. Define also $M_L$ to be the centralizer of $A_L$ in $G$, and let $S_L = M_LA_LN_L$ be any parabolic subgroup of $G$ with respect to which~$\nu_L$ is dominant. Then by construction, there is a unique parabolic subgroup $S$ with Levi $MA$ and containing $S_L$. Let $\nu = \nu_L \mid_A$. Then following \cite[Thm 6.16]{VZ84}, there exists a discrete series representation $\sigma$ of $M$ such that $\pi_0 = J(S,\sigma,\nu)$.

We now use this construction to compute the pairings in \eqref{matrix coefficient ineq for langlands quotients}; to do so, we write everything in coordinates. To begin, we choose a posteriori a minimal parabolic subgroup $S_0$ for which $S$ is standard. Define numbers $m_\star = \min\{p_\star, q_\star\}$ and $n_\star = p_\star + q_\star$. Each $U(p_\star,q_\star)$ has a minimal parabolic corresponding to the partition 
\[
(1^{(m_\star)}, n_\star - 2m_\star ,1^{(m_\star)}) 
\]
and a maximal split torus isomorphic to $\R^{m_\star}$, which we embed in a maximal torus $T_\star$. Fix coordinates in the standard way: i.e. so that we can write
\[
X^*(T_\star) = \Z\langle e_1, \dotsc, e_{n_\star} \rangle
\]
with simple roots $\alpha_i = e_i - e_{i+1}$. We can therefore realize as a vector:
\begin{multline*}
\rho_0 = \frac12\lf(n-1, n-3, \dotsc, n-2m + 1, 0^{(n-2m)},  -n+2m - 1, \dotsc, -n+3, -n+1 \ri)
\end{multline*}
in $X^*(T)$. In $X^*(A_0)$, this becomes
\[
\rho_0 = \rho_{p,q} = (n-1, n-3, \dotsc, n - 2m + 1).
\]
Similarly, $\nu_L \in X^*(A_L)$ is the concatenation of sequences
\begin{equation} \label{eq nu-ell}
\bigsqcup_{k=1}^r \rho_{p_i, q_i}
\end{equation}
reordered to be decreasing (the reordering comes from choosing $S_0$ with respect to which $S_L$ is standard). The subtorus $A \subset A_L$ is then chosen so that~$\ip{\mathrm{Re}(\nu_L)}{\alpha}>0$ for all simple roots $\alpha$ of $A$. We have further direct sum decompositions
\[
A_0 =  A' \oplus A_L = A' \oplus A'' \oplus A
\]
where $A'$ is the maximal split torus of $L$ and $A' \oplus A''$ is that of $M$. The extension of $\nu$ by $0$ to $A_L$ is then just $\nu_L$ again. Let $\nu_0$ be the common extension by $0$ to $A_0$, obtained in coordinates by adding a string of zeros to \eqref{eq nu-ell}. Following \eqref{matrix coefficient ineq for langlands quotients}, we have 
\[  
\frac{2}{p(\pi_0)}\geq 1-\max_i\left\{\frac{\ip{\nu_0}{\omega_i}}{\ip{\rho_0}{\omega_i}}\right\}. 
\]

By symmetry of the $\nu_0$ and $\rho_0$, the maximum value is achieved for some $i \leq m$. In this case, we check that $2\langle \rho_0, \omega_i \rangle = i(N-i)$ and $2\langle \nu_0, \omega_i \rangle = \sigma_i(B)$. 
\end{proof}

\subsubsection{Some combinatorial lemmas}
We next need some involved but elementary combinatorial bounds, this time for the $\sigma_i(B)$ defined in \eqref{eq combinatorial thing we want to bound}. Once again, part of the complexity of this section is due to our use the suboptimal bound $R(\Delta)$ from Corollary \ref{simpleshapesstronger} instead of the conjectural optimal bound $R_0(\Delta)$ from \ref{simpleshapesconj}. 

First, for a (possibly ordered) partition $Q$, define $\sigma_i(Q) = \sigma_i(B)$ for the $B = ((p_i, q_i))_i \in \beta^{-1}(Q)$ such that $|p_i - q_i| \leq 1$. In particular, for any $B$ and for all $i$, $\sigma_i(B) \leq \sigma_i(\beta(B))$. Next:

\begin{lem}\label{QdQdprime}
Let $d < N \in \Z^+$.
\begin{enumerate}
\item
If
\[
Q_d := \lf(d^{(\lfloor N/d \rfloor)}, N - d\lfloor N/d \rfloor \ri),
\]
then for all $Q = (n_1, \dotsc, n_r)$ a partition of $N$ with each $n_j\leq d$, $\sigma_i(Q) \leq \sigma_i(Q_d)$ for all $i$. 
\item
Assume $N \geq 2d$. If
\[
Q'_d := \begin{cases}
 \lf(d^{(\lfloor N/d \rfloor - 1)},  d-1, N - d\lfloor N/d \rfloor + 1 \ri) & N \not\equiv -1 \pmod d \\
 \lf(d^{(\lfloor N/d \rfloor-1)}, d-1, d-1, 1 \ri) & N \equiv -1 \pmod d
\end{cases},
\]
then for all $Q = (n_1, \dotsc, n_r)$ a partition of $N$ with each $n_j\leq d$ and at most $\lfloor N/d \rfloor-1$ parts of size $d$ (i.e. not equal to $Q_d$), $\sigma_i(Q) \leq \sigma_i(Q'_d)$ for all $i$. 
\end{enumerate}
\end{lem}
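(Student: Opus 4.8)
\textbf{Proof proposal for Lemma \ref{QdQdprime}.}

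The plan is to reduce both parts to a single ``greedy/majorization'' principle: among all partitions of $N$ into parts of size $\le d$ (resp.\ with the extra cap on the multiplicity of $d$), the one whose associated decreasing sequence $\Xi$ majorizes all the others — in the strong sense that every partial sum $\sigma_i$ is maximal — is obtained by taking as many $d$'s as allowed, then filling the rest greedily. The key observation is that for a part of size $n$, the block it contributes to $\Xi(B)$ (with $|p_i-q_i|\le 1$) is the arithmetic progression $n-1, n-3, \dots$ down to $0$ or $1$, and this progression is ``front-loaded'': its largest entry is $n-1$, and replacing one part of size $n$ by two parts of sizes $n', n''$ with $n'+n''=n$ strictly decreases the top entry of the combined block (from $n-1$ to $\max(n',n'')-1 \le n-2$) while preserving the total sum $\sum_j \chi_j$. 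So any ``splitting move'' weakly decreases every $\sigma_i$; since every partition with parts $\le d$ is reachable from $Q_d$ (for part~(1)) by a sequence of splitting moves applied to the parts, $\sigma_i(Q)\le \sigma_i(Q_d)$ follows.

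More carefully, for part~(1): I first record the elementary fact that if two multisets of parts $P, P'$ of the same total satisfy ``$P'$ refines $P$'' (in the sense defined just before Lemma \ref{enumsl2}), then $\sigma_i(P')\le\sigma_i(P)$ for all $i$. This is proved by induction on the number of refinement steps, each step being a single split $n \rightsquigarrow (n', n'')$; the claim for a single split reduces to checking that merging two arithmetic progressions (for $n'$ and $n''$) and re-sorting produces a sequence all of whose partial sums are dominated by those of the single progression for $n$. This in turn is the statement that the progression $(n-1, n-3, \dots)$, read in decreasing order, majorizes the sorted concatenation of the two shorter progressions — a routine check comparing partial sums term by term, using that the combined sequence's $j$-th largest entry is at most the $j$-th largest entry of $(n-1,n-3,\dots,1\text{ or }0)$. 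Then $Q_d$ is the ``coarsest'' partition with all parts $\le d$ (every other such partition refines it), so part~(1) is immediate.

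For part~(2), the situation is that we have the same cap $\le d$ on part sizes but additionally at most $\lfloor N/d\rfloor - 1$ parts equal to $d$. Among such partitions, the coarsest candidates are those with exactly $\lfloor N/d\rfloor - 1$ copies of $d$ and the remaining mass $m := N - d(\lfloor N/d\rfloor - 1)$ (which lies in the range $d+1 \le m \le 2d-1$ when $N\ge 2d$) split into parts of size $\le d$, none equal to $d$ — so the largest allowed single part is $d-1$. By part~(1) applied to this leftover mass $m$ with bound $d-1$, the maximizing split of $m$ is $Q_{d-1}$ of $m$, i.e.\ $(d-1, m-(d-1))$ when $m - (d-1) \ge 1$, which is the case $m \le 2d-2$, i.e.\ $N \not\equiv -1 \pmod d$; and it is $(d-1, d-1, 1)$ when $m = 2d-1$, i.e.\ $N \equiv -1 \pmod d$ (here one must double-check that $(d-1,d-1,1)$ beats $(d-1,d-2,2)$ etc., which again follows from part~(1) with bound $d-1$ applied to the mass $2d-1$, noting $\lfloor (2d-1)/(d-1)\rfloor = 2$). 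Reassembling gives exactly the two cases of $Q'_d$. The one remaining subtlety is to confirm that no partition violating the cap but with fewer than $\lfloor N/d\rfloor$ $d$'s and some part strictly between — there are none, all parts are $\le d$ by hypothesis — could do better; this is closed off by the same refinement/majorization argument, since any admissible $Q$ refines one of the coarsest candidates just enumerated.

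The main obstacle I expect is the single-split majorization lemma — verifying that sorting the concatenation of two arithmetic progressions $(n'-1, n'-3,\dots)$ and $(n''-1, n''-3,\dots)$ yields a sequence whose partial sums never exceed those of $(n-1, n-3, \dots)$ with $n = n'+n''$. This is genuinely combinatorial and needs a clean argument (e.g.\ an interlacing/counting argument: show the number of entries of the combined sequence that are $\ge t$ is at most the number of entries of the single progression that are $\ge t$, for every threshold $t$, which gives partial-sum dominance by Abel summation). Everything else is bookkeeping once this is in hand.
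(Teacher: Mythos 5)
Your proposal has a genuine gap: the reduction of part (1) to the single‑split majorization lemma rests on the claim that every partition of $N$ with parts $\leq d$ refines $Q_d$ (equivalently, is reachable from $Q_d$ by splitting moves). This is false. Take $N=4$, $d=3$, $Q=(2,2)$: the parts of $Q$ cannot be grouped into blocks summing to $3$ and $1$, so $Q$ does not refine $Q_3=(3,1)$; moreover $(2,2)$ is itself maximal in the refinement order among partitions with parts $\leq 3$, since merging its two parts would exceed the cap. Similarly $(2,2,2,1)$ does not refine $(3,3,1)$ for $N=7$, $d=3$. The refinement poset restricted to partitions with bounded parts has many maximal elements, so $Q_d$ is maximal but not the maximum, and your split/merge toolkit cannot connect an arbitrary admissible $Q$ to $Q_d$. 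The same objection applies to the closing step of your part (2), where you assert that any admissible $Q$ refines one of the enumerated coarsest candidates. (A smaller slip: splitting a part does not preserve $\sum_j \chi_j$ — e.g.\ the block of a part of size $4$ sums to $4$ while two parts of size $2$ contribute $1+1$ — though this error only makes your sub‑lemma easier, not harder.)

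The missing idea is a \emph{transfer} move, which is what the paper uses: order $Q$ decreasingly, note that if $Q\neq Q_d$ its two smallest parts $n_{r-1}\geq n_r$ are both $<d$, and replace them by $(n_{r-1}+1,\,n_{r-1}-1)$'s analogue, namely $(n_{r-1}+1,\,n_r-1)$. This stays within the admissible set, and on the level of $\Xi$ it amounts to pair‑switches $(a,b)\mapsto(a+r,b-r)$ with $a\geq b$ together with some strict coordinate increases, each of which weakly increases every partial sum $\sigma_i$. Iterating (the move strictly increases $\sum_i n_i^2$, so it terminates) lands exactly at $Q_d$, and the constrained version of the same iteration yields $Q_d'$ for part (2). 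Your single‑split majorization lemma is true and is in the same spirit, but it is not enough on its own; if you add the transfer move and verify its effect on $\Xi$ as above, the argument goes through.
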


\begin{proof}
For the first claim, choose such $Q = (n_1, \dotsc, n_r) \neq Q_d$ without loss of generality in decreasing order. Then $n_r, n_{r-1} < d$ so $Q' = (n_1, \dotsc, n_{r-2}, n_{r-1} + 1, n_r - 1)$ also satisfies the conditions.  

Next, $\Xi(Q')$ differs from $\Xi(Q)$ by replacing pairs of numbers
\[
(n_r - 1, n_r -1), (n_r - 3, n_r - 3), \dotsc
\]
with corresponding pairs
\[
(n_r, n_r - 2), (n_r - 2, n_r - 4), \dotsc
\]
Switching any pair $(a,b)$ in $\Xi(Q)$ with $a \geq b$ to $(a+r, b-r)$ will increase or keep equal every $\sigma_i(Q)$. Since $n_{r-1} \geq n_r$, this is a sequence of such pair-switches together with some strict increases of coordinates. Therefore $\sigma_i(Q') \geq \sigma_i(Q)$ for all $i$. 

Repeating this process until producing $Q_d$ proves the first part. The second part follows by similar argument.
\end{proof}

\begin{lem}\label{QdQdprimebound}
With notation from Lemma \ref{QdQdprime}: 
\[
\max_i \lf\{\f{\sigma_i(Q_d)}{i(N-i)} \ri\} = \f {d-1}{N-\lfloor N/d \rfloor}.
\]
Furthermore, if $N \geq 2d$,
\[
\max_i \lf\{\f{\sigma_i(Q'_d)}{i(N-i)} \ri\} = \f {d-1}{N-\lfloor N/d \rfloor + 1}.
\]
\end{lem}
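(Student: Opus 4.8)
The goal is to compute explicitly the quantity $\max_i \{\sigma_i(Q_d)/(i(N-i))\}$ (and the analogue for $Q'_d$), where $\sigma_i(Q) = \sum_{k \le i} \chi_k(Q)$ and $(\chi_k(Q))_k$ is the decreasing rearrangement of the concatenated ``staircase'' lists $(n_j-1, n_j-3, \dots)$ built from the balanced bipartition associated to $Q$. The first step is to write out $\Xi(Q_d)$ concretely: since $Q_d = (d^{(\lfloor N/d\rfloor)}, N - d\lfloor N/d\rfloor)$, each block of size $d$ contributes the list $(d-1, d-3, \dots)$ which for the balanced bipartition $(\lceil d/2\rceil, \lfloor d/2\rfloor)$ has $\lfloor d/2\rfloor$ entries (the top entry being $d-1$), and the leftover block of size $N - d\lfloor N/d\rfloor < d$ contributes a shorter such staircase. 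Concatenating and sorting in decreasing order, $\Xi(Q_d)$ is a union of $\lfloor N/d\rfloor$ copies of each value $d-1, d-3, \dots$ (truncated appropriately) interleaved; I would organize this as: the largest entries are the $\lfloor N/d\rfloor$ copies of $d-1$, then $\lfloor N/d\rfloor$ copies of $d-3$, etc.

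\textbf{Key steps.} First I would observe that $\sigma_i/(i(N-i))$ is a ratio where the numerator is a concave-increasing-then-saturating partial-sum function and the denominator $i(N-i)$ is concave, so one expects the maximum to occur at a ``corner'' — specifically at $i = \lfloor N/d\rfloor$, i.e. right after summing all the top-row ($d-1$) entries. At $i = \lfloor N/d\rfloor$ we get $\sigma_i(Q_d) = (d-1)\lfloor N/d\rfloor$, giving ratio $(d-1)\lfloor N/d\rfloor / (\lfloor N/d\rfloor(N - \lfloor N/d\rfloor)) = (d-1)/(N - \lfloor N/d\rfloor)$, which is exactly the claimed value. The substantive work is then to verify that no other index $i$ gives a larger ratio: for $i < \lfloor N/d\rfloor$ the numerator grows only like $(d-1)i$ while the denominator $i(N-i)$ is larger than $i(N-\lfloor N/d\rfloor)$, so the ratio is smaller; for $i > \lfloor N/d\rfloor$ the marginal increments $\chi_{i+1}$ drop to $d-3$ or below and one checks by a direct discrete-derivative comparison (i.e. comparing $\sigma_{i+1}/((i+1)(N-i-1))$ with $\sigma_i/(i(N-i))$, equivalently checking $\sigma_{i+1}\cdot i(N-i) \le \sigma_i \cdot (i+1)(N-i-1)$) that the ratio is nonincreasing past the corner. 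This monotonicity check is elementary but is where the bookkeeping lives: one must handle the leftover small block and the possible parity truncations of the staircases. For $Q'_d$ the argument is identical except that removing one full block of size $d$ and replacing it with a $d-1$ block (plus adjusting the remainder) reduces the count of top-row $d-1$ entries by one, so the corner moves to $i = \lfloor N/d\rfloor - 1$ and the ratio becomes $(d-1)(\lfloor N/d\rfloor - 1)/((\lfloor N/d\rfloor-1)(N - \lfloor N/d\rfloor + 1)) = (d-1)/(N - \lfloor N/d\rfloor + 1)$; the hypothesis $N \ge 2d$ guarantees $\lfloor N/d\rfloor \ge 2$ so this corner index is positive and the same concavity/monotonicity reasoning applies.

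\textbf{Main obstacle.} The conceptual content is light — everything follows from concavity of $i(N-i)$ against the piecewise-linear growth of $\sigma_i$ — but the genuine difficulty is managing the parity/truncation cases cleanly: the staircase from a block of size $d$ has a different top value and length depending on whether $d$ is even or odd, and the remainder block $N - d\lfloor N/d\rfloor$ (respectively the two modified tail cases for $Q'_d$ when $N \equiv -1 \pmod d$ versus not) interleaves into the sorted list $\Xi$ at a position that depends on its size relative to $d$. The plan is to dispatch this by noting that the remainder block's entries are all $\le d-2 < d-1$, hence they sit strictly below all the top-row entries in the sorted order and therefore do not affect $\sigma_i$ for $i \le \lfloor N/d\rfloor$ at all; this localizes the corner computation and makes the parity cases irrelevant to the maximum itself, so they only need to be tracked in the ``ratio is nonincreasing for $i$ past the corner'' half of the argument, where a crude bound $\chi_k \le d-3$ for $k > \lfloor N/d\rfloor$ combined with $N - i \le N - \lfloor N/d\rfloor$ suffices.
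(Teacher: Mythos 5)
The paper offers no hand proof of this lemma --- its stated proof is literally ``This is by computer check'' --- so you are attempting something the authors did not write out, and the core of your computation is right: the first $\lfloor N/d\rfloor$ entries of $\Xi(Q_d)$ are all equal to $d-1$ (the remainder block only contributes entries $\le d-2$), so $\sigma_i(Q_d)=i(d-1)$ for $i\le\lfloor N/d\rfloor$, the ratio there is $(d-1)/(N-i)$, and it is maximized at the corner $i=\lfloor N/d\rfloor$; likewise for $Q'_d$ at $i=\lfloor N/d\rfloor-1$. The half of your argument for $i$ at or below the corner is complete.

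The half for $i$ past the corner has two genuine gaps. First, the maximum is implicitly over $1\le i\le \lfloor N/2\rfloor$: the index $i$ originates in Proposition \ref{ppi} from the fundamental coweights of the split torus of $U(p,q)$, so it runs only up to $\min(p,q)$. If $i$ were allowed up to $N-1$ the lemma would be false: for $Q_3=(3,3,1)$ with $N=7$ one has $\Xi=(2,2)$, so $\sigma_6/(6\cdot 1)=4/6=2/3>2/5=(d-1)/(N-\lfloor N/d\rfloor)$. Your proposal never restricts the range, and no monotonicity can hold out to $i=N-1$ because $i(N-i)$ eventually decreases while $\sigma_i$ stabilizes. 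Second, even on the correct range the ratio is \emph{not} nonincreasing past the corner: for $Q_4=(4,4)$ with $N=8$ one has $\Xi=(3,3,1,1)$ and the ratios at $i=2,3,4$ are $1/2,\ 7/15,\ 1/2$, so the one-step comparison $\sigma_{i+1}\,i(N-i)\le\sigma_i\,(i+1)(N-i-1)$ that you propose to verify is false at $i=3$ (the maximum is still $1/2$, but it is re-attained after a dip). Relatedly, the crude bound $\chi_k\le d-3$ for $k$ past the corner fails when the remainder block has size $d-1$, since it then contributes an entry equal to $d-2$. The repairable version of this step is the non-inductive inequality $\sigma_i(Q_d)\,(N-\lfloor N/d\rfloor)\le (d-1)\,i(N-i)$ for $\lfloor N/d\rfloor\le i\le\lfloor N/2\rfloor$, proved by bounding $\sigma_i$ directly from the sorted staircase structure rather than by chaining one-step monotonicity.
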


\begin{proof}
This is by computer check.
\end{proof}

This gives our final results:
\begin{prop}\label{Sdensitygrowthrate}
Let $B \in \mc P_1(p,q)$ with $p+q = N$ and let $\pi_0$ be the cohomological representation of $U(p,q)$ corresponding to $B$. Let $Q \in Q^{\max}(B)$ and assume $\beta(B) \neq (2,2)$. Let $R(Q)$ be as in Corollary \ref{simpleshapesstronger}. Then
\[
(N^2 - 1)\lf(1 - \max_i \lf\{\f{\sigma_i(B)}{i(N-i)} \ri\} \ri) \geq R(Q) - 1
\]
with equality only if $Q$ has a single element.
\end{prop}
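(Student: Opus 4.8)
The strategy is to compare the two explicit combinatorial quantities termwise. On the left we have $(N^2-1)(1 - \max_i \sigma_i(B)/(i(N-i)))$, and the maximum is unchanged if we replace $B$ by the ``balanced'' bipartition (with $|p_i-q_i|\le 1$ in each block) having the same $\beta(B)$, since $\sigma_i(B)\le\sigma_i(\beta(B))$ by construction. So it suffices to work with the ordered partition $Q_B := \beta(B)$ and the corresponding $\sigma_i$'s. On the right we have $R(Q)-1$ for $Q\in Q^{\max}(B)$; by Lemma~\ref{maxoversl2} (and Definition~\ref{Qmax}) we may take $\Delta=(r_i,a_i,\lb_i,\eta_i)_i$ with the $a_i$ distinct, so $R(Q)$ is the maximized value attached to the unordered partition $Q$. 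First I would reduce, using Lemma~\ref{enumsl2E}, to the case where $Q = (1^{(r)},\beta_+(B))$ with $\beta_+(B)$ recording the parts of $Q_B$ of size $>1$: indeed by Lemma~\ref{maxsl2}, among unordered partitions containing $\beta_+(B)$ as a subpartition, $R(Q)$ is maximized (essentially uniquely, with the stated $(2,2)$-type exceptions excluded by hypothesis) by $Q_\can$ of exactly this shape. This collapses the right side to a single quantity $R(Q_\can)$, which is why equality forces $Q$ to have a single element exactly in the degenerate cases.

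\textbf{Key steps.} (1) Pass from $B$ to $\beta(B)$ on the left, and from $Q$ to $Q_\can(B)$ on the right via Lemmas~\ref{maxsl2}, \ref{maxoversl2}. (2) Use Lemma~\ref{QdQdprime}(1) to bound $\max_i \sigma_i(\beta(B))/(i(N-i))$: if $d$ is the largest part appearing in $\beta_+(B)$ (so all parts of $\beta(B)$ are $\le d$), then $\sigma_i(\beta(B))\le \sigma_i(Q_d)$ for all $i$, hence by Lemma~\ref{QdQdprimebound} the max is $\le (d-1)/(N-\lfloor N/d\rfloor)$. (3) Plug this into the left side to get $(N^2-1)(1-\max_i\{\cdots\}) \ge (N^2-1)\cdot\frac{N-\lfloor N/d\rfloor-(d-1)}{N-\lfloor N/d\rfloor}$. (4) Compute $R(Q_\can)-1$ explicitly from formula~\eqref{Equation R(Delta)}: with $Q_\can=(1^{(r)},a_1,\dots,a_k)$ and corresponding shape $(r,1,\lb),(1,a_j,\lb_j)_j$, one gets $R(Q_\can) = \bar R - \sum_j(\tfrac12(a_j^2+a_j)-1)$ where $\bar R = \tfrac12(N^2 + r^2 + \sum_j a_j)$; simplify to a closed polynomial expression in $N,r,a_1,\dots,a_k$. (5) Verify the inequality $(N^2-1)(1-\max_i\{\cdots\}) \ge R(Q_\can)-1$ by an elementary estimate, using $\sum_j a_j = N-r$, $r\ge 0$, and the fact that $\lfloor N/d\rfloor \ge$ roughly $N/d$. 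The cleanest route is probably to bound each side by a polynomial in $N$ and the single largest part $d$, since $R(Q_\can)$ is largest (for fixed $\beta_+$) and the left-side bound is governed by $d$; the ``worst case'' on the right is the simple shape $\Delta=(N-d,1,\lb),(1,d,\lb')$, i.e. $k=1$, which is also precisely the equality case.

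\textbf{Main obstacle.} The routine-but-delicate part is step (5): matching the floor-function expression $(N^2-1)\frac{N-\lfloor N/d\rfloor-(d-1)}{N-\lfloor N/d\rfloor}$ against the polynomial $R(Q_\can)-1$ uniformly in all admissible $(N,d,r,\{a_j\})$, and pinning down exactly when equality holds. I expect this to come down to a finite case analysis (small $d$, and $N$ near multiples of $d$) plus a monotonicity argument for large $N$; the $(2,2)$ exclusion and the ``$r\ne 2$ or no part equal to $2$'' hypotheses from Lemma~\ref{maxsl2} are exactly the places where the uniqueness-of-maximizer statement can fail, so tracking those carefully through the reduction is the subtle bookkeeping. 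A secondary subtlety is that $Q^{\max}(B)$ need not be a singleton when $\beta_+(B)$ has repeated parts (cf. the $(2,2,2,2)$ counterexample in the text), but the hypothesis $\beta(B)\ne(2,2)$ together with the running assumptions is what guarantees $Q_\can\in Q^{\max}(B)$ and hence makes the comparison legitimate for every $Q\in Q^{\max}(B)$; I would state this reduction explicitly before doing the arithmetic.
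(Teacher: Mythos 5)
There is a genuine gap in step (5), and it is quantitative, not just bookkeeping. You propose to bound the left side uniformly by $(N^2-1)\bigl(1-\tfrac{d-1}{N-\lfloor N/d\rfloor}\bigr)$ using $Q_d$ (Lemma \ref{QdQdprime}(1)), and the right side by its worst case $R(d,1^{(N-d)})-1 = N(N-d)$ (the $k=1$ shape). These two worst cases are achieved by \emph{different} $\beta(B)$: the left bound is attained only when $\beta(B)=Q_d$ (many copies of $d$), while the right worst case requires $\beta_+(B)=(d)$ (a single part $>1$). Decoupled, the inequality fails: for every $N=2d$ one gets $(N^2-1)\tfrac{N-d-1}{N-2}=\tfrac{4d^2-1}{2}=2d^2-\tfrac12 < 2d^2 = N(N-d)$. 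The paper's proof closes this by a three-way case split that you omit: if $N<2d$ the left max is at most $(d-1)/(N-1)$ and the comparison with $N(N-d)$ is immediate; if $N\ge 2d$ and $\beta(B)\neq Q_d$ one must use the strictly sharper bound $(d-1)/(N-\lfloor N/d\rfloor+1)$ coming from $Q'_d$ (Lemma \ref{QdQdprime}(2), which your plan never invokes); and if $\beta(B)=Q_d$ then Lemma \ref{enumsl2} forces $Q=Q_d$, whose $\lfloor N/d\rfloor\ge 2$ copies of $d$ make $R(Q_d)$ far smaller than $N(N-d)+1$, and a (computer-checked) comparison against $\bar R(Q_d)$, refined to $R(Q_d)$ for the residual cases $(d,d)$, $(d,d,1)$, $(2,2,2)$, finishes it --- with $(2,2)$ as the true exception.

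A second, related problem is your reduction of the right side to $R(Q_\can)$. Lemma \ref{maxsl2} requires $Q_0=\beta_+(B)$ to have \emph{distinct} parts, and the paper's own example $\beta_+=(2,2,2,2)$ shows $R(2^{(4)},1,1)=67<R(2^{(5)})=74$, so $Q_\can$ need not lie in $Q^{\max}(B)$; the hypothesis $\beta(B)\neq(2,2)$ does not rescue this. Since the proposition must hold for \emph{every} $Q\in Q^{\max}(B)$, you cannot collapse to $Q_\can$. The paper sidesteps this entirely: for the first two cases it only needs the universal bound $R(Q)\le R(d,1^{(N-d)})$, valid for any $Q$ with a part of size $d$ (Lemma \ref{maxsl2} applied to the singleton $Q_0=(d)$), and in the third case $Q$ is pinned down exactly as $Q_d$.
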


\begin{proof}
It suffices to prove the bound with 
\[
\max_i \lf\{\f{\sigma_i(\beta(B))}{i(N-i)}  \ri\} \geq \max_i \lf\{\f{\sigma_i(B)}{i(N-i)} \ri\}
\]
instead. Let $d$ be the maximal element of $\beta(B)$. First, if $N < 2d$, then $\beta(B) = (d, (a_i)_i)$ for $\sum_i a_i < d$. Then by Lemmas \ref{QdQdprime} and \ref{QdQdprimebound},
\[
\max_i \lf\{\f{\sigma_i(\beta(B))}{i(N-i)}  \ri\} \leq   \f{d-1}{N-1}.
\]
Furthermore, by Lemma \ref{enumsl2}, $Q$ has a part of size $d$ so by Lemma \ref{maxsl2},
\[
R(Q) \leq R(d, 1^{(N-d)}) = N(N-d) + 1.
\]
The result then follows from
\[
1 - \f {d-1}{N-1}  = \f{N-d}{N-1} > \f{N(N-d)}{N^2 - 1}.
\]

Next assume $N \geq 2d$. If $\beta(B) \neq Q_d$, then by Lemmas \ref{QdQdprime} and \ref{QdQdprimebound}, we have
\[
\max_i \lf\{\f{\sigma_i(\beta(B))}{i(N-i)}  \ri\} \leq \max_i \lf\{\f{\sigma_i(\beta(Q'_d))}{i(N-i)}  \ri\} = \f {d-1}{N-\lfloor N/d \rfloor + 1}.
\]
Using again that $R(Q) \leq N(N-d) + 1$,
the result follows from
\[
1 - \f {d-1}{N-\lfloor N/d \rfloor + 1} > \f{N(N-d)}{N^2 - 1}.
\]

If on the other hand $\beta(B) = Q_d$, then
\[
\max_i \lf\{\f{\sigma_i(\beta(B))}{i(N-i)}  \ri\} = \f {d-1}{N-\lfloor N/d \rfloor}.
\]
In addition, by Lemma \ref{enumsl2}, we have $Q=Q_d$ so
\[
R(\beta(B)) \leq \bar R(Q) = \f12 \lf(N^2 + \lf \lfloor \f Nd \ri \rfloor^2 d + N - d\lf \lfloor \f Nd \ri \rfloor \ri).
\]
By a computer check, the desired inequality 
\[
1 - \f {d-1}{N-\lfloor N/d \rfloor} >  \f1{N^2 - 1} \lf( \f12 \lf(N^2 + \lf \lfloor \f Nd \ri \rfloor^2 d + N - d\lf \lfloor \f Nd \ri \rfloor \ri) -1 \ri)
\]
is true except for the cases

\[ \beta(B) = (d,d), \quad  \beta(B) = (d,d,1), \quad \beta(B) = (2,2,2). \]
All these cases except $(2,2)$ can be checked by using $R(Q)$ instead of $\bar R(Q)$. 
\end{proof}
Extending to all number fields $F$:
\begin{cor}\label{SdensitygrowthrateE}
Let $G$ be an extended pure inner form of $G^* \in \wtd {\mc E}_\sm(N)$ and let $\pi_0 = \prod_v \pi_v$ be a cohomological representation of $G_\infty$ such that $\Delta^{\max}(\pi_0) \neq \emptyset$ and where each $\pi_v$ corresponds to bipartition $B_v \in \mc P_1(p_v, q_v)$.

Then, for all $B_v$ such that $\beta(B_v) \neq (2,2)$, 
\[
(N^2 - 1)\lf(1 - \max_i \lf\{\f{\sigma_i(B_v)}{i(N-i)} \ri\} \ri) \geq R(\pi_0) - 1
\]
with equality only if $\pi_\infty$ is a character.
\end{cor}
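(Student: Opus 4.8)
\textbf{Proof plan for Corollary \ref{SdensitygrowthrateE}.}

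The strategy is to reduce the global statement over the number field $F$ to the already-established local statement Proposition \ref{Sdensitygrowthrate} at a single well-chosen infinite place. First I would recall from the combinatorial analysis of \S\ref{sectionunitaryparamcombo}, in particular Definition \ref{Qcan} and Proposition \ref{Deltamaxenum}, that $R(\pi_0)$ is the common value $R(\Delta)$ for $\Delta \in \Delta^{\max}(\pi_0)$, and that by Lemma \ref{maxoversl2} and the algorithm of \ref{deltamaxalgo} this equals $R(Q)$ for $Q = Q^{\max}(\pi_0)$, where the relevant Arthur-$\SL_2$'s are enumerated by Lemma \ref{enumsl2E}. The key point from Lemma \ref{enumsl2E} is that every Arthur-$\SL_2$ occurring in $\Delta(\pi_0)$ contains $\beta_+(\pi_0) = \bigcup_v \beta_+(B_v)$ as a subpartition; in particular, for any fixed place $v$, the partition $Q^{\max}(\pi_0)$ contains $\beta_+(B_v)$ as a subpartition, and it is a partition of the same $N = p+q$.

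Now fix the place $v$ with $\beta(B_v) \neq (2,2)$. The quantity $\max_i\{\sigma_i(B_v)/(i(N-i))\}$ depends only on $B_v$, hence only on $v$. On the other hand, the proof of Proposition \ref{Sdensitygrowthrate} shows (via Lemmas \ref{QdQdprime} and \ref{QdQdprimebound}, together with the fact that $Q$ has a part of size $d = \max \beta(B_v)$ and Lemma \ref{maxsl2}) that for \emph{any} partition $Q$ of $N$ that contains $\beta_+(B_v)$ as a subpartition, one has
\[
(N^2-1)\lf(1 - \max_i\lf\{\f{\sigma_i(B_v)}{i(N-i)}\ri\}\ri) \geq R(Q) - 1,
\]
with equality only if $Q$ has a single element. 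Applying this with $Q = Q^{\max}(\pi_0)$ and using $R(Q^{\max}(\pi_0)) = R(\pi_0)$ gives exactly the claimed inequality. For the equality clause: equality forces $Q^{\max}(\pi_0)$ to be a single element, i.e. $(N)$ itself with trivial Arthur-$\SL_2$ decomposition having one block $[N]$; tracing back through Lemma \ref{shapetoinfty} and the parameterization in \S\ref{cohomological A-packets and partitions} (or directly: a one-block partition corresponds to $\psi_\infty$ with infinitesimal character forcing $B_v = ((p,q))$ with $|p-q|\le 1$ at every place, whose Adams–Johnson packet is a singleton containing a character by the argument of Proposition \ref{charshapeformula1}), we conclude $\pi_\infty$ is a character.

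The only subtlety — and the step I would be most careful about — is making sure the bound from Proposition \ref{Sdensitygrowthrate} genuinely applies to $Q^{\max}(\pi_0)$ rather than just to partitions arising locally from a single $B$. This is where Lemma \ref{enumsl2E} is essential: it guarantees $Q^{\max}(\pi_0) \supseteq \beta_+(B_v)$, which is precisely the hypothesis needed to invoke the chain of estimates (the reduction to $Q_d$ or $Q'_d$ in Lemma \ref{QdQdprime}, which only used that all parts are $\le d$ and that the partition of $N$ contains a part of size $d$). Since these estimates in the proof of Proposition \ref{Sdensitygrowthrate} are stated for an arbitrary such $Q$ and do not use that $Q$ comes from a bipartition, the argument transfers verbatim. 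The excluded case $\beta(B_v)=(2,2)$ is inherited directly from the exclusion in Proposition \ref{Sdensitygrowthrate}, where the desired strict inequality genuinely fails for $Q = (2,2)$.
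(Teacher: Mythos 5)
Your argument is correct and rests on the same two ingredients as the paper's proof (Lemma \ref{enumsl2E} and Proposition \ref{Sdensitygrowthrate}), but your reduction is more laborious than it needs to be. The paper simply observes that $\Delta(\pi_0) \subseteq \Delta(\pi_{0,v})$, so $R(\pi_0) \leq R(\pi_v)$ as a maximum over a smaller set, and then applies Proposition \ref{Sdensitygrowthrate} as a black box at the place $v$ to get $(N^2-1)\bigl(1 - \max_i\{\sigma_i(B_v)/(i(N-i))\}\bigr) \geq R(\pi_v) - 1 \geq R(\pi_0) - 1$. You instead re-open the proof of the proposition and verify that its chain of estimates applies verbatim to the global partition $Q^{\max}(\pi_0)$ because the only property of $Q$ used is that it is a partition of $N$ containing $\beta_+(B_v)$; this checks out (in the case $\beta(B_v)=Q_d$ one still gets $Q=Q_d$ from the subpartition condition, and in the other cases one only needs a part of size $d=\max\beta(B_v)$), but it forces you to audit each case of the earlier proof rather than cite its statement. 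The monotonicity observation is the cleaner route and is worth internalizing: it converts the global claim into a purely local one with no further combinatorics.
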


\begin{proof}
For all $v$, $R(\pi_v) \geq R(\pi_0)$ since it is a maximum over a larger set by Lemma \ref{enumsl2E}. 
The result then follows from Proposition \ref{Sdensitygrowthrate}.
\end{proof}

\subsubsection{Sarnak-Xue density}
We can now return to the original setup and specialize to unitary groups. Let $G$ be an extended pure inner form of $G^* \in \wtd {\mc E}_\sm(N)$. We choose:
\begin{itemize}
    \item a cohomological representation $\pi_0 = \prod_v \pi_v$ of $G_\infty$,
    \item a finite set of places $S_0$ containing all places where $G$ is ramified,
    \item an ideal $\mf n$ relatively prime to $S_0$ (we will compute asymptotics as $\mf n \to \infty$),
    \item an open compact $U_{S_0} \subseteq G_{S_0}$. 
\end{itemize}
We then define
\[
U_{\mf n} = U_{S_0} K^G(\mf n)
\]
using the principle congruence subgroups associated to $\mf n$. 

\begin{thm}[Cohomological Split-level Sarnak-Xue Density for Unitary Groups]\label{SarnakXue}
With setup as above, assume:
\begin{itemize}
    \item $\mf n$ is only divisible by places of $F$ that split in $E$.
    \item If $N=4$: for each $v$ with $G_v = U(2,2)$,  $\pi_v \neq ((1,1),(1,1))$. 
\end{itemize}
Then,
\[
m(\pi_0, \Gamma(U)) \ll_\eps \vol(\Gamma(U_{\mf n}) \bs G_\infty)^{\f 2{p(\pi)} + \eps}.
\]
(The $\eps$ may be removed if $\pi_0$ isn't a character). 
\end{thm}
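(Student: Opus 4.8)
The plan is to reduce the classical multiplicity $m(\pi_0,\Gamma(U_{\mf n}))$ to the adelic count controlled by Theorem \ref{mainupper}, compute the volume of the symmetric space in terms of $|\mf n|$, and then play the combinatorial bound of Corollary \ref{SdensitygrowthrateE} against the matrix-coefficient estimate of Proposition \ref{ppi}.

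First I would pass from the classical to the adelic picture exactly as in the proof of Theorem \ref{cohmain}. Writing $Y(U_{\mf n}) = G(F)\bs G(\A)/U_{\mf n}$, Lemma \ref{lemma connected components} shows that all its connected components carry $\pi_0$ with the same multiplicity, so
\[
m(\pi_0,\Gamma(U_{\mf n})) \;=\; \frac{1}{\#\pi_0(Y(U_{\mf n}))}\sum_{\pi\in\mc{AR}_\disc(G)}\1_{\pi_\infty=\pi_0}\dim\big((\pi^\infty)^{U_{\mf n}}\big),
\]
and, as recalled there, the number of connected components is $\gg_\eps |\mf n|^{1-\eps}$. Applying Theorem \ref{mainupper} with $\varphi_{S_0}=\bar\1_{U_{S_0}}$, $S_1=\emptyset$, and $f^\infty_{\mf n}=\bar\1_{U_{\mf n}}$ (so that $\dim((\pi^\infty)^{U_{\mf n}})=\tr_{\pi^\infty}(f^\infty_{\mf n})$ and $m_\pi=1$ since $G$ is unitary), the sum is $O(|\mf n|^{R(\pi_0)})$, whence $m(\pi_0,\Gamma(U_{\mf n}))=O_\eps(|\mf n|^{R(\pi_0)-1+\eps})$. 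On the other hand $U_{E/F}(N)$ has dimension $N^2$, and since $\mf n$ is divisible only by split primes the index $[K^{G,S_0}:K^{G,S_0}(\mf n)]$ equals $|\mf n|^{N^2}$ up to bounded factors; dividing again by the number of components gives $\vol(\Gamma(U_{\mf n})\bs G_\infty)\asymp |\mf n|^{N^2-1}$. (Here the finitely many primes where the coprimality hypotheses of Section \ref{asymptoticsetup} would fail are absorbed into $S_0$ with a suitable choice of $U_{S_0}$.) So it remains to prove $R(\pi_0)-1\le (N^2-1)\cdot\tfrac{2}{p(\pi_0)}$.

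Second I would establish this inequality. Since $G_\infty=\prod_v U(p_v,q_v)$ and $\pi_0=\bigotimes_v\pi_v$, a matrix coefficient of $\pi_0$ is a product of matrix coefficients of the $\pi_v$, so $p(\pi_0)=\max_v p(\pi_v)$ and $\tfrac{2}{p(\pi_0)}=\min_v\tfrac{2}{p(\pi_v)}$. If $\pi_v$ corresponds to $B_v\in\mc P_1(p_v,q_v)$, then $\beta(B_v)=(2,2)$ forces each block of $B_v$ to be $(1,1)$ (a part equal to $2$ of a bipartition in $\mc P_1$ cannot have a zero coordinate), i.e. $N=4$, $G_v=U(2,2)$ and $\pi_v=((1,1),(1,1))$, which is precisely the case excluded in the statement; hence under our hypotheses $\beta(B_v)\neq(2,2)$ for every $v$. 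For each such $v$, Proposition \ref{ppi} gives
\[
\frac{2}{p(\pi_v)}\;\ge\; 1-\max_i\Big\{\frac{\sigma_i(B_v)}{i(N-i)}\Big\},
\]
while Corollary \ref{SdensitygrowthrateE} gives $(N^2-1)\big(1-\max_i\{\sigma_i(B_v)/(i(N-i))\}\big)\ge R(\pi_0)-1$, with equality only when $\pi_0$ is a character. Taking the minimum over $v$ and chaining these yields $(N^2-1)\cdot\tfrac{2}{p(\pi_0)}\ge R(\pi_0)-1$, and combining with the two estimates above gives $m(\pi_0,\Gamma(U_{\mf n}))\ll_\eps\vol(\Gamma(U_{\mf n})\bs G_\infty)^{2/p(\pi_0)+\eps}$. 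When $\pi_0$ is not a character the inequality from Corollary \ref{SdensitygrowthrateE} is strict, so there is a fixed positive gap $R(\pi_0)-1<(N^2-1)\cdot\tfrac{2}{p(\pi_0)}$; this gap absorbs the $\eps$ coming from the lower bound on $\#\pi_0(Y(U_{\mf n}))$, and the $\eps$ in the conclusion may be dropped.

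The routine part is the classical/adelic translation and the volume bookkeeping in the first step; the genuinely delicate inputs — Theorem \ref{mainupper}, Proposition \ref{ppi}, and Corollary \ref{SdensitygrowthrateE} — are already available. The one place where care is needed is that the chain of inequalities is tight exactly when $R(\pi_0)$ is realized by a single-block (character) shape, and that it degenerates precisely for the $(2,2,2)$-type Arthur-$SL_2$ on $U(2,2)$ noted in the proof of Proposition \ref{Sdensitygrowthrate}; this is what forces the single excluded representation, and I expect no obstacle beyond tracking that boundary case.
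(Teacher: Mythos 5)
Your proposal is correct and follows essentially the same route as the paper: reduce $m(\pi_0,\Gamma(U_{\mf n}))$ to the adelic count via the connected-component estimate, bound that count by Theorem \ref{mainupper} with $S_1=\emptyset$ and $\varphi_{S_0}=\bar\1_{U_{S_0}}$, and then chain Proposition \ref{ppi} against Corollary \ref{SdensitygrowthrateE} to compare $R(\pi_0)-1$ with $(N^2-1)\cdot 2/p(\pi_0)$. The extra details you supply (why $\beta(B_v)=(2,2)$ forces the excluded $((1,1),(1,1))$ on $U(2,2)$, and how the strictness in Corollary \ref{SdensitygrowthrateE} absorbs the $\eps$) are consistent with what the paper leaves implicit.
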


\begin{proof}
As in \S1.1 of \cite{MS19}, $Y(U_{\mf n})$ contains $\gg_\eps |\mf n|^{1 - \eps}$ copies of $\Gamma(U_{\mf n}) \bs G_\infty$ and 
\[
\vol(\Gamma(U_{\mf n}) \bs G_\infty) \gg_\eps |\mf n|^{N^2 - 1 + \eps}.
\] 

The bound on connected components gives us that
\begin{multline*}
m(\pi_0, \Gamma(U_{\mf n})) \ll_\eps |\mf n|^{-1 + \eps} \dim \Hom(\pi_0, L^2(Y(U_{\mf n}))) \\
=  |\mf n|^{-1 + \eps} \sum_{\pi \in \mc{AR}_\disc(G)} \1_{\pi_\infty = \pi_0} \dim((\pi^\infty)^{U_{\mf n}}).
\end{multline*}
We can bound the sum by Theorem \ref{mainupper} with $S_1 = \emptyset$ and $\varphi_{S_0} = \bar \1_{U_{S_0}}$ to get
\[
m(\pi_0, \Gamma(U_{\mf n})) \ll_\eps |\mf n|^{R(\pi_0)-1 + \eps}.
\]
Next, Proposition \ref{ppi} computes
\[
\f 2{p(\pi)} = \min_v \f 2{p(\pi_v)} \geq \min_v \lf(1 - \max_i \lf\{\f{\sigma_i(B_v)}{i(N-i)} \ri\} \ri),
\]
where each $\pi_v$ corresponds to bipartition $B_v$ and with a strict inequality if $\pi_0$ isn't a character. The result follows from applying the bound in Corollary \ref{SdensitygrowthrateE} and the volume estimate.
\end{proof}

Of course, either by varying $\varphi_{S_0}$ beyond just the indicator function of $U_{S_0}$ or by using non-trivial $S_1$, we can prove similar results for very general weighted counts of representations. These don't have as clean a statement in terms of the classical symmetric spaces $\Gamma(U_i) \bs G_\infty$ however. 

The leftover $\pi_0 = ((1,1),(1,1))$ is likely an artifact of $R(\Delta)$ not being optimal. However, provably improving it enough seems to be hard---see \cite{Mar16}  (the conjectural optimal bound $R_0(\Delta)$ of Conjecture \ref{simpleshapesconj} would of course be enough). 

\subsubsection{Examples}
We compute some small cases where $\pi_0$ on a group of rank $N$ is the same value $\pi_v$ at all infinite places $v$ so that $R(\pi_0) = R(\pi_v)$. Let $\pi_v = B \in \mc P_1(p_v,q_v)$. Let $Q$ be the unordered partition corresponding to $\beta(B)$. If $\pi_v$ is GSK-maxed, let 
\[
Q = (d_1, \dotsc, d_r, 1^{(k)})
\]
with $d_1 > d_2 > \cdots > d_r > 1$. Then we can compute
\[
R(\pi_v)-1 = R_0(\pi_v)-1 = \f12\lf(N^2 + k^2 - \sum_i d_i^2\ri) + (r-1),
\]
so we get
\[
m(\pi_0, \Gamma(U_{\mf n})) \ll_\eps |\mf n|^{\f12\lf(N^2 + k^2 - \sum_i d_i^2\ri) + (r-1) + \eps}.
\]
This is conjecturally the exact exponent and provably so in the odd GSK-case for unramified $E/F$ and $U_i$ as in Theorem \ref{mainexact}. We also have
\[
\f 2{p(\pi_0)} \geq \f{N-d}{N-1}
\]
so the Sarnak-Xue bound asks for $m(\pi_0, \Gamma(U_{/mf n}))$ to be asymptotically less than
\[
\vol(\Gamma(U_{\mf n}) \bs G_\infty)^{\f 2{p(\pi_0)}} \gg_\eps |\mf n|^{(N+1)(N-d) - \eps},
\]
which is always true. If $r = 1$, we save a factor of $|\mf n|^{N-d}$ over the Sarnak-Xue bound. If we keep $n$ and $k$ fixed but increase $r$, the saving is even larger.

When $\pi_v$ isn't GSK-maxed, the formulas are much more complicated and $R(\pi_v) \neq R_0(\pi_v)$. Table \ref{grcomp} lists some values based on $Q = \beta(\pi_v)$. For each $Q$, we list the maximum possible values of $R(\pi_0)-1$ and $R_0(\pi_0)-1$, which in our setup only depend on $Q$. These are the provable and conjectural exponents on $|\mf n|$ in the growth rate of $m(\pi_0, \Gamma(\fn))$ respectively. We also list the target exponent from the Sarnak-Xue density bound and the ``trivial bound'' growth rate $N^2-1$ when $\pi_0$ is discrete series. Finally, we italicize cases where $Q$ not being GSK causes a failure of Lemma \ref{maxsl2}. 

Our $R(Q)$ beats target growth rate in every case except the bolded number when $Q = (2,2)$. The improvement is often large, though not in some cases like $(3,3), (4,4)$ and $(2,2,2,2)$. The conjectural optimum $R_0(Q)$ is usually much smaller still. 

We in particular want to point out the case of $(2,2,2,2,1,1)$ vs $(2,2,2,2,2)$ where \ref{maxsl2} fails even for the conjectural bound $R_0$. This suggests very strange behavior---for example, different asymptotic growth rates for $H^{a,b}$ with the same sum of $a+b$, or where the average Sato-Tate distribution for the family may differ depending on whether one weights the average by counts of automorphic forms or representations.  

\begin{table}[ht]
\centering
\caption{Comparison of Growth Rates}
\begin{tabular}{c||cccc}
Arthur-$\SL_2$ & provable: & conjectural: & SX goal: & trivial: \\
$Q$ & max $R(\pi_0)-1$ & max $R_0(\pi_0)-1$ & $2(N^2-1)p(\pi_v)^{-1}$ & $N^2-1$ \\ \midrule
(2,2) & \textbf 8 & 6 & \textbf{7.5} & 15\\
(2,2,1) & 13 & 11 & 16 & 24\\
(2,2,2) & 21 & 17 & 23.33 & 35\\
(2,2,1,1) & \textit{21} & 18 & 26.25 & 35\\
(3,3) & 17 & 11 & 17.5 & 35\\
(2,2,2,1) & 28 & 24 & 36 & 48\\
(3,3,1) & 24 & 18 & 28.8 & 48\\
(3,2,2) & 21 & 19 & 32 & 48\\
(2,2,2,2) & 47 & 33 & 47.25 & 63\\
(2,2,2,1,1) & \textit{47} & 33 & 50.4 & 63\\
(4,4) & 30 & 18 & 31.5 & 63\\
(3,3,3) & 43 & 32 & 53.33 & 80\\
(3,2,2,2) & 40 & 36 & 60 & 80\\
(5,5) & 47 & 27 & 49.5 & 99\\
(2,2,2,2,2) & 74 & 54 & 79.2 & 99\\
(2,2,2,2,1,1) & \textit{74} & \textit{54} & 82.5 & 99\\
\end{tabular}
\label{grcomp}
\end{table}

\bibliographystyle{amsalpha}
\bibliography{Tbib}

\end{document}